\newcommand{\hfparskip}{\vspace{-0.5\parskip}}
\newcommand{\unparskip}{\vspace{-\parskip}}
\newcommand{\umparskip}{\vspace{-1.5\parskip}}
\newcommand{\deparskip}{\vspace{-2\parskip}}
\newcommand{\abs}[1]{\lvert #1 \rvert}
\newcommand{\norm}[1]{\| #1 \|}
\newcommand{\rbr}[1]{\left(#1\right)}
\newcommand{\cbr}[1]{\left\{#1\right\}}
\newcommand{\abr}[1]{\langle #1\rangle}
\newcommand{\inv}[1]{#1^{-1}}
\newcommand{\N}{\mathbb{N}}
\newcommand{\Q}{\mathbb{Q}}
\newcommand{\R}{\mathbb{R}}
\newcommand{\E}{\mathbb{E}}
\renewcommand{\Pr}{\mathbb{P}}
\newcommand{\Ind}{\mathbbm{1}}
\newcommand{\ipr}[2]{\langle #1,#2 \rangle}
\newcommand{\restr}[2]{\left.#1\right\vert_{#2}}
\newcommand{\sperp}{\raisebox{-0.7pt}{\scalebox{0.7}[0.35]{$\perp$}}}
\newcommand{\barp}[1]{\overset{\sperp}{#1}}
\newcommand{\barpp}[2]{\barp{#1}^{\raisebox{-2pt}{$\scriptstyle #2$}}}
\DeclareMathOperator\sgn{sgn}
\DeclareMathOperator\Var{Var}
\DeclareMathOperator\Cov{Cov}
\DeclareMathOperator\Img{Im}
\DeclareMathOperator\Span{span}
\DeclareMathOperator*\rank{rank}
\DeclareMathOperator\diag{diag}
\DeclareMathOperator*\argmin{argmin}
\DeclareMathOperator\tr{tr}
\newcommand{\iid}{\overset{\mathrm{iid}}{\sim}}
\newcommand{\cvc}{\overset{c}{\to}}
\DeclareMathOperator*\clim{c-lim}
\newcommand{\cvas}{\overset{a.s.}{\to}}
\newcommand{\cvp}{\overset{p}{\to}}
\newcommand{\cvd}{\overset{d}{\to}}
\newcommand{\eqd}{\overset{d}{=}}
\newcommand{\eqdcond}[1]{\mathrel{\eqd\!\vert_{#1}}}
\newcommand{\indep}{\mathrel{\perp\!\!\!\perp}}
\newcommand{\PL}{\mathrm{PL}}
\newcommand{\GOE}{\mathrm{GOE}}
\newcommand{\AMP}{\mathrm{AMP}}
\DeclareMathOperator\mmse{mmse}
\DeclareMathOperator\ST{ST}
\DeclareMathOperator\prox{prox}
\theoremstyle{plain} 
\newtheorem{theorem}{Theorem}[section]
\newtheorem{proposition}[theorem]{Proposition}
\newtheorem{lemma}[theorem]{Lemma}
\newtheorem{corollary}[theorem]{Corollary}
\newtheorem{definition}[theorem]{Definition}
\theoremstyle{definition} 
\newtheorem{remark}[theorem]{Remark}
\newtheorem{example}{Example}
\begin{document}

\title{A unifying tutorial on Approximate Message Passing}
\author{  
Oliver Y. Feng$^\ast$, Ramji Venkataramanan$^\dagger$, Cynthia Rush$^\ddagger$ and Richard J. Samworth$^\ast$ \\
$^\ast$Statistical Laboratory, University of Cambridge \\
$^\dagger$Department of Engineering, University of Cambridge \\
$^\ddagger$Department of Statistics, Columbia University
}

\maketitle

\begin{abstract}
Over the last decade or so, Approximate Message Passing (AMP) algorithms have become extremely popular in various structured high-dimensional statistical problems.  The fact that the origins of these techniques can be traced back to notions of belief propagation in the statistical physics literature lends a certain mystique to the area for many statisticians.  Our goal in this work is to present the main ideas of AMP from a statistical perspective, to illustrate the power and flexibility of the AMP framework.  Along the way, we strengthen and unify many of the results in the existing literature.
\end{abstract}

\section{Introduction}

Approximate Message Passing (AMP) refers to a class of iterative algorithms that have been successfully applied to a number of statistical estimation tasks such as linear regression~\citep{donohoMM2009,BM11,krzakala2012}, generalised linear models~\citep{RanganGAMP,schniter2014compressive,MM20} and low-rank matrix estimation~\citep{Ryosuke2013,Deshpande2014,deshpande2016asymptotic,montanari2016non,kabashima2016phase,lesieur2017constrained,rangan2018iterative,MV21}.  Moreover, these techniques are also popular and practical in a variety of engineering and computer science applications such as imaging \citep{fletcher2014scalable,VSM15,metzler2017learned}, communications \citep{SchniterBICM2011,JGMS15,Barbier2017,rushGV2017} and deep learning~\citep{pandit2019asymptotics,Yan19,ESPRF20,PSRSF20}. AMP algorithms have two features that make them particularly attractive. First, they can easily be tailored to take advantage of prior information on the structure of the signal, such as sparsity or other constraints. Second, under suitable assumptions on a design or data matrix, AMP theory provides precise asymptotic guarantees for statistical procedures in the high-dimensional regime where the ratio of the number of observations~$n$ to dimensions~$p$ converges to a constant~\citep{BM12,donohoJM_SC2013,sur2017likelihood}. More generally, AMP has been also used to obtain lower bounds on the estimation error of first-order methods~\citep{CMW20}, and in linear regression and low rank matrix estimation, it plays a fundamental role in understanding the performance gap between information-theoretically optimal and computationally feasible estimators~\citep{reeves2019replica,barbier2019optimal,lelarge2019fundamental}. In these settings, it is conjectured that AMP achieves the optimal asymptotic estimation error among all polynomial-time algorithms~\citep[cf.][]{CM19}. 


The purpose of this article is to give a comprehensive and rigorous introduction to what AMP can offer, as well as to unify and formalise the core concepts within the large body of recent work in the area. In fact, many of the original ideas of AMP were developed in the physics and engineering literature, and involved notions such as `loopy belief propagation'~\citep[e.g.][Section~11.3]{KF2009} and the `replica method'~\citep[e.g.][]{Tan02,GuoV05,MM2009,RFG2009,krzakala2012}, which will be unfamiliar to many statisticians. In our view, these notions provide useful heuristics, but are not essential to understand the sharp asymptotic characterisations of AMP algorithms in statistical applications, and are omitted here. Instead, the starting point for our development will be an abstract AMP recursion, whose form depends on whether or not the data matrix is symmetric; we will study the symmetric case in detail, and then present the asymmetric version, which can be handled via a reduction argument. The striking and crucial feature of this recursion is that when the dimension is large, the empirical distribution of the coordinates of each iterate is approximately Gaussian, with limiting variance given by a scalar iteration called `state evolution'.

Rigorous formulations of the key AMP property are given in Theorems~\ref{thm:AMPmaster} and \ref{thm:masterext} (for the symmetric case) and Theorem~\ref{thm:AMPnonsym} (for the asymmetric case), which can be found in Sections~\ref{sec:AMPsym} and~\ref{sec:AMPnonsym} respectively. Here, we both strengthen earlier related results, and seek to make the underlying arguments more transparent. These `master theorems', which can be viewed as asymptotic results on Gaussian random matrices, can be adapted to analyse variants of the original AMP recursion that are geared towards more statistical problems. In this aspect, we focus on two canonical statistical settings, namely estimation of low-rank matrices in Section~\ref{Sec:LowRank}, and estimation in generalised linear models (GLMs) in Section~\ref{Sec:GAMP}. The former encompasses Sparse Principal Component Analysis~\citep{JTU03,ZHT06,Deshpande2014,WBS16,gataric2020sparse}, submatrix detection~\citep{ma2015computational}, hidden clique detection~\citep{alon1998clique,deshpande2015clique}, spectral clustering~\citep{vonLux07}, matrix completion \citep{candes2009exact,ZWS19}, topic modelling \citep{BNJ2003} and collaborative filtering \citep{SK09}. The latter provides a holistic approach to studying a suite of popular modern statistical methods, including penalised M-estimators such as the Lasso \citep{Tib96} and SLOPE \citep{bogdan2015slope}, as well as more traditional techniques such as logistic regression. A novel aspect of our presentation in Section~\ref{Sec:GAMP} is that we formalise the connection between AMP and a broad class of convex optimisation problems, and then show how to systematically derive exact expressions for the asymptotic risk of estimators in GLMs. We expect that our general recipe can be applied to a wider class of GLMs than have been studied in the AMP literature to date.

To preview the statistical content in the paper and highlight some recurring themes, we now discuss two prototypical applications of AMP that form the basis of Sections~\ref{Sec:LowRank} and~\ref{Sec:GAMP} respectively. First, suppose that we wish to estimate an unknown signal $v\in\R^n$ based on an observation
\[
A=\frac{\lambda}{n}vv^\top+W,
\]
where $\lambda>0$ is fixed and $W\in\R^{n\times n}$ is a symmetric Gaussian noise matrix. In this so-called spiked Wigner model (see Section~\ref{sec:rankone} and the references therein), a popular and well-studied estimator of $v$ is the leading eigenvector $\hat{\varphi}$ of $A$, which can be approximated via the power method, with iterates 
\[
v^{k+1}=\frac{Av^k}{\norm{Av^k}}.
\]
An AMP algorithm in this context can be interpreted as a generalised power method that produces a sequence of estimates $\hat{v}^k$ of $v$ via iterative updates of the form 
\[
\hat{v}^k =g_k(v^k),\qquad v^{k+1}=A\hat{v}^k-b_k\hat{v}^{k-1}
\]
for $k\in\N_0$, where we emphasise the following two characteristic features:

\unparskip
\begin{enumerate}[label=(\roman*)]
\item Each `denoising' function $g_k\colon\R\to\R$ is applied componentwise to vectors, and can be chosen appropriately to exploit different types of prior information about the structure of $v$ (e.g.\ to encourage $\hat{v}^k$ to be sparse).
\item In the `memory' term $-b_k\hat{v}^{k-1}$, which is called an `Onsager' correction in the AMP literature~\citep[e.g.][]{donohoMM2009,BM11}, the scalar $b_k$ is defined as a specific function of $v^k$ to ensure that the iterates $v^{k+1}$ have desirable statistical properties; see~\eqref{eq:AMPmatsym} below.
\end{enumerate}

\unparskip
One way to incorporate additional structural information on $v$ into the spiked model is to assume that its entries are drawn independently from some prior distribution $\pi$ on $\R$; for example, we can enforce sparsity through priors that place strictly positive mass at 0. Then under appropriate conditions, AMP theory guarantees that, for each $k$, the components of the estimate $\hat{v}^k$ have approximately the same empirical distribution as those of $g_k(\mu_k v+\sigma_k\xi)$; here, $\xi\sim N_n(0,I_n)$ is a `noise' vector that is independent of the signal $v\in\R^n$, and the `signal' and `noise' parameters $\mu_k\in\R$, $\sigma_k>0$ are determined by a scalar state evolution recursion that depends on $(g_k)$ and the prior distribution~$\pi$; see~\eqref{eq:statevolsymat}. This distributional characterisation enables us to choose the functions $g_k$ in such a way that the `effective signal-to-noise ratios' $(\mu_k/\sigma_k)^2$ are large and the resulting AMP estimates $\hat{v}^k=g_k(v^k)$ have low asymptotic estimation error as $n\to\infty$. 

For instance, suppose that the entries of $v$ are drawn uniformly at random from $\{-1,1\}$. Then it turns out that the asymptotic mean squared error (MSE) of $\hat{v}^k$ is minimised by choosing $g_k$ to be the function $x\mapsto\tanh(\mu_k x/\sigma_k^2)$; see Section~\ref{sec:lowrankgk}. Figure~\ref{fig:amse} illustrates that the limiting MSE of the AMP estimates $\hat{v}^k$ decreases with the iteration number $k$, and in particular that they improve on the pilot spectral estimator $\hat{v}^{-1}$ (which is agnostic to the structure of $v$).
\pdfsuppresswarningpagegroup=1
\begin{figure}[htbp!]
\begin{center}
\includegraphics[width=0.475\textwidth]{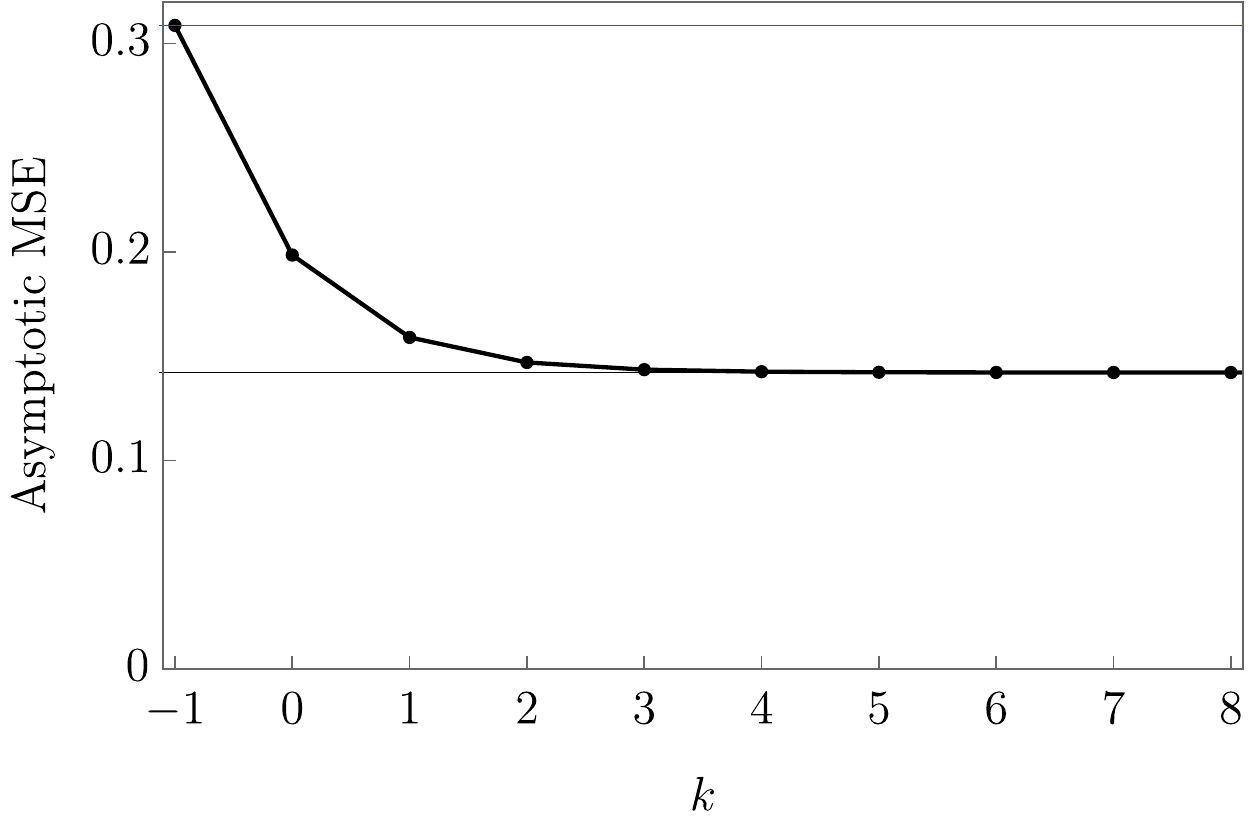}
\hspace{10pt}
\includegraphics[width=0.475\textwidth]{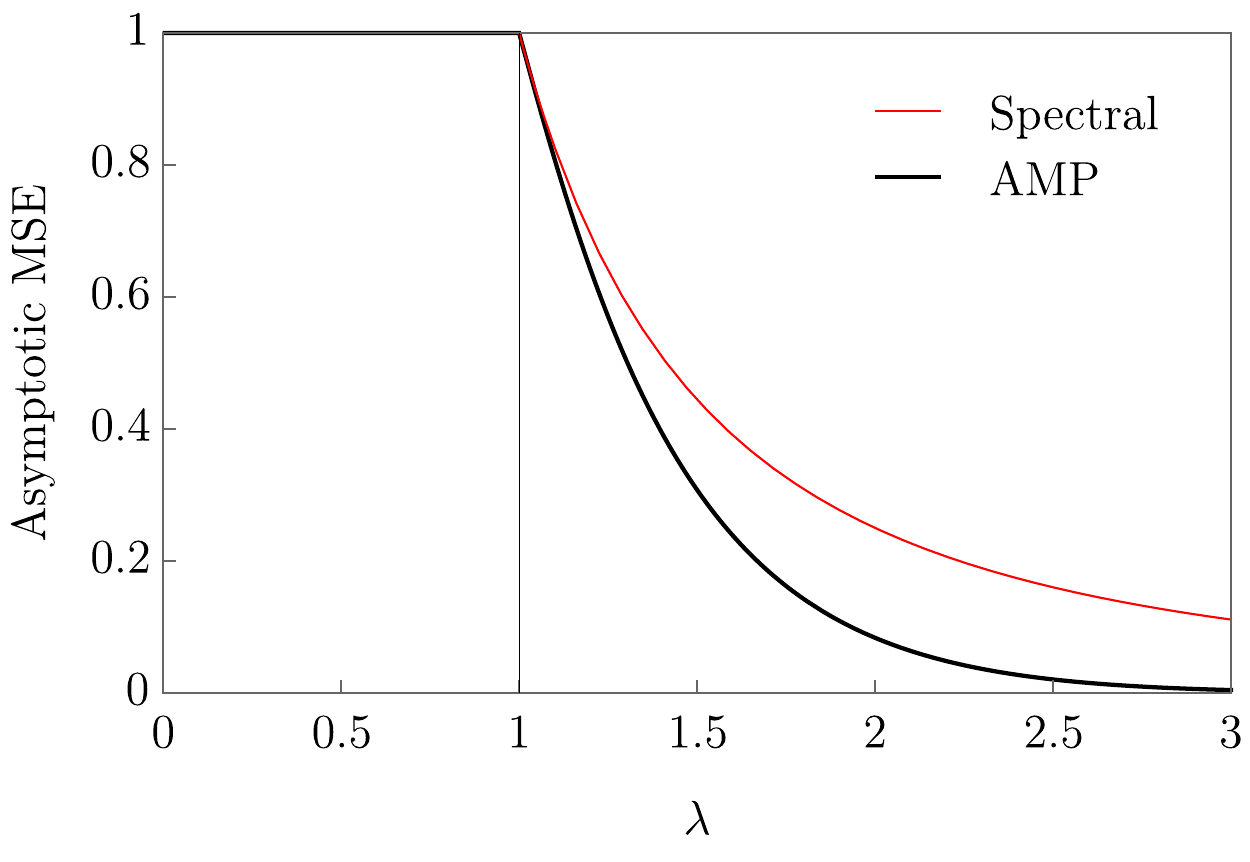}
\end{center}
\caption{\label{fig:amse}Asymptotic mean-squared error plots for estimation of a signal $v\in\R^n$ with i.i.d.\ $U\{-1,1\}$ entries in the rank-one spiked model, based on an AMP algorithm with denoising functions $g_k\colon x\mapsto\tanh(\mu_k x/\sigma_k^2)$ and  spectral initialisation ($v^0=\hat{\varphi}$ and $\hat{v}^{-1}=\lambda^{-1}\hat{\varphi}$ with $\norm{\hat{\varphi}}=\sqrt{n\lambda^2(\lambda^2-1)_+}$); see Section~\ref{sec:spectral}.\\[1ex]\emph{Left}: Plot of $\mathrm{AMSE}_k(\lambda):=\lim_{n\to\infty}\norm{\hat{v}^k-v}^2/n$ against the iteration number $k$ for the AMP estimates $\hat{v}^k\equiv\hat{v}_\lambda^k(n)$, when $\lambda=1.7$. $\mathrm{AMSE}_k(\lambda)$ decreases monotonically to some $\mathrm{AMSE}_\infty(\lambda)$ as $k\to\infty$; see Theorem~\ref{thm:bayesamp}(c). \\[1ex]\emph{Right}: Plots of $\mathrm{AMSE}_{-1}(\lambda)=1\vee\lambda^{-2}$ for the pilot spectral estimator $\hat{v}^{-1}$ and $\mathrm{AMSE}_\infty(\lambda)$ for AMP, with $\lambda\in [0,3]$. The spectral estimator undergoes the so-called BBP phase transition at $\lambda=1$; see Section~\ref{sec:rankone}.}
\unparskip
\end{figure}

As a second example, consider the linear model $y=X\beta+\varepsilon$, where $\beta\in\R^p$ is the target of inference, $\varepsilon\in\R^n$ is a noise vector, and $X\in\R^{n\times p}$ is a random design matrix with independent $N(0,1/n)$ entries. In high-dimensional regimes where $p$ is comparable in magnitude to, or even much larger than $n$, a popular (sparse) estimator is the Lasso~\citep{Tib96}, which for $\lambda>0$ is defined by 
\[
\hat{\beta}_\lambda^{\mathrm L}\in\argmin_{\tilde{\beta}\in\R^p}\,\biggl\{\frac{1}{2}\norm{y-X\tilde{\beta}}^2+\lambda\norm{\tilde{\beta}}_1\biggr\}.
\]
In the literature on high-dimensional estimation, upper bounds on the prediction and estimation error of the Lasso have been obtained under suitable conditions on the design matrix $X$, such as the restricted isometry property or compatibility conditions~\citep[e.g.][]{BvG11}. AMP offers complementary guarantees by providing exact formulae for the asymptotic risk in the `large system limit' where $n,p\to\infty$ with $n/p\to\delta\in (0,\infty)$, and with the components of $\beta$ drawn independently from a prior distribution on $\mathbb{R}$. To motivate the form of the AMP algorithm in this setting, first consider the iterative soft thresholding algorithm (ISTA) for solving the Lasso optimisation problem, whose update steps can be written as
\begin{equation}
\label{eq:ISTA}
\hat{r}^k=y-X\hat{\beta}^k,\qquad\hat{\beta}^{k+1}=\ST_{\lambda\eta_k}\bigl(\hat{\beta}^k + \eta_k X^\top\hat{r}^k\bigr)\qquad\text{for }k\in\N_0;
\end{equation}
here, $\hat{r}^k$ is the current residual, $\eta_k>0$ is a deterministic step size, and for $t>0$, the soft-thresholding function $\ST_t\colon w\mapsto\sgn(w)(\abs{w}-t)_+$ is applied componentwise to vectors. This is an instance of the general-purpose proximal gradient method~\citep[Sections~4.2 and~4.3]{PB13}. An `accelerated' version of~\eqref{eq:ISTA} called FISTA~\citep{BT09} bears a closer resemblance to an AMP algorithm, whose iterates are given by
\begin{equation}
\label{eq:AMPlasso}
\hat{r}^k=y-X\hat{\beta}^k+\frac{\norm{\hat{\beta}^k}_0}{n}\,\hat{r}^{k-1},\qquad\hat{\beta}^{k+1}=\ST_{t_{k+1}}\bigl(\hat{\beta}^k + X^\top\hat{r}^k\bigr)\qquad\text{for }k\in\N_0.
\end{equation}
Here, each $t_k>0$ is a deterministic threshold and $\norm{\hat{\beta}^k}_0$ denotes the number of non-zero entries of $\hat{\beta}^k\in\R^p$. By comparison with~\eqref{eq:ISTA}, we observe that~$\hat{r}^k$ in~\eqref{eq:AMPlasso} is a corrected residual, whose definition includes an additional memory term that is crucial for ensuring that the empirical distribution of the iterates can be characterised exactly. Indeed, for each fixed $k\in\N$, the entries of the AMP estimate $\hat{\beta}^k$ of $\beta$ have approximately the same empirical distribution as those of $\ST_{t_k}(\beta+\sigma_k\xi)$ when $p$ is large; here $\xi\sim N_p(0,I_p)$ is a noise vector that is independent of $\beta$, the noise level $\sigma_k>0$ is determined by the state evolution recursion defined in~\eqref{eq:statevolasso} below, and the scalar denoising function $\ST_{t_k}$ induces sparsity.

\citet{BM12} proved that in the asymptotic regime above, the AMP iterates $(\hat{r}^k,\hat{\beta}^k)$ converge in a suitable sense to a fixed point $(\hat{r}^*,\hat{\beta}^*)$, and a key property of~\eqref{eq:AMPlasso} is that for any such fixed point, $\hat{\beta}^*$ is a Lasso solution; see~\eqref{eq:lassofixed1} below.  It follows that the performance of the Lasso is precisely characterised by a fixed point of the state evolution recursion~\eqref{eq:statevolasso}; see Theorem~\ref{thm:lasso}. Since the above properties are proved under a Gaussian design, the main utility of AMP in this setting is not so much as an efficient Lasso computational algorithm, but rather as a device for gaining insight into the statistical properties of the estimator. In Section~\ref{Sec:GAMP}, the above theory is developed as part of an overarching AMP framework for linear models and generalised linear models (GLMs). 

Note that in both of the examples above, the limiting empirical distributions of the entries of the AMP iterates can be decomposed into independent `signal' and `noise' components, and the effective signal strength and noise level are determined by a state evolution recursion. 
In Sections~\ref{Sec:LowRank} and~\ref{Sec:GAMP}, we show how to derive these asymptotic guarantees by applying the master theorems in Section~\ref{Sec:Master} to suitable abstract recursions, which track the evolution of the asymptotically Gaussian `noise' components of the AMP iterates.  We discuss various extensions in Section~\ref{Sec:Conclusions}, and provide proofs in the Appendix (Section~\ref{Sec:Proofs}), with supplementary mathematical background deferred to Section~\ref{Sec:Supp}.  As a guide to the reader, we remark that rigorous formulations of the results in this paper require a number of technical conditions.  While we take care to state these precisely, and discuss them at appropriate places, we emphasise that these should generally be regarded as mild.  We therefore recommend that the reader initially focuses on the main conclusions of the results.

The statistical roots of AMP lie in compressed sensing~\citep{donohoMM2009,donohoJM_SC2013}. A reader approaching the subject from this perspective can consult~\citet{Mon12},~\citet{TKGM14} and \citet{schniter2019simple} for accessible expositions of the motivating ideas and the connections with message passing algorithms on dense graphs. 
Alternatively, for comprehensive reviews of AMP from a physics perspective, see~\citet{ZK16},~\citet{krzakala2012} and~\citet{lesieur2017constrained}.

In statistical physics, a specific form of AMP was proposed as an iterative algorithm to solve the Thouless--Anderson--Palmer (TAP) equations corresponding to the Sherrington--Kirkpatrick model in spin glass theory~\citep{MPV1987,MM2009,Tal11,Bol14}. The estimation problem here is equivalent to one of reconstructing a symmetric rank-one matrix in a Gaussian spiked model.~\citet{Bol14} proved a rigorous state evolution result for AMP in this specific setting, by introducing a conditioning argument that became an essential ingredient in subsequent analyses of AMP~\citep{BM11,JM13,BMN20,Fan20}. See Section~\ref{sec:AMPcond} for a detailed discussion of this proof technique.


In this article, we restrict our focus to AMP recursions in which the random matrices are Gaussian. However, several recent works have extended AMP and its state evolution recursion to more general non-Gaussian settings. For matrices with independent sub-Gaussian entries, results on the `universality' of AMP were first established by \citet{bayati2015universality} and later in greater generality by~\citet{CL20}. In addition, to accommodate the class of rotationally invariant random matrices, a number of extensions of the original AMP framework have recently been proposed, including Orthogonal AMP~\citep{ma2017,Tak19} and Vector AMP~\citep{SRF16,ranganVAMP19}, 
as well as the general iterative schemes of~\citet{OCW16},~\citet{CO19} and~\citet{Fan20}. Some of these are closely related to expectation propagation~\citep{OW05,KV14}. In all of the above variants of AMP, the recursion is tailored to the spectrum of the random matrix.

\hfparskip
\subsection{Notation and preliminaries}
\label{sec:notation}

Here, we introduce some notation used throughout the paper, and present basic properties of Wasserstein distances, pseudo-Lipschitz functions, as well as the complete convergence of random sequences.

\textbf{General notation}: For $n\in\N$, let $e_1,\dotsc,e_n$ be the standard basis vectors in $\R^n$. For $r\in [1,\infty]$, we write $\norm{x}_r$ for the $\ell_r$ norm of $x\equiv (x_1,\dotsc,x_n)\in\R^n$, so that $\norm{x}_r=(\sum_{i=1}^n\,\abs{x_i}^r)^{1/r}$ when $r\in [1,\infty)$ and $\norm{x}_\infty=\max_{1\leq i\leq n}\,\abs{x_i}$. We also define $\norm{x}_{n,r}:=n^{-1/r}\norm{x}_r=(n^{-1}\sum_{i=1}^n\,\abs{x_i}^r)^{1/r}$ for $r\in (1,\infty)$. Let $\ipr{\cdot\,}{\cdot}$ and $\norm{{\cdot}}:=\norm{{\cdot}}_2$ be the standard Euclidean inner product and norm on $\R^n$ respectively, and define $\ipr{\cdot\,}{\cdot}_n$ to be the scaled Euclidean inner product on $\R^n$ given by $\ipr{x}{y}_n:=\inv{n}\ipr{x}{y}$ for $x,y\in\R^n$, which induces the norm $\norm{{\cdot}}_n:=\norm{{\cdot}}_{n,2}$. We denote by $\mathbf{1}_n:=(1,\dotsc,1)\in\R^n$ the all-ones vector and write $\abr{x}_n:=\ipr{x}{\mathbf{1}_n}_n=n^{-1}\sum_{i=1}^n x_i$ for each $x\in\R^n$.

For $D\in\N$ and $x^1,\dotsc,x^D\in\R^n$, we denote by $\nu_n(x^1,\dotsc,x^D):=n^{-1}\sum_{i=1}^n\delta_{(x_i^1,\dotsc,x_i^D)}$ the joint empirical distribution of their components, and for a function $f\colon\R^D\to\R$, 
write $f(x^1,\dotsc,x^D):=\bigl(f(x_i^1,\dotsc,x_i^D):1\leq i\leq n\bigr)\in\R^n$ for the row-wise application of $f$ to $(x^1\;\cdots\;x^D)$. 

By a \emph{Euclidean space} $(E,\norm{{\cdot}}_E)$ we mean a finite-dimensional inner product space over $\R$, equipped with the norm induced by its inner product; examples include $(\R^n,\norm{{\cdot}})$ for $n\in\N$ and $(\R^{k\times\ell},\norm{{\cdot}}_{\mathrm{F}})$ for $k,\ell\in\N$, where $\norm{{\cdot}}_{\mathrm{F}}$ is the Frobenius norm induced by the trace inner product $(A,B)\mapsto\tr(A^\top B)$. 

\textbf{Gaussian orthogonal ensemble}: We write $W\sim\GOE(n)$ if $W=(W_{ij})_{1\leq i,j\leq n}$ takes values in the space of all symmetric $n\times n$ matrices, and has the property that $(W_{ij})_{1\leq i\leq j\leq n}$ are independent, with $W_{ij}\sim N(0,1/n)$ for $1\leq i<j\leq n$ and $W_{ii}\sim N(0,2/n)$ for $i=1,\dotsc,n$. Writing $\mathbb{O}_n$ for the set of all $n \times n$ orthogonal matrices, we note the orthogonal invariance property of the $\GOE(n)$ distribution: if $Q\in\mathbb{O}_n$ and $W \sim \GOE(n)$, then $Q^\top WQ\sim\GOE(n)$.

\textbf{Complete convergence of random sequences}: The asymptotic results in this paper are formulated in terms of the notion of \emph{complete convergence}~\citep[e.g.][Chapter~1.3]{HR47,Ser80}. This is a stronger mode of stochastic convergence than almost sure convergence, and is denoted throughout using the symbol $\cvc$. In Definition~\ref{def:compconv} and Proposition~\ref{prop:compequiv} below, we give two equivalent characterisations of complete convergence and introduce some associated stochastic $O$ symbols. 
\begin{definition}
\label{def:compconv}
Let $(X_n)$ be a sequence of random elements taking values in a Euclidean space $(E,\norm{{\cdot}}_E)$. We say that $X_n$ \emph{converges completely} to a deterministic limit $x\in E$, and write $X_n\cvc x$ or $\clim_{n\to\infty}X_n=x$, if $Y_n\to x$ almost surely for any sequence of $E$-valued random elements $(Y_n)$ with $Y_n\eqd X_n$ for all $n$. 

We write $X_n=o_c(1)$ if $X_n\cvc 0$, and write $X_n=O_c(1)$ 
if $Y_n=O_{a.s.}(1)$ (i.e.\ $\limsup_{n\to\infty}\norm{Y_n}_E<\infty$ almost surely) for any sequence of $E$-valued random elements $(Y_n)$ with $Y_n\eqd X_n$ for all $n$.
\end{definition}
\begin{proposition}
\label{prop:compequiv}
For a sequence $(X_n)$ of random elements taking values in a Euclidean space $(E,\norm{{\cdot}}_E)$, we have

\unparskip
\begin{enumerate}[label=(\alph*)]
\item $X_n=o_c(1)$ if and only if $\sum_n\Pr(\norm{X_n}_E>\varepsilon)<\infty$ for all $\varepsilon>0$;
\item $X_n=O_c(1)$ if and only if there exists $C>0$ such that $\sum_n\Pr(\norm{X_n}_E>C)<\infty$.
\end{enumerate}

\unparskip
\end{proposition}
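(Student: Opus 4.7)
The plan is to prove both equivalences via the two Borel--Cantelli lemmas, making essential use of the freedom in Definition~\ref{def:compconv} to take $(Y_n)$ to be \emph{any} sequence of $E$-valued random elements with $Y_n \eqd X_n$ for every $n$.

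For the ``$\Leftarrow$'' direction of (a), I would assume $\sum_n \Pr(\norm{X_n}_E > \varepsilon) < \infty$ for every $\varepsilon > 0$ and fix any $(Y_n)$ with $Y_n \eqd X_n$. Since the same summability then holds for the $Y_n$, the first Borel--Cantelli lemma yields $\norm{Y_n}_E \leq \varepsilon$ for all sufficiently large $n$, almost surely. Intersecting over a countable sequence $\varepsilon = 1/k$ with $k \in \N$ gives $Y_n \to 0$ almost surely, so $X_n = o_c(1)$. The analogous argument for (b), applied with the single threshold $C$ in place of $\varepsilon$, gives $Y_n = O_{a.s.}(1)$ and hence $X_n = O_c(1)$.

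For the ``$\Rightarrow$'' directions I would argue by contrapositive. If summability fails at some $\varepsilon_0 > 0$ in (a), or at every $C > 0$ in (b), then I would construct an \emph{independent} sequence $(Y_n)$ with $Y_n \eqd X_n$ by taking the product of the laws of the $X_n$ on the Borel $\sigma$-algebra of $E^{\N}$; this is routine since $E$ is Polish. Independence then lets me apply the second Borel--Cantelli lemma: in case (a), to the events $\{\norm{Y_n}_E > \varepsilon_0\}$, which occur infinitely often almost surely, contradicting $Y_n \to 0$; and in case (b), to each $\{\norm{Y_n}_E > k\}$ for $k \in \N$, after which intersecting over $k$ yields $\limsup_n \norm{Y_n}_E = \infty$ almost surely, contradicting $Y_n = O_{a.s.}(1)$.

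There is no real obstacle here: this is essentially the classical characterisation of complete convergence, adapted to the slightly non-standard formulation given in Definition~\ref{def:compconv}. The only step that requires any care is the construction of the independent realisation $(Y_n)$ with the prescribed marginals, which is standard for random elements in a Euclidean space.
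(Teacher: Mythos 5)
Your proof is correct and follows essentially the same route as the paper: the first Borel--Cantelli lemma for the sufficiency of summability, and the second Borel--Cantelli lemma applied to an independent realisation $(Y_n)$ with $Y_n\eqd X_n$ for the converse. The extra remark on constructing the independent copies via a product measure is a detail the paper leaves implicit, but otherwise the arguments coincide.
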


\unparskip
For a deterministic $x\in E$, we see that $X_n\cvc x$ if and only if $\sum_n\Pr(\norm{X_n-x}_E>\varepsilon)<\infty$ for all $\varepsilon>0$. Moreover, if $X_n\cvc x$, then $X_n=O_c(1)$. The proof of Proposition~\ref{prop:compequiv}, along with various other properties of complete convergence and a calculus for $o_c(1)$ and $O_c(1)$ notation, is given in Section~\ref{sec:cc}; see also Remark~\ref{rem:AMPconv}.

\textbf{Wasserstein distances and pseudo-Lipschitz functions}: For $D\in\N$ and $r\in [1,\infty)$, we write $\mathcal{P}(r)\equiv\mathcal{P}_D(r)$ for the set of all Borel probability measures $P$ on $\R^D$ with $\int_{\R^D}\norm{x}^r\,dP(x)<\infty$. For $P,Q\in\mathcal{P}_D(r)$, the \emph{$r$-Wasserstein distance} between $P$ and $Q$ is defined by \[d_r(P,Q):=\inf_{(X,Y)}\E(\norm{X-Y}^r)^{1/r},\]
where the infimum is taken over all pairs of random vectors $(X,Y)$ defined on a common probability space with $X\sim P$ and $Y\sim Q$. For $P,P_1,P_2,\dotsc\in\mathcal{P}_D(r)$, we have $d_r(P_n,P)\to 0$ if and only if both $\int_{\R^D}\norm{x}^r\,dP_n(x)\to\int_{\R^D}\norm{x}^r\, dP(x)$ and $P_n\to P$ weakly~\citep[e.g.][Theorem~7.12]{Vil03}. Furthermore, for $L>0$, we write $\PL_D(r,L)$ for the set of functions $\psi\colon\R^D\to\R$ such that
\begin{equation}
\label{eq:PL}
\abs{\psi(x)-\psi(y)}\leq L\norm{x-y}\,(1+\norm{x}^{r-1}+\norm{y}^{r-1})
\end{equation}
for all $x,y\in\R^D$, and denote by $\PL_D(r):=\bigcup_{L>0}\PL_D(r,L)$ the class of \emph{pseudo-Lipschitz functions $f\colon\R^D\to\R$ of order $r$}. Note that $\PL_D(1,L)$ is precisely the class of all $(3L)$-Lipschitz functions on $\R^D$, and that $\PL_D(s)\subseteq\PL_D(r)$ for any $1\leq s\leq r$. Moreover, for any probability measure $P\in\mathcal{P}_D(r)$, we have $\abs{\int_{\R^d}\psi\,dP}\leq L\int_{\R^D}(\norm{x}+\norm{x}^r)\,dP(x)+\abs{\psi(0)}<\infty$ for all $\psi\in\PL_D(r,L)$. Now for $P,Q\in\mathcal{P}_D(r)$, we define
\begin{equation}
\label{eq:Wrtilde}
\widetilde{d}_r(P,Q):=\sup_{\psi\in\PL_D(r,1)}\,\biggl|\,\int_{\R^D}\psi\,dP-\int_{\R^D}\psi\,dQ\,\biggr|.
\end{equation}
In Section~\ref{sec:Wr}, we show (among other things) that $\widetilde{d}_r,d_r$ are metrics on $\mathcal{P}_D(r)$ that induce the same topology (Remark~\ref{rem:Wrmetric}).

\section{Master theorems for abstract AMP recursions}
\label{Sec:Master}
\subsection{Symmetric AMP}
\label{sec:AMPsym}
In this subsection, we present an abstract AMP recursion that was first studied by~\citet{Bol14} in a special case\footnote{In a 2009 workshop, Bolthausen presented his analysis of AMP for the TAP equations, which inspired the work of~\citet{BM11}; see Section~3 of the latter.}, and subsequently by~\citet[Section~4]{BM11} and~\citet{JM13} in greater generality. Let $(f_k)_{k=0}^\infty$ be a sequence of Lipschitz functions $f_k\colon\R^2\to\R$, and for $n\in\N$, let $W\equiv W(n)\in\R^{n\times n}$ be a symmetric matrix and $\gamma\equiv\gamma(n)\in\R^n$ be a vector of auxiliary information. Given $m^{-1}\equiv m^{-1}(n):=0\in\R^n$ and an initialiser $h^0\equiv h^0(n)\in\R^n$, recursively define $m^k\equiv m^k(n)\in\R^n$, $b_k\equiv b_k(n)\in\R$ and $h^{k+1}\equiv h^{k+1}(n)\in\R^n$ by
\begin{equation}
\label{eq:AMPsym}
m^k:=f_k(h^k,\gamma),\qquad b_k:=\abr{f_k'(h^k,\gamma)}_n=\frac{1}{n}\sum_{i=1}^n f_k'(h_i^k,\gamma_i),\qquad h^{k+1}:=Wm^k-b_k m^{k-1}
\end{equation}
for $k\in\N_0$. Here, $f_k'\colon\R^2\to\R$ is a bounded, Borel measurable function that agrees with the partial derivative of $f_k$ with respect to its first argument, wherever the latter is defined.
Note that for each $y\in\R$, the Lipschitz function $x\mapsto f_k(x,y)$ is differentiable Lebesgue almost everywhere~\citep[e.g.][Theorem~3.1.6]{Fed96} with weak derivative $x\mapsto f_k'(x,y)$. 

In its generic form,~\eqref{eq:AMPsym} is not intended for use as an algorithm to solve any particular estimation problem, but for the following reasons, it underpins the statistical framework for AMP:

\unparskip
\begin{enumerate}[label=(\roman*)]
\item \emph{State evolution characterisation of limiting Gaussian distributions}: In an asymptotic regime where conditions~\ref{ass:A0}--\ref{ass:A5} below are satisfied (in particular where~\ref{ass:A0} requires $W$ to be Gaussian), the key mathematical property of~\eqref{eq:AMPsym} is given by~\eqref{eq:AMPmaster} below: for fixed $k\in\N$, the empirical distributions of the components of $h^k\equiv h^k(n)$ converge completely in Wasserstein distance to a Gaussian limit $N(0,\tau_k^2)$ as $n\to\infty$. The variances $\tau_k^2$ are determined by the state evolution recursion~\eqref{eq:statevolsym} below, which depends on the choice of Lipschitz functions $(f_k:k\in\N_0)$. As we will discuss later in this subsection, the so-called Onsager correction term $-b_k m^{k-1}$ plays a pivotal role in ensuring that the asymptotic distributions are indeed Gaussian. 
\item \emph{Basis for the construction and analysis of AMP algorithms}: In statistical settings,~\eqref{eq:AMPsym} cannot be used as a practical procedure when $\gamma$ and/or $W$ are unobservable; for example, in Section~\ref{Sec:LowRank} on low-rank matrix estimation, $\gamma$ represents the unknown target of inference and $W$ is a noise matrix. Instead, one can replace $\gamma$ and/or $W$ in~\eqref{eq:AMPsym} with observed quantities to design an AMP algorithm that produces a sequence of valid estimates of $\gamma$; see~\eqref{eq:AMPmatsym} for instance.
Exact expressions for the asymptotic estimation error can often be derived by subsequently recasting the algorithm as an abstract recursion of the form~\eqref{eq:AMPsym}, whose state evolution characterisation makes it a powerful theoretical tool; see for example Corollary~\ref{cor:AMPlowsym} and (the sketch of) its proof. Moreover, through judicious choices of the Lipschitz functions $f_k$, the AMP estimates can be tailored to different types of prior information about the structure of $\gamma$.

\item \emph{Precursor to other abstract AMP recursions}: By generalising and transforming~\eqref{eq:AMPsym}, we can obtain state evolution descriptions of the limiting behaviour in a number of related abstract AMP iterations, including those in which the input matrix need not be symmetric (Section~\ref{sec:AMPnonsym}) and/or the iterates themselves are matrices rather than vectors (Section~\ref{sec:AMPmatrix}). These facilitate the analysis of a wider class of AMP algorithms that are not covered directly by~\eqref{eq:AMPsym} alone; see for example Section~\ref{Sec:GAMP} on GAMP.
\end{enumerate}

\unparskip
We will now formalise point (i) above through Theorems~\ref{thm:AMPmaster} and~\ref{thm:masterext} below, which establish the Wasserstein limits of the joint empirical distributions of the components of $h^k,\gamma$ and $h^1,\dotsc,h^k,\gamma\in\R^n$ respectively for each fixed $k$ as $n\to\infty$. In view of (ii) and (iii), we 
will refer to these results as `master theorems' for symmetric AMP. 

We will consider a probabilistic setup where for each $n\in\N$, we have an AMP recursion~\eqref{eq:AMPsym} based on a random triple $(m^0,\gamma,W)\equiv\bigl(m^0(n),\gamma(n),W(n)\bigr)$ such that

\unparskip
\begin{enumerate}[label=(A0)]
\item \label{ass:A0} $W\equiv W(n)\sim\GOE(n)$ and is independent of $(m^0,\gamma)\equiv\bigl(m^0(n),\gamma(n)\bigr)$.
\end{enumerate}

\unparskip
Recalling the concepts and definitions from Section~\ref{sec:notation}, we assume that for some $r\in [2,\infty)$ and $\tau_1\in (0,\infty)$, the inputs to~\eqref{eq:AMPsym} also satisfy the following conditions as $n\to\infty$: 

\unparskip
\begin{enumerate}[label=(A\arabic*)]
\item \label{ass:A1} There exists a probability distribution $\pi\in\mathcal{P}_1(r)$ such that the empirical distribution $\nu_n(\gamma)$ of the components of $\gamma\equiv\gamma(n)$ satisfies $d_r\bigl(\nu_n(\gamma),\pi\bigr)\cvc 0$.
\item \label{ass:A2} $\norm{m^0}_n\equiv (n^{-1}\sum_{i=1}^n\,\abs{m_i^0}^2)^{1/2}\cvc\tau_1$ and $\norm{m^0}_{n,r}\equiv (n^{-1}\sum_{i=1}^n\,\abs{m_i^0}^r)^{1/r}=O_c(1)$.
\item \label{ass:A3} There exists a Lipschitz $F_0\colon\R\to\R$ such that taking $\bar{\gamma}\sim\pi$, we have $\E\bigl(F_0(\bar{\gamma})^2\bigr)\leq\tau_1^2$ and $\ipr{m^0}{\phi(\gamma)}_n=n^{-1}\sum_{i=1}^n f_0(h_i^0,\gamma_i)\,\phi(\gamma_i)\cvc\E\big(F_0(\bar{\gamma})\phi(\bar{\gamma})\bigr)$ for all Lipschitz $\phi\colon\R\to\R$.
\end{enumerate}

\unparskip
\ref{ass:A1} holds if for each $n$, the entries of $\gamma\equiv\gamma(n)$ are drawn independently from a distribution $\pi$ on $\R$ with a finite $r^{th}$ moment. In general, $\pi$ can be thought of as a `limiting prior distribution' in statistical applications.~\ref{ass:A2} includes a boundedness assumption on the empirical $r^{th}$ moment of $m^0\equiv m^0(n)$. Both~\ref{ass:A1} and~\ref{ass:A2} are less stringent and more natural than analogous conditions on $(2r-2)^{th}$ moments in the existing literature on AMP; see Remark~\ref{rem:AMPm0}, which also discusses~\ref{ass:A3}. 

Given $\pi\in\mathcal{P}_1(r)$ from~\ref{ass:A1} and $\tau_1\in (0,\infty)$ from~\ref{ass:A2}, the state evolution parameters $(\tau_k^2:k\in\N)$ are defined inductively by
\begin{equation}
\label{eq:statevolsym}
\tau_{k+1}^2:=\E\bigl(f_k(G_k,\bar{\gamma})^2\bigr),
\end{equation}
where $G_k\sim N(0,\tau_k^2)$ and $\bar{\gamma}\sim\pi$ are independent. 
Since the functions $f_k$ are Lipschitz and $\E(\bar{\gamma}^2)^{1/2}\leq\E(\abs{\bar{\gamma}}^r)^{1/r}<\infty$ under~\ref{ass:A1}, it follows by induction that $\tau_k^2\in [0,\infty)$ for all~$k$. 

We will make two further mild regularity assumptions. Suppose henceforth that if $r>2$, then

\unparskip
\begin{enumerate}[label=(A4)]
\item \label{ass:A4} 
$\pi\bigl(\{y\in\R:x\mapsto f_k(x,y)\text{ is non-constant}\}\bigr)>0$ for each $k\in\N$.
\end{enumerate}

\unparskip
This is a `non-degeneracy' condition that ensures that $\tau_k^2>0$ for all $k\in\N$; see also Lemma~\ref{lem:posdef} below.

\unparskip
\begin{enumerate}[label=(A5)]
\item \label{ass:A5} For each $k\in\N$, the set $D_k$ of discontinuities of $f_k'$ satisfies $(\lambda\otimes\pi)(D_k)=0$, where $\lambda$ denotes Lebesgue measure on $\R$.
\end{enumerate}

\unparskip
This guarantees the existence of a deterministic limit for $b_k\equiv b_k(n)$ in~\eqref{eq:AMPsym} as $n\to\infty$ for each~$k$ (see Remark~\ref{rem:bk} below), and is satisfied by the functions $f_k$ that are typically used in statistical applications, such as those based on soft-thresholding functions $\ST_t\colon u\mapsto \sgn(u)(\abs{u}-t)_+$ for $t>0$. See Section~\ref{sec:AMPrem} for some technical remarks on~\ref{ass:A1}--\ref{ass:A5}, which can be skipped on a first reading.

We are now ready to state our first master theorem, which is a substantial result in random matrix theory. As mentioned in (i) above, this reveals in particular that the asymptotic distributional behaviour of the AMP iterates is governed by the scalar recursion~\eqref{eq:statevolsym}.
\begin{theorem}
\label{thm:AMPmaster}
Suppose that~\emph{\ref{ass:A0}}--\emph{\ref{ass:A5}} hold for a sequence of symmetric AMP recursions~\eqref{eq:AMPsym} indexed by $n\in\N$. Then for each $k\in\N$, we have $d_r\bigl(\nu_n(h^k,\gamma),N(0,\tau_k^2)\otimes\pi\bigr)\cvc 0$ as $n\to\infty$, or equivalently
\begin{equation}
\label{eq:AMPmaster}
\widetilde{d}_r\bigl(\nu_n(h^k,\gamma),N(0,\tau_k^2)\otimes\pi\bigr)=\sup_{\psi\in\PL_2(r,1)}\;\biggl|\frac{1}{n}\sum_{i=1}^n\psi(h_i^k,\gamma_i)-\E\bigl(\psi(G_k,\bar{\gamma})\bigr)\biggr|\cvc 0\quad\text{as }n\to\infty,
\end{equation}
where $G_k\sim N(0,\tau_k^2)$ and $\bar{\gamma}\sim\pi$ are independent.
\end{theorem}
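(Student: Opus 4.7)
The plan is to proceed by induction on $k$ using Bolthausen's conditioning technique, working with the stronger joint statement of Theorem~\ref{thm:masterext} as the inductive hypothesis (so that Theorem~\ref{thm:AMPmaster} is recovered by marginalisation). For the base case $k=1$, write $h^1 = Wm^0$, and note that since $W \sim \GOE(n)$ is independent of $(m^0,\gamma)$ by~\ref{ass:A0}, the conditional law of $h^1$ given $(m^0,\gamma)$ is centred Gaussian with covariance $n^{-1}\|m^0\|^2 I_n$ up to a low-rank adjustment from the symmetric GOE diagonal. Since $\|m^0\|_n \cvc \tau_1$ by~\ref{ass:A2}, Gaussian concentration for Lipschitz functions together with~\ref{ass:A1} and~\ref{ass:A3} delivers the $\widetilde{d}_r$-convergence of $\nu_n(h^1,\gamma)$ to $N(0,\tau_1^2) \otimes \pi$ with summable tail probabilities, hence complete convergence; the equivalence with $d_r$-convergence recorded in Remark~\ref{rem:Wrmetric} then closes the step.

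For the inductive step, collect the iterates into $M_k := (m^0 \mid \cdots \mid m^{k-1}) \in \R^{n \times k}$ and $H_k := (h^1 \mid \cdots \mid h^k)$, and condition on $\mathcal{F}_k := \sigma(\gamma, m^0, H_k, m^1, \ldots, m^k)$. Recursion~\eqref{eq:AMPsym} encodes the affine constraint $WM_k = H_k + (b_0 m^{-1} \mid b_1 m^0 \mid \cdots \mid b_{k-1} m^{k-2})$, and the orthogonal invariance of $\GOE(n)$ yields, conditional on $\mathcal{F}_k$,
\[
W \eqd \E(W \mid \mathcal{F}_k) + P_k^\perp \widetilde{W} P_k^\perp,
\]
where $\widetilde{W} \sim \GOE(n)$ is independent of $\mathcal{F}_k$ and $P_k^\perp$ projects onto $\Span(M_k)^\perp$. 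Substituting into $h^{k+1} = Wm^k - b_k m^{k-1}$ and decomposing $m^k$ into its projection onto $\Span(M_k)$ plus an orthogonal component $m^{k,\perp}$ gives
\[
h^{k+1} = \sum_{j=0}^{k-1}\alpha_{k,j}\, h^{j+1} + \sum_{j=0}^{k-1}\beta_{k,j}\, m^j - b_k m^{k-1} + P_k^\perp \widetilde{W} P_k^\perp m^k,
\]
with $\mathcal{F}_k$-measurable coefficients $(\alpha_{k,j},\beta_{k,j})$ built from the empirical Gram matrix of $M_k$.

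By the (strengthened) inductive hypothesis and the pseudo-Lipschitz continuity of the $f_j$, each $\ipr{m^i}{m^j}_n$ converges completely to $\E\bigl(f_i(G_i,\bar{\gamma}) f_j(G_j,\bar{\gamma})\bigr)$, while $b_k \cvc \E\bigl(f_k'(G_k,\bar{\gamma})\bigr)$, where~\ref{ass:A5} is used to absorb the discontinuity set of $f_k'$. Stein's identity identifies $\E\bigl(f_k(G_k,\bar{\gamma}) f_{k-1}(G_{k-1},\bar{\gamma})\bigr)$ with $\Cov(G_k,G_{k-1})\,\E\bigl(f_k'(G_k,\bar{\gamma})\bigr)$, which is precisely the relation needed to see that the $\beta_{k,k-1}$ term cancels the Onsager correction $-b_k m^{k-1}$; analogous calculations dispose of the remaining $\beta_{k,j}$. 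The $\alpha_{k,j}$ terms combine with the fresh Gaussian $P_k^\perp \widetilde{W} P_k^\perp m^k$, whose conditional law is centred Gaussian with variance $\|m^{k,\perp}\|_n^2$ (strictly positive in the limit by~\ref{ass:A4}), to produce asymptotically an $N(0,\tau_{k+1}^2)$ component independent of $(h^1,\ldots,h^k,\gamma)$ in the appropriate joint sense; a final application of pseudo-Lipschitz continuity upgrades this to the required $d_r$-convergence for $k+1$.

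The main obstacle is the bookkeeping of the conditional decomposition: one must identify precisely which $\mathcal{F}_k$-measurable cross-terms appear in $h^{k+1}$ and verify via Stein's identity and the state evolution~\eqref{eq:statevolsym} that the Onsager correction cancels exactly the deterministic bias produced by $\E(W\mid\mathcal{F}_k) m^k$. A parallel technical challenge is upgrading every almost-sure or in-probability statement to complete convergence, which requires summable tail bounds -- Gaussian concentration for Lipschitz functions, Hanson--Wright for Gaussian quadratic forms, and Borel--Cantelli at each iteration. The comparatively mild moment conditions~\ref{ass:A1}--\ref{ass:A2} (rather than the $(2r-2)$-th moment hypotheses in prior work) demand particular care: propagating the $d_r$-topology through the induction hinges on maintaining uniform $r$-th moment control of all iterates, which is why the equivalence between $d_r$ and $\widetilde{d}_r$ (Remark~\ref{rem:Wrmetric}) must be invoked at each inductive stage rather than only at the end.
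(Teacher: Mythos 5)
Your proposal is correct and follows essentially the same route as the paper: induction on the joint statement of Theorem~\ref{thm:masterext}, the Bolthausen/Bayati--Montanari conditioning decomposition $W\eqd \E(W\,|\,\mathscr{S}_k)+P_k^\perp\tilde{W}P_k^\perp$ (Propositions~\ref{prop:condgoe} and~\ref{prop:AMPconddist}), Stein's lemma to show the Onsager term cancels the conditional bias, and Gaussian concentration for pseudo-Lipschitz sums plus Borel--Cantelli to upgrade to complete convergence, with the $d_r$/$\widetilde{d}_r$ equivalence closing each step exactly as in Proposition~\ref{prop:AMPproof} and Corollary~\ref{cor:Wr}. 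Two small imprecisions to fix when writing this out: the Stein identity should read $\E\bigl(G_{k-1}f_k(G_k,\bar{\gamma})\bigr)=\Cov(G_{k-1},G_k)\,\E\bigl(f_k'(G_k,\bar{\gamma})\bigr)$ (not with $f_{k-1}(G_{k-1},\bar{\gamma})$ on the left), and the limiting $N(0,\tau_{k+1}^2)$ variable is independent of $\bar{\gamma}$ but \emph{correlated} with $G_1,\dotsc,G_k$ through $\bar{\mathrm{T}}^{[k+1]}$ -- only the fresh component $P_k^\perp\tilde{W}P_k^\perp m^k$, with limiting conditional variance $\barpp{\tau}{2}_{k+1}\leq\tau_{k+1}^2$, is independent of the past.
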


\unparskip
In the AMP literature, this conclusion is usually stated as
\begin{equation}
\label{eq:AMPmaster0}
\frac{1}{n}\sum_{i=1}^n\psi(h_i^k,\gamma_i)\cvas\E\bigl(\psi(G_k,\bar{\gamma})\bigr)\;\;\text{as }n\to\infty,\;\text{for every }\psi\in\PL_2(r).
\end{equation}
In fact, $\cvas$ can be strengthened to $\cvc$, and the resulting version of~\eqref{eq:AMPmaster0} is equivalent to~\eqref{eq:AMPmaster};
in other words, it can be upgraded automatically to a convergence statement that holds \emph{uniformly} over the class $\PL_2(r,1)$ of pseudo-Lipschitz test functions. See Remarks~\ref{rem:AMPconv} and~\ref{rem:AMPunif} for further details.

To gain some insight into the form of the recursion~\eqref{eq:AMPsym} and its asymptotic characterisation in Theorem~\ref{thm:AMPmaster}, suppose for simplicity that $\gamma\equiv\gamma(n)=0\in\R^n$ for all $n$, and first consider $k=1$. Since $m^0\equiv m^0(n)$ is independent of $W\equiv W(n)$ for each $n$ by~\ref{ass:A0}, it follows that
$h^1\equiv h^1(n)=Wm^0$ is conditionally Gaussian given $m^0$. In fact, conditional on $m^0$,
\[h^1\;\;\text{and}\;\;h^{1,0}:=\norm{m^0}_n\tilde{Z}+\tilde{\zeta}m^0=\tau_1\tilde{Z}+\Delta^1\;\;\;\text{are identically distributed for each }n,\]
where $\tilde{Z}\sim N_n(0,I_n)$ is independent of $\tilde{\zeta}\sim N(0,1/n)$, and where $\Delta^1:=(\norm{m^0}_n-\tau_1)\tilde{Z}+\tilde{\zeta}m^0$; see Lemma~\ref{lem:Wu},~\eqref{eq:condzero} and~\eqref{eq:sum10}. 

By~\ref{ass:A2}, $\norm{m^0}_n\cvc\tau_1$ and $\norm{m^0}_{n,r}=O_c(1)$ as $n\to\infty$, from which it follows (by the triangle inequality for $\norm{{\cdot}}_{n,r}$) that $\norm{\Delta^1}_{n,r}\equiv (n^{-1}\sum_{i=1}^n\,\abs{\Delta_i^1}^r)^{1/r}\cvc 0$; see $\mathcal{H}_1(a)$ at the start of Section~\ref{sec:masterproofs}. This means that $\Delta^1$ has asymptotically vanishing influence on the empirical distribution of the entries of $h^{1,0}\eqd h^1\in\R^n$ as $n\to\infty$, while the empirical distribution of the entries of $\tau_1\tilde{Z}$ converges completely in $d_r$ to $N(0,\tau_1^2)$ (essentially by the strong law of large numbers, or the concentration inequality in Lemma~\ref{lem:pseudoconc}). This yields the conclusion of Theorem~\ref{thm:AMPmaster} for $h^1$, and also implies that 
\begin{align*}
\norm{m^1}_n^2=\norm{f_1(h^1,0)}_n^2=\frac{1}{n}\sum_{i=1}^n f_1(h_i^1,0)^2&\cvc\E\bigl(f_1(G_1,0)^2\bigr)=\tau_2^2,\\
\norm{m^1}_{n,r}^r=\norm{f_1(h^1,0)}_{n,r}^r=\frac{1}{n}\sum_{i=1}^n \,\abs{f_1(h_i^1,0)}^r&\cvc\E\bigl(\abs{f_1(G_1,0)}^r\bigr)<\infty,
\end{align*}
by the state evolution recursion~\eqref{eq:statevolsym} and the fact that $f_1$ is Lipschitz, whence $f_1^2,\abs{f_1}^r\in\PL_2(r)$; see Corollary~\ref{cor:Wr}(b). Continuing inductively in this vein, we conclude that for each fixed $k\in\N$, the Gaussian distribution $N(0,\tau_k^2)$ in Theorem~\ref{thm:AMPmaster} is the $d_r$ limit of the empirical distribution of the entries of $\breve{h}^k\equiv\breve{h}^k(n)\in\R^n$ in the `toy' recursion
\begin{equation}
\label{eq:AMPtoy}
\breve{h}^1:=\breve{W}^0m^0,\qquad \breve{m}^k:=f_k(\breve{h}^{k},\gamma),\qquad\breve{h}^{k+1}:=\breve{W}^k\breve{m}^k\qquad\text{for }k\in\N,
\end{equation}
where each $\breve{W}^k\equiv\breve{W}^k(n)\sim\GOE(n)$ is independent of $m^0,\gamma\,$($=0$ here) and $\breve{W}^0,\dotsc,\breve{W}^{k-1}$, and hence of $\breve{m}^k$. 

On the other hand, observe that in the original recursion~\eqref{eq:AMPsym}, the \emph{same} $\GOE(n)$ matrix $W\equiv W(n)$ appears in every iteration, so $W$ and $m^k$ are not in general independent for $k\in\N$, and in fact $Wm^k$ is not asymptotically Gaussian in the above sense. To compensate for this, the Onsager correction $-b_k m^{k-1}$ is designed specifically as a debiasing term to ensure that $h^{k+1}=Wm^k-b_k m^{k-1}$ has the same limiting behaviour as $\breve{h}^{k+1}$ in~\eqref{eq:AMPtoy} above.
Indeed, an important technical step in the proof of Theorem~\ref{thm:AMPmaster} is to characterise the conditional distribution of $Wm^k$ given $m^0,\gamma$ and the previous iterates $h^1,\dotsc,h^k$ (Proposition~\ref{prop:AMPconddist}), 
and then show that the `non-Gaussian components' thereof are asymptotically cancelled out by the Onsager term.

This ingenious conditioning technique was first developed by~\citet{Bol14} and~\citet{BM11}, and later used extensively in the analysis of various other AMP iterations in which $W$ is drawn from a rotationally invariant matrix ensemble. For example,~\citet{BMN20} introduced a `Long AMP' recursion in which each iterate $h^{k+1}$ is defined more explicitly in terms of the Gaussian part of the conditional distribution of $Wf_k(h^k,\gamma)$. For the symmetric AMP recursion~\eqref{eq:AMPsym}, the relevant results on conditional distributions are stated in Section~\ref{sec:AMPcond}, where we discuss the subtleties in their derivation, and then rigorously proved in Section~\ref{sec:AMPcondproofs}.

We give a technical summary of the proof of Theorem~\ref{thm:AMPmaster} in Section~\ref{sec:inductive}, where the key result is Proposition~\ref{prop:AMPproof}, and defer the formal arguments to Section~\ref{sec:masterproofs}. The proof proceeds by induction on $k\in\N$ and actually establishes a stronger conclusion (Theorem~\ref{thm:masterext} below) that implies Theorem~\ref{thm:AMPmaster}: in particular, for fixed $k\in\N$, the joint empirical distribution of the components of $h^1,\dotsc,h^k\in\R^n$ converges completely in $d_r$ to a Gaussian limit $N_k(0,\bar{\mathrm{T}}^{[k]})$ as $n\to\infty$. 

The sequence $(\bar{\mathrm{T}}^{[k]}\in\R^{k\times k}:k\in\N)$ of covariance matrices is defined recursively as an extension of the state evolution~\eqref{eq:statevolsym}. First, let $G_1\sim N(0,\tau_1^2)$ and $\bar{\mathrm{T}}_{1,1}:=\tau_1^2$, so that $\bar{\mathrm{T}}^{[1]}\equiv\Var(G_1)=\bar{\mathrm{T}}_{1,1}\geq 0$. For a general $k\geq 2$, suppose inductively that we have already defined a non-negative definite $\bar{\mathrm{T}}^{[k-1]}\in\R^{(k-1)\times (k-1)}$ with entries $\bar{\mathrm{T}}_{ij}^{[k-1]}=\bar{\mathrm{T}}_{i,j}$ for $1\leq i,j\leq k-1$, and then let
\begin{equation}
\label{eq:Sigmabar}
\bar{\mathrm{T}}_{k,\ell}=\bar{\mathrm{T}}_{\ell,k}:=
\begin{cases}
\,\E\bigl(F_0(\bar{\gamma})\cdot f_{k-1}(G_{k-1},\bar{\gamma})\bigr)\quad&\text{for }\ell=1\\
\,\E\bigl(f_{\ell-1}(G_{\ell-1},\bar{\gamma})\cdot f_{k-1}(G_{k-1},\bar{\gamma})\bigr)\quad&\text{for }\ell=2,\dotsc,k,
\end{cases}
\end{equation}
where $F_0$ is as in~\ref{ass:A3} and $\bar{\gamma}\sim\pi$ is independent of $(G_1,\dotsc,G_{k-1})\sim N_{k-1}(0,\bar{\mathrm{T}}^{[k-1]})$. Define $\bar{\mathrm{T}}^{[k]}$ to be the $k\times k$ matrix with entries $\bar{\mathrm{T}}_{ij}^{[k]}=\bar{\mathrm{T}}_{i,j}$ for $1\leq i,j\leq k$, so that $\bar{\mathrm{T}}^{[k-1]}$ is the top-left principal $(k-1)\times (k-1)$ submatrix of $\bar{\mathrm{T}}^{[k]}$. For every $a\equiv (a_1,\dotsc,a_k)\in\R^k$, we have
\begin{equation}
\label{eq:posdef}
a^\top\bar{\mathrm{T}}^{[k]}a=\E\bigl\{\bigl(a_1 F_0(\bar{\gamma})+\textstyle\sum_{\ell=2}^k a_\ell f_{\ell-1}(G_{\ell-1},\bar{\gamma})\bigr)^2\bigr\}+a_1^2\bigl\{\tau_1^2-\E\bigl(F_0(\bar{\gamma})^2\bigr)\bigr\}\geq 0
\end{equation}
since $\E\bigl(F_0(\bar{\gamma})^2\bigr)\leq\tau_1^2$ by~\ref{ass:A3}, so $\bar{\mathrm{T}}^{[k]}\in\R^{k\times k}$ is non-negative definite. In fact, we have the following:
\begin{lemma}
\label{lem:posdef}
Under~\emph{\ref{ass:A4}}, $\bar{\mathrm{T}}^{[k]}\in\R^{k\times k}$ is positive definite and hence invertible for every $k\in\N$.
\end{lemma}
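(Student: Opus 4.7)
The plan is to proceed by induction on $k \in \N$. The base case $k=1$ is immediate, since $\bar{\mathrm{T}}^{[1]}$ is the scalar $\tau_1^2$, which is strictly positive by \ref{ass:A2}. For the inductive step, I will assume $\bar{\mathrm{T}}^{[k-1]}$ is positive definite, take an arbitrary $a = (a_1, \dotsc, a_k) \in \R^k$ with $a^\top \bar{\mathrm{T}}^{[k]} a = 0$, and argue that $a = 0$.

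The main leverage comes from \eqref{eq:posdef}, which already decomposes $a^\top \bar{\mathrm{T}}^{[k]} a$ as a sum of two non-negative quantities; both must therefore vanish. This yields the almost-sure identity
\[ a_1 F_0(\bar{\gamma}) + \sum_{\ell=2}^k a_\ell f_{\ell-1}(G_{\ell-1}, \bar{\gamma}) = 0 \]
together with $a_1^2 \bigl(\tau_1^2 - \E(F_0(\bar{\gamma})^2)\bigr) = 0$. Assuming for contradiction that $a \neq 0$, let $j := \max\{\ell : a_\ell \neq 0\}$. The case $j = 1$ is immediate: substituting $a_\ell = 0$ for $\ell \geq 2$ into \eqref{eq:posdef} simplifies $a^\top \bar{\mathrm{T}}^{[k]} a$ to $a_1^2 \tau_1^2$, forcing $a_1 = 0$ since $\tau_1 > 0$, a contradiction.

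For $j \geq 2$, I would isolate the highest-index term,
\[ a_j f_{j-1}(G_{j-1}, \bar{\gamma}) = -a_1 F_0(\bar{\gamma}) - \sum_{\ell=2}^{j-1} a_\ell f_{\ell-1}(G_{\ell-1}, \bar{\gamma}) \quad \text{a.s.}, \]
whose right-hand side depends only on $(G_1, \dotsc, G_{j-2}, \bar{\gamma})$. Since $\bar{\mathrm{T}}^{[j-1]}$ is a top-left principal submatrix of $\bar{\mathrm{T}}^{[k-1]}$, the inductive hypothesis makes it positive definite, so conditional on $(G_1, \dotsc, G_{j-2})$ and the independent $\bar{\gamma}$, the Gaussian $G_{j-1}$ has strictly positive conditional variance, and therefore its conditional law has full support on $\R$. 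It follows that for $\pi$-almost every $y$, the map $x \mapsto a_j f_{j-1}(x, y)$ is constant Lebesgue-a.e.\ and, by the Lipschitz continuity of $f_{j-1}$, constant everywhere. Since $a_j \neq 0$, this forces $x \mapsto f_{j-1}(x, y)$ to be constant for $\pi$-a.e.\ $y$, contradicting \ref{ass:A4}.

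The main obstacle lies in the case $j \geq 2$: I will need to verify that the conditional variance of $G_{j-1}$ given the preceding Gaussians is strictly positive (which is precisely where positive-definiteness of the principal submatrix $\bar{\mathrm{T}}^{[j-1]}$ from the inductive hypothesis is essential), and then transfer the almost-sure constancy on a non-degenerate Gaussian fibre into a pointwise statement about $f_{j-1}$ via continuity. The rest is bookkeeping around the sum-of-squares decomposition \eqref{eq:posdef}.
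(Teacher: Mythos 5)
Your proof is correct and takes essentially the same approach as the paper: induction on $k$, the sum-of-squares decomposition \eqref{eq:posdef}, full support of the Gaussian vector supplied by the inductive positive definiteness, Lipschitz continuity of the $f_\ell$, and \ref{ass:A4}. The only cosmetic difference is that you isolate the largest non-zero coefficient and argue through the conditional law of $G_{j-1}$ given the earlier Gaussians, whereas the paper shows directly that $a_1F_0(\bar{\gamma})+\sum_{\ell=2}^k a_\ell f_{\ell-1}(G_{\ell-1},\bar{\gamma})$ is non-degenerate whenever some $a_L\neq 0$ with $L\geq 2$.
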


\unparskip
The proof of this fact is given in Section~\ref{sec:addproofs}. By induction, we have $\tau_k^2=\E\bigl(f_{k-1}(G_{k-1},\bar{\gamma})^2\bigr)=\bar{\mathrm{T}}_{k,k}>0$ for all $k\in\N$, so~\eqref{eq:Sigmabar} does indeed extend~\eqref{eq:statevolsym}. Our second master theorem is the following:
\begin{theorem}
\label{thm:masterext}
Under the hypotheses of Theorem~\ref{thm:AMPmaster}, $d_r\bigl(\nu_n(h^1,\dotsc,h^k,\gamma),N_k(0,\bar{\mathrm{T}}^{[k]})\otimes\pi\bigr)\cvc 0$ for each fixed $k\in\N$ as $n\to\infty$, or equivalently
\begin{equation}
\label{eq:masterext}
\widetilde{d}_r\bigl(\nu_n(h^1,\dotsc,h^k,\gamma),N_k(0,\bar{\mathrm{T}}^{[k]})\otimes\pi\bigr)=\sup_{\psi\in\PL_{k+1}(r,1)}\;\biggl|\frac{1}{n}\sum_{i=1}^n\psi(h_i^1,\dotsc,h_i^k,\gamma_i)-\E\bigl(\psi(G_1,\dotsc,G_k,\bar{\gamma})\bigr)\biggr|\cvc 0
\end{equation}
as $n\to\infty$, where $(G_1,\dotsc,G_k)\sim N_k(0,\bar{\mathrm{T}}^{[k]})$ and $\bar{\gamma}\sim\pi$ are independent.
\end{theorem}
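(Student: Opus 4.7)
I would prove Theorem~\ref{thm:masterext} by induction on $k \in \N$, via the Bolthausen conditioning method packaged in Proposition~\ref{prop:AMPconddist}. Since $d_r$ and $\widetilde{d}_r$ induce the same topology on $\mathcal{P}_{k+1}(r)$ (Remark~\ref{rem:Wrmetric}), it suffices to establish $d_r$-convergence. For $k = 1$, the argument is as sketched between Theorems~\ref{thm:AMPmaster} and~\ref{thm:masterext}: since $W$ is independent of $(m^0, \gamma)$, the vector $h^1 = Wm^0$ is conditionally Gaussian with $h^1 \eqdcond{(m^0, \gamma)} \tau_1 \tilde{Z} + \Delta^1$, where $\tilde{Z} \sim N_n(0, I_n)$ is independent of $(m^0, \gamma)$ and $\norm{\Delta^1}_{n,r} \cvc 0$ by~\ref{ass:A2}. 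Combined with~\ref{ass:A1} and a pseudo-Lipschitz concentration bound (Lemma~\ref{lem:pseudoconc}) applied to the independent pair $(\tilde{Z}, \gamma)$, this gives $d_r\bigl(\nu_n(h^1, \gamma), N(0, \tau_1^2) \otimes \pi\bigr) \cvc 0$.

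\textbf{Inductive step.} Assuming the result for index $k - 1$, I would condition on $\mathcal{F}_{k-1} := \sigma(m^0, \gamma, h^1, \dotsc, h^{k-1})$ and apply Proposition~\ref{prop:AMPconddist} to obtain a representation of the form
\[
Wm^{k-1} \eqdcond{\mathcal{F}_{k-1}} \sum_{i=1}^{k-1} \alpha_i^{(n)}\, h^i + \beta^{(n)}\, m^{k-2} + \sigma_k^{(n)}\,\tilde{Z}^{(k-1)} + R_n,
\]
where $\tilde{Z}^{(k-1)} \sim N_n(0, I_n)$ is independent of $\mathcal{F}_{k-1}$, the scalar coefficients are measurable functions of the Gram matrix of $(m^0, \dotsc, m^{k-1})$, and the residual satisfies $\norm{R_n}_{n,r} \cvc 0$. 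Applying the inductive hypothesis to the pseudo-Lipschitz functions $(u, v, w) \mapsto f_i(u, w)\,f_j(v, w)$ (with~\ref{ass:A3} handling cases where $m^0$ is involved) and invoking Corollary~\ref{cor:Wr}(b), one finds $\ipr{m^i}{m^j}_n \cvc \bar{\mathrm{T}}_{i+1, j+1}$ for all $0 \le i, j \le k - 1$. Lemma~\ref{lem:posdef}, via~\ref{ass:A4}, then guarantees that the limiting Gram matrix is invertible, so $\alpha_i^{(n)}$ and $\sigma_k^{(n)}$ admit deterministic complete limits, identifiable with the affine-regression coefficients and conditional standard deviation of $G_k$ given $(G_1, \dotsc, G_{k-1})$ under $N_k(0, \bar{\mathrm{T}}^{[k]})$. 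Critically, the Onsager correction $-b_{k-1} m^{k-2}$ cancels the $\beta^{(n)} m^{k-2}$ term asymptotically: Stein's identity combined with~\ref{ass:A5} and the induction hypothesis yields $\beta^{(n)} - b_{k-1} \cvc 0$.

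\textbf{Conclusion and main obstacle.} Subtracting $b_{k-1} m^{k-2}$ from both sides yields a representation of $h^k$, modulo a $\norm{\cdot}_{n,r}$-vanishing error, as an affine combination of $h^1, \dotsc, h^{k-1}$ plus an independent Gaussian direction with deterministic variance. Inserting this into a generic $\psi \in \PL_{k+1}(r, 1)$, using the inductive hypothesis for $(h^1, \dotsc, h^{k-1}, \gamma)$, the independence of $\tilde{Z}^{(k-1)}$ from $\mathcal{F}_{k-1}$, and a Gaussian integration in the new variable, produces the required limit $d_r\bigl(\nu_n(h^1, \dotsc, h^k, \gamma), N_k(0, \bar{\mathrm{T}}^{[k]}) \otimes \pi\bigr) \cvc 0$. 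The main obstacle, in my view, lies in propagating $\norm{\cdot}_{n,r}$-control (rather than mere $\norm{\cdot}_n$-control) through the induction: the Lipschitz property of $f_j$ delivers $\norm{\cdot}_n$-bounds cleanly, but $r$-th moment bounds on $m^j$ and on the residual $R_n$ must be tracked inductively, starting from~\ref{ass:A2}. A related subtlety is the treatment of finite-$n$ events on which the Gram matrix of $(m^0, \dotsc, m^{k-2})$ is nearly singular; these are exactly why~\ref{ass:A4} and Lemma~\ref{lem:posdef} are needed, and they require some care when inverting the Gram matrix to define the regression coefficients $\alpha_i^{(n)}$.
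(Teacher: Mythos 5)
Your proposal follows essentially the same route as the paper: the paper proves Theorem~\ref{thm:masterext} by induction via the conditioning result (Proposition~\ref{prop:AMPconddist}), packaging exactly the ingredients you describe—Gram-matrix limits and their invertibility under~\ref{ass:A4} (Lemma~\ref{lem:posdef}), Stein's identity plus~\ref{ass:A5} for the Onsager cancellation, \ref{ass:A3} for the $m^0$-weighted averages, $\norm{\cdot}_{n,r}$-control of the residual, and a conditional Gaussian concentration step—into the inductive Proposition~\ref{prop:AMPproof}, from which the theorem follows with Corollary~\ref{cor:Wr}(b). The obstacles you flag (propagating $r$-th moment bounds and handling near-singular Gram matrices) are precisely the ones the paper addresses via its items (b), (i), (j) and Lemma~\ref{lem:posdef}, so your outline is a correct sketch of the paper's own argument.
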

\begin{remark}
\label{rem:bk}
The precise form of the Onsager coefficient $b_k$ in~\eqref{eq:AMPsym} is essentially due to Stein's lemma; see~\eqref{eq:covid2} and Proposition~\ref{prop:AMPproof}(g) below. The latter shows that under~\ref{ass:A5}, \[b_k(n)=\abr{f_k'(h^k,\gamma)}_n\cvc\E\bigl(f_k'(\bar{G}_k,\bar{\gamma})\bigr)=:\bar{b}_k\]
for each $k$ as $n\to\infty$. The conclusions of Theorems~\ref{thm:AMPmaster} and~\ref{thm:masterext} remain valid if we replace $b_k\equiv b_k(n)$ with $\bar{b}_k$ in the recursion~\eqref{eq:AMPsym} for all $k,n$, in which case~\ref{ass:A5} is no longer needed. 
\end{remark}

\unparskip
For $1\leq j,\ell\leq k$, since $\psi\colon (x_1,\dotsc,x_k,y)\mapsto x_jx_\ell$ lies in $\PL_{k+1}(2)\subseteq\PL_{k+1}(r)$,~\eqref{eq:masterext} implies that $\ipr{h^j}{h^\ell}_n\cvc\E(G_jG_\ell)=\bar{\mathrm{T}}_{j,\ell}$. Thus, the limiting covariance structure of $h^1,\dotsc,h^k$ is given by $\mathrm{T}^{[k]}$, which in general is not a diagonal matrix. By contrast, while $\breve{h}^k$ in the toy recursion~\eqref{eq:AMPtoy} has the same asymptotics as $h^k$ as $n\to\infty$, it turns out that $\breve{h}^1,\dotsc,\breve{h}^k$ are asymptotically independent, in the sense that the $d_r$ limit of the joint empirical distribution of their components is a centred Gaussian with covariance $\diag(\tau_1^2,\dotsc,\tau_k^2)$.

\hfparskip
\subsection{Asymmetric AMP}
\label{sec:AMPnonsym}
For $n,p\in\N$, the abstract asymmetric AMP recursion~\eqref{eq:AMPnonsym} below is based on a matrix $W\in\R^{n\times p}$, two vectors $\beta\in\R^p$ and $\gamma\in\R^n$ of auxiliary information and two sequences $(g_k,f_{k+1}:k\in\N_0)$ of Lipschitz functions $g_k,f_{k+1}\colon\R^2\to\R$. Given $q^{-1}:=0\in\R^n$, $b_0\in\R$ and $m^0\in\R^p$, we inductively define 
\begin{equation}
\label{eq:AMPnonsym}
\begin{alignedat}{3}
e^k&:=Wm^k-b_kq^{k-1},\qquad&q^k&:=g_k(e^k,\gamma),\qquad&c_k&:=
n^{-1}\textstyle\sum_{i=1}^n g_k'(e_i^k,\gamma_i),\\ 
h^{k+1}&:=W^\top q^k-c_k m^k,\qquad&m^{k+1}&:=f_{k+1}(h^{k+1},\beta),\qquad&b_{k+1}&:=
n^{-1}\textstyle\sum_{j=1}^p f_{k+1}'(h_j^{k+1},\beta_j)
\end{alignedat}
\end{equation}
for $k\in\N_0$. Here, $g_k',f_{k+1}'\colon\R^2\to\R$ are bounded, Borel measurable functions that agree with the partial derivatives of $g_k,f_{k+1}$ respectively with respect to their first arguments, wherever the latter are defined. 

A master theorem for~\eqref{eq:AMPnonsym} is stated below as Theorem~\ref{thm:AMPnonsym}, whose hypotheses and conclusions are similar to those of Theorems~\ref{thm:AMPmaster} and~\ref{thm:masterext} for the symmetric iteration~\eqref{eq:AMPsym}. Consider a sequence of recursions~\eqref{eq:AMPnonsym} indexed by $n\in\N$ and $p\equiv p_n$, for which $n/p\to\delta\in (0,\infty)$ as $n\to\infty$. In this asymptotic regime, suppose that there exist $r\in [2,\infty)$ and $\sigma_0\in (0,\infty)$ for which the following analogues of~\ref{ass:A0}--\ref{ass:A5} hold:

\unparskip
\begin{enumerate}[label=(B\arabic*)]
\setcounter{enumi}{-1}
\item \label{ass:B0} For each $n$, the matrix $W\equiv W(n)$ has entries $W_{ij}\iid N(0,1/n)$ for $1\leq i\leq n$ and $1\leq j\leq p$, and is independent of $(m^0,\beta,\gamma)\equiv\bigl(m^0(n),\beta(n),\gamma(n)\bigr)$.
\item \label{ass:B1} There exist probability distributions $\pi_{\bar{\beta}},\pi_{\bar{\gamma}}\in\mathcal{P}_1(r)$ such that writing $\nu_p(\beta)$ and $\nu_n(\gamma)$ for the empirical distributions of the components of $\beta\in\R^p$ and $\gamma\in\R^n$ respectively, we have $d_r\bigl(\nu_p(\beta),\pi_{\bar{\beta}}\bigr)\cvc 0$ and $d_r\bigl(\nu_n(\gamma),\pi_{\bar{\gamma}}\bigr)\cvc 0$.
\item \label{ass:B2} $\sqrt{p/n}\,\norm{m^0}_p\equiv(n^{-1}\sum_{j=1}^p\,\abs{m_j^0}^2)^{1/2}\cvc\sigma_0$ and $\norm{m^0}_{p,r}\equiv(p^{-1}\sum_{j=1}^p\,\abs{m_j^0}^r)^{1/r}=O_c(1)$.
\item There exists a Lipschitz $F_0\colon\R\to\R$ such that taking $\bar{\beta}\sim\pi_{\bar{\beta}}$, we have $\E\bigl(F_0(\bar{\beta})^2\bigr)\leq\sigma_0^2$ and $\ipr{m^0}{\phi(\beta)}_p=p^{-1}\sum_{j=1}^p f_0(h_j^0,\beta_j)\,\phi(\beta_j)\cvc\E\big(F_0(\bar{\beta})\phi(\bar{\beta})\bigr)$ for all Lipschitz $\phi\colon\R\to\R$.
\item \label{ass:B4} For each $k\in\N_0$, we have $\pi_{\bar{\gamma}}\bigl(\{y\in\R:x\mapsto g_k(x,y)\text{ is non-constant}\}\bigr)>0$ and\\
$\pi_{\bar{\beta}}\bigl(\{y\in\R:x\mapsto f_{k+1}(x,y)\text{ is non-constant}\}\bigr)>0$.
\item \label{ass:B5} For each $k\in\N_0$, writing $D_k,C_{k+1}$ for the sets of discontinuities of $g_k',f_{k+1}'$ respectively, we have $(\lambda\otimes\pi_{\bar{\gamma}})(D_k)=(\lambda\otimes\pi_{\bar{\beta}})(C_{k+1})=0$, where $\lambda$ denotes Lebesgue measure on $\R$.
\end{enumerate}

\unparskip
\textbf{State evolution}: With $\sigma_0>0$ as above, inductively define
\begin{equation}
\label{eq:statevolns}
\tau_{k+1}^2:=\E\bigl(g_k(G_k^\sigma,\bar{\gamma})^2\bigr)\quad\text{and}\quad
\sigma_{k+1}^2:=\inv{\delta}\,\E\bigl(f_{k+1}(G_{k+1}^\tau,\bar{\beta})^2\bigr)
\end{equation}
for $k\in\N_0$, where we take $G_k^\sigma\sim N(0,\sigma_k^2)$ to be independent of $\bar{\beta}\sim\pi_{\bar{\beta}}$, and $G_{k+1}^\tau\sim N(0,\tau_{k+1}^2)$ to be independent of $\bar{\gamma}\sim\pi_{\bar{\gamma}}$. 

\textbf{Limiting covariance structure}: Let $\bar{\Sigma}^{[1]}\equiv\bar{\Sigma}_{0,0}:=\sigma_0^2$ and $\bar{\mathrm{T}}^{[1]}\equiv\bar{\mathrm{T}}_{1,1}:=\tau_1^2$, and for a general $k\in\N$, suppose inductively that we have already defined non-negative definite matrices $\bar{\Sigma}^{[k]},\bar{\mathrm{T}}^{[k]}\in\R^{k\times k}$ with entries $\bar{\Sigma}_{ij}^{[k]}=\bar{\Sigma}_{i-1,j-1}$ and $\bar{\mathrm{T}}_{ij}^{[k]}=\bar{\mathrm{T}}_{i,j}$ for $1\leq i,j\leq k$. Then let
\begin{equation}
\label{eq:Sigmabarns}
\bar{\Sigma}_{k,\ell}=\bar{\Sigma}_{\ell,k}:=
\begin{cases}
\,\inv{\delta}\,\E\bigl(F_0(\bar{\beta})\cdot f_k(G_k^\tau,\bar{\beta})\bigr)\quad&\text{for }\ell=0\\
\,\inv{\delta}\,\E\bigl(f_\ell(G_\ell^\tau,\bar{\beta})\cdot f_k(G_k^\tau,\bar{\beta})\bigr)\quad&\text{for }\ell=1,\dotsc,k,
\end{cases}
\end{equation}
where $(G_1^\tau,\dotsc,G_k^\tau)\sim N_k(0,\bar{\mathrm{T}}^{[k]})$ is independent of $\bar{\gamma}\sim\pi_{\bar{\gamma}}$, and define $\bar{\Sigma}^{[k+1]}\in\R^{(k+1)\times (k+1)}$ by $\bar{\Sigma}_{ij}^{[k+1]}:=\bar{\Sigma}_{i-1,j-1}$ for $1\leq i,j\leq k+1$. As in~\eqref{eq:posdef}, it is easily verified that $\bar{\Sigma}^{[k+1]}$ is non-negative definite. In addition, let
\begin{equation}
\label{eq:Taubarns}
\bar{\mathrm{T}}_{k+1,\ell}=\bar{\mathrm{T}}_{\ell,k+1}:=\,\E\bigl(g_{\ell-1}(G_{\ell-1}^\sigma,\bar{\gamma})\cdot g_k(G_k^\sigma,\bar{\gamma})\bigr)\hspace{1cm}\!\;\text{for }\ell=1,\dotsc,k+1,
\end{equation}
where $(G_0^\sigma,\dotsc,G_k^\sigma)\sim N_{k+1}(0,\bar{\Sigma}^{[k+1]})$ is independent of $\bar{\beta}\sim\pi_{\bar{\beta}}$, and define $\bar{\mathrm{T}}_{ij}^{[k+1]}:=\bar{\mathrm{T}}_{i,j}$ for $1\leq i,j\leq k+1$, so that the resulting matrix $\bar{\mathrm{T}}^{[k+1]}\in\R^{(k+1)\times (k+1)}$ is again non-negative definite. Under~\ref{ass:B4}, it can be shown as in Lemma~\ref{lem:posdef} that $\bar{\Sigma}^{[k]},\bar{\mathrm{T}}^{[k]}$ are positive definite for all $k\in\N$, and also that~\eqref{eq:Sigmabarns}--\eqref{eq:Taubarns} extends~\eqref{eq:statevolns}, with $\sigma_{k-1}^2=\bar{\Sigma}_{k-1,k-1}>0$ and $\tau_k^2=\bar{\mathrm{T}}_{k,k}>0$ for all $k$.
\begin{theorem}
\label{thm:AMPnonsym}
Suppose that~\emph{\ref{ass:B0}}--\emph{\ref{ass:B5}} hold for a sequence of asymmetric AMP recursions~\eqref{eq:AMPnonsym} indexed by $n\in\N$ and $p\equiv p_n$ with $n/p\to\delta\in (0,\infty)$.
Then for each fixed $k \in\N_0$, we have
\begin{equation}
\label{eq:AMPnonsym1}
\begin{split}
\widetilde{d}_r\bigl(\nu_n(e^k,\gamma),N(0,\sigma_k^2)\otimes\pi_{\bar{\gamma}}\bigr)&=\sup_{\psi\in\PL_2(r,1)}\;\biggl|\frac{1}{n}\sum_{i=1}^n\psi(e_i^k,\gamma_i)-\E\bigl(\psi(G_k^\sigma,\bar{\gamma})\bigr)\biggr|\cvc 0,\\
\widetilde{d}_r\bigl(\nu_p(h^{k+1},\beta),N(0,\tau_{k+1}^2)\otimes\pi_{\bar{\beta}}\bigr)&=\sup_{\psi\in\PL_2(r,1)}\;\biggl|\frac{1}{p}\sum_{j=1}^p\psi(h_j^{k+1},\beta_j)-\E\bigl(\psi(G_{k+1}^\tau,\bar{\beta})\bigr)\biggr|\cvc 0,
\end{split}
\end{equation}
\begin{equation}
\label{eq:AMPnonsym2}
\begin{split}
&\widetilde{d}_r\bigl(\nu_n(e^0,\dotsc,e^k,\beta),N_{k+1}(0,\bar{\Sigma}^{[k+1]})\otimes\pi_{\bar{\beta}}\bigr)\\
&\hspace{2cm}=\sup_{\psi\in\PL_{k+2}(r,1)}\;\biggl|\frac{1}{n}\sum_{i=1}^n\psi(e_i^0,\dotsc,e_i^k,\gamma_i)-\E\bigl(\psi(G_0^\sigma,\dotsc,G_k^\sigma,\bar{\gamma})\bigr)\biggr|\cvc 0,\\[1ex]
&\widetilde{d}_r\bigl(\nu_p(h^1,\dotsc,h^{k+1},\beta),N_{k+1}(0,\bar{\mathrm{T}}^{[k+1]})\otimes\pi_{\bar{\beta}}\bigr)\\
&\hspace{2cm}=\sup_{\psi\in\PL_{k+2}(r,1)}\;\biggl|\frac{1}{p}\sum_{j=1}^p\psi(h_j^1,\dotsc,h_j^{k+1},\beta_j)-\E\bigl(\psi(G_1^\tau,\dotsc,G_{k+1}^\tau,\bar{\beta})\bigr)\biggr|\cvc 0
\end{split}
\end{equation}
as $n\to\infty$. Equivalent statements hold with $d_r$ in place of $\widetilde{d}_r$.
\end{theorem}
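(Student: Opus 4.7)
The plan is to reduce the asymmetric recursion~\eqref{eq:AMPnonsym} to a symmetric instance of~\eqref{eq:AMPsym} by means of a symmetrisation embedding of dimension $N:=n+p$, and then invoke the master theorems of Section~\ref{sec:AMPsym}. First I would construct $\tilde{W}\sim\GOE(N)$ whose upper-right $n\times p$ off-diagonal block equals $\sqrt{n/N}\,W$ (so that those entries are $N(0,1/N)$, consistent with the GOE normalisation), with the remaining entries filled in with independent fresh Gaussian noise at the appropriate GOE variances. Since $n/p\to\delta\in(0,\infty)$, the ratio $n/N$ tends to $\delta/(\delta+1)\in(0,1)$, so the rescaling factor $\sqrt{n/N}$ is bounded away from $0$ and $\infty$ in the limit.

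Next I would design a symmetric AMP iteration on $\mathbb{R}^N$ whose iterates $\tilde{m}^k$ alternate in block support: $\tilde{m}^{2k}$ is supported on the last $p$ coordinates and proportional to $m^k$, while $\tilde{m}^{2k+1}$ is supported on the first $n$ coordinates and proportional to $q^k$, the proportionality factors being chosen so that the restriction of $\tilde{W}\tilde{m}^{2k}$ (resp.\ $\tilde{W}\tilde{m}^{2k+1}$) to the active block of the next iterate reproduces $Wm^k$ (resp.\ $W^\top q^k$) exactly. Since each input iterate is supported on the block opposite to the one that is read out next, the stray $\GOE$-distributed submatrices populating the diagonal blocks of $\tilde{W}$ only contribute to the \emph{inactive} block of the output and are then annihilated by $\tilde{f}_{k+1}$, so they never feed back into the iteration. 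To accommodate $\gamma$, $\beta$ and a block indicator within a single auxiliary-information vector $\tilde{\gamma}$, I would rely on the routine extension of Theorems~\ref{thm:AMPmaster}--\ref{thm:masterext} to multidimensional per-coordinate auxiliary data (the inductive arguments of Section~\ref{sec:masterproofs} go through verbatim), with the limiting prior on $\tilde{\gamma}$ given by the mixture $\tfrac{\delta}{\delta+1}\,\pi_{\bar{\gamma}}^{(\mathrm{top})}+\tfrac{1}{\delta+1}\,\pi_{\bar{\beta}}^{(\mathrm{bot})}$.

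With these pieces in place, hypotheses~\ref{ass:B0}--\ref{ass:B5} translate directly into the corresponding~\ref{ass:A0}--\ref{ass:A5} for the embedded recursion, so Theorem~\ref{thm:masterext} yields joint $d_r$-Wasserstein convergence of $(\tilde{h}^1,\dotsc,\tilde{h}^{2k+2},\tilde{\gamma})$ on $\mathbb{R}^N$ to a centred Gaussian whose covariance is determined by the symmetric state evolution~\eqref{eq:Sigmabar}. Restricting the supremum in~\eqref{eq:masterext} to test functions depending only on the top-block (resp.\ bottom-block) coordinates and renormalising by $N/n$ (resp.\ $N/p$) to convert the $1/N$-average into a $1/n$- (resp.\ $1/p$-) average recovers~\eqref{eq:AMPnonsym1}--\eqref{eq:AMPnonsym2}; a direct calculation matching the blockwise restriction of~\eqref{eq:Sigmabar} against~\eqref{eq:Sigmabarns}--\eqref{eq:Taubarns} identifies the limiting covariances.

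The main obstacle I foresee lies in the bookkeeping for the Onsager coefficients: the symmetric $\tilde{b}_k:=\abr{\tilde{f}_k'(\tilde{h}^k,\tilde{\gamma})}_N$ is an average over \emph{all} $N$ coordinates, whereas $b_{k+1}$ and $c_k$ in~\eqref{eq:AMPnonsym} are constructed from sums over only the active blocks of size $p$ and $n$ respectively (and, notably, $b_{k+1}$ is normalised by $1/n$ rather than $1/p$, which builds a $1/\delta$ factor into the limit). I would therefore arrange each $\tilde{f}_k$ so that its derivative vanishes identically on the inactive block, and then carefully track the $n/N$, $p/N$ and $\sqrt{N/n}$, $\sqrt{N/p}$ factors arising from the change of normalisation and the iterate rescaling, so that the product $\tilde{b}_k\tilde{m}^{k-1}$ in the embedded symmetric recursion collapses onto precisely the correct Onsager term $b_k q^{k-1}$ or $c_k m^k$ in the corresponding step of~\eqref{eq:AMPnonsym}. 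The remaining verifications (asymptotic vanishing of the inactive-block contributions, consistency of the Borel derivatives under~\ref{ass:B5}, and positive-definiteness of $\bar{\Sigma}^{[k]}$, $\bar{\mathrm{T}}^{[k]}$ under~\ref{ass:B4}) mirror their symmetric counterparts in Section~\ref{sec:AMPsym}.
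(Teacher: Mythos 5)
Your route is essentially the one the paper itself sketches immediately after the theorem: embed~\eqref{eq:AMPnonsym} into a symmetric recursion driven by a $\GOE(n+p)$ matrix whose iterates alternate block support, so that iteration $2\ell$ carries $h^\ell$ and iteration $2\ell+1$ carries $e^\ell$, and then read off~\eqref{eq:AMPnonsym1}--\eqref{eq:AMPnonsym2} from Theorems~\ref{thm:AMPmaster} and~\ref{thm:masterext}; the paper does not give a detailed proof either, deferring to \citet{JM13} and \citet{BMN20} for exactly this construction (and mentioning, as a second option, a direct conditioning-based induction in the spirit of Proposition~\ref{prop:AMPconddist} and Proposition~\ref{prop:AMPproof}).

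One caveat on your implementation of the block-dependent denoisers: coding the block label into the auxiliary vector $\tilde{\gamma}$ and writing the combined function as an indicator-weighted mixture of $g_k$ and $f_{k+1}$ does not literally satisfy the hypotheses of the symmetric master theorems, since such a mixture is only pseudo-Lipschitz (not Lipschitz) jointly in its arguments, and several steps of the induction in Section~\ref{sec:masterproofs} (e.g.\ the $\PL$ membership of products like $f_{j-1}f_{\ell-1}$) use joint Lipschitzness in $(x,\gamma)$. This is precisely why the paper speaks of a ``slightly more general version'' of~\eqref{eq:AMPsym} that applies two \emph{different} Lipschitz functions to the two blocks of coordinates: that generalisation has to be proved (its proof does mirror Section~\ref{sec:masterproofs}, with the block label treated as deterministic per coordinate rather than smuggled into $\gamma$), not invoked verbatim. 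Relatedly, when you restrict to block-supported test functions, multiplying a $\PL_{k+2}(r,1)$ function by the block indicator raises its pseudo-Lipschitz order, so the passage from the $N$-normalised supremum to the $1/n$- and $1/p$-normalised statements should go through the restricted empirical measures (using $n/N\to\delta/(1+\delta)$ and the $d_r$ formulation of Corollary~\ref{cor:Wr}) rather than through indicator-weighted test functions directly. Your flagged Onsager/scaling bookkeeping is indeed where the $1/\delta$ factors in~\eqref{eq:statevolns}--\eqref{eq:Sigmabarns} arise, and is handled exactly as you anticipate in the cited references.
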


\unparskip
Together with the master theorems in Section~\ref{sec:AMPsym}, Theorem~\ref{thm:AMPnonsym} can be generalised to abstract AMP recursions with matrix-valued iterates; see Section~\ref{sec:AMPmatrix}.

Similarly to the discussion after Theorem~\ref{thm:AMPmaster}, one can argue that for each $k\in\N_0$, the Gaussian distributions $N(0,\sigma_k^2)$ and $N(0,\tau_{k+1}^2)$ in~\eqref{eq:AMPnonsym1} are the $d_r$ limits of the empirical distributions of the entries of $\breve{e}^k\in\R^n$ and $\breve{h}^{k+1}\in\R^p$ respectively in the toy recursion 
\begin{equation}
\label{eq:AMPtoyns}
\breve{e}^0:=\tilde{W}^0 m^0,\qquad\breve{h}^{k+1}:=\breve{W}^k g_k(\breve{e}^k,\gamma),\qquad\breve{e}^{k+1}:=\tilde{W}^{k+1}f_k(\breve{h}^{k+1},\beta)\qquad\text{for }k\in\N_0
\end{equation}
as $n,p\to\infty$ with $n/p\to\delta$. Here, each iteration features a new matrix with i.i.d.\ $N(0,1/n)$ entries that is independent of everything thus far.
In the original abstract iteration~\eqref{eq:AMPnonsym}, where the same Gaussian matrix $W$ is used throughout, the Onsager correction terms $-b_k q^{k-1}$ and $-c_k m^k$ are designed to ensure that $e^k\in\R^n$ and $h^{k+1}\in\R^p$ have the same limiting behaviour as $\breve{e}^k$ and $\breve{h}^{k+1}$ respectively. 
We note however that the limiting joint empirical distributions in~\eqref{eq:AMPnonsym2} are in general different from those in~\eqref{eq:AMPtoyns}.

One way to establish Theorem~\ref{thm:AMPnonsym} is to analyse the asymmetric recursion~\eqref{eq:AMPnonsym} directly, by adapting the techniques and arguments from the proof of Theorem~\ref{thm:masterext} for the symmetric iteration~\eqref{eq:AMPsym}. An important first step is to obtain an analogue of Proposition~\ref{prop:AMPconddist} that characterises the conditional distribution of each of the iterates in~\eqref{eq:AMPnonsym}, given the inputs $m^0,\beta,\gamma$ and all the previous iterates. This then sets up an inductive proof along the lines of Proposition~\ref{prop:AMPproof}~\citep{BM11}. \citet{RV18} established a finite-sample version of Theorem~\ref{thm:AMPnonsym} under finite-sample analogues of its hypotheses (see Remark~\ref{rem:finitesample}).

There is an alternative derivation of Theorem~\ref{thm:AMPnonsym} that proceeds by first embedding~\eqref{eq:AMPnonsym} within a suitable symmetric recursion
(featuring a $\GOE(n+p)$ matrix), whose output at iteration $k\in\N_0$ contains $h^\ell$ when $k=2\ell$ and $e^\ell$ when $k=2\ell+1$~\citep{JM13,BMN20}. The construction of this augmented recursion is based on a slightly more general version of the original symmetric iteration~\eqref{eq:AMPsym} that offers the additional flexibility to apply (two) different Lipschitz functions to different components of each AMP iterate.

\section{Low-rank matrix estimation}
\label{Sec:LowRank}
\subsection{An AMP algorithm for estimating a symmetric rank-one matrix}
\label{sec:rankone}
In this subsection, we will motivate and analyse an AMP algorithm for 
reconstructing a symmetric rank-one matrix based on an observation
\begin{equation}
\label{eq:symspike}
A\equiv A(n)=\frac{\lambda}{n}vv^\top+W\in\R^{n\times n}
\end{equation}
for some $n\in\N$, where $\lambda>0$ is a deterministic scalar, $v\equiv v(n)\in\R^n$ is the signal (or `spike') that we wish to estimate, and $W\equiv W(n)\sim\GOE(n)$ is a noise matrix. The asymptotic setting of interest to us here is one where $\norm{v}_{n}\equiv n^{-1/2}\,\norm{v}$ converges to 1 as $n\to\infty$; see~\eqref{eq:vconv} below. 

A natural estimator of $v$ is a principal eigenvector $\hat{\varphi}\equiv\varphi^1(A)\in\R^n$ (with $\norm{\hat{\varphi}}_n=1$) corresponding to the largest eigenvalue $\lambda_1(A)$ of the observation matrix $A$. A cornerstone of the spectral theory of such `deformed' GOE matrices is the so-called `BBP' phase transition. This was first established in the seminal paper of~\citet{BBP05} and later explored in greater generality by~\citet{BS06},~\citet{FP07},~\citet{CDF09} and~\citet{BN11}, among many others. See~\citet{JP18} for an accessible summary of this line of work, which reveals that in the limiting regime where $\norm{v}_n$ converges to 1, the eigenstructure of $A\equiv A(n)$ for large $n$ exhibits two different types of qualitative behaviour depending on whether $\lambda\leq 1$ or $\lambda>1$. In particular, when $n\to\infty$, it follows from the concentration results in~\citet[Theorems~2.7 and~6.3]{KY13} that
\begin{equation}
\label{eq:princeigen}
\lambda_1(A)\cvc
\begin{cases}
\lambda+\lambda^{-1}>2\quad&\text{if }\lambda>1\\
2\quad&\text{if }\lambda\in (0,1],
\end{cases}
\qquad\quad\frac{\abs{\ipr{\hat{\varphi}}{v}}}{\norm{\hat{\varphi}}\,\norm{v}}\cvc
\begin{cases}
\sqrt{1-\lambda^{-2}}\quad&\text{if }\lambda>1\\
0\quad&\text{if }\lambda\in (0,1];
\end{cases}
\end{equation}
see also~\citet[Theorem~3.1]{Pen12} for the former and Corollary~\ref{prop:power} below for the latter. 

In the `supercritical' phase when $\lambda>1$, the effect of the spike $v$ can be seen in the limiting expressions above: with high probability, $\hat{\varphi}$ is at least partially aligned with~$v$ (although it does not estimate $v$ consistently) and $\lambda_1(A)$ is an outlier that is separated from the `bulk' of the spectrum of $A$. Indeed, the remaining eigenvalues of $A$ are asymptotically distributed according to the Wigner semicircle law on $[-2,2]$, and it can be shown that the second-largest eigenvalue $\lambda_2(A)$ of $A\equiv A(n)$ satisfies $\lambda_2(A)\cvc 2$ as $n\to\infty$, so the limiting spectral gap $\lambda_1(A)-\lambda_2(A)$ is strictly positive. 

On the other hand, in the `subcritical' phase when $\lambda\leq 1$, the noise matrix $W$ obscures the signal in~\eqref{eq:symspike} to such an extent that $\hat{\varphi}$ is asymptotically uninformative as an estimator of $v$, as evidenced by the asymptotic orthogonality in~\eqref{eq:princeigen}, and $\lambda_1(A)$ remains attached to the bulk of the eigenvalues of $A$. In this low signal-to-noise regime, the limits for $\lambda_1(A)$ and $\hat{\varphi}$ in~\eqref{eq:princeigen} are the same as for the leading eigenvalue and eigenvector of $W$ respectively.

A further limitation of the classical spectral estimator $\hat{\varphi}$ is that it is unable to exploit any additional information about the structure of $v$ that may be relevant for inference. For example, in some matrix estimation problems such as hidden clique detection and non-negative or sparse principal component analysis, there are natural constraints that force $v$ to be non-negative or sparse, or to lie in some finite set such as $\{0,1\}^n$ \citep{alon1998clique,ZHT06,VuLei2013,deshpande2015clique,montanari2016non}. 
A Bayesian approach to modelling a structured signal $v$ is to assume that its components are drawn from some suitable prior distribution that is fully or partially known. However, for general priors, a practical issue is the lack of efficient (i.e.\ polynomial-time) algorithms for computing or accurately approximating the Bayes estimator of $v$ with respect to quadratic loss, namely the posterior mean $\E(v\,|\,A)$.

We will now present a generic (and computationally feasible) AMP procedure~\eqref{eq:AMPmatsym} for estimating~$v$~\citep[cf.][]{Deshpande2014,deshpande2016asymptotic,MV21}, and obtain an exact characterisation of its asymptotic performance in terms of a state evolution recursion (Theorem~\ref{thm:AMPlowsym} and Corollary~\ref{cor:AMPlowsym}). Guided by these theoretical guarantees, we will explain in Sections~\ref{sec:spectral} and~\ref{sec:lowrankgk} how the inputs to the algorithm can be specialised further to take advantage of different types of prior information, and thereby produce estimators that outperform $\hat{\varphi}$ in terms of asymptotic mean squared error.

Let $(g_k)_{k=0}^\infty$ be a sequence of Lipschitz functions on $\R$ with corresponding weak derivatives $g_k'$. Given $\hat{v}^{-1}\equiv\hat{v}^{-1}(n):=0\in\R^n$ and an initialiser $v^0\equiv v^0(n)\in\R^n$ for some $n\in\N$, we recursively define $v^k\equiv v^k(n)\in\R^n$, $b_k\equiv b_k(n)\in\R$ and $\hat{v}^{k+1}\equiv \hat{v}^{k+1}(n)\in\R^n$ by
\begin{equation}
\label{eq:AMPmatsym}
\hat{v}^k:=g_k(v^k),\qquad b_k:=\abr{g_k'(v^k)}_n=\frac{1}{n}\sum_{i=1}^n g_k'(u_i^k),\qquad v^{k+1}:=A\hat{v}^k-b_k\hat{v}^{k-1}
\end{equation}
for $k\in\N_0$. This has a very similar form to the abstract recursion~\eqref{eq:AMPsym} that we studied in Section~\ref{sec:AMPsym}, the main difference being that~\eqref{eq:AMPmatsym} is a valid algorithm with the data matrix $A\equiv A(n)$ in place of the unobserved noise matrix $W\equiv W(n)$.

As mentioned in the Introduction, we can view~\eqref{eq:AMPmatsym} as a generalised power iteration, in which the additional Onsager correction term $-b_k \hat{v}^{k-1}$ is crucial for ensuring that the iterates $v^k$ have the desired asymptotic distributional properties. In fact, we will see in Section~\ref{sec:lowrankgk} that for a specific choice of linear functions $g_k$ given by~\eqref{eq:gklinear}, the corresponding recursion~\eqref{eq:AMPmatsym} is asymptotically equivalent to a standard power iteration that converges to the principal eigenvector $\hat{\varphi}$ of $A$.

To set up our asymptotic framework, consider a sequence of recursions~\eqref{eq:AMPmatsym} indexed by $n\in\N$, for which the following conditions hold:

\unparskip
\begin{enumerate}[label=(M\arabic*)]
\setcounter{enumi}{-1}
\item \label{ass:S0} The noise matrix $W\equiv W(n)\sim\GOE(n)$ in~\eqref{eq:symspike} is independent of $(\hat{v}^0,v)\equiv\bigl(\hat{v}^0(n),v(n)\bigr)$ for each $n$.
\item \label{ass:S1} There exist $\mu_0,\sigma_0\in\R$ and independent random variables $U,V$ with $\E(U^2)=\E(V^2)=1$, such that \[\sup_{\psi\in\PL_2(2,1)}\;\biggl|\frac{1}{n}\sum_{i=1}^n\psi(v_i^0,v_i)-\E\bigl\{\psi\bigl(\mu_0 V+\sigma_0 U,V\bigr)\bigr\}\biggr|\cvc 0.\]
In other words, writing $\bar{\mu}^0$ for the distribution of $(\mu_0 V+\sigma_0 U,V)$, and $\nu_n(v^0,v)$ for the joint empirical distribution of the components of $v^0,v\in\R^n$ for $n\in\N$, we have \[\widetilde{d}_2\bigl(\nu_n(v^0,v),\bar{\mu}^0\bigr)\cvc 0\quad\text{or equivalently}\quad d_2\bigl(\nu_n(v^0,v),\bar{\mu}^0\bigr)\cvc 0.\]
\item \label{ass:S2} For each $k\in\N$, the function $g_k'\colon\R\to\R$ is continuous Lebesgue almost everywhere, i.e.\ the set of discontinuities of $g_k'$ has Lebesgue measure 0.
\end{enumerate}

\unparskip
Henceforth, we will write $\pi$ for the distribution of $V$, which can be viewed as the `limiting prior distribution' of the components of the signal $v\equiv v(n)$. Note that while $v$ is only identifiable up to a sign in the original spiked model~\eqref{eq:symspike}, knowledge of $\pi$ may help us to distinguish $v$ from $-v$ in the limit $n\to\infty$, for example if $\pi$ has non-zero mean. By considering the $\PL_2(2)$ functions $(x,y)\mapsto y^2$, $(x,y)\mapsto xy$ and $(x,y)\mapsto x^2$, we deduce from~\ref{ass:S1} that
\begin{align}
\label{eq:vconv}
\norm{v}_n^2=\frac{1}{n}\sum_{i=1}^n v_i^2&\cvc\E(V^2)=1,\\
\label{eq:musigma1}
\lambda\ipr{\hat{v}^0}{v}_n\cvc\lambda\E\bigl(Vg_0(\mu_0 V+\sigma_0 U)\bigr)=:\mu_1\quad&\text{and}\quad\norm{\hat{v}^0}_n^2\cvc\E\bigl(g_0(\mu_0 V+\sigma_0 U)^2\bigr)=:\sigma_1^2.
\end{align}
\textbf{State evolution}: Starting with $\mu_1\in\R$ and $\sigma_1\in [0,\infty)$, we inductively define state evolution parameters $\mu_k\in\R$ and $\sigma_k\in [0,\infty)$ for $k\in\N$ by
\begin{equation}
\label{eq:statevolsymat}
\mu_{k+1}:=\lambda\E\bigl(Vg_k(\mu_kV+\sigma_k G)\bigr),\qquad\sigma_{k+1}^2:=\E\bigl(g_k(\mu_kV+\sigma_k G)^2\bigr),
\end{equation}
where $V\sim\pi$ and $G\sim N(0,1)$ are independent. Note that since each $g_k$ is Lipschitz and $\E(V^2)=\E(G^2)=1$, we indeed have $\mu_k\in\R$ and $\sigma_k\in [0,\infty)$ for all $k$ by induction; we will see below that these represent the \emph{effective signal strength} and \emph{effective noise level} respectively at iteration $k$.

\textbf{Limiting covariance structure}: We now extend~\eqref{eq:statevolsymat} by specifying the covariance matrices of the limiting Gaussian distributions in Theorem~\ref{thm:AMPlowsym} below. Let $\bar{\Sigma}^{[1]}=\bar{\Sigma}_{1,1}:=\sigma_1^2\geq 0$. For a general $k\geq 2$, suppose inductively that we have already defined a non-negative definite $\bar{\Sigma}^{[k-1]}\in\R^{(k-1)\times (k-1)}$ with entries $\bar{\Sigma}_{ij}^{[k-1]}=\bar{\Sigma}_{i,j}$ for $1\leq i,j\leq k-1$, and then let
\begin{equation}
\label{eq:Sigmabarmat}
\bar{\Sigma}_{k,\ell}:=
\begin{cases}
\,\E\bigl(g_0(\mu_0 V+\sigma_0 U)\cdot g_{k-1}(\mu_{k-1}V+\sigma_{k-1}G_{k-1})\bigr)\quad&\text{for }\ell=1\\
\,\E\bigl(g_{\ell-1}(\mu_{\ell-1}V+\sigma_{\ell-1}G_{\ell-1})\cdot g_{k-1}(\mu_{k-1}V+\sigma_{k-1}G_{k-1})\bigr)\quad&\text{for }\ell=2,\dotsc,k,
\end{cases}
\end{equation}
for $1\leq\ell\leq k$, where $(\sigma_1 G_1,\dotsc,\sigma_{k-1} G_{k-1})\sim N_{k-1}(0,\bar{\Sigma}^{[k-1]})$ is independent of $(U,V)$ from~\ref{ass:S1}. Let $\bar{\Sigma}^{[k]}$ be the $k\times k$ matrix with entries $\bar{\Sigma}_{ij}^{[k]}:=\bar{\Sigma}_{i,j}$ for $1\leq i,j\leq k$, so that $\bar{\Sigma}^{[k-1]}$ is the top-left principal $(k-1)\times (k-1)$ submatrix of $\bar{\Sigma}^{[k]}$. It can be verified as in~\eqref{eq:posdef} that $\bar{\Sigma}^{[k]}$ is non-negative definite. By induction, $\sigma_k^2=\E\bigl(g_{k-1}(\mu_{k-1}V+\sigma_{k-1}G)^2\bigr)=\bar{\Sigma}_{k,k}$ for all $k\in\N$, so~\eqref{eq:Sigmabarmat} does indeed extend~\eqref{eq:statevolsymat}.

We are now ready to state the main result of this subsection, which for each $k\in\N$ establishes the 2-Wasserstein ($d_2$) limit of the joint empirical distributions of the components of $v^0,v^1,\dotsc,v^k,v\in\R^n$ as $n\to\infty$.
\begin{theorem}
\label{thm:AMPlowsym}
Suppose that~\emph{\ref{ass:S0}}--\emph{\ref{ass:S2}} hold for a sequence of AMP iterations~\eqref{eq:AMPmatsym}, where for each $n\in\N$, the symmetric matrix $A\equiv A(n)$ is generated according to the spiked model~\eqref{eq:symspike} for some fixed $\lambda>0$ that does not depend on $n$. Then for each $k\in\N$, we have
\begin{equation}
\label{eq:AMPlowsym}
\sup_{\psi\in\PL_{k+2}(2,1)}\;\biggl|\frac{1}{n}\sum_{i=1}^n\psi(v_i^0,v_i^1\dotsc,v_i^k,v_i)-\E\bigl(\psi(\mu_0 V+\sigma_0 U,\mu_1 V+\sigma_1 G_1,\dotsc,\mu_k V+\sigma_k G_k,V)\bigr)\biggr|\cvc 0
\end{equation}
as $n\to\infty$, where $(\sigma_1 G_1,\dotsc,\sigma_k G_k)\sim N_k(0,\bar{\Sigma}^{[k]})$ is independent of $(U,V)$ from~\emph{\ref{ass:S1}}. In other words, writing $\breve{\nu}^k$ for the distribution of $(\mu_0 V+\sigma_0 U,\mu_1 V+\sigma_1 G_1,\dotsc,\mu_k V+\sigma_k G_k,V)$, we have
\[\widetilde{d}_2\bigl(\nu_n(v^0,v^1,\dotsc,v^k,v),\breve{\nu}^k\bigr)\cvc 0\quad\text{or equivalently}\quad d_2\bigl(\nu_n(v^0,v^1,\dotsc,v^k,v),\breve{\nu}^k\bigr)\cvc 0\quad\text{as } n\to\infty.\]
\end{theorem}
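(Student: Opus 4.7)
The plan is to realise the AMP iteration~\eqref{eq:AMPmatsym} as an instance of the abstract symmetric recursion~\eqref{eq:AMPsym} after peeling off the rank-one spike. Take $\gamma := v$, $f_k(x,y) := g_k(\mu_k y + x)$ for each $k \in \N_0$ (with $\mu_k$ from~\eqref{eq:statevolsymat}), and $h^0 := v^0 - \mu_0 v$, so that $m^0 := f_0(h^0, v) = g_0(v^0) = \hat{v}^0$. Running the abstract recursion $h^{k+1} := Wm^k - b_k m^{k-1}$ with the same GOE noise $W$ appearing in~\eqref{eq:symspike}, and decomposing $A\hat{v}^k = \lambda\ipr{v}{\hat{v}^k}_n\, v + W\hat{v}^k$ in~\eqref{eq:AMPmatsym}, yields the key identity
\[
v^{k+1} = \mu_{k+1}\, v + h^{k+1} + \Delta^{k+1}, \qquad \Delta^{k+1} := \bigl(\lambda\ipr{v}{m^k}_n - \mu_{k+1}\bigr)\, v.
\]

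I would next verify the master-theorem hypotheses~\ref{ass:A0}--\ref{ass:A5} with $r = 2$. (A0) is immediate from (M0); (A1)--(A3) follow from (M1) applied respectively to the $\PL_2(2)$ test functions $\psi(x,y) = \phi(y)$, $\psi(x,y) = g_0(x)^2$ and $\psi(x,y) = g_0(x)\phi(y)$, taking $F_0(y) := \E[g_0(\mu_0 y + \sigma_0 U)]$ in (A3)---this is Lipschitz and satisfies $\E(F_0(V)^2) \leq \sigma_1^2 = \tau_1^2$ by Jensen; (A4) is vacuous for $r = 2$; and (A5) reduces to (M2) via a Fubini argument since $f_k'(x,y) = g_k'(\mu_k y + x)$. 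A parallel induction on $k$ then shows that the abstract state evolution~\eqref{eq:statevolsym}--\eqref{eq:Sigmabar} matches~\eqref{eq:statevolsymat}--\eqref{eq:Sigmabarmat}, so that $\tau_k^2 = \sigma_k^2$ and $\bar{\mathrm{T}}^{[k]} = \bar{\Sigma}^{[k]}$ for every $k$.

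Theorem~\ref{thm:masterext} now delivers $\widetilde{d}_2\bigl(\nu_n(h^1, \ldots, h^k, v), N_k(0, \bar{\Sigma}^{[k]}) \otimes \pi\bigr) \cvc 0$. Applied to the $\PL_2(2)$ test function $\psi(x,y) = y\, g_k(\mu_k y + x)$, this yields $\lambda\ipr{v}{m^k}_n \cvc \mu_{k+1}$ by~\eqref{eq:statevolsymat}, and since $\norm{v}_n \cvc 1$ we conclude $\norm{\Delta^{k+1}}_n \cvc 0$ by induction on $k$. By the pseudo-Lipschitz property (combined with a Cauchy--Schwarz bound using the empirical second-moment control obtained along the way), these vanishing $\Delta^\ell$-corrections can be absorbed inside any $\psi \in \PL_{k+2}(2,1)$, so the target~\eqref{eq:AMPlowsym} reduces to showing that $\nu_n(v^0, h^1, \ldots, h^k, v)$ converges in $\widetilde{d}_2$ to the law of $(\mu_0 V + \sigma_0 U, \sigma_1 G_1, \ldots, \sigma_k G_k, V)$.

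The main obstacle lies in this last step: Theorem~\ref{thm:masterext} as stated captures the joint limit of $(h^1, \ldots, h^k, v)$ but not of $(v^0, h^1, \ldots, h^k, v)$, whereas the target involves $v^0$ as well. The remedy exploits that the iterates $(h^1, \ldots, h^k)$ depend on $(v^0, v)$ only through $m^0 = g_0(v^0)$ and through the noise $W$, which is independent of $(v^0, v)$ by (M0). A mild strengthening of Theorem~\ref{thm:masterext}, obtained by carrying $v^0$ through the conditioning-based induction underlying its proof as an additional piece of auxiliary information (independent of $W$ and treated in parallel with $\gamma$), then gives the joint convergence of $\nu_n(v^0, h^1, \ldots, h^k, v)$ to the law of $(\mu_0 V + \sigma_0 U, \sigma_1 G_1, \ldots, \sigma_k G_k, V)$, where $(G_1, \ldots, G_k) \sim N_k(0, \bar{\Sigma}^{[k]})$ is independent of $(U, V)$. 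Combined with the marginal limit of $(v^0, v)$ supplied by (M1), which identifies the distribution of the first and last coordinates, this completes the proof.
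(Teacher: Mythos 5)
Your overall strategy is the paper's: recast \eqref{eq:AMPmatsym} as an instance of the abstract symmetric recursion \eqref{eq:AMPsym} with $\gamma=v$ and $f_k(x,y)=g_k(x+\mu_k y)$, verify \ref{ass:A0}--\ref{ass:A5} from \ref{ass:S0}--\ref{ass:S2} (via Remark~\ref{rem:A4'}), match the state evolutions so that $\tau_k^2=\sigma_k^2$ and $\bar{\mathrm{T}}^{[k]}=\bar{\Sigma}^{[k]}$, and absorb small deviations into pseudo-Lipschitz test functions; your way of carrying $v^0$ through the conditioning induction as extra auxiliary information is also consistent with how the paper's proof includes $v^0$ in the augmented master-theorem display.

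However, your ``key identity'' is false for $k\geq 1$, and this is a genuine gap. In the abstract recursion, $m^k=f_k(h^k,v)=g_k(h^k+\mu_k v)$ and $b_k=\abr{g_k'(h^k+\mu_k v)}_n$, whereas the algorithm \eqref{eq:AMPmatsym} uses $\hat{v}^k=g_k(v^k)$ and the Onsager coefficient $\abr{g_k'(v^k)}_n$. Once $v^k=\mu_k v+h^k+\Delta^k$ with $\Delta^k\neq 0$ (which happens already at $k=1$), we have $\hat{v}^k\neq m^k$, and the true deviation of $v^{k+1}$ from $\mu_{k+1}v+h^{k+1}$ is
\[
\bigl(\lambda\ipr{v}{\hat{v}^k}_n-\mu_{k+1}\bigr)v+W\bigl(g_k(v^k)-g_k(h^k+\mu_k v)\bigr)-\bigl(\abr{g_k'(v^k)}_n\,\hat{v}^{k-1}-b_k m^{k-1}\bigr),
\]
not the rank-one term $(\lambda\ipr{v}{m^k}_n-\mu_{k+1})v$ alone. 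You cannot have it both ways: either $h^k$ is an exact instance of \eqref{eq:AMPsym}, so Theorem~\ref{thm:masterext} applies, but then $v^k-\mu_k v-h^k$ is not a multiple of $v$; or you set $h^k:=v^k-\mu_k v$, but then the recursion for $h^k$ carries the extra $\delta_k v$ terms inside the nonlinearities and is no longer of the form \eqref{eq:AMPsym} --- this is precisely the distinction between \eqref{eq:matsymdelta} and \eqref{eq:matsymabs}. Your induction, which only shows $\lambda\ipr{v}{m^k}_n-\mu_{k+1}\cvc 0$ and uses $\norm{v}_n\cvc 1$, therefore misses the dominant error terms. The paper closes the argument by an induction on $\norm{v^k-(u^k+\mu_k v)}_n\cvc 0$, where $u^k$ are the iterates of the comparison recursion \eqref{eq:matsymabs} (your $h^k$): the middle term above is controlled using $\norm{W}_{2\to 2}=O_c(1)$ together with the Lipschitz property of $g_k$ and the inductive hypothesis, and the last term by showing that both empirical Onsager coefficients $\abr{g_k'(v^k)}_n$ and $\abr{g_k'(u^k+\mu_k v)}_n$ converge completely to $\E\bigl(g_k'(\mu_k V+\sigma_k G_k)\bigr)$, using \ref{ass:S2} and the argument of Proposition~\ref{prop:AMPproof}(g). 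These steps are routine but essential; without them the reduction to the master theorem is incomplete.
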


\unparskip
Before discussing Theorem~\ref{thm:AMPlowsym} and its proof, we note that as an immediate consequence of~\eqref{eq:AMPlowsym}, Corollary~\ref{cor:AMPlowsym} below yields an exact expression for the asymptotic deviation of $\hat{v}^k=g_k(v^k)$ from $v$ with respect to any pseudo-Lipschitz loss function of order 2. In particular, the asymptotic mean squared error and empirical correlation in~\eqref{eq:asymmse} and~\eqref{eq:asymcorr} respectively depend only on $\lambda$ and the state evolution parameters $\mu_{k+1},\sigma_{k+1}$.
\begin{corollary}
\label{cor:AMPlowsym}
In the setting of Theorem~\ref{thm:AMPlowsym}, fix $k\in\N$ and let $n\to\infty$. Then taking $G_k\sim N(0,1)$ to be independent of $V\sim\pi$, we have
\begin{equation}
\label{eq:asymdev}
\frac{1}{n}\sum_{i=1}^n\psi(\hat{v}_i^k,v_i)\cvc\E\bigl\{\psi\bigl(g_k(\mu_k V+\sigma_k G_k),V\bigr)\bigr\}
\end{equation}
for all $\psi\in\PL_2(2)$. Consequently,
\begin{align}
\label{eq:asymmse}
\norm{\hat{v}^k-v}_n^2&\cvc\E\bigl\{\bigl(g_k(\mu_k V+\sigma_k G_k)-V\bigr)^2\bigr\}=\sigma_{k+1}^2-\frac{2\mu_{k+1}}{\lambda}+1\\[1.5ex]
\label{eq:asymcorr}
\text{and}\qquad\frac{\abs{\ipr{\hat{v}^k}{v}_n}}{\norm{\hat{v}^k}_n\norm{v}_n}&\cvc\frac{\bigl|\E\bigl(Vg_k(\mu_kV+\sigma_k G_k)\bigr)\bigr|}{\sqrt{\E\bigl(g_k(\mu_k V+\sigma_k G_k)^2\bigr)}}=\frac{\abs{\mu_{k+1}}}{\lambda\sigma_{k+1}}.
\end{align}
\end{corollary}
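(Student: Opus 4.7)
The plan is to deduce Corollary~\ref{cor:AMPlowsym} directly from Theorem~\ref{thm:AMPlowsym} by applying it to a suitable family of pseudo-Lipschitz test functions. For~\eqref{eq:asymdev}, given $\psi\in\PL_2(2)$, I would define $\widetilde{\psi}\colon\R^2\to\R$ by $\widetilde{\psi}(x,y):=\psi(g_k(x),y)$, so that $\frac{1}{n}\sum_{i=1}^n\psi(\hat{v}_i^k,v_i)=\frac{1}{n}\sum_{i=1}^n\widetilde{\psi}(v_i^k,v_i)$. The first routine check is that $\widetilde{\psi}\in\PL_2(2)$: since $g_k$ is Lipschitz, $|g_k(x)|\leq|g_k(0)|+\mathrm{Lip}(g_k)\,|x|$, and a direct estimate using the pseudo-Lipschitz inequality~\eqref{eq:PL} for $\psi$ shows that $\widetilde{\psi}$ satisfies an analogous bound with a modified constant. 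Then, viewing $\widetilde{\psi}$ as a function of the last two coordinates of $(v^0,v^1,\dotsc,v^k,v)\in\R^{(k+2)n}$ (i.e.\ constant in its first $k$ arguments), it still lies in $\PL_{k+2}(2)$, so Theorem~\ref{thm:AMPlowsym} yields
\[\frac{1}{n}\sum_{i=1}^n\psi(\hat{v}_i^k,v_i)\;\cvc\;\E\bigl\{\psi\bigl(g_k(\mu_kV+\sigma_kG_k),V\bigr)\bigr\},\]
which is precisely~\eqref{eq:asymdev}.

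For~\eqref{eq:asymmse}, I would apply~\eqref{eq:asymdev} with the pseudo-Lipschitz choice $\psi(x,y)=(x-y)^2\in\PL_2(2)$, expand the square, and use both the state evolution~\eqref{eq:statevolsymat} (which gives $\E(g_k(\mu_kV+\sigma_kG_k)^2)=\sigma_{k+1}^2$ and $\lambda\E(Vg_k(\mu_kV+\sigma_kG_k))=\mu_{k+1}$) together with the normalisation $\E(V^2)=1$ from~\ref{ass:S1} to obtain the stated closed-form expression. For~\eqref{eq:asymcorr}, I would apply~\eqref{eq:asymdev} separately with $\psi(x,y)=xy$ and $\psi(x,y)=x^2$ to conclude that $\ipr{\hat{v}^k}{v}_n\cvc\mu_{k+1}/\lambda$ and $\norm{\hat{v}^k}_n^2\cvc\sigma_{k+1}^2$, and use~\eqref{eq:vconv} to see that $\norm{v}_n^2\cvc 1$. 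Combining these via the continuous mapping and standard calculus for complete convergence (via Proposition~\ref{prop:compequiv}, or equivalently by using that $|\cdot|$, square-root and division by a positive limit are continuous) then yields the quoted ratio, after noting that $\sigma_{k+1}>0$ follows from the positive-definiteness of the limiting covariance matrix (arguing as in Lemma~\ref{lem:posdef}; in the degenerate case $\sigma_{k+1}=0$ the right-hand side is to be interpreted by convention, but this does not arise under the standing non-degeneracy assumption on $g_k$).

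There is no genuine obstacle here: all three conclusions reduce to applications of Theorem~\ref{thm:AMPlowsym} at specific $\PL_2(2)$ test functions, followed by state-evolution bookkeeping. The only mild subtlety is verifying that $(x,y)\mapsto\psi(g_k(x),y)$ inherits the pseudo-Lipschitz property of order $2$ from $\psi$, which requires the Lipschitz growth bound on $g_k$ noted above. Once that composition lemma is in hand, everything else is algebraic.
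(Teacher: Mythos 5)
Your proof is correct and follows essentially the same route as the paper: composing $\psi$ with the Lipschitz $g_k$ to get a $\PL_{k+2}(2)$ test function for Theorem~\ref{thm:AMPlowsym}, then specialising to $(x,y)\mapsto(x-y)^2$, $xy$, $x^2$ (together with $\norm{v}_n^2\cvc 1$ from~\eqref{eq:vconv}) and doing the state-evolution bookkeeping. The only cosmetic difference is that you make explicit the $\PL_2(2)$ composition check and the positivity of $\sigma_{k+1}$, which the paper leaves implicit.
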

\begin{remark}
\label{rem:musigmaest}
Observe that $\norm{v^k}_n^2\cvc\E\bigl((\mu_k V+\sigma_k G_k)^2\bigr)=\mu_k^2+\sigma_k^2$ and $\norm{\hat{v}^{k-1}}_n^2=\norm{g_{k-1}(v^{k-1})}_n\cvc\sigma_k^2$ for all $k\in\N$, so $\norm{v^k}_n^2-\norm{\hat{v}^{k-1}}_n^2$ and $\norm{\hat{v}^{k-1}}_n^2$ are strongly consistent estimators of $\mu_k^2$ and $\sigma_k^2$ respectively.
\end{remark}

\unparskip
\textbf{Interpretation}: Through the state evolution recursion~\eqref{eq:statevolsymat}, Corollary~\ref{cor:AMPlowsym} establishes a precise correspondence between the asymptotic behaviour of the AMP iterates $\bigl(\hat{v}^k\equiv \hat{v}^k(n):n\in\N\bigr)$ and a univariate deconvolution problem, where we estimate $V$ by $g_k(\mu_k V+\sigma_k G_k)$ when given a single noisy observation $\mu_k V+\sigma_k G_k$. In this context, 
the quantity $\rho_k:=(\mu_k/\sigma_k)^2$ can be interpreted as an \emph{effective signal-to-noise ratio}, which arises naturally in~\eqref{eq:asymcorr} above. Returning to the spiked model~\eqref{eq:symspike}, we can think of $\hat{v}^k\equiv \hat{v}^k(n)=g_k(v^k)$ as an estimate of $v\equiv v(n)$ based on an `\emph{effective observation}' $v^k\equiv v^k(n)$ whose components have approximately the same empirical distribution as those of $\mu_k v+\sigma_k\xi$ when $n$ is large, where $\xi\equiv\xi(n)\sim N_n(0,I_n)$ is independent of $v$. 

Theorem~\ref{thm:AMPlowsym} and Corollary~\ref{cor:AMPlowsym} can be rigorously proved by means of an instructive application of the master theorems for the abstract symmetric AMP iteration~\eqref{eq:AMPsym} in Section~\ref{sec:AMPsym}. In the next few paragraphs (which can be skipped on a first reading), we will outline the key arguments in the setting of Corollary~\ref{cor:AMPlowsym}; a full proof of the more general Theorem~\ref{thm:AMPlowsym} can be found in Section~\ref{sec:LowRankproofs}. 

In summary, we begin by rewriting the AMP algorithm~\eqref{eq:AMPmatsym} in terms of the `noise' components $\breve{u}^k\equiv\breve{u}^k(n):=v^k-\mu_k v$ of the effective observations $v^k$, and aim to show that the corresponding noise variables in the limiting univariate problem are indeed Gaussian (and independent of $V$), with mean~0 and variance $\sigma_k^2$ given by~\eqref{eq:statevolsymat}. To this end, it can be seen that the resulting recursion~\eqref{eq:matsymdelta} below for $(\breve{u}^k:k\in\N)$ is very similar to an iteration
of the abstract form~\eqref{eq:AMPsym}, whose exact asymptotics are given by Theorems~\ref{thm:AMPmaster} and~\ref{thm:masterext}. In addition to these main workhorse results, some additional technical arguments are needed to take care of a `correction term' in~\eqref{eq:matsymdelta} below with asymptotically vanishing influence. 

The conclusion is that for each $k$, the joint empirical distribution $\nu_n(\breve{u}^k,v)$ of the entries of $\breve{u}^k(n)=v^k(n)-\mu_k v(n)$ and $v\equiv v(n)$ converges completely in $\widetilde{d}_2$ to the distribution of $(\sigma_k G_k,V)$ as $n\to\infty$. Equivalently, $\nu_n(v^k,v)$ converges completely in $\widetilde{d}_2$ to the distribution of $(\mu_k V+\sigma_k G_k,V)$ as $n\to\infty$, whence the conclusion~\eqref{eq:asymdev} of Corollary~\ref{cor:AMPlowsym} follows straightforwardly.

\unparskip
\begin{proof}[Proof sketch for Corollary~\ref{cor:AMPlowsym}]
\label{sketch:rankone}
More precisely, under the spiked model~\eqref{eq:symspike}, $A$ is the sum of independent signal and noise matrices $\lambda vv^\top/n$ and $W$ respectively, so~\eqref{eq:AMPmatsym} becomes $v^{k+1}\equiv v^{k+1}(n)=\lambda\ipr{\hat{v}^k}{v}_n v+Wg_k(v^k)-b_k g_{k-1}(v^{k-1})$ for $k,n\in\N$. Rearranging this and defining \[\delta_k\equiv\delta_k(n):=\lambda\ipr{\hat{v}^{k-1}}{v}_n-\mu_k\]
for all $k$ and $n$, we see that $\breve{u}^k\equiv\breve{u}^k(n)=v^k(n)-\mu_k v(n)$ satisfies
\begin{equation}
\label{eq:matsymdelta}
\breve{u}^1=W\hat{v}^0+\delta_1 v,\qquad\breve{u}^{k+1}=Wg_k(\breve{u}^k+\mu_k v)-b_kg_{k-1}(\breve{u}^{k-1}+\mu_{k-1}v)+\delta_{k+1}v\quad\text{for }k\in\N,
\end{equation}
where $b_k\equiv b_k(n)=\abr{g_k'(v^k)}_n=\abr{g_k'(\breve{u}^k+\mu_k v)}_n$. Setting $u^1:=W\hat{v}^0$ and dropping the final $\delta_{k+1}v$ term from the right hand side of~\eqref{eq:matsymdelta}, we obtain a related recursion
\begin{equation}
\label{eq:matsymabs}
u^{k+1}\equiv u^{k+1}(n):=Wg_k(u^k+\mu_k v)-\tilde{b}_kg_{k-1}(u^{k-1}+\mu_{k-1}v)\quad\text{for }k\in\N,
\end{equation}
where $\tilde{b}_k\equiv\tilde{b}_k(n):=\abr{g_k'(u^k+\mu_k v)}_n$. This is an instance of~\eqref{eq:AMPsym} with $f_k,f_k'\colon\R^2\to\R$ given by $f_k(x,y)=g_k(x+\mu_k y)$ and $f_k'(x,y)=g_k'(x+\mu_k y)$ for $x,y\in\R$. Under~\ref{ass:S0}--\ref{ass:S2}, it is straightforward to verify that~\ref{ass:A0}--\ref{ass:A5} are satisfied with
\begin{equation}
\label{eq:statevolrankone}
\tau_1^2=\clim_{n\to\infty}\norm{\hat{v}^0}_n^2=\sigma_1^2\quad\text{and}\quad \tau_{k+1}^2=\E\bigl(f_k(\sigma_k G_k,V)^2\bigr)=\E\bigl(g_k(\mu_k V+\sigma_k G_k)^2\bigr)=\sigma_{k+1}^2
\end{equation}
for all $k\in\N$ (by induction), in view of the state evolution recursion for $(\sigma_k:k\in\N)$ in~\eqref{eq:musigma1}--\eqref{eq:statevolsymat}. It follows from Theorem~\ref{thm:AMPmaster} that for each $k$ in~\eqref{eq:matsymabs}, the joint empirical distribution $\nu_n(u^k,v)$ converges completely in $d_2$ to the distribution $N(0,\sigma_k^2)\otimes\pi$ of $(\sigma_k G_k,V)$ as $n\to\infty$. 

It now remains to establish that the $\delta_{k+1}v$ term in~\eqref{eq:matsymdelta} has asymptotically negligible effect, in the sense that the iterates in~\eqref{eq:matsymdelta} remain close to those for~\eqref{eq:matsymabs} and hence have the same limiting distributions. Specifically, it can be shown by induction on $k\in\N$ that $\delta_k(n)\cvc 0$, that $\norm{\breve{u}^k-u^k}_n\cvc 0$ and hence that $\widetilde{d}_2\bigl(\nu_n(\breve{u}^k,v),N(0,\sigma_k^2)\otimes\pi\bigr)\cvc 0$ as $n\to\infty$ for each fixed $k$. The arguments involved are fairly routine, and are spelled out in detail in Section~\ref{sec:LowRankproofs}. We mention here that the first part of the inductive step reveals the origins of the state evolution recursion for $(\mu_k:k\in\N)$ in~\eqref{eq:musigma1}--\eqref{eq:statevolsymat}: it follows from the inductive hypothesis $\widetilde{d}_2\bigl(\nu_n(\breve{u}^k,v),N(0,\sigma_k^2)\otimes\pi\bigr)\cvc 0$ that
\[\lambda\ipr{\hat{v}^k}{v}_n=\frac{\lambda}{n}\sum_{i=1}^n v_i g_k(\breve{u}_i^k+\mu_k v_i)\cvc\lambda\E\bigl(Vg_k(\mu_k V+\sigma_k G)\bigr)=\mu_{k+1}\]
as $n\to\infty$, so indeed $\delta_{k+1}(n)\cvc 0$ as $n\to\infty$.
\end{proof}

\unparskip
We conclude this subsection by noting that for a given sequence of (random) spikes $v\equiv v(n)$ satisfying~\ref{ass:S1}, the quality of the estimates $\hat{v}^k\equiv \hat{v}^k(n)$ clearly depends on the vectors $v^0\equiv v^0(n)$ that are used to initiate the AMP iterations, as well as the sequence of Lipschitz functions $g_k\colon\R\to\R$. In the next two subsections, we will describe how these inputs to~\eqref{eq:AMPmatsym} can be suitably chosen to achieve good estimation performance, based on the information that we have about the distribution of $V$.

\hfparskip
\subsection{Spectral initialisation}
\label{sec:spectral}
In the context of the spiked model~\eqref{eq:symspike}, it is helpful to think of AMP as a method by which we can potentially improve a `pilot' estimator $\hat{v}^0\equiv \hat{v}^0(n)=g_0(v^0)$ of $v\equiv v(n)$, in the sense that we may be able to increase the asymptotic empirical correlation in~\eqref{eq:asymcorr} (i.e.\ the effective signal-to-noise ratio) by repeatedly iterating~\eqref{eq:AMPmatsym}. To this end, a minimum requirement is that we obtain effective signal-to-noise ratios $\rho_{k+1}$ that are strictly positive, since the corresponding estimates $\hat{v}^k\equiv \hat{v}^k(n)$ ought to be at least partially aligned with $v$ in the limit $n\to\infty$. 

When $\E(V)\neq 0$, we will see in Section~\ref{sec:lowrankgk} that if the functions $g_k$ are chosen appropriately, then it suffices to take $v^0\equiv v^0(n)=c\mathbf{1}_n\equiv(c,\dotsc,c)\in\R^n$ for each $n$, where $c\in\R$ is fixed. However, this does not work when $\E(V)=0$: in this case, $\mu_0=\clim_{n\to\infty}\ipr{c\mathbf{1}_n}{v}_n=0$ in~\ref{ass:S1}, and for any choice of $(g_k)$, the state evolution recursion~\eqref{eq:statevolsymat} then yields $\mu_k=0$ and $\rho_k=(\mu_k/\sigma_k)^2=0$ for all $k\in\N$ (since $V$ and $G_k$ are independent). For each $k$, it follows from~\eqref{eq:asymcorr} that $\ipr{\hat{v}^k}{v}_n\cvc 0$ as $n\to\infty$, so $\hat{v}^k\equiv\hat{v}^k(n)$ is asymptotically uninformative as an estimator of $v\equiv v(n)$.

Thus, when $\E(V)=0$, we require $\mu_0\neq 0$ and pilot estimators that have non-zero asymptotic empirical correlation with $v$. 
For $n\in\N$, consider initialising the AMP algorithm~\eqref{eq:AMPmatsym} with $v^0=c\hat{\varphi}$ for some $c\neq 0$, where $\hat{\varphi}\equiv\hat{\varphi}(n)$ is a normalised principal eigenvector of $A\equiv A(n)$ with $\norm{\hat{\varphi}}_n=1$. This is almost surely well-defined up to its sign, and yields an initial estimate with the desired property precisely when $\lambda>1$; indeed, recall from~\eqref{eq:princeigen} that $\abs{\ipr{\hat{\varphi}}{v}_n}/\norm{v}_n\cvc\sqrt{1-\lambda^{-2}}>0$ for such $\lambda$. Using the orthogonal invariance of $W\sim\GOE(n)$, Proposition~\ref{prop:power} below extends this convergence result to show that $\bigl\{\bigl(\hat{\varphi}(n),v(n)\bigr):n\in\N\bigr\}$ satisfies condition~\ref{ass:S1} with $\mu_0=\sqrt{1-\lambda^{-2}}$, $\sigma_0=1/\lambda$ and $U\sim N(0,1)$, provided that $\ipr{\hat{\varphi}}{v}_n\geq 0$ for all $n$; see Remark~\ref{rem:spectralsign} below for further discussion of this final issue.
\begin{proposition}
\label{prop:power}
Suppose that $V\sim\pi$ satisfies $\E(V^2)=1$ and $d_2\bigl(\nu_n(v),\pi\bigr)\cvc 0$ as $n\to\infty$, where $\nu_n(v)=\inv{n}\sum_{i=1}^n\delta_{v_i}$ denotes the empirical distribution of $v\equiv v(n)$ for $n\in\N$. If $\lambda>1$ in~\eqref{eq:symspike}, and each $\hat{\varphi}\equiv\hat{\varphi}(n)$ is a principal eigenvector of $A\equiv A(n)$ whose direction is chosen so that $\ipr{\hat{\varphi}}{v}_n\geq 0$ for all $n$, then
\[\sup_{\psi\in\PL_2(2,1)}\;\biggl|\frac{1}{n}\sum_{i=1}^n\psi(\hat{\varphi}_i,v_i)-\E\bigl\{\psi\bigl(\sqrt{1-\lambda^{-2}}\,V+\lambda^{-1}G_0,V\bigr)\bigr\}\biggr|\cvc 0\]
as $n\to\infty$, where $G_0\sim N(0,1)$ is independent of $V$.
\end{proposition}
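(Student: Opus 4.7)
The plan is to decompose $\hat\varphi$ into components along and perpendicular to $v$, pin down the asymptotic magnitudes of these components using the BBP limits in \eqref{eq:princeigen}, and exploit the orthogonal invariance of $W\sim\GOE(n)$ to show that the perpendicular part behaves asymptotically like an independent Gaussian vector. Setting $P := I_n - vv^\top/\norm{v}^2$, $a_n := \ipr{\hat\varphi}{v}_n/\norm{v}_n^2$, $r_n := \norm{P\hat\varphi}$ and $w := P\hat\varphi/\norm{P\hat\varphi}$, we have the orthogonal decomposition $\hat\varphi = a_n v + r_n w$. The hypothesis $d_2(\nu_n(v),\pi)\cvc 0$ together with $\E(V^2)=1$ gives $\norm{v}_n\cvc 1$; combined with \eqref{eq:princeigen} and the sign convention $\ipr{\hat\varphi}{v}_n\geq 0$, this yields $a_n\cvc\sqrt{1-\lambda^{-2}}$. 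From $\norm{\hat\varphi}_n^2 = a_n^2\norm{v}_n^2 + r_n^2/n = 1$, we then deduce $r_n/\sqrt{n}\cvc 1/\lambda$.

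For the direction $w$, observe that since $W\indep v$ and $\GOE(n)$ is orthogonally invariant, for any $v$-measurable $Q\in\mathbb{O}_n$ fixing $v$ we have $Q^\top WQ\eqdcond{v}W$, hence $Q^\top AQ\eqdcond{v}A$ and $Q^\top\hat\varphi\eqdcond{v}\hat\varphi$ (the sign convention is preserved because $Qv=v$). This symmetry on the subspace $v^\perp$ implies that, conditional on $v$ and $(a_n,r_n)$, $w$ is uniformly distributed on the unit sphere of $v^\perp$. To realise this law, introduce an auxiliary $g\equiv g(n)\sim N_n(0,I_n)$ independent of $(v,W)$ and set $\tilde w := Pg/\norm{Pg}$, which is also uniform on that sphere, so $w\eqdcond{v,a_n,r_n}\tilde w$; we henceforth work on the coupled space in which $w = \tilde w$. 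Standard $\chi^2$ concentration gives $\norm{Pg}/\sqrt{n}\cvc 1$, while $\ipr{g}{v}/\norm{v}^2=O_c(n^{-1/2})$, so $\norm{\sqrt{n}\,\tilde w - g}_n\cvc 0$. Combining these with the limits for $a_n$ and $r_n/\sqrt{n}$ gives
\[\norm{\hat\varphi - \sqrt{1-\lambda^{-2}}\,v - \lambda^{-1}g}_n\cvc 0.\]

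To finish, we transfer this $\ell_{n,2}$-closeness to pseudo-Lipschitz averages. For any $\psi\in\PL_2(2,1)$, applying \eqref{eq:PL} coordinatewise and Cauchy--Schwarz,
\[\biggl|\frac{1}{n}\sum_{i=1}^n\psi(\hat\varphi_i,v_i)-\frac{1}{n}\sum_{i=1}^n\psi\bigl(\sqrt{1-\lambda^{-2}}v_i+\lambda^{-1}g_i,v_i\bigr)\biggr|\leq C_n\,\norm{\hat\varphi - \sqrt{1-\lambda^{-2}}v - \lambda^{-1}g}_n,\]
where $C_n$ is built from the empirical second moments of $\hat\varphi$, $v$ and $g$, all of which are $O_c(1)$. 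Since $g$ has i.i.d.\ $N(0,1)$ entries independent of $v$, a joint Wasserstein law of large numbers obtained by combining the triangle inequality with $d_2(\nu_n(v),\pi)\cvc 0$ and Gaussian concentration (cf.\ Lemma~\ref{lem:pseudoconc}) gives $d_2\bigl(\nu_n(g,v),N(0,1)\otimes\pi\bigr)\cvc 0$, which is equivalent to uniform $\widetilde{d}_2$ convergence of the second average to $\E[\psi(\sqrt{1-\lambda^{-2}}V+\lambda^{-1}G_0,V)]$ over $\PL_2(2,1)$. Taking suprema then completes the proof. I expect the main obstacle to be implementing the coupling cleanly: one must verify that the conditional identity $w\eqdcond{v,a_n,r_n}\tilde w$ can be used to equate the \emph{laws} of the pseudo-Lipschitz averages under the original and coupled probability spaces, which requires careful measurability bookkeeping and, for the $\cvc$ conclusion, ensuring that a single coupling of $g$ with $(v,W)$ can be constructed consistently across all $n$ rather than separately for each $n$.
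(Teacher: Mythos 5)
Your argument is essentially correct, but it takes a genuinely different route from the paper. The paper obtains Proposition~\ref{prop:power} from the AMP machinery itself: the ``connection with the power method'' in Section~\ref{sec:lowrankgk} sets up an artificial iteration~\eqref{eq:AMPmatsym} with the linear denoisers~\eqref{eq:gklinear} and initialiser $\hat{v}^0=\mu_0 v+\xi$, applies Theorem~\ref{thm:AMPlowsym} to characterise its iterates, and then uses the strictly positive limiting spectral gap to show the iterates align with $\hat{\varphi}$ as $k\to\infty$, so that the $d_2$ law of $(\hat{\varphi},v)$ is inherited from state evolution; only the eigenvalue/spectral-gap limits are needed as external random-matrix input, and the overlap $\sqrt{1-\lambda^{-2}}$ comes out of the argument. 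You instead take the overlap limit in~\eqref{eq:princeigen} (due to the cited concentration results of Knowles--Yin, so there is no circularity) as an input, decompose $\hat{\varphi}=a_nv+r_nw$, and use orthogonal invariance of $\GOE(n)$ to show that, given $v$ (and $a_n,r_n$), the direction $w$ is uniform on the unit sphere of $v^\perp$, which you then realise as $Pg/\norm{Pg}$ for an independent Gaussian $g$ and transfer to pseudo-Lipschitz averages via the analogue of Lemma~\ref{lem:pseudoavg} and a conditional concentration argument as in $\mathcal{H}_1(c)$. This is shorter and more self-contained given~\eqref{eq:princeigen}, and is close in spirit to the general eigenvector results of \citet[Lemma~C.1, Corollary~C.3]{MV21} that the paper cites; the paper's AMP route, by contrast, extends more readily to situations (higher rank, spectral initialisation in GAMP) where explicit overlap results are not available off the shelf.

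Two small points. First, your worry about constructing ``a single coupling of $g$ with $(v,W)$ consistently across all $n$'' is unnecessary: by Definition~\ref{def:compconv} and Remark~\ref{rem:compdist}, complete convergence to a deterministic limit depends only on the marginal law at each $n$, so a separate coupling for each $n$ suffices, and the invariance argument (a.s.\ simplicity of the top eigenvalue, measurable sign-fixed selection, uniqueness of the rotation-invariant law on the sphere of $v^\perp$, which also gives conditional independence of $w$ from $(a_n,r_n)$ given $v$) is routine. Second, the intermediate claim $\ipr{g}{v}/\norm{v}^2=O_c(n^{-1/2})$ is not literally correct in the paper's $O_c$ calculus, since a standard Gaussian sequence is not $O_c(1)$ (Example~\ref{ex:exptails}); however, what you actually need is $\abs{\ipr{g}{v}}\,\norm{v}_n/\norm{v}^2\eqd\abs{Z}/\sqrt{n}=o_c(1)$ with $Z\sim N(0,1)$, which holds by Example~\ref{ex:exptails}(a), so the conclusion $\norm{\sqrt{n}\,\tilde{w}-g}_n\cvc 0$ and the rest of the proof stand.
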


\unparskip
For proofs of more general results of this type for finite-rank perturbations of GOE matrices, see~\citet[Lemma~C.1 and Corollary~C.3]{MV21}.

In the subsequent asymptotic analysis of the AMP algorithm~\eqref{eq:AMPmatsym} with spectral initialisation, an additional technical challenge stems from the fact that $\hat{\varphi}\equiv\hat{\varphi}(n)$ is not independent of the noise matrix $W\equiv W(n)$ for any $n$. This means that condition~\ref{ass:S0} does not hold in general, so the theory from Section~\ref{sec:rankone} is not directly applicable in this setting. Nevertheless,~\citet{MV21} established the following to recover the desired conclusion for this particular initialisation.
\begin{theorem}
\label{thm:spectral}
Suppose that $\lambda>1$ in the spiked model~\eqref{eq:symspike}, and that 
the hypotheses of Proposition~\ref{prop:power} are satisfied for a sequence of AMP algorithms~\eqref{eq:AMPmatsym} initialised with $v^0\equiv v^0(n)=c\,\hat{\varphi}(n)$ and $\hat{v}^{-1}\equiv\hat{v}^{-1}(n)=\lambda^{-1}c\,\hat{\varphi}(n)$ for each $n\in\N$, where $\ipr{\hat{\varphi}}{v}_n\geq 0$ and $c\neq 0$ is fixed. Starting with $\mu_0=c\sqrt{1-\lambda^{-2}}$, $\sigma_0=c/\lambda$ and $U\sim N(0,1)$ in~\eqref{eq:musigma1}, define the state evolution parameters $\mu_k,\sigma_k,\bar{\Sigma}^{[k]}$ for $k\in\N$ according to~\eqref{eq:statevolsymat}--\eqref{eq:Sigmabarmat}. Then under~\emph{\ref{ass:S2}}, the conclusions of Theorem~\ref{thm:AMPlowsym} and Corollary~\ref{cor:AMPlowsym} remain valid.
\end{theorem}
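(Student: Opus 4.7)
The fundamental obstacle is that the spectral initializer $\hat{\varphi}\equiv\hat{\varphi}(n)$ is a function of $W\equiv W(n)$, so condition~\ref{ass:S0} fails and Theorem~\ref{thm:AMPlowsym} cannot be invoked directly on the recursion~\eqref{eq:AMPmatsym}. My plan is to construct an auxiliary (``bootstrapped'' or ``long'') AMP recursion whose initialiser is independent of $W$ and whose iterates, after sufficiently many burn-in steps, are asymptotically equivalent to those of the spectrally initialised recursion; the conclusions of Theorem~\ref{thm:AMPlowsym} can then be transferred across.

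First I would realise $\hat{\varphi}$ as a limit of the power method. Draw a random unit vector $u^0$ independent of $W$ (e.g.\ uniform on the sphere, normalised so that $\ipr{u^0}{v}_n\geq 0$) and let $u^{t+1}:=Au^t/\norm{Au^t}$. Under $\lambda>1$, the BBP-type spectral gap in~\eqref{eq:princeigen}, together with the sign convention, guarantees that for every $\varepsilon>0$ there is a deterministic $T=T(\varepsilon)\in\N$ with $\Pr(\norm{u^T-\hat{\varphi}}_n>\varepsilon)\to 0$ as $n\to\infty$; a quantitative version is available via the concentration estimates for deformed GOE matrices cited before~\eqref{eq:princeigen}. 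The key point is that the normalised power iteration can be embedded into the abstract symmetric recursion~\eqref{eq:AMPsym}, with a sequence of asymptotically linear $f_k$'s encoding the normalisation constants (which themselves converge completely via~\eqref{eq:princeigen}), followed by the genuine denoising functions $(g_k)$ after step $T$.

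Applying Theorem~\ref{thm:masterext} to this combined recursion would yield the joint convergence of $\nu_n(u^T,u^{T+1},\dotsc,u^{T+k},v)$ to an explicit $(k+2)$-dimensional Gaussian $\otimes\pi$ limit. A direct calculation of the associated state evolution shows that the marginal for $u^T$ concentrates, as $T\to\infty$, on the distribution of $\bigl(\sqrt{1-\lambda^{-2}}\,V+\lambda^{-1}G_0,V\bigr)$ predicted by Proposition~\ref{prop:power}, and more generally that the burn-in state evolution drives the bootstrap SE to the $(\mu_0,\sigma_0)=(c\sqrt{1-\lambda^{-2}},c/\lambda)$, $U\sim N(0,1)$ initial condition stipulated in the theorem. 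After the first $T$ steps, the state evolution for the bootstrap recursion then evolves exactly according to~\eqref{eq:statevolsymat}--\eqref{eq:Sigmabarmat}, so its iterate at time $T+k$ has the target limiting distribution $\breve\nu^k$.

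The main obstacle is showing that the spectrally initialised iterates in~\eqref{eq:AMPmatsym} are asymptotically close, in $\widetilde d_2$, to the bootstrap iterates at time $T+k$. The natural route is to prove a Lipschitz-type stability estimate: for fixed $k$, the map sending an initialiser $(v^0,\hat v^{-1})$ to $(v^0,v^1,\dotsc,v^k)$ under~\eqref{eq:AMPmatsym} is uniformly Lipschitz in the inputs with respect to $\norm{\cdot}_n$, with constants that depend only on the Lipschitz norms of $g_0,\dotsc,g_{k-1}$ and on $\norm{A}_{\mathrm{op}}$, the latter being $O_c(1)$ under~\ref{ass:S0} by standard random-matrix bounds. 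Applied to the difference between $(c\hat\varphi,\lambda^{-1}c\hat\varphi)$ and $(c u^T,\lambda^{-1}c u^T)$, this turns the estimate $\norm{u^T-\hat\varphi}_n=o_c(1)$ (as $\varepsilon\to 0$ after $n\to\infty$) into $\norm{v^k-v^{T+k}_{\mathrm{long}}}_n=o_c(1)$, and a routine $\widetilde d_2$ perturbation argument then concludes. Along the way one has to handle two subtleties: the sign ambiguity of $\hat\varphi$ (resolved via $\ipr{\hat\varphi}{v}_n\geq 0$ and Proposition~\ref{prop:power}), and the choice of $\hat v^{-1}=\lambda^{-1}c\hat\varphi$, whose asymptotic role is to reproduce exactly the Onsager term that would have arisen from an extra virtual iteration of the bootstrap AMP -- this is what motivates the factor $\lambda^{-1}$ in the hypothesis.
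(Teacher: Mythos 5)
Your overall architecture (a two-stage ``bootstrap'' recursion in which a power-method burn-in is embedded into the abstract symmetric AMP, followed by a stability-in-the-initialiser argument) is indeed one of the two routes the paper indicates for this theorem (see the discussion after the statement, Section~\ref{sec:lowrankgk} and the MTM20/MM20 references). However, your first step fails as stated. If $u^0$ is uniform on the sphere and independent of $W$ (and hence essentially uncorrelated with $v$ and $\hat{\varphi}$), then $\abs{\ipr{u^0}{\hat{\varphi}}_n}$ is of order $n^{-1/2}$, so after any \emph{fixed} number $T$ of power iterations the component of $u^T$ along $\hat{\varphi}$ is still of order $(\lambda_1/\lambda_2)^T n^{-1/2}\to 0$; there is no deterministic $T(\varepsilon)$ with $\Pr(\norm{u^T-\hat{\varphi}}_n>\varepsilon)\to 0$ as $n\to\infty$, since one needs $T\gtrsim\log n$ iterations. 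The BBP spectral gap only controls the contraction rate, not the vanishing initial overlap. In AMP terms this is precisely the phenomenon recorded at the start of Section~\ref{sec:spectral}: an initialiser with asymptotically zero empirical correlation with $v$ forces $\mu_0=0$ and hence $\mu_k=0$ for every fixed $k$ in the state evolution, so in the regime covered by the master theorems ($n\to\infty$ at fixed iteration count, then iteration count $\to\infty$) your burn-in iterates never align with $\hat{\varphi}$ or $v$, and the subsequent transfer argument has nothing to transfer.

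The fix used in the approach you are implicitly following is to initialise the artificial first phase with an \emph{oracle} vector such as $\hat{v}^0=\mu_0 v+\xi$ with $\mu_0\neq 0$ and $\xi\sim N_n(0,I_n)$, which is independent of $W$ (so the analogue of~\ref{ass:S0} holds) but deliberately correlated with the signal; this is legitimate because the two-stage recursion is a proof device, not an algorithm. With the linear denoisers of~\eqref{eq:gklinear}, Theorem~\ref{thm:AMPlowsym} plus the positive limiting spectral gap yield $\lim_{k\to\infty}\clim_{n\to\infty}\abs{\ipr{\hat{v}^k}{\hat{\varphi}}_n}/\norm{\hat{v}^k}_n=1$, and only then does your Lipschitz stability estimate (with $\norm{A}_{2\to 2}=O_c(1)$, together with control of the data-dependent Onsager coefficients and continuity of the $k$-step state evolution map in its initial condition) allow the double-limit bookkeeping that transfers the conclusion to the spectrally initialised iterates; note also that the comparison must be carried out at the level of complete convergence, not just convergence in probability. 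Alternatively, the paper points to the proof in MV21, Appendix~A, which replaces $A$ by a surrogate $\tilde{A}$ built from a fresh independent $\GOE(n)$ matrix and the projection orthogonal to $\hat{\varphi}$, and shows the conditional laws of $A$ and $\tilde{A}$ given $\bigl(\hat{\varphi},\lambda_1(A)\bigr)$ are close in total variation. As written, the burn-in step is the essential gap in your argument.
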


\unparskip
To circumvent the difficulty mentioned above, Theorem~\ref{thm:spectral} can be proved by first applying the existing AMP machinery to a suitably modified version of the iteration~\eqref{eq:AMPmatsym} for which~\ref{ass:S0} is satisfied, and then showing that this has the same asymptotics as the original procedure with spectral initialisation. In the spiked model~\eqref{eq:symspike} where the signal matrix has rank 1, one approach along these lines is to design a more tractable two-stage iteration, in which the input to~\eqref{eq:AMPmatsym} in the second phase is the output of a surrogate power method that approximates $v^0=c\,\hat{\varphi}$. 
This `artificial' first phase takes the form of an AMP iteration with specially chosen linear threshold functions (see~\eqref{eq:gklinear} in Section~\ref{sec:lowrankgk}) and a (non-spectral) initialiser that is independent of $W$. 
The success of this strategy relies on the fact that the spectral gap $\lambda_1(A)-\lambda_2(A)$ of $A\equiv A(n)$ has a strictly positive limit as $n\to\infty$ when $\lambda>1$, as mentioned at the start of Section~\ref{sec:rankone}. For further details of applications of this proof technique in the GAMP setting of Section~\ref{Sec:GAMP}, see~\citet{MTM20} and~\citet{MM20}.

We refer the reader to~\citet[Appendix~A]{MV21} for a different proof of Theorem~\ref{thm:spectral} that extends more readily to a wider class of AMP algorithms for general low-rank matrix estimation (see Section~\ref{sec:lowrankgeneral}). This involves studying a variant of~\eqref{eq:AMPmatsym} in which $A\equiv A(n)$ is replaced with
\[\tilde{A}\equiv\tilde{A}(n)=\frac{\lambda_1(A)}{n}\hat{\varphi}\hat{\varphi}^\top+\hat{P}^\perp\biggl(\frac{\lambda}{n}vv^\top+\tilde{W}\biggr)\hat{P}^\perp\]
for each $n$, where $\lambda_1(A)$ is the maximal eigenvalue of $A$, the matrix $\hat{P}:=I-\hat{\varphi}\hat{\varphi}^\top/n$ represents the projection onto the orthogonal complement of $\hat{\varphi}$, and (crucially) $\tilde{W}\sim\GOE(n)$ is independent of $W$ and $v$. To relate the simplified iteration based on $\tilde{A}$ to the original AMP procedure, an important technical step is to show that the conditional distributions of $A$ and $\tilde{A}$ given $\bigl(\hat{\varphi},\lambda_1(A)\bigr)$ are close in total variation distance when $n$ is large.
\begin{remark}
\label{rem:spectralsign}
In an estimation context where each $v\equiv v(n)$ is unknown, it is sometimes not possible to consistently determine the sign of the leading eigenvector of $A\equiv A(n)$ that should be used as a spectral initialiser, to ensure that it has non-negative asymptotic empirical correlation with $v$. 
For example, this is the case if the limiting prior distribution $\pi$ is symmetric, i.e.\ $V\eqd -V$. On the event of probability 1 where $A$ has a unique maximal eigenvalue, suppose that one of the two possible directions for the corresponding eigenvector is chosen uniformly at random when carrying out spectral initialisation. In other words, let $v^0\equiv v^0(n)=\epsilon\hat{\varphi}$ for each $n$, where $\ipr{\hat{\varphi}}{v}_n\geq 0$ and $\epsilon\equiv\epsilon(n)$ is a Rademacher random variable that is independent of everything else. With this choice of $v^0$, there are two different state evolution trajectories that can arise: for $\epsilon'\in\{-1,1\}$, let $\mu_0(\epsilon'):=\epsilon'\sqrt{1-\lambda^{-2}}$ and $\sigma_0,U$ be as above, and define $\mu_k(\epsilon'),\sigma_k(\epsilon'),\bar{\Sigma}^{[k]}(\epsilon')$ for $k\in\N$ as per~\eqref{eq:statevolsymat}--\eqref{eq:Sigmabarmat}. For the resulting iterates $v^k\equiv v^k(n)$, Theorem~\ref{thm:spectral} implies that as $n\to\infty$, we have
\[\sup_{\psi\in\PL_2(2,1)}\;\biggl|\frac{1}{n}\sum_{i=1}^n\psi(v_i^k,v_i)-\E\bigl\{\psi\bigl(\mu_k(\epsilon)V+\sigma_k(\epsilon)G_k,V\bigr)\!\bigm|\!\epsilon\bigr\}\biggr|\cvc 0\]
for each $k\in\N_0$, as well as appropriate analogues of~\eqref{eq:AMPlowsym}--\eqref{eq:asymcorr}. Since $\E\bigl\{\psi\bigl(\mu_k(\epsilon)V+\sigma_k(\epsilon)G_k,V\bigr)\!\bigm|\!\epsilon\bigr\}$ is random for each $\psi$, the empirical distribution of the components of $v^k(n)$ may not converge (completely in $d_2$) to a deterministic limit as $n\to\infty$, unlike in earlier results.
Instead, we see that for large $n$, the behaviour of the AMP iterates is characterised by a state evolution recursion with a random initial condition $\mu_0(\epsilon)$ that depends on $v^0\equiv v^0(n)$ through the (unknown) sign $\epsilon\equiv\epsilon(n)$.
\end{remark}

\umparskip
\subsection{Choosing the functions \texorpdfstring{$g_k$}{g\_k}}
\label{sec:lowrankgk}
Recall that our goal is to specialise the general AMP algorithm~\eqref{eq:AMPmatsym} to produce estimates $\hat{v}^k=g_k(v^k)$ of $v$ that exploit full or partial knowledge of the limiting prior distribution $\pi$ from~\ref{ass:S1}. Corollary~\ref{cor:AMPlowsym} suggests that we should aim to choose a sequence of Lipschitz `denoising' functions $g_k\colon\R\to\R$ for which each $g_k(\mu_k V+\sigma_k G_k)$ performs well as an estimator of $V\sim\pi$ in the limiting univariate problem, where $G_k\sim N(0,1)$ is independent of $V$ for $k\in\N$. More precisely, it would be desirable to ensure that the effective signal-to-noise ratio $\rho_k=(\mu_k/\sigma_k)^2$ is large for each $k$, since~\eqref{eq:asymcorr} tells us that the asymptotic empirical correlation between $\hat{v}^k$ and $v$ is given by $\sqrt{\rho_{k+1}}/\lambda$. In fact, the implication of Lemma~\ref{lem:snrmax} below is that achieving a high effective signal-to-noise ratio ought to be our first priority, even when the ultimate objective is for $\hat{v}^k=g_k(v^k)$ to have low asymptotic estimation error $\E\bigl\{\psi\bigl(g_k(\mu_k V+\sigma_k G_k),V\bigr)\bigr\}$ with respect to some specific loss function $\psi\in\PL_2(2)$. 
\begin{lemma}
\label{lem:snrmax}
Let $G\sim N(0,1)$ be independent of $V\sim\pi$. Then for any Borel measurable loss function $\psi\colon\R^2\to [0,\infty)$, \[\rho\mapsto\inf_g\E\bigl\{\psi\bigl(g(\sqrt{\rho}V+G),V\bigr)\bigr\}=:R_{\pi,\psi}(\rho)\]
is non-increasing on $[0,\infty)$, where the infimum is over all Borel measurable functions $g\colon\R\to\R$. This infimum is attained for all $\rho\in [0,\infty)$ if for example $\psi(x,y)=\Psi(x-y)$ for some convex function $\Psi$ with $\Psi(u)\to\infty$ as $\abs{u}\to\infty$. 
\end{lemma}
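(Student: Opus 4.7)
The plan is to treat the two assertions separately: the monotonicity will follow from a data-processing argument in which a lower signal-to-noise observation is simulated by adding extra independent Gaussian noise to a higher signal-to-noise one, while the attainability of the infimum will reduce, via conditioning on the observation, to a pointwise convex minimisation together with a measurable selection.

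For monotonicity, fix $0 \leq \rho_2 \leq \rho_1$; since $\rho_1 = 0$ is trivial, assume $\rho_1 > 0$. Writing $Y_\rho := \sqrt{\rho}\,V + G$, let $Z \sim N(0,1)$ be independent of $(V, G)$. The key observation is that
\[\sqrt{\rho_2/\rho_1}\,Y_{\rho_1} + \sqrt{1 - \rho_2/\rho_1}\,Z \eqd Y_{\rho_2},\]
since the resulting noise on the left hand side has unit variance, is Gaussian, and is independent of $V$. For any Borel $g \colon \R \to \R$, set $g_z(y) := g\bigl(\sqrt{\rho_2/\rho_1}\,y + \sqrt{1-\rho_2/\rho_1}\,z\bigr)$; then
\[\E\bigl[\psi(g(Y_{\rho_2}), V)\bigr] = \E\bigl[\psi(g_Z(Y_{\rho_1}), V)\bigr] = \E_Z\bigl\{\E\bigl[\psi(g_Z(Y_{\rho_1}), V) \mid Z\bigr]\bigr\},\]
and for each realisation $z$ of $Z$, $g_z$ is a deterministic Borel function, so the inner expectation is $\geq R_{\pi,\psi}(\rho_1)$. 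Averaging over $z$ and taking the infimum over $g$ gives $R_{\pi,\psi}(\rho_2) \geq R_{\pi,\psi}(\rho_1)$.

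For attainability, suppose $\psi(x,y) = \Psi(x-y)$ with $\Psi$ convex, non-negative (since $\psi \geq 0$) and satisfying $\Psi(u) \to \infty$ as $\abs{u} \to \infty$. Fix $\rho \geq 0$, write $Y := Y_\rho$, and let $F(y, t) := \E[\Psi(t - V) \mid Y = y]$ be a jointly Borel measurable version, obtained from a regular conditional distribution of $V$ given $Y$. For each $y$, $F(y, \cdot)$ is convex (as an average of convex functions) and lower semicontinuous, and by Fatou
\[\liminf_{\abs{t}\to\infty} F(y, t) \geq \E\Bigl[\liminf_{\abs{t}\to\infty}\Psi(t - V) \,\Bigm|\, Y = y\Bigr] = +\infty,\]
so $F(y, \cdot)$ attains its minimum on a nonempty compact interval $M(y) = [a(y), b(y)] \subseteq \R$. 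Define $g^*(y) := a(y)$; since $F(y,\cdot)$ is convex and therefore continuous, $\{y : a(y) > c\} = \bigcup_{q \in \Q} \{y : F(y, q) < F(y, c)\}$ is Borel, so $g^*$ is Borel measurable. Conditioning on $Y$, for any Borel $g$,
\[\E[\psi(g(Y), V)] = \E[F(Y, g(Y))] \geq \E[F(Y, g^*(Y))],\]
with equality when $g = g^*$, so $g^*$ attains the infimum.

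The main subtlety is the Borel measurability of the selection $g^*$ (equivalently, producing a jointly measurable version of $F$ to which a selection theorem may be applied); once that is in hand, the rest of the attainability argument is a routine pointwise reduction, and the monotonicity is a short data-processing calculation. If one prefers not to handle the selection by hand, one can invoke a standard measurable selection theorem such as Kuratowski--Ryll-Nardzewski, applied to the closed-valued multifunction $y \mapsto M(y)$.
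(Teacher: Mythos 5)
Your argument is correct in substance and differs from the paper's mainly in packaging. For the monotonicity, the paper parametrises by the noise scale (observations $V+sG$ with $\rho=s^{-2}$), first proves the auxiliary identity $R_{\pi,\psi}(s_2^{-2})=\inf_{\tilde g}\E\bigl\{\psi\bigl(\tilde g(V+s_2G,G'),V\bigr)\bigr\}$ for an independent $G'\sim N(0,s_1^2-s_2^2)$, and then uses $(V,V+s_2G+G')\eqd(V,V+s_1G)$; you instead simulate the noisier observation directly via $\sqrt{\rho_2/\rho_1}\,Y_{\rho_1}+\sqrt{1-\rho_2/\rho_1}\,Z\eqd Y_{\rho_2}$ and condition on the fresh noise $Z$. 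These are the same data-processing idea (just make explicit that the distributional equality you need is the joint one with $V$, which holds because the combined noise is Gaussian, of unit variance and independent of $V$). For attainability, the paper simply cites the measurable selection theorem of Brown and Purves (1973, Theorem~3) for posterior-risk minimisers, whereas you build the Bayes rule by hand as the leftmost minimiser $a(y)$ of the posterior risk $F(y,\cdot)$; this is a legitimate, more self-contained route (and the Kuratowski--Ryll-Nardzewski alternative would also do), but your measurability identity has a slip: $\bigcup_{q\in\Q}\{y:F(y,q)<F(y,c)\}$ is the set on which $c$ fails to be a minimiser, i.e.\ $\{y:a(y)>c\}\cup\{y:b(y)<c\}$, not $\{y:a(y)>c\}$. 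The fix is one line: by convexity, $a(y)>c$ if and only if there exists $t>c$ with $F(y,t)<F(y,c)$, so restrict the union to rationals $q>c$ (continuity of the finite convex function $F(y,\cdot)$ lets you pass to rational $q$). With that correction $g^*=a$ is Borel, and your final conditioning step $\E\bigl[\psi(g(Y),V)\bigr]=\E\bigl[F(Y,g(Y))\bigr]\geq\E\bigl[F(Y,g^*(Y))\bigr]$ finishes the proof in the same way the paper does.
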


\unparskip
The intuition behind this result is straightforward: to minimise $\E\bigl\{\psi\bigl(g(\sqrt{\rho}V+G),V\bigr)\bigr\}$ jointly over $\rho$ (belonging to a given range) and all measurable $g$, we should always begin by taking the largest possible $\rho$ (i.e.\ the least noisy $\sqrt{\rho}V+G$) before subsequently optimising over $g$. A formal proof of Lemma~\ref{lem:snrmax} is deferred to Section~\ref{sec:LowRankproofs}. The arguments therein show also that the first assertion of the lemma remains valid if the infimum is instead taken only over Lipschitz functions (which are more relevant to the setting of AMP). 

Note that for (known) $\mu,\sigma\in\R$ with $(\mu/\sigma)^2=\rho$, the quantity $R_{\pi,\psi}(\rho)$ is the $\pi$-Bayes risk with respect to $\psi$ in a Bayesian mean estimation problem where we place a prior $\pi$ on $V$ and observe $Y=\mu V+\sigma G$ (as in the paragraph above), i.e.\ $Y\,|\,V\sim N(\mu V,\sigma^2)$. If there exists a Borel measurable $g^*\colon\R\to\R$ that attains the infimum in the definition of $R_{\pi,\psi}(\rho)$, then $g^*(Y)$ is a $\pi$-Bayes estimator of~$V$ (with respect to $\psi$) based on $Y$.

\textbf{Bayes-AMP}: Suppose first that for some $k\in\N_0$, we are given the distribution $\pi$ of $V$ and the state evolution parameters $\mu_k,\sigma_k$ (which depend on $\mu_0,\sigma_0$ in~\ref{ass:S1} as well as the functions $g_0,g_1,\dotsc,g_{k-1}$). For convenience, when $k=0$, we write $G_0$ for the random variable $U$ from~\ref{ass:S1}, and assume that its distribution is also known. Let $g_k^*\colon\R\to\R$ be any measurable function with
\begin{equation}
\label{eq:gkbayes}
g_k^*(\mu_k V+\sigma_k G_k)=\E(V\,|\,\mu_k V+\sigma_k G_k),
\end{equation}
which in principle can be computed based on $Y_k:=\mu_k V+\sigma_k G_k$. In particular, for $k\in\N$, we have $G_k\sim N(0,1)$, in which case if $\sigma_k> 0$, then $Y_k$ has a smooth (real analytic), strictly positive Lebesgue density on $\R$ given by $p_k(y):=\int_\R\phi_{\sigma_k}(y-\mu_k x)\,d\pi(x)$, where $\phi_{\sigma_k}$ is the density of a $N(0,\sigma_k^2)$ random variable. Then by \emph{Tweedie's formula}~\citep{Rob56,Efr11}, we can take
\begin{equation}
\label{eq:tweedie}
g_k^*(y)=\frac{y+\sigma_k^2\,(\log p_k)'(y)\Ind_{\{\sigma_k\neq 0\}}}{\mu_k}\Ind_{\{\mu_k\neq 0\}}=\frac{y+\sigma_k^2\,(p_k'/p_k)(y)\Ind_{\{\sigma_k\neq 0\}}}{\mu_k}\Ind_{\{\mu_k\neq 0\}}\quad\text{for }y\in\R.
\end{equation}
For example, if $\pi$ is the uniform distribution on $\{-1,1\}$ and $\sigma_k\neq 0$, then $g_k^*(y)=\tanh(\mu_k y/\sigma_k^2)$ for $y\in\R$. 

In the AMP literature, $g_k^*$ is referred to as the `Bayes optimal' choice of threshold function in~\eqref{eq:AMPmatsym}, since the posterior mean $g_k^*(Y_k)=\E(V\,|\,Y_k)$ is the Bayes estimator of $V$ based on $Y_k$ with respect to quadratic loss (often known as the \emph{minimum mean squared error} (MMSE) estimator). Indeed, by the characterisation of $\E(V\,|\,Y_k)$ as an orthogonal projection,
\begin{equation}
\label{eq:ortho}
\E\bigl\{\bigl(V-g(Y_k)\bigr)^2\bigr\}=\E\bigl\{\bigl(V-g_k^*(Y_k)\bigr)^2\bigr\}+\E\{(g_k^*-g)^2(Y_k)\}\geq\E\bigl\{\bigl(V-g_k^*(Y_k)\bigr)^2\bigr\}
\end{equation}
for all measurable $g\colon\R\to\R$. In addition,
\begin{equation}
\label{eq:snrCS}
\frac{\E\bigl(Vg(Y_k)\bigr)^2}{\E\bigl(g(Y_k)^2\bigr)}=\frac{\E\bigl(g_k^*(Y_k)\,g(Y_k)\bigr)^2}{\E\bigl(g(Y_k)^2\bigr)}\leq\E\bigl(g_k^*(Y_k)^2\bigr)
\end{equation}
by the Cauchy--Schwarz inequality, with equality if $g$ is a (non-zero) scalar multiple of $g_k^*$. Thus, for given $\mu_k,\sigma_k$, the function $g_k^*$ simultaneously minimises the asymptotic mean squared error in~\eqref{eq:asymmse} and maximises the asymptotic empirical correlation (i.e.\ the effective signal-to-noise ratio $\rho_{k+1}^*$) in~\eqref{eq:asymcorr} over all measurable $g_k\colon\R\to\R$. 

The following result is a slight extension of~\citet[Remark~2.3]{MV21} (with a different, simpler proof given in Section~\ref{sec:LowRankproofs}) that provides sufficient conditions on $\pi$ under which $g_k^*$ is Lipschitz and satisfies~\ref{ass:S2}.
\begin{lemma}
\label{lem:tweedie}
Suppose either that $V$ has a log-concave density, or that there exist independent random variables $U_0,V_0$ such that $U_0$ is Gaussian, $V_0$ is compactly supported and $V\eqd U_0+V_0$. Then for $\mu_k,\sigma_k\neq 0$, the function $g_k^*$ in~\eqref{eq:tweedie} is smooth and Lipschitz on $\R$.
\end{lemma}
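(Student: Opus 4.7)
The plan is to reduce the claim to a uniform upper bound on the conditional variance $\Var(V \mid Y_k = y)$, where $Y_k := \mu_k V + \sigma_k G_k$. Smoothness is essentially free: since $p_k(y) = \int_{\R} \phi_{\sigma_k}(y - \mu_k x)\, d\pi(x)$ is a Gaussian convolution, it is $C^\infty$ on $\R$ and strictly positive, so $\log p_k$, and hence $g_k^*$ as given by~\eqref{eq:tweedie}, is smooth. For the Lipschitz property I would use the identification $g_k^*(y) = \E(V \mid Y_k = y)$ from~\eqref{eq:gkbayes}, and differentiate under the integral in
\[
g_k^*(y) = \int_\R x \cdot \frac{\phi_{\sigma_k}(y - \mu_k x)}{p_k(y)}\, d\pi(x).
\]
A short calculation exploiting $\phi_{\sigma_k}'(u) = -(u/\sigma_k^2)\phi_{\sigma_k}(u)$ yields the Brown/Tweedie variance identity
\[
(g_k^*)'(y) = \frac{\mu_k}{\sigma_k^2}\, \Var(V \mid Y_k = y),
\]
so it suffices to show that $y \mapsto \Var(V \mid Y_k = y)$ is uniformly bounded on $\R$ in each of the two cases.

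In the log-concave case, writing the density of $V$ as $e^{-\phi}$ with $\phi\colon\R\to\R$ convex, the posterior density of $V$ given $Y_k = y$ is proportional to $\exp\!\bigl(-\phi(v) - (y - \mu_k v)^2/(2\sigma_k^2)\bigr)$. The quadratic likelihood term contributes $\mu_k^2/\sigma_k^2$ to the second derivative in $v$, so adding a convex function preserves $(\mu_k^2/\sigma_k^2)$-strong log-concavity of the posterior in $v$, uniformly in $y$. Applying the Brascamp--Lieb variance inequality to the identity function then gives $\Var(V \mid Y_k = y) \le \sigma_k^2/\mu_k^2$ for every $y$, so $g_k^*$ is $(1/\abs{\mu_k})$-Lipschitz.

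In the convolution case, I would exploit the Gaussian component directly rather than passing through a log-concavity argument. Write $V = U_0 + V_0$ with $U_0 \sim N(0, \tau^2)$ independent of $V_0$ supported in some interval $[-M, M]$, so that $Y_k = \mu_k V_0 + Z$ where $Z := \mu_k U_0 + \sigma_k G_k \sim N(0, \tilde{\sigma}^2)$ with $\tilde{\sigma}^2 := \mu_k^2 \tau^2 + \sigma_k^2$, and $Z$ is independent of $V_0$. Conditionally on $(V_0, Y_k) = (v_0, y)$, the Gaussian vector $(U_0, G_k)$ is constrained by the single linear equation $\mu_k U_0 + \sigma_k G_k = y - \mu_k v_0$, so $V = U_0 + v_0$ has a Gaussian conditional distribution with deterministic variance $\tau^2 \sigma_k^2/\tilde{\sigma}^2$. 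By the law of total variance,
\[
\Var(V \mid Y_k = y) = \frac{\tau^2 \sigma_k^2}{\tilde{\sigma}^2} + \Bigl(\frac{\sigma_k^2}{\tilde{\sigma}^2}\Bigr)^{\!2} \Var(V_0 \mid Y_k = y) \le \frac{\tau^2 \sigma_k^2}{\tilde{\sigma}^2} + \Bigl(\frac{\sigma_k^2}{\tilde{\sigma}^2}\Bigr)^{\!2} M^2,
\]
which is the required uniform bound.

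The main obstacle I expect is the Case 2 conditional variance computation: setting up the decomposition correctly so that $V$ becomes Gaussian given $(V_0, Y_k)$ with deterministic variance, and then applying the law of total variance in the right direction. Case 1 is a clean appeal to Brascamp--Lieb once one notices that the Gaussian likelihood is what injects strong log-concavity into the posterior irrespective of $y$, and the smoothness and differentiation-under-the-integral reductions are entirely routine given the rapid decay of $\phi_{\sigma_k}$ and its derivatives.
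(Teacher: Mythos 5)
Your proposal is correct, and it shares the paper's overall skeleton (smoothness from Gaussian convolution, then the identity $(g_k^*)'(y)=\frac{\mu_k}{\sigma_k^2}\Var(V\,|\,Y_k=y)$, then a uniform bound on the posterior variance), but it executes both cases differently. In the log-concave case the paper never looks at the posterior: it invokes Pr\'ekopa to conclude that the marginal density $p_k$ of $Y_k$ is log-concave, so $(\log p_k)''\leq 0$, which combined with $g'\geq 0$ (from the variance identity) gives $0\leq g'\leq\abs{\mu_k}^{-1}$; you instead observe that the Gaussian likelihood makes the posterior $(\mu_k^2/\sigma_k^2)$-strongly log-concave uniformly in $y$ and apply Brascamp--Lieb, arriving at the same Lipschitz constant $\abs{\mu_k}^{-1}$ — a fine argument, with the only caveat that $\phi$ need not be finite or twice differentiable, so one should either cite the variance bound for strongly log-concave measures in its general form or add a routine smoothing/approximation step. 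In the Gaussian-plus-compact case the paper first bounds $\Var(V\,|\,Y_k=y)\leq (b-a)^2/4$ when $\pi$ has compact support $[a,b]$, and then handles $V\eqd U_0+V_0$ by absorbing the Gaussian component into the observation noise ($\mu V+\sigma G\eqd\mu V_0+\sqrt{\sigma^2+\mu^2\sigma_0^2}\,G$) and transferring the resulting bound on $(\log p)''$ back through Tweedie's formula; your route conditions on $(V_0,Y_k)$, notes that $V$ is then Gaussian with deterministic variance $\tau^2\sigma_k^2/\tilde{\sigma}^2$, and applies the law of total variance with $\Var(V_0\,|\,Y_k=y)\leq M^2$. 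Your Case 2 computation is correct and arguably more transparent probabilistically; the paper's version buys explicit two-sided bounds on $g'$ (in particular monotonicity information) at the cost of the change-of-noise bookkeeping. Either way the derivative is uniformly bounded, which is all the lemma needs.
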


\unparskip
Assuming now that we have complete knowledge of the distributions of $U,V$ as well as $\lambda>0$ in~\eqref{eq:symspike} and $\mu_0,\sigma_0$ from~\ref{ass:S1}, we can construct a \emph{`Bayes-AMP' algorithm} of the form~\eqref{eq:AMPmatsym} by recursively defining $(g_k^*:k\in\N)$ and state evolution sequences $(\mu_k^*,\sigma_k^*:k\in\N)$ in accordance with~(\ref{eq:gkbayes},\,\ref{eq:tweedie}) and~\eqref{eq:statevolsymat} respectively. We will write $v^{k,\mathrm{B}}\equiv v^{k,\mathrm{B}}(n)$ for the resulting Bayes-AMP iterates (i.e.\ effective observations) and $\hat{v}^{k,\mathrm{B}}\equiv\hat{v}^{k,\mathrm{B}}(n):=g_k^*(v^{k,\mathrm{B}})$ for the Bayes-AMP estimates of $v\equiv v(n)$. 

For each $k\in\N$, we have $\mu_{k+1}^*=\lambda\E\bigl(Vg_k^*(Y_k)\bigr)=\lambda\E\bigl(g_k^*(Y_k)^2\bigr)=\lambda(\sigma_{k+1}^*)^2$ by~\eqref{eq:snrCS}, and since $\E(V^2)=1$ by~\ref{ass:S1}, the effective signal-to-noise ratios in Bayes-AMP satisfy
\begin{equation}
\label{eq:rhok}
\rho_{k+1}^*:=(\mu_{k+1}^*/\sigma_{k+1}^*)^2=\lambda^2(\sigma_{k+1}^*)^2=\lambda^2\,\E\bigl(g_k^*(Y_k)^2\bigr)=\lambda^2\bigl(1-\E\bigl\{\bigl(V-g_k^*(Y_k)\bigr)^2\bigr\}\bigr).
\end{equation}
Thus, the state evolution recursion~\eqref{eq:statevolsymat} for Bayes-AMP can be compactly written as
\begin{equation}
\label{eq:gammak}
\rho_0^*:=(\mu_0/\sigma_0)^2,\quad\rho_{k+1}^*:=\lambda^2\bigl(1-\mmse_k(\rho_k^*)\bigr)\quad\text{for }k\in\N_0,
\end{equation}
where for $\rho\in [0,\infty)$ we denote by
\[\mmse_k(\rho):=\E\bigl\{\bigl(V-\E(V\,|\,\sqrt{\rho}V+G_k)\bigr)^2\bigr\}\]
the \emph{minimum mean squared error} (i.e.\ the Bayes risk with respect to squared error loss $\psi_2\colon (x,y)\mapsto (x-y)^2$) for the problem of reconstructing $V$ based on the corrupted observation $\sqrt{\rho}V+G_k$. For $k\in\N$, we have $G_k\sim N(0,1)$, in which case we simply write $\mmse(\rho)$ for $\mmse_k(\rho)=R_{\pi,\psi_2}(\rho)$. For concreteness, we set $\mmse(\infty)=0$, which is consistent with the fact that $\mmse(\rho)\to 0$ as $\rho\to\infty$.

At each iteration $k\in\N$, it turns out that $\rho_{k+1}^*$ is the highest effective signal-to-noise ratio that can be achieved with any choice of functions $(g_k)$ in the generic AMP procedure~\eqref{eq:AMPmatsym}. 
\begin{corollary}
\label{cor:snrbayes}
Consider any sequence of AMP iterations $\bigl(v^k\equiv v^k(n):k,n\in\N\bigr)$ of the form~\eqref{eq:AMPmatsym} for which the hypotheses of Theorem~\ref{thm:AMPlowsym}  or~\ref{thm:spectral} are satisfied with $V\sim\pi$ and suitable $\mu_0,\sigma_0$. Let $(\mu_k,\sigma_k:k\in\N_0)$ and $\bigl(\rho_k=(\mu_k/\sigma_k)^2:k\in\N_0\bigr)$ be the associated sequences of state evolution parameters and effective signal-to-noise ratios respectively. Define $(\rho_k^*:k\in\N_0)$ as in~\eqref{eq:rhok}. Then for each $k\in\N_0$ and any $\psi\in\PL_2(2)$, the estimates $\hat{v}^k\equiv\hat{v}^k(n)=g_k(v^k)$ satisfy
\begin{align}
\label{eq:snrbayes}
\frac{\abs{\ipr{\hat{v}^k}{v}_n}}{\norm{\hat{v}^k}_n\norm{v}_n}&\cvc\frac{\sqrt{\rho_{k+1}}}{\lambda}\leq\frac{\sqrt{\rho_{k+1}^*}}{\lambda}\\
\label{eq:bayesdev}
\text{and}\;\;\quad\frac{1}{n}\sum_{i=1}^n\psi(\hat{v}_i^k,v_i)&\cvc\E\bigl\{\psi\bigl(g_k(\mu_k V+\sigma_k G_k),V\bigr)\bigr\}\geq R_{\pi,\psi}(\rho_k^*)\quad\text{as }n\to\infty.
\end{align}
\end{corollary}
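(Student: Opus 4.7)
The plan is to combine Corollary~\ref{cor:AMPlowsym} (which gives the $\cvc$ limits essentially for free) with two tools established earlier in this subsection: the Cauchy--Schwarz bound~\eqref{eq:snrCS} and the monotonicity of Bayes risk from Lemma~\ref{lem:snrmax}. Once these are deployed, both inequalities reduce to an inductive comparison $\rho_k\leq\rho_k^*$.

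\textbf{Convergence statements.} The $\cvc$ limit in~\eqref{eq:snrbayes} is immediate from~\eqref{eq:asymcorr}, since $|\mu_{k+1}|/(\lambda\sigma_{k+1})=\sqrt{\rho_{k+1}}/\lambda$ by definition. The $\cvc$ limit in~\eqref{eq:bayesdev} is~\eqref{eq:asymdev} verbatim, applicable under either Theorem~\ref{thm:AMPlowsym} or Theorem~\ref{thm:spectral}.

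\textbf{Induction that $\rho_k\leq\rho_k^*$ for all $k\in\N_0$.} The base case $\rho_0=\rho_0^*$ holds by definition. For the inductive step, write $Y_k:=\mu_kV+\sigma_kG_k$; the state evolution~\eqref{eq:statevolsymat} gives
\[
\rho_{k+1}=\frac{\mu_{k+1}^2}{\sigma_{k+1}^2}=\lambda^2\,\frac{\E\bigl(Vg_k(Y_k)\bigr)^2}{\E\bigl(g_k(Y_k)^2\bigr)}.
\]
Applying~\eqref{eq:snrCS} with $Y_k$ in place of the Bayes-AMP effective observation, and noting that $Y_k$ is informationally equivalent to $\sqrt{\rho_k}\,V+G_k$ (via rescaling by $\sigma_k$), we obtain $\rho_{k+1}\leq\lambda^2\,\E\bigl[\E(V\mid Y_k)^2\bigr]=\lambda^2\bigl(1-\mmse_k(\rho_k)\bigr)$. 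For $k\geq 1$, the function $\mmse_k=\mmse$ is non-increasing by Lemma~\ref{lem:snrmax} (taking $\psi(x,y)=(x-y)^2$), so the inductive hypothesis gives $\mmse_k(\rho_k)\geq\mmse_k(\rho_k^*)$ and hence $\rho_{k+1}\leq\lambda^2\bigl(1-\mmse_k(\rho_k^*)\bigr)=\rho_{k+1}^*$ by~\eqref{eq:gammak}. The step $k=0$ does not require any monotonicity, since the inductive hypothesis is the equality $\rho_0=\rho_0^*$ and the Cauchy--Schwarz bound at that step is evaluated at the same argument on both sides.

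\textbf{Lower bound in~\eqref{eq:bayesdev}.} Given Step~2, the limit $\E[\psi(g_k(Y_k),V)]$ is bounded below by $\inf_g\E[\psi(g(Y_k),V)]$ over Borel measurable $g$. When $G_k\sim N(0,1)$ (i.e.\ $k\geq 1$), a rescaling $g\mapsto g\circ(\sigma_k\cdot)$ identifies this infimum with $R_{\pi,\psi}(\rho_k)$. Lemma~\ref{lem:snrmax} then yields $R_{\pi,\psi}(\rho_k)\geq R_{\pi,\psi}(\rho_k^*)$ because $\rho_k\leq\rho_k^*$ by Step~2. Strict positivity of $\sigma_k$, needed to legitimise the rescaling, follows from the non-degeneracy argument underlying Lemma~\ref{lem:posdef}.

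\textbf{Anticipated obstacle.} The one genuinely delicate point is the $k=0$ instance of~\eqref{eq:bayesdev} when the initialisation noise $U$ in~\ref{ass:S1} is non-Gaussian, since $R_{\pi,\psi}(\rho_0^*)$ is defined using a Gaussian $G\sim N(0,1)$ rather than $U$. Under the spectral initialisation of Theorem~\ref{thm:spectral}, Proposition~\ref{prop:power} gives $U\sim N(0,1)$ and the argument above goes through verbatim at $k=0$; in the general setting of Theorem~\ref{thm:AMPlowsym}, the cleanest route is to restrict the risk bound~\eqref{eq:bayesdev} to $k\geq 1$ (for which $G_k$ is Gaussian) and to handle the correlation bound~\eqref{eq:snrbayes} for all $k\in\N_0$ since its proof never leaves the induction and the Cauchy--Schwarz step.
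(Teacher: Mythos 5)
Your argument is correct and follows essentially the same route as the paper: the complete-convergence limits are read off from~\eqref{eq:asymcorr} and~\eqref{eq:asymdev}, the induction $\rho_k\leq\rho_k^*$ combines the Cauchy--Schwarz bound~\eqref{eq:snrCS} with the monotonicity of the mmse (Lemma~\ref{lem:snrmax}, packaged in the paper as Corollary~\ref{cor:mmse}), and~\eqref{eq:bayesdev} then follows from the definition of $R_{\pi,\psi}$ and Lemma~\ref{lem:snrmax}. Your caveat about the $k=0$ instance of~\eqref{eq:bayesdev} when $U$ is non-Gaussian is, if anything, more careful than the paper's own proof, which applies the same chain of inequalities without comment.
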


\unparskip
This follows from~\eqref{eq:asymcorr} and~\eqref{eq:snrCS} above, as well as Lemma~\ref{lem:snrmax}, which implies in particular that $\rho\mapsto\mmse(\rho)$ is decreasing on $[0,\infty)$. See Section~\ref{sec:LowRankproofs} for a full justification of Corollary~\ref{cor:snrbayes}. 

Under the conditions of Lemma~\ref{lem:tweedie} above, the Bayes optimal functions $g_k^*$ are Lipschitz and satisfy~\ref{ass:S2}. We can then apply the general results in Sections~\ref{sec:rankone} and~\ref{sec:spectral} to obtain the exact asymptotics for Bayes-AMP, for which it follows that~\eqref{eq:snrbayes} holds with equality. In other words, at every iteration, the Bayes-AMP estimate $\hat{v}^{k,\mathrm{B}}=g_k^*(v^{k,\mathrm{B}})$ achieves the optimal asymptotic empirical correlation among all AMP algorithms that are covered by the theory above. Moreover, with the initialisations in (i) and (ii) below, Theorem~\ref{thm:bayesamp} shows that Bayes-AMP achieves the objective set out at the start of Section~\ref{sec:spectral}, namely that $\hat{v}^{k+1,\mathrm{B}}$ is a strict improvement on $\hat{v}^{k,\mathrm{B}}$ in terms of its asymptotic squared error and empirical correlation (i.e.\ the effective signal-to-noise ratio $\rho_{k+1}^*$) for each $k$. 
This means that for large $k$ and $n$, the performance of $\hat{v}^{k,\mathrm{B}}\equiv\hat{v}^{k,\mathrm{B}}(n)$ is approximately characterised by a fixed point of the recursion in~\eqref{eq:gammak} to which $(\rho_k^*)$ converges monotonically; see Figure~\ref{fig:bayesamp}.
\begin{theorem}
\label{thm:bayesamp}
Let $\bigl(v^{k,\mathrm{B}}\equiv v^{k,\mathrm{B}}(n):k,n\in\N\bigr)$ be a sequence of Bayes-AMP iterations that satisfies either (i) or (ii) below.

\unparskip
\begin{enumerate}[label=(\roman*)]
\item \emph{(Non-spectral initialisation)} $v^0\equiv v^0(n)=c\mathbf{1}_n$ for each $n$, where $c\in\R$ is fixed, and the hypotheses of Theorem~\ref{thm:AMPlowsym} are satisfied with $\E(V)\neq 0$, in which case $\mu_0=0$, $\sigma_0=c$ and $\rho_0^*=0$.
\item \emph{(Spectral initialisation)} $v^0\equiv v^0(n)=c\,\hat{\varphi}(n)$ for each $n$, where $\ipr{\hat{\varphi}}{v}_n\geq 0$ and $c\neq 0$ is fixed, and the hypotheses of Theorem~\ref{thm:spectral} are satisfied with $\lambda>1$, in which case $\mu_0=c\sqrt{1-\lambda^{-2}}$, $\sigma_0=c/\lambda$ and $\rho_0^*=\lambda^2-1$.
\end{enumerate}

\unparskip
Suppose that $V\sim\pi$ satisfies one of the conditions of Lemma~\ref{lem:tweedie}. Then we have the following:

\unparskip
\begin{enumerate}[label=(\alph*)]
\item The sequence $(\rho_k^*:k\in\N_0)$ of effective signal-to-noise ratios defined through~\eqref{eq:rhok} is strictly increasing, and converges to the \emph{smallest} strictly positive fixed point of $\rho=\lambda^2\bigl(1-\mmse(\rho)\bigr)$, which we denote by $\rho_\AMP^*\equiv\rho_\AMP^*(\lambda)\in (0,\lambda^2]$.
\item For $k\in\N$ and a (convex, non-negative) loss function $\psi\in\PL_2(2)$, suppose that $g_{k,\psi}^*\colon\R\to\R$ is Lipschitz and attains the infimum in the definition of $R_{\pi,\psi}(\rho_k^*)$. Then the estimates $\hat{v}^{k,\psi}\equiv\hat{v}^{k,\psi}(n):=g_{k,\psi}^*(v^{k,\mathrm{B}})$ satisfy~\eqref{eq:bayesdev} with equality, i.e.\ $n^{-1}\sum_{i=1}^n\psi(\hat{v}_i^{k,\psi},v_i)\cvc R_{\pi,\psi}(\rho_k^*)$ as $n\to\infty$, and $R_{\pi,\psi}(\rho_k^*)\geq R_{\pi,\psi}(\rho_{k+1}^*)$.

\item The Bayes-AMP estimates $\hat{v}^{k,\mathrm{B}}=g_k^*(v^{k,\mathrm{B}})$ satisfy
\begin{align} 
\label{eq:AMPesterr}
\clim_{n\to\infty}\norm{\hat{v}^{k,\mathrm{B}}-v}_n^2=1-\frac{\rho_{k+1}^*}{\lambda^2}&\searrow 1-\frac{\rho_\AMP^*(\lambda)}{\lambda^2}\\
\label{eq:AMPcorr}
\text{and}\quad\;\;\clim_{n\to\infty}\frac{\ipr{\hat{v}^{k,\mathrm{B}}}{v}_n}{\norm{\hat{v}^{k,\mathrm{B}}}_n\norm{v}_n}=\frac{\sqrt{\rho_{k+1}^*}}{\lambda}&\nearrow\frac{\sqrt{\rho_\AMP^*(\lambda)}}{\lambda}
\qquad\text{as }k\to\infty.
\end{align}
\end{enumerate}

\unparskip
\end{theorem}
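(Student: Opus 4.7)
I would treat the three parts in order, with part (a) as a scalar fixed-point analysis of the state evolution recursion~\eqref{eq:gammak}, and parts (b), (c) as applications of Theorem~\ref{thm:AMPlowsym}/\ref{thm:spectral} and Corollary~\ref{cor:AMPlowsym} to Bayes-AMP (whose denoisers $g_k^*$ are Lipschitz and satisfy~\ref{ass:S2} by Lemma~\ref{lem:tweedie}, so that the master theorems apply).

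For part (a), write the recursion as $\rho_{k+1}^* = F(\rho_k^*)$ with $F(\rho) := \lambda^2(1 - \mmse(\rho))$. I would first record that $F$ is non-decreasing (from Lemma~\ref{lem:snrmax} applied with $\psi(x,y)=(x-y)^2$), continuous (a standard consequence of I-MMSE, or of dominated convergence in $\mmse(\rho)=\E\{(V-\E(V\mid\sqrt{\rho}V+G))^2\}$), and satisfies $0\le F(\rho)\le \lambda^2$. Next, I would verify the base step $\rho_0^*<\rho_1^*$: in case (i) this reduces to $\rho_1^*=\lambda^2(\E V)^2>0=\rho_0^*$, whereas in case (ii) it follows from the classical Gaussian MMSE upper bound $\mmse(\rho)\le 1/(1+\rho)$ evaluated at $\rho_0^*=\lambda^2-1$, which is strict for any non-Gaussian prior. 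Monotonicity of $F$ then propagates strict increase by induction, and boundedness plus continuity yield a fixed point $\rho_\infty^* := \lim_{k} \rho_k^*$ of $F$. To identify $\rho_\infty^*=\rho_\AMP^*$, I would argue by comparison: any strictly positive fixed point $\rho'$ satisfies $\rho'\ge\lambda^2-1$ in case (ii) (since $\rho'=\lambda^2(1-\mmse(\rho'))\ge \lambda^2(1-1/(1+\rho'))$ rearranges to $\rho'\ge\lambda^2-1$), and $\rho_0^*=0<\rho'$ trivially in case (i); monotonicity of $F$ then gives $\rho_k^*\le\rho'$ for all $k$, hence $\rho_\infty^*\le \rho'$.

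For part (b), the master theorem applied to the Bayes-AMP recursion produces~\eqref{eq:AMPlowsym} with parameters $(\mu_k^*,\sigma_k^*)$. Given a Lipschitz minimiser $g_{k,\psi}^*$ of $g\mapsto\E\{\psi(g(\mu_k^* V+\sigma_k^* G_k),V)\}$, the composite $(x,y)\mapsto\psi(g_{k,\psi}^*(x),y)$ lies in $\PL_2(2)$ (a short check from~\eqref{eq:PL}, using that $g_{k,\psi}^*$ is Lipschitz and hence of linear growth), so Corollary~\ref{cor:AMPlowsym} gives
\begin{equation*}
\frac{1}{n}\sum_{i=1}^n\psi(\hat{v}_i^{k,\psi},v_i)\;\cvc\;\E\bigl\{\psi\bigl(g_{k,\psi}^*(\mu_k^* V+\sigma_k^* G_k),V\bigr)\bigr\}=R_{\pi,\psi}(\rho_k^*),
\end{equation*}
the last equality by optimality of $g_{k,\psi}^*$ at signal-to-noise $(\mu_k^*/\sigma_k^*)^2=\rho_k^*$. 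Finally, $R_{\pi,\psi}(\rho_k^*)\ge R_{\pi,\psi}(\rho_{k+1}^*)$ is Lemma~\ref{lem:snrmax} combined with $\rho_k^*\le\rho_{k+1}^*$ from part (a). Part (c) is then the specialisation $\psi(x,y)=(x-y)^2$: the unconstrained minimiser is $g_k^*$ itself, with Bayes risk $\mmse(\rho_k^*)=1-\rho_{k+1}^*/\lambda^2$ via~\eqref{eq:gammak}, giving~\eqref{eq:AMPesterr}; the correlation limit~\eqref{eq:AMPcorr} is~\eqref{eq:asymcorr} together with $\mu_{k+1}^*=\lambda(\sigma_{k+1}^*)^2$ from~\eqref{eq:snrCS}; monotonicity in $k$ of both limits is part (a).

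\textbf{Main obstacle.} The delicate point is the strict monotonicity and minimality claim in part (a) under spectral initialisation: if $V$ were genuinely Gaussian then $F(\lambda^2-1)=\lambda^2-1$ and the sequence would be constant, so strict increase really hinges on the \emph{strict} Gaussian MMSE bound $\mmse(\rho)<1/(1+\rho)$ for non-Gaussian priors; I would justify this via the I-MMSE identity $\tfrac{d}{d\rho}I(V;\sqrt{\rho}V+G)=\tfrac{1}{2}\mmse(\rho)$ together with the conditional entropy-power inequality. The remaining ingredients -- continuity of $\mmse$, the $\PL_2(2)$-bookkeeping in part (b), and the algebraic identifications in part (c) -- are technical but routine.
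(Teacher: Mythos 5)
Your proposal is correct and, in substance, follows the same route as the paper: part (a) is a scalar fixed-point analysis of $\rho_{k+1}^*=m_\lambda(\rho_k^*)$ with $m_\lambda(\rho):=\lambda^2\bigl(1-\mmse(\rho)\bigr)$, using monotonicity and continuity of $\mmse$ (the paper's Corollary~\ref{cor:mmse}, built on Lemmas~\ref{lem:snrmax} and~\ref{lem:tweedie}) together with the linear-estimator bound $1-\mmse(\rho)\geq\rho/(1+\rho)$, which is exactly the bound the paper uses to get~\eqref{eq:snrfixed}; parts (b) and (c) are the same applications of Theorems~\ref{thm:AMPlowsym}/\ref{thm:spectral} and Corollary~\ref{cor:AMPlowsym} combined with the identity $\mu_{k+1}^*=\lambda(\sigma_{k+1}^*)^2$ from~\eqref{eq:snrCS} and the relation $\mmse(\rho_k^*)=1-\rho_{k+1}^*/\lambda^2$ from~\eqref{eq:gammak}. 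Your identification of the limit by comparison with an arbitrary strictly positive fixed point $\rho'$ (showing $\rho'\geq\lambda^2-1\geq\rho_0^*$ and propagating $\rho_k^*\leq\rho'$ by monotonicity) is an equivalent variant of the paper's argument, which instead shows $m_\lambda(\rho)\geq\rho$ on $[0,\rho_\AMP^*]$ so that this interval is invariant and the monotone limit, being a fixed point in $(0,\rho_\AMP^*]$, must equal $\rho_\AMP^*$. The one place you diverge is your ``main obstacle'': the paper invokes no strict MMSE inequality, I-MMSE identity or entropy-power argument, and indeed your proposed fix cannot deliver strictness in the generality of the theorem, since a Gaussian prior satisfies the hypotheses of Lemma~\ref{lem:tweedie} and, under spectral initialisation, yields $\rho_k^*\equiv\lambda^2-1=\rho_\AMP^*$ for all $k$. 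The paper's proof accordingly establishes only (non-strict) monotone convergence, with strict increase whenever $\rho_k^*<\rho_\AMP^*$, and this is all that is needed for the convergence claim and for parts (b) and (c); the extra EPI machinery is unnecessary.
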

\pdfsuppresswarningpagegroup=1
\begin{figure}[htbp!]
\begin{center}
\includegraphics[height=0.18\textheight]{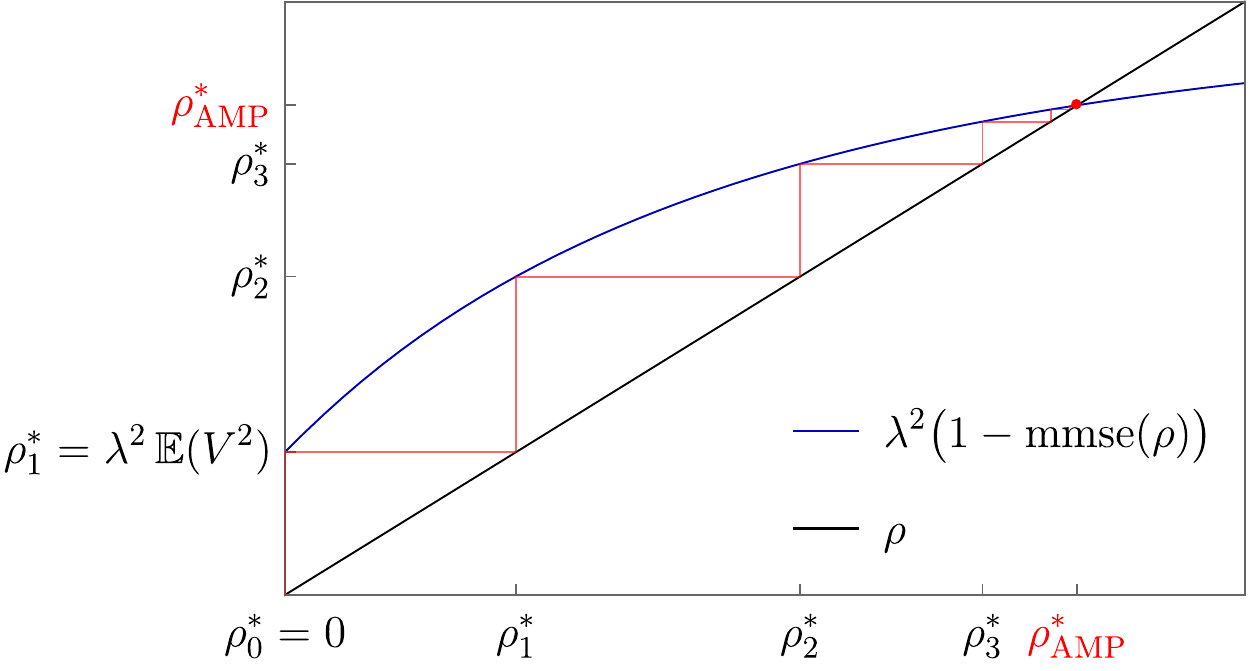}
\hspace{10pt}
\includegraphics[height=0.18\textheight]{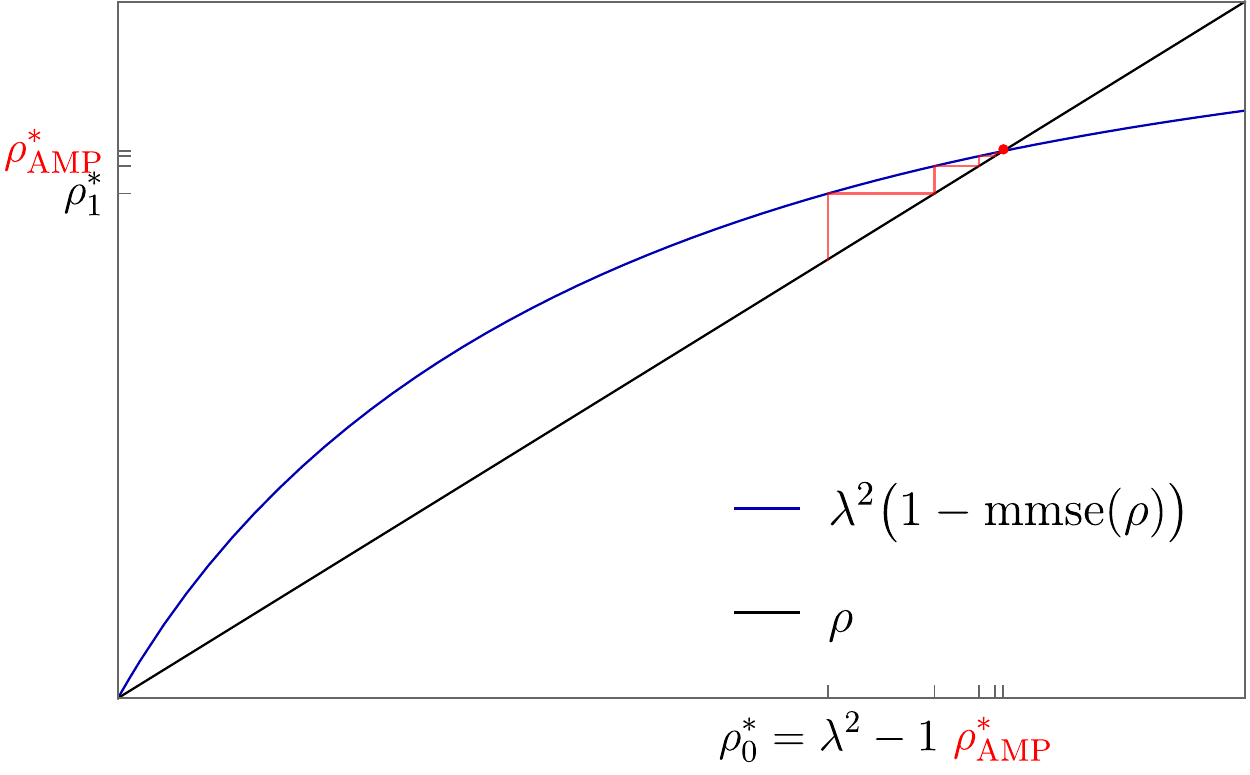}
\end{center}
\caption{\label{fig:bayesamp}`Cobweb diagrams' illustrating the conclusion of Theorem~\ref{thm:bayesamp}(a) that $\rho_k^*\nearrow\rho_\AMP^*$ as $k\to\infty$, under (i) and (ii) respectively with $\lambda=1.7$; note that $1-\mmse(0)=\E(V)^2$:\\[1ex]\emph{Left, non-spectral initialisation}: $V\sim\pi=\frac{3}{4}\delta_0+\frac{1}{4}\delta_2$, with $\E(V)=1/2\neq 0$ and $\E(V^2)=1$: convergence to $\rho_\AMP^*$ occurs when $\rho_0^*=0$.\\[1ex]\emph{Right, spectral initialisation}: $V\sim\pi=\frac{1}{2}\delta_{-1}+\frac{1}{2}\delta_1$, with $\E(V)=0$ and $\E(V^2)=1$: convergence to $\rho_\AMP^*$ occurs only if $\rho_0^*>0$.}
\end{figure}

To understand the implications of (b) above, suppose that we wish to use AMP to obtain estimates $\tilde{v}\equiv\tilde{v}(n)$ of $v\equiv v(n)$ with small (asymptotic) $\ell_1$ estimation error $n^{-1}\sum_{i=1}^n\,\abs{\tilde{v}_i-v_i}$. In view of~\eqref{eq:bayesdev} and Theorem~\ref{thm:bayesamp}(b), with $\psi$ taken to be absolute error loss $\psi_1\colon (x,y)\mapsto\abs{x-y}$, we should first run Bayes-AMP to obtain the highest possible effective signal-to-noise ratio $\rho_k^*$ at every iteration. Then for each $k\in\N$, we should consider $g_{k,\psi_1}^*\colon\R\to\R$ for which $g_{k,\psi_1}^*(y)$ is a median of the conditional (i.e.\ posterior) distribution of $V$ given $\mu_k V+\sigma_k G=y$ for (Lebesgue almost) every $y\in\R$. If we can find a Lipschitz $g_{k,\psi_1}^*$ with this property, then $\hat{v}^{k,\psi_1}:=g_{k,\psi_1}^*(v^{k,\mathrm{B}})$ attains the lowest possible limiting mean absolute error $R_{\pi,\psi_1}(\rho_k^*)=\inf_g\E\bigl\{\bigl|V-g\bigl(\sqrt{\rho_k^*}\,V+G\bigr)\bigr|\bigr\}$, among all estimators obtained from the $k^{th}$ iteration of some AMP algorithm of the form~\eqref{eq:AMPmatsym}. In cases where there is no suitable Lipschitz $g_{k,\psi_1}^*$, for example when $V$ has a discrete distribution, one possible modification of the approach above would be to replace $g_{k,\psi_1}^*$ with a Lipschitz approximation when constructing the estimator, in the hope that the resulting asymptotic $\ell_1$ error is close to $R_{\pi,\psi_1}(\rho_k^*)$.

As for Theorem~\ref{thm:bayesamp}(c), one can compare the asymptotic mean squared error~\eqref{eq:AMPesterr} and empirical correlation~\eqref{eq:AMPcorr} achieved by Bayes-AMP with the corresponding Bayes optimal quantities (i.e.\ the best possible limiting values that can be attained by \emph{any} estimator). In a spiked model~\eqref{eq:symspike} where the entries of $v\equiv v(n)$ are i.i.d.\ with distribution $\pi$, closed-form asymptotic expressions for the Bayes estimator $\E(v\,|\,A)$ were rigorously established by~\cite{barbier2016mutual} and~\citet{lelarge2019fundamental}. It turns out that the Bayes optimal performance is characterised by a fixed point $\rho_{\mathrm{B}}^*$ of $\rho=\lambda^2\bigl(1-\mmse(\rho)\bigr)$ that maximises a specific free-energy functional; see~\citet[Section~2.4]{MV21} for further details. Thus, we can precisely characterise the performance gap between Bayes-AMP and Bayes optimal estimation for symmetric rank-one matrix estimation. In particular, when the equation $\rho=\lambda^2\bigl(1-\mmse(\rho)\bigr)$ has a unique positive solution (as is the case for the $U\{-1,1\}$ prior in Figure~\ref{fig:amse}), Bayes-AMP achieves the Bayes optimal performance. Furthermore, in cases where $\rho_\AMP^*\neq\rho_{\mathrm{B}}^*$ (i.e.\ AMP is not Bayes optimal), there is currently no known polynomial-time algorithm that is superior to Bayes-AMP in terms of the limiting effective signal-to-noise ratio in~\eqref{eq:AMPcorr}.
\begin{remark}
Suppose that the limiting prior distribution $\pi$ is symmetric, i.e.\ $V\eqd -V$, in which case $v$ and $-v$ are asymptotically indistinguishable. Then $\E(V)=0$, and as mentioned in Remark~\ref{rem:spectralsign}, it is not possible to consistently choose the sign of the spectral initialiser in a data-driven way, so as to ensure that $\ipr{\hat{\varphi}}{v}_n\geq 0$ for each $n$. Nevertheless, the two possible state evolution trajectories for Bayes-AMP (with spectral initialisation) are easily seen to be identical up to the sign of each $\mu_k$, so the limits in~\eqref{eq:AMPesterr} and~\eqref{eq:AMPcorr} remain valid for \[\min_{\epsilon\in\{-1,1\}}\norm{\hat{v}^{k,\mathrm{B}}-\epsilon v}_n^2\quad\;\;\text{and}\quad\;\;\min_{\epsilon\in\{-1,1\}}\frac{\ipr{\hat{v}^{k,\mathrm{B}}}{\epsilon v}_n}{\norm{\hat{v}^{k,\mathrm{B}}}_n\norm{v}_n}\quad\;\;\text{respectively.}\]
\end{remark}
\begin{remark}
If the limiting prior distribution $\pi$ is known but some or all of $\lambda,\mu_0,\sigma_0$ are not, then starting with $\hat{v}^0\equiv\hat{v}^0(n)$ for some~$n$, we can construct an `\emph{empirical Bayes-AMP algorithm}' based on estimates of $\mu_k,\sigma_k$ for each $k$. Specifically, recalling Remark~\ref{rem:musigmaest} and proceeding inductively, we can use~\eqref{eq:tweedie} to define $\hat{g}_k^*$ based on \[\hat{\mu}_k:=\bigl(\norm{v^k}_n^2-\norm{\hat{v}^{k-1}}_n^2\bigr)^{1/2}\quad\text{and}\quad\hat{\sigma}_k:=\norm{\hat{v}^{k-1}}_n,\]
and then obtain $\hat{v}^k=\hat{g}_k^*(v^k)$ and $v^{k+1}$ via~\eqref{eq:AMPmatsym} for each~$k$. Alternatively, since $\mu_k^*=\lambda(\sigma_k^*)^2\geq 0$ in Bayes-AMP, we could instead take $\hat{\mu}_k=\hat{\lambda}\hat{\sigma}_k^2=\hat{\lambda}\norm{\hat{v}^{k-1}}_n^2$, where \[\hat{\lambda}:=\frac{\lambda_1(A)+\sqrt{\lambda_1(A)^2-4}}{2}\]
is a strongly consistent estimator of $\lambda$ by~\eqref{eq:princeigen}. Yet another approach is to first define $(\hat{\rho}_k)$ recursively by $\hat{\rho}_1:=\hat{\mu}_1^2/\hat{\sigma}_1^2$ and $\hat{\rho}_{k+1}:=\hat{\lambda}^2\bigl(1-\mmse(\hat{\rho}_k)\bigr)$ for each $k$. In view of~\eqref{eq:rhok}, we can then estimate $\mu_k,\sigma_k$ by $\hat{\rho}_{k+1}/\hat{\lambda}$ and $\hat{\rho}_{k+1}/\hat{\lambda}^2$ respectively, and use these to define $\hat{g}_k^*$ and hence $\hat{v}^k,v^{k+1}$ for each $k$ as above. The theoretical guarantees in Theorems~\ref{thm:AMPlowsym} and~\ref{thm:spectral} extend fairly straightforwardly to empirical Bayes-AMP; see~\citet[Lemma~G.1]{MV21}.
\end{remark}
\unparskip
\textbf{Sparse signal recovery}: To give another example where the AMP procedure~\eqref{eq:AMPmatsym} can be specialised appropriately, suppose that the exact distribution $\pi$ of $V$ is not known, but that for some fixed $c\in (0,1)$ and every $n\in\N$, the spike $v\equiv v(n)$ is known to have at most $sn$ non-zero entries. This implies that $\pi$ satisfies $\pi(\{0\})\geq 1-s$. In line with the classical theory on denoising sparse vectors~\citep[Section~9.3]{DJ94a,DJ98,Mon12}, we can take $(g_k)_{k\in\N_0}$ to be a sequence of soft-thresholding functions
\[
g_k(y)=\ST_{t_k}(y):=\sgn(y)(\abs{y}-t_k)_+,
\]
so that the AMP algorithm~\eqref{eq:AMPmatsym} becomes
\begin{align}
\label{eq:AMPsofthr}
\hat{v}^k=\ST_{t_k}(v^k),\qquad b_k=\frac{1}{n}\sum_{i=1}^n\Ind_{\{\hat{v}_i^k\neq 0\}}\equiv\frac{\norm{\hat{v}^k}_0}{n},\qquad v^{k+1}=A\hat{v}^k-b_k\hat{v}^{k-1}\qquad\text{for }k\in\N_0.
\end{align}
When each of the thresholds $t_k\in (0,\infty)$ is suitably chosen in terms of $\lambda$ and the state evolution parameter $\sigma_k$ (or consistent estimators thereof),~\citet{MV21} establish lower bounds on the effective signal-to-noise ratios $\rho_k=(\mu_k/\sigma_k)^2$ that hold uniformly over the class of distributions $\pi$ with $\pi(\{0\})\geq 1-\delta$. In conjunction with Corollary~\ref{cor:AMPlowsym}, this analysis leads to a theoretical guarantee on the performance of~\eqref{eq:AMPsofthr} for any sequence of $n\delta$-sparse spikes $v\equiv v(n)$ satisfying~\ref{ass:S1}; see Proposition~2.1 in the aforementioned paper.

We also mention that~\citet{BMR20} recently established statistical and computational limits for sparse signal recovery in an asymptotic regime where the expected number of non-zero entries of $v\equiv v(n)$ is a sublinear function of $n$. Specifically, for each $n$, the entries of $v$ are drawn independently from a prior $\pi_n$ with $\pi_n(\{0\})\geq 1-s_n$ and $s_n\to 0$ as $n\to\infty$. In this setting, the analysis makes use of finite-sample versions of the AMP master theorems (see Remark~\ref{rem:finitesample}).

In summary, the state evolution characterisation of the AMP algorithm~\eqref{eq:AMPmatsym} allows us to choose the functions $g_k$ in a principled way, depending on the prior information available about the signal $v$.  A poor choice of $(g_k)$ will lead to low effective signal-to-noise ratios $\rho_k=(\mu_k/\sigma_k)^2$, but the asymptotic convergence results~\eqref{eq:AMPlowsym}--\eqref{eq:asymcorr} will continue to hold provided that the hypotheses of Theorem~\ref{thm:AMPlowsym} or~\ref{thm:spectral} are satisfied.

A key strength of the AMP framework is that it gives us the flexibility to choose non-linear functions $g_k$, such as the soft-thresholding functions above. Note that the MMSE denoising functions $g_k^*$ in~\eqref{eq:gkbayes} are non-linear except in special cases (such as when $V$ is Gaussian). 
Nevertheless, iterations with linear~$g_k$ can sometimes be useful as a theoretical device for obtaining distributional information about spectral estimators, as the following example shows; see also~\citet{MTM20} and~\citet{MM20}.

\textbf{Connection with the power method}: Suppose that we initialise~\eqref{eq:AMPmatsym} with $\hat{v}^0\equiv \hat{v}^0(n):=\mu_0 v+\xi$, where $\mu_0\neq 0$ and $\xi\equiv\xi(n)\sim N_n(0,I_n)$, and define
\begin{equation}
\label{eq:gklinear}
\beta_k:=\sqrt{1+\mu_k^2},\qquad g_k(x):=\frac{x}{\beta_k}\quad\;\;\text{for }\,x\in\R,\qquad\mu_{k+1}:=\frac{\lambda}{\sqrt{1+\mu_k^{-2}}}\quad\text{for }k\in\N_0.
\end{equation}
These functions $g_k$ are constructed in a such a way that the corresponding state evolution formula~\eqref{eq:statevolsymat} yields $\sigma_k^2=1$ for every $k$, and parameters $\mu_k$ that coincide exactly with those defined in~\eqref{eq:gklinear}. Observe now that the AMP iteration~\eqref{eq:AMPmatsym} corresponding to~\eqref{eq:gklinear} yields $\bigl(\hat{v}^k\equiv\hat{v}^k(n):k\in\N\bigr)$ satisfying $\beta_0\hat{v}^1=v^1=A\hat{v}^0$ and
\begin{equation}
\label{eq:AMPpower}
\rbr{\beta_k+\frac{1}{\beta_{k-1}}}\hat{v}^{k+1}-\frac{1}{\beta_{k-1}}(\hat{v}^{k+1}-\hat{v}^{k-1})=A\hat{v}^k\quad\text{for }k\in\N.
\end{equation}
The key steps in the theoretical analysis of~\eqref{eq:AMPpower} can be summarised as follows:

\unparskip
\begin{enumerate}[label=(\Roman*)]
\item When $\lambda>1$, some elementary analysis (e.g.\ based on the contraction mapping theorem) shows that $\sqrt{\lambda^2-1}$ is a stable fixed point of the deterministic recursion for $(\mu_k)$ in~\eqref{eq:gklinear}, and hence that $\beta_k\to\lambda$ as $k\to\infty$.
\item Using Theorem~\ref{thm:AMPlowsym} and the covariance matrix defined in~\eqref{eq:Sigmabarmat}, we can obtain the $d_2$ limit of the joint empirical distribution of the components of $\hat{v}^{k+1}\equiv\hat{v}^{k+1}(n)$ and $\hat{v}^{k-1}\equiv \hat{v}^{k-1}(n)$ as $n\to\infty$; in particular, $\norm{\hat{v}^{k+1}}_n\cvc 1$. It then follows from (I) and routine arguments that $\lim_{k\to\infty}\clim_{n\to\infty}\norm{\hat{v}^{k+1}-\hat{v}^{k-1}}_n=0$. In other words, $\norm{\hat{v}^{k+1}-\hat{v}^{k-1}}_n$ converges completely to some deterministic limit $\ell_k$ as $n\to\infty$ for each fixed $k$, and $\ell_k\to 0$ as $k\to\infty$.
\item Thus, writing~\eqref{eq:AMPpower} in the form $A\hat{v}^k=(\lambda+\lambda^{-1})\hat{v}^{k+1}+\vartheta_k$ for $k,n\in\N$, where \[\vartheta_k\equiv\vartheta_k(n):=\biggl\{\biggl(\beta_k+\frac{1}{\beta_{k-1}}\biggr)-\biggl(\lambda+\frac{1}{\lambda}\biggr)\biggr\}\,\hat{v}^{k+1}-\frac{1}{\beta_{k-1}}(\hat{v}^{k+1}-\hat{v}^{k-1}),\]
we deduce from (I) and (II) that $\lim_{k\to\infty}\clim_{n\to\infty}\norm{\vartheta_k}_n=0$.
\end{enumerate}

\unparskip
Using these ingredients and the fact that the limiting spectral gap of $A\equiv A(n)$ is strictly positive when $\lambda>1$, it can be established that \[\lim_{k\to\infty}\clim_{n\to\infty}\,\frac{\abs{\ipr{\hat{v}^k}{\hat{\varphi}}_n}}{\norm{\hat{v}^k}_n}=1.\]
This shows that the specific instance~\eqref{eq:AMPpower} of the AMP iteration is asymptotically equivalent to the well-known power method for approximating $\hat{\varphi}$, although the dependence of $\hat{v}^0$ on the unknown $v$ means that we cannot use~\eqref{eq:AMPpower} as an algorithm in practice. Nevertheless, this asymptotic equivalence ensures that we can apply Theorem~\ref{thm:AMPlowsym} to obtain the $d_2$ convergence result in Proposition~\ref{prop:power} for the joint empirical distribution of the components of $\hat{\varphi}$ and the signal $v$.

\hfparskip
\subsection{Confidence intervals and \texorpdfstring{$p$}{p}-values}
\label{sec:lowrankconf}
As a consequence of Theorem~\ref{thm:AMPlowsym}, recall from the discussion after Corollary~\ref{cor:AMPlowsym} that for fixed $k$ and large $n$, the AMP iterates (i.e.\ effective observations) $v^k\equiv v^k(n)$ in the generic procedure~\eqref{eq:AMPmatsym} have the property that
$\{(v_i^k-\mu_k v_i)/\sigma_k:1\leq i\leq n\}$ behaves approximately like an i.i.d.\ sample of size $n$ from the $N(0,1)$ distribution. Thus, for a given $\alpha\in [0,1]$, we would expect roughly $n(1-\alpha)$ of these components to have absolute value at most $z_{\alpha/2}:=\Phi^{-1}(1-\alpha/2)$, where $\Phi^{-1}$ denotes the quantile function of the $N(0,1)$ distribution. Using this observation, we will now outline briefly how to construct confidence intervals for the entries of $v\equiv v(n)$, as well as associated $p$-values. By Remark~\ref{rem:musigmaest}, the (possibly unknown) state evolution parameters $\mu_k,\sigma_k$ can be estimated consistently by $\hat{\mu}_k\equiv\hat{\mu}_k(n)=\bigl(\norm{v^k}_n^2-\norm{v^{k-1}}_n^2\bigr)^{1/2}$ and $\hat{\sigma}_k\equiv\hat{\sigma}_k(n)=\norm{v^{k-1}}_n$ respectively for each $k\in\N$, so we define
\begin{equation}
\label{eq:lowrankconf}
\hat{J}_i^k(n,\alpha):=\biggl[\frac{v_i^k-z_{\alpha/2}\,\hat{\sigma}_k}{\hat{\mu}_k},\,\frac{v_i^k+z_{\alpha/2}\,\hat{\sigma}_k}{\hat{\mu}_k}\biggr]\quad\text{and}\quad p_i^k\equiv p_i^k(n)=2\,\biggl\{1-\Phi\biggl(\frac{|v_i^k|}{\hat{\sigma}_k}\biggr)\biggr\}
\end{equation}
for $k,n\in\N$, $1\leq i\leq n$ and $\alpha\in [0,1]$. \citet[Corollary~3.1]{MV21} showed that for fixed $k\in\N$ and $\alpha\in [0,1]$, the confidence intervals $\hat{J}_1^k(n,\alpha),\dotsc,\hat{J}_n^k(n,\alpha)$ have asymptotic mean coverage level $1-\alpha$; specifically,
\[\clim_{n\to\infty}\frac{1}{n}\sum_{i=1}^n\Ind_{\{v_i(n)\in\hat{J}_i^k(n,\alpha)\}}=1-\alpha=\lim_{n\to\infty}\frac{1}{n}\sum_{i=1}^n\Pr\bigl(v_i(n)\in\hat{J}_i^k(n,\alpha)\bigr).\]
The first limit above can be established by considering Lipschitz approximations to indicator functions of intervals and appealing to either Theorem~\ref{thm:AMPlowsym} or~\ref{thm:spectral} (for non-spectral and spectral initialisations respectively). The dominated convergence theorem can then be applied to deduce the second equality from the first. Note that for fixed $k,\alpha$, the asymptotic width of each $\hat{J}_i^k(n,\alpha)$ is $2z_{\alpha/2}/\rho_k$, which is minimised when the empirical Bayes-AMP iterates are used to construct these intervals.

In addition, suppose that the proportion of non-zero entries in the spike $v\equiv v(n)$ tends to $\delta\in (0,1)$ as $n\to\infty$. Then the result cited above asserts that the $p$-values defined in~\eqref{eq:lowrankconf} are asymptotically valid for the nulls $\mathcal{N}_n:=\{1\leq i\leq n:v_i\equiv v_i(n)=0\}$ in the following sense: for any sequence of indices $\bigl(i_0(n)\in\mathcal{N}_n:n\in\N\bigr)$ and all fixed $k\in\N$ and $\alpha\in [0,1]$, we have $\lim_{n\to\infty}\Pr(p_{i_0(n)}^k\leq\alpha)=\alpha$. 

\hfparskip
\subsection{AMP for more general low-rank matrix estimation problems}
\label{sec:lowrankgeneral}

\textbf{Estimation of a rectangular rank-one matrix}: Let $A\in\R^{n \times p}$ be an observation matrix given by
\begin{equation}
\label{eq:rectspike}
A\equiv A(n)=\frac{\lambda}{n}uv^\top+W',
\end{equation} 
where $W'$ is a Gaussian noise matrix with $W_{ij}'\iid N(0,1/n)$ for $1\leq i\leq n$ and $1\leq j\leq p$, and seek to estimate one or both of the unknown vectors $u\in\R^n$ and $v\in\R^p$.	

An important example of this observation scheme is a spiked covariance model \citep{JohnstoneICM,johnstoneLu09consistency} where $a_1,\dotsc,a_n\iid N_p(0,\Sigma)$  with $\Sigma:=(\lambda vv^\top+I_p)/n\in\R^{p\times p}$. In this case, the matrix $A\in\R^{n\times p}$ with rows $a_1,\dotsc,a_n$ is of the form~\eqref{eq:rectspike} with $u\sim N_n(0,I_n)$.

By analogy with the symmetric case in Section~\ref{sec:rankone}, an AMP algorithm for the model~\eqref{eq:rectspike} can be obtained by replacing the Gaussian matrix $W$ in the abstract asymmetric AMP iteration~\eqref{eq:AMPnonsym} with the data matrix $A$~\citep{rangan2012iterative,Deshpande2014}. For $k\in\N_0$ and generic sequences of Lipschitz functions $(f_k)_{k=0}^\infty$ and $(g_k)_{k=0}^\infty$ satisfying~\ref{ass:S2}, the corresponding AMP procedure takes the form
\begin{equation}
\label{eq:AMPmatrect} 
\begin{alignedat}{2}
u^k&:=Af_k(v^k)-b_k g_{k-1}(u^{k-1}),\qquad&c_k&:=n^{-1}\textstyle\sum_{i=1}^n g_k'(u_i^k),\\
v^{k+1}&:=A^\top g_k(u^k)-c_k f_k(v^k),\qquad&b_{k+1}&:=n^{-1}\textstyle\sum_{i=1}^p f_{k+1}'(v_i^{k+1})\\
\end{alignedat}
\end{equation}
for $k\in\N_0$. Based on an appropriate state evolution recursion, analogues of Theorems~\ref{thm:AMPlowsym} and~\ref{thm:spectral} can be formulated for~\eqref{eq:AMPmatrect} with non-spectral and spectral initialisations respectively. These results apply to an asymptotic regime where $n,p\to\infty$ with $n/p\to\delta$ for some $\delta\in (0,1)$, and where a version of~\ref{ass:S1} holds (with $\norm{u}_n,\norm{v}_p\cvc 1$ and the empirical distributions of the components of $u$ and $v$ converging completely in $d_2$ to suitable limits). A suitable spectral initialiser for~\eqref{eq:AMPmatrect} is $v^0=\hat{\varphi}^R$, a principal right singular vector of $A$ with $\norm{\hat{\varphi}^R}_p=1$~\citep[Section~4]{MV21}. The associated spectral threshold is at $\sqrt{\delta}$: if $n/p\to\delta$ and $\lambda>1/\sqrt{\delta}$, then the limiting empirical correlation 
$\abs{\ipr{\hat{\varphi}^R}{v}_n}/\norm{v}_n$ is strictly positive~\citep{paul2007asymptotics,BaiSilverstein}.

\textbf{Estimation of rank-$s$ matrices for $s>1$}: The general rank-$s$ spiked models take the form
\begin{equation}
\label{eq:genspike}
A=\sum_{j=1}^s\frac{\lambda_j}{n}v_j v_j^\top+W\quad\text{(symmetric)};\qquad 
A=\sum_{j=1}^s\frac{\lambda_j}{n}u_j v_j^\top+W'\quad\text{(asymmetric)},
\end{equation}
where $\lambda_1\geq\cdots\geq\lambda_s$ and the noise matrices $W,W'$ are as in~\eqref{eq:AMPmatsym} and~\eqref{eq:AMPmatrect} respectively.~\citet{PSC14a,PSC14b},~\citet{kabashima2016phase},~\citet{lesieur2017constrained} and~\citet{MV21} proposed generalisations of the AMP algorithms~\eqref{eq:AMPmatsym} and~\eqref{eq:AMPmatrect} for estimating $u_1,\dotsc,u_s,v_1,\dotsc,v_s$ and hence the signal matrices in~\eqref{eq:genspike}. For $s>1$, the main difference with the rank-one case is that the iterates in these procedures are matrices rather than vectors. When the initialiser is a matrix consisting of eigenvectors corresponding to the $s$ largest eigenvalues of~$A$, a rigorous state evolution result was obtained by~\citet[Section~6]{MV21}.
Additional complications arise in degenerate cases where $\lambda_1,\dotsc,\lambda_s$ are not all distinct.

\textbf{Universality}: As mentioned in the Introduction, the theoretical framework for AMP was originally built around Gaussian random matrices, but the conclusions of Theorems~\ref{thm:AMPlowsym} and~\ref{thm:spectral} (as well as the master theorems in Section~\ref{Sec:Master}) have now been extended to encompass more general random matrix ensembles. In so-called `spiked Wigner' models of the form~\eqref{eq:symspike}, the symmetric noise matrices $W\equiv W(n)$ have independent upper-triangular entries $(W_{ij}:1\leq i\leq j\leq n)$ that are uniformly subexponential across $n\in\N$ with $\E(W_{ij})=0$ and $\Var(W_{ij})=(1+\delta_{ij})/n$.
It was previously known that the eigenstructure of the corresponding observation matrix $A$ undergoes the BBP phase transition described in Section~\ref{sec:rankone} at the same spectral threshold $\lambda=1$ as for `spiked GOE' matrices; see for instance~\citet{AGZ10},~\citet{KY13} and~\citet{PWBM18}. Recently,~\citet[Examples~2.1 and 2.2]{CL20} used the method of Slepian interpolation to prove that in AMP algorithms of the form~\eqref{eq:AMPmatsym} based on matrices $A$ from rank-one spiked Wigner models, the iterates have the same asymptotics as in the original Gaussian setting, with or without spectral initialisation.

In a different direction,~\citet[Section~3]{Fan20} developed a more general class of AMP procedures for symmetric and rectangular rank-one spiked models~(\ref{eq:symspike},\,\ref{eq:rectspike}) in which the noise matrices are orthogonally invariant. In the symmetric case, this means that $W\equiv W(n)$ satisfies $W\eqd Q^\top WQ$ for all deterministic orthogonal $Q\in\R^{n\times n}$, and it can be shown that the only such $W$ with independent, mean-zero upper-triangular entries are scalar multiples of GOE matrices~\citep[e.g.][]{Meh04}. For other orthogonally invariant $W$,~\citet{Fan20} showed that
the original symmetric AMP algorithm~\eqref{eq:AMPmatsym} can be modified to allow each $v^{k+1}$ to depend on all of the previous iterates via
\begin{equation}
\label{eq:AMPFan}
\hat{v}^k=g_k(v^k)\quad\text{and}\quad v^{k+1}=A\hat{v}^k-\sum_{j=1}^k b_{kj}\hat{v}^{j-1}\quad\text{for }k\in\N_0,
\end{equation}
in such a way that the joint empirical distributions have well-defined Wasserstein limits. To achieve this, the technical crux is to design suitable Onsager coefficients $b_{k1},\dotsc,b_{kk}$ that depend on the limiting spectral distribution of $W$ (when it exists) through its moments and free cumulants, which also appear in the resulting state evolution recursion.
Asymptotic convergence results similar in spirit to Theorems~\ref{thm:AMPlowsym} and~\ref{thm:bayesamp} can then be established for iterations of the form~\eqref{eq:AMPFan} and their Bayes-AMP versions. As in Section~\ref{sec:lowrankgk}, it turns out that for large $k$, these Bayes-AMP estimates $\hat{v}^k$ of the spike $v$ can substantially improve on the spectral estimator (namely a leading eigenvector of the observation matrix $A$) in terms of asymptotic mean squared error.

\section{GAMP for generalised linear models}
\label{Sec:GAMP}
In this section, we give a unified treatment of a class of AMP algorithms for models of the following generic form: suppose that we generate a design matrix $X\in\R^{n\times p}$ with rows $x_1,\dotsc,x_n\in\R^p$, and observe $y\equiv(y_1,\dotsc,y_n)\in\R^n$ satisfying
\begin{equation}
\label{eq:glm1}
y_i=h(x_i^\top\beta,\varepsilon_i)\quad\text{for }i=1,\dotsc,n,
\end{equation}
where $\beta\equiv(\beta_1,\dotsc,\beta_p)$ is the target of inference, $\varepsilon\equiv(\varepsilon_1,\dotsc,\varepsilon_n)$ is a vector of noise variables and $h\colon\R^2\to\R$ is a known function. We will focus on the random design setting where $x_1,\dotsc,x_n\iid N_p(0,I_p/n)$, which is a common assumption in high-dimensional statistics and compressed sensing. Frequently, $\varepsilon_1,\dotsc,\varepsilon_n$ are assumed to be independent of each other and of $X$, in which case~\eqref{eq:glm1} becomes
\begin{equation}
\label{eq:glm2}
y_i\,|\,x_i\sim Q_i(\cdot\,|\,x_i^\top\beta),
\end{equation}
where $Q_i(\cdot\,|\,z)$ denotes the distribution of $h(z,\varepsilon_i)$ for a fixed $z\in\R$ and $1\leq i\leq n$. 
In statistics,~\eqref{eq:glm2} is traditionally referred to as a \emph{generalised linear model} (GLM) for $(x_1,y_1),\dotsc,(x_n,y_n)$ if the conditional distributions of $y_i$ given $x_i$ have densities of \emph{exponential dispersion family form}~\citep{PS97}
\begin{equation}
\label{eq:glm3}
u\mapsto a(\sigma_i^2,u)\exp\biggl\{\frac{u\Theta(\mu_i)-K(\Theta(\mu_i))}{\sigma_i^2}\biggr\}
\end{equation}
with respect to either Lebesgue measure on $\R$ or counting measure on $\Q$. In~\eqref{eq:glm3}, the mean parameter $\mu_i\in\mathcal{M}\subseteq\R$ is related to $x_i$ via $\mu_i=\eta^{-1}(x_i^\top\beta)$ for some strictly increasing, twice differentiable \emph{link function} $\eta$, and $\sigma_i\in\mathcal{D}\subseteq (0,\infty)$ is the \emph{dispersion parameter}, while $a,K,\Theta$ are fixed functions with $K''>0$ on $\R$ and $\Theta=(K')^{-1}$. The GLM framework encompasses a broad class of parametric models, including the standard linear model, phase retrieval (where $y_i=(x_i^\top\beta)^2+\varepsilon_i$ for $1\leq i\leq n$), and logistic, binomial and Poisson regression~\citep[e.g.][]{MN89,Agr15}. Sometimes, `GLM' is used as an umbrella term to describe more general models of the form~(\ref{eq:glm1},\,\ref{eq:glm2}).

Likelihood-based inference for $\beta$ in~(\ref{eq:glm2},\,\ref{eq:glm3}) is justified by classical asymptotic theory when $p$ is fixed and $n\to\infty$, or when $p$ grows sufficiently slowly with $n$~\citep{Por84,Por85,Por88}. 
However, in modern high-dimensional regimes where $n,p\to\infty$ and the aspect ratio $n/p$ of the design matrix $X$ is bounded, different tools are needed to construct and analyse estimators of $\beta$, and it is in this context that we introduce the GAMP paradigm below.

\hfparskip
\subsection{Master theorem for GAMP}
\label{sec:GAMPmaster}
The \emph{generalised AMP \emph{(GAMP)} algorithm} proposed by~\citet{RanganGAMP} iteratively produces estimates $\hat{\beta}^k,\theta^k$ of $\beta\in\R^p$ and $\theta:=X\beta\in\R^n$ respectively in~\eqref{eq:glm1}, via update steps of the following form: given $\hat{r}^{-1}:=0\in\R^n$, $b_0\in\R$ and an initialiser $\hat{\beta}^0\in\R^p$, recursively define
\begin{equation}
\label{eq:GAMP}
\begin{alignedat}{3}
\theta^k&:=X\hat{\beta}^k-b_k\hat{r}^{k-1},\qquad&\hat{r}^k&:=g_k(\theta^k,y),\qquad&c_k&:=n^{-1}\textstyle\sum_{i=1}^n g_k'(\theta_i^k,y_i),\\
\beta^{k+1}&:=X^\top\hat{r}^k-c_k\hat{\beta}^k,\qquad&\hat{\beta}^{k+1}&:=f_{k+1}(\beta^{k+1}),\qquad&b_{k+1}&:=n^{-1}\textstyle\sum_{j=1}^p f_{k+1}'(\beta_j^{k+1}),
\end{alignedat}
\end{equation}
for $k\in\N_0$. Here, $g_k\colon\R^2\to\R$ and $f_{k+1}\colon\R\to\R$ are Lipschitz in their first argument, and $g_k'\colon\R^2\to\R$, $f_{k+1}'\colon\R\to\R$ agrees with the partial derivatives of $g_k,f_{k+1}$ respectively with respect to their first arguments, wherever the latter are defined. As in previous sections, these functions are understood to act componentwise on their vector arguments in~\eqref{eq:GAMP}. The goal of Section~\ref{Sec:GAMP} is to develop the theory and applications of GAMP, whose statistical utility can be summarised in the following key points:

\unparskip
\begin{enumerate}[label=(\roman*)]
\item \emph{Exact asymptotic characterisation via state evolution}: The Onsager correction terms $-b_k\hat{r}^{k-1}$, $-c_k\hat{\beta}^k$ are designed to ensure that in a high-dimensional limiting regime where $n,p\to\infty$ with $n/p\to\delta\in (0,\infty)$, the empirical distributions of the entries of the iterates in~\eqref{eq:GAMP} converge to well-defined Wasserstein limits. These asymptotic distributions are characterised by the state evolution recursion~\eqref{eq:GAMPstatevol1}--\eqref{eq:GAMPstatevol2} below. Consequently, for each fixed $k\in\N_0$, the entries of $\hat{\beta}^{k+1}\in\R^p$ have approximately the same empirical distribution as those of $f_{k+1}(\mu_k\beta+\sigma_k\xi)$ when $p$ is large; here, $\beta\in\R^p$ is the unknown signal, $\xi\sim N_p(0,I_p)$ is an independent noise vector, $\mu_k,\sigma_k$ are the effective signal strength and noise level respectively, and $f_{k+1}$ can be viewed as a denoising function. This result facilitates a targeted approach to inference for structured signals $\beta$, whereby informed choices of $(f_k,g_k:k\in\N_0)$ can be made to accommodate different types of prior information (Section~\ref{sec:GAMPbeta}). 
\item \emph{Link to convex optimisation problems}: For suitable choices of $f_k,g_k$, the GAMP recursion~\eqref{eq:GAMP} can be interpreted as an alternating minimisation procedure for solving a convex optimisation problem of the form~\eqref{eq:constropt}, and the fixed points of this iteration are minimisers of the convex objective function (Proposition~\ref{prop:GAMPopt} in Section~\ref{sec:GAMPopt}). Together with the state evolution description of~\eqref{eq:GAMP}, this forms the basis of a \emph{systematic} approach to deriving exact performance guarantees for the Lasso and other (penalised or unpenalised) M-estimators in high-dimensional GLMs (Sections~\ref{sec:lasso}--\ref{sec:logistic}).
\end{enumerate}

\unparskip
In this subsection, we address point (i) above and formally state a `master theorem' for GAMP (Theorem~\ref{thm:GAMPmaster}). Consider a sequence of recursions~\eqref{eq:GAMP} indexed by $n\in\N$ and $p\equiv p_n$, where $n/p\to\delta\in (0,\infty)$ as $n\to\infty$, and assume that

\unparskip
\begin{enumerate}[label=(G0)]
\item \label{ass:G0} For each $n$, the design matrix $X\equiv X(n)\in\R^{n\times p}$ has i.i.d.\ $N(0,1/n)$ entries and is independent of $\bigl(\hat{\beta}^0(n),\beta(n),\varepsilon(n)\bigr)\in\R^p\times\R^p\times\R^n$.
\end{enumerate}

\unparskip
At first sight, it would appear that the GAMP algorithm~\eqref{eq:GAMP} is an instance of the abstract asymmetric AMP recursion~\eqref{eq:AMPnonsym}, but in models~\eqref{eq:glm1} where~\ref{ass:G0} holds, the crucial difference in the probabilistic structure is that the observation vector $y\equiv y(n)\in\R^n$ is in general not independent of $X\equiv X(n)$. This means that condition~\ref{ass:B0} does not hold with $\gamma=y$, so the original master theorem for asymmetric AMP (Theorem~\ref{thm:AMPnonsym}) cannot be directly applied in this setting, and in fact does not give the correct limiting distributions for~\eqref{eq:GAMP}. 

Instead, Theorem~\ref{thm:GAMPmaster} below is derived from a general state evolution result for \emph{matrix-valued} AMP iterations (Section~\ref{sec:AMPmatrix}), under suitable analogues of~\ref{ass:B1}--\ref{ass:B5} on the inputs to the GAMP recursions~\eqref{eq:GAMP} as $n\to\infty$ and $n/p\to\delta$: for some $r\in [2,\infty)$, suppose that

\unparskip
\begin{enumerate}[label=(G\arabic*)]
\item \label{ass:G1} There exist random variables $\bar{\beta}\sim \pi_{\bar{\beta}}$ and $\bar{\varepsilon}\sim P_{\bar{\varepsilon}}$ with $\E(\bar{\beta}^2)>0$ and $\E(\abs{\bar{\beta}}^r),\E(\abs{\bar{\varepsilon}}^r)<\infty$, such that writing $\nu_p(\beta)$ and $\nu_n(\varepsilon)$ for the empirical distributions of the components of $\beta\equiv\beta(p)$ and $\varepsilon\equiv\varepsilon(n)$ respectively, we have $d_r\bigl(\nu_p(\beta),\pi_{\bar{\beta}}\bigr)\cvc 0$ and $d_r\bigl(\nu_n(\varepsilon),P_{\bar{\varepsilon}}\bigr)\cvc 0$.
\item \label{ass:G2} $\norm{\hat{\beta}^0}_{p,r}=O_c(1)$ and there exists a non-negative definite $\Sigma_0\in\R^{2\times 2}$ such that $\breve{\beta}^0:=(\beta\;\:\hat{\beta}^0)\in\R^{p\times 2}$ satisfies
\[\frac{1}{n}(\breve{\beta}^0)^\top\breve{\beta}^0=\frac{1}{n}\begin{pmatrix}\beta^\top\beta&\beta^\top\hat{\beta}^0\\(\hat{\beta}^0)^\top\beta&(\hat{\beta}^0)^\top\hat{\beta}^0\end{pmatrix}
\cvc\Sigma_0.\]
\item \label{ass:G3} There exists a Lipschitz $F_0\colon\R\to\R$ such that $\ipr{\hat{\beta}^0}{\phi(\beta)}_p\cvc\E\big(F_0(\bar{\beta})\phi(\bar{\beta})\bigr)$ and $\E\bigl(F_0(\bar{\beta})^2\bigr)\leq(\Sigma_0)_{22}$ for all Lipschitz $\phi\colon\R\to\R$.
\item \label{ass:G4} For each $k\in\N_0$, the function $f_{k+1}$ is non-constant on $\R$, and $\tilde{g}_k\colon (z,u,v)\mapsto g_k(u,h(z,v))$ is Lipschitz on $\R^3$ with $P_{\bar{\varepsilon}}\bigl(\{v:(z,u)\mapsto\tilde{g}_k(z,u,v)\text{ is non-constant}\}\bigr)>0$.
\end{enumerate}

\unparskip
We remark here that while~\ref{ass:G2} is in general a stronger requirement than~\ref{ass:B2}, both~\ref{ass:G2} and~\ref{ass:G3} are implied by~\ref{ass:G1} if for some fixed $c\in\R$ we have $\hat{\beta}^0\equiv\hat{\beta}^0(n)=c\mathbf{1}_p$ for all $n$.
As in Section~\ref{Sec:LowRank}, constraints on $\beta\equiv\beta(n)$ such as sparsity or entrywise non-negativity will be reflected in the form of the `limiting prior distribution' $\pi_{\bar{\beta}}$. Note that
\begin{equation}
\label{eq:kappa}
(\Sigma_0)_{11}=\clim_{n\to\infty}\,\biggl(\frac{p}{n}\cdot\frac{\norm{\beta}^2}{p}\biggr)=\frac{\E(\bar{\beta}^2)}{\delta}>0
\end{equation}
by~\ref{ass:G1} and~\ref{ass:G2}. Also, the condition on $\varepsilon\equiv\varepsilon(n)$ in~\ref{ass:G1} is satisfied if $\varepsilon_1,\dotsc,\varepsilon_n\iid P_{\bar{\varepsilon}}$ for each $n$. 

\textbf{State evolution}: With $\Sigma_0$ as in~\ref{ass:G2}, the state evolution parameters $\bigl(\mu_k\in\R,\,\sigma_k\in [0,\infty),\,\Sigma_k\in\R^{2\times 2}:k\in\N\bigr)$ are recursively defined by
\begin{equation}
\label{eq:GAMPstatevol1}
\mu_{k+1}:=\E\bigl(\partial_z\tilde{g}_k(Z,Z_k,\bar{\varepsilon})\bigr),\qquad\quad
\sigma_{k+1}^2:=\E\bigl(\tilde{g}_k(Z,Z_k,\bar{\varepsilon})^2\bigr)=\E\bigl(g_k(Z_k,Y)^2\bigr),
\end{equation}
\begin{equation}
\label{eq:GAMPstatevol2}
\Sigma_{k+1}:=\frac{1}{\delta}\begin{pmatrix}\E(\bar{\beta}^2)&\E\{\bar{\beta}f_{k+1}(\mu_{k+1}\bar{\beta}+\sigma_{k+1} G_{k+1})\}\\\E\{\bar{\beta}f_{k+1}(\mu_{k+1}\bar{\beta}+\sigma_{k+1} G_{k+1})\}&\E\{f_{k+1}(\mu_{k+1}\bar{\beta} + \sigma_{k+1}G_{k+1})^2\}\end{pmatrix}
\end{equation}
for $k\in\N_0$, where we take $(Z,Z_k)\sim N_2(0,\Sigma_k)$ to be independent of $\bar{\varepsilon}\sim P_{\bar{\varepsilon}}$, define $Y:=h(Z,\bar{\varepsilon})$, and take $G_{k+1}\sim N(0,1)$ to be independent of $\bar{\beta}\sim \pi_{\bar{\beta}}$. Under~\ref{ass:G4}, it can be shown as in Lemma~\ref{lem:posdef} that if $\sigma_1>0$, then  $\sigma_k>0$ and $\Sigma_k$ is positive definite for all $k\in\N$. In~\eqref{eq:GAMPstatevol1}, $\partial_z\tilde{g}_k$ denotes the partial derivative of $\tilde{g}_k$ with respect to its first argument; observe that by~\ref{ass:G4}, $z\mapsto\tilde{g}_k(z,u,v)$ is Lipschitz and hence differentiable almost everywhere for all $(u,v)\in\R^2$, so $\mu_{k+1}$ is well-defined. 

Stein's lemma (Lemma~\ref{lem:steinsigma}) can be used to derive some alternative expressions for $\mu_{k+1}$ that will be useful later on; see~\citet[Proposition~3.1]{MM20} or Section~\ref{sec:GAMPproofs} for the proof of the following lemma.
\begin{lemma}
\label{lem:GAMPstein}
For each $k\in\N$, letting $\tilde{G}_k\sim N(0,1)$ be independent of $(Z,\bar{\varepsilon})$, we have $(Z,Z_k,\bar{\varepsilon})\eqd (Z,\mu_{Z,k}Z+\sigma_{Z,k}\tilde{G}_k,\bar{\varepsilon})$, where 
\begin{equation}
\label{eq:zkmusigma}
\begin{split}
\mu_{Z,k}&:=\frac{\E\bigl(\bar{\beta}f_k(\mu_k\bar{\beta}+\sigma_k \tilde{G}_k)\bigr)}{\E(\bar{\beta}^2)}=\frac{\Sigma_{21}}{\Sigma_{11}},\\
\sigma_{Z,k}^2&:=\frac{\E(\bar{\beta}^2)\,\E\bigl(f_k(\mu_k\bar{\beta}+\sigma_k\tilde{G}_k)^2\bigr)-\E\bigl(\bar{\beta}f_k(\mu_k\bar{\beta}+\sigma_k\tilde{G}_k)\bigr)^2}{\delta\,\E(\bar{\beta}^2)}=\Sigma_{22}-\frac{\Sigma_{12}^2}{\Sigma_{11}},
\end{split}
\end{equation}
with $\Sigma\equiv\Sigma_k$. Thus, $\mu_{k+1}=\E\bigl(\partial_z\tilde{g}_k(Z,\mu_{Z,k}Z+\sigma_{Z,k}\tilde{G}_k,\bar{\varepsilon})\bigr)$ and $\sigma_{k+1}^2=\E\bigl(\tilde{g}_k(Z,\mu_{Z,k}Z+\sigma_{Z,k}\tilde{G}_k,\bar{\varepsilon})^2\bigr)$. Moreover,
\begin{equation}
\label{eq:GAMPstein}
\mu_{k+1}=\frac{\delta}{\E(\bar{\beta}^2)}\E\bigl(Zg_k(Z_k,Y)\bigr)-\mu_{Z,k}\,\E\bigl(g_k'(Z_k,Y)\bigr)=\E\biggl(\frac{\E(Z\,|\,Z_k,Y)-\E(Z\,|\,Z_k)}{\Var(Z\,|\,Z_k)}\,g_k(Z_k,Y)\biggr).
\end{equation}
\end{lemma}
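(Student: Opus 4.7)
\textbf{Proof plan for Lemma~\ref{lem:GAMPstein}.} The plan is to treat the three assertions separately, since each follows from a standard Gaussian manipulation once the setup is unpacked.

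\emph{Step 1 (bivariate Gaussian decomposition).} For the equality in distribution, I would invoke the classical regression decomposition: since $(Z,Z_k)\sim N_2(0,\Sigma_k)$, we may write $Z_k=(\Sigma_{21}/\Sigma_{11})Z+R$, where $R\sim N(0,\Sigma_{22}-\Sigma_{12}^2/\Sigma_{11})$ is independent of $Z$. Setting $\mu_{Z,k}:=\Sigma_{21}/\Sigma_{11}$ and $\sigma_{Z,k}^2:=\Sigma_{22}-\Sigma_{12}^2/\Sigma_{11}$, we may write $R=\sigma_{Z,k}\tilde{G}_k$ with $\tilde{G}_k\sim N(0,1)$ independent of $Z$. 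Because $\bar{\varepsilon}$ is independent of $(Z,Z_k)$ by construction, we can arrange for $\tilde{G}_k$ to be independent of $(Z,\bar{\varepsilon})$ on a suitable enlarged probability space, which yields $(Z,Z_k,\bar{\varepsilon})\eqd(Z,\mu_{Z,k}Z+\sigma_{Z,k}\tilde{G}_k,\bar{\varepsilon})$. The displayed alternative formulae for $\mu_{Z,k}$ and $\sigma_{Z,k}^2$ then follow by inserting the explicit expressions for the entries of $\Sigma_k$ from~\eqref{eq:GAMPstatevol2} and cancelling the common factor of $\delta^{-1}$. The assertions about $\mu_{k+1}$ and $\sigma_{k+1}^2$ immediately follow by substituting this representation of $Z_k$ into~\eqref{eq:GAMPstatevol1}.

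\emph{Step 2 (first Stein identity).} For the first equality in~\eqref{eq:GAMPstein}, I would apply Stein's lemma (in the bivariate form stated as Lemma~\ref{lem:steinsigma}) to $\phi(z,u):=\tilde{g}_k(z,u,\bar{\varepsilon})$ conditionally on $\bar{\varepsilon}$, using Lipschitz regularity of $\tilde{g}_k$ from~\ref{ass:G4} to justify applying Stein's lemma with weak derivatives. This gives
\[\E\bigl(Z\,\tilde{g}_k(Z,Z_k,\bar{\varepsilon})\bigm|\bar{\varepsilon}\bigr)=\Sigma_{11}\E\bigl(\partial_z\tilde{g}_k(Z,Z_k,\bar{\varepsilon})\bigm|\bar{\varepsilon}\bigr)+\Sigma_{12}\E\bigl(\partial_u\tilde{g}_k(Z,Z_k,\bar{\varepsilon})\bigm|\bar{\varepsilon}\bigr),\]
and taking expectations and recognising $\tilde{g}_k(Z,Z_k,\bar{\varepsilon})=g_k(Z_k,Y)$, $\partial_u\tilde{g}_k(z,u,v)=g_k'(u,h(z,v))$ (the derivative w.r.t.\ the first argument of $g_k$), rearranging for $\mu_{k+1}=\E(\partial_z\tilde{g}_k)$ and dividing by $\Sigma_{11}=\E(\bar{\beta}^2)/\delta$ yields the stated expression.

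\emph{Step 3 (second Stein identity, posterior-mean form).} For the posterior-mean expression, I would introduce the orthogonal decomposition $Z=\E(Z\mid Z_k)+R$, where $R\sim N\bigl(0,\Var(Z\mid Z_k)\bigr)$ is independent of $(Z_k,\bar{\varepsilon})$. Since $Y=h(Z,\bar{\varepsilon})=h(\E(Z\mid Z_k)+R,\bar{\varepsilon})$, the random variable $g_k(Z_k,Y)$ is a function of $(Z_k,R,\bar{\varepsilon})$, and I would apply Stein's lemma in the variable $R$ (again after conditioning on $(Z_k,\bar{\varepsilon})$). The chain rule gives $\partial_R g_k(Z_k,Y)=(\partial_2 g_k)(Z_k,Y)\cdot(\partial_1 h)(Z,\bar{\varepsilon})=\partial_z\tilde{g}_k(Z,Z_k,\bar{\varepsilon})$, so
\[\E\bigl(R\,g_k(Z_k,Y)\bigr)=\Var(Z\mid Z_k)\cdot\E\bigl(\partial_z\tilde{g}_k(Z,Z_k,\bar{\varepsilon})\bigr)=\Var(Z\mid Z_k)\cdot\mu_{k+1}.\]
Finally, iterated expectations give $\E(R\,g_k(Z_k,Y))=\E\bigl\{[\E(Z\mid Z_k,Y)-\E(Z\mid Z_k)]\,g_k(Z_k,Y)\bigr\}$, and dividing by $\Var(Z\mid Z_k)$ yields the claimed identity.

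\emph{Main obstacle.} The only non-mechanical step is being scrupulous about the chain rule for $\tilde{g}_k$ when $g_k$ and $h$ are only Lipschitz (so their derivatives exist only almost everywhere). In particular, one must verify that $\partial_z\tilde{g}_k=(\partial_2 g_k)(\cdot,h(\cdot))\cdot\partial_1 h$ holds in the weak sense required by the Lipschitz version of Stein's lemma, and that the event on which this chain rule fails is negligible under the joint law of $(Z,Z_k,\bar{\varepsilon})$. Given the Lipschitz hypotheses in~\ref{ass:G4} and the fact that $(Z,Z_k,\bar{\varepsilon})$ has a density that is absolutely continuous with respect to an appropriate product measure on $\R^2\times\R$, this is routine but requires a careful invocation of Rademacher's theorem in the spirit of the earlier remarks following~\eqref{eq:AMPsym}.
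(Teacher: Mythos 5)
Your proposal is correct and follows essentially the same route as the paper: the Gaussian regression decomposition for $(Z,Z_k)$, the bivariate Stein identity of Lemma~\ref{lem:steinsigma} applied to $\tilde{g}_k$ with $\bar{\varepsilon}$ held fixed, and a conditional Stein argument combined with the tower property for the posterior-mean expression for $\mu_{k+1}$. One remark: your ``main obstacle'' is not actually an obstacle, because the conditional Stein step only requires the weak derivative of the Lipschitz map $z\mapsto\tilde{g}_k(z,u,v)$ (guaranteed by~\ref{ass:G4}), applied after conditioning on $Z_k$ (equivalently, in the residual $Z-\E(Z\,|\,Z_k)$, as you do); the chain-rule factorisation $\partial_z\tilde{g}_k=g_k'(\cdot\,,h(\cdot))\,\partial_1 h$ never needs to be justified, and indeed can fail when $h$ is discontinuous (as in logistic regression) even though $\tilde{g}_k$ is Lipschitz, which is why the paper works with $D_1\tilde{g}_k$ directly rather than through $h$.
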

\unparskip
Before stating the main result of this subsection, we make an further regularity assumption that is similar to~\ref{ass:B5}.

\unparskip
\begin{enumerate}[label=(G5)]
\item \label{ass:G5} For each $k\in\N_0$, writing $D_k\subseteq\R^2$ for the set of discontinuities of $g_k'$, we have $\Pr\bigl((Z_k,Y)\in D_k\bigr)=0$, and $f_{k+1}'$ is continuous Lebesgue almost everywhere.
\end{enumerate}

\unparskip
\begin{theorem}
\label{thm:GAMPmaster}
Suppose that~\emph{\ref{ass:G0}}--\emph{\ref{ass:G5}} hold for a sequence of GAMP recursions~\eqref{eq:GAMP} indexed by $n$ and $p\equiv p_n$, with $n/p\to\delta\in (0,\infty)$ and $\sigma_1>0$. Then for each $k\in\N_0$, we have
\begin{align}
\label{eq:GAMPbeta}
&\sup_{\psi\in\PL_2(r,1)}\;\biggl|\frac{1}{p}\sum_{j=1}^p\psi(\beta_j^{k+1},\beta_j)-\E\bigl(\psi(\mu_{k+1}\bar{\beta}+\sigma_{k+1}G_{k+1},\bar{\beta})\bigr)\biggr|\cvc 0,\\
\label{eq:GAMPtheta}
&\sup_{\psi\in\PL_3(r,1)}\;\biggl|\frac{1}{n}\sum_{i=1}^n\psi(\theta_i^k,\theta_i,\varepsilon_i)-\E\bigl(\psi(\mu_{Z,k}Z+\sigma_{Z,k}\tilde{G}_k,Z,\bar{\varepsilon})\bigr)\biggr|\cvc 0
\end{align}
as $n,p\to\infty$ with $n/p\to\delta$, where $\theta_i\equiv\theta_i(n)=x_i^\top\beta$ for $n\in\N$ and $1\leq i\leq n$. 
\end{theorem}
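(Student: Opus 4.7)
The plan is to reduce the GAMP recursion~\eqref{eq:GAMP} to a \emph{matrix-valued} abstract asymmetric AMP iteration, so that the master theorem of Section~\ref{sec:AMPmatrix} can be applied. The central obstruction, as noted in the passage preceding the theorem, is that $y = h(X\beta,\varepsilon)$ is not independent of $X$, and so Theorem~\ref{thm:AMPnonsym} cannot be invoked directly with $\gamma = y$. The workaround is to augment the iterate with the signal $\beta$ itself, thereby absorbing the dependence through the quantity $\theta = X\beta$.

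Concretely, I would define the augmented $\R^{p\times 2}$-valued iterate $M^k := [\,\hat{\beta}^k \mid \beta\,]$, whose second column does not depend on $k$. Then $XM^k = [\,X\hat{\beta}^k \mid \theta\,]$, so $\theta$ is tracked as a column of an AMP iterate that is a linear function of $X$. This lets us rewrite the nonlinearity producing $\hat{r}^k$ as $\hat{r}^k = g_k(\theta^k, y) = \tilde{g}_k(\theta, \theta^k, \varepsilon)$, using the function $\tilde{g}_k(z,u,v) = g_k(u, h(z,v))$ from~\ref{ass:G4}; crucially, this is Lipschitz on $\R^3$ and $\varepsilon$ is independent of $X$ by~\ref{ass:G0}. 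Analogously augmenting $\hat{r}^k$ (and the initial data) and expressing the updates $\theta^k = (XM^k)_{\cdot 1} - b_k \hat{r}^{k-1}$, $\beta^{k+1} = X^\top \hat{r}^k - c_k\hat{\beta}^k$ then casts~\eqref{eq:GAMP} as a matrix-valued asymmetric AMP recursion driven by the Gaussian matrix $X$ with auxiliary information $(\beta,\varepsilon)$. Note that because the $\beta$-column of $M^k$ is constant in $k$, the matrix-valued Onsager coefficients collapse to the scalars $b_k, c_k$ that appear in~\eqref{eq:GAMP}, which is one of the checks to be carried out carefully.

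The next step is to verify that~\ref{ass:G0}--\ref{ass:G5} translate into the analogues of~\ref{ass:B0}--\ref{ass:B5} needed by the matrix master theorem: \ref{ass:G0} supplies the Gaussian design and independence; \ref{ass:G1} yields $d_r$-convergence of $\nu_p(\beta), \nu_n(\varepsilon)$ to $\pi_{\bar{\beta}}, P_{\bar{\varepsilon}}$; \ref{ass:G2} gives complete convergence of the initial Gram matrix $n^{-1}(\breve{\beta}^0)^\top\breve{\beta}^0$, which is stronger than the scalar analogue~\ref{ass:B2} and is precisely what is required in the matrix setting; \ref{ass:G3} provides the Lipschitz $F_0$ controlling the limiting behaviour of $\ipr{\hat{\beta}^0}{\phi(\beta)}_p$; \ref{ass:G4} gives the Lipschitz and non-degeneracy conditions (the latter guaranteeing $\sigma_k > 0$ for all $k$ once $\sigma_1 > 0$, as in Lemma~\ref{lem:posdef}); and \ref{ass:G5} ensures that the scalars $b_k, c_k$ converge completely to their state-evolution limits, as in Remark~\ref{rem:bk}. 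Invoking the matrix master theorem then produces joint Wasserstein limits for the empirical distributions of the rows of $[\theta^k \mid \theta \mid \varepsilon]$ and of $[\beta^{k+1}\mid \beta]$, with the former bivariate Gaussian in its first two coordinates and independent of the $\bar{\varepsilon}$ coordinate.

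Finally, one needs to identify the $2\times 2$ covariance matrix produced by the matrix state evolution with $\Sigma_k$ as defined in~\eqref{eq:GAMPstatevol2}, and to match the variance parameters appearing in the limit of $\theta^k$ with the quantities $(\mu_{Z,k}, \sigma_{Z,k}^2)$ of Lemma~\ref{lem:GAMPstein}. This is where Lemma~\ref{lem:GAMPstein} is essential: its decomposition $(Z,Z_k) \eqd (Z, \mu_{Z,k}Z + \sigma_{Z,k}\tilde{G}_k)$ converts the bivariate Gaussian limit supplied by the matrix master theorem into the form~\eqref{eq:GAMPtheta}, and the Stein-type identity~\eqref{eq:GAMPstein} confirms that the cross-covariance generated by the matrix recursion corresponds exactly to the definition $\mu_{k+1} = \E(\partial_z\tilde{g}_k(Z,Z_k,\bar{\varepsilon}))$ in~\eqref{eq:GAMPstatevol1}. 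An induction on $k$ propagates this covariance identification to all iterations, from which~\eqref{eq:GAMPbeta} and~\eqref{eq:GAMPtheta} follow. The main obstacle is the bookkeeping in the reduction step---verifying that the matrix Onsager terms reduce to the scalar $b_k, c_k$ and that the limiting $2\times 2$ covariance output by the matrix master theorem coincides with $\Sigma_k$ in~\eqref{eq:GAMPstatevol2}; the remaining arguments are largely routine given the general matrix AMP framework and Lemma~\ref{lem:GAMPstein}.
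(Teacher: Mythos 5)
Your overall route is the paper's: augment the iterates so that $\theta=X\beta$ is carried along as an extra column, pass to $\tilde{g}_k$ with auxiliary information $(\beta,\varepsilon)$ (which, unlike $y$, is independent of $X$), invoke the matrix-valued master theorem of Section~\ref{sec:AMPmatrix}, and finish the $\theta$-side with Lemma~\ref{lem:GAMPstein}. However, one of your explicit claims is wrong, and it is precisely the crux of the reduction: the matrix-valued Onsager coefficients do \emph{not} collapse to the scalars $b_k,c_k$. The collapse is correct on the $\theta$-side only: since the $\beta$-column of $M^k$ does not depend on the $H$-iterate, the corresponding row of $B_k$ vanishes and the $E$-side correction is $-\hat{r}^{k-1}\,(0\;\;b_k)$, matching~\eqref{eq:GAMP}. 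On the $\beta$-update side, the abstract recursion~\eqref{eq:AMPmatrix} forces $C_k$ to be the empirical average of the \emph{full} Jacobian of $\tilde{g}_k$ with respect to both columns of the $E$-iterate $(\theta\;\,\theta^k)$, so $C_k$ contains not only $c_k\approx\E\bigl(g_k'(Z_k,Y)\bigr)$ but also the component $\abr{\partial_z\tilde{g}_k(\theta,\theta^k,\varepsilon)}_n\approx\mu_{k+1}$, which is nonzero in any useful regime. Consequently the $H$-iterate produced by the abstract recursion is $X^\top\hat{r}^k-c_k\hat{\beta}^k-\abr{\partial_z\tilde{g}_k(\theta,\theta^k,\varepsilon)}_n\,\beta\approx\beta^{k+1}-\mu_{k+1}\beta$, not $\beta^{k+1}$ itself.

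If you insist on taking $\beta^{k+1}$ as the $H$-iterate with Onsager term $c_k\hat{\beta}^k$ alone, your recursion is simply not an instance of~\eqref{eq:AMPmatrix}; moreover, the master theorem gives rows of $(H^{k+1}\;\beta)$ converging to a product $N(0,\cdot)\otimes\pi_{\bar{\beta}}$, which is incompatible with the correlated limit $(\mu_{k+1}\bar{\beta}+\sigma_{k+1}G_{k+1},\bar{\beta})$ in~\eqref{eq:GAMPbeta} whenever $\mu_{k+1}\neq 0$ and $\E(\bar{\beta}^2)>0$. The fix is the paper's key step (cf.~\eqref{eq:GAMP1}--\eqref{eq:GAMP2}): take the centred noise $\tilde{\beta}^{k+1}:=\beta^{k+1}-\mu_{k+1}\beta$ as the $H$-iterate, take the row-wise map producing $M^{k+1}$ to be $(h,\beta_j)\mapsto\bigl(\beta_j,\,f_{k+1}(h+\mu_{k+1}\beta_j)\bigr)$, and recognise that the ``extra'' component of $C_k$ coming from $\partial_z\tilde{g}_k$ is exactly what effects this centring --- this is why the state evolution defines $\mu_{k+1}=\E\bigl(\partial_z\tilde{g}_k(Z,Z_k,\bar{\varepsilon})\bigr)$ in~\eqref{eq:GAMPstatevol1}. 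With that adjustment the matrix state evolution reproduces $\Sigma_k$ and $\sigma_{k+1}^2$, and the remainder of your plan (verifying the analogues of~\ref{ass:B0}--\ref{ass:B5} from~\ref{ass:G0}--\ref{ass:G5}, adding back $\mu_{k+1}\beta$ to obtain~\eqref{eq:GAMPbeta}, and applying Lemma~\ref{lem:GAMPstein} for~\eqref{eq:GAMPtheta}) goes through as in the paper.
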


\unparskip
Writing $\nu_p(\beta^k,\beta)$ for the joint empirical distribution of the components of $\beta^k,\beta\in\R^p$, and $\breve{\nu}^k$ for the distribution of $(\mu_k\bar{\beta}+\sigma_k G_k,\bar{\beta})$, we can express the conclusion of~\eqref{eq:GAMPbeta} as
\[\widetilde{d}_r\bigl(\nu_p(\beta^k,\beta),\breve{\nu}^k\bigr)\cvc 0,\quad\text{or equivalently}\quad d_r\bigl(\nu_p(\beta^k,\beta),\breve{\nu}^k\bigr)\cvc 0\quad\text{ as }n\to\infty.\]
Likewise,~\eqref{eq:GAMPtheta} says that the joint empirical distribution $\nu_n(\theta^k,\theta,\varepsilon)$ converges completely in $d_r$ to the distribution of $(\mu_{Z,k}Z+\sigma_{Z,k}\tilde{G}_k,Z,\bar{\varepsilon})\eqd (Z_k,Z,\bar{\varepsilon})$. 

\textbf{Interpretation}: Informally, when $p$ is large, the components of $\beta^k$ have approximately the same empirical distribution as those of $\mu_k\beta+\sigma_k\xi$, where $\xi\sim N_p(0,I_p)$ is independent of $\beta\in\R^p$. By analogy with the limiting univariate problem of estimating $\bar{\beta}\sim\pi_{\bar{\beta}}$ based on a corrupted observation $\mu_k\bar{\beta}+\sigma_k G_k$, we can regard $\beta^k$ as an \emph{effective observation} and $\rho_k:=(\mu_k/\sigma_k)$ as an \emph{effective signal-to-noise ratio}; recall the discussion after Corollary~\ref{cor:AMPlowsym}. 
\begin{remark}
\label{rem:GAMPbk}
Similarly to Remark~\ref{rem:bk}, it turns out that in the setting of Theorem~\ref{thm:GAMPmaster}, condition~\ref{ass:G5} ensures that
\begin{equation}
\label{eq:GAMPbk}
\begin{split}
c_k=\frac{1}{n}\sum_{i=1}^n g_k'(\theta_i^k,y_i)&\cvc\E\bigl(g_k'(Z_k,Y)\bigr)=:\bar{c}_k,\\ b_{k+1}=\frac{1}{n}\sum_{j=1}^p f_{k+1}'(\beta_j^{k+1})&\cvc\frac{\E\bigl(f_{k+1}'(\mu_{k+1}\bar{\beta}+\sigma_{k+1}G_{k+1})\bigr)}{\delta}=:\bar{b}_{k+1}
\end{split}
\end{equation}
as $n,p\to\infty$ with $n/p\to\delta$, for each $k\in\N_0$. In fact, the theorem holds under~\ref{ass:G0}--\ref{ass:G4} if $b_k,c_k$ are replaced with $\bar{b}_k,\bar{c}_k$ respectively in~\eqref{eq:GAMP}, in which case~\ref{ass:G5} is not needed.
\end{remark}

\unparskip
By defining an augmented state evolution that specifies the covariance structure of the limiting Gaussians $G_1,G_2,\dotsc$ and $\tilde{G}_1,\tilde{G}_2,\dotsc$, we can establish the $d_r$ limits of the joint empirical distributions $\nu_p(\beta^1,\dotsc,\beta^k,\beta)$ and $\nu_n(\theta^0,\dotsc,\theta^k,\theta)$, similarly to~\eqref{eq:Sigmabarmat} and Theorem~\ref{thm:AMPlowsym}. For simplicity of presentation, we do not state this stronger conclusion. Its proof is identical in most respects to that of Theorem~\ref{thm:GAMPmaster}, which we now summarise.

\unparskip	
\begin{proof}[Proof (sketch) of Theorem~\ref{thm:GAMPmaster}]
As mentioned previously, the overall objective is to handle the dependence of $y$ on $X$ (through $\theta=X\beta$) in~\eqref{eq:GAMP}, and show that the `noise' components $\tilde{\beta}^k\equiv\tilde{\beta}^k(n):=\beta^k-\mu_k\beta$ of the effective observations is approximately Gaussian (and independent of $\beta$) for large $n$. To this end, consider rewriting the second update step as
\begin{equation}
\label{eq:GAMP1}
\tilde{\beta}^{k+1}\equiv\beta^{k+1}-\mu_{k+1}\beta=X^\top\tilde{g}_k(\theta,\theta^k,\varepsilon)-\Bigl(\beta\;\;\;f_k(\tilde{\beta}^k+\mu_k\beta)\Bigr)\begin{pmatrix}\mu_{k+1}\\\abr{D_2\tilde{g}_k(\theta,\theta^k,\varepsilon)}_n\end{pmatrix}.
\end{equation}
Here, $\tilde{g}_k(\theta,\theta^k,\varepsilon)=g_k(\theta^k,y)=\hat{r}^k$ (applying $\tilde{g}_k$ componentwise), $f_k(\tilde{\beta}^k+\mu_k\beta)=f_k(\beta^k)=\hat{\beta}^k$ and $D_2\tilde{g}_k(z,u,v):=g_k'(u,h(z,v))$ agrees with the partial derivative of $\tilde{g}_k$ with respect to its second argument, wherever the latter is defined. A useful feature of~\eqref{eq:GAMP1} is that unlike $y$, the noise vector $\varepsilon$ is independent of $X$ by~\ref{ass:G0}. Since both $\theta$ and $\theta^k$ depend on $X$, this suggests treating $\tilde{\theta}^k:=(\theta\;\:\theta^k)\in\R^{n\times 2}$ as a single entity, and rewriting the first update step in~\eqref{eq:GAMP} as
\begin{equation}
\label{eq:GAMP2}
\tilde{\theta}^k\equiv(\theta\;\:\theta^k)=X\,\Bigl(\beta\;\;\;f_k(\tilde{\beta}^k+\mu_k\beta)\Bigr)-\tilde{g}_{k-1}(\theta,\theta^{k-1},\varepsilon)\,\biggl(0\quad\frac{p}{n}\abr{f_k'(\tilde{\beta}^k+\mu_k\beta)}_n\biggr),
\end{equation}
where $\frac{p}{n}\abr{f_k'(\tilde{\beta}^k+\mu_k\beta)}_n=\frac{p}{n}\abr{f_k'(\beta^k)}_n=b_k$. In doing so, we have recast~\eqref{eq:GAMP} as a matrix-valued AMP iteration~\eqref{eq:GAMP1}--\eqref{eq:GAMP2} that is no longer a valid algorithm for practical purposes, but is more amenable to theoretical analysis. Indeed, its asymptotics can be derived by applying a master theorem for abstract recursions~\eqref{eq:AMPmatrix} of this type; see Section~\ref{sec:AMPmatrix}. The significance of the definition of $\mu_{k+1}=\E\bigl(\partial_z\tilde{g}_k(Z,Z_k,\bar{\varepsilon})\bigr)\equiv\E\bigl(D_1\tilde{g}_k(Z,Z_k,\bar{\varepsilon})\bigr)$ in~\eqref{eq:GAMPstatevol1} is that the final term in~\eqref{eq:GAMP1} is a non-linear correction based on the derivative (gradient) of $\tilde{g}_k$. The final term in~\eqref{eq:GAMP2} has a similar interpretation as a multivariate analogue of the original $b_k$ in~\eqref{eq:GAMP}, and together these ensure that the limiting empirical distributions of the iterates in~\eqref{eq:GAMP1}--\eqref{eq:GAMP2} are indeed Gaussian.
\end{proof}

\umparskip
\subsection{Choosing the functions \texorpdfstring{$f_k,g_k$}{f\_k,g\_k}, and inference for \texorpdfstring{$\beta$}{beta}}
\label{sec:GAMPbeta}
\textbf{Asymptotic estimation error}: Since the functions $f_k$ in~\eqref{eq:GAMP} are Lipschitz by assumption, it follows as in Corollary~\ref{cor:AMPlowsym} that in the setting of Theorem~\ref{thm:GAMPmaster} above, the asymptotic estimation error of $\hat{\beta}^k$ with respect to any loss function $\psi\in\PL_2(r)$ is given by
\begin{equation}
\label{eq:GAMPesterr}
\frac{1}{p}\sum_{j=1}^p\psi(\hat{\beta}_j^k,\beta_j)\cvc\E\bigl\{\psi\bigl(f_k(\mu_k\bar{\beta}+\sigma_k G_k),\bar{\beta}\bigr)\bigr\}
\end{equation}
for each $k\in\N$, as $n,p\to\infty$ with $n/p\to\delta$. In particular, taking $\psi(x,y)=\abs{x-y}^q$ for $q\in [1,r]$, we obtain the asymptotic normalised $\ell_q$ error $\clim_{p\to\infty}p^{-1}\norm{\hat{\beta}^k-\beta}_q=\E\bigl\{\bigl(f_k(\mu_k\bar{\beta}+\sigma_k G_k)-\bar{\beta}\bigr)^q\bigr\}^{1/q}$. 

\textbf{Bayes-GAMP}: If the limiting prior distribution $\pi_{\bar{\beta}}$, the limiting noise distribution $P_{\bar{\varepsilon}}$ 
and the initial $\Sigma_0\in\R^{2\times 2}$ are known, then guided by Lemma~\ref{lem:snrmax}, we can proceed as in Section~\ref{sec:lowrankgk} and choose $f_k,g_k$ in~\eqref{eq:GAMP} so as to maximise the effective signal-to-noise ratios $\rho_k=(\mu_k/\sigma_k)^2$ and $\rho_{Z,k}:=(\mu_{Z,k}/\sigma_{Z,k})^2$ for each $k$. 

Specifically, given the matrix $\Sigma\equiv\Sigma_k\in\R^{2\times 2}$ in~\eqref{eq:GAMPstatevol2} for some $k\in\N_0$, we can obtain $\mu_{Z,k},\sigma_{Z,k}$ from~\eqref{eq:zkmusigma}; conversely, given $\mu_{Z,k},\sigma_{Z,k}$, we can recover $\Sigma$ since $\Sigma_{11}=\delta^{-1}\E(\bar{\beta}^2)$ is known, and~\eqref{eq:zkmusigma} yields $\Sigma_{21}=\Sigma_{11}\mu_{Z,k}$ and $\Sigma_{22}=\sigma_{Z,k}^2+\Sigma_{11}\mu_{Z,k}^2$. Now take $(Z,Z_k)\sim N_2(0,\Sigma_k)$ to be independent of $\bar{\varepsilon}\sim P_{\bar{\varepsilon}}$, and let $Y=h(Z,\bar{\varepsilon})$, so that $Y$ and $Z_k$ are conditionally independent given $Z$. Based on the joint distribution of $(Z,Z_k,Y)$, let $g_k^*\colon\R^2\to\R$ be a measurable function satisfying
\begin{equation}
\label{eq:GAMPgk}
g_k^*(Z_k,Y)=\frac{\E(Z\,|\,Z_k,Y)-\E(Z\,|\,Z_k)}{\Var(Z\,|\,Z_k)},
\end{equation}
where $\E(Z\,|\,Z_k)=m_k Z_k$ with
\[m_k:=\frac{\Sigma_{21}}{\Sigma_{22}}
=\frac{\mu_{Z,k}}{\sigma_{Z,k}^2}\Var(Z\,|\,Z_k),\quad\;\;\Var(Z\,|\,Z_k)=\Sigma_{11}-\frac{\Sigma_{21}^2}{\Sigma_{22}}=\frac{\Sigma_{11}\sigma_{Z,k}^2}{\sigma_{Z,k}^2+\Sigma_{11}\mu_{Z,k}^2}=\biggl(\frac{\E(\bar{\beta}^2)}{\delta}+\rho_{Z,k}\biggr)^{-1}.\]
Then by~\eqref{eq:GAMPstatevol1},~\eqref{eq:GAMPstein} and the Cauchy--Schwarz inequality, we have
\[\rho_{k+1}=\frac{\mu_{k+1}^2}{\sigma_{k+1}^2}=\frac{\E\bigl(g_k^*(Z_k,Y)\,g_k(Z_k,Y)\bigr)^2}{\E\bigl(g_k(Z_k,Y)^2\bigr)}\leq\E\bigl(g_k^*(Z_k,Y)^2\bigr),\]
with equality when $g_k$ is a (non-zero) scalar multiple of $g_k^*$.

Now given $\mu_k,\sigma_k$ for some $k\in\N$, we wish to find $f_k\colon\R\to\R$ such that defining $\Sigma\equiv\Sigma_k$ as in~\eqref{eq:zkmusigma}, the quantity
\[\rho_{Z,k}=\frac{\mu_{Z,k}^2}{\sigma_{Z,k}^2}=\frac{\Sigma_{21}^2\Sigma_{11}^{-2}}{\Sigma_{22}-\Sigma_{21}^2\Sigma_{11}^{-1}}=\biggl(\frac{\Sigma_{22}}{\Sigma_{21}^2}\,\Sigma_{11}^2-\Sigma_{11}\biggr)^{-1}\]
is as large as possible. Since $\Sigma_{11}=\delta^{-1}\E(\bar{\beta}^2)$ is fixed, this amounts to maximising
\[\frac{\Sigma_{21}^2}{\Sigma_{22}}=\frac{\E\bigl(\bar{\beta}f_k(\mu_k\bar{\beta}+\sigma_k \tilde{G}_k)\bigr)^2}{\E\bigl(f_k(\mu_k\bar{\beta}+\sigma_k\tilde{G}_k)^2\bigr)}.\]
Again by the Cauchy--Schwarz inequality (see~\eqref{eq:snrCS} in Section~\ref{sec:lowrankgk}), this can be done by taking $f_k$ to be any (non-zero) scalar multiple of $f_k^*$ satisfying
\begin{equation}
\label{eq:GAMPfk}
f_k^*(\mu_k\bar{\beta}+\sigma_k\tilde{G}_k)=\E(\bar{\beta}\,|\,\mu_k\bar{\beta}+\sigma_k\tilde{G}_k),
\end{equation}
in which case $\Sigma_{21}=\Sigma_{22}<\Sigma_{11}$. An exact expression for $f_k^*$ is given by Tweedie's formula~\eqref{eq:tweedie}, and if $\bar{\beta}\sim \pi_{\bar{\beta}}$ satisfies the conditions of Lemma~\ref{lem:tweedie}, then $f_k^*$ is Lipschitz. As we saw in~\eqref{eq:ortho}, the choice $f_k=f_k^*$ also minimises the asymptotic mean squared error $\E\bigl\{\bigl(f_k(\mu_k\bar{\beta}+\sigma_k G_k)-\bar{\beta}\bigr)^2\bigr\}$, for given $\mu_k,\sigma_k$; in other words, $f_k^*$ is the Bayes optimal (i.e.\ MMSE) denoising function.

By recursively defining $g_k=g_k^*$ (or some scalar multiple thereof) and $f_{k+1}=f_{k+1}^*$ for $k\in\N$ using~\eqref{eq:GAMPgk} and~\eqref{eq:GAMPfk}, together with corresponding sequences $(\mu_k^*,\sigma_k^*,\mu_{Z,k}^*,\sigma_{Z,k}^*:k\in\N)$ of state evolution parameters through~\eqref{eq:GAMPstatevol1}--\eqref{eq:GAMPstatevol2}, we obtain a \emph{Bayes-GAMP algorithm} of the form~\eqref{eq:GAMP}. A version of this was originally derived by~\citet[Section~IV-B]{RanganGAMP} as an approximation to a sum-product loopy belief propagation algorithm. The limiting empirical distributions for the Bayes-GAMP iterates can be obtained from Theorem~\ref{thm:GAMPmaster}, provided that the functions $f_k^*$ and $\tilde{g}_k^*\colon (z,u,v)\mapsto g_k^*(u,h(z,v))$ are all Lipschitz and~\ref{ass:G0}--\ref{ass:G5} are satisfied.

Even when $\pi_{\bar{\beta}}$ is not completely known, it can still be possible to tailor the choices of $f_k,g_k$ to wider classes of limiting prior distributions that induce certain types of structure in the signals $\beta$. For instance, if we are told that $\beta\in\R^p$ has at most $sp$ non-zero entries for some $s\in (0,1)$ and every $p\equiv p_n$, then as in Section~\ref{sec:lowrankgk}, we can take each $f_k$ to be a soft-thresholding function $\mathrm{S}_{t_k}\colon u\mapsto\sgn(u)(\abs{u}-t_k)_+$ for some $t_k>0$. Using an AMP recursion~\eqref{eq:AMPlinear} of this form (for the linear model in Section~\ref{sec:AMPlinear}) with appropriately chosen thresholds $t_k$,~\citet{BM12} derived exact high-dimensional asymptotics for the Lasso estimator; see Section~\ref{sec:lasso}.

\textbf{Spectral initialisation}: Under the conditions of Theorem~\ref{thm:GAMPmaster}, it follows from~\eqref{eq:GAMPesterr} and Lemma~\ref{lem:GAMPstein} that for each $k\in\N$, the estimates $\hat{\beta}^k$ in the generic GAMP procedure~\eqref{eq:GAMP} satisfy $\ipr{\hat{\beta}^k}{\beta}_p=p^{-1}\sum_{j=1}^p\hat{\beta}_j^k\beta_j\cvc\E\bigl(\bar{\beta}f_k(\mu_k\bar{\beta}+\sigma_k G_k)\bigr)=\mu_{Z,k}\,\E(\bar{\beta}^2)$ as $n,p\to\infty$ with $n/p\to\delta$. To ensure that $\mu_{Z,k}\neq 0$ for some $k$, and hence that the corresponding $\hat{\beta}^k$ has non-zero asymptotic empirical correlation with the signal $\beta$, it is sometimes necessary to start with pilot estimators $\hat{\beta}^0\in\R^p$ that themselves have the property that 
$\clim_{p\to\infty}\ipr{\hat{\beta}^0}{\beta}_p\neq 0$. Indeed, suppose that the limiting random variables in the state evolution recursion~\eqref{eq:GAMPstatevol1}--\eqref{eq:GAMPstatevol2} are such that
\begin{equation}
\label{eq:GAMPspectral}
\E(\bar{\beta})=0\quad\text{and}\quad\E(Z\,|\,Y)=0\quad\text{almost surely},
\end{equation}
where the latter condition is equivalent to (3.13) in~\citet{MM20}. Now given estimates $\hat{\beta}^0\in\R^p$ for which $\clim_{n\to\infty}\ipr{\hat{\beta}^0}{\beta}_p=\delta(\Sigma_0)_{21}=0$, we see from Lemma~\ref{lem:GAMPstein} that $\mu_{Z,0}=0$ and $Z_0$ is independent of $(Z,Y	)$, whence $g_0^*(Z_0,Y)=\E(Z\,|\,Y)/\Var(Z)=0$ almost surely in~\eqref{eq:GAMPgk} and $\mu_1=\E\bigl(g_0^*(Z_0,Y)\,g_0(Z_0,Y)\bigr)=0$ by~\eqref{eq:GAMPstein}. 
This means that $\mu_{Z,1}\,\E(\bar{\beta}^2)=\delta(\Sigma_1)_{21}=\E\bigl(\bar{\beta}f_1(\sigma_1G_1)\bigr)=0$ by the independence of $\bar{\beta}$ and $G_1$. Continuing inductively, we conclude that $\mu_k=\mu_{Z,k}=0$ for all $k\in\N$, irrespective of the choices of $g_k,f_{k+1}$ for $k\in\N_0$, so $\hat{\beta}^k$ is asymptotically uninformative as an estimator of $\beta\in\R^p$ for every $k\in\N_0$.

Thus, while there are some GLMs (such as the linear model in Section~\ref{sec:AMPlinear}) in which it suffices to take $\hat{\beta}^0=c\mathbf{1}_p$ for some fixed $c\in\R$, a different initialiser is required when~\eqref{eq:GAMPspectral} holds. We note that the second condition therein is satisfied in the phase retrieval model, where $y_i=h(x_i^\top\beta,\varepsilon_i)=(x_i^\top\beta)^2+\varepsilon_i$ for $1\leq i\leq n$, and more generally in all non-identifiable models of the form~\eqref{eq:glm1} where $h(z,w)=h(-z,w)$ for all $z,w$ (and hence $Q(\cdot\,|\,z)=Q(\cdot\,|\,{-z})$ in~\eqref{eq:glm2} for all $z$).
Indeed, for such functions $h$, we have $\E(Z\,|\,Y)=\E\bigl(Z\,|\,h(Z,\bar{\varepsilon})\bigr)=-\E\bigl(Z\,|\,h(Z,\bar{\varepsilon})\bigr)$ and hence $\E(Z\,|\,Y)=0$ almost surely.

\citet{MM20} established a version of Theorem~\ref{thm:GAMPmaster} for GAMP algorithms in which $\hat{\beta}^0$ is taken to be a leading eigenvector of $X^\top DX\in\R^{p\times p}$, where $D=\diag\bigl(g(y_1),\dotsc,g(y_n)\bigr)\in\R^{n\times n}$ for some $g\colon\R\to\R$. Since this spectral initialiser is correlated with the random design matrix $X$, condition~\ref{ass:G0} for the original Theorem~\ref{thm:GAMPmaster} does not hold in general. As mentioned in Section~\ref{sec:spectral}, the authors overcome this obstacle by analysing a two-phase artificial GAMP iteration in which the first stage effectively approximates $\hat{\beta}^0$ by the power method. 

\textbf{Confidence intervals and $p$-values}: For fixed $k$ and large $n$, Theorem~\ref{thm:GAMPmaster} tells us that $\{(\beta_i^k-\mu_k\beta_i)/\sigma_k:1\leq i\leq n\}$ behaves approximately like an i.i.d.\ sample of size $n$ from the $N(0,1)$ distribution. Thus, to carry out inference for $\beta$, we can proceed similarly as in Section~\ref{sec:lowrankconf}, to which we refer the reader for further details. We mention here that if the state evolution parameters $\mu_k,\sigma_k$ are unknown, then they can be estimated consistently by $\hat{\mu}_k:=\bigl(\norm{\beta^k}_p^2-\norm{\hat{r}^{k-1}}_n^2\bigr)^{1/2}/\E(\bar{\beta}^2)^{1/2}$ and $\hat{\sigma}_k:=\norm{\hat{r}^{k-1}}_n$ provided that $\E(\bar{\beta}^2)>0$ is known. Indeed, by~\eqref{eq:GAMPbeta} and~\eqref{eq:GAMPtheta} respectively, \begin{align*}
\norm{\beta^k}_p^2&\cvc\E\bigl((\mu_k\bar{\beta}+\sigma_k G_k)^2\bigr)=\E(\bar{\beta}^2)\mu_k^2+\sigma_k^2,\\ \norm{\hat{r}^{k-1}}_n^2&=\norm{g_{k-1}(\theta^{k-1},y)}_n^2=\norm{\tilde{g}_{k-1}(\theta,\theta^{k-1},\varepsilon)}\cvc\E\bigl(\tilde{g}_{k-1}(Z,Z_k,\bar{\varepsilon})^2\bigr)=\E\bigl(g_{k-1}(Z_k,Y)^2\bigr)=\sigma_k^2
\end{align*} 
for each $k\in\N$ as $n,p\to\infty$ with $n/p\to\delta$.

\hfparskip
\subsection{AMP for the linear model}
\label{sec:AMPlinear}
Much of the early work on AMP~\citep[e.g.][]{donohoMM2009,BM11,BM12,krzakala2012} was centred around the standard linear model
\[y=X\beta+\varepsilon,\]
where $\varepsilon_1,\dotsc,\varepsilon_n\iid P_{\bar{\varepsilon}}$ have second moment $\sigma^2>0$ and a finite $r^{th}$ moment for some $r\in [2,\infty)$ (or more generally where the empirical distribution $\nu_n(\varepsilon)=n^{-1}\sum_{i=1}^n\delta_{\varepsilon_i}$ converges completely in $d_r$ to $P_{\bar{\varepsilon}}$ as $n\to\infty$). This is a special case of the model~\eqref{eq:glm1} with $h(z,v)=z+v$.

Given $\hat{r}^{-1}=0\in\R^n$, $b_0\in\R$ and an initial estimator $\hat{\beta}^0\in\R^p$, the original AMP algorithm of~\citet{donohoMM2009} and~\citet{BM11} can be recovered by setting $g_k(u,v):=v-u$ for $u,v\in\R$ in the GAMP recursion~\eqref{eq:GAMP}, so that $c_k=\abr{g_k'(\theta^k,y)}_n=-1$ and
\begin{equation}
\label{eq:AMPlinear}
\hat{r}^k=y-X\hat{\beta}^k+b_k\hat{r}^{k-1},\qquad\hat{\beta}^{k+1}=f_{k+1}(X^\top\hat{r}^k+\hat{\beta}^k),\qquad b_{k+1}=\frac{1}{n}\sum_{j=1}^p f_{k+1}'(\beta_j^{k+1})
\end{equation}
for $k\in\N_0$. Here, $\hat{r}^k=g_k(\theta^k,y)=y-\theta^k=y-X\hat{\beta}^k+b_k\hat{r}^{k-1}$ is a `corrected' residual at iteration $k$, and $\beta^{k+1}=X^\top\hat{r}^k+\hat{\beta}^k$ is the effective observation. 

\textbf{State evolution}: The GAMP state evolution equations~\eqref{eq:GAMPstatevol1}--\eqref{eq:GAMPstatevol2} simplify to the recursion
\begin{equation}
\label{eq:statevolinear}
\mu_k\equiv 1,\qquad\sigma_1^2=\sigma^2+\E\bigl((Z-Z_0)^2\bigr),\qquad\sigma_{k+1}^2=\sigma^2+\frac{1}{\delta}\,\E\bigl\{\bigl(\bar{\beta}-f_k(\bar{\beta}+\sigma_k G_k)\bigr)^2\bigr\}
\end{equation}
for $k\in\N$, where $(Z,Z_0)\sim N_2(0,\Sigma_0)$, and $\bar{\beta}\sim \pi_{\bar{\beta}}$ is independent of $G_k\sim N(0,1)$. Note that by~\ref{ass:G2}, $\sigma_1^2=\sigma^2+\clim_{n\to\infty}n^{-1}\norm{\beta-\hat{\beta}^0}^2$, and that if the pilot estimate of $\beta\in\R^p$ is taken to be $\hat{\beta}^0=0\in\R^p$ for each $p\equiv p_n$, then $Z_0\equiv 0$ and $\sigma_1^2=\sigma^2+\E(Z^2)=\sigma^2+\delta^{-1}\E(\bar{\beta}^2)$. 

\textbf{Asymptotic estimation error}: Under~\ref{ass:G0}--\ref{ass:G5} with $r\in [2,\infty)$, the main result of~\citet[Theorem~1]{BM11} on the asymptotic performance of the estimators $\hat{\beta}^k$ in~\eqref{eq:AMPlinear} can be stated as
\begin{equation}
\label{thm:AMPlinear}
\sup_{\psi\in\PL_2(r,1)}\;\biggl|\frac{1}{p}\sum_{j=1}^p\psi(\hat{\beta}_j^k,\beta_j)-\E\bigl\{\psi\bigl(f_k(\bar{\beta}+\sigma_k G_k),\bar{\beta}\bigr)\bigr\}\biggr|\cvc 0\quad\text{as }n,p\to\infty\text{ with }n/p\to\delta,
\end{equation}
for each $k\in\N$, where $\sigma_k$ is as in~\eqref{eq:statevolinear}. This can be obtained as a special case of Theorem~\ref{thm:GAMPmaster} and~\eqref{eq:GAMPesterr}. Alternatively,~\eqref{thm:AMPlinear} can be established via a direct reduction to an abstract asymmetric AMP recursion of the type in Section~\ref{sec:AMPnonsym}; see~\citet[Section~3.3]{BM11}. This involves writing~\eqref{eq:AMPlinear} in terms of $e^k:=\varepsilon-\hat{r}^k$ and $h^{k+1}:=\beta^{k+1}-\beta$, which turn out to be the asymptotically Gaussian `noise' components of $\hat{r}^k$ and $\beta^{k+1}$ respectively. 

Originally, the $d_r$ convergence result~\eqref{thm:AMPlinear} was derived under a stronger version of~\ref{ass:G1} that assumed $d_{2r-2}$ convergence to limiting distributions $\pi_{\bar{\beta}},P_{\bar{\varepsilon}}$ with finite $(2r-2)^{th}$ moments. In~\ref{ass:G1}, we relax this to a more natural $d_r$ condition under which the conclusion still holds; see the first part of Remark~\ref{rem:AMPm0}. We also mention that under suitable finite-sample analogues of the conditions above, a complementary finite-sample version of~\eqref{thm:AMPlinear} was established by~\citet{RV18} in the case $r=2$; see Remark~\ref{rem:finitesample}. 

\textbf{Link to Bayes-GAMP}: If the limiting prior distribution $\pi_{\bar{\beta}}$ is known, then to minimise the effective noise variance $\sigma_{k+1}^2$, we can take $f_k$ in~\eqref{eq:AMPlinear} to be the Bayes optimal $f_k^*$ from~\eqref{eq:GAMPfk}. In general, $g_k\colon (u,v)\mapsto v-u$ does not coincide with $g_k^*$ in~\eqref{eq:GAMPgk}. However, when $P_{\bar{\varepsilon}}=N(0,\sigma^2)$ with $\sigma^2>0$, $\hat{\beta}^0\equiv\hat{\beta}^0(n)=0$ for every $n$ and $f_k=f_k^*$ for each $k\in\N$, it turns out that~\eqref{eq:AMPlinear} is an instance of a Bayes-GAMP procedure (with $g_k\propto g_k^*$) that maximises the effective signal-to-noise ratios $\rho_k=(\mu_k/\sigma_k)^2$ and $\rho_{Z,k}=(\mu_{Z,k}/\sigma_{Z,k})^2$ at each iteration. Indeed, in this special case, it can be verified by direct computation that 
\[g_k^*(u,v)=c_k\biggl(\frac{\Sigma_{21}}{\Sigma_{22}}u-v\biggr)=c_k(u-v)=-c_kg_k(u,v)\]
for each $k\in\N_0$, where $\Sigma\equiv\Sigma_k\in\R^{2\times 2}$ is as in~\eqref{eq:GAMPstatevol2}, with $\Sigma_{21}=\Sigma_{22}=0<\Sigma_{11}$ when $k=0$ and $\Sigma_{21}=\delta^{-1}\E(\bar{\beta}f_k^*(\mu_k\bar{\beta}+\sigma_k G_k)\bigr)=\delta^{-1}\E(f_k^*(\mu_k\bar{\beta}+\sigma_k G_k)^2\bigr)=\Sigma_{22}<\Sigma_{11}=\delta^{-1}\E(\bar{\beta}^2)$ by~\eqref{eq:GAMPfk} when $k\in\N$, and
\[c_k
=-\frac{\Sigma_{11}-\Sigma_{22}}{\Sigma_{11}-\Sigma_{22}+\sigma^2}<0\]
is deterministic. Here, $\delta(\Sigma_{11}-\Sigma_{22})=\E\bigl\{\bigl(\bar{\beta}-\E(\bar{\beta}\,|\,\mu_k\bar{\beta}+\sigma_k G_k)\bigr)^2\bigr\}$ is the minimum mean squared error for the problem of estimating $\bar{\beta}$ based on $\mu_k\bar{\beta}+\sigma_k G_k$.

\hfparskip
\subsection{GAMP algorithms for convex optimisation}
\label{sec:GAMPopt}
Given $y\in\R^n$ and $X\in\R^{n\times p}$ with rows $x_1,\dotsc,x_n$, many statistical estimators of $\beta$ in~\eqref{eq:glm1} are defined as minimisers of objective functions of the form $\tilde{\beta}\mapsto\mathcal{C}(\tilde{\beta};X,y):=\sum_{i=1}^n\ell(x_i^\top\tilde{\beta},y_i)+\sum_{j=1}^p J(\tilde{\beta}_j)$, or equivalently as solutions to constrained optimisation problems of the form
\begin{equation}
\label{eq:constropt}
\text{minimise}\quad\sum_{i=1}^n\ell(\tilde{\theta}_i,y_i)+\sum_{j=1}^p J(\tilde{\beta}_j)\quad\text{over }(\tilde{\beta},\tilde{\theta})\in\R^p\times\R^n\text{ with }\tilde{\theta}=X\tilde{\beta}, 
\end{equation}
where $\ell\colon\R^2\to\R$ is a loss function and $J\colon\R\to\R$ is a penalty function. In particular, consider a GLM of the form~\eqref{eq:glm2} in which $y_i\,|\,(x_i,\beta)\sim q(\cdot\,|\,x_i^\top\beta)$ for $1\leq i\leq n$, where $q(\cdot\,|\,z)$ is a Lebesgue density on $\R$ for each $z\in\R$. Then the maximum likelihood estimators (MLEs) of $\beta$ and $\theta=X\beta$ are given by
\[(\hat{\beta}^{\mathrm{MLE}},\hat{\theta}^{\mathrm{MLE}}):=\argmin_{\substack{(\tilde{\beta},\tilde{\theta})\in\R^p\times\R^n\\ \tilde{\theta}=X\tilde{\beta}}}\;\sum_{i=1}^n -\log q(y_i\,|\,\tilde{\theta}_i).\]
If in addition $\beta_1,\dotsc,\beta_p\iid p_{\bar{\beta}}$ for some prior density $p_{\bar{\beta}}$, then the maximum a posteriori (MAP) estimates of $\beta$ and $\theta$ are
\[(\hat{\beta}^{\mathrm{MAP}},\hat{\theta}^{\mathrm{MAP}}):=\argmin_{\substack{(\tilde{\beta},\tilde{\theta})\in\R^p\times\R^n\\ \tilde{\theta}=X\tilde{\beta}}}\;\Biggl(\sum_{i=1}^n -\log q(y_i\,|\,\tilde{\theta}_i)+\sum_{j=1}^p -\log p_{\bar{\beta}}(\tilde{\beta}_j)\Biggr).\]
Assuming henceforth that $\ell$ and $J$ are convex in their first arguments, we will now design a GAMP iteration~\eqref{eq:GAMPopt} whose fixed points are solutions to the associated optimisation problem~\eqref{eq:constropt}; see Proposition~\ref{prop:GAMPopt} below. By exploiting this connection and applying the GAMP theory from Section~\ref{sec:GAMPmaster}, we will explain later how to obtain a statistical payoff in the form of exact high-dimensional asymptotics for estimators defined by~\eqref{eq:constropt}.

To begin the construction, fix two sequences of deterministic scalars $\bar{b}_k>0$ and $\bar{c}_k<0$ for $k\in\N_0$. These will later be assigned appropriate values in~\eqref{eq:GAMPopt} below, but for the time being, we will treat them as generic constants. For $k\in\N_0$, define $\bar{g}_k,g_k\colon\R^2\to\R$ and $f_{k+1}\colon\R\to\R$ by
\begin{align}
\label{eq:gkbar}
\bar{g}_k(u,v)&:=\argmin_{z\in\R}\,\Bigl\{\ell(z,v)+\frac{1}{2\bar{b}_k}(z-u)^2\Bigr\},\qquad g_k(u,v):=\frac{\bar{g}_k(u,v)-u}{\bar{b}_k},\\
\label{eq:fkbar}
f_{k+1}(w)&:=\argmin_{z\in\R}\,\biggl\{J(z)-\frac{\bar{c}_k}{2}\biggl(z+\frac{w}{\bar{c}_k}\biggr)^2\biggr\}.
\end{align}
Note that since $\ell$ and $J$ are assumed to be convex in their first arguments, $\bar{g}_k(u,v)$ and $f_{k+1}(w)$ are well-defined as unique minima of strongly convex functions. The pertinence of this specific choice of $g_k,f_{k+1}$ will become apparent through Proposition~\ref{prop:GAMPopt} below and its proof. At this point, it is helpful to recall that for a convex function $\mathrm{M}\colon\R\to\R$ and $\eta>0$, the associated \emph{proximal operator} $\prox_{\eta\mathrm{M}}\colon\R\to\R$ is given by
\begin{equation}
\label{eq:prox}
\prox_{\eta\mathrm{M}}(z):=\argmin_{t\in\R}\,\Bigl\{\eta\mathrm{M}(t)+\frac{1}{2}(t-z)^2\Bigr\},
\end{equation}
and moreover that $\prox_{\eta\mathrm{M}}$ is always non-decreasing and 1-Lipschitz~\citep[cf.][Sections~2.3 and~3.1]{PB13}. We see that $\bar{g}_k(u,v)=\prox_{\bar{b}_k\ell(\cdot,v)}(u)$ and $f_{k+1}(w)=\prox_{-J/\bar{c}_k}(-w/\bar{c}_k)$ for $u,v,w\in\R$, so
$\bar{g}_k,g_k,f_{k+1}$ are all Lipschitz with constants 1, $\bar{b}_k^{-1}$ and $\abs{\bar{c}_k}^{-1}$ respectively, and hence weakly differentiable with respect to their first arguments. Writing $\bar{g}_k',g_k',f_{k+1}'$ for the corresponding weak derivatives, we have
\begin{equation}
\label{eq:fgkderiv}
f_{k+1}'(w)\geq 0,\qquad\bar{g}_k'(u,v)\leq 1\quad\text{and hence}\quad g_k'(u,v)\leq 0
\end{equation}
for all $u,v,w$. If in addition $\ell$ and $J$ are twice continuously differentiable, then $J'(f_{k+1}(w))-\bigl(\bar{c}_kf_{k+1}(w)+w\bigr)=0$ for each $w$, so it follows from the implicit function theorem that
\begin{equation}
\label{eq:proxderiv}
f_{k+1}'(w)=\bigl(J''(f_{k+1}(w))-\bar{c}_k\bigr)^{-1}\quad\text{and similarly}\quad\bar{g}_k'(u,v)=\bigl(\bar{b}_k\ell''(f_{k+1}(w))+1\bigr)^{-1}
\end{equation}
for all $u,v,w$, where $\ell''$ denotes the second partial derivative of $\ell$ with respect to its first argument.

We will now define a GAMP recursion of the form~\eqref{eq:GAMP} as a precursor to the iteration~\eqref{eq:GAMPopt} that will subsequently be used to analyse the statistical properties of the solutions to the optimisation problem~\eqref{eq:constropt}.
Given $\hat{s}^{-1}:=0\in\R^n$, a fixed $b_0>0$ and an initialiser $\hat{\beta}^0\in\R^p$, inductively define
\begin{alignat}{3}
\theta^k&:=X\hat{\beta}^k-b_k\hat{s}^{k-1},\qquad&\hat{\theta}^k&:=\bar{g}_k(\theta^k,y),\qquad&c_k&:=n^{-1}\textstyle\sum_{i=1}^n g_k'(\theta_i^k,y_i),\qquad\hat{s}^k:=g_k(\theta^k,y),\notag
\\
\label{eq:GAMPopt1}
\beta^{k+1}&:=X^\top\hat{s}^k-c_k\hat{\beta}^k,\qquad&\hat{\beta}^{k+1}&:=f_{k+1}(\beta^{k+1}),\qquad&b_{k+1}&:=n^{-1}\textstyle\sum_{j=1}^p f_{k+1}'(\beta_j^{k+1})
\end{alignat}
for $k\in\N_0$. Note that $\hat{s}^k=(\hat{\theta}^k-\theta^k)/\bar{b}_k$, and that if $\ell$ and $J$ are convex and twice continuously differentiable with respect to their first arguments, then~\eqref{eq:proxderiv} yields
\begin{align*}
c_k=\frac{1}{\bar{b}_k}\biggl(\frac{1}{n}\sum_{i=1}^n\bar{g}_k'(\theta_i^k,y_i)-1\biggr)=-\frac{1}{n}\sum_{i=1}^n\frac{\ell''(\hat{\theta}_i^k,y_i)}{\bar{b}_k\ell''(\hat{\theta}_i^k,y_i)+1},\qquad b_{k+1}
=\frac{1}{n}\sum_{j=1}^p\frac{1}{J''(\hat{\beta}_j^{k+1})-\bar{c}_k}.
\end{align*}
If the hypotheses of Theorem~\ref{thm:GAMPmaster} are satisfied by a sequence of recursions~\eqref{eq:GAMPopt1}, then the limiting empirical distributions of the iterates therein are characterised by the associated state evolution parameters $(\mu_k,\sigma_k,\Sigma_k:k\in\N)$ defined through~\eqref{eq:GAMPstatevol1}--\eqref{eq:GAMPstatevol2}. Moreover, with $(Z,Z_k)\sim N_2(0,\Sigma_k)$ and $Y=h(Z,\bar{\varepsilon})$ as in Lemma~\ref{lem:GAMPstein} for each fixed $k$, recall from~\eqref{eq:GAMPbk} that $b_k\cvc\delta^{-1}\,\E\bigl(f_k'(\mu_k\bar{\beta}+\sigma_k G_k)\bigr)$ and $c_k\cvc\E\bigl(g_k'(Z_k,Y)\bigr)$ as $n,p\to\infty$ with $n/p\to\delta\in (0,\infty)$. 

Based on this observation, we will define $\bar{b}_k$ and $\bar{c}_k$ above to coincide with these limiting values, and substitute these deterministic quantities for the random $b_k,c_k$ in~\eqref{eq:GAMPopt1} to obtain the following modified recursion. As before, we start with $\hat{s}^{-1}:=0\in\R^n$, $\bar{b}_0>0$, $\hat{\beta}^0\in\R^p$, as well as a positive definite $\Sigma_0\in\R^{2\times 2}$ as in~\ref{ass:G2}. Given $\hat{\beta}^k,\hat{s}^{k-1}$ and $\bar{b}_k,\Sigma_k$ for a general $k\in\N_0$, we inductively define $\bar{g}_k,g_k$ as in~\eqref{eq:gkbar}, along with
\begin{alignat}{3}
\theta^k&:=X\hat{\beta}^k-\bar{b}_k\hat{s}^{k-1},\qquad&\hat{\theta}^k&:=\bar{g}_k(\theta^k,y),\qquad&\bar{c}_k&:=\E\bigl(g_k'(Z_k,Y)\bigr),\qquad\hat{s}^k:=g_k(\theta^k,y),\notag
\\
\label{eq:GAMPopt}
\beta^{k+1}&:=X^\top\hat{s}^k-\bar{c}_k\hat{\beta}^k,\qquad&\hat{\beta}^{k+1}&:=f_{k+1}(\beta^{k+1}),\qquad&\bar{b}_{k+1}&:=\delta^{-1}\,\E\bigl(f_{k+1}'(\mu_{k+1}\bar{\beta}+\sigma_{k+1}G_{k+1})\bigr).
\end{alignat}
In~\eqref{eq:GAMPopt}, we take $(Z,Z_k)\sim N_2(0,\Sigma_k)$ and $Y=h(Z,\bar{\varepsilon})$ as above, and define the state evolution parameters $\mu_{k+1},\sigma_{k+1}$ as in~\eqref{eq:GAMPstatevol1} based on $g_k$, while using $\bar{c}_k$ and~\eqref{eq:fkbar} to specify $f_{k+1}$. Finally, define $\Sigma_{k+1}$ in terms of $f_{k+1},\mu_{k+1},\sigma_{k+1}$ according to~\eqref{eq:GAMPstatevol2}. We emphasise that the functions $\bar{g}_k,f_{k+1}$ are indeed well-defined through~\eqref{eq:gkbar}--\eqref{eq:fkbar} for all $k$ since $\bar{b}_k>0>\bar{c}_k$ by~\eqref{eq:fgkderiv} and the fact that $\prox_{\mathrm{M}}$ is non-constant for any convex $\mathrm{M}\colon\R\to\R$.

The iteration~\eqref{eq:GAMPopt} has two important features that make it a useful theoretical tool.
First, Remark~\ref{rem:GAMPbk} ensures that its iterates are characterised by the state evolution parameters $(\mu_k,\sigma_k,\Sigma_k:k\in\N)$ under the hypotheses of Theorem~\ref{thm:GAMPmaster}. In addition, the following result highlights the significance of~\eqref{eq:GAMPopt} as an optimisation procedure for the original constrained problem~\eqref{eq:constropt}.

\begin{proposition}[{\citealp[Theorem~1]{rangan2016fixed}}]
\label{prop:GAMPopt}
In~\eqref{eq:constropt}, suppose that $\ell$ and $J$ are convex in their first arguments, and define the associated Lagrangian by
\begin{equation}
\label{eq:lagrangian}
L(\tilde{\beta},\tilde{\theta},s):=\sum_{i=1}^n\ell(\tilde{\theta}_i,y_i)+\sum_{j=1}^p J(\tilde{\beta}_j)+s^\top(\tilde{\theta}-X\tilde{\beta})
\end{equation}
for $\tilde{\beta}\in\R^p$ and $\tilde{\theta},s\in\R^n$. Then the iterates in~\eqref{eq:GAMPopt} satisfy
\begin{align}
\label{eq:Lcond1}
\hat{\beta}^{k+1}&=\argmin_{\tilde{\beta}\in\R^p}\,\Bigl\{L(\tilde{\beta},\hat{\theta}^k,\hat{s}^k)-\frac{\bar{c}_k}{2}\norm{\tilde{\beta}-\hat{\beta}^k}^2\Bigr\},\\
\label{eq:Lcond2}
\hat{\theta}^{k+1}&=\argmin_{\tilde{\theta}\in\R^n}\,\Bigl\{L(\hat{\beta}^{k+1},\tilde{\theta},\hat{s}^k)+\frac{1}{2\bar{b}_{k+1}}\norm{\tilde{\theta}-X\hat{\beta}^{k+1}}^2\Bigr\},\\
\label{eq:Lcond3}
\hat{s}^{k+1}&=\hat{s}^k+\frac{(\hat{\theta}^{k+1}-X\hat{\beta}^{k+1})}{\bar{b}_{k+1}}.
\end{align}
for $k\in\N_0$. Moreover, if $(\beta^*,\theta^*,\hat{\beta}^*,\hat{\theta}^*,\hat{s}^*)$ is a fixed point of~\eqref{eq:GAMPopt}, 
then $(\hat{\beta}^*,\hat{\theta}^*)$ is a solution to the optimisation problem~\eqref{eq:constropt}, i.e.\ $\hat{\beta}^*\in\argmin_{\tilde{\beta}\in\R^p}\mathcal{C}(\tilde{\beta};X,y)$.
\end{proposition}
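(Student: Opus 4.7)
The plan is to establish the three recursive identities \eqref{eq:Lcond1}--\eqref{eq:Lcond3} by unpacking the proximal definitions of $f_{k+1}$ and $\bar{g}_k$ and matching them to the minimisers of the corresponding Lagrangian-plus-quadratic objectives, and then to deduce from these identities that any fixed point of~\eqref{eq:GAMPopt} satisfies the KKT conditions for~\eqref{eq:constropt}. Crucially, $\bar{b}_k>0$ and $\bar{c}_k<0$ by~\eqref{eq:fgkderiv}, so $-\bar{c}_k/2>0$ and the objectives on the right-hand sides of \eqref{eq:Lcond1}--\eqref{eq:Lcond2} are strongly convex in the primal variable, yielding unique minimisers.

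For~\eqref{eq:Lcond1}, I would drop the terms in $L(\tilde{\beta},\hat{\theta}^k,\hat{s}^k)$ that are constant in $\tilde{\beta}$ and, using $\beta^{k+1}=X^\top\hat{s}^k-\bar{c}_k\hat{\beta}^k$, expand the quadratic to see that coordinate-wise the minimisation reduces to $\argmin_z\{J(z)-\frac{\bar{c}_k}{2}z^2-z\beta_j^{k+1}\}$; a short algebraic manipulation matches this against the defining formula~\eqref{eq:fkbar} for $f_{k+1}(\beta_j^{k+1})$. Analogously, for~\eqref{eq:Lcond2}, substituting $\theta^{k+1}=X\hat{\beta}^{k+1}-\bar{b}_{k+1}\hat{s}^k$ into~\eqref{eq:gkbar} shows that $\bar{g}_{k+1}(\theta_i^{k+1},y_i)$ coincides with the coordinate-wise minimiser of $\ell(z,y_i)+\hat{s}_i^k z+\frac{1}{2\bar{b}_{k+1}}(z-(X\hat{\beta}^{k+1})_i)^2$. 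Finally, \eqref{eq:Lcond3} is an immediate consequence of the identity $g_{k+1}(u,v)=(\bar{g}_{k+1}(u,v)-u)/\bar{b}_{k+1}$ combined with the recursive definitions of $\theta^{k+1}$ and $\hat{s}^{k+1}$ in~\eqref{eq:GAMPopt}.

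For the fixed-point statement, let $(\beta^*,\theta^*,\hat{\beta}^*,\hat{\theta}^*,\hat{s}^*)$ be a fixed point of~\eqref{eq:GAMPopt}, with associated $\bar{b}^*>0$ and $\bar{c}^*<0$. Plugging this into~\eqref{eq:Lcond3} immediately gives $\hat{\theta}^*=X\hat{\beta}^*$, i.e.\ primal feasibility for~\eqref{eq:constropt}. Since the objective on the right-hand side of~\eqref{eq:Lcond1} is convex in $\tilde{\beta}$, its first-order optimality condition at $\hat{\beta}^{k+1}=\hat{\beta}^*$ (using the subdifferential of $J$ to accommodate non-smooth penalties such as the $\ell_1$-norm) reduces at the fixed point to $0\in\partial J(\hat{\beta}_j^*)-(X^\top\hat{s}^*)_j$ for every $j$, because the quadratic perturbation term $-\frac{\bar{c}^*}{2}\|\tilde{\beta}-\hat{\beta}^*\|^2$ vanishes at $\tilde{\beta}=\hat{\beta}^*$ together with its gradient. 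Likewise, \eqref{eq:Lcond2} yields $0\in\partial_1\ell(\hat{\theta}_i^*,y_i)+\hat{s}_i^*$ for every $i$, the quadratic perturbation again contributing nothing at the fixed point (where $\hat{\theta}^*=X\hat{\beta}^*$).

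These are precisely the stationarity conditions in the primal variables $\tilde{\beta}$ and $\tilde{\theta}$ of the Lagrangian $L(\tilde{\beta},\tilde{\theta},s)$ in~\eqref{eq:lagrangian}, with $s=\hat{s}^*$ playing the role of the dual variable for the equality constraint $\tilde{\theta}=X\tilde{\beta}$. Combined with primal feasibility, these are the full KKT conditions for~\eqref{eq:constropt}, and since $\ell(\cdot,y_i)$ and $J$ are convex, KKT is sufficient for global optimality; hence $(\hat{\beta}^*,\hat{\theta}^*)$ solves~\eqref{eq:constropt}. The main technical point to be careful about is handling the subdifferentials cleanly when $\ell$ or $J$ fails to be differentiable; aside from this, the entire argument consists of routine convex-analytic manipulations once the proximal identifications~\eqref{eq:gkbar}--\eqref{eq:fkbar} are in hand.
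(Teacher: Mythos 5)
Your proposal is correct and follows essentially the same route as the paper: the identities \eqref{eq:Lcond1}--\eqref{eq:Lcond3} are obtained by discarding terms constant in the primal variable, completing the square, and matching coordinate-wise against the proximal definitions \eqref{eq:gkbar}--\eqref{eq:fkbar}, exactly as in the paper's proof in Section~\ref{sec:GAMPproofs}. For the fixed-point claim you package the argument as coordinate-wise KKT conditions (subdifferential stationarity plus primal feasibility from \eqref{eq:Lcond3}) and invoke their sufficiency under convexity, whereas the paper makes the same zero-gradient observation for the quadratic perturbations, deduces that $\hat{\beta}^*$ and $\hat{\theta}^*$ block-minimise $L(\cdot,\cdot,\hat{s}^*)$, and concludes with a direct two-line comparison of objective values over the feasible set $\tilde{\theta}=X\tilde{\beta}$ --- an equivalent and equally routine finish.
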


\unparskip
In fact, the proof we give in Section~\ref{sec:GAMPproofs} reveals that Proposition~\ref{prop:GAMPopt} holds for any choice of deterministic scalars $\bar{b}_k>0$ and $\bar{c}_k<0$ in the first column of~\eqref{eq:GAMPopt}, provided that these are also used to define $\bar{g}_k,f_{k+1}$. The characterisation in~\eqref{eq:Lcond1}--\eqref{eq:Lcond3} shows that the GAMP algorithm~\eqref{eq:GAMPopt} is closely related to (but not completely identical to) a `linearised' Alternating Direction Method of Multipliers (ADMM) procedure~\citep[Section~4.4.2]{PB13} for optimising~\eqref{eq:constropt}. 
Alternating algorithms of this type are particularly well-suited to handling objective functions of the form~\eqref{eq:lagrangian} since each minimisation step involves only one of $J$ and $\ell$ (while~\eqref{eq:Lcond3} is a dual update step). The forms of the quadratic penalties in~\eqref{eq:Lcond1}--\eqref{eq:Lcond2} ensure that the `augmented Lagrangians' therein are separable, and hence can be minimised separately in each coordinate of $\tilde{\beta}$ or $\tilde{\theta}$. 
This is why $\hat{\beta}^{k+1},\hat{\theta}^{k+1}$ are obtained from $\beta^{k+1},\theta^{k+1}$ by componentwise applications of $f_{k+1},\bar{g}_{k+1}$ respectively, whose expressions in~\eqref{eq:gkbar}--\eqref{eq:fkbar} emerge naturally from~\eqref{eq:Lcond1}--\eqref{eq:Lcond2}. See also~\citet{boyd2011distributed} for an accessible introduction to ADMM, and~\citet{rangan2016fixed} for further details on the connection between GAMP and conventional convex optimisation algorithms.

Based on Proposition~\ref{prop:GAMPopt} and the reasoning above, we might expect the high-dimensional limiting behaviour of the estimators $\hat{\beta}^*\in\argmin_{\tilde{\beta}\in\R^p}\mathcal{C}(\tilde{\beta};X,y)$ to be governed by some fixed point of the state evolution for~\eqref{eq:GAMPopt} (if it exists). To prove this, we might hope to be able to establish convergence of both the GAMP iteration~\eqref{eq:GAMPopt} and its state evolution to their respective fixed points (in the sense of~\eqref{eq:GAMPconv} below). We conclude this subsection by setting out a general strategy along these lines. 
In Sections~\ref{sec:lasso}--\ref{sec:logistic}, we will go on to demonstrate that it unifies existing derivations of high-dimensional asymptotic results for the Lasso, and M-estimators in the linear model and logistic regression model.

\textbf{Step 1}: For given $\ell$ and $J$ (and fixed $n$ and $p\equiv p_n$), find a fixed point of~\eqref{eq:GAMPopt} \emph{together with its state evolution}, satisfying
\begin{equation}
\label{eq:GAMPfixed}
\begin{alignedat}{3}
\theta^*&:=X\hat{\beta}^*-\bar{b}_*\hat{s}^*,\qquad&\hat{\theta}^*&:=\bar{g}_*(\theta^*,y),\qquad&\bar{c}_*&:=\E\bigl(g_*'(Z_*,Y)\bigr),\qquad\hat{s}^*:=g_*(\theta^*,y),
\\
\beta^*&:=X^\top\hat{s}^*-\bar{c}_*\hat{\beta}^*,\qquad&\hat{\beta}^*&:=f_*(\beta^*),\qquad&\bar{b}_*&:=\delta^{-1}\,\E\bigl(f_*'(\mu_*\bar{\beta}+\sigma_*G_*)\bigr).
\end{alignedat}
\end{equation}
Here, $f_*,\bar{g}_*,g_*$ are defined in terms of $\bar{b}_*>0$, $\bar{c}_*<0$ as in~\eqref{eq:gkbar}--\eqref{eq:fkbar}, with $(Z,Z_*)\sim N_2(0,\Sigma_*)$ and $Y=h(Z,\bar{\varepsilon})$, while $G_*\sim N(0,1)$ is independent of $\bar{\beta}\sim \pi_{\bar{\beta}}$ and $\mu_*,\sigma_*,\Sigma_*,f_*,g_*$ satisfy~\eqref{eq:GAMPstatevol1}--\eqref{eq:GAMPstatevol2}. In each of the subsequent examples, the system~\eqref{eq:GAMPfixed} reduces to
a smaller set of (non-linear) equations.
The existence and uniqueness of a state evolution fixed point usually needs to be verified on a case-by-case basis, and may depend on the values of parameters such as the limiting sampling ratio $\delta$ and the asymptotic signal strength $\E(\beta^2)/\delta$ (the variance of $Z$ above).

\textbf{Step 2}: If Step 1 yields suitable $f_*,\bar{g}_*,g_*,\bar{b}_*,\bar{c}_*$, then consider the following 
`stationary' version of~\eqref{eq:GAMPopt} for each $n$ and $p\equiv p_n$:
\begin{equation}
\label{eq:GAMPstat}
\begin{alignedat}{3}
\theta^k&:=X\hat{\beta}^k-\bar{b}_*\hat{s}^{k-1},\qquad&\hat{\theta}^k&:=\bar{g}_*(\theta^k,y),\qquad\hat{s}^k:=g_*(\theta^k,y),
\\
\beta^{k+1}&:=X^\top\hat{s}^k-\bar{c}_*\hat{\beta}^k,\qquad&\hat{\beta}^{k+1}&:=f_*(\beta^{k+1}).
\end{alignedat}
\end{equation}
Henceforth, we will use~\eqref{eq:GAMPstat} as a theoretical device rather than as a practical algorithm, which gives us the flexibility to initialise it with $\hat{s}^{-1}=0\in\R^n$ and an `oracle' $\hat{\beta}^0=f_*(\mu_*\beta+\sigma_*\xi)\in\R^p$, where $\xi\sim N_p(0,I_p)$ is independent of the signal $\beta\in\R^p$. 
This is a convenient choice because it ensures that $\Sigma_0=\Sigma_*$ and hence that the state evolution for~\eqref{eq:GAMPstat} is stationary, i.e.\ $\mu_k=\mu_*$, $\sigma_k=\sigma_*$ and $\Sigma_k=\Sigma_*$ for all $k\in\N$. In addition, as $n,p\to\infty$ with $n/p\to\delta$ under~\ref{ass:G1}, the $d_2$ limit of the empirical distribution of the entries of $\hat{\beta}^0$ is the distribution of $f_*(\mu_*\bar{\beta}+\sigma_*G_*)$ by construction, and under the hypotheses of Theorem~\ref{thm:GAMPmaster}, this is also true of $\hat{\beta}^k$ for each \emph{fixed} $k\in\N$ by Remark~\ref{rem:GAMPbk}. The remaining technical challenge to establish the same distributional limit for the fixed point $\hat{\beta}^*$, which solves the optimisation problem~\eqref{eq:constropt} by Proposition~\ref{prop:GAMPopt}. 

\textbf{Step 3}: Show that the estimates $\hat{\beta}^k$ in~\eqref{eq:GAMPstat} converge to $\hat{\beta}^*\in\argmin_{\tilde{\beta}\in\R^p}\mathcal{C}(\tilde{\beta};X,y)$ in the sense that
\begin{equation}
\label{eq:GAMPconv}
\lim_{k\to\infty}\clim_{p\to\infty}\frac{\norm{\hat{\beta}^k-\hat{\beta}^*}^2}{p}=0.
\end{equation}
In the examples in Sections~\ref{sec:lasso}--\ref{sec:logistic}, this is achieved by first establishing a `Cauchy property'
\[\clim_{p\to\infty}\frac{\norm{\hat{\beta}^{k+1}-\hat{\beta}^k}^2}{p}=0,\qquad\clim_{n\to\infty}\frac{\norm{\hat{s}^{k+1}-\hat{s}^k}}{n}^2=0\]
for each $k$ (using the limiting covariance structure mentioned after Theorem~\ref{thm:GAMPmaster}), and then proving that for large $k$ and $p$, the original convex cost function $\tilde{\beta}\mapsto\mathcal{C}(\tilde{\beta};X,y)$ is approximately minimised by $\hat{\beta}^k$ in the following sense: if $\hat{\gamma}^k\in\R^p$ belongs to the subgradient of $\mathcal{C}(\cdot\,;X,y)$ at $\hat{\beta}^k$ for $k\in\N$ and $p\equiv p_n$, then 
\begin{equation}
\label{eq:GAMPconv1}
\lim_{k\to\infty}\clim_{p\to\infty}\frac{\norm{\hat{\gamma}^k}^2}{p}=0.
\end{equation}
If $\mathcal{C}(\cdot\,;X,y)$ is strongly convex (on a subset of its domain that contains $\hat{\beta}^k,\hat{\beta}^*$) with high probability, then the desired conclusion~\eqref{eq:GAMPconv} follows readily from~\eqref{eq:GAMPconv1} and the basic inequality $\mathcal{C}(\hat{\beta}^*;X,y)\leq\mathcal{C}(\hat{\beta}^k;X,y)$; see (98)--(100) in~\citet{donohoMest16}. Otherwise (as in the case of the Lasso in Section~\ref{sec:lasso}), further work must be done to show that in a random design setting, it is vanishingly unlikely that $\norm{\hat{\gamma}^k}_p$ is small but $\norm{\hat{\beta}^k-\hat{\beta}^*}_p$ is large~\citep[cf.][Theorem~1.8 and Lemma~3.1]{BM12}.

\hfparskip
\subsection{AMP for the Lasso}
\label{sec:lasso}
In high-dimensional linear models $y=X\beta+\varepsilon$, the Lasso~\citep{Tib96} is a popular method for obtaining sparse estimates of $\beta\in\R^p$ via $\ell_1$-penalised least squares. Given $X\in\R^{n\times p}$, $y\in\R^n$ and a regularisation parameter $\lambda>0$, the Lasso estimator is defined by
\begin{equation}
\label{eq:lasso}
\hat{\beta}_\lambda^{\mathrm L}\in\argmin_{\tilde{\beta}\in\R^p}\,\biggl\{\frac{1}{2}\norm{y-X\tilde{\beta}}^2+\lambda\norm{\tilde{\beta}}_1\biggr\}.
\end{equation}
In the random design setting of~\ref{ass:G0} and~\ref{ass:G1},~\citet{BM12} derived an exact expression~\eqref{eq:lassoest} for the asymptotic estimation error of $\hat{\beta}_\lambda^{\mathrm{L}}$ as $n,p\to\infty$ with $n/p\to\delta\in (0,\infty)$. By following the GAMP recipe in Section~\ref{sec:GAMPopt}, we will show how to design and calibrate an AMP iteration that is central to the proof of their main result (Theorem~\ref{thm:lasso} below).

To begin with, note that $\hat{\beta}_\lambda^{\mathrm L}$ solves a convex optimisation problem of the form~\eqref{eq:constropt} with $\ell\colon(u,v)\mapsto (u-v)^2/2$ and $J\colon x\mapsto\lambda\abs{x}$. For $k\in\N_0$, the corresponding $\bar{g}_k,g_k,f_{k+1}$ in~\eqref{eq:gkbar}--\eqref{eq:fkbar} are given by
\begin{equation}
\label{eq:lassofgk}
\bar{g}_k(u,v)=\frac{u+\bar{b}_kv}{1+\bar{b}_k},\qquad g_k(u,v)=\frac{v-u}{1+\bar{b}_k},\qquad f_{k+1}(w)=-\ST_{\lambda/\bar{c}_k}\biggl(-\frac{w}{\bar{c}_k}\biggr)=-\frac{\ST_\lambda(w)}{\bar{c}_k},
\end{equation}
where as in Section~\ref{sec:lowrankgk}, we denote by $\ST_t$ the soft-thresholding function $w\mapsto\sgn(w)(\abs{w}-t)_+$ for $t>0$. Given $\hat{r}^{-1}=0\in\R^n$, $\tilde{b}_0\equiv\bar{b}_0>0$ and $\hat{\beta}^0\in\R^p$, the resulting GAMP algorithm~\eqref{eq:GAMPopt} can be succinctly written as
\begin{equation}
\label{eq:GAMPlasso}
\hat{r}^k=y-X\hat{\beta}^k+\tilde{b}_k\hat{r}^{k-1},\qquad\hat{\beta}^{k+1}=\ST_{t_{k+1}}\bigl(X^\top\hat{r}^k+\hat{\beta}^k\bigr)\qquad\text{for }k\in\N_0,
\end{equation}
where $\hat{r}^k:=y-\hat{\theta}^k=(1+\bar{b}_k)\hat{s}^k$. Observe that~\eqref{eq:GAMPlasso} is (asymptotically equivalent to) an instance of the AMP recursion~\eqref{eq:AMPlinear} in Section~\ref{sec:AMPlinear} for the linear model, 
whose state evolution formula is given by~\eqref{eq:statevolinear}, with $\mu_k=1$ for all $k$. By~\eqref{eq:GAMPopt} and~\eqref{eq:lassofgk}, the deterministic scalars $\tilde{b}_k:=\bar{b}_k/(1+\bar{b}_{k-1})>0$ and $t_{k+1}:=\lambda(1+\bar{b}_k)=-\lambda/\bar{c}_k>0$ in~\eqref{eq:GAMPlasso} are related to each other and the state evolution parameters $\sigma_k^2$ via
\begin{alignat}{2}
\sigma_1^2=\sigma^2+\E\bigl((Z-Z_0)^2\bigr),\quad t_1=\lambda(1+\tilde{b}_0),\qquad&&\tilde{b}_k&=\frac{\E\bigl(\ST_{t_k}'(\bar{\beta}+\sigma_kG_k)\bigr)}{\delta}=\frac{\Pr\bigl(\abs{\bar{\beta}+\sigma_kG_k}>t_k\bigr)}{\delta},\notag\\ 
\label{eq:statevolasso}
\sigma_{k+1}^2=\sigma^2+\frac{\E\bigl\{\bigl(\bar{\beta}-\ST_{t_k}(\bar{\beta}+\sigma_k G_k)\bigr)^2\bigr\}}{\delta},\qquad&&t_{k+1}&=\lambda+\tilde{b}_kt_k=\lambda+\frac{t_k\,\Pr\bigl(\abs{\bar{\beta}+\sigma_kG_k}>t_k\bigr)}{\delta}
\end{alignat}
for $k\in\N$, where $\bar{\beta}\sim \pi_{\bar{\beta}}$ and $G_k\sim N(0,1)$ are independent, $(Z,Z_0)\sim N_2(0,\Sigma_0)$, and $\sigma^2>0$ is the second moment of $P_{\bar{\varepsilon}}$. 

Proceeding as in \textbf{Step 1} in Section~\ref{sec:GAMPopt}, we now seek a fixed point $(\hat{r}^*,\hat{\beta}^*,\tilde{b}_*,\sigma_*,t_*>0)$ of~\eqref{eq:GAMPlasso}--\eqref{eq:statevolasso} satisfying
\begin{alignat}{2}
\label{eq:lassofixed1}
\hat{r}^*&=y-X\hat{\beta}^*+\tilde{b}_*\hat{r}^*,\qquad\tilde{b}_*=\frac{t_*-\lambda}{t_*},\qquad&\hat{\beta}^*&=\ST_{t_*}\bigl(X^\top\hat{r}^*+\hat{\beta}^*\bigr),\\
\label{eq:lassofixed2}
\sigma_*^2&=\sigma^2+\frac{\E\bigl\{\bigl(\bar{\beta}-\ST_{t_*}(\bar{\beta}+\sigma_*G_*)\bigr)^2\bigr\}}{\delta},\qquad& t_*&=\lambda\,\biggl(1-\frac{\Pr(\abs{\bar{\beta}+\sigma_*G_*}>t_*)}{\delta}\biggr)^{-1},
\end{alignat}
where $\bar{\beta}\sim \pi_{\bar{\beta}}$ and $G_*\sim N(0,1)$ are independent. Noting that the condition~\eqref{eq:lassofixed1} simplifies to $\hat{\beta}^*=\ST_{t_*}\bigl(\hat{\beta}^*+t_*\lambda^{-1}X^\top(y-X\hat{\beta}^*)\bigr)$, we can either apply Proposition~\ref{prop:GAMPopt} or verify the Karush--Kuhn--Tucker (KKT) conditions directly to deduce that $\hat{\beta}^*$ is a Lasso solution satisfying~\eqref{eq:lasso}. 

The next task is to show that for any $\lambda>0$ in~\eqref{eq:lasso} and $\delta,\sigma>0$, there exist unique solutions $\sigma_*\equiv\sigma_*(\lambda,\delta,\sigma)>0$ and $t_*\equiv t_*(\lambda,\delta,\sigma)$ to the non-linear equations in~\eqref{eq:lassofixed2}. To this end,~\citet[Proposition~1.3]{BM12} first verified that for fixed $\alpha>0$, there is a unique $\tilde{\sigma}_\alpha\equiv\tilde{\sigma}_\alpha(\delta,\sigma)$ satisfying
\[\tilde{\sigma}_\alpha^2=\sigma^2+\frac{\E\bigl\{\bigl(\bar{\beta}-\ST_{\alpha\tilde{\sigma}_\alpha}(\bar{\beta}+\tilde{\sigma}_\alpha G_*)\bigr)^2\bigr\}}{\delta}\]
provided that 
\begin{equation}
\label{eq:upsilonalpha}
\upsilon(\alpha):=(1+\alpha^2)\Phi(-\alpha)-\alpha\phi(\alpha)<\frac{\delta}{2},
\end{equation}
where $\phi$ and $\Phi$ denote the standard Gaussian density and distribution functions respectively. Since $\upsilon\colon\R\to\R$ is a strictly decreasing continuous function with range $(0,\infty)$,~\eqref{eq:upsilonalpha} holds for all positive $\alpha>\upsilon^{-1}(\delta/2)$. 
In addition, some elementary calculus shows that for some $\alpha_0\equiv\alpha_0(\delta,\sigma)\geq\upsilon^{-1}(\delta/2)$, the map 
\[\alpha\mapsto\Lambda_{\delta,\sigma}(\alpha):=\alpha\tilde{\sigma}_\alpha\biggl(1-\frac{\Pr(\abs{\bar{\beta}+\tilde{\sigma}_\alpha G_*}>\alpha\tilde{\sigma}_\alpha)}{\delta}\biggr)\]
is a continuous bijection from $(\alpha_0,\infty)$ to $(0,\infty)$~\citep[Proposition~1.4 and Corollary~1.7]{BM12}, so that for any $\lambda>0$, there is a unique $\alpha_*\equiv\alpha_*(\lambda,\delta,\sigma)>\alpha_0$ such that $\lambda=\Lambda_{\delta,\sigma}(\alpha_*)$. It follows from this that $\sigma_*=\tilde{\sigma}_{\alpha_*}$ and $t_*=\alpha_*\sigma_*$ are the unique solutions to~\eqref{eq:lassofixed2}.

For $n\in\N$ and $p\equiv p_n$, the resulting `stationary' AMP iteration~\eqref{eq:GAMPstat} in \textbf{Step 2} in Section~\ref{sec:GAMPopt} takes the form
\begin{equation}
\label{eq:GAMPlassostat}
\hat{r}^k=y-X\hat{\beta}^k+\tilde{b}_*\hat{r}^{k-1},\qquad\hat{\beta}^{k+1}=\ST_{t_*}\bigl(X^\top\hat{r}^k+\hat{\beta}^k\bigr)\qquad\text{for }k\in\N_0,
\end{equation}
where $\hat{r}^{-1}=0\in\R^n$, $\tilde{b}_*=\delta^{-1}\,\Pr(\abs{\bar{\beta}+\sigma_*G_*}>t_*)$, and $\hat{\beta}^0=\ST_{t_*}(\beta+\sigma_*\xi)\in\R^p$ is an oracle initialiser with $\xi\sim N_p(0,I_p)$ taken to be independent of the signal $\beta\in\R^p$. Under the hypotheses of Theorem~\ref{thm:GAMPmaster}, it follows from Remark~\ref{rem:GAMPbk} and~\eqref{eq:GAMPesterr} that for each fixed $k\in\N_0$, the empirical distribution of the entries of $\hat{\beta}^k\equiv\hat{\beta}^k(n)$ converges completely in $d_2$ to the distribution of $\ST_{t_*}(\bar{\beta}+\sigma_*G_*)$ as $n,p\to\infty$ with $n/p\to\delta$. 

Theorem~\ref{thm:lasso} below asserts that the same asymptotic conclusion holds for the fixed point $\hat{\beta}^*$ of~\eqref{eq:GAMPlassostat}, which is a Lasso solution by virtue of~\eqref{eq:lassofixed1}. The additional technical challenge in its proof is to show that the AMP iterates $\hat{\beta}^k$ in~\eqref{eq:GAMPlassostat} actually converge to a fixed point in the sense of~\eqref{eq:GAMPconv}, when we take $n,p\to\infty$ followed by $k\to\infty$~\citep[Theorem~1.8]{BM12}.\footnote{\citet{BM12} originally established this result for a AMP recursion~\eqref{eq:GAMPlasso} initialised with $\hat{\beta}^0=0$, in which the thresholds are defined instead by $t_{k+1}=\alpha_*\sigma_{k+1}$ in~\eqref{eq:statevolasso} with $\alpha_*=\alpha_*(\lambda,\delta,\sigma)$ as above, and the state evolution sequence $(\sigma_k)$ is non-constant but converges to $\sigma_*$. Their analysis yields the same conclusion for~\eqref{eq:GAMPlassostat}, and also shows that~\eqref{eq:lassoactive} holds even though $\psi\colon (u,v)\mapsto\Ind_{\{u\neq 0\}}$ is discontinuous.} This constitutes \textbf{Step 3} in Section~\ref{sec:GAMPopt}, and as mentioned there, the arguments involved turn out to be highly non-trivial in this case because the Lasso objective function in~\eqref{eq:lasso} is not strongly convex.
\begin{theorem}[{\citealp[Theorem~1.5]{BM12}}]
\label{thm:lasso}
Consider a sequence of linear models $y=X\beta+\varepsilon$ satisfying~\emph{\ref{ass:G0}} and \emph{\ref{ass:G1}} for $r=2$ as $n,p\to\infty$ with $n/p\to\delta\in (0,\infty)$. Suppose that the limiting prior distribution $\pi_{\bar{\beta}}$ satisfies $\pi_{\bar{\beta}}(\{0\})>0$, so that an asymptotically non-vanishing proportion of the entries of $\beta\in\R^p$ are equal to 0. For $\lambda>0$, let $\hat{\beta}^{\mathrm L}\equiv\hat{\beta}_\lambda^{\mathrm L}\in\R^p$ be a Lasso estimator~\eqref{eq:lasso} for each $p\equiv p_n$, and let $\sigma_*\equiv\sigma_*(\lambda,\delta,\sigma)>0$ and $t_*\equiv t_*(\lambda,\delta,\sigma)>0$ be the unique solutions to~\eqref{eq:lassofixed2}. Then
\begin{equation}
\label{eq:lassoest}
\sup_{\psi\in\PL_2(2,1)}\;\biggl|\frac{1}{p}\sum_{j=1}^p\psi(\hat{\beta}_j^{\mathrm L},\beta_j)-\E\bigl\{\psi\bigl(\ST_{t_*}(\bar{\beta}+\sigma_*G),\bar{\beta}\bigr)\bigr\}\biggr|\cvc 0 
\end{equation}
as $n,p\to\infty$ with $n/p\to\delta$, where $\bar{\beta}\sim \pi_{\bar{\beta}}$ is independent of $G\sim N(0,1)$. In particular, the asymptotic mean squared error of the Lasso estimator is given by
\begin{equation}
\label{eq:lassoamse}
\clim_{p\to\infty}\frac{\norm{\hat{\beta}_\lambda^{\mathrm L}-\beta}^2}{p}=\E\bigl\{\bigl(\bar{\beta}-\ST_{t_*}(\bar{\beta}+\sigma_*G)\bigr)^2\bigr\}=\delta(\sigma_*^2-\sigma^2).
\end{equation}
\end{theorem}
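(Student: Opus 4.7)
The plan is to follow the three-step GAMP recipe of Section~\ref{sec:GAMPopt}, specialised to Lasso via~\eqref{eq:lassofgk}--\eqref{eq:statevolasso}. Since the fixed point $(\sigma_*,t_*)$ of~\eqref{eq:lassofixed2} has already been shown to exist and to be unique, Step~1 is complete, and I would take as my theoretical device the stationary AMP recursion~\eqref{eq:GAMPlassostat}, initialised with the oracle $\hat{\beta}^0=\ST_{t_*}(\beta+\sigma_*\xi)\in\R^p$ where $\xi\sim N_p(0,I_p)$ is independent of $(\beta,\varepsilon,X)$. This particular initialisation ensures $\Sigma_0=\Sigma_*$, so the state evolution is stationary: $\sigma_k\equiv\sigma_*$ and $t_k\equiv t_*$ for all $k$. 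After checking that~\ref{ass:G0}--\ref{ass:G5} transfer to this linear-model setting (note that $g_k\colon(u,v)\mapsto (v-u)/(1+\bar{b}_*)$ and $\ST_{t_*}$ are Lipschitz, and their relevant derivatives are continuous Lebesgue-almost everywhere), the linear-model master result~\eqref{thm:AMPlinear} yields, for each fixed $k\in\N$,
\[
\sup_{\psi\in\PL_2(2,1)}\,\biggl|\,\frac{1}{p}\sum_{j=1}^p\psi(\hat{\beta}_j^k,\beta_j)-\E\bigl\{\psi\bigl(\ST_{t_*}(\bar{\beta}+\sigma_*G),\bar{\beta}\bigr)\bigr\}\,\biggr|\cvc 0.
\]

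The remainder of the work (Steps~2--3) is to transfer this limit from the AMP iterate $\hat{\beta}^k$ to the Lasso solution $\hat{\beta}_\lambda^{\mathrm L}$ as $k\to\infty$. First, I would establish the Cauchy-type bounds $\clim_{p\to\infty}\norm{\hat{\beta}^{k+1}-\hat{\beta}^k}_p^2=:\varepsilon_{\beta,k}^2$ and $\clim_{n\to\infty}\norm{\hat{r}^{k+1}-\hat{r}^k}_n^2=:\varepsilon_{r,k}^2$, with both $\varepsilon_{\beta,k},\varepsilon_{r,k}\to 0$ as $k\to\infty$. These follow from an extension of~\eqref{thm:AMPlinear} to the joint law of $(\hat{\beta}^k,\hat{\beta}^{k+1})$ and $(\hat{r}^k,\hat{r}^{k+1})$, whose limiting covariance structure (analogous to~\eqref{eq:Sigmabarns}--\eqref{eq:Taubarns}) tends in the stationary regime to that of a single fixed-point iterate. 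Next, the soft-thresholding update in~\eqref{eq:GAMPlassostat} produces $v^{k+1}\in\partial\norm{\hat{\beta}^{k+1}}_1$ with $t_*v^{k+1}=X^\top\hat{r}^k+\hat{\beta}^k-\hat{\beta}^{k+1}$, so the element
\[
\hat{\gamma}^{k+1}:=-X^\top(y-X\hat{\beta}^{k+1})+\lambda v^{k+1}
\]
is a subgradient of the Lasso objective at $\hat{\beta}^{k+1}$. Substituting $y-X\hat{\beta}^{k+1}=\hat{r}^{k+1}-\tilde{b}_*\hat{r}^k$ and using the fixed-point identity $\tilde{b}_*+\lambda/t_*=1$ from~\eqref{eq:lassofixed1}, the expression telescopes to
\[
\hat{\gamma}^{k+1}=X^\top(\hat{r}^k-\hat{r}^{k+1})+\frac{\lambda}{t_*}\,(\hat{\beta}^k-\hat{\beta}^{k+1}).
\]
Combined with the complete boundedness of the operator norm of $X$ (standard for Gaussian designs with $n/p\to\delta$), this yields $\lim_{k\to\infty}\clim_{p\to\infty}\norm{\hat{\gamma}^{k+1}}_p^2=0$, i.e.\ the iterates $\hat{\beta}^k$ approximately satisfy the KKT conditions for~\eqref{eq:lasso}.

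The main obstacle, flagged at the end of Section~\ref{sec:GAMPopt}, is that when $p>n$ the Lasso objective is merely convex, not strongly convex, so approximate stationarity of the subgradient does \emph{not} directly imply $\norm{\hat{\beta}^k-\hat{\beta}_\lambda^{\mathrm L}}_p\cvc 0$. To overcome this I would invoke a restricted strong convexity argument in the spirit of Lemma~3.1 of~\citet{BM12}: the sparsity of the fixed-point distribution $\ST_{t_*}(\bar{\beta}+\sigma_*G)$ (inherited from $\pi_{\bar{\beta}}(\{0\})>0$ and the thresholding) ensures that, with complete probability for large $n$, $\hat{\beta}^k-\hat{\beta}_\lambda^{\mathrm L}$ lies in a cone on which $X^\top X$ satisfies a restricted eigenvalue condition. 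This is the delicate step — it fuses the AMP distributional control with compressed-sensing-style random matrix bounds — and it yields $\lim_{k\to\infty}\clim_{p\to\infty}\norm{\hat{\beta}^k-\hat{\beta}_\lambda^{\mathrm L}}_p^2=0$. Once this is secured, the triangle inequality for $\widetilde{d}_2$ and the fact that $\PL_2(2,1)$-functions are controlled linearly in $\ell_2$-distance transfer the $\widetilde{d}_2$-limit from $\hat{\beta}^k$ to $\hat{\beta}_\lambda^{\mathrm L}$, proving~\eqref{eq:lassoest}. Finally,~\eqref{eq:lassoamse} follows by taking $\psi\colon(u,v)\mapsto(u-v)^2$ in~\eqref{eq:lassoest} and substituting the fixed-point equation for $\sigma_*^2$ from~\eqref{eq:lassofixed2}.
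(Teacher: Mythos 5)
Your proposal follows essentially the same route as the paper: the three-step GAMP recipe of Section~\ref{sec:GAMPopt} specialised via~\eqref{eq:lassofgk}--\eqref{eq:statevolasso}, the stationary iteration~\eqref{eq:GAMPlassostat} with the oracle initialiser so that state evolution is constant and Theorem~\ref{thm:GAMPmaster} (with Remark~\ref{rem:GAMPbk}) gives the fixed-$k$ limit, a Cauchy property from the joint limiting covariance of successive iterates, the telescoped subgradient identity $\hat{\gamma}^{k+1}=X^\top(\hat{r}^k-\hat{r}^{k+1})+(\lambda/t_*)(\hat{\beta}^k-\hat{\beta}^{k+1})$ (which checks out, using $\tilde{b}_*+\lambda/t_*=1$ from~\eqref{eq:lassofixed1}), and finally the structural argument of \citet[Theorem~1.8 and Lemma~3.1]{BM12} to bridge the lack of strong convexity --- exactly the point at which the paper also defers to Bayati and Montanari. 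The only minor imprecision is attributing the cone/restricted-eigenvalue structure in that last step to $\pi_{\bar{\beta}}(\{0\})>0$: what the argument actually exploits is that the asymptotic active fraction $\Pr(\abs{\bar{\beta}+\sigma_*G_*}>t_*)=\delta\tilde{b}_*$ is strictly smaller than $\delta$, which is forced by the second fixed-point equation in~\eqref{eq:lassofixed2} rather than by the atom of the prior at zero.
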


\unparskip
We emphasise once again the complex, non-linear dependence of $\sigma_*$ in~\eqref{eq:lassoest} on the asymptotic sparsity level $\pi_{\bar{\beta}}(\{0\})$ and $\lambda,\delta,\sigma>0$ through~\eqref{eq:lassofixed2}, and also the fact the asymptotic guarantees of Theorem~\ref{thm:lasso} hold for a \emph{fixed} value of the regularisation parameter $\lambda>0$.~\citet{MMB18} showed that the asymptotic mean squared error of $\hat{\beta}_\lambda^{\mathrm{L}}$ in~\eqref{eq:lassoamse} is a quasi-convex function of $\lambda$ (i.e.\ decreasing on $(0,\lambda^*]$ and increasing on $[\lambda^*,\infty)$ for some $\lambda^*>0$), and moreover that 
\begin{equation}
\label{eq:lassoactive}
\clim_{p\to\infty}\frac{\norm{\hat{\beta}_\lambda^{\mathrm{L}}}_0}{p}\equiv\clim_{p\to\infty}\frac{1}{p}\sum_{j=1}^p\Ind_{\{\hat{\beta}_j^{\mathrm{L}}\neq 0\}}=\Pr\bigl(\ST_{t_*}(\bar{\beta}+\sigma_*G_*)\neq 0\bigr)=\Pr(\abs{\bar{\beta}+\sigma_*G_*}>t_*)=\delta\tilde{b}_*(\lambda,\delta,\sigma)
\end{equation}
is a decreasing function of $\lambda$, as might be intuitively expected.

When the Lasso is used to perform variable selection (possibly with an adaptive choice of $\lambda$),~\citet{su2017false} established a tradeoff between the false discovery proportion and false negative proportion along the regularisation path $\lambda\mapsto\hat{\beta}_\lambda^{\mathrm{L}}$ in the high-dimensional asymptotic regime above. To this end, by extending the results of~\citet{BM12}, they proved that these two quantities converge \emph{uniformly} to deterministic limits over $\lambda\in [\lambda_{\min},\lambda_{\max}]$, for any $0<\lambda_{\min}<\lambda_{\max}$.
\begin{remark}
The SLOPE estimator~\citep{bogdan2015slope,su2016slope,bellec2018slope} is a generalisation of the Lasso that solves a regularised least squares problem in which the penalty is a \emph{sorted} $\ell_1$-norm: for $\lambda_1\geq\lambda_2 \geq \cdots\geq\lambda_p\geq 0$, define
\begin{equation}
\label{eq:slope}
\hat{\beta}^{\mathrm{SLOPE}}(\lambda_1,\dotsc,\lambda_p)\in\argmin_{\tilde{\beta}\in \R^p}\,\biggl\{\frac{1}{2}\norm{y-X\tilde{\beta}}^2+\sum_{j=1}^p\lambda_j\abs{\tilde{\beta}}_{(j)}\biggr\},
\end{equation}
where $\abs{\tilde{\beta}}_{(1)}\geq\abs{\tilde{\beta}}_{(2)}\geq\dotsc\geq\abs{\tilde{\beta}}_{(p)}$ are the absolute values of the entries of $\tilde{\beta}$ arranged in decreasing order. This is a convex optimisation problem that produces sparse solutions like the Lasso, but offers more flexibility due to the choices available for $\lambda_1,\dotsc,\lambda_p$. For example, SLOPE can be used to control the false discovery rate in variable selection via a judicious choice of these regularisation parameters. Note however that when the $\lambda_j$ are distinct, the optimisation problem~\eqref{eq:slope} is not of the form~\eqref{eq:constropt} since the SLOPE penalty is not an additively separable function of the components of $\tilde{\beta}$. Consequently, the GAMP construction~\eqref{eq:GAMPopt} in Section~\ref{sec:GAMPopt} is not applicable to this setting.

Nevertheless,~\citet{bu2019slope} show that an appropriately tuned AMP algorithm converges to the SLOPE solution in the sense of~\eqref{eq:GAMPconv}, under assumptions similar to those for Theorem~\ref{thm:lasso}. This AMP iteration for SLOPE is somewhat similar to that for the Lasso, the main difference being that the soft-thresholding function in~\eqref{eq:GAMPlasso} is replaced by the proximal operator associated with the SLOPE penalty. This proximal operator is non-separable (i.e.\ does not act componentwise on its vector input), which is why the analysis is based on master theorems recently obtained by~\cite{BMN20} for AMP recursions with non-separable denoising functions.
\end{remark}

\umparskip
\subsection{AMP for M-estimation in the linear model}
\label{sec:Mestlin}
Consider again the linear model $y=X\beta+\varepsilon$ from Section~\ref{sec:AMPlinear}, and define an M-estimator of $\beta\in\R^p$ by
\begin{equation}
\label{eq:betaM}
\hat{\beta}^{\mathrm{M}}\in\argmin_{\tilde{\beta}\in\R^p}\sum_{i=1}^n\,\mathrm{M}(y_i-x_i^\top\tilde{\beta})
\end{equation}
for some convex $\mathrm{M}\colon\R\to\R$ that is bounded below. The existence of $\hat{\beta}^{\mathrm{M}}$ is guaranteed if for example $\mathrm{M}$ is strongly convex. If $\varepsilon_1,\dotsc,\varepsilon_n\iid f_{\bar{\varepsilon}}$ for some known (strictly positive log-concave) density $f_{\bar{\varepsilon}}$, then taking $\mathrm{M}=-\log f_{\bar{\varepsilon}}$ in~\eqref{eq:betaM} yields a maximum likelihood estimator of $\beta$; see~\citet[Section~3]{DSS11} for a maximum likelihood approach to estimating $\beta$ when $f_{\bar{\varepsilon}}$ is unknown. Other popular choices of~$\mathrm{M}$ include squared error loss $w\mapsto w^2$, Huber loss $w\mapsto w^2\Ind_{\{\abs{w}\leq B\}}+(2\abs{w}-B)B\Ind_{\{\abs{w}>B\}}$ (for robust regression) with $B>0$, and quantile loss $w\mapsto\tau w-\Ind_{\{w<0\}}$ (for quantile regression) with $\tau\in(0,1)$. In a classical setting where the dimension $p$ is fixed, $x_1,\dotsc,x_n\iid P_X$ on $\R^p$ and $\varepsilon_1,\dotsc,\varepsilon_n\iid P_{\bar{\varepsilon}}$ on $\R$ for all $n$,~\citet{Hub64,Hub73} proved that
\begin{equation}
\label{eq:huber}
\sqrt{n}(\hat{\beta}^{\mathrm{M}}-\beta)\cvd N_p(0,\Sigma^{\mathrm{M}})\quad\text{as }n\to\infty,\quad\text{with}\quad\Sigma^{\mathrm{M}}:=\frac{\int_\R(\mathrm{M}')^2\,dP_{\bar{\varepsilon}}}{\bigl(\int_\R\mathrm{M}''\,dP_{\bar{\varepsilon}}\bigr)^2}\,\biggl(\int_{\R^p}xx^\top\,dP_X\biggr)^{-1},
\end{equation}
under appropriate regularity conditions on $\mathrm{M}$ and the \emph{score function} $\mathrm{S}:=\mathrm{M'}$; see also~\citet{HR09} and~\citet[Example~5.28]{vdV98}. When $\bar{\varepsilon}\sim P_{\bar{\varepsilon}}$ has a differentiable density $f_{\bar{\varepsilon}}$, it follows from the Cauchy--Schwarz inequality that the variance functional 
\begin{equation}
\label{eq:hubervar}
V(\mathrm{S};\bar{\varepsilon}):=\frac{\E\bigl(\mathrm{S}(\bar{\varepsilon})^2\bigr)}{\E\bigl(\mathrm{S}'(\bar{\varepsilon})\bigr)^2}=\frac{\int_\R(\mathrm{M}')^2\,dP_{\bar{\varepsilon}}}{\bigl(\int_\R\mathrm{M}''\,dP_{\bar{\varepsilon}}\bigr)^2}
\end{equation}
that appears in~\eqref{eq:huber} is bounded below by the Fisher information $I(P_{\bar{\varepsilon}}):=\int_\R\,(f_{\bar{\varepsilon}}'/f_{\bar{\varepsilon}})^2\,dP_{\bar{\varepsilon}}$, with equality when $\mathrm{M}=-\log f_{\bar{\varepsilon}}$ (in which case the maximum likelihood estimator $\hat{\beta}^{\mathrm{M}}$ is asymptotically efficient). 

In contrast to~\eqref{eq:huber},~\citet{donohoMest16} showed that the M-estimator $\hat{\beta}^{\mathrm{M}}$ suffers from variance inflation (and cannot be asymptotically efficient) in high-dimensional regimes where $n,p\to\infty$ with $n/p\to\delta\in (1,\infty)$. AMP machinery plays a pivotal role in the analysis that leads to their main result (stated as Theorem~\ref{thm:Mest} below), and as in Section~\ref{sec:lasso}, we will now present the main steps within the context of the GAMP framework of Sections~\ref{sec:GAMPmaster} and~\ref{sec:GAMPopt}.

Observing that the convex optimisation problem in~\eqref{eq:betaM} is an instance of~\eqref{eq:constropt} with $\ell\colon (u,v)\mapsto\mathrm{M}(u-v)$ and $J\equiv 0$ (i.e.\ no penalty term), we first write down an associated GAMP algorithm~\eqref{eq:GAMPMest} based on the general construction in Section~\ref{sec:GAMPopt}. For $\eta>0$, define a `smoothed' version of $\eta\mathrm{M}$ by
\[\mathrm{M}_\eta(z):=\min_{t\in\R}\,\Bigl\{\eta\mathrm{M}(t)+\frac{1}{2}(t-z)^2\Bigr\}\]
for $z\in\R$. (The function $\eta^{-1}\mathrm{M}_\eta$ is called a \emph{Moreau envelope} of $\mathrm{M}$.) We note here that $\prox_{\eta\mathrm{M}}(z)$ in~\eqref{eq:prox} is the unique $t$ that achieves this minimum for each $z\in\R$, and also that $\mathrm{M}_\eta$ is convex and differentiable with $\mathrm{S}_\eta(z):=(\mathrm{M}_\eta)'(z)=z-\prox_{\eta\mathrm{M}}(z)$ for all $z$; see for example~\citet[Theorem~31.5]{Rock97} and~\citet[Section~3.2]{PB13}. Moreover, $\mathrm{S}_\eta$ is non-decreasing and 1-Lipschitz~\citep[cf.][Sections~2.3 and~3.1]{PB13}. 

For $k\in\N_0$, the functions $\bar{g}_k,g_k,f_{k+1}$ in~\eqref{eq:gkbar}--\eqref{eq:fkbar} are given by
\[\bar{g}_k(u,v)=v-\prox_{\bar{b}_k\mathrm{M}}(v-u)=u+\mathrm{S}_{\bar{b}_k}(v-u),\qquad g_k(u,v)=\frac{\mathrm{S}_{\bar{b}_k}(v-u)}{\bar{b}_k},\qquad f_{k+1}(w)=-\frac{w}{\bar{c}_k}.\]
Given $\bar{b}_0>0$, $\hat{\beta}^0\in\R^p$ and $\hat{r}^0:=y-X\hat{\beta}^0$, we now write the GAMP algorithm~\eqref{eq:GAMPopt} in terms of $\hat{r}^k=y-\theta^k$ and $\hat{\beta}^{k+1}=f_{k+1}(\beta^{k+1})=-\beta^{k+1}/\bar{c}_k$, and obtain the recursion
\begin{equation}
\label{eq:GAMPMest}
\hat{\beta}^{k+1}=\frac{\delta\bar{b}_{k+1}}{\bar{b}_k}X^\top\mathrm{S}_{\bar{b}_k}(\hat{r}^k)+\hat{\beta}^k,\qquad\hat{r}^{k+1}=y-X\hat{\beta}^{k+1}+\frac{\bar{b}_{k+1}}{\bar{b}_k}\,\mathrm{S}_{\bar{b}_k}(\hat{r}^k),\qquad
\end{equation} 
for $k\in\N_0$, where $\bar{b}_{k+1}=-1/(\delta\bar{c}_k)$. Moreover, expressing the state evolution recursion~\eqref{eq:GAMPstatevol1}--\eqref{eq:GAMPstatevol2} for~\eqref{eq:GAMPopt} in terms of $\tilde{\mu}_k:=\delta\bar{b}_k\mu_k$, $\tilde{\sigma}_k:=\delta\bar{b}_k\sigma_k$ and $\tau_k:=\E\bigl((Z-Z_k)^2\bigr)^{1/2}$ with $(Z,Z_k)\sim N(0,\Sigma_k)$, we have $\tau_0=\E\bigl((Z-Z_0)^2\bigr)^{1/2}$ and
\begin{equation}
\label{eq:statevolMest}
\begin{alignedat}{2}
\tilde{\mu}_{k+1}&=\delta\,\E\bigl(\mathrm{S}_{\bar{b}_k}'(\bar{\varepsilon}+\tau_k G_k)\bigr),\qquad&\tilde{\sigma}_{k+1}^2&=\delta^2\,\E\bigl(\mathrm{S}_{\bar{b}_k}(\bar{\varepsilon}+\tau_k G_k)^2\bigr),\\
\bar{b}_{k+1}&=-\frac{1}{\delta\bar{c}_k}=\frac{\bar{b}_k}{\delta\,\E\bigl(\mathrm{S}_{\bar{b}_k}'(\bar{\varepsilon}+\tau_k G_k)\bigr)},\qquad&\tau_{k+1}^2&=\frac{\E(\bar{\beta}^2)(\tilde{\mu}_{k+1}-1)^2+\tilde{\sigma}_{k+1}^2}{\delta}
\end{alignedat}
\end{equation}
for $k\in\N_0$, where $\bar{\varepsilon}\sim P_{\bar{\varepsilon}}$ is independent of $G_k\sim N(0,1)$.

Turning now to \textbf{Step 1} in Section~\ref{sec:GAMPopt}, we seek a fixed point $(\hat{r}^*,\hat{\beta}^*,\bar{b}_*>0,\tilde{\mu}_*,\tilde{\sigma}_*,\tau_*)$ of~\eqref{eq:GAMPMest}--\eqref{eq:statevolMest} satisfying
\begin{alignat}{2}
\label{eq:Mestfixed}
0&=\delta X^\top\mathrm{S}_{\bar{b}_*}(\hat{r}^*),\qquad&\hat{r}^*&=y-X\hat{\beta}^*+\mathrm{S}_{\bar{b}_*}(\hat{r}^*),\\
\label{eq:statevolMest1}
\tilde{\mu}_*&=\delta\,\E\bigl(\mathrm{S}_{\bar{b}_*}'(\bar{\varepsilon}+\tau_*G_*)\bigr)=1,&\qquad\tau_*^2&=\delta\,\E\bigl(\mathrm{S}_{\bar{b}_*}(\bar{\varepsilon}+\tau_*G_*)^2\bigr),\qquad\tilde{\sigma}_*=\sqrt{\delta}\tau_*,
\end{alignat}
where $\bar{\varepsilon}\sim P_{\bar{\varepsilon}}$ is independent of $G_*\sim N(0,1)$, and $\mu_*=\tilde{\mu}_*/(\delta\bar{b}_*)$ and $\sigma_*=\tilde{\sigma}_*/(\delta\bar{b}_*)$ are fixed points of the original state evolution equation~\eqref{eq:GAMPstatevol1}. By Proposition~\ref{prop:GAMPopt}, $\hat{\beta}^*$ solves the M-estimation problem in~\eqref{eq:betaM}. Assuming that
\begin{equation}
\label{eq:Mestsmooth}
\mathrm{M}\text{ is continuously differentiable and }\mathrm{S}=\mathrm{M}'\text{ is absolutely continuous with }\sup_{w\in\R}\mathrm{S}'(w)<\infty,
\end{equation}
\citet[Lemma~6.5]{donohoMest16} showed that for any $\tau>0$, the map $b\mapsto\E\bigl(\mathrm{S}_b'(\bar{\varepsilon}+\tau G_*)\bigr)=:F_\tau(b)$ is continuous on $(0,\infty)$ with $\lim_{b\to 0}F_\tau(b)=0$ and $\lim_{b\to\infty}F_\tau(b)=1$, and hence that there exists $b\equiv b_\tau>0$ satisfying $\E\bigl(\mathrm{S}_b'(\bar{\varepsilon}+\tau G_*)\bigr)=\delta^{-1}$ for $\delta\in (1,\infty)$. 
Using this, they deduced that under~\eqref{eq:Mestsmooth}, there exists a unique solution $(\tau_*,\bar{b}_*)$  to~\eqref{eq:statevolMest1} for any such $\delta$~\citep[Corollary~4.4]{donohoMest16}. 

The functions $f_*,g_*$ in \textbf{Step 2} in Section~\ref{sec:GAMPopt} are given by $f_*\colon w\mapsto\delta\bar{b}_*w$ and $g_*\colon (u,v)\mapsto\mathrm{S}_{\bar{b}_*}(v-u)/\bar{b}_*$, so for $n\in\N$ and $p\equiv p_n$, the `stationary' AMP iteration~\eqref{eq:GAMPstat} can be written as
\begin{equation}
\label{eq:GAMPMeststat}
\hat{\beta}^{k+1}=X^\top\mathrm{S}_{\bar{b}_*}(\hat{r}^k)+\hat{\beta}^k,\qquad\hat{r}^{k+1}=y-X\hat{\beta}^{k+1}+\mathrm{S}_{\bar{b}_*}(\hat{r}^k)\qquad\text{for }k\in\N_0.
\end{equation}
Here, $\hat{r}^0=y-X\hat{\beta}^0$ and $\hat{\beta}^0=\beta+\tilde{\sigma}_*\xi=f_*(\mu_*\beta+\sigma_*\xi)\in\R^p$, where $\xi\sim N_p(0,I_p)$ is independent of the signal $\beta\in\R^p$. 
This choice of oracle initialiser ensures that the corresponding state evolution sequence is stationary with $\tau_k=\tau_*$ for all $k\in\N_0$. Then under the conditions~\ref{ass:G0}--\ref{ass:G5} of Theorem~\ref{thm:GAMPmaster} with $r=2$, it follows from Remark~\ref{rem:GAMPbk} and~\eqref{eq:GAMPbeta} that for each fixed $k\in\N$, the empirical distributions of the components of $\hat{r}^k-\varepsilon\in\R^n$ and $\hat{\beta}^k-\beta\in\R^p$ converge completely in $d_2$ to $N(0,\tau_*^2)$ and $N(0,\tilde{\sigma}_*^2)=N(0,\delta\tau_*^2)$ respectively as $n,p\to\infty$ with $n/p\to\delta\in (1,\infty)$. 

We remark that this result can in fact be derived by directly transforming~\eqref{eq:GAMPMeststat} into an abstract asymmetric AMP iteration of the form~\eqref{eq:AMPnonsym}. Note in particular that since $h(z,v)=z+v$ in~\eqref{eq:glm1} for the linear model and $\mathrm{S}_\eta$ is 1-Lipschitz for all $\eta>0$, the function $\tilde{g}_k=\tilde{g}_*\colon (z,u,v)\mapsto\mathrm{S}_{\bar{b}_*}(z+v-u)/\bar{b}_*$ in~\ref{ass:G4} is indeed Lipschitz.

As in Section~\ref{sec:lasso}, the remaining ingredient (\textbf{Step 3} in Section~\ref{sec:GAMPopt}) is to show that the iterates $\hat{\beta}^k$ in~\eqref{eq:GAMPMeststat} converge in the sense of~\eqref{eq:GAMPconv} to some $\hat{\beta}^*$ satisfying~\eqref{eq:Mestfixed}, which is an M-estimator by Proposition~\ref{prop:GAMPopt}. Under~\eqref{eq:Mestsmooth} and the additional assumption that $\mathrm{M}$ is strongly convex, i.e.\ $\inf_{w\in\R}\mathrm{S}'(w)>0$, the conclusion of~\citet[Theorem~4.1]{donohoMest16} is indeed that
\begin{equation}
\label{eq:Mestconv2}
\lim_{k\to\infty}\clim_{p\to\infty}\frac{\norm{\hat{\beta}^k-\hat{\beta}^*}^2}{p}=0.
\end{equation}
Together with the state evolution characterisation of the iterates in~\eqref{eq:GAMPMeststat}, this leads to the following characterisation of the asymptotic performance of the M-estimator. 
\begin{theorem}[{\citealp[Theorem~4.2]{donohoMest16}}]
\label{thm:Mest}
Consider a sequence of linear models $y=X\beta+\varepsilon$ satisfying~\emph{\ref{ass:G0}} and~\emph{{\ref{ass:G1}}}, with $n/p\to\delta\in (1,\infty)$ as $n,p\to\infty$. Assume that the loss function $\mathrm{M}$ is continuously differentiable, and that the score function $\mathrm{S}=\mathrm{M}'$ is absolutely continuous with $0<\inf_{w\in\R}\mathrm{S}'(w)\leq\sup_{w\in\R}\mathrm{S}'(w)<\infty$. Let $(\tau_*,\bar{b}_*)$ be the unique fixed point of~\eqref{eq:statevolMest1}. Then
\begin{equation}
\label{eq:Mest}
\sup_{\psi\in\PL_2(2,1)}\;\biggl|\frac{1}{p}\sum_{j=1}^p\psi(\hat{\beta}_j^{\mathrm{M}}-\beta_j,\beta_j)-\E\bigl(\psi(\sqrt{\delta}\tau_*G,\bar{\beta})\bigr)\biggr|\cvc 0
\end{equation}
as $n,p\to\infty$ with $n/p\to\delta$, where $G\sim N(0,1)$. In particular, the asymptotic mean squared error of $\hat{\beta}^{\mathrm{M}}$ is given by
\begin{equation}
\label{eq:Mestmse}
\clim_{p\to\infty}\frac{\norm{\hat{\beta}^{\mathrm{M}}-\beta}^2}{p}=V(\mathrm{S}_{\bar{b}_*};\bar{\varepsilon}+\tau_*G)=\frac{\E\bigl(\mathrm{S}_{\bar{b}_*}(\bar{\varepsilon}+\tau_*G)^2\bigr)}{\E\bigl(\mathrm{S}_{\bar{b}_*}'(\bar{\varepsilon}+\tau_*G)\bigr)^2}=\frac{\tau^2_*/\delta}{1/\delta^{2}}=\delta\tau_*^2.
\end{equation}
\end{theorem}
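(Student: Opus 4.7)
The plan is to implement the three-step recipe laid out in Section~\ref{sec:GAMPopt}, following~\citet{donohoMest16}. Step 1 is essentially given: under assumption~\eqref{eq:Mestsmooth} together with strong convexity, Corollary~4.4 of \citet{donohoMest16} (quoted above) supplies the unique fixed point $(\tau_*,\bar{b}_*)$ of~\eqref{eq:statevolMest1}, and Proposition~\ref{prop:GAMPopt} identifies the $\hat{\beta}^*$-component of any fixed point of the GAMP iteration with an M-estimator. The task therefore reduces to propagating the state-evolution characterisation of the stationary AMP iterates~\eqref{eq:GAMPMeststat} to the limit object $\hat{\beta}^{\mathrm{M}}$.

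For Step~2, I would verify the hypotheses~\ref{ass:G0}--\ref{ass:G5} of Theorem~\ref{thm:GAMPmaster} for the stationary recursion~\eqref{eq:GAMPMeststat}. The key observations are that $f_*\colon w\mapsto\delta\bar{b}_*w$ is Lipschitz, and that $g_*\colon (u,v)\mapsto\mathrm{S}_{\bar{b}_*}(v-u)/\bar{b}_*$ lifts to the Lipschitz function $\tilde{g}_*\colon (z,u,v)\mapsto\mathrm{S}_{\bar{b}_*}(z+v-u)/\bar{b}_*$ required by~\ref{ass:G4}, because $\mathrm{S}_{\bar{b}_*}$ is $1$-Lipschitz for any Moreau envelope. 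The oracle initialisation $\hat{\beta}^0=\beta+\tilde{\sigma}_*\xi$ with $\xi\sim N_p(0,I_p)$ independent of $\beta$ ensures $\Sigma_0=\Sigma_*$ so that the state evolution is \emph{stationary} ($\mu_k=\mu_*,\sigma_k=\sigma_*,\tau_k=\tau_*$ for all $k$). Theorem~\ref{thm:GAMPmaster} then yields, for each fixed $k\in\N$,
\[
\widetilde{d}_2\bigl(\nu_p(\hat{\beta}^k-\beta,\beta),\,N(0,\delta\tau_*^2)\otimes\pi_{\bar{\beta}}\bigr)\cvc 0
\]
as $n,p\to\infty$ with $n/p\to\delta$, since $\hat{\beta}^k-\beta=f_*(\mu_*\beta+\sigma_*\tilde G_k)-\beta=\tilde{\sigma}_*\tilde G_k=\sqrt{\delta}\tau_*\tilde G_k$ in distribution (after noting $\mu_*=1/\delta$).

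For Step~3, the heart of the argument, I would establish~\eqref{eq:Mestconv2} by the route sketched after~\eqref{eq:GAMPconv} in the excerpt. First, by extending the state evolution to the \emph{joint} distribution of $(\hat{\beta}^k,\hat{\beta}^{k+1})$ and $(\hat{r}^k,\hat{r}^{k+1})$, one obtains a Cauchy property
\[
\clim_{p\to\infty}\frac{\|\hat{\beta}^{k+1}-\hat{\beta}^k\|^2}{p}=\mathcal{E}_k,\qquad \lim_{k\to\infty}\mathcal{E}_k=0,
\]
and similarly for $\hat{r}^k$; the limit vanishes because the stationary state evolution forces $(\hat{\beta}^k,\hat{\beta}^{k+1})$ to jointly converge to an \emph{identical} pair of Gaussians in the limit. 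Next, unfolding~\eqref{eq:GAMPMeststat} shows that the vector $\hat{\gamma}^k:=X^\top\mathrm{S}_{\bar{b}_*}(\hat{r}^k)$ lies in the subgradient of the empirical cost at $\hat{\beta}^k$ (after an $\mathrm{S}_\eta$ versus $\mathrm{S}$ correction that is negligible by smoothing), and the Cauchy property together with the update rule gives $\lim_k \clim_p \|\hat{\gamma}^k\|^2/p=0$. Strong convexity of $\mathrm{M}$ (the hypothesis $\inf\mathrm{S}'>0$) implies strong convexity of the cost function (using standard random matrix estimates to lower-bound the smallest singular value of $X$ when $n>p$), so a quadratic bound yields $\|\hat{\beta}^k-\hat{\beta}^{\mathrm{M}}\|^2/p\lesssim\|\hat{\gamma}^k\|^2/p$, proving~\eqref{eq:Mestconv2}.

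The conclusion follows by a triangle-inequality argument in $\widetilde{d}_2$: for any $\psi\in\PL_2(2,1)$, the pseudo-Lipschitz bound~\eqref{eq:PL} and Cauchy--Schwarz give
\[
\biggl|\frac{1}{p}\sum_{j=1}^p\bigl\{\psi(\hat{\beta}_j^{\mathrm{M}}-\beta_j,\beta_j)-\psi(\hat{\beta}_j^k-\beta_j,\beta_j)\bigr\}\biggr|\leq \|\hat{\beta}^{\mathrm{M}}-\hat{\beta}^k\|_p\cdot\bigl(C+\|\hat{\beta}^{\mathrm{M}}-\beta\|_{p,2}+\|\hat{\beta}^k-\beta\|_{p,2}+2\|\beta\|_{p,2}\bigr),
\]
where the moment factors are eventually $O_c(1)$ by~\ref{ass:G1}, Step~2, and the boundedness of $\|\hat{\beta}^{\mathrm{M}}-\beta\|_{p,2}$ derived along the way. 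Taking $\clim_{p\to\infty}$ followed by $k\to\infty$ and combining with Step~2 yields~\eqref{eq:Mest}; the mean squared error formula~\eqref{eq:Mestmse} then follows by specialising to $\psi(x,y)=x^2$. The principal obstacle is Step~3, and within it the passage from the Cauchy property and approximate stationarity of the KKT residual to a genuine $\|\hat{\beta}^k-\hat{\beta}^{\mathrm{M}}\|$ bound, which genuinely requires the full strength of strong convexity combined with non-trivial random matrix theory to handle the interaction with $X$; the $\widetilde{d}_2$ convergence in Step~2 is, by contrast, a direct application of the master theorem.
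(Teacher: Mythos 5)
Your proposal follows essentially the same route as the paper: Section~\ref{sec:Mestlin} establishes Theorem~\ref{thm:Mest} by exactly this three-step GAMP recipe, with Step~1 and Step~3 delegated to \citet[Corollary~4.4 and Theorem~4.1]{donohoMest16} and Step~2 handled via the stationary iteration~\eqref{eq:GAMPMeststat}, the oracle initialisation, Remark~\ref{rem:GAMPbk} and Theorem~\ref{thm:GAMPmaster}, followed by the same pseudo-Lipschitz triangle-inequality transfer. The only slip is the parenthetical claim $\mu_*=1/\delta$ (in fact $\mu_*=1/(\delta\bar{b}_*)$), which is harmless because what your computation actually uses is $\tilde{\mu}_*=\delta\bar{b}_*\mu_*=1$, so that $f_*(\mu_*\beta+\sigma_*\xi)=\beta+\sqrt{\delta}\tau_*\xi$ as claimed.
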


\unparskip
Under condition~\ref{ass:G1} on the signal vectors $\beta\in\R^p$, Theorem~\ref{thm:Mest} provides the limiting joint empirical distribution of the entries of $\hat{\beta}^{\mathrm{M}},\beta\in\R^p$. It turns out that even in the absence of~\ref{ass:G1}, we have
\[\sup_{\psi\in\PL_1(2,1)}\;\biggl|\frac{1}{p}\sum_{j=1}^p\psi(\hat{\beta}_j^{\mathrm{M}}-\beta_j)-\E\bigl(\psi(\sqrt{\delta}\tau_*G)\bigr)\biggr|\cvc 0,\]
as evidenced by the fact that $\bar{\beta}$ does not appear in the state evolution recursion~\eqref{eq:statevolMest1}. Comparing the variance functional $V(\mathrm{S}_{\bar{b}_*};\bar{\varepsilon}+\tau_*G)$ in~\eqref{eq:Mestmse} with that in the classical setting, namely $V(\mathrm{S};\bar{\varepsilon})$ in~\eqref{eq:hubervar}, we emphasise the following points of difference. First, the asymptotic variance in the high-dimensional setting depends on $\mathrm{S}_{\bar{b}_*}=\mathrm{M}_{\bar{b}_*}'$, the score function of a regularised version of $\mathrm{M}$ (rather than $\mathrm{M}$ itself). In addition, the `effective noise' in the high-dimensional regime is $\bar{\varepsilon}+\tau_* G$, rather than $\bar{\varepsilon}$. In fact, 
\begin{equation}
\label{eq:Mesthidim}
V(\mathrm{S}_{\bar{b}_*};\bar{\varepsilon}+\tau_*G)\geq\frac{1}{1-\delta^{-1}}\cdot\frac{1}{I(P_{\bar{\varepsilon}})}
\end{equation}
by~\citet[Corollary~4.3]{donohoMest16}, where $I(P_{\varepsilon})^{-1}$ is the classical lower bound. This shows that the M-estimator is inefficient in high dimensions, particularly so when $\delta$ is close to 1.

We also mention that~\citet[Theorem~2.2]{DM15} extended the conclusion~\eqref{eq:Mest} to M-estimators defined with respect to the Huber loss function, which is not strongly convex on $\R$ and hence is not covered by Theorem~\ref{thm:Mest}.~\citet[Section~6]{donohoMest16} noted an interesting connection between the Lasso and Huber M-estimators, as a special case ($J\colon w\mapsto\lambda\abs{w}$) of a duality relationship between the following optimisation problems: 

\unparskip
\begin{enumerate}[label=(\roman*)]
\item The regularised least squares problem
\[\text{minimise}\quad\frac{1}{2}\norm{\breve{y}-\breve{X}\breve{\beta}}^2+\sum_{j=1}^p J(\breve{\beta}_j)\quad\text{over }\breve{\beta}\in\R^n\]
based on $\breve{X}\in\R^{(n-p)\times n}$ and $\breve{y}\in\R^{n-p}$, with convex penalty $J\colon\R\to\R$;
\item The unpenalised M-estimation problem~\eqref{eq:betaM} based on $X\in\R^{n\times p}$, $y\in\R^n$ satisfying 
$\breve{X}X=0$ and $\breve{y}=\breve{X}y$, with convex loss function $\mathrm{M}\colon w\mapsto\min_{z\in\R}\,\{J(z)+(z-w)^2/2\}$.
\end{enumerate}

\umparskip
\subsection{GAMP for logistic regression}
\label{sec:logistic}
To further illustrate the generality and utility of the GAMP framework, we will now demonstrate how it can be applied to a popular non-linear GLM, namely the logistic regression model with canonical logit link. Suppose that we observe $(x_1,y_1),\dotsc,(x_n,y_n)\in\R^p\times\{0,1\}$ with
\begin{equation}
\label{eq:logistic}
\Pr(y_i=1\,|\,x_i^\top\beta)=\frac{e^{x_i^\top\beta}}{1+e^{x_i^\top\beta}}=\zeta'(x_i^\top\beta),\quad\text{where}\;\;\zeta(z):=\log(1+e^z)
\end{equation}
for $1\leq i\leq n$. Equivalently, we may view this as an instance of the model~\eqref{eq:glm1} with $\varepsilon_1,\dotsc,\varepsilon_n\iid U[0,1]$ and $h(z,v)=\Ind_{\{v\leq\zeta'(z)\}}$, so that $y_i=h(x_i^\top\beta,\varepsilon_i)=\Ind_{\{\varepsilon_i\leq\zeta'(x_i^\top\beta)\}}$ for each $i$, and seek to estimate $\beta\in\R^p$ by maximum likelihood via
\begin{equation}
\label{eq:logmle}
\hat{\beta}^{\mathrm{MLE}}\in\argmin_{\tilde{\beta}\in\R^p}\sum_{i=1}^n\,\bigl\{\zeta(x_i^\top\tilde{\beta})-y_i x_i^\top\tilde{\beta}\bigr\},
\end{equation}
where the objective function in~\eqref{eq:logmle} is the negative log-likelihood.~\citet{AA84} showed that this MLE exists if and only if $\mathcal{X}_0:=\{x_i:1\leq i\leq n,\,y_i=0\}$ and $\mathcal{X}_1:=\{x_i:1\leq i\leq n,\,y_i=1\}$ are not (strongly) linearly separable, i.e.\ for any $\tilde{\beta}\neq 0$, there either exists $x_{i_0}\in\mathcal{X}_0$ with $x_{i_0}^\top\tilde{\beta}>0$ or $x_{i_1}\in\mathcal{X}_1$ with $x_{i_1}^\top\tilde{\beta}<0$. In the random design setting of~\ref{ass:G0} where $x_1,\dotsc,x_n\iid N_p(0,I_p/n)$ for each $n$ and $p\equiv p_n$,~\citet{candes2020} established a sharp phase transition for the existence of $\hat{\beta}^{\mathrm{MLE}}$. Specifically, they proved that there exists a decreasing function $s_{\mathrm{MLE}}\colon (0,\infty)\to [0,\infty)$ with the following property: if the signals $\beta\in\R^p$ are such that $n^{-1/2}\norm{\beta}\cvc\kappa\in (0,\infty)$ as $n,p\to\infty$ with $n/p\to\delta\in (1,\infty)$, then $\hat{\beta}^{\mathrm{MLE}}$ exists with probability tending to 0 if $\kappa>s_{\mathrm{MLE}}(1/\delta)$, and exists with probability tending to 1 if $\kappa<s_{\mathrm{MLE}}(1/\delta)$. 

Henceforth, we will restrict attention to the latter regime, and use the GAMP formalism in Sections~\ref{sec:GAMPmaster} and~\ref{sec:GAMPopt} to explain how to derive a result of~\citet{SC19a,SC19b} on the high-dimensional asymptotics of $\hat{\beta}^{\mathrm{MLE}}$, which is formally stated as Theorem~\ref{thm:logistic} below. Recall from~\eqref{eq:kappa} that for a sequence of logistic regression models~\eqref{eq:logistic} satisfying~\ref{ass:G1}, the asymptotic signal strength $\kappa^2=\clim_{n\to\infty}\norm{\beta}^2/n$ is equal to $\E(\bar{\beta}^2)/\delta$. 
Noting that $\hat{\beta}^{\mathrm{MLE}}$ in~\eqref{eq:logmle} solves a convex optimisation problem of the form~\eqref{eq:constropt} with $J\equiv 0$ and $\ell(u,v)=\zeta(u)-vu$, we see that the functions $\bar{g}_k,g_k,f_{k+1}$ in~\eqref{eq:gkbar}--\eqref{eq:fkbar} are given by
\begin{align}
\bar{g}_k(u,v)&=\prox_{\bar{b}_k\zeta}(u+\bar{b}_k v)=u+\bar{b}_kv-\bar{b}_k\zeta'\bigl(\prox_{\bar{b}_k\zeta}(u+\bar{b}_k v)\bigr),\notag\\
\label{eq:logfgkbar} 
g_k(u,v)&=v-\zeta'\bigl(\prox_{\bar{b}_k\zeta}(u+\bar{b}_k v)\bigr),\qquad f_{k+1}(w)=-\frac{w}{\bar{c}_k}
\end{align}
for $k\in\N_0$, since $b\zeta'(\prox_{b\zeta}(u))+\prox_{b\zeta}(u)-u=0$ by the definition of $\prox_{b\zeta}$ in~\eqref{eq:prox} for $b>0$. 

Given $\bar{b}_0>0$, $\hat{\beta}^0\in\R^p$ and $\theta^0:=X\hat{\beta}^0$, the GAMP recursion~\eqref{eq:GAMPopt} therefore takes the form
\begin{equation}
\label{eq:GAMPlog1}
\begin{split}
\hat{\beta}^{k+1}&=\delta\bar{b}_{k+1}X^\top\big\{y-\zeta'\bigl(\prox_{\bar{b}_k\zeta}(\theta^k+\bar{b}_k y)\bigr)\bigr\}+\frac{\bar{b}_{k+1}}{\bar{b}_k}\hat{\beta}^k,\\
\theta^{k+1}&=X\hat{\beta}^{k+1}-\bar{b}_{k+1}\bigl\{y-\zeta'\bigl(\prox_{\bar{b}_k\zeta}(\theta^k+\bar{b}_k y)\bigr)\bigr\}
\end{split}
\end{equation}
for $k\in\N_0$, where $\bar{b}_{k+1}=-1/(\delta\bar{c}_k)$. Using~\eqref{eq:GAMPstein} from Lemma~\ref{lem:GAMPstein}, as well as~\eqref{eq:proxderiv}, we now write the corresponding state evolution recursion~\eqref{eq:GAMPstatevol1}--\eqref{eq:GAMPstatevol2} for~\eqref{eq:GAMPlog1} in terms of $\tilde{\mu}_k:=\delta\bar{b}_k\mu_k$ and $\tilde{\sigma}_k:=\delta\bar{b}_k\sigma_k$. This yields
\begin{align} \bar{b}_{k+1}&=\frac{\bar{b}_k}{\delta}\biggl(1-\E\biggl\{\frac{1}{1+\bar{b}_k\zeta''\bigl(\prox_{\bar{b}_k\zeta}(Z_k+\bar{b}_kY)\bigr)}\biggr\}\biggr)^{-1},\notag\\
\label{eq:statevolog}
\tilde{\mu}_{k+1}&=\frac{\delta^2\bar{b}_{k+1}}{\E(\bar{\beta}^2)}\,\E\Bigl(Z \Bigl\{Y-\zeta'\bigl(\prox_{\bar{b}_k\zeta}(Z_k+\bar{b}_kY)\bigr)\Bigr\}\Bigr)+\tilde{\mu}_k,\\
\tilde{\sigma}_{k+1}^2&=\delta^2\bar{b}_{k+1}^2\,\E\Bigl(\Bigl\{Y-\zeta'\bigl(\prox_{\bar{b}_k\zeta}(Z_k+\bar{b}_kY)\bigr)\Bigr\}^2\Bigr)\notag
\end{align}
for $k\in\N_0$, where given independent $Z\sim N(0,\E(\bar{\beta}^2)/\delta)$, $\tilde{G}_k\sim N(0,1)$ and $\bar{\varepsilon}\sim P_{\bar{\varepsilon}}$, we set \[Y=h(Z,\bar{\varepsilon})=\Ind_{\{\bar{\varepsilon}\leq\zeta'(Z)\}},\qquad Z_k=\mu_{Z,k}Z+\sigma_{Z,k}\tilde{G}_k=\tilde{\mu}_k Z+\delta^{-1/2}\,\tilde{\sigma}_k\tilde{G}_k\]
in view of~\eqref{eq:logistic},~\eqref{eq:zkmusigma} and the definition of $f_{k+1}$ in~\eqref{eq:logfgkbar}.~\citet[Section~3.1]{SC19b} showed that~\eqref{eq:statevolog} is equivalent to the original state evolution recursion they defined in~\citet[Section~4.1]{SC19a}.

In accordance with \textbf{Step 1} in Section~\ref{sec:GAMPopt}, we seek a fixed point $(\hat{\beta}^*,\theta^*,\tilde{\mu}_*,\tilde{\sigma}_*,\bar{b}_*>0)$ of~\eqref{eq:GAMPlog1}--\eqref{eq:statevolog} satisfying
\begin{alignat}{2}
\label{eq:GAMPlogfp}
\theta^*&=X\hat{\beta}^*-\bar{b}_*\bigl\{y-\zeta'\bigl(\prox_{\bar{b}_*\zeta}(\theta^*+\bar{b}_*y)\bigr)\bigr\},\quad\;\;&0&=X^\top\big\{y-\zeta'\bigl(\prox_{\bar{b}_*\zeta}(\theta^*+\bar{b}_*y)\bigr)\bigr\},\\
\label{eq:logfp1}
\tilde{\sigma}^2_*&=\delta^2\bar{b}_*^2\,\E\Bigl(\Bigl\{Y-\zeta'\bigl(\prox_{\bar{b}_*\zeta}(Z_*+\bar{b}_*Y)\bigr)\Bigr\}^2\Bigr),\quad\;\;&0&=\E\Bigl(Z \Bigl\{Y-\zeta'\bigl(\prox_{\bar{b}_*\zeta}(Z_*+\bar{b}_*Y)\bigr)\Bigr\}\Bigr),\\
\label{eq:logfp2}
1-\frac{1}{\delta}&=\E\,\biggl\{\frac{1}{1+\bar{b}_*\zeta''\bigl(\prox_{\bar{b}_*\zeta}(Z_*+\bar{b}_*Y)\bigr)}\biggr\},&&
\end{alignat}
where $Z_*:=\tilde{\mu}_*Z+\delta^{-1/2}\,\tilde{\sigma}_*\tilde{G}_*$ with $Z\sim N(0,\E(\bar{\beta}^2)/\delta)$ independent of $\tilde{G}_*\sim N(0,1)$. It turns out that there exists a unique solution $(\tilde{\mu}_*,\tilde{\sigma}_*,\bar{b}_*>0)$ to~\eqref{eq:logfp1}--\eqref{eq:logfp2} precisely when $\E(\bar{\beta}^2)/\delta\equiv\kappa^2<s_{\mathrm{MLE}}(1/\delta)^2$~\citep[Lemma~7 and Remark~1]{SC19b}, in which case $\hat{\beta}^{\mathrm{MLE}}$ exists with probability tending to 1. By Proposition~\ref{prop:GAMPopt}, $\hat{\beta}^*$ in~\eqref{eq:GAMPlogfp} is an MLE for $\beta$ in the logistic regression model. 

Proceeding as in \textbf{Step 2} in Section~\ref{sec:GAMPopt}, we can use the fixed points in~\eqref{eq:GAMPlogfp}--\eqref{eq:logfp2} to construct a stationary version of~\eqref{eq:GAMPlog1} based on $f_*\colon w\mapsto\delta\bar{b}_*w$ and $g_*\colon (u,v)\mapsto v-\zeta'\bigl(\prox_{\bar{b}_*\zeta}(u+\bar{b}_*v)\bigr)$. For each $n\in\N$ and $p\equiv p_n$, let $\hat{\beta}^0:=\tilde{\mu}_*\beta+\tilde{\sigma}_*\xi=f_*(\mu_*\beta+\sigma_*\xi)\in\R^p$ be an oracle initialiser with $\xi\sim N_p(0,I_p)$ taken to be independent of the signal $\beta\in\R^p$. Then setting $\theta^0=X\hat{\beta}^0$, we inductively define
\begin{equation}
\label{eq:GAMPlog2}
\hat{\beta}^{k+1}=\delta\bar{b}_*X^\top\big\{y-\zeta'\bigl(\prox_{\bar{b}_*\zeta}(\theta^k+\bar{b}_*y)\bigr)\bigr\}+\hat{\beta}^k,\qquad\theta^{k+1}=X\hat{\beta}^k-\bar{b}_*\bigl\{y-\zeta'\bigl(\prox_{\bar{b}_*\zeta}(\theta^k+\bar{b}_*y)\bigr)\bigr\}\\
\end{equation}
for $k\in\N_0$. By the choice of $\hat{\beta}^0$ above, the associated state evolution recursion~\eqref{eq:statevolog} is stationary, i.e.\ $\tilde{\mu}_k=\tilde{\mu}_*$ and $\tilde{\sigma}_k=\tilde{\sigma}_*$ for all $k\in\N_0$. Consequently, under the hypotheses of Theorem~\ref{thm:GAMPmaster} with $r=2$, it follows from Remark~\ref{rem:GAMPbk} that for each fixed $k\in\N$, the joint empirical distribution of the entries of $\hat{\beta}^k,\beta\in\R^p$ converges completely in $d_2$ to the distribution of $(\tilde{\mu}_*\bar{\beta}+\tilde{\sigma}_*G,\bar{\beta})$ as $n,p\to\infty$ with $n/p\to\delta$, where $\bar{\beta}\sim \pi_{\bar{\beta}}$ is independent of $G\sim N(0,1)$. 
On a technical note, we remark that the function 
\[\tilde{g}_*\colon (z,u,v)\mapsto g_*(u,h(z,v))=\Ind_{\{v\leq\zeta'(z)\}}-\zeta'\Bigl(\prox_{\bar{b}_*\zeta}\bigl(u+\bar{b}_*\Ind_{\{v\leq\zeta'(z)\}}\bigr)\Bigr)\]
in~\ref{ass:G4} is not Lipschitz since $h\colon(z,v)\mapsto\Ind_{\{v\leq\zeta'(z)\}}$ is not continuous, so an additional approximation argument is needed to formally justify the application of Theorem~\ref{thm:GAMPmaster}.

Finally, we discuss \textbf{Step 3} in Section~\ref{sec:GAMPopt}, whose aim is to show that the iterates in~\eqref{eq:GAMPlog2} converge in the sense of~\eqref{eq:GAMPconv} to a fixed point $\hat{\beta}^*\equiv\hat{\beta}^{\mathrm{MLE}}$ satisfying~\eqref{eq:GAMPlogfp}. This is the content of~\citet[Theorem~7]{SC19b}, and follows from similar arguments to those used by~\citet{donohoMest16} to prove~\eqref{eq:Mestconv2} for the M-estimators in Section~\ref{sec:Mestlin}. An additional technical obstacle in this setting is that $\zeta\colon z\mapsto\log(1+e^z)$ and hence the negative log-likelihood function in~\eqref{eq:logmle} are strongly convex on compact sets but not on the entirety of their domains. One way to address this issue is to show that $\hat{\beta}^k,\hat{\beta}^{\mathrm{MLE}}$ are contained in some sufficiently large Euclidean ball with overwhelming probability. Indeed, it follows from the state evolution characterisation of~\eqref{eq:GAMPlog2} that $\norm{\hat{\beta}^k}^2/p=O_c(1)$ for each fixed $k$; in addition,~\citet[Theorem~4]{SC19b} established the boundedness property $\norm{\hat{\beta}^{\mathrm{MLE}}}^2/p=O_c(1)$ in the regime $\kappa<s_{\mathrm{MLE}}(1/\delta)$ where $\hat{\beta}^{\mathrm{MLE}}$ exists with probability tending to 1. 
\begin{theorem}[{\citealp[Theorem~2]{SC19a}}]
\label{thm:logistic}
Consider a sequence of logistic regression models~\eqref{eq:logistic} satisfying~\emph{\ref{ass:G0}} and~\emph{\ref{ass:G1}} for $r=2$ as $n,p\to\infty$ with $n/p\to\delta\in (1,\infty)$. Assume that $\E(\bar{\beta}^2)/\delta\equiv\kappa^2<s_{\mathrm{MLE}}(1/\delta)^2$, so that~\eqref{eq:logmle} defines a maximum likelihood estimator $\hat{\beta}^{\mathrm{MLE}}$ with probability tending to 1, and there exist $\tilde{\mu}_*,\tilde{\sigma}_*,\bar{b}_*$ satisfying~\eqref{eq:logfp1}--\eqref{eq:logfp2}. Then
\[\sup_{\psi\in\PL_2(2,1)}\;\biggl|\frac{1}{p}\sum_{j=1}^p\psi\bigl(\hat{\beta}_j^{\mathrm{MLE}}-\tilde{\mu}_*\beta_j, \beta_j\bigr)-\E\bigl(\psi(\tilde{\sigma}_*G,\bar{\beta})\bigr)\biggr|\cvc 0\]
as $n,p\to\infty$ with $n/p\to\delta$, where $G\sim N(0,1)$ is independent of $\bar{\beta}\sim \pi_{\bar{\beta}}$. In particular,
\[\frac{1}{p}\sum_{j=1}^p\bigl(\hat{\beta}_j^{\mathrm{MLE}}-\tilde{\mu}_*\beta_j\bigr)\cvc 0,\quad\frac{1}{p}\sum_{j=1}^p\bigl(\hat{\beta}_j^{\mathrm{MLE}}-\tilde{\mu}_*\beta_j\bigr)^2\cvc\tilde{\sigma}_*^2,\quad\frac{\norm{\hat{\beta}^{\mathrm{MLE}}-\beta}^2}{p}\cvc (\tilde{\mu}_*-1)^2\,\E(\bar{\beta}^2)+\tilde{\sigma}_*^2.\]
\end{theorem}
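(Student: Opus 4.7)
The plan is to follow the three-step GAMP recipe laid out in Section~\ref{sec:GAMPopt} and already instantiated for logistic regression in the paragraphs preceding the theorem. The core object is the stationary GAMP iteration~\eqref{eq:GAMPlog2}, initialised with the oracle $\hat{\beta}^0 = \tilde{\mu}_*\beta + \tilde{\sigma}_*\xi$, where $\xi\sim N_p(0,I_p)$ is independent of $\beta$. This choice forces the state evolution for~\eqref{eq:GAMPlog2} to be stationary, with $\tilde{\mu}_k=\tilde{\mu}_*$ and $\tilde{\sigma}_k=\tilde{\sigma}_*$ for all $k\in\N_0$, and by Theorem~\ref{thm:GAMPmaster} the joint empirical distribution $\nu_p(\hat{\beta}^k-\tilde{\mu}_*\beta,\beta)$ then converges completely in $d_2$ to the law of $(\tilde{\sigma}_*G,\bar{\beta})$ for each fixed $k$. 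The goal is to transfer this characterisation from $\hat{\beta}^k$ to $\hat{\beta}^{\mathrm{MLE}}$ by establishing that the two are arbitrarily close, in normalised Euclidean distance, once $k$ and $n$ are large.

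First I would verify the hypotheses~\ref{ass:G0}--\ref{ass:G5} for~\eqref{eq:GAMPlog2}. Condition~\ref{ass:G0} is given and~\ref{ass:G1} is the theorem's hypothesis, while~\ref{ass:G2}--\ref{ass:G3} follow from the explicitly Gaussian form of the oracle $\hat{\beta}^0$, with $F_0\colon\bar{\beta}\mapsto\tilde{\mu}_*\bar{\beta}$ and $(\Sigma_0)_{22}=\tilde{\mu}_*^2\E(\bar{\beta}^2)/\delta+\tilde{\sigma}_*^2$. The only genuine technical subtlety is that the function $\tilde{g}_*\colon(z,u,v)\mapsto g_*(u,\Ind_{\{v\le\zeta'(z)\}})$ in~\ref{ass:G4} is not Lipschitz, because $h\colon(z,v)\mapsto\Ind_{\{v\le\zeta'(z)\}}$ is discontinuous. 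I would circumvent this by a standard smoothing, approximating the indicator by Lipschitz functions, running Theorem~\ref{thm:GAMPmaster} for each approximant, and passing to the limit using continuity of $d_2$ in the joint law of the inputs (together with uniform control of second moments inherited from the state evolution).

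The second main step is the Cauchy-type convergence~\eqref{eq:GAMPconv} of the iterates to a fixed point of~\eqref{eq:GAMPlogfp}, which by Proposition~\ref{prop:GAMPopt} is an MLE. Following~\citet[Theorem~7]{SC19b} and the template of~\citet{donohoMest16}, I would first augment the state evolution to include the joint covariances of $(\hat{\beta}^k,\hat{\beta}^{k+1})$ and show, using Theorem~\ref{thm:GAMPmaster} applied to test functions like $(u_1,u_2,v)\mapsto(u_1-u_2)^2$, that
\[
\clim_{p\to\infty}\frac{\norm{\hat{\beta}^{k+1}-\hat{\beta}^k}^2}{p}\to 0\quad\text{as }k\to\infty,
\]
and similarly for $\hat{r}^{k+1}-\hat{r}^k$. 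This Cauchy property, combined with the fact that the gradient of the logistic negative log-likelihood at $\hat{\beta}^k$ can be read off from~\eqref{eq:GAMPlogfp}, would yield the vanishing-gradient statement~\eqref{eq:GAMPconv1}. The main obstacle here, and the principal difference from the Lasso analysis of Section~\ref{sec:lasso}, is that $\zeta$ is only locally strongly convex: $\zeta''(z)=e^z/(1+e^z)^2\to 0$ as $\abs{z}\to\infty$. I would overcome this by combining the $\ell_2$ boundedness $\norm{\hat{\beta}^k}^2/p=O_c(1)$ (a direct consequence of the stationary state evolution) with~\citet[Theorem~4]{SC19b}, which provides $\norm{\hat{\beta}^{\mathrm{MLE}}}^2/p=O_c(1)$ whenever $\kappa<s_{\mathrm{MLE}}(1/\delta)$. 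On a sufficiently large ball (and using that $X$ has well-controlled spectrum by a standard Gaussian matrix estimate), the negative log-likelihood is strongly convex with high probability, so a small gradient at $\hat{\beta}^k$ forces $\hat{\beta}^k$ to be close to $\hat{\beta}^{\mathrm{MLE}}$ in normalised distance, giving~\eqref{eq:GAMPconv}.

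Finally, once~\eqref{eq:GAMPconv} is established, the pseudo-Lipschitz convergence for the AMP iterates transfers to the MLE: for any $\psi\in\PL_2(2,1)$, the difference
\[
\frac{1}{p}\sum_{j=1}^p\psi(\hat{\beta}_j^{\mathrm{MLE}}-\tilde{\mu}_*\beta_j,\beta_j)-\frac{1}{p}\sum_{j=1}^p\psi(\hat{\beta}_j^k-\tilde{\mu}_*\beta_j,\beta_j)
\]
is controlled via the defining bound~\eqref{eq:PL} and Cauchy--Schwarz by $p^{-1/2}\norm{\hat{\beta}^{\mathrm{MLE}}-\hat{\beta}^k}\cdot(1+\norm{\hat{\beta}^{\mathrm{MLE}}}_{p,2}+\norm{\hat{\beta}^k}_{p,2}+\norm{\beta}_{p,2})$, which is $o_c(1)$ uniformly in $\psi\in\PL_2(2,1)$ after first sending $p\to\infty$ and then $k\to\infty$. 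This delivers the main display of the theorem; the three corollary identities then follow by specialising $\psi(u,v)\in\{u,u^2,(u+(1-\tilde{\mu}_*)v)^2\}$, noting $\E(\tilde{\sigma}_*G)=0$, $\E((\tilde{\sigma}_*G)^2)=\tilde{\sigma}_*^2$, and that $\norm{\hat{\beta}^{\mathrm{MLE}}-\beta}^2/p$ expands as the sum of $(\tilde{\mu}_*-1)^2\E(\bar{\beta}^2)$ and $\tilde{\sigma}_*^2$ in the limit.
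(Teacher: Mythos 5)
Your proposal follows essentially the same route as the paper's treatment in Section~\ref{sec:logistic}: the three-step GAMP recipe with the stationary iteration~\eqref{eq:GAMPlog2} and oracle initialiser, the Lipschitz-smoothing of the discontinuous $h$ to apply Theorem~\ref{thm:GAMPmaster}, the Cauchy/vanishing-gradient argument combined with local strong convexity justified by the boundedness of $\norm{\hat{\beta}^k}^2/p$ and $\norm{\hat{\beta}^{\mathrm{MLE}}}^2/p$ via~\citet[Theorems~4 and~7]{SC19b}, and the pseudo-Lipschitz transfer from $\hat{\beta}^k$ to $\hat{\beta}^{\mathrm{MLE}}$. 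Two trivial slips only: since~\ref{ass:G2} normalises by $n$, $(\Sigma_0)_{22}$ should be $\delta^{-1}\bigl(\tilde{\mu}_*^2\,\E(\bar{\beta}^2)+\tilde{\sigma}_*^2\bigr)$, and the test function for the final identity should be $(u,v)\mapsto\bigl(u+(\tilde{\mu}_*-1)v\bigr)^2$ rather than $(u+(1-\tilde{\mu}_*)v)^2$.
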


\unparskip
Thus, for large $p$, the components of $\hat{\beta}^{\mathrm{MLE}}\in\R^p$ have approximately the same empirical distribution as those of $\tilde{\mu}_*\beta+\tilde{\sigma}_*\xi$ (the oracle initialiser $\hat{\beta}^0$ in~\eqref{eq:GAMPlog2} above), so we can interpret $\tilde{\mu}_*$ as an asymptotic bias factor and $\tilde{\sigma}_*^2$ as a limiting variance.~\citet{SC19a} observe empirically that 
when $n,p\to\infty$ with $\delta\in (1,\infty)$, both the limiting bias and variance are larger than they would be in classical settings where $p$ is fixed or grows sufficiently slowly with $n$ (in which case $\hat{\beta}^{\mathrm{MLE}}$ would be asymptotically unbiased (with $\tilde{\mu}_*=1$) and asymptotically efficient as $n\to\infty$). Their Figure~7 illustrates that this high-dimensional phenomenon becomes increasingly pronounced when either $\delta$ is reduced or $\kappa$ is enlarged; 
in fact, when $\kappa$ approaches the critical value $s_{\mathrm{MLE}}(1/\delta)$ for the existence of $\hat{\beta}^{\mathrm{MLE}}$, the value of $\tilde{\mu}_*$ diverges to infinity, as does the ratio between $\tilde{\sigma}_*$ and the Cram\'er--Rao lower bound.

It is instructive to compare the high-dimensional asymptotic performance of $\hat{\beta}^{\mathrm{MLE}}$ in the logistic model with that of the M-estimator~\eqref{eq:betaM} in the linear model. Note that while both estimators exhibit variance inflation (as quantified by Theorems~\ref{thm:Mest} and~\ref{thm:logistic}), only the former suffers from bias inflation. Indeed, in the linear model, the AMP state evolution recursion~\eqref{eq:statevolMest1} yields $\mu_k=1$ for all $k$, and hence $\mu_*=1$ (implicitly) in Theorem~\ref{thm:Mest} for the M-estimator; see also~\eqref{eq:statevolinear} in Section~\ref{sec:AMPlinear}.

\section{Conclusions}
\label{Sec:Conclusions}
With the abstract AMP recursions in Section~\ref{Sec:Master} as our starting point, we have shown how to design and analyse AMP algorithms for estimating structured signals, both in low-rank spiked models with Gaussian noise matrices and in GLMs with Gaussian design matrices. In high-dimensional asymptotic regimes where the matrix dimensions scale proportionally to each other, we have illustrated how to apply the abstract master theorems to derive precise state evolution characterisations of AMP estimation performance, which we have stated as complete convergence guarantees.

In Section~\ref{Sec:GAMP}, we have presented a general recipe that uses AMP systematically to obtain exact expressions for the asymptotic error of penalised and unpenalised M-estimators in GLMs with Gaussian design matrices. An alternative approach to deriving such guarantees is via Gaussian comparison inequalities and the convex Gaussian min-max theorem (CGMT); see for instance~\citet{TOH15,TAH18},~\citet{MM18} and~\citet{LS20} for applications of these techniques to regularised M-estimators, the Lasso and boosting respectively. 

Remaining within the realm of Gaussian matrices, we mention the results in this paper can be extended to AMP recursions with (i) non-separable denoising functions that do not act componentwise on their vector arguments, and can therefore take advantage of correlation between entries of the signal~\citep{MRB19,BMN20}; (ii) matrices with independent entries and a blockwise variance structure~\citep{JM13}. With a carefully chosen variance structure (`spatial coupling'), AMP has been shown to achieve the information-theoretic limit for compressed sensing~\citep{donohoJM_SC2013}.

In the setting of AMP for asymmetric matrices in Section~\ref{sec:AMPnonsym}, the results of Theorem~\ref{thm:AMPnonsym} can be generalised to matrices with i.i.d.\ sub-Gaussian entries with mild additional assumptions~\citep{bayati2015universality,CL20}. It is likely that the proof strategies in these papers can be developed further to extend other theoretical results (such as Theorem~\ref{thm:GAMPmaster} for GAMP) to these more general random matrix ensembles.

When the data matrix does not have i.i.d.\ Gaussian entries, AMP is not guaranteed to converge, and in fact can even diverge in sometimes pathological ways; see~\citet{rangan2019convergence} for a discussion of this issue.
For this reason, a number of other AMP-based algorithms have been introduced that allow for this assumption to be weakened in various ways, such as Vector AMP (VAMP)~\citep{ranganVAMP19}, orthogonal AMP (OAMP)~\citep{ma2017,Tak19} and other generalisations of AMP for rotationally invariant matrices~\citep{OCW16,Fan20}.

AMP has also been used to obtain lower bounds on the limiting estimation error of a broad class of general first-order methods such as gradient descent and mirror descent~\citep{CMW20}. An active area of current research is to determine whether AMP outperforms all other polynomial-time algorithms in low-rank matrix estimation and GLMs. In these settings, the statistical-computational gap has been precisely characterised in terms of the critical points of a `potential function'~\citep{lelarge2019fundamental,barbier2019optimal}. As mentioned in Section~\ref{sec:lowrankgk}, the performance of both Bayes-AMP and the Bayes optimal estimator correspond to (possibly different) critical points of this function, and when the potential function has a single critical point, Bayes-AMP achieves Bayes optimal performance. This connection suggests that AMP will play an important role in understanding statistical-computational gaps in a wider statistical context. 

\clearpage
\section{Appendix: proofs and technical remarks}
\label{Sec:Proofs}

In addition to the definitions in Section~\ref{sec:notation}, we introduce the following notation. The Moore--Penrose pseudoinverse of a matrix $A\in\R^{k\times\ell}$ will be denoted by $A^+\in\R^{\ell\times k}$. This satisfies $A^+=(A^\top A)^+A^\top$~\citep[e.g.][Proposition~3.2]{BH12}, and if $k=\ell$ and $A$ is invertible, then $A^+=A^{-1}$. For non-negative, real-valued functions $f,g$, we write $f \lesssim g$ if there exists a universal constant $C > 0$ such that $f \leq Cg$; more generally, given parameters $\alpha_1,\dotsc,\alpha_N$, we write $f\lesssim_{\alpha_1,\dotsc,\alpha_N}\!g$ if there exists $C\equiv C_{\alpha_1,\dotsc,\alpha_N}>0$, depending only on $\alpha_1,\dotsc,\alpha_N$, such that $f\leq Cg$.  

\hfparskip
\subsection{Technical remarks on the master theorems in Section~\ref{sec:AMPsym}}
\label{sec:AMPrem}
In this subsection, we will make some general observations that unify Theorems~\ref{thm:AMPmaster} and~\ref{thm:masterext} with other master theorems in the AMP literature~\citep[e.g.][]{Bol14,BM11,JM13}. There are a number of respects in which our results are presented differently and/or in slightly greater generality, and we discuss each of these in turn.
\begin{remark}[\emph{Complete convergence}]
\label{rem:AMPconv}
In Section~\ref{sec:inductive}, we will also establish the following variants of Theorem~\ref{thm:AMPmaster}, neither of which implies the other (or the original theorem): for a sequence of symmetric AMP recursions~\eqref{eq:AMPsym} satisfying~\ref{ass:A0},~\ref{ass:A4} and~\ref{ass:A5}, and an associated sequence of state evolution parameters $(\tau_k^2:k\in\N)$ as in~\eqref{eq:statevolsym}, the following hold for each fixed $k\in\N$ as $n\to\infty$:

\unparskip
\begin{enumerate}[label=(\alph*)]
\item Suppose that~\ref{ass:A1}--\ref{ass:A3} hold with $\cvp$ and $O_p(1)$ in place of $\cvc$ and $O_c(1)$ respectively. Then $d_r\bigl(\nu_n(h^k,\gamma),N(0,\tau_k^2)\otimes\pi\bigr)\cvp 0$, or equivalently $\widetilde{d}_r\bigl(\nu_n(h^k,\gamma),N(0,\tau_k^2)\otimes\pi\bigr)\cvp 0$.
\item Suppose instead that~\ref{ass:A1}--\ref{ass:A3} hold with $\cvas$ and $O_{a.s.}(1)$ in place of $\cvc$ and $O_c(1)$ respectively, and moreover that $\bigl(W(n):n\in\N\bigr)$ is independent of $\bigl(m^0(n),\gamma(n):n\in\N\bigr)$. Then $d_r\bigl(\nu_n(h^k,\gamma),N(0,\tau_k^2)\otimes \pi\bigr)\cvas 0$, or equivalently $\widetilde{d}_r\bigl(\nu_n(h^k,\gamma),N(0,\tau_k^2)\otimes \pi\bigr)\cvas 0$.
\end{enumerate}

\unparskip
Stronger versions of these statements can be formulated as analogues of Theorem~\ref{thm:masterext}. We now explain why we have stated our AMP master theorems (and all subsequent asymptotic results in the paper)
in terms of complete convergence.

\unparskip
\begin{itemize}[leftmargin=0.4cm]
\item Complete convergence is stronger than almost sure convergence and convergence in probability, so the conclusions of Theorems~\ref{thm:AMPmaster} and~\ref{thm:masterext} provide stronger convergence guarantees than (a) and (b).
\item In view of Remark~\ref{rem:compdist}, neither the conditions~\ref{ass:A0}--\ref{ass:A3} nor their analogues in (a) impose any restrictions on the dependence structure across $n\in\N$ of the random triples $\bigl(m^0(n),\gamma(n),W(n)\bigr)$ that generate the AMP iterates. 
By contrast, the additional assumption in (b) is somewhat unnatural from a statistical point of view, except perhaps when $\bigl(m^0(n),\gamma(n):n\in\N\bigr)$ is taken to be deterministic sequence that satisfies the other conditions in (b). Note however that this special case is covered by Theorems~\ref{thm:AMPmaster} and~\ref{thm:masterext}, which yield stronger conclusions than (b), as mentioned above.
\item The method of proof of Theorems~\ref{thm:AMPmaster} and~\ref{thm:masterext} (via Proposition~\ref{prop:AMPproof}) is well-suited to complete convergence and convergence in probability, but appears not to be able to handle almost sure convergence directly; it is not clear whether (b) holds in general if we only assume~\ref{ass:A0} rather than the stronger independence condition above. The reason for this is that in many of the key technical arguments,
the convergence of some random sequence $(X_n)$ of interest is established by first identifying a more tractable sequence $(Y_n)$ such that $Y_n\eqd X_n$ for all $n$. To show that $X_n\cvc x$ for some deterministic $x$, or that $X_n=O_c(1)$, it suffices to prove that $Y_n\cvc x$ or $Y_n=O_c(1)$ respectively in view of Definition~\ref{def:compconv} of complete convergence. Similarly, $Y_n\cvp x$ implies that $X_n\cvp x$, and $Y_n=O_p(1)$ implies that $X_n=O_p(1)$. However, if $Y_n\cvas x$, then it does not necessarily follow that $X_n\cvas x$, and if $Y_n=O_{a.s.}(1)$, then it need not be the case that $X_n=O_{a.s.}(1)$.
\end{itemize}

\unparskip
\end{remark}
\begin{remark}[\emph{Uniformity over $\PL_D(r,1)$ and the link between pseudo-Lipschitz functions and Wasserstein convergence}]
\label{rem:AMPunif}
Many asymptotic convergence results for AMP iterations are stated in the form
\begin{equation}
\label{eq:PLconv}
\frac{1}{n}\sum_{i=1}^n\psi(X_{ni}^k)\leadsto\E\bigl(\psi(\bar{X}^k)\bigr)\in\R\;\;\text{as }n\to\infty,\;\text{for every }\psi\in\PL_D(r),
\end{equation}
where $r\in [2,\infty)$, $\leadsto$ denotes one of the three modes of stochastic convergence discussed in Remark~\ref{rem:AMPconv}, the random vectors $\bar{X}^k,X_{ni}^k$ take values in $\R^D$ for some fixed $D\in\N$, and $k\in\N$ is a fixed iteration number; usually, each $X_{ni}^k$ depends on the $i^{th}$ coordinates of vector quantities in the first $k$ iterations of an AMP recursion indexed by $n$. Recalling the definition~\eqref{eq:Wrtilde} of $\widetilde{d}_r$, we deduce from Corollary~\ref{cor:Wr} that any conclusion of the form~\eqref{eq:PLconv} can be automatically upgraded to a uniform statement
\begin{equation}
\label{eq:PLconvunif}
\widetilde{d}_r(\mu_n^k,\bar{\mu}^k)=\sup_{\psi\in\PL_D(r,1)}\;\biggl|\frac{1}{n}\sum_{i=1}^n\psi(X_{ni}^k)-\E\bigl(\psi(\bar{X}^k)\bigr)\biggr|\leadsto 0\;\;\text{as }n\to\infty
\end{equation}
featuring the same mode of convergence $\leadsto$ as in~\eqref{eq:PLconv}, where we write $\mu_n^k$ for the empirical distribution of $X_{n1}^k,\dotsc,X_{nn}^k$ on $\R^D$, and $\bar{\mu}^k$ for the distribution of the limiting random vector $\bar{X}^k$. Furthermore, by Corollary~\ref{cor:Wr}, both~\eqref{eq:PLconv} and~\eqref{eq:PLconvunif} are equivalent to the assertion that $d_r(\mu_n^k,\bar{\mu}^k)\leadsto 0$. In essence, this is because $d_r,\widetilde{d}_r$ are equivalent metrics, in the sense that they generate the same topology on the space $\mathcal{P}_D(r)$ of probability distributions on $\R^D$ with a finite $r^{th}$ moment; see Theorem~\ref{thm:Wr} and Remark~\ref{rem:Wrmetric}. 

On a technical note, the measurability of the random quantities $\widetilde{d}_r(\mu_n^k,\bar{\mu}^k)$ and $d_r(\mu_n^k,\bar{\mu}^k)$ is guaranteed by analytic considerations; it is shown in Proposition~\ref{prop:Wrcount} that the supremum in~\eqref{eq:PLconvunif} can instead be taken over a deterministic countable subset $T'\subseteq\PL_D(r)$ of bounded Lipschitz functions.
\end{remark}
\begin{remark}[\emph{Finite-sample analysis}]
\label{rem:finitesample}
To complement and refine some of the asymptotic conclusions of the type~\eqref{eq:PLconv} for general AMP procedures, the relevant proof techniques have been adapted to establish concentration inequalities for quantities of the form $n^{-1}\sum_{i=1}^n\psi(X_{ni}^k)-\E\bigl(\psi(\bar{X}^k)\bigr)$ for $k,n\in\N$ and fixed arbitrary $\psi\in\PL_D(r,1)$, under suitable assumptions. 
For $r=2$, such finite-sample guarantees were obtained for asymmetric recursions by~\citet{RV18} and for symmetric recursions by~\citet{BMR20}. Their conclusions can be generalised to $r>2$ with the aid of Lemma~\ref{lem:pseudoconc}, a general concentration result for sums of pseudo-Lipschitz functions of independent Gaussian random variables. It would be interesting to see whether the above results can be extended to derive a stronger finite-sample analogue of Theorem~\ref{thm:AMPmaster} in the form of a concentration inequality for $\widetilde{d}_r\bigl(\nu_n(h^k,\gamma),N(0,\tau_k^2)\otimes\pi\bigr)=\sup_{\psi\in\PL_2(r,1)}\,\bigl|n^{-1}\sum_{i=1}^n\psi(h_i^k,\gamma_i)-\E\bigl(\psi(G_k,\bar{\gamma})\bigr)\bigr|$ or $d_r\bigl(\nu_n(h^k,\gamma),N(0,\tau_k^2)\otimes\pi\bigr)$ for $k,n\in\N$.
\end{remark}
\begin{remark}[\emph{Conditions~\emph{\ref{ass:A2}} and~\emph{\ref{ass:A3}}}]
\label{rem:AMPm0}
For $r\geq 2$, conclusions of the form~\eqref{eq:PLconv} have previously been derived for general AMP iterations under a boundedness assumption on the $(2r-2)^{th}$ moments of the empirical distributions $\nu_n(m^0)$ for $n\in\N$. In~\ref{ass:A2}, we relax this to a boundedness condition $\norm{m^0}_{n,r}=O_c(1)$ on the empirical $r^{th}$ moments, which is more natural and in line with what one would expect for a $d_r$ convergence result. To accommodate this weaker assumption, we apply H\"older's inequality rather than the Cauchy--Schwarz inequality in Lemma~\ref{lem:pseudoavg}, which is used in a key estimate in the proof of Proposition~\ref{prop:AMPproof}(c) below; see~\eqref{eq:H1cTn2} and~\eqref{eq:HkcT2}. By making similar alterations to the statements and proofs of other AMP results, it ought to be possible to avoid any mention of $(2r-2)^{th}$ empirical moments.

The primary purpose of~\ref{ass:A3} is to ensure that the asymptotic dependence between different iterates $h^j,h^\ell$ (as measured by the inner product $\ipr{h^j}{h^\ell}_n$ between them) has a deterministic limiting expression, namely $\bar{\mathrm{T}}_{j,\ell}$ as defined in~\eqref{eq:Sigmabar}; see also Proposition~\ref{prop:AMPproof}(d,\,e,\,f). The existence of the limiting covariance structure captured by~\eqref{eq:Sigmabar} is crucial to the success of the proof strategy for Theorems~\ref{thm:AMPmaster} and~\ref{thm:masterext}; in fact, its existence is a \emph{necessary} condition for the more general conclusion in Theorem~\ref{thm:masterext}, as can be seen by taking $\psi(x_1,\dotsc,x_k):=x_jx_\ell$ therein for $1\leq j,\ell\leq k$. 
\end{remark}
\begin{remark}
\label{rem:A4'}
Since $\pi\in\mathcal{P}_1(r)$ by~\ref{ass:A1}, recall from Section~\ref{sec:notation} that if $\bar{\gamma}\sim\pi$, then $\E\bigl(\psi(\bar{\gamma})\bigr)=\int_{\R}\psi\,d\pi<\infty$ for all $\psi\in\PL_1(r)$, the set of all pseudo-Lipschitz functions on $\R$ of order $r$. Thus, in~\ref{ass:A3}, given Lipschitz functions $F_0,\phi$ on $\R$, Lemma~\ref{lem:pseudoprod} ensures that $x\mapsto F_0(x)\phi(x)$ lies in $\PL_1(2)\subseteq\PL_1(r)$ since $r\geq 2$, so $\E\big(F_0(\bar{\gamma})\phi(\bar{\gamma})\bigr)$ is finite.

It can be shown by fairly routine arguments that the following condition implies the first condition in~\ref{ass:A2} as well as~\ref{ass:A3}; see Section~\ref{sec:addproofs} for a full justification.

\unparskip
\begin{enumerate}[label=(A1$^+$),leftmargin=1.2cm]
\item \label{ass:A1+} There exists a Lipschitz function $\tilde{f}_0\colon\R^2\to\R$ and a probability distribution $\tilde{\nu}^0\in\mathcal{P}_1(2)$ such that writing $\mu^0$ for the distribution of $\bigl(\tilde{f}_0(\bar{\eta},\bar{\gamma}),\bar{\gamma}\bigr)$ when $\bar{\eta}\sim\tilde{\nu}^0$ and $\bar{\gamma}\sim\pi$ are independent, we have $d_2\bigl(\nu_n(m^0,\gamma),\mu^0\bigr)\cvc 0$.
\end{enumerate}

\unparskip
In applications,~\ref{ass:A1+} can be more convenient to verify than~\ref{ass:A3}. Note that if $d_2\bigl(\nu_n(h^0,\gamma),\tilde{\nu}^0\otimes\pi\bigr)\cvc 0$ with $\tilde{\nu}^0$ as above, then~\ref{ass:A1+} holds with $\tilde{f}_0=f_0$.
\end{remark}
\begin{remark}
At least when $r=2$, the master theorems in Section~\ref{Sec:Master} can be extended to abstract recursions for which the non-degeneracy condition~\ref{ass:A4} does not hold and the limiting covariance matrices need not be positive definite. These degenerate cases can be handled by first perturbing the Lipschitz functions $f_k$ and then applying a continuity argument that has some similarities with the proof of Theorem~\ref{thm:AMPlowsym} in Section~\ref{sec:LowRankproofs}; see~\citet[Section~4.2.1]{JM13} and~\citet[Section~5.4]{BMN20} for further details. 

An important fact in the proof is that $\norm{W}_{2\to 2}:=\sup_{u\neq 0}\norm{Wu}_2/\norm{u}_2=O_c(1)$ for $W\sim\GOE(n)$ as $n\to\infty$~\citep[e.g.][]{AGZ10,KY13}. For $r\in [1,\infty]$, we mention here that $\norm{W}_{r\to r}:=\sup_{u\neq 0}\norm{Wu}_r/\norm{u}_r=O_c(1)$ \emph{if and only if} $r=2$; this can be seen by taking $u=e_1$ and appealing to Lemma~\ref{lem:Wu} when $r\in [1,2)$, and then noting that $\norm{W}_{r\to r}=\norm{W}_{r'\to r'}$ when $1/r+1/r'=1$.
\end{remark}
\begin{remark}
\ref{ass:A5} is a non-vacuous albeit very mild condition. For any Lipschitz $f\colon\R^2\to\R$, the partial derivative $\frac{\partial f}{\partial x}$ is bounded on its domain of definition, which is a Borel set of full Lebesgue measure. Nevertheless, there are examples of Lipschitz $f\colon\R^2\to\R$ for which $\frac{\partial f}{\partial x}$ cannot be extended to a function on $\R^2$ that is continuous $(\lambda\otimes\pi)$-almost everywhere (see Remark~\ref{rem:lipderiv}). That said, it is inconceivable that such pathological choices of $f_k$ would be made in any practical AMP procedure, where the functions $f_k'$ usually have the property that $\{x\in\R:(x,y)\in D_k\}$ is finite for every $y\in\R$, and hence satisfy~\ref{ass:A5}.
\end{remark}

\umparskip
\subsection{Conditional distributions for symmetric AMP}
\label{sec:AMPcond}
In this subsection, we fix $n\in\N$, and in most places, we suppress the dependence on $n$ of all quantities such as $W\equiv W(n)$ and $h^k\equiv h^k(n)$. When we refer to \emph{orthonormal} sets, it is implicit that the constituent vectors have unit Euclidean norm, i.e.\ that the underlying inner product is $\ipr{\cdot\,}{\cdot}$, not $\ipr{\cdot\,}{\cdot}_n$. All statements concerning conditional distributions can be understood formally in terms of the rigorous definition of regular conditional probability, as outlined in Section~\ref{sec:conddists}. The proofs of the results below are given in Section~\ref{sec:AMPcondproofs}.

In the setting of Section~\ref{sec:AMPsym}, define the $n\times k$ matrices 
\[H_k\equiv H_k(n):=\bigl(h^1\;\cdots\;h^k\bigr),\quad M_k\equiv M_k(n):=\bigl(m^0\;m^1\;\cdots\;m^{k-1}\bigr),\quad Y_k\equiv Y_k(n):=\bigl(y^0\;y^1\;\cdots\;y^{k-1}\bigr),\]
where $y^j\equiv y^j(n):=Wm^j=h^{j+1}+b_j m^{j-1}$ for $j=0,1,\dotsc,k-1$. For convenience, we also define $M_0(n)=Y_0(n):=0\in\R^n$. Then the symmetric AMP recursion~\eqref{eq:AMPsym} can be rewritten as $WM_k=Y_k$ for $k\in\N$. 

For each $0\leq k\leq n-1$, let $P_k:=M_kM_k^+=M_k(M_k^\top M_k)^+ M_k^\top$ and $P_k^\perp:=I_n-P_k$ be the $n\times n$ matrices representing the orthogonal projections onto $\Img(M_k):=\Span\{m^j:0\leq j\leq k-1\}$ and $V_k:=\Img(M_k)^\perp$ respectively, and define $r_k:=\rank(M_k)=\dim\Img(M_k)$. Let $\barpp{m}{k}:=P_k^\perp m^k$ for $0\leq k\leq n-1$, so that the span of $\barpp{m}{k}$ is the orthogonal complement of $V_{k+1}$ within $V_k$. Furthermore, define $\mathscr{S}_{-1}:=\{\emptyset,\Omega\}$ to be the trivial $\sigma$-algebra, and for $k\in\N_0$, let \[\mathscr{S}_k:=\sigma(\gamma,m^0,h^j:1\leq j\leq k).\]
Then since $b_k,m^k$ are measurable functions of $h^k$ and $\gamma$, we see from~\eqref{eq:AMPsym} that 
\begin{equation}
\label{eq:Sk}
\mathscr{S}_k=\sigma(\gamma,m^0,y^j:0\leq j\leq k-1),
\end{equation}
and that $m^0,\dotsc,m^k$ and $r_0,\dotsc,r_{k+1}$ are $\mathscr{S}_k$-measurable for each $-1\leq k\leq n-1$. (It is not true in general that $\Pr(r_k=k)=1$ for all $1\leq k\leq n-1$, even in recursions~\eqref{eq:AMPsym} with non-pathological $f_k$.)

Our first task is to establish an important fact (Proposition~\ref{prop:condgoe}) that will be used to derive the (regular) conditional distributions of $W$ and $h^{k+1}$ given $\mathscr{S}_k$ in Proposition~\ref{prop:AMPconddist} below, for each fixed $k\in\{0,1,\dotsc,n-1\}$. We will use the symbol `$\eqdcond{\mathscr{S}_k}$' to indicate (almost-sure) equality of conditional distributions given $\mathscr{S}_k$, a notion that is defined formally in Section~\ref{sec:conddists}.
\begin{proposition}
\label{prop:condgoe}
Fix $0\leq k\leq n-1$ and suppose as in~\emph{\ref{ass:A0}} that $W\sim\GOE(n)$ is independent of $(m^0,\gamma)$. If $\tilde{U}_k$ is any $\mathscr{S}_{k-1}$-measurable $n\times(n-r_k)$ matrix whose columns form an orthonormal basis of $V_k$, then given $\mathscr{S}_{k-1}$, the matrix $\tilde{U}_k^\top W\tilde{U}_k$ has conditional distribution $\GOE(n-r_k)$ and is conditionally independent of $\mathscr{S}_k$. Consequently, $\tilde{U}_k^\top W\tilde{U}_k$ has conditional distribution $\GOE(n-r_k)$ given $\mathscr{S}_k$, and if $\tilde{W}\sim\GOE(n)$ is independent of $\mathscr{S}_k$, then $\tilde{U}_k^\top W\tilde{U}_k\eqdcond{\mathscr{S}_k}\tilde{U}_k^\top\tilde{W}\tilde{U}_k$.
\end{proposition}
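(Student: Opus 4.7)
The plan is to proceed by induction on $k$, starting from the essentially trivial base case $k=0$: since $M_0=0$ forces $V_0=\R^n$ and $r_0=0$, any $\mathscr{S}_{-1}$-measurable $\tilde{U}_0\in\mathbb{O}_n$ is deterministic, so orthogonal invariance of $\GOE$ gives $\tilde{U}_0^\top W\tilde{U}_0\sim\GOE(n)$, and the independence of $W$ from $(m^0,\gamma)$ in~\ref{ass:A0} yields the required conditional independence from $\mathscr{S}_0=\sigma(m^0,\gamma)$. For the inductive step, a useful preliminary remark is that all conclusions are invariant under replacing $\tilde{U}_k$ by $\tilde{U}_k\tilde{R}$ for any $\mathscr{S}_{k-1}$-measurable $\tilde{R}\in\mathbb{O}_{n-r_k}$ (apply orthogonal invariance of $\GOE$ conditionally on $\mathscr{S}_{k-1}$), so one is free to work with a specific convenient $\tilde{U}_k$ built from $\tilde{U}_{k-1}$.

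For the inductive step itself, the case $\barpp{m}{k-1}=0$ is handled immediately: here $V_k=V_{k-1}$, $r_k=r_{k-1}$, and $Wm^{k-1}$ is an $\mathscr{S}_{k-1}$-measurable linear combination of $y^0,\dotsc,y^{k-2}$, so $\mathscr{S}_k=\mathscr{S}_{k-1}$ and everything reduces to the inductive hypothesis. Otherwise, I would define the $\mathscr{S}_{k-1}$-measurable unit vector $u:=\tilde{U}_{k-1}^\top\barpp{m}{k-1}/\norm{\barpp{m}{k-1}}\in\R^{n-r_{k-1}}$, extend it to an $\mathscr{S}_{k-1}$-measurable orthogonal matrix $R=[u\mid R_2]\in\mathbb{O}_{n-r_{k-1}}$ via a measurable Gram--Schmidt, and set $\tilde{U}_k:=\tilde{U}_{k-1}R_2$. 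Writing $A:=\tilde{U}_{k-1}^\top W\tilde{U}_{k-1}$, the inductive hypothesis combined with the tower property yields $A\sim\GOE(n-r_{k-1})$ conditionally on $\mathscr{S}_{k-1}$; since $R$ is $\mathscr{S}_{k-1}$-measurable, orthogonal invariance applied conditionally gives $R^\top AR\sim\GOE(n-r_{k-1})$ given $\mathscr{S}_{k-1}$, whose bottom-right $(n-r_k)\times(n-r_k)$ block is $R_2^\top AR_2=\tilde{U}_k^\top W\tilde{U}_k$, establishing the required conditional distribution $\GOE(n-r_k)$.

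To obtain the conditional independence of $\tilde{U}_k^\top W\tilde{U}_k$ from $\mathscr{S}_k$ given $\mathscr{S}_{k-1}$, I would identify precisely the new information that $\mathscr{S}_k$ carries beyond $\mathscr{S}_{k-1}$. Pick an $\mathscr{S}_{k-2}$-measurable orthonormal basis $\tilde{V}_{k-1}$ of $\Img(M_{k-1})$, so that $[\tilde{V}_{k-1},\tilde{U}_{k-1}]\in\mathbb{O}_n$, and decompose $W\tilde{U}_{k-1}=\tilde{V}_{k-1}B+\tilde{U}_{k-1}A$ where $B:=\tilde{V}_{k-1}^\top W\tilde{U}_{k-1}$. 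The crucial auxiliary fact, which is the main technical obstacle, is that $B$ is $\mathscr{S}_{k-1}$-measurable even though $W$ itself is not: this follows from the identity $\tilde{U}_{k-1}^\top Y_{k-1}=B^\top\tilde{V}_{k-1}^\top M_{k-1}$, obtained by applying $\tilde{U}_{k-1}^\top$ to both sides of $WM_{k-1}=Y_{k-1}$ and using $\tilde{U}_{k-1}^\top M_{k-1}=0$, together with the full row rank of $\tilde{V}_{k-1}^\top M_{k-1}$ (which, since $\Img(M_{k-1})=\Img(\tilde{V}_{k-1})$, provides a right inverse). Granted this, $Wm^{k-1}$ decomposes as an $\mathscr{S}_{k-1}$-measurable term plus $\norm{\barpp{m}{k-1}}\tilde{U}_{k-1}Au$, so the new information in $\mathscr{S}_k$ beyond $\mathscr{S}_{k-1}$ is equivalent to $Au$, or equivalently to the first column $R^\top Au$ of $R^\top AR$. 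In any $\GOE$ matrix, the first column and the bottom-right block involve disjoint independent upper-triangular entries, so $R^\top Au$ is conditionally independent of $R_2^\top AR_2=\tilde{U}_k^\top W\tilde{U}_k$ given $\mathscr{S}_{k-1}$, which is exactly the required conditional independence.

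The two consequences follow readily: the $\GOE(n-r_k)$ conditional distribution given $\mathscr{S}_k$ follows from combining the conditional distribution and conditional independence statements via the tower property, and the equality in conditional distribution with $\tilde{U}_k^\top\tilde{W}\tilde{U}_k$ holds because, for $\tilde{W}\sim\GOE(n)$ independent of $\mathscr{S}_k$, orthogonal invariance immediately gives $\tilde{U}_k^\top\tilde{W}\tilde{U}_k\sim\GOE(n-r_k)$ given $\mathscr{S}_k$ as well, matching the conditional distribution of $\tilde{U}_k^\top W\tilde{U}_k$.
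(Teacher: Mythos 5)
Your proof is correct and follows essentially the same route as the paper's: induction on $k$, realising $\tilde{U}_k^\top W\tilde{U}_k$ as a principal block of an $\mathscr{S}_{k-1}$-measurably rotated copy of $\tilde{U}_{k-1}^\top W\tilde{U}_{k-1}$, and reducing the new information $y^{k-1}$ in $\mathscr{S}_k$ (modulo $\mathscr{S}_{k-1}$-measurable terms; your measurability of the cross-block $B$ is the paper's observation that $WP_{k-1}=Y_{k-1}M_{k-1}^+$ is $\mathscr{S}_{k-1}$-measurable) to the ``first column'', which is conditionally independent of the block by the product structure of the $\GOE$. The only points you gloss are routine ones the paper dispatches with Lemmas~\ref{lem:orthgoe}, \ref{lem:conddist}(a) and \ref{lem:condind}(a): patching the conditional statements across the $\mathscr{S}_{k-1}$-measurable events $\{\barpp{m}{k-1}=0\}$ and $\{\barpp{m}{k-1}\neq 0\}$, and handling the random rank $r_k$ in the final orthogonal-invariance step for $\tilde{W}$.
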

\begin{remark}
\label{rem:condgoesp}
Consider the important special case where $\mathbb{P}(r_k=k)=1$.  Then under the hypotheses of the proposition, $\tilde{U}_k^\top W\tilde{U}_k\sim \GOE(n-k)$ is conditionally independent of $\mathscr{S}_k$ given $\mathscr{S}_{k-1}$, and is independent of $\mathscr{S}_k$.
\end{remark}
\begin{remark}
\label{rem:gs}
To explicitly construct a (random) $\tilde{U}_k$ with the above properties, consider applying the Gram--Schmidt procedure to $m^0,\dotsc,m^{k-1},e_1,\dotsc,e_n\in\R^n$ (in that order) and retaining only the non-zero vectors in the output (which are all normalised to have unit Euclidean length). This yields an $\mathscr{S}_{k-1}$-measurable orthonormal basis $\tilde{m}^1,\dotsc,\tilde{m}^n$ of $\R^n$, where $\tilde{m}^1,\dotsc,\tilde{m}^{r_k}$ are obtained from $m^0,\dotsc,m^{k-1}$ and therefore span $\Img(M_k)$, while $\tilde{m}^{r_k+1},\dotsc,\tilde{m}^n$ span $V_k=\Img(M_k)^\perp$. Thus, we can take $\tilde{U}_k=\bigl(\tilde{m}^{r_k+1}\;\cdots\;\tilde{m}^n\bigr)$. 
\end{remark}

\unparskip
The main result of this subsection is Proposition~\ref{prop:AMPconddist} below, which plays a crucial role in the inductive proof of the AMP master theorems given in Sections~\ref{sec:inductive} and~\ref{sec:masterproofs}. For each $k\in\{1,\dotsc,n-1\}$, let
\begin{equation}
\label{eq:alphak}
\alpha^k\equiv\alpha^k(n)\equiv (\alpha_1^k,\dotsc,\alpha_k^k):=M_k^+ m^k=(M_k^\top M_k)^+ M_k^\top m^k\in\R^k
\end{equation}
be a vector of projection coefficients satisfying $P_km^k=M_k\alpha^k=\sum_{\ell=1}^k\alpha_\ell^k\,m^{\ell-1}$. When $M_k$ has full rank (i.e.\ when $r_k=k$), note that $\alpha^k=(M_k^\top M_k)^{-1}M_k^\top m^k$ is the unique vector with this property. In addition, let $B_1:=(0,0)\in\R^2$
and $B_k:=\diag(b_0,\dotsc,b_{k-1})\in\R^{k\times k}$ for $k\in\{2,\dotsc,n\}$, so that $Y_k=H_k+(0\;M_{k-1})B_k$ for all $k\in\{1,\dotsc,n\}$.
\begin{proposition}
\label{prop:AMPconddist}
For $n\in\N$, consider a symmetric AMP recursion~\eqref{eq:AMPsym} for which\emph{~\ref{ass:A0}} holds. For $k\in\{0,1,\dotsc,n-1\}$, let both $\tilde{W}^k\equiv\tilde{W}^k(n)\sim\GOE(n)$ and $(\tilde{Z}^{k+1},\tilde{\zeta}^{k+1})\equiv\bigl(\tilde{Z}^{k+1}(n),\tilde{\zeta}^{k+1}(n)\bigr)\sim N_n(0,I_n)\otimes N(0,1/n)$ be independent of $\mathscr{S}_k$. Then
\begin{equation}
\label{eq:condzero}
W\eqdcond{\mathscr{S}_0}\tilde{W}^0\quad\text{and}\quad h^1\eqdcond{\mathscr{S}_0}\norm{m^0}_n\tilde{Z}^1+\tilde{\zeta}^1 m^0=:h^{1,0},
\end{equation}
and for each $k\in\{1,\dotsc,n-1\}$, we have
\begin{align}
\label{eq:Wcond}
W&\eqdcond{\mathscr{S}_k}WP_k+(WP_k)^\top P_k^\perp+P_k^\perp\tilde{W}^kP_k^\perp=Y_kM_k^++(Y_kM_k^+)^\top P_k^\perp+P_k^\perp\tilde{W}^kP_k^\perp\\
h^{k+1}&\eqdcond{\mathscr{S}_k}H_k\alpha^k+P_k^\perp(\tilde{W}^k\barpp{m}{k})+\bigl\{(M_k^+)^\top H_k^\top\barpp{m}{k}-b_km^{k-1}+(0\;M_{k-1})B_k\alpha^k\bigr\}\notag\\
\label{eq:hcond}
&\eqdcond{\mathscr{S}_k}\sum_{\ell=1}^k\alpha_\ell^k\,h^\ell+\norm{\barpp{m}{k}}_n(P_k^\perp\tilde{Z}^{k+1})+\tilde{\zeta}^{k+1}\barpp{m}{k}+M_k(M_k^\top M_k)^+\biggl(v^{k,k}-\sum_{\ell=1}^k\alpha_\ell^k\,v^{k,\ell-1}\biggr)\\
&\phantom{\restr{=}{}}=:h^{k+1,k},\notag
\end{align}
where $v^{k,\ell}\equiv v^{k,\ell}(n):=H_k^\top m^\ell-b_\ell\,M_k^\top m^{\ell-1}\in\R^k$ for $\ell\in\{0,\dotsc,k\}$.
\end{proposition}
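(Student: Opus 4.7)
My plan is to reduce the statement to Proposition~\ref{prop:condgoe} via an orthogonal decomposition of $W$, handling the $k=0$ base case by a direct Gaussian covariance calculation and then dealing with $k \geq 1$ by a deterministic splitting argument.

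For $k=0$: since $W$ is independent of $\mathscr{S}_0=\sigma(\gamma,m^0)$ by~\ref{ass:A0}, we immediately obtain $W\eqdcond{\mathscr{S}_0}\tilde{W}^0$. Conditional on $m^0$, the vector $h^1=Wm^0$ is Gaussian with covariance matrix $\norm{m^0}_n^2 I_n + m^0(m^0)^\top/n$ by the entrywise covariance formula for the GOE (Lemma~\ref{lem:Wu}), and this is precisely the conditional covariance of $\norm{m^0}_n\tilde{Z}^1+\tilde{\zeta}^1 m^0$.

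For $k\geq 1$, the core deterministic identity is $W=WP_k+(WP_k)^\top P_k^\perp+P_k^\perp W P_k^\perp$, obtained by expanding $W=(P_k+P_k^\perp)W(P_k+P_k^\perp)$ into four blocks and folding two of them into $(WP_k)^\top P_k^\perp$ via the symmetry of $W$. The first two summands are $\mathscr{S}_k$-measurable because $Y_k$ is $\mathscr{S}_k$-measurable by~\eqref{eq:Sk} and $WP_k=WM_kM_k^+=Y_kM_k^+$. For the third summand, I choose the $\mathscr{S}_{k-1}$-measurable orthonormal basis $\tilde{U}_k$ of $V_k$ from Remark~\ref{rem:gs}, so that $P_k^\perp=\tilde{U}_k\tilde{U}_k^\top$; then Proposition~\ref{prop:condgoe} gives $\tilde{U}_k^\top W\tilde{U}_k\eqdcond{\mathscr{S}_k}\tilde{U}_k^\top\tilde{W}^k\tilde{U}_k$, whence $P_k^\perp WP_k^\perp\eqdcond{\mathscr{S}_k}P_k^\perp\tilde{W}^k P_k^\perp$. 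This establishes~\eqref{eq:Wcond}.

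To derive~\eqref{eq:hcond}, I decompose $m^k=M_k\alpha^k+\barpp{m}{k}$ and write $h^{k+1}=Y_k\alpha^k+W\barpp{m}{k}-b_km^{k-1}$. Applying~\eqref{eq:Wcond} to $W\barpp{m}{k}$, the terms involving $WP_k\barpp{m}{k}$ vanish because $P_k\barpp{m}{k}=0$, leaving $(WP_k)^\top\barpp{m}{k}=(M_k^+)^\top H_k^\top\barpp{m}{k}$ (using $Y_k^\top\barpp{m}{k}=H_k^\top\barpp{m}{k}$ since the columns of $M_{k-1}$ lie in $\Img(M_k)$, so $M_{k-1}^\top\barpp{m}{k}=0$) together with the Gaussian residual $P_k^\perp\tilde{W}^k\barpp{m}{k}$. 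Expanding $Y_k\alpha^k=H_k\alpha^k+(0\;M_{k-1})B_k\alpha^k$ produces the first displayed form of $h^{k+1,k}$. The second form follows by invoking Lemma~\ref{lem:Wu} conditionally to identify $\tilde{W}^k\barpp{m}{k}\eqdcond{\mathscr{S}_k}\norm{\barpp{m}{k}}_n\tilde{Z}^{k+1}+\tilde{\zeta}^{k+1}\barpp{m}{k}$ (projecting by $P_k^\perp$ kills the $P_k$-component), and then verifying the algebraic identity
\[(M_k^+)^\top H_k^\top\barpp{m}{k}-b_km^{k-1}+(0\;M_{k-1})B_k\alpha^k=M_k(M_k^\top M_k)^+\Bigl(v^{k,k}-\textstyle\sum_{\ell=1}^k\alpha_\ell^k\,v^{k,\ell-1}\Bigr).\]
This identity is checked by noting that $b_km^{k-1}$ and $(0\;M_{k-1})B_k\alpha^k$ both lie in $\Img(M_k)$, so $P_k=M_k(M_k^\top M_k)^+M_k^\top$ fixes them; using $(M_k^+)^\top=M_k(M_k^\top M_k)^+$ and the relation $\sum_{\ell=1}^k\alpha_\ell^k m^{\ell-1}=M_k\alpha^k=P_km^k$ when expanding the $v^{k,\ell}$.

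The main obstacle is the final bookkeeping step reconciling the two equivalent expressions in~\eqref{eq:hcond}; all the probabilistic content is packaged in Proposition~\ref{prop:condgoe}, which we invoke as a black box once $\tilde{U}_k$ has been chosen to be $\mathscr{S}_{k-1}$-measurable.
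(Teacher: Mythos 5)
Your proposal is correct and follows essentially the same route as the paper's own proof: the decomposition $W=WP_k+(WP_k)^\top P_k^\perp+P_k^\perp WP_k^\perp$ with $WP_k=Y_kM_k^+$ $\mathscr{S}_k$-measurable, the substitution of $P_k^\perp\tilde{W}^kP_k^\perp$ via Proposition~\ref{prop:condgoe} with the $\mathscr{S}_{k-1}$-measurable $\tilde{U}_k$ of Remark~\ref{rem:gs}, the conditional use of Lemma~\ref{lem:Wu} for $\tilde{W}^k\barpp{m}{k}$, and the same bookkeeping identity reconciling the two forms of $h^{k+1,k}$. The only (minor) difference is that you verify the $k=0$ case by a direct covariance computation rather than by orthogonal invariance, and you leave implicit the conditional-distribution calculus (Lemma~\ref{lem:conddist}(b,c)) that the paper invokes to make each substitution of $\mathscr{S}_k$-measurable quantities rigorous.
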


\unparskip
The crux of the proof of Proposition~\ref{prop:AMPconddist} is to establish~\eqref{eq:Wcond}, which characterises the conditional distribution of $W$ given $\mathscr{S}_k$. It is intuitively helpful to think of this as being obtained by conditioning~$W$ on the `linear constraints' $Wm^0=y^0,\dotsc,Wm^{k-1}=y^{k-1}$. However, since $m^1,\dotsc,m^{k-1}$ are random and depend on $W$, this heuristic argument is not sufficient on its own to constitute a formal proof of Proposition~\ref{prop:AMPconddist}. For the benefit of readers interested in the technicalities, we give a more detailed explanation below.

Observe that for fixed $k\in\N$ and deterministic $y,a^0,a^1,\dotsc,a^k\in\R^n$, the event $\Omega_{y,a^0,\dotsc,a^k}:=\{\gamma=y,m^0=a^0,h^1=a^1,\dotsc,h^k=a^k\}$ can be expressed as 
\begin{equation}
\label{eq:eventequiv}
\Omega_{y,a^0,\dotsc,a^k}=\{\gamma=y,m^0=a^0,Wt^j=z^j\text{ for all }0\leq j\leq k-1\}=\{\gamma=y,m^0=a^0,WT_k=Z_k\},
\end{equation}
where $t^j := f_j(a^j,y)\in\R^n$ and $z^j := a^{j+1} + \abr{f_j'(a^j,y)}_n\,f_{j-1}(a^{j-1},y)$ for $0\leq j\leq k-1$, and $T_k:=(t^0\;t^1\;\cdots\;t^{k-1})$ and $Z_k:=(z^0\;z^1\;\cdots\;z^{k-1})$ are fixed $n\times k$ matrices. Now for $W\sim\GOE(n)$ and any fixed $T\in\R^{n\times k}$ of rank $p$, we can derive the conditional distribution of $W$ given $WT$ by writing
\begin{equation}
\label{eq:Wcondfix}
W=WP+\bigl(P+P^\perp\bigr)^\top WP^\perp=WP+(WP)^\top P^\perp+P^\perp WP^\perp,
\end{equation}
where $P:=TT^+$ and $P^\perp:=I_n-TT^+$ represent the orthogonal projections onto $\Img(T)$ and $\Img(T)^\perp$ respectively. The first two terms on the right hand side of~\eqref{eq:Wcondfix} are measurable functions of $WP=(WT)T^+$ (and hence $WT$), while the third term $P^\perp WP^\perp$ is independent of $WT$. Thus, $\E(W\,|\,WT)=WP+(WP)^\top P^\perp$. 
Moreover, we can write $P^\perp=\tilde{U}\tilde{U}^\top$, where the columns of $\tilde{U}$ form an orthonormal basis for $\Img(T)^\perp$, so that $P^\perp WP^\perp=\tilde{U}(\tilde{U}^\top W\tilde{U})\tilde{U}^\top$, and $\tilde{U}^\top W\tilde{U}\sim\GOE(n-p)$ is independent of $WT$. For $Z\in\R^{n\times k}$, this enables us to interpret `the conditional distribution of $W$ given $WT=Z$' as the distribution of 
\[ZT^+ +(ZT^+)^\top P^\perp+\tilde{U}^\top\tilde{W}\tilde{U},\]
where $\tilde{W}\sim\GOE(n-p)$. We denote this distribution by $\mathcal{L}_Z(T)$.

In view of~\eqref{eq:eventequiv} and the assumption that $W$ is independent of $(m^0,\gamma)$ in~\ref{ass:A0}, it is then tempting to argue heuristically that
\begin{align*}
W\,|\,`\{\gamma=y,m^0=a^0,h^1=a^1,\dotsc,h^k=a^k\}\text{'}&\eqd W\,|\,`\{\gamma=y,m^0=a^0,WT_k=Z_k\}\text{'}\\
&\eqd W\,|\,`\{WT_k=Z_k\}\text{'}\sim\mathcal{L}_{Z_k}(T_k),
\end{align*}
and conclude on this basis that $W$ has (regular) conditional distribution $\mathcal{L}_\omega\equiv\mathcal{L}_{Y_k(\omega)}\bigl(M_k(\omega)\bigr)$ given $\mathscr{S}_k=\sigma(\gamma,m^0,h^1,\ldots,h^k)$, noting that $M_k(\omega)=T_k$ and $Y_k(\omega)=Z_k$ for $\omega\in\Omega_{y,a^0,\dotsc,a^k}$. However, this line of reasoning appears to involve conditioning explicitly on an event of potentially zero probability, and is not formally justified by the above argument; cf.~the Borel paradox \citep[][pp.~350--351]{Dud02} for the associated hazards. 

As mentioned above, the issue is that $M_k$ is random and is in general not independent of $W$, whereas the distributional claims in the previous paragraph relied on the fact that $T$ was fixed. Nevertheless, the key point is that the randomness of $M_k$ and its dependence on $W$ turn out not to cause irreconcilable difficulties, due to the conditional independence established in Proposition~\ref{prop:condgoe}. It follows from this result that $\E(W\,|\,\mathscr{S}_k)=WP_k+(WP_k)^\top P_k^\perp$, so the conditional distributional equality~\eqref{eq:Wcond} in Proposition~\ref{prop:AMPconddist} and the decomposition~\eqref{eq:Wcondproof} in its proof are the appropriate analogues of~\eqref{eq:Wcondfix}. 

\hfparskip
\subsection{Proofs of results in Section~\ref{sec:AMPcond}}
\label{sec:AMPcondproofs}
A key ingredient in the proof of Proposition~\ref{prop:condgoe} is Lemma~\ref{lem:orthgoe} below, which extends the orthogonal invariance property of the $\GOE(n)$ distribution. Given a finite collection of disjoint measurable spaces $(\mathscr{X}_1,\mathcal{A}_1),\dotsc,(\mathscr{X}_m,\mathcal{A}_m)$, we equip the disjoint union $\bigsqcup_{\,k=1}^{\,m}\mathscr{X}_k$ with the $\sigma$-algebra $\bigl\{\bigsqcup_{\,k=1}^{\,m}A_k:A_k\in\mathcal{A}_k\text{ for all }k\bigr\}$.  
\begin{lemma}
\label{lem:orthgoe}
Let $\mathcal{G}\subseteq\mathcal{F}$ be a sub-$\sigma$-algebra and let $X\colon (\Omega,\mathcal{F},\Pr)\to\bigsqcup_{\,k=1}^{\,n}\R^{k\times k}$ be a measurable function. Suppose that there is a partition of $\Omega$ into disjoint events $\Omega_1,\dotsc,\Omega_m\in\mathcal{G}$ such that for each $k=1,\dotsc,m$, the map $X$ takes values in $\R^{n_k\times n_k}$ on $\Omega_k$ and has conditional distribution $\GOE(n_k)$ given $\mathcal{G}$ on $\Omega_k$, for some (deterministic) $n_k\in\{1,\dotsc,n\}$. Moreover, let $Q=(Q_1\;Q_2)\colon (\Omega,\mathcal{F},\Pr)\to\bigsqcup_{\,k=1}^{\,n}\mathbb{O}_k$ be a $\mathcal{G}$-measurable function such that on each event $\Omega_k$, the map $Q$ takes values in $\mathbb{O}_{n_k}$, and $Q_1,Q_2$ have $\ell_k$ and $n_k-\ell_k$ columns respectively, for some (deterministic) $\ell_k\in\{1,\dotsc,n_k-1\}$. Then, given $\mathcal{G}$, we have the following:

\unparskip
\begin{enumerate}[label=(\alph*)]
\item $Q^\top XQ$ has conditional distribution $\GOE(n_k)$ on $\Omega_k$ for every $k=1,\ldots,m$;
\item $Q_2^\top XQ_2$ has conditional distribution $\GOE(n_k-\ell_k)$ on $\Omega_k$ for every $k=1,\ldots,m$;
\item $Q^\top XQ_1$ and $Q_2^\top XQ_2$ are conditionally independent.
\end{enumerate}
\end{lemma}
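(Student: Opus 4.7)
The plan is to reduce the three claims to the unconditional orthogonal invariance and block-independence properties of the Gaussian orthogonal ensemble, applied on each $\Omega_k$ separately where the ambient dimension is a fixed $n_k$. Since $\Omega_1,\dotsc,\Omega_m\in\mathcal{G}$ are disjoint and have union $\Omega$, a version of the regular conditional probability of any random element given $\mathcal{G}$ can be chosen so that its behaviour on $\Omega_k$ is unaffected by what happens on the other $\Omega_j$'s; equivalently, statements about conditional distributions given $\mathcal{G}$ on the event $\Omega_k$ reduce to analogous statements under $\Pr(\cdot\,|\,\Omega_k)$ with the sub-$\sigma$-algebra $\mathcal{G}\cap\Omega_k$. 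Fixing $k$, we may therefore work on $\Omega_k$, where $X$ takes values in $\R^{n_k\times n_k}$ with conditional distribution $\GOE(n_k)$ given $\mathcal{G}$, and $Q=(Q_1\;Q_2)\in\mathbb{O}_{n_k}$ is $\mathcal{G}$-measurable with $Q_1\in\R^{n_k\times\ell_k}$ and $Q_2\in\R^{n_k\times(n_k-\ell_k)}$. Because $Q$ is $\mathcal{G}$-measurable, it may be treated as deterministic when reasoning conditionally on $\mathcal{G}$, so each of (a), (b), (c) reduces to a fact about a $\GOE(n_k)$ matrix acted on by a fixed orthogonal transformation.

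For (a), the orthogonal invariance of $\GOE(n_k)$ asserts that $R^\top W R\sim\GOE(n_k)$ whenever $W\sim\GOE(n_k)$ and $R\in\mathbb{O}_{n_k}$ is deterministic (or independent of $W$), a consequence of the rotational invariance of the joint Gaussian law of the entries of $W$ on and above the diagonal. Applying this with $W=X$ and $R=Q$ yields (a). Setting $Y:=Q^\top X Q$, which by (a) is conditionally $\GOE(n_k)$ on $\Omega_k$, we decompose $Y$ into blocks
\begin{equation*}
Y=\begin{pmatrix}Y_{11}&Y_{12}\\ Y_{12}^\top & Y_{22}\end{pmatrix},\qquad Y_{11}:=Q_1^\top X Q_1\in\R^{\ell_k\times\ell_k},\;\; Y_{12}:=Q_1^\top X Q_2,\;\; Y_{22}:=Q_2^\top X Q_2.
\end{equation*}
Then $Q_2^\top X Q_2=Y_{22}$ is the bottom-right principal $(n_k-\ell_k)\times(n_k-\ell_k)$ submatrix of a $\GOE(n_k)$ matrix, and the conclusion (b) follows from the convention relating $\GOE(n_k)$ to $\GOE(n_k-\ell_k)$ on such submatrices. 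Similarly, $Q^\top X Q_1$ consists of the first $\ell_k$ columns of $Y$, which encode precisely the entries of $Y_{11}$ and $Y_{12}$; since the entries of $Y$ on and above the diagonal are independent in $\GOE(n_k)$, the pair $(Y_{11},Y_{12})$ is independent of $Y_{22}$, giving (c).

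The principal technical obstacle is the measure-theoretic bookkeeping needed to justify the reduction to each $\Omega_k$ and the treatment of $Q$ as deterministic when conditioning on $\mathcal{G}$: because $X$ is a priori valued in the disjoint union $\bigsqcup_{k=1}^n\R^{k\times k}$ rather than a single Euclidean space, some care is required in selecting a regular version of the conditional distribution of $X$ given $\mathcal{G}$ and then passing to its restriction on each $\Omega_k\in\mathcal{G}$. In practice this is handled by testing against bounded measurable functions of the form $\mathbf{1}_A\,\varphi(X,Q)$ with $A\in\mathcal{G}$, $A\subseteq\Omega_k$, and invoking the $\mathcal{G}$-measurability of $Q$ to separate the factors. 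Once this is carried out, the substance of the proof is the classical orthogonal invariance of $\GOE$ together with the block-independence of its entries, as summarised above.
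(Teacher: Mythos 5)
Your proposal is correct and follows essentially the same route as the paper's proof: conditional orthogonal invariance is obtained by treating the $\mathcal{G}$-measurable $Q$ as fixed under the conditioning on each $\Omega_k$, and then (b) and (c) are read off from the lower-right block and the first $\ell_k$ columns of $Q^\top XQ$ together with the block-independence of the $\GOE(n_k)$ entries. The only difference is one of packaging: the paper formalises the ``treat $Q$ as deterministic'' step and the factorisation of the conditional law via its prepared Lemmas~\ref{lem:conddist}(b) and~\ref{lem:condind}(b) on push-forwards of regular conditional distributions, whereas you sketch the same verification directly by testing against functions of the form $\Ind_A\,\varphi(X,Q)$ with $A\in\mathcal{G}$, $A\subseteq\Omega_k$.
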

\begin{remark}
Note that if $n_1=\cdots=n_m=n$, then under the first condition of the lemma, it follows from Remark~\ref{rem:indepcond} that $X$ has unconditional distribution $\GOE(n)$ and is independent of $\mathcal{G}$. Thus, in the instructive special case where $m=1$ and $1\leq\ell_1<n=n_1$, the result above simplifies to the following: suppose that $X\sim\GOE(n)$, and is independent of $\mathcal{G}$, and moreover that $Q=(Q_1\;Q_2)\colon (\Omega,\mathcal{F},\Pr)\to\mathbb{O}_n$ is a $\mathcal{G}$-measurable map such that $Q_1,Q_2$ have $\ell_1$ and $n-\ell_1$ columns respectively. Then

\unparskip
\begin{enumerate}[label=(\alph*)]
\item $Q^\top XQ \sim \GOE(n)$ and is independent of $\mathcal{G}$;
\item $Q_2^\top XQ_2 \sim \GOE(n-\ell_1)$ and is independent of $\mathcal{G}$;
\item $Q^\top XQ_1$ and $Q_2^\top XQ_2$ are independent, and also conditionally independent given $\mathcal{G}$.
\end{enumerate}
\end{remark}

\deparskip
\begin{proof}[Proof of Lemma~\ref{lem:orthgoe}]
\noindent (a) For $\ell=1,\ldots,n$, let $\mathcal{A}_\ell$ and $\mathcal{B}_\ell$ be the Borel $\sigma$-algebras on $\mathscr{X}_\ell:=\R^{\ell\times\ell}$ and $\mathscr{Y}_\ell:=\mathbb{O}_\ell$ respectively. Define $\phi_\ell\colon \mathscr{X}_\ell\times\mathscr{Y}_\ell\rightarrow\mathscr{X}_\ell$ by $\phi_\ell(M,J) := J^\top MJ$. In the notation of Lemma~\ref{lem:conddist}(b), the orthogonal invariance property of $\GOE(\ell)$ can be restated as $\GOE(\ell) = \GOE(\ell)\circ (\phi_\ell\circ \iota_J)^{-1}$ for every $J \in \mathbb{O}_\ell$. Thus, observing that $\phi_{n_k}(X,Q) = Q^\top X Q$ on $\Omega_k$, and applying Lemma~\ref{lem:conddist}(b) to $\phi_{n_k}$, we see that $Q^\top X Q$ has conditional distribution $\GOE(n_k)$ given $\mathcal{G}$ on $\Omega_k$, as required.

(b) For $k=1,\dotsc,m$, let $\psi_k\colon\mathscr{X}_{n_k}\rightarrow\mathscr{X}_{n_k-\ell_k}$ denote the map that extracts the lower-right $(n_k-\ell_k)\times (n_k-\ell_k)$ block of entries of an $n_k\times n_k$ matrix. Then $\psi_k(W)\sim\GOE(n_k-\ell_k)$ whenever $W\sim\GOE(n_k)$, so $\GOE(n_k-\ell_k) = \GOE(n_k)\circ\psi_k^{-1}=\GOE(n_k)\circ (\phi_{n_k}\circ \iota_J)^{-1} \circ \psi_k^{-1}$ for every $J\in\mathbb{O}_{n_k}$.  We can therefore apply Lemma~\ref{lem:conddist}(b) to $\psi_k\circ\phi_{n_k}$ to conclude that $Q_2^\top XQ_2$ has conditional distribution $\GOE(n_k-\ell_k)$ given $\mathcal{G}$ on $\Omega_k$.

(c) For $\omega\in\Omega$, let $P_\omega$, $Q_\omega$ and $R_\omega$ respectively denote the conditional distributions of $Q^\top XQ_1$, $Q_2^\top XQ_2$ and $(Q^\top XQ_1,Q_2^\top XQ_2)$ given $\mathcal{G}$. For $k=1,\ldots,m$, let $\tilde{\psi}_k\colon\mathscr{X}_{n_k}\rightarrow \R^{n_k\times \ell_k}$ denote the map that extracts the first $\ell_k$ columns of a $n_k\times n_k$ matrix. Now define $\Psi_k\colon\mathscr{X}_{n_k}\rightarrow \R^{n_k\times \ell_k}\times\mathscr{X}_{n_k-\ell_k}$ by $\Psi_k(M) := \bigl(\tilde{\psi}_k(M),\psi_k(M)\bigr)$. Then $\tilde{\psi}_k(W)$ and $\psi_k(W)$ are independent whenever $W\sim\GOE(n_k)$, so $\GOE(n_k)\circ \Psi_k^{-1} = \bigl(\GOE(n_k)\circ\tilde{\psi}_k^{-1}\bigr) \otimes \bigl(\GOE(n_k) \circ \psi_k^{-1}\bigr)$.  Since $(Q^\top XQ_1,Q_2^\top XQ_2) = (\Psi_k\circ\phi_k)(X,Q)$ on $\Omega_k$, we may apply Lemma~\ref{lem:conddist}(b) to $\tilde{\psi}_k\circ \phi_{n_k}$, $\psi_k\circ \phi_{n_k}$ and $\Psi_k \circ \phi_{n_k}$ to deduce that $R_\omega = P_\omega \otimes Q_\omega$ for all $\omega\in\Omega_k$.  Since $k\in\{1,\ldots,m\}$ was arbitrary, we conclude that $R_\omega = P_\omega \otimes Q_\omega$ for all $\omega\in\Omega=\bigsqcup_{\,k=1}^{\,m}\Omega_k$, which together with Lemma~\ref{lem:condind}(b) implies that $Q^\top XQ_1$ and $Q_2^\top XQ_2$ are conditionally independent given $\mathcal{G}$.
\end{proof}

\deparskip
\begin{proof}[Proof of Proposition~\ref{prop:condgoe}]
We argue by induction on $k\in\{0,1,\dotsc,n-1\}$. The case $k=0$ is trivial since $W\sim\GOE(n)$ and is independent of $(m^0,\gamma)$ by assumption. 
For a general $1\leq k\leq n-1$ (when $n\geq 2$), let $\tilde{U}_{k-1}$ be any $\mathscr{S}_{k-2}$-measurable $n\times (n-r_{k-1})$ matrix whose columns form an orthonormal basis of $V_{k-1}$, and fix an arbitrary $\mathscr{S}_{k-1}$-measurable $n\times (n-r_k)$ matrix $\tilde{U}_k$ whose columns form an orthonormal basis of $V_k$. Moreover, let $E\equiv E_{k-1}$ be the event $\{\barpp{m}{k-1}\neq 0\}=\{r_k=r_{k-1}+1\}\in\mathscr{S}_{k-1}$, and note that $n-r_{k-1}\geq n-k+1\geq 2$.

Next, define an $\mathscr{S}_{k-1}$-measurable $n\times (n-r_{k-1})$ matrix $\check{U}$ by setting $\check{U}:=(\barpp{m}{k-1}\;\tilde{U}_k)$ on $E$ and $\check{U}:=\tilde{U}_k$ on $E^c=\{\barpp{m}{k-1}=0\}$. Letting $\breve{U}$ be the $\mathscr{S}_{k-1}$-measurable $n\times (n-r_{k-1}-1)$ matrix obtained by removing the first column $\check{U}e_1$ of $\check{U}$, we therefore have $\tilde{U}_k=\breve{U}$ on $E$ and $\tilde{U}_k=\check{U}$ on $E^c$. Now $\check{U},\tilde{U}_{k-1}$ have orthonormal columns that span $V_{k-1}$, so $Q:=\tilde{U}_{k-1}^\top\check{U}$ is an $\mathscr{S}_{k-1}$-measurable orthogonal $(n-r_{k-1})\times (n-r_{k-1})$ matrix such that $\check{U}^\top W\check{U}=Q^\top\bigl(\tilde{U}_{k-1}^\top W\tilde{U}_{k-1}\bigr)Q$. 
By the inductive hypothesis, $\tilde{U}_{k-1}^\top W\tilde{U}_{k-1}$ has conditional distribution $\GOE(n-r_{k-1})$ given $\mathscr{S}_{k-1}$, so it follows from parts (a) and (b) respectively of Lemma~\ref{lem:orthgoe} (with $\ell\equiv 1$ and $N=n-r_{k-1}\geq 2$) that $\check{U}^\top W\check{U}$ and $\breve{U}^\top W\breve{U}$ have conditional distributions $\GOE(n-r_{k-1})$ and $\GOE(n-r_{k-1}-1)$ respectively given $\mathscr{S}_{k-1}$. Since $\tilde{U}_k^\top W\tilde{U}_k=\check{U}^\top W\check{U}$ on $E^c=\{r_k=r_{k-1}\} \in\mathscr{S}_{k-1}$ and $\tilde{U}_k^\top W\tilde{U}_k=\breve{U}^\top W\breve{U}$ on $E=\{r_k=r_{k-1}+1\}\in\mathscr{S}_{k-1}$, we deduce from Lemma~\ref{lem:conddist}(a) that $\tilde{U}_k^\top W\tilde{U}_k$ has conditional distribution $\GOE(n-r_k)$ given $\mathscr{S}_{k-1}$, as required.

In addition, it holds trivially that 0 and $\check{U}^\top W\check{U}$ are conditionally independent given $\mathscr{S}_{k-1}$, and Lemma~\ref{lem:orthgoe}(c) implies that $\check{U}^\top W(\check{U}e_1) = Q^\top(\tilde{U}_{k-1}^\top W\tilde{U}_{k-1}\bigr)Qe_1$ and $\breve{U}^\top W\breve{U}$ are also conditionally independent given $\mathscr{S}_{k-1}$. Since $W\barpp{m}{k-1}=0$ on $E^c$ and $\check{U}e_1=\barpp{m}{k-1}$ on $E$, an application of Lemma~\ref{lem:condind}(a) shows that $\check{U}^\top W\barpp{m}{k-1}$ (and hence $\sigma(\mathscr{S}_{k-1},\check{U}^\top W\barpp{m}{k-1})$ by Lemma~\ref{lem:condindequiv}) is conditionally independent of $\tilde{U}_k^\top W\tilde{U}_k$ given $\mathscr{S}_{k-1}$. Moreover, $WP_{k-1}=WM_{k-1}M_{k-1}^+=Y_{k-1}M_{k-1}^+$, $m^{k-1}$, $\barpp{m}{k-1}$, $\check{U}$ and $P_{k-1}^\perp=\check{U}\check{U}^\top$ are $\mathscr{S}_{k-1}$-measurable, so
\begin{align}
y^{k-1}=W\bigl(P_{k-1}+P_{k-1}^\perp\bigr)m^{k-1}&=(WP_{k-1})m^{k-1}+\bigl(P_{k-1}+P_{k-1}^\perp\bigr)^\top W\barpp{m}{k-1}\notag\\
\label{eq:yk-1}
&=(WP_{k-1})m^{k-1}+(WP_{k-1})^\top\barpp{m}{k-1}+\check{U}\bigl(\check{U}^\top W\barpp{m}{k-1}\bigr)
\end{align}
is measurable with respect to $\sigma(\mathscr{S}_{k-1},\check{U}^\top W\barpp{m}{k-1})$. Thus, given $\mathscr{S}_{k-1}$, we conclude that $\tilde{U}_k^\top W\tilde{U}_k$ is conditionally independent of $y^{k-1}$, and hence conditionally independent of $\mathscr{S}_k=\sigma(\mathscr{S}_{k-1},y^{k-1})$ by Lemma~\ref{lem:condindequiv} and~\eqref{eq:Sk}. Therefore, since $\tilde{U}_k^\top W\tilde{U}_k$ has conditional distribution $\GOE(n-r_k)$ given $\mathscr{S}_{k-1}$, it also has conditional distribution $\GOE(n-r_k)$ given $\sigma(\mathscr{S}_{k-1},\mathscr{S}_k)=\mathscr{S}_k$. 

Finally, it remains to show that if $\tilde{W}\sim\GOE(n)$ is independent of $\mathscr{S}_k$, then $\tilde{U}_k^\top\tilde{W}\tilde{U}_k$ also has conditional distribution $\GOE(n-r_k)$ given $\mathscr{S}_k$. To see this, let $\tilde{m}^1,\dotsc,\tilde{m}^{r_k}$ be an $\mathscr{S}_k$-measurable orthonormal basis of $\Img(M_k)$, obtained for example by applying the Gram--Schmidt procedure to $m^0,\dotsc,m^{k-1}$, as in Remark~\ref{rem:gs} above. Then taking $Q_1=\bigl(\tilde{m}^1\;\cdots\;\tilde{m}^{r_k}\bigr)$ and $Q_2=\tilde{U}_k$, we see that $Q=(Q_1\;Q_2)$ satisfies the hypotheses of Lemma~\ref{lem:orthgoe} with $\Omega_j=\{r_k=j\}\in\mathscr{S}_k$ and $\ell_j=j\leq n=n_j$ for $j=1,\dotsc,k$. The desired conclusion now follows directly from Lemma~\ref{lem:orthgoe}(b), and this completes the inductive step.
\end{proof}

\unparskip
As mentioned above, the proof of Proposition~\ref{prop:AMPconddist} relies crucially on the final assertion in Proposition~\ref{prop:condgoe}. To obtain the conditional distributional equalities in~\eqref{eq:condzero} and~\eqref{eq:hcond}, we will also apply the following elementary fact.
\begin{lemma}
\label{lem:Wu}
If $W\sim\GOE(n)$ and $u\in\R^n$ is fixed, then $Wu\eqd\norm{u}_n Z+\zeta u$, where $Z\sim N_n(0,I_n)$ and $\zeta\sim N(0,1/n)$ are independent.
\end{lemma}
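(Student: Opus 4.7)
The plan is to verify that both sides are centred Gaussian vectors in $\R^n$ with the same covariance matrix; the conclusion then follows since a multivariate normal distribution is determined by its mean and covariance. Linearity guarantees that $Wu$ is Gaussian, being an affine function of the independent Gaussian entries of $W$, and mean zero is immediate since all entries of $W$ have mean zero.

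The bulk of the work is the covariance computation $\Cov(Wu)$. I would write $\E\bigl[(Wu)_i(Wu)_j\bigr]=\sum_{k,\ell}u_k u_\ell\,\E(W_{ik}W_{\ell j})$ and use the key fact that, for the $\GOE(n)$ ensemble, $\E(W_{ik}W_{\ell j})=0$ unless $\{i,k\}=\{\ell,j\}$ as unordered pairs, with $\E(W_{ik}^2)=1/n$ when $i\neq k$ and $2/n$ when $i=k$. Splitting into cases $i\neq j$ (where only the matching $(l,j)=(i,k)$ contributes, giving $u_iu_j/n$) and $i=j$ (where the condition forces $k=\ell$, giving $\sum_{k\neq i}u_k^2/n+2u_i^2/n=\|u\|^2/n+u_i^2/n$), I would conclude that
\[
\Cov(Wu)=\tfrac{1}{n}\bigl(\|u\|^2 I_n+uu^\top\bigr)=\|u\|_n^2\,I_n+\tfrac{1}{n}\,uu^\top.
\]

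For the right-hand side, independence of $Z$ and $\zeta$ immediately yields $\Cov(\|u\|_n Z+\zeta u)=\|u\|_n^2\,I_n+\tfrac{1}{n}\,uu^\top$, with mean zero. Since both vectors are centred Gaussians with identical covariance, they are equal in distribution.

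There is no real obstacle here beyond careful bookkeeping with the diagonal factor of $2$ in the GOE variance convention, which is precisely what produces the extra $\tfrac{1}{n}uu^\top$ rank-one term on top of the isotropic $\|u\|_n^2 I_n$ part; in the right-hand representation, this rank-one contribution is exactly what the scalar Gaussian multiple $\zeta u$ supplies.
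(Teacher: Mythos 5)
Your proof is correct: $Wu$ is a linear image of the jointly Gaussian entries $(W_{ij})_{i\leq j}$, hence a centred Gaussian vector, and your covariance computation (including the careful case split that isolates the factor $2/n$ on the diagonal) correctly yields $\Cov(Wu)=\norm{u}_n^2 I_n+\frac{1}{n}uu^\top$, which matches $\Cov(\norm{u}_n Z+\zeta u)$; equality of mean and covariance then pins down the Gaussian law. The paper argues differently: it first checks the special case $u=e_1$ by inspection, since $We_1\sim N_n\bigl(0,\diag(2/n,1/n,\dotsc,1/n)\bigr)$ visibly coincides with the law of $\norm{e_1}_n Z+\zeta e_1$, and then handles general $u\neq 0$ by choosing an orthogonal $Q$ with $Qe_1=u/\norm{u}$ and exploiting the orthogonal invariance of both the $\GOE(n)$ distribution ($Wu\eqd QWQ^\top u$) and of $Z\sim N_n(0,I_n)$. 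Your direct moment calculation is more elementary and self-contained, at the cost of the double-sum bookkeeping over unordered index pairs; the paper's invariance argument avoids that bookkeeping entirely and isolates where the rank-one term comes from (the distinguished direction $e_1$), but relies on the rotational symmetry of the ensemble, which is a structural fact rather than a computation. Either route is a complete proof.
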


\deparskip
\begin{proof}[Proof of Lemma~\ref{lem:Wu}]
The result holds trivially when $u=0$, and is also true when $u=e_1$ since $We_1\sim N_n\bigl(0,\diag(2/n,1/n,\dotsc,1/n)\bigr)$. For a general $u\in\R^n\setminus\{0\}$, let $Q\in\R^{n\times n}$ be an orthogonal matrix with $Qe_1=u/\norm{u}$, so that $Q^\top u=\norm{u}e_1$. Then
\[Wu\eqd QWQ^\top u=Q(We_1)\norm{u}\eqd Q(\norm{e_1}_n Z+\zeta e_1)\norm{u}\eqd\norm{u}_n Z+\zeta u,\]
as required, where we have used the orthogonal invariance of $W\sim\GOE(n)$, the result for $e_1$ and the orthogonal invariance of $Z\sim N_n(0,I_n)$ respectively to obtain the distributional equalities above.
\end{proof}

\deparskip
\begin{proof}[Proof of Proposition~\ref{prop:AMPconddist}]
We start by proving~\eqref{eq:Wcond} for every $k\in\{0,1,\dotsc,n-1\}$. Let $\tilde{U}_k$ be any $\mathscr{S}_{k-1}$-measurable $n\times(n-r_k)$ matrix whose columns form an orthonormal basis of $V_k$; see Remark~\ref{rem:gs} for a specific construction of $\tilde{U}_k$. Similarly to~\eqref{eq:yk-1} in the proof of Proposition~\ref{prop:condgoe}, we can write
\begin{align}
W=WP_k&+\bigl(P_k+P_k^\perp\bigr)^\top WP_k^\perp\notag\\
=WP_k&+(WP_k)^\top P_k^\perp+P_k^\perp WP_k^\perp\notag\\
\label{eq:Wcondproof}
=WP_k&+(WP_k)^\top P_k^\perp+\tilde{U}_k\bigl(\tilde{U}_k^\top W\tilde{U}_k\bigr)\tilde{U}_k^\top\\
\eqdcond{\mathscr{S}_k}WP_k&+(WP_k)^\top P_k^\perp+\tilde{U}_k\bigl(\tilde{U}_k^\top\tilde{W}^k\tilde{U}_k\bigr)\tilde{U}_k^\top=WP_k+(WP_k)^\top P_k^\perp+P_k^\perp\tilde{W}^kP_k^\perp,\notag
\end{align}
where $\tilde{W}^k\sim\GOE(n)$ is independent of $\mathscr{S}_k$. To justify the key distributional equality after~\eqref{eq:Wcondproof}, we can apply Lemma~\ref{lem:conddist}(c); indeed, note that $WP_k=Y_kM_k^+$, $\tilde{U}_k$ and $P_k^\perp=\tilde{U}_k\tilde{U}_k^\top$ are $\mathscr{S}_k$-measurable, and that $\tilde{U}_k^\top W\tilde{U}_k\eqdcond{\mathscr{S}_k}\tilde{U}_k^\top\tilde{W}^k\tilde{U}_k$ by the final assertion of Proposition~\ref{prop:condgoe}. By replacing $WP_k$ with $Y_kM_k^+$ in the display above, we obtain~\eqref{eq:Wcond} for every $k\in\{0,1,\dotsc,n-1\}$, as desired. Since $I-P_0=P_0^\perp=I_n$, this specialises to $W\eqdcond{\mathscr{S}_0}\tilde{W}^0$ when $k=0$, which is the first part of~\eqref{eq:condzero}.

Using~\eqref{eq:Wcond}, we now derive the conditional distribution of $h^{k+1}$ given $\mathscr{S}_k$ for $k\in\{0,1,\dotsc,n-1\}$. When $k=0$, we have $h^1=Wm^0$, so the associated identity in~\eqref{eq:condzero} follows directly from the first part of~\eqref{eq:condzero}, Lemma~\ref{lem:Wu} and Lemma~\ref{lem:conddist}(c). Turning now to~\eqref{eq:hcond} with $k\geq 1$, we have $h^{k+1}=Wm^k-b_km^{k-1}$, where $b_k,m^{k-1}$ are $\mathscr{S}_k$-measurable, so we can deduce from~\eqref{eq:Wcond} and Lemma~\ref{lem:conddist}(c) that
\begin{align}
h^{k+1}&\eqdcond{\mathscr{S}_k}Y_kM_k^+m^k+(Y_kM_k^+)^\top P_k^\perp m^k+P_k^\perp\tilde{W}^kP_k^\perp m^k-b_km^{k-1}\notag\\
&\phantom{\restr{=}{}}=Y_k\alpha^k+(Y_kM_k^+)^\top\barpp{m}{k}+P_k^\perp(\tilde{W}^k\barpp{m}{k})-b_km^{k-1}\notag\\
\label{eq:hcondproof}
&\phantom{\restr{=}{}}=H_k\alpha^k+(0\;M_{k-1})B_k\alpha^k+(H_kM_k^+)^\top\barpp{m}{k}+P_k^\perp(\tilde{W}^k\barpp{m}{k})-b_km^{k-1}.
\end{align}
Indeed, to obtain the final equality above, observe that $Y_k=H_k+(0\;M_{k-1})B_k$ and $B_k^\top (0\;M_{k-1})^\top\barpp{m}{k}=B_k^\top (0\;M_{k-1})^\top P_k^\perp m^k=0$ in view of the fact that $P_k^\perp M_{k-1}=0$. Since $\barpp{m}{k}$ is $\mathscr{S}_k$-measurable and $\tilde{W}^k\sim\GOE(n)$ is independent of $\mathscr{S}_k$ (and therefore has conditional distribution $\GOE(n)$ given $\mathscr{S}_k$), it follows from Lemmas~\ref{lem:Wu} and~\ref{lem:conddist}(b) that $\tilde{W}^k\barpp{m}{k}\eqdcond{\mathscr{S}_k}\norm{\barpp{m}{k}}_n\tilde{Z}^{k+1}+\tilde{\zeta}^{k+1}\barpp{m}{k}$. Now since $P_k^\perp$ and all the other summands in~\eqref{eq:hcondproof} are $\mathscr{S}_k$-measurable, a further application of Lemma~\ref{lem:conddist}(c) shows that the random variable in~\eqref{eq:hcondproof} and
\begin{align}
&H_k\alpha^k+P_k^\perp\bigl(\norm{\barpp{m}{k}}_n\tilde{Z}^{k+1}+\tilde{\zeta}^{k+1}\barpp{m}{k}\bigr)+(H_kM_k^+)^\top\barpp{m}{k}-\bigl\{b_km^{k-1}-(0\;M_{k-1})B_k\alpha^k\bigr\}\notag\\
\label{eq:hcondid}
&=\sum_{\ell=1}^k\alpha_\ell^k\,h^\ell+P_k^\perp\bigl(\norm{\barpp{m}{k}}_n\tilde{Z}^{k+1}+\tilde{\zeta}^{k+1}\barpp{m}{k}\bigr)+(H_kM_k^+)^\top\barpp{m}{k}-\biggl(b_k m^{k-1}-\sum_{\ell=1}^k \alpha_\ell^k\,b_{\ell-1}m^{\ell-2}\biggr)
\end{align}
are identically distributed given $\mathscr{S}_k$. Finally, recall that $\barpp{m}{k}=(I-P_k)\,m^k=m^k-\sum_{\ell=1}^k\alpha_\ell^k\,m^{\ell-1}$, and that $(M_k^+)^\top M_k^\top m^\ell=P_k^\top m^\ell=P_k m^\ell=m^\ell$ for all $0\leq\ell\leq k-1$ by the definition of the projection matrix $P_k=M_k M_k^+$. It follows that $P_k^\perp(\tilde{\zeta}^{k+1}\barpp{m}{k})=\tilde{\zeta}^{k+1}\barpp{m}{k}$ and
\begin{align*}
(M_k^+)^\top H_k^\top\barpp{m}{k}&=(M_k^+)^\top\biggl(H_k^\top m^k-\sum_{\ell=1}^k\alpha_\ell^k\,H_k^\top m^{\ell-1}\biggr)\\
b_k m^{k-1}-\sum_{\ell=1}^k b_{\ell-1}\alpha_\ell^k\, m^{\ell-2}&=(M_k^+)^\top\biggl(b_k\,M_k^\top m^{k-1}-\sum_{\ell=1}^k\alpha_\ell^k\,b_{\ell-1}\,M_k^\top m^{\ell-2}\biggr).
\end{align*}
Thus, since $(M_k^+)^\top=M_k(M_k^\top M_k)^+$, the random variable $h^{k+1,k}$ defined in~\eqref{eq:hcond} is identical to that in~\eqref{eq:hcondid}, so we conclude from~\eqref{eq:hcondproof} that $h^{k+1}\eqdcond{\mathscr{S}_k} h^{k+1,k}$, as required.
\end{proof}
\umparskip
\subsection{Proof outline for the AMP master theorems in Section~\protect{\ref{sec:AMPsym}}}
\label{sec:inductive}
Recalling the definition~\eqref{eq:Sigmabar} of the limiting covariance matrices $\bar{\mathrm{T}}^{[k]}\in\R^{k\times k}$ in Theorem~\ref{thm:masterext}, we first outline a standard construction of a single random sequence $(\bar{G}_k:k\in\N)$ satisfying $(\bar{G}_1,\dotsc,\bar{G}_k)\sim N_k(0,\bar{\mathrm{T}}^{[k]})$ for each $k$. Let $\bar{\mathrm{T}}^{[k],k+1}:=(\bar{\mathrm{T}}_{1,k+1},\dotsc,\bar{\mathrm{T}}_{k,k+1})\in\R^k$ and
\begin{equation}
\label{eq:alphabar}
\bar{\alpha}^k\equiv(\bar{\alpha}_1^k,\dotsc,\bar{\alpha}_k^k):=\bigl(\bar{\mathrm{T}}^{[k]}\bigr)^{-1}\,\bar{\mathrm{T}}^{[k],k+1}\in\R^k
\end{equation}
for each $k\in\N$, where the latter is well-defined since $\bar{\mathrm{T}}^{[k]}$ is positive definite under~\ref{ass:A4} by Lemma~\ref{lem:posdef}. It is easily verified that if $(G_1,\dotsc,G_{k+1})\sim N_{k+1}(0,\bar{\mathrm{T}}^{[k+1]})$, then $G_{[k]}:=(G_1,\dotsc,G_k)$ and $\xi_{k+1}:=G_{k+1}-G_{[k]}^\top\,\bar{\alpha}^k=G_{k+1}-\sum_{\ell=1}^k\bar{\alpha}_\ell^k\,G_\ell$ are uncorrelated and hence independent. This means that
\[G_{[k]}^\top\,\bar{\alpha}^k=\sum_{\ell=1}^k\bar{\alpha}_\ell^k\,G_\ell=\E(G_{k+1}\,|\,G_1,\dotsc,G_k)\]
for each $k$. Moreover, since $\bar{\mathrm{T}}^{k+1}=\Cov(G_1,\dotsc,G_{k+1})$ is positive definite and $\xi_{k+1}$ is a non-trivial linear combination of $G_1,\dotsc,G_{k+1}$, it follows under~\ref{ass:A4} that
\begin{align}
0&<\Var(\xi_{k+1})=\Var(\xi_{k+1}\,|\,G_1,\dotsc,G_k)
=\Var(G_{k+1}\,|\,G_1,\dotsc,G_k)\notag\\
&=\Var(G_{k+1})-\Var\bigl(G_{[k]}^\top\,\bar{\alpha}^k\bigr)\notag\\
&=\bar{\mathrm{T}}_{k+1,k+1}-(\bar{\alpha}^k)^\top\bar{\mathrm{T}}^{[k]}\,\bar{\alpha}^k\notag\\
\label{eq:tauperpbar}
&=\tau_{k+1}^2-(\bar{\mathrm{T}}^{[k],k+1})^\top\bigl(\bar{\mathrm{T}}^{[k]}\bigr)^{-1}\,\bar{\mathrm{T}}^{[k],k+1}=:\barpp{\tau}{2}_{k+1}
\end{align}
for $k\in\N$, so that $\barp{\tau}_{k+1}\in (0,\infty)$ satisfies $\barpp{\tau}{2}_{k+1}=\Var(\xi_{k+1})\leq\Var(G_{k+1})=\tau_{k+1}^2$. 
Now let $\bar{G}_1\sim N(0,\tau_1^2)$, and for $k\in\N$, inductively define
\begin{equation}
\label{eq:Gkbar}
\bar{G}_{k+1}:=\sum_{\ell=1}^k\bar{\alpha}_\ell^k\,\bar{G}_\ell+\barp{\tau}_{k+1}\barp{\zeta}_{k+1},
\end{equation}
where $\barp{\zeta}_{k+1}\sim N(0,1)$ is independent of $(\bar{G}_1,\dotsc,\bar{G}_k)$. Then $(\bar{G}_k:k\in\N)$ is a random sequence with $(\bar{G}_1,\dotsc,\bar{G}_k)\sim N_k(0,\bar{\mathrm{T}}^{[k]})$ for each $k$, as desired.
With the above definitions in place, we record here some key identities. 
In view of~\eqref{eq:Sigmabar}, we certainly have
\begin{equation}
\label{eq:covid1}
\Cov(\bar{G}_k,\bar{G}_\ell)=\E(\bar{G}_k\bar{G}_\ell)=\bar{\mathrm{T}}_{k,\ell}=
\begin{cases}
\,\tau_1^2\quad&\text{if }k=\ell=1\\
\,\E\bigl(F_0(\bar{\gamma})\cdot f_{k-1}(\bar{G}_{k-1},\bar{\gamma})\bigr)\quad&\text{if }k>\ell=1\\
\,\E\bigl(f_{\ell-1}(\bar{G}_{\ell-1},\bar{\gamma})\cdot f_{k-1}(\bar{G}_{k-1},\bar{\gamma})\bigr)\quad&\text{if }k\geq\ell\geq 2,
\end{cases}
\end{equation}
where $f_1,f_2,\dotsc$ are the Lipschitz functions in the AMP recursion~\eqref{eq:AMPsym}, and $\tau_1$ and $F_0$ are as in~\ref{ass:A2} and~\ref{ass:A3} respectively. This fact underlies an important assertion (Proposition~\ref{prop:AMPproof}(e) below) in our inductive proof of the master theorems. Moreover, for $k,\ell \in \mathbb{N}$ and any Lipschitz function $\varphi\colon\R\to\R$ with weak derivative $\varphi'$, we have
\begin{equation}
\label{eq:steinid}
\E\bigl(\bar{G}_k\,\varphi(\bar{G}_\ell)\bigr)=\E\bigl(\varphi'(\bar{G}_\ell)\bigr)\E(\bar{G}_k\bar{G}_\ell)=\E\bigl(\varphi'(\bar{G}_\ell)\bigr)\bar{\mathrm{T}}_{k,\ell}.
\end{equation}
This follows from Stein's lemma, a general formulation of which can be found in~\citet[Lemma~3.6]{Tsy09} and Lemma~\ref{lem:steinsigma}.
\begin{lemma}[Stein's lemma]
\label{lem:stein}
If $Z\sim N(0,\sigma^2)$ and $\varphi\colon\R\to\R$ is an absolutely continuous function with weak derivative $\varphi'$ such that $\varphi'(Z)$ is integrable, then $\E\bigl(Z\varphi(Z)\bigr)=\sigma^2\,\E\bigl(\varphi'(Z)\bigr)$.
\end{lemma}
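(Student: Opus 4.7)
The plan is to reduce the statement to the classical integration-by-parts identity against the Gaussian density, with care to handle the weak-derivative / absolute-continuity setting via Fubini rather than appealing directly to integration by parts on the real line.

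First I would dispose of the degenerate case $\sigma=0$ trivially: here $Z\equiv 0$ almost surely, so both $\E\bigl(Z\varphi(Z)\bigr)$ and $\sigma^2\,\E\bigl(\varphi'(Z)\bigr)$ equal $0$. For the remainder I assume $\sigma>0$ and write $\phi_\sigma$ for the density of $N(0,\sigma^2)$, so that $\phi_\sigma'(z)=-z\phi_\sigma(z)/\sigma^2$ for all $z\in\R$. Using absolute continuity of $\varphi$, I represent $\varphi(z)=\varphi(0)+\int_0^z\varphi'(t)\,dt$ for every $z\in\R$ (with the convention $\int_0^z=-\int_z^0$ when $z<0$).

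Substituting this representation into $\E\bigl(Z\varphi(Z)\bigr)=\int_\R z\varphi(z)\phi_\sigma(z)\,dz$ and noting $\varphi(0)\,\E(Z)=0$, the quantity becomes
\[
\int_0^\infty\!\!z\phi_\sigma(z)\!\int_0^z\!\varphi'(t)\,dt\,dz\;-\;\int_{-\infty}^0\!\!z\phi_\sigma(z)\!\int_z^0\!\varphi'(t)\,dt\,dz.
\]
I would then apply Fubini's theorem to swap the order of integration in each term. The resulting inner integrals evaluate explicitly via the identity $\phi_\sigma'(z)=-z\phi_\sigma(z)/\sigma^2$: for $t\ge 0$ we have $\int_t^\infty z\phi_\sigma(z)\,dz=\sigma^2\phi_\sigma(t)$, and for $t\le 0$ we have $\int_{-\infty}^t z\phi_\sigma(z)\,dz=-\sigma^2\phi_\sigma(t)$. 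Collecting the two pieces yields $\sigma^2\int_\R\varphi'(t)\phi_\sigma(t)\,dt=\sigma^2\,\E\bigl(\varphi'(Z)\bigr)$, as required.

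The one technical point that needs checking — and the main potential obstacle — is the applicability of Fubini. Absolute integrability of the double integrand reduces, after performing the inner $z$-integration with $|\varphi'(t)|$ in place of $\varphi'(t)$, to the finiteness of $\sigma^2\int_\R|\varphi'(t)|\phi_\sigma(t)\,dt=\sigma^2\,\E\bigl|\varphi'(Z)\bigr|$, which is guaranteed by the standing hypothesis that $\varphi'(Z)$ is integrable. No further growth assumption on $\varphi$ itself is needed, because the argument never touches boundary terms at $\pm\infty$ directly.
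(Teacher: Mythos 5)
Your proof is correct, and it is genuinely more self-contained than what the paper does: the paper does not prove this lemma at all, but simply cites the classical statement (Tsybakov, 2009, Lemma~3.6), and its multivariate extension (Lemma~\ref{lem:steinsigma}) is likewise proved by reducing to that cited scalar case via the change of variables $X=\Sigma^{1/2}Z$. Your route — writing $\varphi(z)=\varphi(0)+\int_0^z\varphi'(t)\,dt$ by absolute continuity, swapping the order of integration, and evaluating $\int_t^\infty z\phi_\sigma(z)\,dz=\sigma^2\phi_\sigma(t)$ (and its mirror image for $t\leq 0$) — is the standard Fubini proof, and you handle the only delicate point correctly: applying Tonelli with $\abs{\varphi'}$ first shows the double integral is absolutely convergent precisely when $\E\abs{\varphi'(Z)}<\infty$, which both licenses Fubini and, as a useful byproduct, shows $Z\varphi(Z)$ is integrable (something the lemma's conclusion implicitly asserts but the hypothesis does not state directly). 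What the citation approach buys the paper is brevity; what your argument buys is a complete proof under exactly the stated weak-derivative hypothesis, with no boundary terms and no extra growth condition on $\varphi$. One cosmetic remark: the case $\sigma=0$ is degenerate (the weak derivative is only defined almost everywhere, so $\varphi'(0)$ need not be meaningful), but since that case is vacuous for the paper's purposes and both sides are trivially zero under any convention, it does not affect anything.
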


\unparskip
Indeed, the first equality in~\eqref{eq:steinid} follows from Lemma~\ref{lem:stein} upon writing $\bar{G}_k=(\bar{\mathrm{T}}_{k,\ell}/\bar{\mathrm{T}}_{\ell,\ell})\,\bar{G}_\ell+\xi_{k\ell}$, where $\xi_{k\ell}$ has zero mean and is independent of $\bar{G}_\ell$, so that $\E\bigl(\xi_{k\ell}\,\varphi(\bar{G}_\ell)\bigr)=0$.

Our choice of functions $(f_k)_{k=0}^\infty$ and $(f_k')_{k=0}^\infty$ in~\eqref{eq:AMPsym} ensures that for any fixed $y\in\R$, we can take $\varphi=f_\ell(\cdot\,,y)$ and $\varphi'=f_\ell'(\cdot\,,y)$ in~\eqref{eq:steinid} to see that $\E\bigl(\bar{G}_k\,f_\ell(\bar{G}_\ell,y)\bigr)=\E\bigl(f_\ell'(\bar{G}_\ell,y)\bigr)\E(\bar{G}_k\bar{G}_\ell)$ for $k,\ell \in \mathbb{N}$.
We deduce from this (and Lemma~\ref{lem:indeplem}) that if $\bar{\gamma}\sim\pi$ is independent of $\bar{G}_1,\bar{G}_2,\dotsc$, then
\begin{equation}
\label{eq:covid2}
\E\bigl(\bar{G}_k\,f_\ell(\bar{G}_\ell,\bar{\gamma})\bigr)=\E\bigl(f_\ell'(\bar{G}_\ell,\bar{\gamma})\bigr)\E(\bar{G}_k\bar{G}_\ell)=\bar{b}_\ell\,\bar{\mathrm{T}}_{k,\ell}
\end{equation}
for all $k,\ell\in \mathbb{N}$, where $\bar{b}_\ell:=\E\bigl(f_\ell'(\bar{G}_\ell,\bar{\gamma})\bigr)$. This forms part of assertion (f) in Proposition~\ref{prop:AMPproof} below.

To complete our technical preparations for the main derivations below, we will set up a more explicit connection between the Gaussian variables $\bar{G}_{k+1}\sim N(0,\tau_{k+1}^2)$ in~\eqref{eq:Gkbar} and the random vectors $h^{k+1,k}\equiv h^{k+1,k}(n)$ defined for $n\in\N$ and $k\in\{0,1,\dotsc,n-1\}$ in~\eqref{eq:condzero} and~\eqref{eq:hcond} in Section~\ref{sec:AMPcond} above. For such $n$ and $k$, Proposition~\ref{prop:AMPconddist} asserts that $h^{k+1}(n)$ and $h^{k+1,k}(n)$ are identically distributed given $\mathscr{S}_k\equiv\mathscr{S}_k(n)=\sigma(\gamma,m^0,h^j:1\leq j\leq k)$, and we now write $h^{k+1,k}(n)=\tilde{h}^{k+1}(n)+\Delta^{k+1}(n)$, where
\begin{equation}
\label{eq:sum10}
\tilde{h}^1\equiv\tilde{h}^1(n):=\tau_1\tilde{Z}^1\quad\text{and}\quad\Delta^1\equiv\Delta^1(n):=(\norm{m^0}_n-\tau_1)\tilde{Z}^1+\tilde{\zeta}^1 m^0,
\end{equation}
and
\begin{align}
\label{eq:htilde}
\tilde{h}^{k+1}\equiv\tilde{h}^{k+1}(n)&:=\sum_{\ell=1}^k\bar{\alpha}_\ell^k\,h^\ell+\barp{\tau}_{k+1}\tilde{Z}^{k+1},\\
\Delta^{k+1}\equiv\Delta^{k+1}(n)&:=\sum_{\ell=1}^k(\alpha_\ell^k-\bar{\alpha}_\ell^k)\,h^\ell+M_k(M_k^\top M_k)^+\biggl(v^{k,k}-\sum_{\ell=1}^k\alpha_\ell^k\,v^{k,\ell-1}\biggr)\notag\\
\label{eq:deltak}
&\hspace{1.15cm}-\norm{\barpp{m}{k}}_n(P_k\tilde{Z}^{k+1})+(\norm{\barpp{m}{k}}_n-\barp{\tau}_{k+1})\tilde{Z}^{k+1}+\tilde{\zeta}^{k+1}\barpp{m}{k}.
\end{align}
Recall that $(\tilde{Z}^{k+1},\tilde{\zeta}^{k+1})\equiv\bigl(\tilde{Z}^{k+1}(n),\tilde{\zeta}^{k+1}(n)\bigr)\sim N_n(0,I_n)\otimes N(0,1/n)$ was taken to be independent of $\mathscr{S}_k\equiv\mathscr{S}_k(n)$ in Proposition~\ref{prop:AMPconddist}, where we also defined $\alpha^k\equiv\alpha^k(n)$ and $v^{k,\ell}\equiv v^{k,\ell}(n)$ for $0\leq\ell\leq k$. 

In the decomposition above, we have defined $\tilde{h}^{k+1}$ in~\eqref{eq:htilde} to mimic the expression for the limiting Gaussian variable $\bar{G}_{k+1}$ in~\eqref{eq:Gkbar}.
Contrasting the definitions of $h^{k+1,k}$ and $\tilde{h}^{k+1}$ in~\eqref{eq:hcond} and~\eqref{eq:htilde} respectively for $k\in\{0,1,\dotsc,n-1\}$, we see that the random quantities $\alpha^k$ and $\norm{\barpp{m}{k}}_n$ in~\eqref{eq:hcond} are replaced in~\eqref{eq:htilde} with the deterministic $\bar{\alpha}^k\in\R^k$ and $\barp{\tau}_{k+1}\in (0,\infty)$ from~\eqref{eq:alphabar} and~\eqref{eq:tauperpbar} respectively; these turn out to be the correct limiting values in Proposition~\ref{prop:AMPproof}(i,\,j) below under the non-degeneracy assumption~\ref{ass:A4}. 

We are now in a position to state the main result of this subsection. To ease notation, we will often suppress the dependence on $n$ of quantities such as $h^k\equiv h^k(n)$, $v^{k,\ell}\equiv v^{k,\ell}(n)$, $\alpha^k\equiv\alpha^k(n)$ and $\Delta^k\equiv\Delta^k(n)$. 
\begin{proposition}
\label{prop:AMPproof}
For a sequence of symmetric AMP recursions~\eqref{eq:AMPsym} satisfying~\emph{\ref{ass:A0}}--\emph{\ref{ass:A5}} as well as~\emph{\ref{ass:A4}}, the following hold as $n\to\infty$ for each $k\in\N$:

\unparskip
\begin{enumerate}[label=(\alph*)]
\item $\norm{\Delta^k}_{n,r}\cvc 0$;
\item $\norm{h^j}_{n,r}=O_c(1)$ for $1\leq j\leq k$;

$\norm{m^j}_{n,r}=O_c(1)$ for $0\leq j\leq k$;
\item $\inv{n}\sum_{i=1}^n\psi(h_i^1,\dotsc,h_i^k,\gamma_i)\cvc\E\bigl(\psi(\bar{G}_1,\dotsc,\bar{G}_k,\bar{\gamma})\bigr)$ for every $\psi\in\PL_{k+1}(r)$;
\item $\inv{n}\sum_{i=1}^n m_i^0\,\phi(h_i^1,\dotsc,h_i^k,\gamma_i)\cvc\E\bigl(F_0(\bar{\gamma})\cdot\phi(\bar{G}_1,\dotsc,\bar{G}_k,\bar{\gamma})\bigr)$ for every $\phi\in\PL_{k+1}(1)$;
\item $\ipr{m^{j-1}}{m^{\ell-1}}_n\cvc\E(\bar{G}_j\bar{G}_\ell)=\bar{\mathrm{T}}_{j,\ell}$ for $1\leq j,\ell\leq k+1$;
\item $\ipr{h^j}{m^\ell}_n=\ipr{h^j}{f_\ell(h^\ell,\gamma)}_n\cvc\E\bigl(\bar{G}_jf_\ell(\bar{G}_\ell,\bar{\gamma})\bigr)=\E\bigl(f_\ell'(\bar{G}_\ell,\bar{\gamma})\bigr)\E(\bar{G}_j\bar{G}_\ell)=\bar{b}_\ell\,\bar{\mathrm{T}}_{j,\ell}$ for $1\leq j,\ell\leq k$;

$\ipr{h^j}{m^0}_n\cvc 0$ for $1\leq j\leq k$;
\item $b_k=\abr{f_k'(h^k,\gamma)}_n\cvc\E\bigl(f_k'(\bar{G}_k,\bar{\gamma})\bigr)=\bar{b}_k$;
\item $v^{k,\ell}/n=(H_k^\top m^\ell-b_\ell\,M_k^\top m^{\ell-1})/n\cvc 0$ for $0\leq\ell\leq k$;
\item $\alpha^k\cvc\bar{\alpha}^k$;
\item $\norm{\barpp{m}{k}}_n\cvc\barp{\tau}_{k+1}=\Var^{1/2}(\bar{G}_{k+1}\,|\,\bar{G}_1,\dotsc,\bar{G}_k)$.
\end{enumerate}

\unparskip
\end{proposition}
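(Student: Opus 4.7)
The plan is to prove all ten statements (a)--(j) simultaneously by induction on $k\in\N$, with the conditional distributional identity $h^{k+1}\eqdcond{\mathscr{S}_k}\tilde{h}^{k+1}+\Delta^{k+1}$ from Proposition~\ref{prop:AMPconddist} as the central engine. The decomposition in~(\ref{eq:sum10}--\ref{eq:deltak}) was crafted precisely so that $\tilde{h}^{k+1}$ is a fixed linear combination of $h^1,\dotsc,h^k$ plus an independent Gaussian scaled by the correct residual standard deviation $\barp{\tau}_{k+1}$, while $\Delta^{k+1}$ consists of five terms each of which reduces to something negligible in $\norm{\cdot}_{n,r}$ once the preceding inductive hypotheses are in hand.

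For the base case $k=1$ I would handle (a) directly from~\ref{ass:A2} (the two summands of $\Delta^1$ vanish because $\norm{m^0}_n\cvc\tau_1$, $\tilde{\zeta}^1\sim N(0,1/n)$, $\norm{m^0}_{n,r}=O_c(1)$, and $\norm{\tilde{Z}^1}_{n,r}=O_c(1)$ via Lemma~\ref{lem:pseudoconc}); combine this with~\ref{ass:A1} and a further application of Lemma~\ref{lem:pseudoconc} to the joint empirical distribution of $(\tilde{Z}^1,\gamma)$ to obtain (c). Statements (d) and the $\ell=0$ part of (f) then invoke \ref{ass:A3}; (b), (e), (h), (i), (j) reduce to continuous-mapping arguments from (c)--(d); and (g) uses \ref{ass:A5} via Stein's lemma~\eqref{eq:covid2} combined with a truncation/continuity-at-$\pi$-a.e.\ argument for $f_k'$. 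In the inductive step I would derive $\mathcal{H}_{k+1}$ from $\mathcal{H}_k$ in the order (a)$\,\to\,$(c)$\,\to\,$(b)$\,\to\,$(d)$\,\to\,$(e),(f),(g)$\,\to\,$(h)$\,\to\,$(i)$\,\to\,$(j).

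The most delicate part, and where the main obstacle lies, is showing (a) and then bootstrapping to (c) at step $k+1$. For (a), I would bound each of the five summands of $\Delta^{k+1}$ in $\norm{\cdot}_{n,r}$ separately: the first collapses because $\alpha^k-\bar{\alpha}^k\cvc 0$ by (i)${}_k$ while $\norm{h^\ell}_{n,r}=O_c(1)$ by (b)${}_k$; the $M_k(M_k^\top M_k)^+$ term rewrites as $\sum_\ell d_\ell^{(n)} m^{\ell-1}$ where the coefficient vector $d^{(n)}=(M_k^\top M_k/n)^+\bigl(v^{k,k}/n-\sum_\ell\alpha_\ell^k v^{k,\ell-1}/n\bigr)$ tends to $0$ by (e)${}_k$, (h)${}_k$, (i)${}_k$ and Lemma~\ref{lem:posdef}; the third term $\norm{\barpp{m}{k}}_nP_k\tilde{Z}^{k+1}$ is handled because $P_k$ has rank at most $k$, so $P_k\tilde{Z}^{k+1}$ has $\norm{\cdot}_{n,r}$ shrinking at a $n^{-1/2}$-type rate; the fourth uses (j)${}_k$ combined with $\norm{\tilde{Z}^{k+1}}_{n,r}=O_c(1)$; and the fifth uses $\tilde{\zeta}^{k+1}\sim N(0,1/n)\cvc 0$ with $\norm{\barpp{m}{k}}_{n,r}$ bounded. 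For (c)${}_{k+1}$ I then absorb $\Delta^{k+1}$ via the triangle inequality for $\widetilde{d}_r$ (using a Lipschitz-in-$r$-th-moment bound, as in Corollary~\ref{cor:Wr}), reducing the claim to the joint convergence of $\nu_n(h^1,\dotsc,h^k,\tilde{h}^{k+1},\gamma)$, which follows from (c)${}_k$ and the independence of $\tilde{Z}^{k+1}$ from $\mathscr{S}_k$ together with Lemma~\ref{lem:pseudoconc} for $\tilde{Z}^{k+1}$.

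The remaining clauses at step $k+1$ are largely consequences of (c)${}_{k+1}$: (b) by taking $\psi(x,y)=\abs{x_j}^r$; (e), (f) by the pseudo-Lipschitz products $f_j(x_j,y)f_{k+1}(x_{k+1},y)$ (with (d) supplying the $m^0$-cases) via Lemma~\ref{lem:pseudoprod}; (g) by a continuous-mapping argument using \ref{ass:A5} to approximate $f_{k+1}'$ by bounded continuous functions that integrate against $N(0,\tau_{k+1}^2)\otimes\pi$ in the limit; (h) by combining (e), (f), (g) at $k+1$; (i) by the continuity of matrix inversion once (e)${}_{k+1}$ gives $M_{k+1}^\top M_{k+1}/n\cvc\bar{\mathrm{T}}^{[k+1]}$ (invertible by Lemma~\ref{lem:posdef}); and (j) by expanding $\norm{\barpp{m}{k+1}}_n^2=\norm{m^{k+1}}_n^2-(\alpha^{k+1})^\top(M_{k+1}^\top m^{k+1})/n$ and plugging in (e), (f), (g), (i) at $k+1$ to recover $\tau_{k+2}^2-(\bar{\alpha}^{k+1})^\top\bar{\mathrm{T}}^{[k+1],k+2}=\barpp{\tau}{2}_{k+2}$. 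Statement (d) at step $k+1$ will be extracted by the same conditioning machinery applied to inner products with $m^0$, leveraging \ref{ass:A3} at the base and propagating through the state-evolution covariances; this is the other place where care is needed under the weaker $r$-th moment assumption~\ref{ass:A2}, and I anticipate using Hölder's inequality (rather than Cauchy--Schwarz) throughout the bounds on residual terms to avoid any appeal to $(2r-2)$-th empirical moments, in line with Remark~\ref{rem:AMPm0}.
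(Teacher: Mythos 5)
Your proposal is correct and follows essentially the same route as the paper's proof: induction driven by the conditional decomposition $h^{k+1}\eqdcond{\mathscr{S}_k}\tilde{h}^{k+1}+\Delta^{k+1}$ from Proposition~\ref{prop:AMPconddist}, term-by-term control of the five summands of $\Delta^{k+1}$ (including the rank-$k$ projection bound, which is the paper's Lemma~\ref{lem:projgaussian}), the conditional-mean-plus-Gaussian-concentration split for (c) and (d) with H\"older used to avoid $(2r-2)$-th empirical moments, Stein's identity~\eqref{eq:covid2} for (f), Lemma~\ref{lem:posdef} for (i), and the variance expansion for (j); your reordering of the clauses within the inductive step (proving (e)--(j) at level $k+1$ rather than level $k$, and (c) before (b)) is immaterial since the required moment bounds are available from (a)$_{k+1}$ and (b)$_k$. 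The only details you gloss, which the paper handles via Lemma~\ref{lem:seqcoup} together with Lemma~\ref{lem:weakconv} for (g) and via the events $A_k(n)$ with summable conditional tail bounds in (c)--(d), are the coupling device needed to upgrade the non-pseudo-Lipschitz functional $\abr{f_k'(h^k,\gamma)}_n$ to a complete-convergence statement and the uniform control of the random pseudo-Lipschitz constants appearing in the conditional concentration step.
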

\begin{remark}
\label{rem:AMPproof}
Under~\ref{ass:A4} and the alternative hypotheses of Remark~\ref{rem:AMPconv}(a), the assertions (a)--(j) above remain valid if we replace $\cvc$ with $\cvp$ and $O_c(1)$ with $O_p(1)$ throughout.
\end{remark}

\unparskip
To establish Proposition~\ref{prop:AMPproof}, we proceed by induction on $k\in\N$ and prove the assertions (a)--(i) one at a time (in that order). Here, we will give a technical summary of the inductive argument (which can be read alongside the detailed proof in Section~\ref{sec:masterproofs}) to highlight its overall structure and key features. Henceforth, we write $\mathcal{H}_k(\cdots)$ for parts $(\cdots)$ of the inductive hypothesis for $k\in\N$. 

$\mathcal{H}_k(e,f)$: These are obtained as direct consequences of the inductive hypotheses $\mathcal{H}_k(c,d)$ by choosing suitable pseudo-Lipschitz functions $\psi\in\PL_{k+1}(2)\subseteq\PL_{k+1}(r)$ that depend on at most three of their $k+1$ arguments. We use $\mathcal{H}_k(d)$ to handle the inner products that feature $m^0$ and apply $\mathcal{H}_k(c)$ to those that do not. In $\mathcal{H}_k(e)$, the limiting value of $\ipr{m^{j-1}}{m^{\ell-1}}_n=\ipr{f_{j-1}(h^{j-1},\gamma)}{f_{\ell-1}(h^{\ell-1},\gamma)}_n$ is shown to be 
\[\begin{cases}
\E\bigl(f_{j-1}(\bar{G}_{j-1},\bar{\gamma})\cdot f_{\ell-1}(\bar{G}_{\ell-1},\bar{\gamma})\bigr)=\E(\bar{G}_j\bar{G}_\ell)\quad&\text{for }2\leq j,\ell\leq k+1\\
\E\bigl(F_0(\bar{\gamma})\cdot f_{j-1}(\bar{G}_{j-1},\bar{\gamma}))=\E(\bar{G}_1\bar{G}_\ell)\quad&\text{for }1=j<\ell\leq k+1,
\end{cases}
\] 
where the two equalities are drawn from~\eqref{eq:covid1} and form the basis of the definition of the limiting covariances $\bar{\mathrm{T}}_{j,\ell}$ in~\eqref{eq:Sigmabar}. Moreover, for $1\leq j,\ell\leq k$, the identity $\E\bigl(\bar{G}_jf_\ell(\bar{G}_\ell,\bar{\gamma})\bigr)=\E\bigl(f_\ell'(\bar{G}_\ell,\bar{\gamma})\bigr)\E(\bar{G}_j\bar{G}_\ell)$ in the first line of $\mathcal{H}_k(f)$ comes from~\eqref{eq:covid2}. 
These identities~\eqref{eq:covid1} and~\eqref{eq:covid2} ultimately provide the crucial link between the limiting values of $\ipr{h^j}{m^\ell}_n$ and $b_\ell\,\ipr{m^{j-1}}{m^{\ell-1}}_n$ in $\mathcal{H}_k(h)$. 

$\mathcal{H}_k(g,h)$: This is also derived from $\mathcal{H}_k(c)$, but since $f_k'\colon\R^2\to\R$ need not lie in $\PL_2(r)$, we instead apply the analytic Lemmas~\ref{lem:seqcoup} and~\ref{lem:weakconv} rather than imitate the proofs of $\mathcal{H}_k(e,f)$. See the proof of Corollary~\ref{cor:Wr}(b) for a similar argument. $\mathcal{H}_k(h)$ follows immediately from $\mathcal{H}_k(e,f,g)$.

$\mathcal{H}_k(i,j)$: We see from $\mathcal{H}_k(e)$ that the matrices $M_k^\top M_k/n\in\R^{k\times k}$ converge completely to the limiting covariance matrix $\bar{\mathrm{T}}^{[k]}=\Cov(\bar{G}_1,\dotsc,\bar{G}_k)\in\R^{k\times k}$, which is positive definite under~\ref{ass:A4}.
In $\mathcal{H}_k(i)$, we consider $\alpha^k=(M_k^\top M_k/n)^+(M_k^\top m^k/n)\in\R^k$, a vector of projection coefficients defined in~\eqref{eq:alphak}.
It follows from $\mathcal{H}_k(e)$ that $(M_k^\top M_k/n)^+$ and $M_k^\top m^k/n$ converge completely to $(\bar{\mathrm{T}}^{[k]})^{-1}$ and $\bar{\mathrm{T}}^{[k],k+1}$ respectively, and hence that $\alpha^k\cvc(\bar{\mathrm{T}}^{[k]})^{-1}\,\bar{\mathrm{T}}^{[k],k+1}=\bar{\alpha}^k$, as defined in~\eqref{eq:alphabar}. For $\mathcal{H}_k(j)$, we recall the definitions at the start of Section~\ref{sec:AMPcond} and write \[\norm{\barpp{m}{k}}_n^2=\norm{P_k^\perp m^k}_n^2=\norm{m^k}_n^2-\norm{P_k m^k}_n^2=\norm{m^k}_n^2-(\alpha^k)^\top(M_k^\top M_k/n)\,\alpha^k.\]
Applying $\mathcal{H}_k(e,i)$ to the individual terms on the right hand side above, 
we deduce that $\norm{\barpp{m}{k}}_n^2\cvc\bar{\mathrm{T}}_{k+1,k+1}-(\bar{\alpha}^k)^\top\bar{\mathrm{T}}^{[k]}\,\bar{\alpha}^k=\barpp{\tau}{2}_{k+1}$, as defined in~\eqref{eq:tauperpbar}.

$\mathcal{H}_{k+1}(a)$: 
It is thanks to the key fact $\mathcal{H}_k(h)$ and the presence of the Onsager term $-b_km^{k-1}$ in the original AMP recursion~\eqref{eq:AMPsym} (and subsequently in~\eqref{eq:hcond} in Proposition~\ref{prop:AMPconddist}) that the $\norm{{\cdot}}_{n,r}$ norm of the second term in~\eqref{eq:deltak}
converges completely to 0. 
Using $\mathcal{H}_k(b,i,j)$ to handle some of the remaining terms in this definition~\eqref{eq:deltak} of the deviation term $\Delta^{k+1}$, we conclude that $\norm{\Delta^{k+1}}_{n,r}\cvc 0$. 

$\mathcal{H}_{k+1}(b)$: Using the distributional equality $h^{k+1}\eqd h^{k+1,k}=\tilde{h}^{k+1}+\Delta^{k+1}=\sum_{\ell=1}^k\bar{\alpha}_\ell^k\,h^\ell+\barp{\tau}_{k+1}\tilde{Z}^{k+1}+\Delta^{k+1}$ from Proposition~\ref{prop:AMPconddist} and~(\ref{eq:htilde},\,\ref{eq:deltak}), we deduce from $\mathcal{H}_{k+1}(a)$ and the inductive hypothesis $\mathcal{H}_k(b)$ that $\norm{h^{k+1}}_{n,r}=O_c(1)$. Since $\norm{\gamma}_{n,r}=O_c(1)$ by~\ref{ass:A1} and $f_{k+1}$ is Lipschitz, this in turn implies that $\norm{m^{k+1}}_{n,r}=\norm{f_k(h^{k+1},\gamma)}_{n,r}=O_c(1)$.

$\mathcal{H}_{k+1}(c)$: This is the main assertion in Proposition~\ref{prop:AMPproof}; by Corollary~\ref{cor:Wr}(b), it is in fact equivalent to the conclusion~\eqref{eq:masterext} of Theorem~\ref{thm:masterext}. We first condition on $\mathscr{S}_k=\sigma(\gamma,m^0,h^j:1\leq j\leq k)$ and appeal to Proposition~\ref{prop:AMPconddist}, which asserts that for each $n>k$, the conditional distribution of $h^{k+1}\equiv h^{k+1}(n)$ given $\mathscr{S}_k$ is identical to that of $h^{k+1,k}\equiv h^{k+1,k}(n)$ from~\eqref{eq:hcond}. With $h^{k+1,k}=\tilde{h}^{k+1}+\Delta^{k+1}$ in place of $h^{k+1}$ on the left hand side of $\mathcal{H}_{k+1}(c)$, we use $\mathcal{H}_{k+1}(a,b)$ to show that the `deviation' term $\Delta^{k+1}\equiv\Delta^{k+1}(n)$ from~\eqref{eq:deltak} has asymptotically negligible effect, so that $h^{k+1,k}$ can in fact be replaced with $\tilde{h}^{k+1}$ in all relevant expressions. In~\eqref{eq:htilde}, $\tilde{h}^{k+1}$ was defined as $\sum_{\ell=1}^k\bar{\alpha}_\ell^k\,h^\ell+\barp{\tau}_{k+1}\tilde{Z}^{k+1}$, where $\sum_{\ell=1}^k\bar{\alpha}_\ell^k\,h^\ell$ is a deterministic linear combination of the previous iterates $h^1,\dotsc,h^k$, and $\barp{\tau}_{k+1}\tilde{Z}^{k+1}$ is a new Gaussian variable that has i.i.d.\ components and is independent of $\mathscr{S}_k$. 

In view of this, the proof of $\mathcal{H}_{k+1}(c)$ can be completed in two stages (given by~\eqref{eq:Hkc3} and~\eqref{eq:Hkc1} below): the influence of the latter Gaussian term can first be understood by appealing to $\mathcal{H}_{k+1}(b)$ and a general concentration result for sums of pseudo-Lipschitz functions of independent Gaussians (Lemma~\ref{lem:pseudoconc}), before we subsequently reintroduce the randomness in $\gamma,m^0,h^1,\dotsc,h^k$ and apply the inductive hypothesis $\mathcal{H}_k(c)$ to account for this.
The appearance of the new limiting Gaussian variable $\bar{G}_{k+1}$ on the right hand side of $\mathcal{H}_{k+1}(c)$ (in addition to the existing $\bar{G}_1,\dotsc,\bar{G}_k$ from $\mathcal{H}_k(c)$) can be explained through its definition in~\eqref{eq:Gkbar}, which matches up neatly with the definition~\eqref{eq:htilde} of $\tilde{h}^{k+1}$ and the two-stage argument we have just outlined; see~\eqref{eq:Hkc1} and~\eqref{eq:Hkc2} in the proof. 

$\mathcal{H}_{k+1}(d)$: The proof of this is similar in spirit to that of $\mathcal{H}_{k+1}(c)$, except that it also makes use of condition~\ref{ass:A3}. Note also that $\mathcal{H}_{k+1}(d)$ applies only to Lipschitz $\phi\colon\R^{k+2}\to\R$ rather than general $\phi\in\PL_{k+2}(r)$, but this is sufficient for our purposes in the subsequent proofs of $\mathcal{H}_{k+1}(e,f)$. 

The proofs we give for $\mathcal{H}_{k+1}(c,d)$ combine aspects of the asymptotic and finite-sample arguments (see Remark~\ref{rem:finitesample}) in the existing AMP literature. Proposition~E.1 in~\citet{Fan20} provides the basis for an alternative asymptotic approach, whose details we omit.
\hfparskip
\subsection{Proofs for Sections~\ref{sec:AMPsym} and~\ref{sec:AMPrem}}
\label{sec:masterproofs}
\begin{proof}[Proof of Proposition~\protect{\ref{prop:AMPproof}}]
Since we are carrying out an asymptotic analysis, we may assume without loss of generality that $n>k$ in the proofs of $\mathcal{H}_k(a,\dotsc,i)$ for each $k\in\N$; this enables us to apply the results on conditional distributions from Section~\ref{sec:AMPcond}. Note also that we use $T_n,T_{n1},T_{n1}',T_{n2}$ to refer to different quantities of interest in different parts of the proof. In Lemma~\ref{lem:slutsky} and Remark~\ref{rem:arithmetic}, we state versions of the continuous mapping theorem and Slutsky's lemma for complete convergence, as well the `arithmetic rules' for $o_c$ and $O_c$ symbols. We will apply these repeatedly in the arguments below, often without further comment or explanation.

First, we prove $\mathcal{H}_1(a,b,c,d)$, which form the base case for the induction.

$\mathcal{H}_1(a)$: Recall from~\eqref{eq:sum10} that $\Delta^1\equiv\Delta^1(n)=(\norm{m^0}_n-\tau_1)\tilde{Z}^1+\tilde{\zeta}^1 m^0$, where $(\tilde{Z}^1,\tilde{\zeta}^1)\equiv(\tilde{Z}^1(n),\tilde{\zeta}^1(n))\sim N_n(0,I_n)\otimes N(0,1/n)$ for each $n$. Taking $\zeta\sim N(0,1)$, we have $\abs{\tilde{\zeta}^1}\eqd n^{-1/2}\,\abs{\zeta}\cvc 0$ by Example~\ref{ex:exptails}(a), and $\norm{\tilde{Z}^1}_{n,r}=(n^{-1}\sum_{i=1}^n\,\abs{\tilde{Z}_i^1}^r)^{1/r}\cvc\E(\abs{\zeta}^r)^{1/r}\in (0,\infty)$ by Lemma~\ref{lem:pseudoconc} and Proposition~\ref{prop:compequiv}. Moreover, $\bigl|\norm{m^0}_n-\tau_1\bigr|\cvc 0$ and $\norm{m^0}_{n,r}=O_c(1)$ by~\ref{ass:A2}. Putting everything together, we recall from Remark~\ref{rem:arithmetic} the `arithmetic rules'~\eqref{eq:arithmetic} for $o_c$ and $O_c$ symbols, and conclude using the triangle inequality for $\norm{{\cdot}}_{n,r}$ that
\[\norm{\Delta^1}_{n,r}\leq\bigl|\norm{m^0}_n-\tau_1\bigr|\,\norm{\tilde{Z}^1}_{n,r}+\abs{\tilde{\zeta}^1}\,\norm{m^0}_{n,r}=o_c(1)\,O_c(1)+o_c(1)\,O_c(1)=o_c(1).\]
$\mathcal{H}_1(b)$: Recall from~\eqref{eq:condzero} in Proposition~\ref{prop:AMPconddist} and~\eqref{eq:sum10} that 
\begin{equation}
\label{eq:h1}
h^1\equiv h^1(n)\eqdcond{\mathscr{S}_0}\tilde{h}^1(n)+\Delta^1(n)=h^{1,0}(n)\equiv h^{1,0}
\end{equation}
for each $n\in\N$, where $\tilde{h}^1\equiv\tilde{h}^1(n)=\tau_1\tilde{Z}^1$ and $\tilde{Z}^1\sim N_n(0,I_n)$ is independent of $\mathscr{S}_0=\sigma(\gamma,m^0)$. Then $\norm{\Delta^1}_{n,r}=o_c(1)$ by $\mathcal{H}_1(a)$ and $\norm{\tilde{h}^1}_{n,r}=\tau_1\norm{\tilde{Z}^1}_{n,r}=O_c(1)$ as in the proof of $\mathcal{H}_1(a)$, so \[\norm{h^1}_{n,r}\eqd\norm{h^{1,0}}_{n,r}\leq\norm{\tilde{h}^1}_{n,r}+\norm{\Delta^1}_{n,r}=O_c(1)+o_c(1)=O_c(1).\]
We already have $\norm{m^0}_{n,r}=O_c(1)$ by~\ref{ass:A2}. In addition, $\norm{\gamma}_{n,r}=(n^{-1}\sum_{i=1}^n\,\abs{\gamma_i}^r)^{1/r}\cvc\E(\abs{\bar{\gamma}}^r)^{1/r}$ by~\ref{ass:A1}, so $\norm{\gamma}_{n,r}=O_c(1)$. Letting $L'>0$ be such that the function $f_1$ in the AMP recursion~\eqref{eq:AMPsym} lies in $\PL_2(1,L')$, we have $\abs{f_1(x,y)}\leq\abs{f_1(0,0)}+L'(\abs{x}+\abs{y})$ for all $(x,y)\in\R^2$, so we can apply the triangle inequality for $\norm{{\cdot}}_{n,r}$ to deduce that \[\norm{m^1}_{n,r}=\norm{f_1(h^1,\gamma)}_{n,r}\leq\abs{f_1(0,0)}\,\norm{\mathbf{1}_n}_{n,r}+L'(\norm{h^1}_{n,r}+\norm{\gamma}_{n,r})=O_c(1).\]
$\mathcal{H}_1(c)$: For each $n$, note that $(\gamma,h^1)\eqdcond{\mathscr{S}_0}(\gamma,h^{1,0})$ by~\eqref{eq:h1} and Lemma~\ref{lem:conddist}(c). Thus, for each fixed $\psi\in\PL_2(r)$, it follows that $\inv{n}\sum_{i=1}^n\psi(h_i^1,\gamma_i)\eqd \inv{n}\sum_{i=1}^n\psi(h_i^{1,0},\gamma_i)=:T_n$ for each $n$, so in view of the third bullet point in Remark~\ref{rem:AMPconv}, it is enough to show that $T_n\cvc\E\bigl(\psi(\bar{G}_1,\bar{\gamma})\bigr)$ as $n\to\infty$. To this end, we write
\[T_n=\frac{1}{n}\sum_{i=1}^n\psi(\tilde{h}_i^1,\gamma_i)+\frac{1}{n}\sum_{i=1}^n\,\bigl\{\psi(h_i^{1,0},\gamma_i)-\psi(\tilde{h}_i^1,\gamma_i)\bigr\}=:T_{n1}+T_{n2}\]
for each $n$, and aim to prove that $T_{n1}\cvc\E\bigl(\psi(\bar{G}_1,\bar{\gamma})\bigr)$ and $T_{n2}\cvc 0$, which together imply the desired conclusion.

Before proceeding, we briefly describe the techniques that we use to determine the limit of $(T_{n1})$ and also to prove $\mathcal{H}_1(d)$ and $\mathcal{H}_{k+1}(c,d)$ later on. It is instructive to consider the following two special cases where the claim is easier to establish. If $\psi$ depends only on its first argument, then since $h^1(n)=\tau_1\tilde{Z}^1(n)$ and $\tilde{Z}^1(n)\sim N_n(0,I_n)$ for each $n$, the result follows readily from the concentration inequality~\eqref{eq:pseudoconc} in Lemma~\ref{lem:pseudoconc} and the characterisation of complete convergence in Proposition~\ref{prop:compequiv}. On the other hand, if $\psi$ depends only on its second argument, then since $\bigl(\gamma\equiv\gamma(n):n\in\N\bigr)$ satisfies~\ref{ass:A1} by assumption, we can appeal directly to Corollary~\ref{cor:Wr}(b). 

For general $\psi\in\PL_2(r)$, we seek to combine these two different lines of reasoning by exploiting the independence of $\tilde{h}^1(n)$ and $\mathscr{S}_0\equiv\mathscr{S}_0(n)=\sigma(\gamma,m^0)$ for each $n$. This allows $\gamma(n)$ and $\tilde{h}^1(n)$ to be handled separately (to a large extent) when we decompose $T_{n1}$ as a sum of $\E(T_{n1}\,|\,\mathscr{S}_0)$ and $T_{n1}-\E(T_{n1}\,|\,\mathscr{S}_0)$ in~\eqref{eq:H1c1} and~\eqref{eq:H1c2} respectively. For the latter, it is helpful to first think of $\gamma(n)$ as being fixed when applying Lemma~\ref{lem:pseudoconc} to the Gaussian $\tilde{h}^1$, before subsequently accounting for the randomness of $\gamma(n)$ using~\ref{ass:A1}.

Define $\Psi\colon\R\to\R$ by $\Psi(y):=\E\bigl(\psi(\tau_1 Z,y)\bigr)$ with $Z\sim N(0,1)$. For each $n$, since $\tilde{Z}^1\equiv\tilde{Z}^1(n)\sim N_n(0,I_n)$ is independent of $\mathscr{S}_0\equiv\mathscr{S}_0(n)=\sigma(\gamma,m^0)$, we deduce from Lemma~\ref{lem:indeplem} that $\E\bigl(\psi(\tilde{h}_i^1,\gamma_i)\!\bigm|\!\mathscr{S}_0\bigr)=\E\bigl(\psi(\tau_1\tilde{Z}_i^1,\gamma_i)\!\bigm|\!\mathscr{S}_0\bigr)=\Psi(\gamma_i)$ almost surely, for every $1\leq i\leq n$. Since $\Psi\in\PL_1(r)$ by Lemma~\ref{lem:pseudocomp}(b), it follows from~\ref{ass:A1} and Corollary~\ref{cor:Wr}(b) that $\inv{n}\sum_{i=1}^n\Psi(\gamma_i)\cvc\E\bigl(\Psi(\bar{\gamma})\bigr)$ as $n\to\infty$, where $\bar{\gamma}\sim\pi$. A further application of Lemma~\ref{lem:indeplem} shows that if $\bar{G}_1\sim N(0,\tau_1^2)$ is independent of $\bar{\gamma}$, then $\E\bigl(\Psi(\bar{\gamma})\bigr)=\E\bigl(\E\bigl\{\psi(\bar{G}_1,\bar{\gamma})\!\bigm|\!\bar{\gamma}\bigr\}\bigl)=\E\bigl(\psi(\bar{G}_1,\bar{\gamma})\bigr)$, so in summary, we have
\begin{equation}
\label{eq:H1c1}
\frac{1}{n}\sum_{i=1}^n\E\bigl(\psi(\tilde{h}_i^1,\gamma_i)\!\bigm|\!\mathscr{S}_0\bigr)=
\frac{1}{n}\sum_{i=1}^n\Psi(\gamma_i)\cvc\E\bigl(\Psi(\bar{\gamma})\bigr)=\E\bigl(\psi(\bar{G}_1,\bar{\gamma})\bigr).
\end{equation}
To complete the proof that $T_{n1}\cvc\E\bigl(\psi(\bar{G}_1,\bar{\gamma})\bigr)$, we must therefore show that
\begin{equation}
\label{eq:H1c2}
T_{n1}':=\frac{1}{n}\sum_{i=1}^n\,\bigl\{\psi(\tilde{h}_i^1,\gamma_i)-\E\bigl(\psi(\tilde{h}_i^1,\gamma_i)\!\bigm|\!\mathscr{S}_0\bigr)\bigr\}\cvc 0
\end{equation}
as $n\to\infty$. To this end, let $L>0$ be such that $\psi\in\PL_2(r,L)$, and for each $y\in\R$, define $\psi_y,\bar{\psi}_y\colon\R\to\R$ by $\psi_y(z):=\psi(\tau_1 z,y)$ and $\bar{\psi}_y(z):=\psi_y(z)-\E\bigl(\psi_y(Z)\bigr)$, where $Z\sim N(0,1)$. Then by Lemma~\ref{lem:pseudocomp}(a), there exists $K_0>0$, depending only on $\tau_1$ and $r$, such that $\psi_y\in\PL_1(r,K_0L_y)$ with $L_y:=L(1\vee\abs{y}^{r-1})$. For fixed $n\in\N$ and $y_1,\dotsc,y_n\in\R$, define $\breve{L}\equiv\breve{L}(y_1,\dotsc,y_n):=(L_{y_1},\dotsc,L_{y_n})$. Let $r':=r/(r-1)\in (1,2]$ be the H\"older conjugate of $r$,
so that $1/r+1/r'=1$, and note that since $\norm{{\cdot}}_{p'}\leq\norm{{\cdot}}_p$ for $1\leq p\leq p'\leq\infty$, we have
\begin{align}
\frac{\norm{\breve{L}}_\infty}{n^{1/r'}}\leq\frac{\norm{\breve{L}}_2}{n^{1/r'}}\leq\frac{\norm{\breve{L}}_{r'}}{n^{1/r'}}=\norm{\breve{L}}_{n,r'}=\biggl(\frac{1}{n}\sum_{i=1}^n\,\abs{L_{y_i}}^{r'}\biggr)^{1/r'}&\leq L\biggl(1+\frac{1}{n}\sum_{i=1}^n\,\abs{y_i}^r\biggr)^{1/r'}\notag\\
\label{eq:H1cLbar}
&=L(1+\norm{y}_{n,r}^r)^{1/r'}.
\end{align}
By Lemma~\ref{lem:pseudoconc}, there exists a universal constant $C>0$ such that if $Z_1,\dotsc,Z_n\iid N(0,1)$, then
\begin{align}
P(n,t,y_1,\dotsc,y_n)&:=\Pr\biggl(\biggl|\frac{1}{n}\sum_{i=1}^n\,\bar{\psi}_{y_i}(Z_i)\biggr|\geq t\biggr)\notag\\
&\leq\exp\biggl(1-\min\biggl\{\biggl(\frac{nt}{(Cr)^r K_0\norm{\breve{L}}_2}\biggr)^2,\biggl(\frac{nt}{(Cr)^rK_0\norm{\breve{L}}_\infty}\biggr)^{2/r}\biggr\}\biggr)\notag\\
&\leq\exp\biggl(1-\min\biggl\{\biggl(\frac{n^{1/r}t}{(Cr)^r K_0\norm{\breve{L}}_{n,r'}}\biggr)^2,\biggl(\frac{n^{1/r}t}{(Cr)^r K_0\norm{\breve{L}}_{n,r'}}\biggr)^{2/r}\biggr\}\biggr)\notag\\
\label{eq:H1cEbar}
&=:E_r(n,t,K_0\norm{\breve{L}}_{n,r'})\equiv E_r\bigl(n,t,K_0\norm{\breve{L}(y_1,\dotsc,y_n)}_{n,r'}\bigr)
\end{align}
for every $t\geq 0$. Returning to~\eqref{eq:H1c2}, we see that 
\[\psi(\tilde{h}_i^1,\gamma_i)-\E\bigl(\psi(\tilde{h}_i^1,\gamma_i)\!\bigm|\!\mathscr{S}_0\bigr)=\psi_{\gamma_i}(\tilde{Z}_i^1)-\E\bigl(\psi_{\gamma_i}(\tilde{Z}_i^1)\!\bigm|\!\mathscr{S}_0\bigr)=\bar{\psi}_{\gamma_i}(\tilde{Z}_i^1)\]
for all $1\leq i\leq n$, where the final equality follows from Lemma~\ref{lem:indeplem} and the fact that $\tilde{Z}^1\equiv\tilde{Z}^1(n)$ is independent of $\mathscr{S}_0\equiv\mathscr{S}_0(n)=\sigma(\gamma,m^0)$. We deduce from this and~\eqref{eq:H1cEbar} that
\begin{equation}
\label{eq:H1cPneps}
\Pr(\abs{T_{n1}'}>\varepsilon\,|\,\mathscr{S}_0)=\Pr\biggl(\biggl|\frac{1}{n}\sum_{i=1}^n\bar{\psi}_{\gamma_i}(\tilde{Z}_i^1)\biggr|\geq\varepsilon\Bigm|\mathscr{S}_0\biggr)=P(n,\varepsilon,\gamma_1,\dotsc,\gamma_n)\leq E_r\bigl(n,\varepsilon,K_0\breve{L}_0(n)\bigr)
\end{equation}
for every $n$ and $\varepsilon>0$, where the second equality is again obtained using Lemma~\ref{lem:indeplem}, and
$\breve{L}_0(n):=\norm{\breve{L}(\gamma_1,\dotsc,\gamma_n)}_{n,r'}\leq L(1+\norm{\gamma}_{n,r}^r)^{1/r'}=O_c(1)$ by~\eqref{eq:H1cLbar} and~\ref{ass:A1}. Thus, by Proposition~\ref{prop:compequiv}, there exists $\bar{L}_0\in (0,\infty)$ such that for $n\in\N$, the events $A_0(n):=\{\breve{L}_0(n)\leq\bar{L}_0\}\in\mathscr{S}_0(n)$ satisfy $\sum_{n=1}^\infty\Pr\bigl(A_0(n)^c\bigr)<\infty$. Moreover, for each $n$ and $\varepsilon>0$, it follows from~\eqref{eq:H1cPneps} that
\begin{align*}
\Pr\bigl(\{\abs{T_{n1}'}>\varepsilon\}\cap A_0(n)\!\bigm|\!\mathscr{S}_0\bigr)=\Pr(\abs{T_{n1}'}>\varepsilon\,|\,\mathscr{S}_0)\Ind_{A_0(n)}&\leq P(n,\varepsilon,\gamma_1,\dotsc,\gamma_n)\Ind_{A_0(n)}\\
&\leq E_r\bigl(n,\varepsilon,K_0\breve{L}_0(n)\bigr)\Ind_{A_0(n)}\leq E_r(n,\varepsilon,K_0\bar{L}_0),
\end{align*}
where we have used the fact that $A_0(n)\in\mathscr{S}_0(n)$ to obtain the first equality above. Recalling the expression for $E_r(n,\varepsilon,K_0\bar{L}_0)$ in~\eqref{eq:H1cEbar}, we see that $\sum_{n=1}^\infty E_r(n,\varepsilon,K_0\bar{L}_0)<\infty$, and hence conclude that for every $\varepsilon>0$, we have
\begin{align}
\sum_{n=1}^\infty\,\Pr(\abs{T_{n1}'}>\varepsilon)&\leq\sum_{n=1}^\infty\,\Pr\bigl(\{\abs{T_{n1}'}>\varepsilon\}\cap A_0(n)\bigr)+\sum_{n=1}^\infty\,\Pr\bigl(A_0(n)^c\bigr)\notag\\
&=\sum_{n=1}^\infty\,\E\bigl\{\Pr\bigl(\{\abs{T_{n1}'}>\varepsilon\}\cap A_0(n)\!\bigm|\!\mathscr{S}_0\bigr)\bigr\}+\sum_{n=1}^\infty\,\Pr\bigl(A_0(n)^c\bigr)\notag\\
\label{eq:H1cTn1}
&\leq\sum_{n=1}^\infty\,E_r(n,\varepsilon,K_0\bar{L}_0)+\sum_{n=1}^\infty\,\Pr\bigl(A_0(n)^c\bigr)<\infty,
\end{align}
which together with Proposition~\ref{prop:compequiv} implies~\eqref{eq:H1c2}. Together with~\eqref{eq:H1c1}, this shows that $T_{n1}\cvc\E\bigl(\psi(\bar{G}_1,\bar{\gamma})\bigr)$, as claimed.

Next, we bound $\abs{T_{n2}}$ for each $n$. Letting $L>0$ be such that $\psi\in\PL_2(r,L)$, we can apply Lemma~\ref{lem:pseudoavg} to see that
\begin{align}
\abs{T_{n2}}&\leq \frac{1}{n}\sum_{i=1}^n\,\bigl|\psi(h_i^{1,0},\gamma_i)-\psi(h_i^{1,0}-\Delta_i^1,\gamma_i)\bigr|\notag\\
&\leq 2^{\frac{r}{2}-1}L\norm{\Delta^1}_{n,r}\bigl(1+\norm{h^{1,0}}_{n,r}^{r-1}+\norm{h^{1,0}-\Delta^1}_{n,r}^{r-1}+2\norm{\gamma}_{n,r}^{r-1}\bigr)\notag\\
\label{eq:H1cTn2}
&\lesssim_r L\norm{\Delta^1}_{n,r}\bigl(1+\norm{h^{1,0}}_{n,r}^{r-1}+\norm{\Delta^1}_{n,r}^{r-1}+\norm{\gamma}_{n,r}^{r-1}\bigr),
\end{align}
where the final bound is obtained using the triangle inequality for $\norm{{\cdot}}_{n,r}$ and the fact that $(a+b)^{r-1}\leq 2^{r-2}(a^{r-1}+b^{r-1})$ for $a,b\geq 0$. Now $\norm{h^{1,0}}_{n,r}\eqd\norm{h^1}_{n,r}=O_c(1)$ by $\mathcal{H}_1(b)$ and $\norm{\Delta^1}_{n,r}=o_c(1)$ by $\mathcal{H}_1(a)$, so $\abs{T_{n2}}=o_c(1)\bigl(1+O_c(1)+o_c(1)\bigr)=o_c(1)$. We conclude that $\inv{n}\sum_{i=1}^n\psi(h_i^1,\gamma_i)\eqd T_n=T_{n1}+T_{n2}\cvc\E\bigl(\psi(\bar{G}_1,\bar{\gamma})\bigr)$, as desired.

$\mathcal{H}_1(d)$: For each $n$, we have $(\gamma,m^0,h^1)\eqd(\gamma,m^0,h^{1,0})$ by~\eqref{eq:h1} and Lemma~\ref{lem:conddist}(c), so for each fixed $\phi\in\PL_2(1)$, it follows that
\begin{equation}
\label{eq:H1d}
\frac{1}{n}\sum_{i=1}^n m_i^0\,\phi(h_i^1,\gamma_i)\eqd\frac{1}{n}\sum_{i=1}^n m_i^0\,\phi(\tilde{h}_i^1,\gamma_i)+\frac{1}{n}\sum_{i=1}^n\,m_i^0\bigl\{\phi(h_i^{1,0},\gamma_i)-\phi(\tilde{h}_i^1,\gamma_i)\bigr\}=:T_{n1}+T_{n2}.
\end{equation}
By similar (and slightly simpler) arguments to those in $\mathcal{H}_1(c)$, we will prove that $T_{n1}\cvc\E\bigl(F_0(\bar{\gamma})\cdot\phi(\bar{G}_1,\bar{\gamma})\bigr)$ and $T_{n2}\cvc 0$ as $n\to\infty$.

For $T_{n1}$, define $\Phi\colon\R\to\R$ by $\Phi(y):=\E\bigl(\phi(\tau_1 Z,y)\bigr)$ with $Z\sim N(0,1)$. For each $n$, recalling once again that $\tilde{Z}^1\equiv\tilde{Z}^1(n)\sim N_n(0,I_n)$ is independent of $\mathscr{S}_0\equiv\mathscr{S}_0(n)=\sigma(\gamma,m^0)$, we deduce from Lemma~\ref{lem:indeplem} that $\E\bigl(\phi(\tilde{h}_i^1,\gamma_i)\!\bigm|\!\mathscr{S}_0\bigr)=\Phi(\gamma_i)$ almost surely, for every $1\leq i\leq n$. Now since $\Phi$ is Lipschitz by Lemma~\ref{lem:pseudocomp}(b), it follows from~\ref{ass:A3} that if $\bar{G}_1\sim N(0,\tau_1^2)$ is independent of $\bar{\gamma}\sim\pi$, then
\begin{align}
\frac{1}{n}\sum_{i=1}^n m_i^0\,\E\bigl(\phi(\tilde{h}_i^1,\gamma_i)\!\bigm|\!\mathscr{S}_0\bigr)&=
\frac{1}{n}\sum_{i=1}^n m_i^0\,\Phi(\gamma_i)\notag\\
&\cvc\E\bigl(F_0(\bar{\gamma})\Phi(\bar{\gamma})\bigr)=\E\bigl(F_0(\bar{\gamma})\cdot\E\bigl\{\phi(\bar{G}_1,\bar{\gamma})\!\bigm|\!\bar{\gamma}\bigr\}\bigl)=\E\bigl(F_0(\bar{\gamma})\cdot\phi(\bar{G}_1,\bar{\gamma})\bigr).
\end{align}
To complete the proof that $T_{n1}\cvc\E\bigl(F_0(\bar{\gamma})\cdot\phi(\bar{G}_1,\bar{\gamma})\bigr)$, we must therefore show that
\begin{equation}
\label{eq:H1d1}
T_{n1}':=n^{-1}\sum_{i=1}^n m_i^0\bigl\{\phi(\tilde{h}_i^1,\gamma_i)-\E\bigl(\phi(\tilde{h}_i^1,\gamma_i)\!\bigm|\!\mathscr{S}_0\bigr)\bigr\}\cvc 0.
\end{equation}
To this end, let $L>0$ be such that $\phi\in\PL_2(1,L)$ on $\R$. For $u,y\in\R$, define $\phi_{u,y}\colon\R\to\R$ by $\phi_{u,y}(z):=u\bigl\{\phi(\tau_1 z,y)-\E\bigl(\phi(\tau_1 Z,y)\bigr)\bigr\}$, where $Z\sim N(0,1)$, so that $\phi_{u,y}\in\PL_2(1,L\tau_1\abs{u})$. 
Since $\tilde{Z}^1\sim N_n(0,I_n)$, it follows from~\eqref{eq:pseudoconcr} in Remark~\ref{rem:pseudoconc} that for every $v\equiv (v_1,\dotsc,v_n)\in\R^n$ and $t\geq 0$, we have
\begin{align}
\tilde{P}(n,t,v):=\Pr\biggl(\biggl|\frac{1}{n}\sum_{i=1}^n\phi_{v_i,y_i}(\tilde{Z}_i^1)\biggr|\geq t\biggr)\leq\exp\biggl\{1-\biggl(\frac{nt}{CL\tau_1\norm{v}_2}\biggr)^2\biggr\}&\leq\exp\biggl\{1-\biggl(\frac{n^{1/2}t}{CL\tau_1\norm{v}_n}\biggr)^2\biggr\}\notag\\
\label{eq:H1d2}
&=:\tilde{E}_r(n,t,L\tau_1\norm{v}_n),
\end{align}
where $C>0$ is a suitable universal constant. Recalling once again that $\tilde{Z}^1$ is independent of $\mathscr{S}_0=\sigma(\gamma,m^0)$, we deduce using Lemma~\ref{lem:indeplem} that 
$m_i^0\bigl\{\phi(\tilde{h}_i^1,\gamma_i)-\E\bigl(\phi(\tilde{h}_i^1,\gamma_i)\!\bigm|\!\mathscr{S}_0\bigr)\bigr\}=\phi_{m_i^0,\gamma_i}(\tilde{Z}_i^1)$ for all $1\leq i\leq n$. Thus, for each $n$ and $\varepsilon>0$, we have
\begin{equation}
\label{eq:H1dTn1'}
\Pr(\abs{T_{n1}'}>\varepsilon\,|\,\mathscr{S}_0)=\Pr\biggl(\biggl|\frac{1}{n}\sum_{i=1}^n\phi_{m_i^0,\gamma_i}(\tilde{Z}_i^1)\biggr|\geq\varepsilon\Bigm|\mathscr{S}_0\biggr)=\tilde{P}(n,\varepsilon,m^0)\leq \tilde{E}_r(n,\varepsilon,L\tau_1\norm{m^0}_n).
\end{equation}
Since $\norm{m^0}_n\cvc\tau_1$ by~\ref{ass:A2}, Proposition~\ref{prop:compequiv} ensures that the events $\tilde{A}_0(n):=\{\norm{m^0}_n\leq\tau_1+1\}\in\mathscr{S}_0(n)$ satisfy $\sum_{n=1}^\infty\Pr\bigl(\tilde{A}_0(n)^c\bigr)<\infty$. Moreover, for each $n$ and $\varepsilon>0$, it follows from~\eqref{eq:H1d2} that
\begin{align}
\label{eq:H1d3}
\Pr\bigl(\{\abs{T_{n1}'}>\varepsilon\}\cap \tilde{A}_0(n)\!\bigm|\!\mathscr{S}_0\bigr)=\Pr(\abs{T_{n1}'}>\varepsilon\,|\,\mathscr{S}_0)\Ind_{\tilde{A}_0(n)}&\leq\tilde{P}(n,\varepsilon,m^0)\Ind_{\tilde{A}_0(n)}\\
&\leq \tilde{E}_r(n,\varepsilon,L\tau_1C_0)\Ind_{\tilde{A}_0(n)}\leq \tilde{E}_r(n,\varepsilon,L\tau_1C_0),\notag
\end{align}
where we have used the fact that $\tilde{A}_0(n)\in\mathscr{S}_0(n)$ to obtain the first equality above. Recalling the expression for $\tilde{E}_r(n,\varepsilon,C_0)$ in~\eqref{eq:H1d2}, we see that $\sum_{n=1}^\infty\tilde{E}_r(n,\varepsilon,L\tau_1C_0)<\infty$. Thus, for every $\varepsilon>0$, we conclude as in~\eqref{eq:H1cTn1} that
\begin{align}
\sum_{n=1}^\infty\,\Pr(\abs{T_{n1}'}>\varepsilon)&\leq\sum_{n=1}^\infty\,\Pr\bigl(\{\abs{T_{n1}'}>\varepsilon\}\cap \tilde{A}_0(n)\bigr)+\sum_{n=1}^\infty\,\Pr\bigl(\tilde{A}_0(n)^c\bigr)\notag\\
\label{eq:H1dTn1}
&\leq\sum_{n=1}^\infty\,\tilde{E}_r(n,\varepsilon,L\tau_1C_0)+\sum_{n=1}^\infty\,\Pr\bigl(\tilde{A}_0(n)^c\bigr)<\infty,
\end{align}
which implies~\eqref{eq:H1d1} in view of Proposition~\ref{prop:compequiv}, and hence that $T_{n1}\cvc\tilde{\tau}\,\E\bigl(\phi(\bar{G}_1)\bigr)$ in~\eqref{eq:H1d}.

As for $T_{n2}$ in~\eqref{eq:H1d}, let $L>0$ be as above, so that $\phi\in\PL_2(1,L)$.  For each $n$, recalling from~\eqref{eq:h1} that $h^{1,0}=\tilde{h}^1+\Delta^1$, we now apply the Cauchy--Schwarz inequality to see that
\begin{align*}
\abs{T_{n2}}\leq\frac{1}{n}\sum_{i=1}^n\,\abs{m_i^0}\,\bigl|\phi(h_i^{1,0},\gamma_i)-\phi(h_i^{1,0}-\Delta_i^1,\gamma_i)\bigr|\leq\frac{L}{n}\sum_{i=1}^n\,\abs{m_i^0}\,\abs{\Delta_i^1}\leq L\norm{m^0}_n\norm{\Delta^1}_n.
\end{align*}
Since $\norm{m^0}_n\cvc\tau_1$ by~\ref{ass:A2} and $\norm{\Delta^1}_n\leq\norm{\Delta^1}_{n,r}=o_c(1)$ by $\mathcal{H}_1(a)$, we conclude that $T_{n2}=O_c(1)\,o_c(1)=o_c(1)$. This completes the proof of $\mathcal{H}_1(d)$.

Turning to the inductive step, we consider a general $k\in\N$ and suppose that $\mathcal{H}_k(b,c,d)$ have already been established. The assertions $\mathcal{H}_k(e,\dotsc,j)$ and $\mathcal{H}_{k+1}(a,b,c,d)$ will now be proved, in that order. Note that $\PL_{k+1}(2)\subseteq\PL_{k+1}(r)$ since $r\geq 2$.

$\mathcal{H}_k(e)$: In the case $j=\ell=1$, we have $\bigl|\norm{m^0}_n-\tau_1\bigr|\cvc 0$ by~\ref{ass:A2}, so $\ipr{m^0}{m^0}_n\cvc\tau_1^2=\bar{\mathrm{T}}_{1,1}=\E(\bar{G}_1^2)$ by~\eqref{eq:covid1}. Now fix $j,\ell\in\{2,\dotsc,k+1\}$. Then $\tilde{\psi}_{j\ell}\colon(x_1,\dotsc,x_k,y)\mapsto f_{j-1}(x_{j-1},y)f_{\ell-1}(x_{\ell-1},y)$ lies in $\PL_{k+1}(2)\subseteq\PL_{k+1}(r)$ by Lemma~\ref{lem:pseudoprod} and the fact that $f_{j-1},f_{\ell-1}$ in the AMP recursion~\eqref{eq:AMPsym} are Lipschitz by assumption. Thus, by taking $\psi=\tilde{\psi}_{j\ell}$ in $\mathcal{H}_k(c)$, we see that
\begin{align*}
\ipr{m^{j-1}}{m^{\ell-1}}_n&=\frac{1}{n}\sum_{i=1}^n f_{j-1}(h_i^{j-1},\gamma_i)\,f_{\ell-1}(h_i^{\ell-1},\gamma_i)\\
&\cvc\E\bigl(f_{j-1}(\bar{G}_{j-1},\bar{\gamma})\cdot f_{\ell-1}(\bar{G}_{\ell-1},\bar{\gamma})\bigr)=\bar{\mathrm{T}}_{j,\ell}=\E(\bar{G}_j\bar{G}_\ell),
\end{align*}
where the final equalities are taken from~\eqref{eq:covid1}. To handle the remaining case where $\{j,\ell\}=\{1,k+1\}$, note that since $f_k$ is Lipschitz, the map $\phi_{k+1}\colon (x_1,\dotsc,x_k,y)\mapsto f_k(x_k,y)$ lies in $\PL_{k+1}(1)$. Thus, by taking $\phi=\phi_{k+1}$ in $\mathcal{H}_k(d)$, we deduce that
\[\ipr{m^0}{m^k}_n=\frac{1}{n}\sum_{i=1}^n m_i^0\,f_k(h_i^k,\gamma_i)\cvc\E\bigl(F_0(\bar{\gamma})\cdot f_k(\bar{G}_k,\bar{\gamma})\bigr)=\bar{\mathrm{T}}_{1,k+1}=\E(\bar{G}_1\bar{G}_{k+1}),\]
where the final equalities are again taken from~\eqref{eq:covid1}.

$\mathcal{H}_k(f)$: This proof is very similar to that of $\mathcal{H}_k(e)$. First fix $1\leq j,\ell\leq k$. By Lemma~\ref{lem:pseudoprod}, the function $(x_1,\dotsc,x_k,y)\mapsto x_j f_\ell(x_\ell,y)$ lies in $\PL_{k+1}(2)\subseteq\PL_{k+1}(r)$, so by applying $\mathcal{H}_k(c)$ again, we deduce that
\[\ipr{h^j}{m^\ell}_n=\frac{1}{n}\sum_{i=1}^n h_i^j\,f_\ell(h_i^\ell,\gamma_i)\cvc\E\bigl(\bar{G}_j f_\ell(\bar{G}_\ell,\bar{\gamma})\bigr)=\E\bigl(f_\ell'(\bar{G}_\ell,\bar{\gamma})\bigr)\E(\bar{G}_j\bar{G}_\ell)=\bar{b}_\ell\,\bar{\mathrm{T}}_{j,\ell},\]
where the final equalities are taken from~\eqref{eq:covid2}. For the second part of $\mathcal{H}_k(f)$, we fix $1\leq j\leq k$ and apply $\mathcal{H}_k(d)$ with the $\PL_{k+1}(1)$ function $(x_1,\dotsc,x_k,y)\mapsto x_j$ to see that
\[\ipr{h^j}{m^0}_n=\frac{1}{n}\sum_{i=1}^n m_i^0\,h_i^j\cvc\E\bigl(F_0(\bar{\gamma})\bar{G}_j\bigr)=0\]
by the independence of $\bar{G}_j\sim N(0,\tau_j^2)$ and $\bar{\gamma}\sim\pi$.

$\mathcal{H}_k(g)$: In view of Definition~\ref{def:compconv} of complete convergence, it suffices to show that if $(\beta_n)$ is any sequence of random variables with $\beta_n\eqd\abr{f_k'(h^k,\gamma)}_n$ for each $n$, then $\beta_n\cvas\E\bigl(f_k'(\bar{G}_k,\bar{\gamma})\bigr)$. For any such sequence $(\beta_n)$, we first seek to construct a random sequence $\bigl((\eta_n,\theta_n)\in\R^n\times\R^n:n\in\N\bigr)$ such that $(\eta_n,\theta_n)\eqd\bigl(h^k(n),\gamma(n)\bigr)$ for each $n$, and $\bigl(\abr{f_k'(\eta_n,\theta_n)}_n:n\in\N\bigr)=(\beta_n:n\in\N)$ almost surely as random sequences. This can be done by applying Lemma~\ref{lem:seqcoup}, where we take $g_n\colon\R^n\times\R^n\to\R$ to be the measurable function $(x,y)\mapsto\abr{f_k'(x,y)}_n=\inv{n}\sum_{i=1}^n f_k'(x_i,y_i)$ for each $n$.

Since $(\eta_n,\theta_n)\eqd\bigl(h^k(n),\gamma(n)\bigr)$ for each $n$ by construction, it follows from the inductive hypothesis $\mathcal{H}_k(c)$ that $\inv{n}\sum_{i=1}^n\varphi(\eta_{ni},\theta_{ni})\eqd\inv{n}\sum_{i=1}^n\varphi(h_i^k,\gamma_i)\cvc\E\bigl(\varphi(\bar{G}_k,\bar{\gamma})\bigr)$ for every $\varphi\in\PL_2(r)$,
where $(\bar{G}_k,\bar{\gamma})\sim N(0,\tau_k^2)\otimes\pi=:\bar{\mu}^k$. Consequently, denoting by $\tilde{\mu}_n^k:=\nu_n(\eta_n,\theta_n)=n^{-1}\sum_{i=1}^n\delta_{(\eta_{ni},\theta_{ni})}$ the joint empirical distribution of the components of $\eta_n$ and $\theta_n$ for each $n$, we deduce using Corollary~\ref{cor:Wr}(a) that $d_r(\tilde{\mu}_n^k,\bar{\mu}^k)\cvas 0$, and hence that $(\tilde{\mu}_n^k)$ converges weakly to $\bar{\mu}^k$ with probability 1. By~\ref{ass:A5}, $f_k'$ is bounded, Borel measurable and continuous $\bar{\mu}^k$-almost everywhere, so we may now apply Lemma~\ref{lem:weakconv} to conclude that $\beta_n=\abr{f_k'(\eta_n,\theta_n)}_n=\int_{\R^2}f_k'\,d\tilde{\mu}_n^k\to\int_{\R^2}f_k'\,d\bar{\mu}^k=\E\bigl(f_k'(\bar{G}_k,\bar{\gamma})\bigr)$ almost surely. This completes the proof of $\mathcal{H}_k(g)$.

$\mathcal{H}_k(h)$: For $1\leq\ell\leq k$, it follows from $\mathcal{H}_k(e,f,g)$ that \[v_j^{k,\ell}/n=\ipr{h^j}{m^\ell}_n-b_\ell\,\ipr{m^{j-1}}{m^{\ell-1}}_n\cvc\bar{b}_\ell\,\bar{\mathrm{T}}_{j,\ell}-\bar{b}_\ell\,\bar{\mathrm{T}}_{j,\ell}=0\]
for all $1\leq j\leq k$. For $\ell=0$, we have $m^{-1}=0$ by definition, so $v_j^{k,0}/n=\ipr{h^j}{m^0}_n\cvc 0$ for all $1\leq j\leq k$ by the second part of $\mathcal{H}_k(f)$.

$\mathcal{H}_k(i)$: Recall from~\eqref{eq:alphak} that $\alpha^k=(M_k^\top M_k/n)^+(M_k^\top m^k/n)\in\R^k$. It follows from $\mathcal{H}_k(e)$ that $(M_k^\top M_k/n)_{j\ell}=\ipr{m^{j-1}}{m^{\ell-1}}_n\cvc\bar{\mathrm{T}}_{j,\ell}$ and $(M_k^\top m^k/n)_j=\ipr{m^{j-1}}{m^k}_n\cvc\bar{\mathrm{T}}_{j,k+1}$ for all $1\leq j,\ell\leq k$. In the notation of Section~\ref{sec:inductive}, this means that $M_k^\top m^k/n\cvc\bar{\mathrm{T}}^{[k],k+1}\in\R^k$ and $M_k^\top M_k/n\cvc\bar{\mathrm{T}}^{[k]}\in\R^{k\times k}$. Under~\ref{ass:A4}, Lemma~\ref{lem:posdef} ensures that $\bar{\mathrm{T}}^{[k]}$ is positive definite and hence invertible, we now apply the continuous mapping theorem for complete convergence (Lemma~\ref{lem:slutsky}) to deduce that \[\alpha^k=(M_k^\top M_k/n)^+(M_k^\top m^k/n)\cvc\bigl(\bar{\mathrm{T}}^{[k]}\bigr)^{-1}\,\bar{\mathrm{T}}^{[k],k+1}=\bar{\alpha}^k,\]
as defined in~\eqref{eq:alphabar}.

$\mathcal{H}_k(j)$: Recalling~\eqref{eq:alphak} as well as the definitions at the start of Section~\ref{sec:AMPcond}, we can write
\[\norm{\barpp{m}{k}}_n^2=\norm{P_k^\perp m^k}_n^2=\norm{m^k}_n^2-\norm{P_km^k}_n^2=\norm{m^k}_n^2-(\alpha^k)^\top(M_k^\top M_k/n)\,\alpha^k.\]
Now $\norm{m^k}_n^2\cvc\bar{\mathrm{T}}_{k+1,k+1}=\tau_{k+1}^2$ and $M_k^\top M_k/n\cvc\bar{\mathrm{T}}^{[k]}\in\R^{k\times k}$ by $\mathcal{H}_k(e)$, and $\alpha^k\cvc\bar{\alpha}^k\in\R^k$ by $\mathcal{H}_k(i)$, so \[\norm{\barpp{m}{k}}_n^2=\norm{m^k}_n^2-(\alpha^k)^\top(M_k^\top M_k/n)\,\alpha^k\cvc\bar{\mathrm{T}}_{k+1,k+1}-(\bar{\alpha}^k)^\top\bar{\mathrm{T}}^{[k]}\,\bar{\alpha}^k=\barpp{\tau}{2}_{k+1},\] 
as defined in~\eqref{eq:tauperpbar}. 

$\mathcal{H}_{k+1}(a)$: Denote by $R_{n1},\dotsc,R_{n5}$ the individual summands (in the order in which they appear) in the definition~\eqref{eq:deltak} of $\Delta^{k+1}\equiv\Delta^{k+1}(n)\in\R^n$. To establish that $\norm{\Delta^{k+1}}_{n,r}\cvc 0$, it suffices to show that $\norm{R_{ns}}_{n,r}\cvc 0$ for $s=1,\dotsc,5$. Observe first that since $\alpha^k\cvc\bar{\alpha}^k\in\R^k$ by $\mathcal{H}_k(i)$ and $\norm{h^\ell}_{n,r}=O_c(1)$ for all $1\leq\ell\leq k$ by $\mathcal{H}_k(b)$, we have
$\norm{R_{n1}}_{n,r}\leq\sum_{\ell=1}^k\,\abs{\alpha_\ell^k-\bar{\alpha}_\ell^k}\,\norm{h^\ell}_{n,r}=\sum_{\ell=1}^k o_c(1)\,O_c(1)=o_c(1)$. 

As for $R_{n2}$, we know from $\mathcal{H}_k(e)$ that $(M_k^\top M_k/n)_{j\ell}=\ipr{m^{j-1}}{m^{\ell-1}}_n\cvc\bar{\mathrm{T}}_{j,\ell}$ for all $1\leq j,\ell\leq k$, so $M_k^\top M_k/n\cvc\bar{\mathrm{T}}^{[k]}\in\R^{k\times k}$, which is positive definite by Lemma~\ref{lem:posdef}. We can now apply the continuous mapping theorem for complete convergence (Lemma~\ref{lem:slutsky}) to deduce that $(M_k^\top M_k/n)^+\cvc (\bar{\mathrm{T}}^{[k]})^{-1}$; see $\mathcal{H}_k(i)$ above for a similar argument. By $\mathcal{H}_k(h)$, we have $v^{k,\ell}/n\cvc 0\in\R^k$ for all $0\leq\ell\leq k$, so
\[\tilde{w}^k\equiv\tilde{w}^k(n):=(M_k^\top M_k)^+\biggl(v^{k,k}-\sum_{\ell=1}^k\alpha_\ell^k\,v^{k,\ell-1}\biggr)\cvc 0,\]
formally by Slutsky's lemma for complete convergence (Lemma~\ref{lem:slutsky}). 
Since $\norm{m^{\ell-1}}_{n,r}=O_c(1)$ for $1\leq\ell\leq k$ by $\mathcal{H}_k(b)$, we have $\norm{R_{n2}}_{n,r}=\norm{M_k\tilde{w}^k}_{n,r}\leq\sum_{\ell=1}^k\,\abs{\tilde{w}_\ell^k}\,\norm{m^{\ell-1}}_{n,r}=\sum_{\ell=1}^k o_c(1)\,O_c(1)=o_c(1)$.

Turning to $R_{n3}$ and introducing $\xi_1,\dotsc,\xi_k\iid N(0,1)$, we see from Lemma~\ref{lem:projgaussian} that $\norm{P_k\tilde{Z}^{k+1}}_{n,r}$ is stochastically dominated by $\sum_{i=1}^k\,\abs{\xi_i}/n^{1/r}$ for each $n>k$. By Example~\ref{ex:exptails}(a), $\sum_{i=1}^k\,\abs{\xi_i}/n^{1/r}\leq k\max_{1\leq i\leq k}\,\abs{\xi_i}/n^{1/r}=o_c(1)$, so $\norm{P_k\tilde{Z}^{k+1}}_{n,r}\cvc 0$. Since $\norm{\barpp{m}{k}}_n\cvc\barp{\tau}_{k+1}\in (0,\infty)$ by $\mathcal{H}_k(j)$, we deduce that $\norm{R_{n3}}_{n,r}=\norm{\barpp{m}{k}}_n\norm{P_k\tilde{Z}^{k+1}}_{n,r}=o_c(1)$.

For the remaining summands $R_{n4}$ and $R_{n5}$, the arguments are similar to those in the proof of $\mathcal{H}_1(a)$. Recall that $(\tilde{Z}^{k+1},\tilde{\zeta}^{k+1})\equiv\bigl(\tilde{Z}^{k+1}(n),\tilde{\zeta}^{k+1}(n)\bigr)\sim N_n(0,I_n)\otimes N(0,1/n)$. Introducing $\zeta\sim N(0,1)$, we have $\abs{\tilde{\zeta}^{k+1}}\eqd n^{-1/2}\,\abs{\zeta}\cvc 0$ by Example~\ref{ex:exptails}(a), and $\norm{\tilde{Z}^{k+1}}_{n,r}=(n^{-1}\sum_{i=1}^n\,\abs{\tilde{Z}_i^{k+1}}^r)^{1/r}\cvc\E(\abs{\zeta}^r)^{1/r}\in (0,\infty)$ by Lemma~\ref{lem:pseudoconc} and Proposition~\ref{prop:compequiv}. By $\mathcal{H}_k(j)$, we have $\norm{\barpp{m}{k}}_n-\barp{\tau}_{k+1}=o_c(1)$. Moreover, $\alpha^k=\bar{\alpha}^k+o_c(1)=O_c(1)$ by $\mathcal{H}_k(i)$ and $\norm{m^\ell}_{n,r}=O_c(1)$ for $0\leq\ell\leq k$ by $\mathcal{H}_k(b)$, so it follows from~\eqref{eq:alphak} that
\[\norm{\barpp{m}{k}}_{n,r}=\norm{(I-P_k)\,m^k}_{n,r}\leq\norm{m^k}_{n,r}+\sum_{\ell=1}^k\,\abs{\alpha_\ell^k}\,\norm{m^{\ell-1}}_{n,r}=O_c(1)+\sum_{\ell=1}^k O_c(1)\,O_c(1)=O_c(1).\]
Putting everything together, we see that
\[\norm{R_{n4}}_{n,r}+\norm{R_{n5}}_{n,r}=\bigl|\norm{\barpp{m}{k}}_n-\barp{\tau}_{k+1}\bigr|\,\norm{\tilde{Z}^{k+1}}_{n,r}+\abs{\tilde{\zeta}^{k+1}}\,\norm{\barpp{m}{k}}_{n,r}=o_c(1)\,O_c(1)+o_c(1)\,O_c(1)=o_c(1).\]
We have now shown that $\norm{R_{ns}}_{n,r}\cvc 0$ for $s=1,\dotsc,5$, so $\norm{\Delta^{k+1}}_{n,r}\leq\sum_{s=1}^5\norm{R_{ns}}_{n,r}\cvc 0$.

$\mathcal{H}_{k+1}(b)$: By the inductive hypothesis $\mathcal{H}_k(b)$, we have $\norm{h^j}_{n,r}=O_c(1)$ for all $1\leq j\leq k$ and $\norm{m^j}_{n,r}=O_c(1)$ for all $0\leq j\leq k$. Now let $j=k+1$. For each integer $n>k$, recall from~\eqref{eq:hcond} in Proposition~\ref{prop:AMPconddist} and~\eqref{eq:htilde} that
\begin{equation}
\label{eq:hk+1}
h^{k+1}(n)\eqdcond{\mathscr{S}_k} h^{k+1,k}(n)=\tilde{h}^{k+1}(n)+\Delta^{k+1}(n)=\sum_{\ell=1}^k\bar{\alpha}_\ell^k\,h^\ell(n)+\barp{\tau}_{k+1}\tilde{Z}^{k+1}(n)+\Delta^{k+1}(n),
\end{equation}
where the deterministic $\bar{\alpha}^k\in\R^k$ is taken from~\eqref{eq:alphabar} and $\tilde{Z}^{k+1}\equiv\tilde{Z}^{k+1}(n)\sim N_n(0,I_n)$ is independent of $\mathscr{S}_k\equiv\mathscr{S}_k(n)=\sigma(\gamma,m^0,h^1,\dotsc,h^k)$. Then $\norm{\Delta^{k+1}}_{n,r}=o_c(1)$ by $\mathcal{H}_{k+1}(a)$ and $\norm{\tilde{Z}^{k+1}}_{n,r}=O_c(1)$, as in the last part of the proof of $\mathcal{H}_{k+1}(a)$ above. It follows from this and $\mathcal{H}_k(b)$ that
\[\norm{h^{k+1}}_{n,r}\eqd\norm{h^{k+1,k}}_{n,r}\leq\sum_{\ell=1}^k\,\abs{\bar{\alpha}_\ell^k}\,\norm{h^\ell}_{n,r}+\norm{\Delta^{k+1}}_{n,r}=O_c(1)+o_c(1)=O_c(1).\]
In addition, letting $L'>0$ be such that $f_{k+1}$ is $L'$-Lipschitz, we can argue as in the proof of $\mathcal{H}_1(b)$ to deduce that \[\norm{m^{k+1}}_{n,r}=\norm{f_{k+1}(h^{k+1},\gamma)}_{n,r}\leq\abs{f_{k+1}(0,0)}+L'(\norm{h^{k+1}}_{n,r}+\norm{\gamma}_{n,r})=O_c(1).\]
$\mathcal{H}_{k+1}(c)$: We again make use of the distributional equality~\eqref{eq:hk+1}, which together with Lemma~\ref{lem:conddist}(c) implies that $(\gamma,h^1,\dotsc,h^k,h^{k+1})\eqdcond{\mathscr{S}_k}(\gamma,h^1,\dotsc,h^k,h^{k+1,k})$ for each integer $n>k$. Thus, for any fixed $\psi\in\PL_{k+2}(r)$, it follows that $\inv{n}\sum_{i=1}^n\psi(h_i^1,\dotsc,h_i^k,h_i^{k+1},\gamma_i)\eqd\inv{n}\sum_{i=1}^n\psi(h_i^1,\dotsc,h_i^k,h_i^{k+1,k},\gamma_i)=:T_n$ for each such $n$, so it suffices to show that $T_n\cvc\E\bigl(\psi(\bar{G}_1,\dotsc,\bar{G}_k,\bar{G}_{k+1},\bar{\gamma})\bigr)$. We decompose $T_n$ as
\begin{equation}
\label{eq:Hkc}
\frac{1}{n}\sum_{i=1}^n\psi(h_i^1,\dotsc,h_i^k,\tilde{h}_i^{k+1},\gamma_i)+\frac{1}{n}\sum_{i=1}^n\,\{\psi(h_i^1,\dotsc,h_i^k,h_i^{k+1,k},\gamma_i)-\psi(h_i^1,\dotsc,h_i^k,\tilde{h}_i^{k+1},\gamma_i)\}=:T_{n1}+T_{n2}
\end{equation}
for each $n>k$, and seek to establish that $T_{n1}\cvc\E\bigl(\psi(\bar{G}_1,\dotsc,\bar{G}_k,\bar{G}_{k+1},\bar{\gamma})\bigr)$ and $T_{n2}\cvc 0$
by imitating and extending the analogous arguments in the proof of $\mathcal{H}_1(c)$. 

For $T_{n1}$, define $\Psi\colon\R^{k+1}\to\R$ by $\Psi(x_1,\dotsc,x_k,y):=\E\bigl\{\psi\bigl(x_1,\dotsc,x_k,\sum_{\ell=1}^k\bar{\alpha}_\ell^k\,x_\ell+\barp{\tau}_{k+1}Z,y\bigr)\bigr\}$ with $Z\sim N(0,1)$.
For each integer $n>k$, since $\tilde{Z}^{k+1}\equiv\tilde{Z}^{k+1}(n)\sim N_n(0,I_n)$ is independent of $\mathscr{S}_k\equiv\mathscr{S}_k(n)=\sigma(\gamma,m^0,h^1,\dotsc,h^k)$, we deduce from~\eqref{eq:hk+1} and Lemma~\ref{lem:indeplem} that
\[\E\bigl(\psi(h_i^1,\dotsc,h_i^k,\tilde{h}_i^{k+1},\gamma_i)\!\bigm|\!\mathscr{S}_k\bigr)=\E\bigl\{\psi\bigl(h_i^1,\dotsc,h_i^k,\textstyle\sum_{\ell=1}^k\bar{\alpha}_\ell^k\, h_i^\ell+\barp{\tau}_{k+1}\tilde{Z}_i^{k+1},\gamma_i\bigr)\!\bigm|\!\mathscr{S}_k\bigr\}=\Psi(h_i^1,\dotsc,h_i^k,\gamma_i)\]
almost surely, for every $1\leq i\leq n$. Now since $\Psi\in\PL_{k+1}(r)$ by Lemma~\ref{lem:pseudocomp}(b), it follows from the inductive hypothesis $\mathcal{H}_k(c)$ that
\begin{equation}
\label{eq:Hkc1}
\frac{1}{n}\sum_{i=1}^n\E\bigl(\psi(h_i^1,\dotsc,h_i^k,\tilde{h}_i^{k+1},\gamma_i)\!\bigm|\!\mathscr{S}_k\bigr)=
\frac{1}{n}\sum_{i=1}^n\Psi(h_i^1,\dotsc,h_i^k,\gamma_i)\cvc\E\bigl(\Psi(\bar{G}_1,\dotsc,\bar{G}_k,\bar{\gamma})\bigr),
\end{equation}
where the equality above holds almost surely for each $n>k$. Taking $\barp{\zeta}_{k+1}\sim N(0,1)$ to be independent of $\bar{G}_{[k]}:=(\bar{G}_1,\dotsc,\bar{G}_k)$ and $\bar{\gamma}$, we apply Lemma~\ref{lem:indeplem} again to see that
\begin{align}
\label{eq:Hkc2}
\E\bigl(\Psi(\bar{G}_1,\dotsc,\bar{G}_k,\bar{\gamma})\bigr)&=\E\bigl(\E\bigl\{\psi\bigl(\bar{G}_1,\dotsc,\bar{G}_k,\textstyle\sum_{\ell=1}^k\bar{\alpha}_\ell^k\,\bar{G}_\ell+\barp{\tau}_{k+1}\barp{\zeta}_{k+1},\bar{\gamma}\bigr)\!\bigm|\!\bar{G}_{[k]},\bar{\gamma}\bigr\}\bigl)\\
&=\E\bigl\{\psi\bigl(\bar{G}_1,\dotsc,\bar{G}_k,\textstyle\sum_{\ell=1}^k\bar{\alpha}_\ell^k\,\bar{G}_\ell+\barp{\tau}_{k+1}\barp{\zeta}_{k+1},\bar{\gamma}\bigr)\bigr\}=\E\bigl(\psi(\bar{G}_1,\dotsc,\bar{G}_k,\bar{G}_{k+1},\bar{\gamma})\bigr),\notag
\end{align}
where the final equality follows from the definition of $\bar{G}_{k+1}$ in~\eqref{eq:Gkbar}. To complete the proof that $T_{n1}\cvc\E\bigl(\psi(\bar{G}_1,\dotsc,\bar{G}_k,\bar{G}_{k+1},\bar{\gamma})\bigr)$, we must therefore show that
\begin{equation}
\label{eq:Hkc3}
T_{n1}':=\frac{1}{n}\sum_{i=1}^n\,\bigl\{\psi(h_i^1,\dotsc,h_i^k,\tilde{h}_i^{k+1},\gamma_i)-\E\bigl(\psi(h_i^1,\dotsc,h_i^k,\tilde{h}_i^{k+1},\gamma_i)\!\bigm|\!\mathscr{S}_k\bigr)\bigr\}\cvc 0.
\end{equation}
To this end, let $L>0$ be such that $\psi\in\PL_{k+2}(r,L)$, and for each $v\equiv (x_1,\dotsc,x_k,y)\in\R^{k+1}$, define $\psi_v,\bar{\psi}_v\colon\R\to\R$ by $\psi_v(z):=\psi\bigl(x_1,\dotsc,x_k,\sum_{\ell=1}^k\bar{\alpha}_\ell^k\,x_\ell+\barp{\tau}_{k+1} z,y\bigr)$ and $\bar{\psi}_v(z):=\psi_v(z)-\E\bigl(\psi_v(Z)\bigr)$, where $Z\sim N(0,1)$. Then by Lemma~\ref{lem:pseudocomp}(a), there exists $K_k>0$, depending only on the deterministic $\bar{\alpha}^k\equiv (\bar{\alpha}_1^k,\dotsc,\bar{\alpha}_k^k)$, $\barp{\tau}_{k+1}$ and $r$, such that $\psi_v\in\PL_1(r,K_kL_{\norm{v}})$ with $L_{\norm{v}}=L(1\vee\norm{v}^{r-1})$. For a fixed integer $n>k$ and $v^{(1)},\dotsc,v^{(n)}\in\R^{k+1}$, define $\breve{L}\equiv\breve{L}\bigl(v^{(1)},\dotsc,v^{(n)}\bigr):=\bigl(L_{\norm{v^{(1)}}},\dotsc,L_{\norm{v^{(n)}}}\bigr)$. Let $r'=r/(r-1)\in (1,2]$ be as in the proof of $\mathcal{H}_1(c)$,
so that $1/r+1/r'=1$, and note that since $\norm{{\cdot}}_{p'}\leq\norm{{\cdot}}_p$ for $1\leq p\leq p'\leq\infty$, we have
\begin{equation}
\label{eq:Lbar}
\frac{\norm{\breve{L}}_\infty}{n^{1/r'}}\leq\frac{\norm{\breve{L}}_2}{n^{1/r'}}\leq\frac{\norm{\breve{L}}_{r'}}{n^{1/r'}}=\norm{\breve{L}}_{n,r'}=\biggl(\frac{1}{n}\sum_{i=1}^n\,\bigl(L_{\norm{v^{(i)}}}\bigr)^{r'}\biggr)^{1/r'}\leq L\biggl(1+\frac{1}{n}\sum_{i=1}^n\norm{v^{(i)}}^r\biggr)^{1/r'}.
\end{equation}
By Lemma~\ref{lem:pseudoconc}, it follows as in~\eqref{eq:H1cEbar} that there exists a universal constant $C>0$ such that if $Z_1,\dotsc,Z_n\iid N(0,1)$, then
\begin{align}
P\bigl(n,t,v^{(1)},\dotsc,v^{(n)}\bigr)&:=\Pr\biggl(\biggl|\frac{1}{n}\sum_{i=1}^n\,\bar{\psi}_{v^{(i)}}(Z_i)\biggr|\geq t\biggr)\notag\\
&\leq\exp\biggl(1-\min\biggl\{\biggl(\frac{nt}{(Cr)^r K_k\norm{\breve{L}}_2}\biggr)^2,\biggl(\frac{nt}{(Cr)^r K_k\norm{\breve{L}}_\infty}\biggr)^{2/r}\biggr\}\biggr)\notag\\
&\leq\exp\biggl(1-\min\biggl\{\biggl(\frac{n^{1/r}t}{(Cr)^r K_k\norm{\breve{L}}_{n,r'}}\biggr)^2,\biggl(\frac{n^{1/r}t}{(Cr)^r K_k\norm{\breve{L}}_{n,r'}}\biggr)^{2/r}\biggr\}\biggr)\notag\\
\label{eq:Ebar}
&=E_r(n,t,K_k\norm{\breve{L}}_{n,r'})\equiv E_r\Bigl(n,t,K_k\bigl\|\breve{L}\bigl(v^{(1)},\dotsc,v^{(n)}\bigr)\bigr\|_{n,r'}\Bigr)
\end{align}
for every $t\geq 0$. Next, define the $\mathscr{S}_k$-measurable vectors $\upsilon_k^{(i)}:=(h_i^1,\dotsc,h_i^k,\gamma_i)$ for $1\leq i\leq n$.
Returning to~\eqref{eq:Hkc3} and recalling~\eqref{eq:hk+1}, we see that 
\begin{align*}
\psi(h_i^1,\dotsc,h_i^k,\tilde{h}_i^{k+1},\gamma_i)-\E\bigl(\psi(h_i^1,\dotsc,h_i^k,\tilde{h}_i^{k+1},\gamma_i)\!\bigm|\!\mathscr{S}_k\bigr)&=\psi_{\upsilon_k^{(i)}}(\tilde{Z}_i^{k+1})-\E\bigl(\psi_{\upsilon_k^{(i)}}(\tilde{Z}_i^{k+1})\!\bigm|\!\mathscr{S}_k\bigr)\\
&=\bar{\psi}_{\upsilon_k^{(i)}}(\tilde{Z}_i^{k+1})
\end{align*}
for all $1\leq i\leq n$, where the final equality follows from Lemma~\ref{lem:indeplem} and the fact that $\tilde{Z}^{k+1}\equiv\tilde{Z}^{k+1}(n)$ is independent of $\mathscr{S}_k\equiv\mathscr{S}_k(n)$. We deduce from this and~\eqref{eq:Ebar} that
\begin{align}
\Pr(\abs{T_{n1}'}>\varepsilon\,|\,\mathscr{S}_k)=\Pr\biggl(\biggl|\frac{1}{n}\sum_{i=1}^n\bar{\psi}_{\upsilon_k^{(i)}}(\tilde{Z}_i^{k+1})\biggr|\geq\varepsilon\Bigm|\mathscr{S}_k\biggr)&=P\bigl(n,\varepsilon,\upsilon_k^{(1)},\dotsc,\upsilon_k^{(n)}\bigr)\notag\\
\label{eq:Pneps}
&\leq E_r\Bigl(n,\varepsilon,K_k\bigl\|\breve{L}\bigl(\upsilon_k^{(1)},\dotsc,\upsilon_k^{(n)}\bigr)\bigr\|_{n,r'}\Bigr)
\end{align}
for every $n>k$ and $\varepsilon>0$, where the second equality is again obtained using Lemma~\ref{lem:indeplem}. Now by~\eqref{eq:Lbar} and H\"older's inequality, which ensures that $\norm{{\cdot}}\equiv\norm{{\cdot}}_2\leq (k+1)^{\frac{1}{2}-\frac{1}{r}}\,\norm{{\cdot}}_r$ on $\R^{k+1}$,
we have
\begin{align}
\breve{L}_k(n):=\bigl\|\breve{L}\bigl(\upsilon_k^{(1)},\dotsc,\upsilon_k^{(n)}\bigr)\bigr\|_{n,r'}&\leq L\biggl(1+\frac{1}{n}\sum_{i=1}^n\norm{\upsilon_k^{(i)}}^r\biggr)^{1/r'}\notag\\
&\lesssim_{k,r} L\,\Biggl\{1+\frac{1}{n}\sum_{i=1}^n\sum_{\ell=1}^k\,\bigl(\bigl|h_i^\ell\bigr|^r+\abs{\gamma_i}^r\bigr)\Biggr\}^{1/r'}\notag\\
\label{eq:Lbreve}
&=L\biggl(1+\sum_{\ell=1}^k\,\bigl(\norm{h^\ell}_{n,r}^r+\norm{\gamma}_{n,r}^r\bigr)\biggr)^{1/r'}
\end{align}
for each $n>k$. Since $\norm{h^\ell}_{n,r}=O_c(1)$ for $1\leq\ell\leq k$ by $\mathcal{H}_k(b)$ and $\norm{\gamma}_{n,r}=O_c(1)$ by~\ref{ass:A1}, this means that $\breve{L}_k(n)=O_c(1)$. Thus, by Proposition~\ref{prop:compequiv}, there exists $\bar{L}_k\in (0,\infty)$ such that for integers $n>k$, the events $A_k(n):=\{\breve{L}_k(n)\leq\bar{L}_k\}\in\mathscr{S}_k(n)$ satisfy $\sum_{n=k+1}^\infty\Pr\bigl(A_k(n)^c\bigr)<\infty$. Moreover, for each $n>k$ and $\varepsilon>0$, it follows from~\eqref{eq:Pneps} that
\begin{align*}
\Pr\bigl(\{\abs{T_{n1}'}>\varepsilon\}\cap A_k(n)\!\bigm|\!\mathscr{S}_k\bigr)=\Pr(\abs{T_{n1}'}>\varepsilon\,|\,\mathscr{S}_k)\Ind_{A_k(n)}&\leq P\bigl(n,\varepsilon,\upsilon_k^{(1)},\dotsc,\upsilon_k^{(n)}\bigr)\Ind_{A_k(n)}\\
&\leq E_r\bigl(n,\varepsilon,K_k\breve{L}_k(n)\bigr)\Ind_{A_k(n)}\leq E_r(n,\varepsilon,K_k\bar{L}_k),
\end{align*}
where we have used the fact that $A_k(n)\in\mathscr{S}_k(n)$ to obtain the first equality above. Recalling the expression for $E_r(n,\varepsilon,K_k\bar{L}_k)$ in~\eqref{eq:Ebar}, we see that $\sum_{n=k+1}^\infty E_r(n,\varepsilon,K_k\bar{L}_k)<\infty$, and hence conclude as in~\eqref{eq:H1cTn1} that for every $\varepsilon>0$, we have
\begin{align*}
\sum_{n=k+1}^\infty\Pr(\abs{T_{n1}'}>\varepsilon)&\leq\sum_{n=k+1}^\infty\Pr\bigl(\{\abs{T_{n1}'}>\varepsilon\}\cap A_k(n)\bigr)+\sum_{n=k+1}^\infty\Pr\bigl(A_k(n)^c\bigr)\\
&\leq\sum_{n=k+1}^\infty E_r(n,\varepsilon,K_k\bar{L}_k)+\sum_{n=k+1}^\infty\Pr\bigl(A_k(n)^c\bigr)<\infty,
\end{align*}
which implies~\eqref{eq:Hkc3} in view of Proposition~\ref{prop:compequiv}. Together with~\eqref{eq:Hkc1} and~\eqref{eq:Hkc2}, this shows that $T_{n1}\cvc\E\bigl(\psi(\bar{G}_1,\dotsc,\bar{G}_k,\bar{G}_{k+1},\bar{\gamma})\bigr)$ in~\eqref{eq:Hkc}, as claimed.

The final step in the proof of $\mathcal{H}_{k+1}(c)$ is to show that $T_{n2}\cvc 0$ in~\eqref{eq:Hkc}. Letting $L>0$ be such that $\psi\in\PL_{k+2}(r,L)$, we can apply Lemma~\ref{lem:pseudoavg} as in~\eqref{eq:H1cTn2} to see that
\begin{align}
\abs{T_{n2}}&\leq \frac{1}{n}\sum_{i=1}^n\,\bigl|\psi(h_i^1,\dotsc,h_i^k,h_i^{k+1,k},\gamma_i)-\psi(h_i^1,\dotsc,h_i^k,h_i^{k+1,k}-\Delta_i^{k+1},\gamma_i)\bigr|\notag\\
&\leq L(k+2)^{\frac{r}{2}-1}\,\norm{\Delta^{k+1}}_{n,r}\biggl(1+2\sum_{\ell=1}^k\norm{h^\ell}_{n,r}^{r-1}+2\norm{\gamma}_{n,r}^{r-1}+\norm{h^{k+1,k}}_{n,r}^{r-1}+\norm{h^{k+1,k}-\Delta^{k+1}}_{n,r}^{r-1}\biggr)\notag\\
\label{eq:HkcT2}
&\lesssim_{k,r} L\norm{\Delta^{k+1}}_{n,r}\biggl(1+\sum_{\ell=1}^k\norm{h^\ell}_{n,r}^{r-1}+\norm{h^{k+1,k}}_{n,r}^{r-1}+\norm{\gamma}_{n,r}^{r-1}+\norm{\Delta^{k+1}}_{n,r}^{r-1}\biggr)
\end{align}
for each integer $n>k$. Now $\norm{h^{k+1,k}}_{n,r}\eqd\norm{h^{k+1}}_{n,r}$ for each such $n$, and recall from $\mathcal{H}_{k+1}(a)$ that $\norm{\Delta^{k+1}}_{n,r}=o_c(1)$ and from $\mathcal{H}_{k+1}(b)$ that $\norm{h^\ell}_{n,r}=O_c(1)$ for $1\leq\ell\leq k+1$. Thus, $T_{n2}\cvc 0$ by~\eqref{eq:HkcT2}, and we conclude from~\eqref{eq:Hkc} that $\inv{n}\sum_{i=1}^n\psi(h_i^1,\dotsc,h_i^k,h_i^{k+1},\gamma_i)\eqd T_n=T_{n1}+T_{n2}\cvc\E\bigl(\psi(\bar{G}_1,\dotsc,\bar{G}_k,\bar{G}_{k+1},\bar{\gamma})\bigr)$, as required. 

$\mathcal{H}_{k+1}(d)$: The arguments in this proof are similar to those given for $\mathcal{H}_1(d)$ and $\mathcal{H}_{k+1}(c)$, so we outline the key steps without going into the full details. By~\eqref{eq:hk+1} and Lemma~\ref{lem:conddist}(c), we have $(\gamma,m^0,h^1,\dotsc,h^k,h^{k+1})\eqdcond{\mathscr{S}_k}(\gamma,m^0,h^1,\dotsc,h^k,h^{k+1,k})$ for each $n>k$, so for fixed $\phi\in\PL_{k+2}(1)$, it follows that $\inv{n}\sum_{i=1}^n m_i^0\,\phi(h_i^1,\dotsc,h_i^k,h_i^{k+1})\eqd\inv{n}\sum_{i=1}^n m_i^0\,\phi(h_i^1,\dotsc,h_i^k,h_i^{k+1,k})=:T_n$ for each such $n$. Using~\eqref{eq:hk+1}, we now write 
\begin{align}
T_n&=\frac{1}{n}\sum_{i=1}^n m_i^0\,\phi(h_i^1,\dotsc,h_i^k,\tilde{h}_i^{k+1},\gamma_i)+\frac{1}{n}\sum_{i=1}^n m_i^0\bigl\{\phi(h_i^1,\dotsc,h_i^k,h_i^{k+1,k},\gamma_i)-\phi(h_i^1,\dotsc,h_i^k,\tilde{h}_i^{k+1},\gamma_i)\bigr\}\notag\\
\label{eq:Hkd}
&=:T_{n1}+T_{n2}
\end{align}
for each $n>k$, and aim to prove that $T_{n1}\cvc\E\bigl(F_0(\bar{\gamma})\cdot\phi(\bar{G}_1,\dotsc,\bar{G}_k,\bar{\gamma})\bigr)$ and $T_{n2}\cvc 0$, which together imply the desired conclusion. 

For $T_{n1}$, recall once again from~\eqref{eq:htilde} or~\eqref{eq:hk+1} that $\tilde{h}^{k+1}(n)=\sum_{\ell=1}^k\bar{\alpha}_\ell^k\,h^\ell(n)+\barp{\tau}_{k+1}\tilde{Z}^{k+1}(n)$ for each $n$, where $\tilde{Z}^{k+1}\equiv\tilde{Z}^{k+1}(n)\sim N_n(0,I_n)$ is independent of $\mathscr{S}_k\equiv\mathscr{S}_k(n)=\sigma(\gamma,m^0,h^1,\dotsc,h^k)$. Define $\Phi\colon\R^{k+1}\to\R$ by $\Phi(x_1,\dotsc,x_k,y):=\E\bigl\{\phi\bigl(x_1,\dotsc,x_k,\sum_{\ell=1}^k\bar{\alpha}_\ell^k\,x_\ell+\barp{\tau}_{k+1}Z,y\bigr)\bigr\}$ with $Z\sim N(0,1)$.
Then $\Phi\in\PL_{k+1}(1)$ by Lemma~\ref{lem:pseudocomp}(b), and as in~\eqref{eq:Hkc1} and~\eqref{eq:Hkc2}, it follows from the inductive hypothesis $\mathcal{H}_k(d)$ and Lemma~\ref{lem:indeplem} that
\begin{align}
\frac{1}{n}\sum_{i=1}^n m_i^0\,\E\bigl(\phi(h_i^1,\dotsc,h_i^k,\tilde{h}_i^{k+1},\gamma_i)\!\bigm|\!\mathscr{S}_k\bigr)&=
\frac{1}{n}\sum_{i=1}^n m_i^0\,\Phi(h_i^1,\dotsc,h_i^k,\gamma_i)\notag\\
&\cvc\E\bigl(F_0(\bar{\gamma})\cdot\Phi(\bar{G}_1,\dotsc,\bar{G}_k,\bar{\gamma})\bigr)\notag\\
\label{eq:Hkd1}
&=\E\bigl(F_0(\bar{\gamma})\cdot\phi(\bar{G}_1,\dotsc,\bar{G}_k,\bar{G}_{k+1},\bar{\gamma})\bigr).
\end{align}
Next, we show as in~\eqref{eq:Hkc3} that
\begin{equation}
\label{eq:Hkd2}
T_{n1}':=\frac{1}{n}\sum_{i=1}^n m_i^0\bigl\{\phi(h_i^1,\dotsc,h_i^k,\tilde{h}_i^{k+1},\gamma_i)-\E\bigl(\phi(h_i^1,\dotsc,h_i^k,\tilde{h}_i^{k+1},\gamma_i)\!\bigm|\!\mathscr{S}_k\bigr)\bigr\}\cvc 0.
\end{equation}
To this end, for each $u\in\R$ and $v\equiv (x_1,\dotsc,x_k,y)\in\R^{k+1}$, define $\phi_{u,v},\bar{\phi}_{u,v}\colon\R\to\R$ by $\phi_{u,v}(z):=u\phi\bigl(x_1,\dotsc,x_k,\sum_{\ell=1}^k\bar{\alpha}_\ell^k\,x_\ell+\barp{\tau}_{k+1}z,y\bigr)$ and $\bar{\phi}_{u,v}(z):=\phi_{u,v}(z)-\E\bigl(\phi_{u,v}(Z)\bigr)$, where $Z\sim N(0,1)$. Since $\phi\in\PL_{k+2}(1)$, we deduce from Lemma~\ref{lem:pseudocomp}(a) that there exists $K'>0$, depending only on the deterministic $\bar{\alpha}^k\equiv (\bar{\alpha}_1^k,\dotsc,\bar{\alpha}_k^k)$, $\barp{\tau}_{k+1}$ and $r$, such that $\bar{\phi}_{u,v}\in\PL_1(1,LK'\abs{u})$ for each $u\in\R$ and $v\in\R^{k+1}$. Now define the $\mathscr{S}_k$-measurable vectors $\upsilon_k^{(i)}:=(h_i^1,\dotsc,h_i^k,\gamma_i)$ for $1\leq i\leq n$, as in~\eqref{eq:Pneps}, and let $\tilde{E}_r$ be as in~\eqref{eq:H1d2}. Then by Lemma~\ref{lem:indeplem} and~\eqref{eq:pseudoconcr} in Remark~\ref{rem:pseudoconc}, it follows as in~\eqref{eq:H1d2},~\eqref{eq:Ebar} and~\eqref{eq:Pneps} that for each $n>k$ and $\varepsilon>0$, we have
\begin{equation}
\label{eq:HkdTn1'}
\Pr(\abs{T_{n1}'}>\varepsilon\,|\,\mathscr{S}_k)=\Pr\biggl(\biggl|\frac{1}{n}\sum_{i=1}^n\bar{\phi}_{m_i^0,\,\upsilon_k^{(i)}}(\tilde{Z}_i^{k+1})\biggr|\geq\varepsilon\Bigm|\mathscr{S}_k\biggr)\leq \tilde{E}_r(n,\varepsilon,LK'\norm{m^0}_n).
\end{equation}
Now $m^0\equiv m^0(n)$ is measurable with respect to $\mathscr{S}_0\subseteq\mathscr{S}_k\equiv\mathscr{S}_k(n)$ for each $n$, and $\norm{m^0}_n=O_c(1)$ by~\ref{ass:A2}, so we conclude as in~\eqref{eq:H1d3} and~\eqref{eq:H1dTn1} that $\sum_n\Pr(\abs{T_{n1}'}>\varepsilon)<\infty$ for all $\varepsilon>0$. Thus, $T_{n1}'\cvc 0$ by Proposition~\ref{prop:compequiv}, as claimed in~\eqref{eq:Hkd2}.

Finally, we prove that $T_{n2}\cvc 0$. Let $L>0$ be such that $\phi\in\PL_{k+2}(1,L)$, and for $n>k$ and $1\leq i\leq n$, define $\upsilon_{k+1}^{(i)}=(h_i^1,\dotsc,h_i^k,h_i^{k+1,k},\gamma_i)$ and $\tilde{\upsilon}_{k+1}^{(i)}=(h_i^1,\dotsc,h_i^k,\tilde{h}_i^{k+1},\gamma_i)=(h_i^1,\dotsc,h_i^k,h_i^{k+1,k}-\Delta_i^{k+1},\gamma_i)$ as in~\eqref{eq:HkcT2}, where the final equality is obtained from~\eqref{eq:deltak}. As in the proof of $\mathcal{H}_1(d)$, we now apply the Cauchy--Schwarz inequality and the fact that $\norm{{\cdot}}_n\equiv\norm{{\cdot}}_{n,2}\leq\norm{{\cdot}}_{n,r}$ to see that
\[\abs{T_{n2}}\leq\frac{1}{n}\sum_{i=1}^n\,\abs{m_i^0}\bigl|\phi(\upsilon_{k+1}^{(i)})-\phi(\tilde{\upsilon}_{k+1}^{(i)})\bigr|\leq\frac{L}{n}\sum_{i=1}^n\,\abs{m_i^0}\abs{\Delta_i^{k+1}}\leq L\norm{m^0}_n\norm{\Delta^{k+1}}_n.\]
Since $\norm{m^0}_n\cvc\tau_1$ by~\ref{ass:A2} and $\norm{\Delta^{k+1}}_n\leq\norm{\Delta^{k+1}}_{n,r}=o_c(1)$ by $\mathcal{H}_{k+1}(a)$, we conclude that $T_{n2}=o_c(1)$, as required. Together with~\eqref{eq:Hkd},~\eqref{eq:Hkd1},~\eqref{eq:Hkd2}, this yields $\mathcal{H}_{k+1}(d)$, and hence completes the inductive step for Proposition~\ref{prop:AMPproof}.
\end{proof}

\deparskip
\begin{proof}[Proof of Remark~\ref{rem:AMPproof}]
Under~\ref{ass:A0},~\ref{ass:A4} and~\ref{ass:A5}, if instead~\ref{ass:A1}--\ref{ass:A3} hold with $\cvc$ and $O_c(1)$ replaced with $\cvp$ and $O_p(1)$ respectively, then as explained in the third bullet point in Remark~\ref{rem:AMPconv}, we can make the same replacements in the proof of Proposition~\ref{prop:AMPproof} and most of the arguments go through as before. However, a few alterations are required in the proofs of (c,\,d) and (g), which we now describe. 

First, in the proof of $\mathcal{H}_1(d)$, the goal in~\eqref{eq:H1d1} is now to show that $T_{n1}'\cvp 0$ as $n\to\infty$. Instead of proceeding as in~\eqref{eq:H1d3} and~\eqref{eq:H1dTn1}, we return to~\eqref{eq:H1dTn1'}, where we note that if $\varepsilon>0$ is fixed and~\ref{ass:A2} takes the form $\norm{m^0}_n\cvp\tau_1$, then $\Pr(\abs{T_{n1}'}>\varepsilon\,|\,\mathscr{S}_0)\leq \tilde{E}_r(n,\varepsilon,L\tau_1\norm{m^0}_n)=o_p(1)$ by Slutsky's lemma and the definition of $\tilde{E}_r$ in~\eqref{eq:H1d2}. Then for every $\varepsilon>0$, it follows from the bounded convergence theorem that $\Pr(\abs{T_{n1}'}>\varepsilon)=\E\bigl(\Pr(\abs{T_{n1}'}>\varepsilon\,|\,\mathscr{S}_0)\bigr)\to 0$ as $n\to\infty$, so $T_{n1}'\cvp 0$, as desired. 

In the proofs of $\mathcal{H}_{k+1}(c,d)$, the analogues of~\eqref{eq:Hkc3} and~\eqref{eq:Hkd2} can be derived from~(\ref{eq:Pneps},\,\ref{eq:Lbreve}) and~\eqref{eq:HkdTn1'} respectively in much the same way; for the former, since $\norm{h^\ell}_{n,r}=O_p(1)$ for $1\leq\ell\leq k$ by the modified $\mathcal{H}_k(b)$,~\eqref{eq:Lbreve} implies that $\breve{L}_k(n)\lesssim_{k,r}LK\bigl(1+\sum_{\ell=1}^k\,\norm{h^\ell}_{n,r}^r\bigr)^{1/r'}=O_p(1)$.

In addition, $\mathcal{H}_k(g)$ now reads $b_k=\abr{f_k'(h^k,\gamma)}_n\cvp\E\bigl(f_k'(\bar{G}_k,\bar{\gamma})\bigr)=\bar{b}_k$. To prove this, we can argue along subsequences, similarly to the proof of Corollary~\ref{cor:Wr}(b).
\end{proof}


\deparskip
\begin{proof}[Proofs of Theorems~\ref{thm:AMPmaster} and~\ref{thm:masterext}]
Theorem~\ref{thm:AMPmaster} follows from Theorem~\ref{thm:masterext}, which in turn is a immediate consequence of Proposition~\ref{prop:AMPproof}(c) and Corollary~\ref{cor:Wr}(b).
\end{proof}

\deparskip
\begin{proof}[Proofs for Remark~\ref{rem:AMPconv}] 
(a) \emph{Convergence in probability}: This is immediate from Remark~\ref{rem:AMPproof} and Corollary~\ref{cor:Wr}(b).

(b) \emph{Almost sure convergence}: The random sequences $\Upsilon:=\bigl(m^0(n):n\in\N\bigr)$ and $\Gamma:=\bigl(\gamma(n):n\in\N\bigr)$ take values in $E:=\prod_{n=1}^\infty\R^n$, whose cylindrical and Borel $\sigma$-algebras 
coincide by~\citet[Lemma~1.2]{Kal97}. Let $E^*$ be the set of all $(u,v)\in E\times E$ such that~\ref{ass:A1}--\ref{ass:A3} hold when $\Upsilon=u\equiv\bigl(u(n):n\in\N\bigr)$ and $\Gamma=v\equiv\bigl(v(n):n\in\N\bigr)$ are non-random. It can be verified that $E^*$ is a Borel subset of $E\times E$.

For $k\in\N$, let $\bar{\mu}^k:=N(0,\tau_k^2)\otimes\pi$ and $\mu_n^k:=\nu_n(h^k,\gamma)$ for $n\in\N$. In the special case where $(\Upsilon,\Gamma)\in E^*$ is deterministic,
Theorem~\ref{thm:AMPmaster} implies that for each $k$, the resulting sequence of AMP iterates $\bigl(h^k(n):n\in\N\bigr)$ satisfies $d_r(\mu_n^k,\bar{\mu}^k)\cvc 0$. Note that for each $n$, we can write $d_r(\mu_n^k,\bar{\mu}^k)=d_r\bigl(\nu_n(h^k,\gamma),\bar{\mu}^k\bigr)=g_n\bigl(m^0(n),\gamma(n),W(n)\bigr)$ for some (non-random) Borel measurable $g_n\colon\R^n\times\R^n\times\R^{n\times n}\to\R$. Indeed, we see from~\eqref{eq:AMPsym} that $h^k\equiv h^k(n)$ is a deterministic Borel measurable function of $m^0(n)$, $\gamma(n)$ and $W(n)$. Moreover for all $x,x'\in\R^n$ and the corresponding empirical distributions $\nu_n(x),\nu_n(x')$ of their components, we have \[\bigl|d_r\bigl(\nu_n(x),\bar{\mu}^k\bigr)-d_r\bigl(\nu_n(x'),\bar{\mu}^k\bigr)\bigr|\leq d_r\bigl(\nu_n(x),\nu_n(x')\bigr)\leq\bigl(n^{-1}\textstyle\sum_{i=1}^n\,\abs{x_i-x_i'}^r\bigr)^{1/r}=\norm{x-x'}_{n,r}\]
since $d_r$ is a metric, so $x\mapsto d_r\bigl(\nu_n(x),\bar{\mu}^k\bigr)$ is continuous on $\R^n$. 

Since $\{(a_1,a_2,\dotsc)\in\R^\N:\lim_{n\to\infty}a_n=0\}$ is a Borel subset of $\R^\N$, we conclude that $g\colon (u,v)\mapsto\Pr\bigl\{\lim_{n\to\infty}g_n\bigl(u(n),v(n),W(n)\bigr)=0\bigr\}$
is a well-defined Borel measurable function on $E\times E$ satisfying $g(u,v)=1$ for all $(u,v)\in E^*$. 

Now suppose more generally that $(\Upsilon,\Gamma)$ and $\bigl(W(n):n\in\N\bigr)$ are independent. If $(\Upsilon,\Gamma)\in E^*$ almost surely, then for the corresponding sequence of AMP iterates $\bigl(h^k(n):n\in\N\bigr)$ from~\eqref{eq:AMPsym},
\begin{align*}
\Pr\Bigl(\lim_{n\to\infty}d_r(\mu_n^k,\bar{\mu}^k)=0\Bigr)&=\Pr\Bigl(\lim_{n\to\infty}g_n\bigl(m^0(n),\gamma(n),W(n)\bigr)=0\Bigr)\\
&=\E\Bigl\{\Pr\Bigl(\lim_{n\to\infty}g_n\bigl(m^0(n),\gamma(n),W(n)\bigr)=0\Bigm|\Upsilon,\Gamma\Bigr)\Bigr\}\\
&=\E\bigl(g(\Upsilon,\Gamma)\bigr)\geq\E\bigl(g(\Upsilon,\Gamma)\,\Ind_{\{(\Upsilon,\Gamma)\in E^*\}}\bigr)=1,
\end{align*}
where the third equality follows from the independence assumption and Lemma~\ref{lem:indeplem}. Therefore, $d_r(\mu_n^k,\bar{\mu}^k)\cvas 0$ as $n\to\infty$, and moreover $\widetilde{d}_r(\mu_n^k,\bar{\mu}^k)\cvas 0$ by Corollary~\ref{cor:Wr}(a).
\end{proof}

\umparskip
\subsection{Auxiliary results and proofs for Section~\protect{\ref{Sec:Master}}}
\label{sec:addproofs}
\begin{proof}[Proof of Lemma~\ref{lem:posdef}]
We proceed by induction on $k$, noting first that the base case $k=1$ is trivial since $\bar{\mathrm{T}}^{[1]}=\tau_1^2>0$ by~\ref{ass:A4}. Now for $k\geq 2$ and $a\equiv (a_1,\dotsc,a_k)\in\R^k$, recall the expression~\eqref{eq:posdef} for $a^\top\bar{\mathrm{T}}^{[k]}a$. If $a_1\neq 0=a_2=\dotsb=a_k$, then $a^\top\bar{\mathrm{T}}^{[k]}a=(a_1\tau_1)^2>0$ as in the base case.
On the other hand, if $a_L\neq 0$ for some $2\leq L\leq k$, then by~\ref{ass:A4}, we can find
$B_L\subseteq\R$ with $\pi(B_L)>0$ such that $x_{L-1}\mapsto a_L f_{L-1}(x_{L-1},y)$ is non-constant whenever $y\in B_L$. For all such $y$, note that $(x_1,\dotsc,x_{k-1})\mapsto\sum_{\ell=2}^k a_\ell f_{\ell-1}(x_{\ell-1},y)$ is non-constant on $\R^{k-1}$. Now by the inductive hypothesis, $(G_1,\dotsc,G_{k-1})$ has a positive definite covariance matrix $\bar{\mathrm{T}}^{[k-1]}$, so the random variable $a_1 F_0(y)+\sum_{\ell=2}^k a_\ell f_{\ell-1}(G_{\ell-1},y)$ is non-degenerate whenever $y\in B_L$. Since $\bar{\gamma}\sim\pi$ is independent of $G_1,\dotsc,G_{k-1}$ and $\Pr(\bar{\gamma}\in B_L)=\pi(B_L)>0$, it follows that $a_1 F_0(\bar{\gamma})+\sum_{\ell=2}^k a_\ell f_{\ell-1}(G_{\ell-1},\bar{\gamma})$ is also non-degenerate. Thus, in all cases, it follows from~\eqref{eq:posdef} and~\ref{ass:A3} that $a^\top\bar{\mathrm{T}}^{[k]}a>0$ whenever $a\neq 0$, as claimed.
\end{proof}

\deparskip
\begin{proof}[Proof of Remark~\ref{rem:A4'}]
Since $\tilde{f}_0$ is Lipschitz and $\bar{\eta},\bar{\gamma}\in\mathcal{P}_1(2)$, we have $\E\bigl\{\bigl\|\bigl(\tilde{f}_0(\bar{\eta},\bar{\gamma}),\bar{\gamma}\bigr)\bigr\|^2\bigr\}<\infty$, so $\mu^0\in\mathcal{P}_2(2)$. By Corollary~\ref{cor:Wr}(b), an equivalent formulation of~\ref{ass:A1+} is that \[\frac{1}{n}\sum_{i=1}^n\psi(m_i^0,\gamma_i)\cvc\E\bigl\{\psi\bigl(\tilde{f}_0(\bar{\eta},\bar{\gamma}),\bar{\gamma}\bigr)\bigr\}\]
for all $\psi\in\PL_2(2)$. In particular, $(x,y)\mapsto x^2$ lies in $\PL_2(2)$, so $\norm{m^0}_n^2=n^{-1}\sum_{i=1}^n\,\abs{m_i^0}^2\cvc\E\bigl(\tilde{f}_0(\bar{\eta},\bar{\gamma})^2\bigr)=:\tau_1^2$, which yields the first part of~\ref{ass:A2}. Moreover, the function $F_0\colon\R\to\R$ defined by $F_0(y):=\E\bigl(\tilde{f}_0(\bar{\eta},y)\bigr)$ is Lipschitz, and since $\bar{\eta},\bar{\gamma}$ are independent, it follows from Lemma~\ref{lem:indeplem} and Jensen's inequality that \[\E\bigl(F_0(\bar{\gamma})^2\bigr)=\E\bigl\{\E\bigl(\tilde{f}_0(\bar{\eta},\bar{\gamma})\!\bigm|\!\bar{\gamma}\bigr)^2\bigr\}\leq\E\bigl\{\E\bigl(\tilde{f}_0(\bar{\eta},\bar{\gamma})^2\!\bigm|\!\bar{\gamma}\bigr)\bigr\}=\tau_1^2.\] For each Lipschitz $\phi\colon\R\to\R$, Lemma~\ref{lem:pseudoprod} ensures that $(x,y)\mapsto x\phi(y)$ belongs to $\PL_2(2)$, so 
\[\ipr{m^0}{\phi(\gamma)}_n=\frac{1}{n}\sum_{i=1}^n m_i^0\,\phi(\gamma_i)\cvc\E\bigl(\tilde{f}_0(\bar{\eta},\bar{\gamma})\cdot\phi(\bar{\gamma})\bigr)=\E\bigl\{\E\bigl(\tilde{f}_0(\bar{\eta},\bar{\gamma})\!\bigm|\!\bar{\gamma}\bigr)\phi(\bar{\gamma})\bigr\}=\E\bigl(F_0(\bar{\gamma})\phi(\bar{\gamma})\bigr),\]
where the final equality again follows from Lemma~\ref{lem:indeplem}. Therefore,~\ref{ass:A3} also holds.
\end{proof}

\unparskip
The following auxiliary result is used in the proof of Proposition~\ref{prop:AMPproof}(a) to control the third summand in the deviation term $\Delta^{k+1}$ defined in~\eqref{eq:deltak}.
\begin{lemma}
\label{lem:projgaussian}
For $n\in\N$ and $k\in\{0,1\dotsc,n-1\}$, let $\tilde{Z}^{k+1}\equiv\tilde{Z}^{k+1}(n)$ be as in Proposition~\ref{prop:AMPconddist}, so that $\tilde{Z}^{k+1}\sim N_n(0,I_n)$ is independent of $\mathscr{S}_k$. If $\xi_1,\dotsc,\xi_k\iid N(0,1)$ and $r\geq 1$, then $\norm{P_k\tilde{Z}^{k+1}}_{n,r}$ is stochastically dominated by $\sum_{i=1}^k\,\abs{\xi_i}/n^{1/(r\vee 2)}$.
\end{lemma}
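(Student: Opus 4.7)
The plan is to exploit the fact that $P_k = M_kM_k^+$ is $\mathscr{S}_{k-1}$-measurable (and hence $\mathscr{S}_k$-measurable) with rank $r_k\leq k$, and that $\tilde{Z}^{k+1}\sim N_n(0,I_n)$ is independent of $\mathscr{S}_k$. It follows from Lemma~\ref{lem:conddist}(c) (applied to the linear map $z\mapsto P_k z$) that the conditional distribution of $P_k\tilde{Z}^{k+1}$ given $\mathscr{S}_k$ is $N_n(0,P_kP_k^\top)=N_n(0,P_k)$, since $P_k$ is a symmetric idempotent. Decomposing $P_k=\sum_{i=1}^{r_k}u_iu_i^\top$ for an orthonormal basis $u_1,\dotsc,u_{r_k}$ of $\Img(M_k)$ (which can be chosen $\mathscr{S}_{k-1}$-measurably via the Gram--Schmidt construction of Remark~\ref{rem:gs}), we deduce that $P_k\tilde{Z}^{k+1}\eqdcond{\mathscr{S}_k}\sum_{i=1}^{r_k}\zeta_i u_i$, where $\zeta_1,\dotsc,\zeta_{r_k}\iid N(0,1)$ are independent of $\mathscr{S}_k$.

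Having arrived at this representation, I would then derive a deterministic (in the $\zeta_i$'s) upper bound on $\norm{\sum_{i=1}^{r_k}\zeta_iu_i}_{n,r}$ by splitting into two cases. For $r\geq 2$, the triangle inequality combined with the standard inequality $\norm{u_i}_r\leq\norm{u_i}_2=1$ (valid whenever $r\geq 2$) yields
\[
\biggl\|\sum_{i=1}^{r_k}\zeta_iu_i\biggr\|_{n,r}=n^{-1/r}\biggl\|\sum_{i=1}^{r_k}\zeta_iu_i\biggr\|_r\leq n^{-1/r}\sum_{i=1}^{r_k}\abs{\zeta_i}=n^{-1/(r\vee 2)}\sum_{i=1}^{r_k}\abs{\zeta_i}.
\]
For $r\in[1,2)$, I would first apply the Hölder (power mean) inequality $\norm{v}_r\leq n^{1/r-1/2}\norm{v}_2$ on $\R^n$, then use $\bigl\|\sum_{i=1}^{r_k}\zeta_iu_i\bigr\|_2=\bigl(\sum_{i=1}^{r_k}\zeta_i^2\bigr)^{1/2}\leq\sum_{i=1}^{r_k}\abs{\zeta_i}$, to obtain
\[
\biggl\|\sum_{i=1}^{r_k}\zeta_iu_i\biggr\|_{n,r}\leq n^{-1/r}\cdot n^{1/r-1/2}\sum_{i=1}^{r_k}\abs{\zeta_i}=n^{-1/(r\vee 2)}\sum_{i=1}^{r_k}\abs{\zeta_i}.
\]

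Finally, since $r_k\leq k$ almost surely, it suffices to augment the $\zeta_i$'s with additional independent $N(0,1)$ variables so that $\sum_{i=1}^{r_k}\abs{\zeta_i}\leq\sum_{i=1}^k\abs{\zeta_i}$ on the right, where the $k$-term sum has distribution that does not depend on $\mathscr{S}_k$ and agrees (in law) with $\sum_{i=1}^k\abs{\xi_i}$. Combining the conditional bound with the law of total probability then gives, for every $t\geq 0$,
\[
\Pr\bigl(\norm{P_k\tilde{Z}^{k+1}}_{n,r}>t\bigr)=\E\Bigl\{\Pr\bigl(\norm{P_k\tilde{Z}^{k+1}}_{n,r}>t\,\big|\,\mathscr{S}_k\bigr)\Bigr\}\leq\Pr\biggl(n^{-1/(r\vee 2)}\sum_{i=1}^k\abs{\xi_i}>t\biggr),
\]
which is the desired stochastic domination. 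The only mildly delicate step is the measurable extension of $(u_i)_{i\leq r_k}$ (and the coupling of extra Gaussians), but this is standard and is already handled implicitly by the Gram--Schmidt construction in Remark~\ref{rem:gs}; no substantive obstacle is anticipated.
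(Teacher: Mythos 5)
Your proposal is correct and follows essentially the same route as the paper: condition on $\mathscr{S}_k$, represent $P_k\tilde{Z}^{k+1}$ conditionally as $\sum_{i=1}^{r_k}\zeta_i u_i$ over an $\mathscr{S}_k$-measurable orthonormal basis of $\Img(M_k)$, bound $\norm{\cdot}_{n,r}$ by $n^{-1/(r\vee 2)}\sum_{i=1}^{r_k}\abs{\zeta_i}$ via the triangle inequality and the $\ell_r$/$\ell_2$ comparison, and use $r_k\leq k$ to pad the sum. The only cosmetic differences are that the paper applies the single inequality $\norm{x}_{n,r}\leq n^{-1/(r\vee 2)}\norm{x}_2$ to each basis vector instead of splitting into the cases $r\geq 2$ and $r\in[1,2)$, and it identifies the conditional law of $P_k\tilde{Z}^{k+1}$ via Lemma~\ref{lem:conddist}(b) (conditional distribution of $\phi(X,Y)$ with $Y$ being $\mathscr{S}_k$-measurable) rather than part (c), which is the more natural citation for this step.
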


\deparskip
\begin{proof}
Note that $P_k$ is an $\mathscr{S}_k$-measurable 
projection matrix of rank $r_k\leq k$. Since $\tilde{Z}^{k+1}$ is independent of $\mathscr{S}_k$, it therefore has conditional distribution $N_n(0,I_n)$ given $\mathscr{S}_k$ by Remark~\ref{rem:indepcond}. Now let $\xi_1,\dotsc,\xi_k\iid N(0,1)$ be independent of $\mathscr{S}_k$ and let $\{\tilde{m}^1,\dotsc,\tilde{m}^{r_k}\}$ be any $\mathscr{S}_k$-measurable orthonormal basis of $\Img(M_k)=\Img(P_k)$, as in Remark~\ref{rem:gs}. Recall that if $Z\sim N_n(0,I_n)$ and $P\in\R^{n\times n}$ is a deterministic projection matrix of rank $p$, then $PZ\sim N(0,P)$, which is also the distribution of $\sum_{i=1}^p\zeta_i u_i$ when $\zeta_1,\dotsc,\zeta_p\iid N(0,1)$ and $\{u_1,\dotsc,u_p\}$ is any orthonormal basis of $\Img(P)$. We deduce from this and Lemma~\ref{lem:conddist}(b) that $P_k\tilde{Z}^{k+1}$ and $\sum_{i=1}^{r_k}\xi_i\tilde{m}^i$ both have conditional distribution $N_n(0,P_k)$ given $\mathscr{S}_k$. This implies that $\norm{P_k\tilde{Z}^{k+1}}_{n,r}\eqd\norm{\sum_{i=1}^{r_k}\xi_i\tilde{m}^i}_{n,r}$.

Now for all $x\in\R^n$, we have $\norm{x}_{n,r}=n^{-1/r}\norm{x}_r\leq n^{-1/(r\vee 2)}\norm{x}_2$ by H\"older's inequality and the fact that $\norm{{\cdot}}_{p'}\leq\norm{{\cdot}}_p$ for $1\leq p\leq p'$. Since $\norm{\tilde{m}^i}_2=1$ for all $i$ by definition, it follows from this and the triangle inequality for $\norm{{\cdot}}_{n,r}$ that $\norm{\sum_{i=1}^{r_k}\xi_i\tilde{m}^i}_{n,r}\leq\sum_{i=1}^{r_k}\,\abs{\xi_i}\,\norm{\tilde{m}^i}_{n,r}\leq\sum_{i=1}^{r_k}\,\abs{\xi_i}/n^{1/(r\vee 2)}\leq\sum_{i=1}^k\,\abs{\xi_i}/n^{1/(r\vee 2)}$. Combining this with the conclusion of the previous paragraph yields the result.
\end{proof}

\umparskip
\subsection{AMP with matrix-valued iterates}
\label{sec:AMPmatrix}
As mentioned in Section~\ref{sec:AMPsym}, state evolution characterisations can be obtained for more general abstract AMP recursions in which the iterates are matrices rather than vectors. Here, we will briefly describe the extended version of the asymmetric iteration~\eqref{eq:AMPnonsym}, which is used to establish the master theorem for GAMP in Section~\ref{sec:GAMPmaster}.

For $n,p\in\N$, let $W\in\R^{n\times p}$, $\beta\in\R^p$ and $\gamma\in\R^n$ be as in~\ref{ass:B0}. For $\ell_E,\ell_H\in\N$, let $(g_k,f_{k+1}:k\in\N_0)$ be two sequences of Lipschitz functions $g_k\colon\R^{\ell_E}\times\R\to\R^{\ell_H}$ and $f_{k+1}\colon\R^{\ell_H}\times\R\to\R^{\ell_E}$, which are applied row-wise to matrices. Given $Q^{-1}:=0\in\R^{n\times \ell_H}$, $B_0\in\R^{\ell_E\times \ell_H}$ and $M^0\in\R^{p\times \ell_E}$, inductively define 
\begin{equation}
\label{eq:AMPmatrix}
\begin{alignedat}{3}
E^k&:=WM^k-Q^{k-1}B_k^\top,\quad\;\;&Q^k&:=g_k(E^k,\gamma),\quad\;\;&C_k&:=
n^{-1}\textstyle\sum_{i=1}^n g_k'(E_i^k,\gamma_i),\\ 
H^{k+1}&:=W^\top Q^k-M^k\,C_k^\top,\quad\;\;&M^{k+1}&:=f_{k+1}(H^{k+1},\beta),\quad\;\;&B_{k+1}&:=
n^{-1}\textstyle\sum_{j=1}^p f_{k+1}'(H_j^{k+1},\beta_j)
\end{alignedat}
\end{equation}
for $k\in\N_0$. Here, $E_i^k$ and $H_j^{k+1}$ denote the $i^{th}$ and $j^{th}$ rows of $E^k\in\R^{n\times\ell_E}$ and $H^{k+1}\in\R^{p\times\ell_H}$ respectively, and $g_k'\colon\R^{\ell_E}\times\R\to\R^{\ell_H\times \ell_E}$ and $f_{k+1}'\colon\R^{\ell_H}\times\R\to\R^{\ell_E\times \ell_H}$ are bounded, Borel measurable functions that agree with the derivatives (Jacobians) of $g_k,f_{k+1}$ respectively with respect to their first arguments, wherever the latter are defined. 

Consider now a sequence of recursions~\eqref{eq:AMPmatrix} indexed by $n$ and $p\equiv p_n$ with $n/p\to\delta\in (0,\infty)$ as $n\to\infty$, and assume appropriate analogues of~\ref{ass:B0}--\ref{ass:B5} with $r\in [2,\infty)$. In particular, suppose in place of~\ref{ass:B2} that $(M^0)^\top M^0/n\cvc\Sigma_0$ for some non-negative definite $\Sigma_0\in\R^{\ell_E\times\ell_E}$, and that $p^{-1}\sum_{i=1}^p\sum_{j=1}^{\ell_E}\,\abs{M_{ij}^0}^r=O_c(1)$.
The state evolution recursion for~\eqref{eq:AMPmatrix} is then defined analogously to that in~\eqref{eq:statevolns}, via
\begin{equation}
\label{eq:statevolmat}
\begin{split}
\mathrm{T}_{k+1}&:=\E\bigl(g_k(G_k^\sigma,\bar{\gamma})^\top g_k(G_k^\sigma,\bar{\gamma})\bigr)\in\R^{\ell_H\times\ell_H},\\
\Sigma_{k+1}&:=\inv{\delta}\,\E\bigl(f_{k+1}(G_{k+1}^\tau,\bar{\beta})^\top f_{k+1}(G_{k+1}^\tau,\bar{\beta})\bigr)\in\R^{\ell_E\times\ell_E}
\end{split}
\end{equation}
for $k\in\N_0$, where we take $G_k^\sigma\sim N_{\ell_E}(0,\Sigma_k)$ to be independent of $\bar{\beta}\sim\pi_{\bar{\beta}}$, and $G_{k+1}^\tau\sim N_{\ell_H}(0,\mathrm{T}_{k+1})$ to be independent of $\bar{\gamma}\sim\pi_{\bar{\gamma}}$.

For $k\in\N_0$, it can be shown that the empirical distributions of the rows of $(E^k\;\gamma)$ and $(H^{k+1}\;\beta)$ converge completely in $d_r$ to $N_{\ell_E}(0,\Sigma_k)\otimes\pi_{\gamma}$ and $N_{\ell_H}(0,\mathrm{T}_{k+1})\otimes\pi_{\beta}$ respectively as $n,p\to\infty$ with $n/p\to\delta$. Similarly as in Remark~\ref{rem:bk}, these limiting distributions remain unchanged if one or both of $C_k,B_{k+1}$ are replaced with the deterministic matrices $\bar{C}_k:=\E\bigl(g_k'(G_k^\Sigma,\bar{\gamma})\bigr)$ and $\bar{B}_{k+1}:=\E\bigl(f_{k+1}'(G_{k+1}^{\mathrm{T}},\bar{\beta})\bigr)$ respectively. Moreover, by generalising the definitions~\eqref{eq:Sigmabarns}--\eqref{eq:Taubarns} of the limiting covariance matrices in line with~\eqref{eq:statevolmat}, one can obtain the $d_r$ limits of the joint empirical distributions for~\eqref{eq:AMPmatrix} above.

The proofs of these results are conceptually very similar to that of Theorem~\ref{thm:AMPnonsym}. For further details, see~\citet{JM13}, who first consider a generalisation of the symmetric iteration~\eqref{eq:AMPsym} with matrix-valued iterates, and then handle the asymmetric case by a reduction argument.

\hfparskip
\subsection{Proofs for Section~\ref{Sec:LowRank}}
\label{sec:LowRankproofs}
\begin{proof}[Proof of Theorem~\ref{thm:AMPlowsym}]
As described in the proof sketch on page~\getpagerefnumber{sketch:rankone}, we introduce the recursion~\eqref{eq:matsymabs} given by $u^1\equiv u^1(n)=W\hat{v}^0=Wg_0(v^0)$ and
\begin{align*}
u^{k+1}\equiv u^{k+1}(n)&=Wg_k(u^k+\mu_k v)-\tilde{b}_kg_{k-1}(u^{k-1}+\mu_{k-1}v)\\
&=Wf_k(u^k,v)-\tilde{b}_k f_{k-1}(u^{k-1},v)
\end{align*}
for $k,n\in\N$, where $f_k(x,y)=g_k(x+\mu_k y)$ and $f_k'(x,y)=g_k'(x+\mu_k y)$ for $x,y\in\R$, and $\tilde{b}_k\equiv\tilde{b}_k(n)=\abr{g_k'(u^k+\mu_k v)}_n=\abr{f_k'(u^k,v)}_n$. First, we verify that this is an iteration of the form~\eqref{eq:AMPsym} to which we can apply the master theorems from Section~\ref{sec:AMPsym} for symmetric AMP. Indeed, it follows from~\ref{ass:S0} and~\ref{ass:S1} respectively that~\eqref{eq:matsymabs} satisfies~\ref{ass:A0} and~\ref{ass:A1+}, where the latter holds with $m^0=v$, $\gamma=v$, $\bar{\gamma}=V\sim\pi$, $\bar{\eta}=U$ and $\tilde{f}_0(x,y)=f_0(\mu_0 x+\sigma_0 y)$ for $x,y\in\R$. By Remark~\ref{rem:A4'},~\ref{ass:A1+} implies that~\ref{ass:A1}--\ref{ass:A3} hold with $r=2$ and $\tau_1=\clim_{n\to\infty}\norm{\hat{v}^0}_n^2=\sigma_1^2$. As verified in~\eqref{eq:statevolrankone}, the state evolution parameters $(\tau_k:k\in\N)$ for~\eqref{eq:matsymabs} satisfy $\tau_k^2=\sigma_k^2$ for all $k$ in view of~\eqref{eq:statevolsymat}. Finally, by~\ref{ass:S2}, each $f_k\colon\R^2\to\R$ is Lipschitz and the corresponding $f_k'$ satisfies~\ref{ass:A5}.

Consequently, for each $k\in\N$, it follows from Theorem~\ref{thm:masterext} that
\[\sup_{\psi\in\PL_{k+1}(2,1)}\;\biggl|\frac{1}{n}\sum_{i=1}^n\psi(v_i^0,v_i^1,\dotsc,v_i^k,v_i)-\E\bigl(\psi(\mu_0 V+\sigma_0 U,\sigma_1 G_1,\dotsc,\sigma_k G_k,V)\bigr)\biggr|\cvc 0\]
as $n\to\infty$, where $(\sigma_1 G_1,\dotsc,\sigma_k G_k)\sim N_k(0,\bar{\Sigma}^{[k]})$ is taken to be independent of $(U,V)$ from~\ref{ass:S1}. Since $\Phi_k\colon(x_1,\dotsc,x_k,y)\mapsto (x_1+\mu_1 y,\dotsc,x_k+\mu_k y,y)$ is a linear map with Lipschitz constant $\tilde{L}_k:=\norm{(\mu_1,\dotsc,\mu_k,1)}$, we have $\tilde{L}_k^{-2}(\psi\circ\Phi_k)\in\PL_{k+1}(2,1)$ whenever $\psi\in\PL_{k+1}(2,1)$,
so it follows from the display above that
\begin{align}
&\sup_{\psi\in\PL_{k+2}(2,1)}\;\biggl|\frac{1}{n}\sum_{i=1}^n\psi(v_i^0,v_i^1+\mu_1 v_i,\dotsc,v_i^k+\mu_k v_i,v_i)\notag\\
\label{eq:matsymdeltaconv}
&\hspace{3cm}-\E\bigl(\psi(\mu_0 V+\sigma_0 U,\mu_1 V+\sigma_1 G_1,\dotsc,\mu_k V+\sigma_k G_k,V)\bigr)\biggr|\cvc 0
\end{align}
as $n\to\infty$. Defining $\tilde{\Delta}^k\equiv\tilde{\Delta}^k(n):=v^k-(u^k+\mu_k v)\in\R^n$ for $k,n\in\N$, we can apply Lemma~\ref{lem:pseudoavg} to see that
\begin{align}
&\sup_{\psi\in\PL_{k+2}(2,1)}\;\biggl|\frac{1}{n}\sum_{i=1}^n\psi(v_i^0,v_i^1,\dotsc,v_i^k,v_i)-\psi(v_i^0,v_i^1+\mu_1 v_i,\dotsc,v_i^k+\mu_k v_i,v_i)\biggr|\notag\\
&\hspace{2.5cm}\leq \rbr{\sum_{\ell=1}^k\norm{\tilde{\Delta}^\ell}_n^2}^{1/2}\biggl(1+\sum_{\ell=1}^k\,\bigl(\norm{v^\ell}_n+\norm{u^\ell+\mu_\ell v}_n\bigr)+2\bigl(\norm{v^0}_n+\norm{v}_n\bigr)\biggr)\notag\\
\label{eq:Deltatilde}
&\hspace{2.5cm}\leq \rbr{\sum_{\ell=1}^k\norm{\tilde{\Delta}^\ell}_n^2}^{1/2}\biggl(1+\sum_{\ell=1}^k\,\bigl(\norm{\tilde{\Delta}^\ell}_n+2\norm{u^\ell+\mu_\ell v}_n\bigr)+2\bigl(\norm{v^0}_n+\norm{v}_n\bigr)\biggr)
\end{align}
for all $k$ and $n$, where $\norm{{\cdot}}_n\equiv\norm{{\cdot}}_{n,2}=n^{-1/2}\,\norm{{\cdot}}$ on $\R^n$. For every $\ell\in\N$, it follows from~\ref{ass:S1} and~\eqref{eq:matsymdeltaconv} that
\begin{align}
&\norm{v^0}_n\cvc\E\bigl((\mu_0 V+\sigma_0 U)^2\bigr)=\mu_0^2+\sigma_0^2,\qquad\norm{v}_n^2\to\E(V^2)=1\notag\\
\label{eq:vtildenorm}
\text{and}\quad&\norm{u^\ell+\mu_\ell v}_n^2=\frac{1}{n}\sum_{i=1}^n\,(u_i^\ell+\mu_\ell v_i)^2\cvc\E\bigl((\mu_\ell V+\sigma_\ell G_\ell)^2\bigr)=\mu_\ell^2+\sigma_\ell^2
\end{align}
as $n\to\infty$. We will now establish by induction on $k\in\N$ that
\begin{equation}
\label{eq:Deltatildenorm}
\norm{\tilde{\Delta}^k}_n=\norm{v^k-(u^k+\mu_k v)}_n\cvc 0\quad\text{as }n\to\infty
\end{equation}
and hence that the conclusion~\eqref{eq:AMPlowsym} of Theorem~\ref{thm:AMPlowsym} holds for every $k$. For the base case $k=1$, we have $\norm{v}_n\cvc 1$ by~\eqref{eq:vconv} and $\lambda\ipr{\hat{v}^0}{v}_n\cvc\mu_1$ by~\eqref{eq:musigma1}, so
\[\norm{\tilde{\Delta}^1}_n=\norm{v^1-(u^1+\mu_1 v)}_n=\norm{A\hat{v}^0-(W\hat{v}^0+\mu_1 v)}_n=\abs{\lambda\ipr{\hat{v}^0}{v}_n-\mu_1}\,\norm{v}_n\cvc 0\]
as $n\to\infty$. It follows from this and~\eqref{eq:matsymdeltaconv}--\eqref{eq:vtildenorm} that~\eqref{eq:AMPlowsym} holds when $k=1$. For a general $k\geq 2$, we write
\begin{align}
\tilde{\Delta}^{k+1}\equiv\tilde{\Delta}^{k+1}(n)&=v^{k+1}-(u^{k+1}+\mu_{k+1}v)\notag\\
&=Ag_k(v^k)-b_k g_{k-1}(v^{k-1})-\bigl(Wg_k(u^k+\mu_k v)-\tilde{b}_k g_{k-1}(u^{k-1}+\mu_{k-1}v)+\mu_{k+1}v\bigr)\notag\\
&=\bigl(\lambda\ipr{v}{g_k(v^k)}_n-\mu_{k+1}\bigr)v+W\bigl(g_k(v^k)-g_k(u^k+\mu_k v)\bigr)\notag\\
&\hspace{4.48cm}+\bigl(\tilde{b}_k g_{k-1}(u^{k-1}+\mu_{k-1}v)-b_k g_{k-1}(v^{k-1})\bigr)\notag\\
\label{eq:Rn123}
&=:R_{n1}+R_{n2}+R_{n3}
\end{align}
for each $n\in\N$, and consider $R_{n1},R_{n2},R_{n3}$ in turn. First, since $(x_1,\dotsc,x_k,y)\mapsto yg_k(x_k)$ belongs to $\PL_{k+1}(2)$ in view of Lemma~\ref{lem:pseudoprod}, it follows from the inductive hypothesis~\eqref{eq:AMPlowsym} and the definition of $\mu_{k+1}$ in~\eqref{eq:statevolsymat} that 
$\lambda\ipr{v}{g_k(v^k)}_n=n^{-1}\sum_{i=1}^n\lambda v_i\,g_k(v_i^k)\cvc\lambda\E\bigl(Vg_k(\mu_k V+\sigma_k G_k)\bigr)=\mu_{k+1}$. Together with~\eqref{eq:vtildenorm}, this implies that $\norm{R_{n1}}_n=\abs{\lambda\ipr{v}{g_k(v^k)}_n-\mu_{k+1}}\,\norm{v}_n\cvc 0$ as $n\to\infty$. 

Next, since $\norm{W}\equiv\norm{W}_{2\to 2}=O_c(1)$~\citep[e.g.][]{AGZ10,KY13} and $g_k$ is $L_k$-Lipschitz for some $L_k>0$, the inductive hypothesis~\eqref{eq:Deltatildenorm} ensures that
\[\norm{R_{n2}}_n\leq\norm{W}\,\norm{g_k(v^k)-g_k(u^k+\mu_k v)}_n\leq L_k\norm{W}\,\norm{v^k-(u^k+\mu_k v)}_n=L_k\norm{W}\,\norm{\tilde{\Delta}^k}_n\cvc 0\]
as $n\to\infty$. Similarly, $\norm{\tilde{\Delta}^{k-1}}_n\cvc 0$ by induction and $g_{k-1}$ is $L_{k-1}$-Lipschitz for some $L_{k-1}>0$, so as a first step towards controlling $\norm{R_{n3}}_n$, we have \[\norm{g_{k-1}(u^{k-1}+\mu_{k-1}v)-g_{k-1}(v^{k-1})}_n\leq L_{k-1}\norm{\tilde{\Delta}^{k-1}}_n\cvc 0.\]
Note also that since $(x_1,\dotsc,x_k,y)\mapsto g_{k-1}(x_{k-1})^2$ lies in $\PL_{k+1}(2)$ by Lemma~\ref{lem:pseudoprod}, it follows from~\eqref{eq:matsymdeltaconv} that \[\norm{g_{k-1}(u^{k-1}+\mu_{k-1}v)}_n^2\cvc\E\bigl(g_{k-1}(\mu_{k-1}V+\sigma_{k-1}G_{k-1})^2\bigr)=\sigma_k^2\]
as $n\to\infty$. Furthermore, since $g_k'$ satisfies~\ref{ass:S2}, we can apply the inductive hypothesis~\eqref{eq:AMPlowsym} and argue as in the proof of Proposition~\ref{prop:AMPproof}(f) to see that
$b_k\equiv b_k(n)=\abr{g_k'(v^k)}_n=n^{-1}\sum_{i=1}^n g_k'(v_i^k)\cvc\E\bigl(g_k'(\mu_kV+\sigma_k G_k)\bigr)$. Similar reasoning based on~\eqref{eq:matsymdeltaconv} yields $\tilde{b}_k\equiv\tilde{b}_k(n)=n^{-1}\sum_{i=1}^n g_k'(v_i^k+\mu_k v_i)\cvc\E\bigl(g_k'(\mu_kV+\sigma_k G_k)\bigr)$, so $\tilde{b}_k(n)-b_k(n)\cvc 0$ as $n\to\infty$.
Putting everything together, we conclude that \[\norm{R_{n3}}_n\leq\abs{\tilde{b}_k-b_k}\norm{g_{k-1}(u^{k-1}+\mu_{k-1}v)}_n+\abs{b_k}\norm{g_{k-1}(u^{k-1}+\mu_{k-1}v)-g_{k-1}(v^{k-1})}_n\cvc 0,\]
and hence that $\norm{\tilde{\Delta}^{k+1}}_n\leq\norm{R_{n1}}_n+\norm{R_{n2}}_n+\norm{R_{n3}}_n\cvc 0$ as $n\to\infty$. Combining this with~\eqref{eq:matsymdeltaconv}--\eqref{eq:vtildenorm} yields the desired conclusion~\eqref{eq:AMPlowsym}, so the inductive step is complete.
\end{proof}

\deparskip
\begin{proof}[Proof of Corollary~\ref{cor:AMPlowsym}]
For $\psi\in\PL_2(2)$, note that since $g_k\colon\R\to\R$ is Lipschitz by assumption, $(x_0,x_1,\dotsc,x_k,y)\mapsto\psi\bigl(g_k(x_k),y\bigr)$ is a $\PL_{k+2}(2)$ function to which we can apply~\eqref{eq:AMPlowsym}. This yields~\eqref{eq:asymdev}, which specialises to~\eqref{eq:asymmse} when $\psi=\psi_2\colon (x,y)\mapsto (x-y)^2$ is squared error loss. Finally, by considering the $\PL_2(2)$ functions $(x,y)\mapsto yg_k(x)$, $(x,y)\mapsto g_k(x)^2$ and $(x,y)\mapsto y^2$, we deduce from~\eqref{eq:asymdev} that as $n\to\infty$, we have \[\ipr{\hat{v}^k}{v}_n\cvc\E\bigl(Vg_k(\mu_k V+\sigma_k G_k)\bigr)=\mu_{k+1}\]
as in the paragraph after~\eqref{eq:Rn123} above,
\[\norm{\hat{v}^k}_n^2=\frac{1}{n}\sum_{i=1}^n g_k(v_i^k)^2\cvc\E\bigl(g_k(\mu_k V+\sigma_k G_k)^2\bigr)=\sigma_{k+1}^2\] 
and $\norm{v}_n^2\cvc\E(V^2)=1$ as in~\eqref{eq:vconv}. Combining these, we obtain~\eqref{eq:asymcorr}.
\end{proof}

\deparskip
\begin{proof}[Proof of Lemma~\ref{lem:tweedie}]
Fix $\mu\neq 0$ and $\sigma>0$, and let $V\sim\pi$ and $G\sim N(0,1)$ be independent. Then $\mu V+\sigma G$ has Lebesgue density $y\mapsto p(y):=\int_\R\phi_\sigma(y-\mu x)\,d\pi(x)>0$, where $\phi_\sigma\colon z\mapsto (\sqrt{2\pi}\sigma)^{-1}e^{-z^2/(2\sigma^2)}$ is the density of $\sigma G\sim N(0,\sigma^2)$. Moreover, since all the derivatives of $\phi_\sigma$ are bounded on $\R$, we can differentiate repeatedly under the integral sign to see that $p^{(j)}(y)=\int_\R\phi_\sigma^{(j)}(y-\mu x)\,d\pi(x)$ for all $y$ and $j\in\N_0$, so $p$ is a smooth function on $\R$. For each $y\in\R$, define $\pi_y$ to be the distribution on $\R$ with density (i.e.\ Radon--Nikodym derivative)
\begin{equation}
\label{eq:posterior}
\frac{d\pi_y}{d\pi}\colon x\mapsto\frac{\phi_\sigma(y-\mu x)}{\int_\R\phi_\sigma(y-\mu x')\,d\pi(x')}=\frac{\phi_\sigma(y-\mu x)}{p(y)}
\end{equation}
with respect to $\pi$. It is easily verified that $\pi_y$ is the ``conditional distribution of $V$ given $\mu V+\sigma G=y$'', formally in the sense of Remark~\ref{rem:condkern}(II). 
It follows from this and~\citet[Theorem~10.2.5]{Dud02} that taking $V_y\sim\pi_y$ and defining $g(y):=\E(V_y)=\int_\R x\,d\pi_y(x)$ for $y\in\R$, we have $\E(V\,|\,\mu V+\sigma G)=g(\mu V+\sigma G)$. For each $y$, note that
\begin{equation}
\label{eq:tweedie1}
g(y)=\frac{\int_\R x\,\phi_\sigma(y-\mu x)\,d\pi(x)}{\int_\R\phi_\sigma(y-\mu x)\,d\pi(x)}=\frac{\int_\R\mu^{-1}\bigl(y\phi_\sigma(y-\mu x)+\phi_\sigma'(y-\mu x)\bigr)\,d\pi(x)}{\int_\R\phi_\sigma(y-\mu x)\,d\pi(x)}=\frac{y+\sigma^2(\log p)'(y)}{\mu},
\end{equation}
so~\eqref{eq:tweedie} holds and $g$ is infinitely differentiable on $\R$, and by similar calculations,
\[g'(y)
=\frac{1+\sigma^2(\log p)''(y)}{\mu}=\frac{\mu}{\sigma^2}\frac{\sigma^2\bigl(1+\sigma^2(\log p)''(y)\bigr)}{\mu^2}=\frac{\mu}{\sigma^2}\Var(V_y)\geq 0.\]
We now consider in turn the two conditions on $\pi$ in the statement of the lemma.

\unparskip
\begin{enumerate}[label=(\roman*)]
\item If $V\sim\pi$ has a log-concave density, then the density $p$ of $\mu V+\sigma G$ is also log-concave~\citep{Pre80}, so $(\log p)''\leq 0$ on $\R$. Thus, $0\leq g'\leq\abs{\mu}^{-1}$ on $\R$, so $g$ is 
Lipschitz with constant $\abs{\mu}^{-1}$.
\item Suppose first that $\pi$ is supported on a compact interval $[a,b]$. Then for each $y\in\R$, the distribution $\pi_y$ has a density with respect to $\pi$ (by definition), so it is also supported on $[a,b]$. Thus, $\Var(V_y)\leq\E\bigl\{\bigl(V_y-(a+b)/2\bigr)^2\bigr\}\leq (b-a)^2/4$ for all $y$, whence $g$ is Lipschitz with constant $\abs{\mu}(b-a)^2/(4\sigma^2)$, and
\begin{equation}
\label{eq:tweedie2}
{-1}\leq\sigma^2(\log p)''(y)\leq\frac{\mu^2(b-a)^2}{4\sigma^2}-1.
\end{equation}
More generally, suppose that $\pi$ is the distribution of $U_0+V_0$, where $U_0\sim N(0,\sigma_0^2)$ with $\sigma_0\geq 0$, and $V_0\sim\pi_0$ is independent of $U_0$ and supported on some compact interval $[a,b]$. Then $p$ is the density of $\mu V+\sigma G\eqd\mu U_0+\sqrt{\sigma^2+\mu^2\sigma_0^2}\,G$, so it follows from~\eqref{eq:tweedie1} and~\eqref{eq:tweedie2} that
\[\frac{1}{\abs{\mu}}\biggl(1-\frac{\sigma^2}{\sigma^2+\mu^2\sigma_0^2}\biggr)\leq \abs{g'}\leq\frac{1}{\abs{\mu}}\,\biggl\{1+\frac{\sigma^2}{\sigma^2+\mu^2\sigma_0^2}\biggl(\frac{\mu^2(b-a)^2}{4(\sigma^2+\mu^2\sigma_0^2)}-1\biggr)\biggr\}\]
on $\R$. The expression on the right hand side is therefore a Lipschitz constant for $g$.
\end{enumerate}

\vspace{-1.2cm}
\end{proof}

\deparskip
\begin{proof}[Proof of Lemma~\ref{lem:snrmax}]
Let $\psi\colon\R^2\to [0,\infty)$ be any measurable loss function, and fix $s_1,s_2\in (0,\infty)$ with $s_1>s_2$. Taking $G'\sim N(0,s_1^2-s_2^2)$, $V\sim\pi$ and $G\sim N(0,1)$ to be jointly independent, we first claim that
\begin{equation}
\label{eq:snrmax}
R_{\pi,\psi}(s_2^{-2})=\inf_g\,\E\bigl\{\psi\bigl(g(V+s_2 G),V\bigr)\bigr\}=\inf_{\tilde{g}}\,\E\bigl\{\psi\bigl(\tilde{g}(V+s_2 G,G'),V\bigr)\bigr\},
\end{equation}
where the infima are taken over all measurable $g\colon\R\to\R$ and $\tilde{g}\colon\R^2\to\R$ respectively. Indeed, the first equality holds since $V+s_2 G=s_2(\sqrt{\rho}V+G)$ when $\rho=s_2^{-2}$, and the middle expression is clearly bounded below by the final one, so it remains to prove the reverse inequality. For any fixed $\tilde{g}\colon\R^2\to\R$, we have \[\tilde{\Psi}(a):=\E\bigl\{\psi\bigl(\tilde{g}(V+s_2 G,a),V\bigr)\bigr\}\geq\inf_g\E\bigl\{\psi\bigl(g(V+s_2 G),V\bigr)\bigr\}=R_{\pi,\psi}(s_2^{-2})\] 
for all $a\in\R$, so it follows from Lemma~\ref{lem:indeplem} that $\E\bigl\{\psi\bigl(\tilde{g}(V+s_2 G,G'),V\bigr)\bigr\}=\E\bigl(\tilde{\Psi}(G')\bigr)\geq R_{\pi,\psi}(s_2^{-2})$, and hence that~\eqref{eq:snrmax} holds. Since $(V,V+s_2 G+G')\eqd (V,V+s_1 G)$, we deduce that
\begin{align*}
R_{\pi,\psi}(s_2^{-2})=\inf_{\tilde{g}}\,\E\bigl\{\psi\bigl(\tilde{g}(V+s_2 G,G'),V\bigr)\bigr\}&\leq\inf_g\,\E\bigl\{\psi\bigl(g(V+s_2 G+G'),V\bigr)\bigr\}\\
&=\inf_g\,\E\bigl\{\psi\bigl(g(V+s_1 G),V\bigr)\bigr\}=R_{\pi,\psi}(s_1^{-2}).
\end{align*}
In addition, arguing as above for~\eqref{eq:snrmax}, we have
\[R_{\pi,\psi}(s_1^{-2})=\inf_g\,\E\bigl\{\psi\bigl(g(V+s_1 G),V\bigr)\bigr\}\leq\inf_{a\in\R}\,\E\bigl(\psi(a,V)\bigr)=\inf_g\,\E\bigl\{\psi\bigl(g(G),V\bigr)\bigr\}=R_{\pi,\psi}(0).\]
Thus, $\rho\mapsto R_{\pi,\psi}(\rho)$ is non-increasing on $[0,\infty)$.

Finally, fix $\rho\in (0,\infty)$ and for each $y\in\R$, let $V_y\sim\pi_y$ be a random variable whose density with respect to $\pi$ is given by~\eqref{eq:posterior} with $\mu=\sqrt{\rho}$ and $\sigma=1$, so that $\pi_y$ is the conditional (i.e.\ posterior) distribution of $V$ given $\sqrt{\rho}V+G=y$. It follows from~\citet[Theorem~3]{BP73} that if the posterior risk function \[r_y\colon a\mapsto\E\bigl(\psi(a,V_y)\bigr)\]
attains its infimum on $\R$ for Lebesgue almost every $y\in\R$, then there exists a measurable $g^*\equiv g_\rho^*\colon\R\to\R$ with $g^*(y)\in\argmin_{a\in\R}\E\bigl(\psi(a,V_y)\bigr)$ for Lebesgue almost every $y\in\R$, whence $R_{\pi,\psi}(\rho)=\E\bigl\{\psi\bigl(g^*(\sqrt{\rho}V+G),V\bigr)\bigr\}$. This is the case (for every $\rho$) if $\psi(x,y)=\Psi(x-y)$ for some convex function $\Psi$ with $\Psi(u)\to\infty$ as $\abs{u}\to\infty$, in which case $r_y\colon a\mapsto\E\bigl(\psi(V_y,a)\bigr)$ is convex with $r_y(a)\to\infty$ as $\abs{a}\to\infty$, for each $y\in\R$.
\end{proof}

\unparskip
\begin{corollary}
\label{cor:mmse}
Given independent random variables $V\sim\pi$ and $G\sim N(0,1)$, the function $\rho\mapsto\mmse(\rho):=\E\bigl\{\bigl(V-\E(V\,|\,\sqrt{\rho}V+G)\bigr)^2\bigr\}$ is non-increasing on $[0,\infty)$. Moreover, if $V$ satisfies one of the conditions of Lemma~\ref{lem:tweedie}, then $\rho\mapsto\mmse(\rho)$ is also continuous on $(0,\infty)$.
\end{corollary}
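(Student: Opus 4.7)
The first assertion is immediate from Lemma~\ref{lem:snrmax}: squared-error loss $\psi_2\colon(x,y)\mapsto(x-y)^2$ is of the form $\Psi(x-y)$ with $\Psi$ convex and coercive, so $\mmse(\rho)=R_{\pi,\psi_2}(\rho)$ is non-increasing on $[0,\infty)$ and the infimum is attained by $g_\rho^*(y)=\E(V\mid\sqrt{\rho}V+G=y)$.

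For continuity on $(0,\infty)$, the plan is to use the orthogonality identity $\mmse(\rho)=\E(V^2)-\E\bigl(g_\rho^*(Y_\rho)^2\bigr)$, where $Y_\rho:=\sqrt{\rho}V+G$, and prove that $\rho\mapsto\E\bigl(g_\rho^*(Y_\rho)^2\bigr)$ is continuous. Fix $\rho_0>0$ and a sequence $\rho_n\to\rho_0$, which we may assume lies in a compact interval $[a,b]\subset(0,\infty)$. The strategy is to show $g_{\rho_n}^*(Y_{\rho_n})\cvas g_{\rho_0}^*(Y_{\rho_0})$ and then pass to the limit by dominated convergence. Two facts about the Tweedie formula $g_\rho^*(y)=\bigl(y+(\log p_\rho)'(y)\bigr)/\sqrt{\rho}$, with $p_\rho(y)=\int_{\R}\phi(y-\sqrt{\rho}x)\,d\pi(x)$, are the key inputs: (i) a uniform Lipschitz bound for $g_\rho^*$ as $\rho$ varies in $[a,b]$, which the proof of Lemma~\ref{lem:tweedie} supplies, since the Lipschitz constant emerges as $1/\sqrt{\rho}$ in the log-concave case and as an explicit continuous function of $\rho$ in the convolution case; and (ii) pointwise-in-$y$ continuity in $\rho$ of both $p_\rho(y)$ and $p_\rho'(y)$ (immediate from dominated convergence, using the super-polynomial decay of $\phi$ and $\phi'$), which combined with positivity of $p_\rho$ yields $g_\rho^*(y)\to g_{\rho_0}^*(y)$ pointwise.

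Combining (i) and (ii) upgrades the pointwise convergence to uniform convergence on compact subsets of $\R$, and since $Y_{\rho_n}\to Y_{\rho_0}$ almost surely (with $V,G$ held fixed across $n$), a short triangle-inequality argument then gives $g_{\rho_n}^*(Y_{\rho_n})\cvas g_{\rho_0}^*(Y_{\rho_0})$. The uniform Lipschitz bound also supplies a square-integrable envelope $|g_{\rho_n}^*(Y_{\rho_n})|\lesssim 1+|V|+|G|$, which together with $V\in L^2$ (log-concave densities have moments of all orders, and the convolution case is immediate) lets dominated convergence close the argument. The main obstacle is (i): one must track how the Lipschitz constant in case~(ii) of Lemma~\ref{lem:tweedie} depends on $\rho$ and verify that it remains bounded on $[a,b]\subset(0,\infty)$. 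A slicker but incomplete shortcut would be to use $g_{\rho_0}^*$ as a test function to obtain upper semicontinuity $\limsup_n\mmse(\rho_n)\leq\mmse(\rho_0)$ directly; combined with monotonicity this yields left-continuity for free, but right-continuity still requires a lower-semicontinuity argument of the type sketched above.
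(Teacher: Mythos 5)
Your first assertion is proved exactly as in the paper: the orthogonal-projection characterisation of the conditional mean identifies $\mmse(\rho)$ with $R_{\pi,\psi_2}(\rho)$, and Lemma~\ref{lem:snrmax} gives monotonicity. For continuity, however, you take a genuinely different route. The paper avoids any analysis of Tweedie's formula in $\rho$: for $s_1>s_2>0$ it writes $(V,V+s_1G)\eqd(V,V+s_2G+G')$ with $G'\sim N(0,s_1^2-s_2^2)$ independent of $(V,G)$, plugs the Lipschitz posterior mean $g_2^*$ at the \emph{higher} SNR $s_2^{-2}$ into the mmse at the lower SNR $s_1^{-2}$, and uses the orthogonality of $V-g_2^*(V+s_2G)$ to all square-integrable functions of $(V+s_2G,G')$ to get the Pythagorean split
\[
\mmse(s_1^{-2})\;\leq\;\mmse(s_2^{-2})+\E\bigl\{\bigl(g_2^*(V+s_2G)-g_2^*(V+s_2G+G')\bigr)^2\bigr\}\;\lesssim_\pi\;\mmse(s_2^{-2})+(1\vee s_2^{-2})^2\,\E(G'^2),
\]
with the Lipschitz constant from Lemma~\ref{lem:tweedie}. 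Together with monotonicity this gives a two-sided bound, i.e.\ \emph{local Lipschitz} continuity of $\mmse$ on $(\rho',\infty)$ for every $\rho'>0$ — note this single inequality is precisely the quantitative form of the "lower-semicontinuity half" that your shortcut remark identifies as the missing piece, so your upper-semicontinuity observation plus this coupling trick closes the argument much more cheaply than the full Tweedie analysis. Your route instead establishes plain continuity via $\mmse(\rho)=\E(V^2)-\E\bigl(g_\rho^*(Y_\rho)^2\bigr)$, pointwise continuity of $p_\rho,p_\rho'$ in $\rho$, an equi-Lipschitz bound for $g_\rho^*$ over compact $\rho$-intervals (available, as you say, from the explicit constants in the proof of Lemma~\ref{lem:tweedie}), and dominated convergence; this works, but be aware that the Lipschitz bound alone gives $|g_\rho^*(y)|\leq|g_\rho^*(0)|+L|y|$, so your envelope also needs $\sup_{\rho\in[a,b]}|g_\rho^*(0)|<\infty$, which follows from the continuity and strict positivity of $\rho\mapsto p_\rho(0)$ (or, alternatively, replace the envelope by the uniform integrability of $\E(V\,|\,Y_{\rho_n})^2\leq\E(V^2\,|\,Y_{\rho_n})$). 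In short: your argument is correct but delivers only continuity and requires tracking the $\rho$-dependence of Tweedie's formula, whereas the paper's comparison argument is shorter and yields an explicit modulus of continuity.
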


\deparskip
\begin{proof}[Proof of Corollary~\ref{cor:mmse}]
Recall that whenever $X,Y$ are random variables with $\E(X^2)<\infty$, it follows from an orthogonal decomposition of the type~\eqref{eq:ortho} that
$\E\bigl\{\bigl(X-\E(X\,|\,Y)\bigr)^2\bigr\}=\min_g\E\bigl\{\bigl(X-g(Y)\bigr)^2\bigr\}$, where the minimum is over all measurable functions $g\colon\R\to\R$. Thus,  by Lemma~\ref{lem:snrmax}, $\rho\mapsto\mmse(\rho)$ is non-increasing on $[0,\infty)$.

Now fix $s_1>s_2>0$, and as in the proof of Lemma~\ref{lem:snrmax}, let $G'\sim N(0,s_1^2-s_2^2)$, $V\sim\pi$ and $G\sim N(0,1)$ be jointly independent, so that $(V,V+s_2 G+G')\eqd (V,V+s_1 G)$. Then under the conditions of Lemma~\ref{lem:tweedie}, it follows from (i) and (ii) in its proof that there exists a Lipschitz $g_2^*\colon\R\to\R$ with $g_2^*(V+s_2 G)=\E(V\,|\,V+s_2 G)=\E(V\,|\,s_2^{-1}V+G)$ and Lipschitz constant $L_{s_2}\leq C_{\pi}(1\vee s_2^{-2})$, where $C_{\pi}>0$ depends only on $\pi$. Thus,
\begin{align}
\mmse(s_2^{-2})\leq\mmse(s_1^{-2})&=\min_g\,\E\bigl\{\bigl(V-g(V+s_1 G)\bigr)^2\bigr\}\notag\\
&\leq\E\bigl\{\bigl(V-g_2^*(V+s_1 G)\bigr)^2\bigr\}\notag\\
&=\E\bigl\{\bigl(V-g_2^*(V+s_2 G+G')\bigr)^2\bigr\}\notag\\
&=\E\bigl\{\bigl(V-g_2^*(V+s_2 G)\bigr)^2\bigr\}+\E\bigl\{\bigl(g_2^*(V+s_2 G)-g_2^*(V+s_2 G+G')\bigr)^2\bigr\}\notag\\
\label{eq:mmse}
&\leq\mmse(s_2^{-2})+L_{s_2}\,\E(\abs{G'}^2)\notag\\
&\leq\mmse(s_2^{-2})+C_{\pi}\,(s_1^2s_2^2\vee s_1^2)(s_2^{-2}-s_1^{-2}).
\end{align}
To justify the equality in the third-last line, note that $g_2^*(V+s_2 G)=\E(V\,|\,V+s_2 G,G')$ by the independence of $G'$ and $(V,G)$, so for any measurable $\tilde{g}\colon\R^2\to\R$ with $\E\bigl(\tilde{g}(V+s_2 G,G')^2\bigr)<\infty$, we have $\E\bigl\{\bigl(V-g_2^*(V+s_2 G)\bigr)\,\tilde{g}(V+s_2 G,G')\bigr\}=0$. We deduce from~\eqref{eq:mmse} that $\rho\mapsto\mmse(\rho)$ is Lipschitz on $(\rho',\infty)$ for every $\rho'>0$, and hence that it is continuous on $(0,\infty)$.
\end{proof}

\deparskip
\begin{proof}[Proof of Corollary~\ref{cor:snrbayes}]
Given any sequence of functions $(g_k)$ for which the corresponding AMP iterations~\eqref{eq:AMPmatsym} satisfy the hypotheses of Theorem~\ref{thm:AMPlowsym} or~\ref{thm:spectral}, we prove~\eqref{eq:snrbayes} by induction on $k\in\N_0$. For each such $k$, it follows from~\eqref{eq:asymcorr} and~\eqref{eq:snrCS} that as $n\to\infty$, we have
\[\frac{\abs{\ipr{\hat{v}^k}{v}_n}}{\norm{\hat{v}^k}_n\norm{v}_n}\cvc\frac{\sqrt{\rho_{k+1}}}{\lambda}=\frac{\bigl|\E\bigl(Vg_k(\mu_k V+\sigma_k G_k)\bigr)\bigr|}{\sqrt{\E\bigl(g_k(\mu_k V+\sigma_k G_k)^2\bigr)}}\leq\sqrt{\E\bigl(\E(V\,|\,\mu_k V+\sigma_k G_k)^2\bigr)}=\sqrt{1-\mmse_k(\rho_k)},\]
where we set $\rho_0\equiv\rho_0^*=(\mu_0/\sigma_0)^2$ and write $G_0$ for the random variable $U$ from~\ref{ass:S1} when $k=0$. Now $\sqrt{1-\mmse_0(\rho_0^*)}=\sqrt{\rho_1^*}/\lambda$ by~\eqref{eq:gammak}, so $\rho_1\leq\rho_1^*$ and~\eqref{eq:snrbayes} holds when $k=0$. For a general $k\in\N$, we have $\rho_k\leq\rho_k^*$ by induction, so since $\rho\mapsto\mmse(\rho)\equiv\mmse_k(\rho)$ is non-increasing by Corollary~\ref{cor:mmse}, we deduce that $\sqrt{1-\mmse(\rho_k)}\leq\sqrt{1-\mmse(\rho_k^*)}=\sqrt{\rho_{k+1}^*}/\lambda$ and hence that $\rho_{k+1}\leq\rho_{k+1}^*$. This completes the inductive step for~\eqref{eq:snrbayes}.

As for~\eqref{eq:bayesdev}, we can apply~\eqref{eq:asymdev}, the definition of $R_{\pi,\psi}(\rho)$, the fact that $\rho_k\leq\rho_k^*$ and Lemma~\ref{lem:snrmax} (in that order) to conclude that $n^{-1}\sum_{i=1}^n\psi(\hat{v}_i^k,v_i)\cvc\E\bigl\{\psi\bigl(g_k(\mu_k V+\sigma_k G_k),V\bigr)\bigr\}\geq R_{\psi,\pi}(\rho_k)\geq R_{\psi,\pi}(\rho_k^*)$, as required.
\end{proof}

\deparskip
\begin{proof}[Proof of Theorem~\ref{thm:bayesamp}]
Under the conditions of Lemma~\ref{lem:tweedie}, each $g_k^*\colon\R\to\R$ is Lipschitz and satisfies~\ref{ass:S2}, and by Corollary~\ref{cor:mmse}, $\rho\mapsto\lambda^2\bigl(1-\mmse(\rho)\bigr)=:m_\lambda(\rho)$ is non-decreasing on $[0,\infty)$ and continuous on $(0,\infty)$. 

(a) We will show that if either (i) or (ii) holds, then
\begin{equation}
\label{eq:snrfixed}
\begin{alignedat}{2}
&\rho_\AMP^*\equiv\rho_\AMP^*(\lambda):=\inf\{\rho>0:\rho=m_\lambda(\rho)\}>0,\qquad&&\rho_0^*\in [0,\rho_\AMP^*],\qquad\rho_1^*>0,\\
&m_\lambda(\rho)\geq\rho\;\text{for all }\rho\in [0,\rho_\AMP^*],
\qquad&&\rho_{k+1}^*=m_\lambda(\rho_k^*)\;\text{for all }k\in\N_0.
\end{alignedat}
\end{equation}
Note that since $m_\lambda(\rho)\leq\lambda^2$ for all $\rho\in [0,\infty)$, we always have $\rho_\AMP^*(\lambda)\leq\lambda^2$.

\unparskip
\begin{enumerate}[label=(\roman*)]
\item \emph{Non-spectral initialisation}: In this case,~\ref{ass:S1} holds with $\mu_0=0$, $\sigma_0=1$, $\rho_0^*=0$ and $U=1$. Since $\E(V)\neq 0$ and $\E(V^2)=1$ by~\ref{ass:S1}, $\rho_1^*=\lambda^2\bigl(1-\mmse_k(0)\bigr)=\lambda^2\bigl(1-\Var(V)\bigr)=\lambda^2\,\E(V)^2=m_\lambda(0)>0$, and therefore $\rho_{k+1}^*=m_\lambda(\rho_k^*)$ for all $k\in\N_0$. In addition, $m_\lambda(\rho)\geq m_\lambda(0)=\rho_1^*>\rho$ for all $\rho\in [0,\rho_1^*)$, so 
$\rho_\AMP^*\geq\rho_1^*>0$ and $m_\lambda(\rho)>\rho$ for all $\rho\in [0,\rho_\AMP^*)$.
\item \emph{Spectral initialisation}: By Proposition~\ref{prop:power},~\ref{ass:S1} holds with $U\equiv G_0\sim N(0,1)$, so $\rho_{k+1}^*=m_\lambda(\rho_k^*)$ for all $k\in\N_0$. Since $\mmse(\rho)$ is the minimum value of $\E\bigl\{\bigl(V-g(\sqrt{\rho}V+G)\bigr)^2\bigr\}$ as $g$ ranges over all measurable functions,
\[m_\lambda(\rho)\geq\lambda^2\Bigl(1-\inf_{a,b\in\R}\E\bigl\{\bigl(V-a(\sqrt{\rho}V+G)-b\bigr)^2\bigr\}\Bigr)=\lambda^2\,\biggl(1-\frac{1-\E(V)^2}{1+\rho\,\{1-\E(V)^2\}}\biggr)\geq\frac{\lambda^2\rho}{1+\rho}\]
for all $\rho\in [0,\infty)$, where the equality above can be verified by a routine calculation. Recalling that $\lambda>1$, we have $m_\lambda(\rho)\geq\lambda^2\rho/(1+\rho)>\rho$ for all $\rho\in [0,\lambda^2-1)$, so $\rho_\AMP^*\geq\lambda^2-1=\rho_0^*>0$ and $m_\lambda(\rho)>\rho$ for all $\rho\in (0,\rho_\AMP^*)$. 
Moreover, $\rho_1^*=m_\lambda(\rho_0^*)\geq\rho_0^*>0$.
\end{enumerate}

\unparskip
To complete the proof of (a), note that if $0\leq\rho_k^*\leq\rho_\AMP^*$ for some $k\in\N_0$, then by~\eqref{eq:snrfixed} and the fact that $\rho\mapsto m_\lambda(\rho)$ is non-decreasing, we have $0\leq\rho_k^*\leq m_\lambda(\rho_k^*)=\rho_{k+1}^*\leq m_\lambda(\rho_\AMP^*)=\rho_\AMP^*$. Since $\rho_1^*>0$, it follows by induction that $(\rho_k^*)$ is an increasing sequence that converges to some $\rho^*\in (0,\rho_\AMP^*]$. By the continuity of $m_\lambda$ on $(0,\infty)$, we conclude that $\rho^*=\lim_{k\to\infty}\rho_{k+1}^*=\lim_{k\to\infty}m_\lambda(\rho_k^*)=m_\lambda(\rho^*)$ and hence that $\rho^*=\rho_\AMP^*$.

(b) Since $g_{k,\psi}^*$ is Lipschitz by assumption, this follows directly from Theorem~\ref{thm:AMPlowsym}; see the proof of Corollary~\ref{cor:AMPlowsym}.

(c) Since each $g_k^*$ is Lipschitz and $\rho_k^*\nearrow\rho_\AMP^*$ by (a), this is an immediate consequence of~\eqref{eq:asymmse} and~\eqref{eq:asymcorr} from Corollary~\ref{cor:AMPlowsym}.
\end{proof}

\umparskip
\subsection{Proofs for Section~\ref{Sec:GAMP}}
\label{sec:GAMPproofs}
The proof of Lemma~\ref{lem:GAMPstein} makes use of the following multivariate version of Stein's lemma.
\begin{lemma}
\label{lem:steinsigma}
Let $g\colon\R^d\to\R$ be such that for $j=1,\dotsc,d$, the function $x_j\mapsto g(x_1,\dotsc,x_d)$ is absolutely continuous for Lebesgue almost every $(x_i:i\neq j)\in\R^{d-1}$, with weak derivative $D_jg\colon\R^d\to\R$ satisfying $\E\bigl(\abs{D_jg(X)}\bigr)<\infty$. Let $\nabla g(x):=(D_1(x),\dotsc,D_d(x))$ for $x\in\R^d$. If $X\sim N_d(0,\Sigma)$ with $\Sigma$ positive definite, then \[\E\bigl(Xg(X)\bigr)=\Sigma\,\E\bigl(\nabla g(X)\bigr).\]
\end{lemma}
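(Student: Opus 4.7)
The plan is to reduce the general case to the isotropic case $\Sigma = I_d$ by a linear change of variables, and then to apply the univariate Stein's lemma (Lemma~\ref{lem:stein}) to each coordinate in turn via Fubini's theorem. Since $\Sigma$ is positive definite, let $A := \Sigma^{1/2}$ be its symmetric positive definite square root and write $X = AZ$ with $Z\sim N_d(0,I_d)$. Defining $\tilde{g}(z):=g(Az)$, the goal is to show
\[
\E(Z\tilde{g}(Z)) = \E(\nabla\tilde{g}(Z))\quad\text{and}\quad \nabla\tilde{g}(z) = A^\top\nabla g(Az)\text{ for Lebesgue a.e.\ }z,
\]
from which the result follows at once since $\E(Xg(X)) = A\,\E(Z\tilde{g}(Z)) = A\,\E(\nabla\tilde{g}(Z)) = AA^\top\,\E(\nabla g(X)) = \Sigma\,\E(\nabla g(X))$.

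For the first identity, fix $i\in\{1,\dotsc,d\}$ and condition on $Z_{-i}:=(Z_j:j\neq i)$. By Fubini, for Lebesgue almost every $(z_j:j\neq i)$, the map $z_i\mapsto \tilde{g}(z_1,\dotsc,z_d)$ is absolutely continuous with weak derivative $z_i\mapsto D_i\tilde{g}(z_1,\dotsc,z_d)$ (this is where the absolute continuity hypothesis on $\tilde{g}$ is used; see the next paragraph). Applying the univariate Stein's lemma (Lemma~\ref{lem:stein}) to the conditional distribution $Z_i\,|\,Z_{-i}\sim N(0,1)$ and then taking an outer expectation over $Z_{-i}$ yields $\E(Z_i\tilde{g}(Z)) = \E(D_i\tilde{g}(Z))$, subject to verifying that the required integrability holds so that Fubini applies. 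Concatenating across $i$ gives $\E(Z\tilde{g}(Z))=\E(\nabla\tilde{g}(Z))$.

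The main obstacle is the chain rule claim $\nabla\tilde{g}(z) = A^\top\nabla g(Az)$ a.e., together with the fact that $\tilde{g}$ inherits the coordinatewise absolute continuity assumption from $g$. The issue is that the hypothesis on $g$ is phrased in terms of the \emph{standard} coordinate directions, whereas composing with $A$ scrambles these directions, so a purely one-dimensional Fubini argument does not immediately transfer the property. To handle this, I would approximate $g$ by a sequence of smooth functions $g_m\in C_c^\infty(\R^d)$ obtained by mollification, for which the classical chain rule holds and the identity $\E(X g_m(X)) = \Sigma\,\E(\nabla g_m(X))$ is immediate; one then passes to the limit using the Gaussian density decay and the integrability hypothesis $\E(\abs{D_jg(X)})<\infty$ to control the right-hand side, and uses $\E(\norm{X}\,\abs{g_m(X)-g(X)})\to 0$ (which in turn requires a mild growth bound on $g$ that can be derived from the absolute continuity along lines and the integrability of $\nabla g$) to control the left-hand side.

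Alternatively, and perhaps more cleanly, one can bypass the reduction entirely and integrate by parts directly against the Gaussian density. Since $X\sim N_d(0,\Sigma)$ has density $p(x)=(2\pi)^{-d/2}\det(\Sigma)^{-1/2}\exp(-x^\top\Sigma^{-1}x/2)$ satisfying $\nabla p(x) = -\Sigma^{-1}x\,p(x)$, we obtain $xp(x) = -\Sigma\nabla p(x)$, so that formally
\[
\E(Xg(X)) = \int_{\R^d} xg(x)p(x)\,dx = -\Sigma\int_{\R^d} g(x)\nabla p(x)\,dx = \Sigma\int_{\R^d}\nabla g(x)p(x)\,dx = \Sigma\,\E(\nabla g(X)),
\]
with the integration by parts in each coordinate justified by the coordinatewise absolute continuity of $g$, Fubini and the rapid decay of $p$ at infinity (which kills the boundary terms after again approximating via cutoff or mollification if necessary).
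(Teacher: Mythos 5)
Your argument is correct and is essentially the paper's proof: the paper likewise reduces to the isotropic case by setting $\tilde{g}(z)=g(\Sigma^{1/2}z)$ and applying the standard Gaussian Stein identity to $Z\sim N_d(0,I_d)$. The only difference is that the paper handles the two ingredients you work out in detail by citation -- the isotropic case to Tsybakov (Lemma~3.6) and the weak-derivative chain rule $\nabla\tilde{g}(z)=\Sigma^{1/2}\,\nabla g(\Sigma^{1/2}z)$ to Fourdrinier, Strawderman and Wells (2018) and Fan (2020) -- rather than via your mollification or direct integration-by-parts arguments.
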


\deparskip
\begin{proof}
The result for $\Sigma=I_d$ is stated as~\citet[Lemma~3.6]{Tsy09}. For a general non-negative definite $\Sigma\in\R^{d\times d}$, let $\tilde{g}(z)=g(\Sigma^{1/2}z)$ for $z\in\R^d$. Then $\nabla\tilde{g}(z)=\Sigma^{1/2}\,\nabla g(\Sigma^{1/2}z)$ for all $z$ (\citealp[Theorem~2.1]{FSW18};~\citealp[Proposition~E.5]{Fan20}), so by taking $Z\sim N_d(0,I_d)$, we conclude that
\[\E\bigl(Xg(X)\bigr)=\Sigma^{1/2}\,\E\bigl(Z\tilde{g}(Z)\bigr)=\Sigma^{1/2}\,\E\bigl(\nabla \tilde{g}(Z)\bigr)=\Sigma\,\E\bigl(\nabla g(X)\bigr).\] 

\vspace{-1.2cm}
\end{proof}

\deparskip
\begin{proof}[Proof of Lemma~\ref{lem:GAMPstein}]
The first assertion follows from a general fact about Gaussian random vectors $(X_1,X_2)$, for which we write $\Sigma_{ij}:=\Cov(X_i,X_j)$ for $i,j\in\{1,2\}$: if $\E(X_1)=\E(X_2)=0$ and $\Sigma_{11}=\Var(X_1)$ is invertible, then $AX_1$ and $X_2-AX_1$ are uncorrelated and hence independent when $A:=\Sigma_{21}\Sigma_{11}^{-1}$. Thus, 
$(X_1,X_2)\eqd (X_1,AX_1+G)$, where $AX_1=\E(X_2\,|\,X_1)$ and $G\sim N(0,\Sigma_{22}-\Sigma_{21}\Sigma_{11}^{-1}\Sigma_{12})$ is independent of $X_1$. For~\eqref{eq:zkmusigma}, we deduce from Lemma~\ref{lem:steinsigma} that
\[\E\bigl(Z\tilde{g}_k(Z,Z_k,v)\bigr)=\Sigma_{11}\,\E\bigl(D_1\tilde{g}_k(Z,Z_k,v)\bigr)+\Sigma_{12}\,\E\bigl(D_2\tilde{g}_k(Z,Z_k,v)\bigr)\]
for all $v\in\R$, where $\Sigma\equiv\Sigma_k$ is as in~\eqref{eq:GAMPstatevol2}. Thus, since $(Z,Z_k)\eqd (Z,\mu_{Z,k}Z+\sigma_{Z,k}\tilde{G}_k)$ is independent of $\bar{\varepsilon}$ in~\eqref{eq:GAMPstatevol1}, and $\mu_{Z,k}=\Sigma_{21}/\Sigma_{11}$ by the first part of the lemma,
\begin{align}
\label{eq:GAMPstein1}
\E\bigl(Zg_k(Z_k,Y)\bigr)=\E\bigl(Z\tilde{g}_k(Z,Z_k,\bar{\varepsilon})\bigr)&=\Sigma_{11}\,\E\bigl(D_1\tilde{g}_k(Z,Z_k,\bar{\varepsilon})\bigr)+\Sigma_{12}\,\E\bigl(D_2\tilde{g}_k(Z,Z_k,\bar{\varepsilon})\bigr)\\
&=\frac{\E(\bar{\beta}^2)}{\delta}\bigl\{\mu_{k+1}+\mu_{Z,k}\,\E\bigl(g_k'(Z_k,Y)\bigr)\bigr\},\notag
\end{align}
which yields the first equality. Next, by the tower property of expectation, the final expression in~\eqref{eq:zkmusigma} can be written as
\[\E\biggl(\frac{\E(Z\,|\,Z_k,Y)-\E(Z\,|\,Z_k)}{\Var(Z\,|\,Z_k)}\,g_k(Z_k,Y)\biggr)=\E\biggl(\frac{Z-\E(Z\,|\,Z_k)}{\Var(Z\,|\,Z_k)}\,\tilde{g}_k(Z,Z_k,\bar{\varepsilon})\biggr).\]
Since $Z$ is conditionally Gaussian given $Z_k$ and $\bar{\varepsilon}$ is independent of $(Z,Z_k)$, a further (conditional) application of Stein's lemma yields
\[\E\biggl(\frac{Z-\E(Z\,|\,Z_k)}{\Var(Z\,|\,Z_k)}\,\tilde{g}_k(Z,Z_k,\bar{\varepsilon})\biggm|Z_k\biggr)=\frac{\E\bigl\{\bigl(Z-\E(Z\,|\,Z_k)\bigr)\,\tilde{g}_k(Z,Z_k,\bar{\varepsilon})\!\bigm|\!Z_k\bigr\}}{\Var(Z\,|\,Z_k)}=\E\bigl(D_1\tilde{g}_k(Z,Z_k,\bar{\varepsilon})\!\bigm|\!Z_k\bigr),\]
so by taking expectations, we obtain the second identity for $\mu_{k+1}$.
\end{proof}

\deparskip
\begin{proof}[Proof of Proposition~\ref{prop:GAMPopt}]
Consider the right hand side of~\eqref{eq:Lcond1} and write $\bar{J}(\tilde{\beta}):=\sum_{j=1}^p J(\tilde{\beta}_j)$ for $\tilde{\beta}\in\R^p$. Using the expression for the Lagrangian~\eqref{eq:lagrangian}, and ignoring terms that do not depend on $\tilde{\beta}$, we obtain
\begin{equation}
\begin{split}
\argmin_{\tilde{\beta}\in\R^p}\,\Bigl\{L(\tilde{\beta},\hat{\theta}^k, \hat{s}^k)-\frac{\bar{c}_k}{2}\norm{\tilde{\beta}-\hat{\beta}^k}^2\Bigr\}&=
\argmin_{\tilde{\beta}\in\R^p}\,\Bigl\{\bar{J}(\tilde{\beta})-\tilde{\beta}^\top X^\top\hat{s}^k-\frac{\bar{c}_k}{2}\norm{\tilde{\beta}-\hat{\beta}^k}^2\Bigr\}\\
&=\argmin_{\tilde{\beta}\in\R^p}\,\biggl\{\bar{J}(\tilde{\beta})-\frac{\bar{c}_k}{2}\biggl\|\tilde{\beta}+\frac{X^\top\hat{s}^k-\bar{c}_k\hat{\beta}^k}{\bar{c}_k}\biggr\|^2\biggr\}\\
&=\argmin_{\tilde{\beta}\in\R^p}\,\biggl\{\bar{J}(\tilde{\beta})-\frac{\bar{c}_k}{2}\biggl\|\tilde{\beta}+\frac{\beta^{k+1}}{\bar{c}_k}\biggr\|^2\biggr\}=\hat{\beta}^{k+1},
\end{split}
\end{equation}
where the third and final equalities follow from the definitions of $\beta^{k+1}$ and $\hat{\beta}^{k+1}=f_{k+1}(\beta^{k+1})$ respectively in~\eqref{eq:GAMPopt}, with $f_{k+1}$ as in~\eqref{eq:fkbar}. Similarly, in view of the definition of $\bar{g}_k$ in~\eqref{eq:gkbar}, we can obtain~\eqref{eq:Lcond2} by completing the square. For~\eqref{eq:Lcond3}, we can apply~\eqref{eq:GAMPopt} to see that
\begin{equation}
\hat{s}^{k+1}=\frac{\hat{\theta}^{k+1}-\theta^{k+1}}{\bar{b}_{k+1}}=\frac{\hat{\theta}^{k+1}-(X\hat{\beta}^{k+1}-\bar{b}_{k+1}\hat{s}^k)}{\bar{b}_{k+1}}=\hat{s}^k+\frac{(\hat{\theta}^{k+1}-X\hat{\beta}^{k+1})}{\bar{b}_{k+1}}.
\end{equation}
For the final assertion of Proposition~\ref{prop:GAMPopt}, if $(\beta^*,\theta^*,\hat{\beta}^*,\hat{\theta}^*,\hat{s}^*)$ is a fixed point of the algorithm~\eqref{eq:GAMPopt}, then $\hat{\theta}^*=X\hat{\beta}^*$ by~\eqref{eq:Lcond3}, and (for example by considering subgradients) it follows from~\eqref{eq:Lcond1} and~\eqref{eq:Lcond2} respectively that
\begin{align*}
\hat{\beta}^*&=\argmin_{\tilde{\beta}\in\R^p}\,L(\tilde{\beta},\hat{\theta}^*,\hat{s}^*)=\argmin_{\tilde{\beta}\in\R^p}\,\bigl\{\bar{J}(\tilde{\beta})-(X\tilde{\beta})^\top\hat{s}^*\bigr\},\\
\hat{\theta}^*&=\argmin_{\tilde{\theta}\in\R^n}\,L(\hat{\beta}^*,\tilde{\theta},\hat{s}^*)=\argmin_{\tilde{\theta}\in\R^n}\,\bigl\{\bar{\ell}(\tilde{\theta},y)+\tilde{\theta}^\top\hat{s}^*\bigr\},
\end{align*}
where $\bar{\ell}(\tilde{\theta},y):=\sum_{i=1}^n\ell(\tilde{\theta}_i,y_i)$. Thus, for all $(\tilde{\beta},\tilde{\theta})\in\R^p\times\R^n$ with $\tilde{\theta}=X\tilde{\beta}$, we have
\begin{align*}
\bar{J}(\tilde{\beta})+\bar{\ell}(\tilde{\theta},y)&=\bigl(\bar{J}(\tilde{\beta})-(X\tilde{\beta})^\top\hat{s}^*\bigr)+\bigl(\bar{\ell}(\tilde{\theta},y)+\tilde{\theta}^\top\hat{s}^*\bigr)\\
&\geq\bigl(\bar{J}(\hat{\beta}^*)-(X\hat{\beta}^*)^\top\hat{s}^*\bigr)+\bigl(\bar{\ell}(\hat{\theta}^*,y)+(\hat{\theta}^*)^\top\hat{s}^*\bigr)=\bar{J}(\hat{\beta}^*)+\bar{\ell}(\hat{\theta}^*,y),
\end{align*}
so $(\hat{\beta}^*,\hat{\theta}^*)$ is a solution to the optimisation problem~\eqref{eq:constropt}, as required.
\end{proof}

\section{Supplementary mathematical background}
\label{Sec:Supp}
\subsection{Basic properties of complete convergence}
\label{sec:cc}
\begin{proof}[Proof of Proposition~\ref{prop:compequiv}]
For (a), suppose that $\sum_n\Pr(\norm{X_n}_E>\varepsilon)<\infty$ for all $\varepsilon>0$. Then for any sequence $(Y_n)$ of $E$-valued random elements with $Y_n\eqd X_n$ for all $n$, the first Borel--Cantelli lemma implies that $\Pr(\norm{Y_n}_E>\varepsilon\text{ infinitely often})=0$ for all $\varepsilon>0$ 
and hence that $Y_n\to 0$ almost surely. This shows that $X_n\cvc 0$. Conversely, suppose that $\sum_n\Pr(\norm{X_n}_E>\varepsilon)=\infty$ for some $\varepsilon>0$. Then for a sequence $(Y_n)$ of independent $E$-valued random elements with $Y_n\eqd X_n$ for all $n$, the second Borel--Cantelli lemma implies that $\Pr(\norm{Y_n}_E>\varepsilon\text{ infinitely often})=1$ and hence that $Y_n\nrightarrow 0$ almost surely. Thus, $X_n\not\cvc 0$.

The argument for (b) is similar. If $\sum_n\Pr(\norm{X_n}_E>C)<\infty$ for all $C>0$, then $X_n=O_c(1)$ by the first Borel--Cantelli lemma. Conversely, suppose that $\sum_n\Pr(\norm{X_n}_E>C)=\infty$ for all $C>0$. Then for a sequence $(Y_n)$ of independent $E$-valued random elements with $Y_n\eqd X_n$ for all $n$, the second Borel--Cantelli lemma implies that $\Pr(\norm{Y_n}_E>C\text{ infinitely often})=1$ for all $C>0$
and hence that $\limsup_{n\to\infty}\norm{Y_n}_E=\infty$ almost surely. Thus, $(X_n)$ is not $O_c(1)$.
\end{proof}
\unparskip

\begin{remark}
\label{rem:compdist}
For a random sequence $(X_n)$ taking values in a Euclidean space $(E,\|\cdot\|_E)$, it can be seen from Definition~\ref{def:compconv} and Proposition~\ref{prop:compequiv} that complete convergence (to a degenerate limit) is a property of the marginal distributions of the random elements $X_1,X_2,\dotsc$ and not of their joint dependence structure (i.e.\ the specific coupling between them), so $X_1,X_2,\dotsc$ need not be defined on the same probability space. Thus, just as for weak convergence or convergence in probability to a degenerate limit (but not almost sure convergence), there is a meaningful notion of complete convergence 
for sequences $(\mu_n)$ of Borel probability measures on $E$: defining $\bar{B}(x,\varepsilon):=\{x'\in E:\norm{x'-x}_E\leq\varepsilon\}$ for $\varepsilon>0$, we write $\mu_n\cvc\delta_x$ if $\sum_n\mu_n\bigl(\bar{B}(x,\varepsilon)^c\bigr)<\infty$ for all $\varepsilon>0$.
\end{remark}

\unparskip
\begin{example}
\label{ex:exptails}
Let $(X_n)$ be any sequence of random variables for which there exist $c_1,c_2,\beta>0$ such that $\Pr(\abs{X_n}>t)\leq c_1\exp(-c_2 t^\beta)$ for all $t>0$ and $n\in\N$. Let $(a_n)$ be a deterministic sequence of real numbers. If $a_n=o(1)$, then clearly $a_nX_n=o_p(1)$, and if $a_n=O(1)$, then $a_nX_n=O_p(1)$. Moreover:

\unparskip
\begin{enumerate}[label=(\alph*)]
\item If $\abs{a_n}^\beta\log n\to 0$, then for every $t>0$, there exists $N\in\N$ such that $c_2(t/\abs{a_n})^\beta\geq 2\log n$ for all $n>N$, so $\sum_n\Pr(\abs{a_nX_n}>t)\leq N+\sum_{n>N} c_1 e^{-2\log n}<\infty$. Thus, $a_nX_n=o_c(1)$ by Proposition~\ref{prop:compequiv}(a).
\item If $\limsup_{n\to\infty}\,\abs{a_n}^\beta\log n<\infty$, then there exists $t>0$ and $N\in\N$ such that $c_2(t/\abs{a_n})^\beta\geq 2\log n$ for all $n>N$, so $\sum_n\Pr(\abs{a_nX_n}>t)<\infty$ as in (i). Thus, $a_nX_n=O_c(1)$ by Proposition~\ref{prop:compequiv}(b).
\end{enumerate}

\unparskip
Suppose in addition that there exist $c_1',c_2',\beta'>0$ such that $\Pr(\abs{X_n}>t)\geq c_1'\exp(-c_2' t^{\beta'})$ for all $t>0$ and $n\in\N$.

\unparskip
\begin{enumerate}[resume,label=(\alph*)]
\item If $\liminf_{n\to\infty}\,\abs{a_n}^{\beta'}\log n>0$, then there exist $t>0$ and $N\in\N$ such that $c_2'(t/\abs{a_n})^{\beta'}\leq\log n$ for all $n>N$, so $\sum_n\Pr(\abs{a_nX_n}>t)\geq\sum_{n\geq N} c_1 e^{-\log n}=\infty$. Thus, $(a_nX_n)$ is not $o_c(1)$ in view of Proposition~\ref{prop:compequiv}(a).
\item If $\abs{a_n}^{\beta'}\log n\to\infty$, then for every $t>0$, there exists $N\in\N$ such that $c_2'(t/\abs{a_n})^{\beta'}\leq\log n$ for all $n>N$, so $\sum_n\Pr(\abs{a_nX_n}>t)=\infty$ as in (iii). Thus, $(a_nX_n)$ is not $O_c(1)$ in view of Proposition~\ref{prop:compequiv}(b).
\end{enumerate}

\unparskip
For instance, suppose that $X_n=X\sim N(0,1)$ for all $n$. Then $X_n\to X$ almost surely and $X_n=O_p(1)$ but $(X_n)$ is not $O_c(1)$, and $X_n/\log^{1/2}n\to 0$ almost surely but $(X_n/\log^{1/2}n)$ is not $o_c(1)$.  
\end{example}

\unparskip
Using Proposition~\ref{prop:compequiv}, it is straightforward to verify that the continuous mapping theorem and Slutsky's lemma remain valid when stated in terms of complete convergence. 
\begin{lemma}
\label{lem:slutsky}
Let $(X_n),(Y_n)$ be sequences of random elements taking values in Euclidean spaces $E,E'$ respectively such that $X_n\cvc x$ and $Y_n\cvc y$ for some deterministic limits $x\in E$ and $y\in E'$. Then $(X_n,Y_n)\cvc (x,y)$ in $E\times E'$
and $g(X_n)\cvc g(x)$ in $E'$ for any function $g\colon E\to E'$ that is continuous at $x$. If in addition $x$ lies in some open set $U\subseteq E$, then $\Ind_{\{X_n\notin U\}}\cvc 0$.

Consequently, $X_n+Y_n\cvc x+y$ when $E=E'$. Moreover, $X_nY_n\cvc xy$ when $E=\R$ and $E'$ is any Euclidean space (in the case of scalar multiplication), or when $E=\R^{k\times\ell}$ and $E'=\R^\ell$ for some $k,\ell\in\N$ (in the case of matrix multiplication). If in addition $k=\ell$ and $x^{-1}\in E$ is well-defined, then $\Ind_{\{X_n\text{ is not invertible}\}}\cvc 0$ and $X_n^+ Y_n\cvc\inv{x}y$.
\end{lemma}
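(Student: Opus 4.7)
The plan is to reduce everything to the summability characterisation of complete convergence in Proposition~\ref{prop:compequiv}(a): namely, $Z_n \cvc z$ iff $\sum_n \Pr(\norm{Z_n - z}_{\tilde{E}} > \varepsilon) < \infty$ for every $\varepsilon > 0$. With this in hand, each of the stated conclusions follows by a short union-bound argument.

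First I would establish $(X_n, Y_n) \cvc (x,y)$. Equipping $E \times E'$ with the product inner product, the event $\{\norm{(X_n - x, Y_n - y)}_{E \times E'} > \varepsilon\}$ is contained in $\{\norm{X_n - x}_E > \varepsilon/\sqrt{2}\} \cup \{\norm{Y_n - y}_{E'} > \varepsilon/\sqrt{2}\}$, so a union bound plus Proposition~\ref{prop:compequiv}(a) applied separately to $(X_n)$ and $(Y_n)$ yields summability. Second, for $g$ continuous at $x$, the continuity delivers a modulus: given $\varepsilon > 0$ there is $\delta > 0$ with $\{\norm{g(X_n) - g(x)}_{E'} > \varepsilon\} \subseteq \{\norm{X_n - x}_E > \delta\}$, whose probabilities are summable by hypothesis. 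Third, if $x$ lies in the open set $U$, pick $\delta > 0$ with $\bar{B}(x, \delta) \subseteq U$; then $\{X_n \notin U\} \subseteq \{\norm{X_n - x}_E > \delta\}$, giving $\Ind_{\{X_n \notin U\}} \cvc 0$.

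For the consequences, $X_n + Y_n \cvc x + y$, scalar multiplication $X_n Y_n \cvc xy$ (case $E = \R$) and matrix multiplication (case $E = \R^{k \times \ell}$, $E' = \R^\ell$) all follow by combining the first two steps, since $(a,b) \mapsto a + b$ and $(a,b) \mapsto ab$ are continuous on the respective product spaces. The main subtlety, and the step I expect to require care, is the pseudoinverse assertion, because $A \mapsto A^+$ fails to be continuous at singular matrices. My proposal is to first observe that $\{A \in \R^{k \times k} : A \text{ is invertible}\}$ is the open subset $\{\det \neq 0\}$ containing $x$, so applying the third part of the lemma gives $\Ind_{\{X_n \text{ not invertible}\}} \cvc 0$. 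Next, define an auxiliary $\tilde{g} \colon \R^{k \times k} \to \R^{k \times k}$ by $\tilde{g}(A) := A^{-1}$ when $A$ is invertible and $\tilde{g}(A) := 0$ otherwise; this $\tilde{g}$ is continuous at $x$ (being continuous on the open set of invertible matrices, which contains $x$), so by the second and first parts together $\tilde{g}(X_n) Y_n \cvc x^{-1} y$. Since $X_n^+ Y_n$ agrees with $\tilde{g}(X_n) Y_n$ on the event $\{X_n \text{ invertible}\}$, the bound
\[
\Pr(\norm{X_n^+ Y_n - x^{-1} y} > \varepsilon) \leq \Pr(\norm{\tilde{g}(X_n) Y_n - x^{-1} y} > \varepsilon) + \Pr(X_n \text{ not invertible})
\]
sums over $n$, yielding $X_n^+ Y_n \cvc x^{-1} y$ via Proposition~\ref{prop:compequiv}(a) once more.
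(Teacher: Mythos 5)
Your proposal is correct and follows essentially the same route as the paper: both reduce every claim to the summability criterion of Proposition~\ref{prop:compequiv}(a) via union bounds, the continuity modulus at $x$, and the open-set containment $\{X_n\notin U\}\subseteq\{\norm{X_n-x}_E>\delta\}$. The only (immaterial) difference is the pseudoinverse step, where the paper simply notes that $\tilde{x}\mapsto\tilde{x}^+$ is itself continuous at the invertible point $x$ (since it agrees with $\tilde{x}\mapsto\tilde{x}^{-1}$ on the open set of invertible matrices) and applies the continuous-mapping part directly, whereas you achieve the same thing by introducing the auxiliary map $\tilde{g}$ and splitting on the event $\{X_n\text{ not invertible}\}$.
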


\deparskip
\begin{proof}
For the first part of the lemma, we apply Proposition~\ref{prop:compequiv}(a). Since $\norm{(\tilde{x},\tilde{y})}_{E\times E'}^2=\norm{\tilde{x}}_E^2+\norm{\tilde{y}}_{E'}^2$ for any $(\tilde{x},\tilde{y})\in E\times E'$, we have \[\sum_n\Pr\bigl(\norm{(X_n,Y_n)-(x,y)}_{E\times E'}>\varepsilon\bigr)\leq\sum_n\,\bigl\{\Pr(\norm{X_n-x}_E>\varepsilon/\sqrt{2})+\Pr(\norm{Y_n-y}_{E'}>\varepsilon/\sqrt{2})\bigr\}<\infty\]
for all $\varepsilon>0$, so $(X_n,Y_n)\cvc (x,y)$ by Proposition~\ref{prop:compequiv}(a). If $g\colon E\to E'$ is continuous at $x\in E$, then for each $\varepsilon>0$, there exists $\delta>0$ such that $\norm{g(\tilde{x})-g(x)}_{E'}<\varepsilon$ whenever $\norm{\tilde{x}-x}_E<\delta$, so
\[\sum_n\Pr(\norm{g(X_n)-g(x)}_{E'}>\varepsilon)\leq\sum_n\Pr(\norm{X_n-x}_E>\delta)<\infty.\]
This holds for all $\varepsilon>0$, so $g(X_n)\cvc g(x)$ by Proposition~\ref{prop:compequiv}(a). When $x$ lies in some open set $U\subseteq E$, there exists $\varepsilon>0$ such that $\Pr(X_n\notin U)\leq\Pr(\norm{X_n-X}_E>\varepsilon)$, so $\Ind_{\{X_n\notin U\}}\cvc 0$, again by Proposition~\ref{prop:compequiv}(a).

Having established the first part of the lemma, we can now apply the facts above to deduce the remaining assertions. Indeed, when $E=E'$, the function $g\colon (\tilde{x},\tilde{y})\mapsto \tilde{x}+\tilde{y}$ is continuous on $E\times E'$ and we know that $(X_n,Y_n)\cvc (x,y)$, so it follows that $X_n+Y_n\cvc x+y$. When $E=\R$ or when $E=\R^{k\times\ell}$ and $E'=\R^\ell$ for some $k,\ell\in\N$, the scalar and matrix multiplication maps (respectively) are continuous on $E\times E'$. Therefore, it follows similarly that $X_nY_n\cvc xy$. 

If in addition $k=\ell$, then $\tilde{x}^+=\tilde{x}^{-1}$ for all invertible $\tilde{x}\in E=\R^{k\times k}$, so the map $\tilde{x}\mapsto\tilde{x}^+$ is continuous on the set $U$ of all invertible $\tilde{x}\in E$, which is open. A further application of the continuous mapping result above shows that if $x$ is invertible, then $\Ind_{\{X_n\text{ is not invertible}\}}\cvc 0$ and $X_n^+ Y_n\cvc\inv{x}y$, as required.
\end{proof} 

\unparskip

\begin{remark}
\label{rem:arithmetic}
By a similar application of Proposition~\ref{prop:compequiv}, it can be shown that the stochastic $o_c$ and $O_c$ symbols obey the `arithmetic rules' of standard $O$ notation. Written in compact form, some examples of these are as follows (for sequences defined on spaces with compatible dimensions):
\begin{equation} 
\label{eq:arithmetic}
\begin{alignedat}{3}
O_c(1)+O_c(1)&=O_c(1),\qquad\quad& o_c(1)+o_c(1)&=o_c(1),\qquad\quad& O_c(1)+o_c(1)&=O_c(1),\\
O_c(1)\,O_c(1)&=O_c(1),\qquad\quad& o_c(1)\,o_c(1)&=o_c(1),\qquad\quad& O_c(1)\,o_c(1)&=O_c(1),
\end{alignedat}
\end{equation}
where the assertions in the second line apply to scalar multiplication or matrix multiplication as appropriate. (The proofs are straightforward and are therefore omitted.) To give another example of a basic fact that follows directly from Definition~\ref{def:compconv} or Proposition~\ref{prop:compequiv}, let $E,E'$ be Euclidean spaces and suppose that $g\colon E\to E'$ is bounded on every bounded subset of $E$. Then for any sequence $(X_n)$ of $E$-valued random elements such that $X_n=O_c(1)$, we also have $g(X_n)=O_c(1)$.
\end{remark}

\umparskip
\subsection{Regular conditional distributions and conditional independence}
\label{sec:conddists}
First, we recall the notion of conditional expectation: if $(\Omega,\mathcal{F},\Pr)$ is a probability space and $\mathcal{G}\subseteq\mathcal{F}$ is a sub-$\sigma$-algebra, we write $\restr{\Pr}{\mathcal{G}}$ for the restricted probability measure on $(\Omega,\mathcal{G})$ given by $\restr{\Pr}{\mathcal{G}}(B):=\mathbb{P}(B)$ for $B\in\mathcal{G}$.  If $Y\colon(\Omega,\mathcal{F},\Pr)\rightarrow\R$ is a random variable with $\mathbb{E}(|Y|) < \infty$, then there exists a $\mathcal{G}$-measurable random variable $Z = \mathbb{E}(Y\,|\,\mathcal{G})$ with the property that $\mathbb{E}(Z\mathbbm{1}_E) = \mathbb{E}(Y\mathbbm{1}_E)$ for all $E \in \mathcal{G}$ \citep[][Chapter~10.1]{Dud02}.  We call $Z$ the \emph{conditional expectation of $Y$ given $\mathcal{G}$}, noting that it is unique up to $\restr{\Pr}{\mathcal{G}}$-almost sure equivalence.  For $F \in \mathcal{F}$, we also write $\mathbb{P}(F\,|\,\mathcal{G}) := \mathbb{E}(\Ind_F\,|\,\mathcal{G})$.

If $X$ is a measurable function from $(\Omega,\mathcal{F},\Pr)$ to a measurable space $(\mathscr{X},\mathcal{A})$, we say that $P_{X|\mathcal{G}}\colon\Omega\times\mathcal{A}\to [0,1]$ is a \emph{(regular) conditional distribution for $X$ given $\mathcal{G}$} if

\unparskip 
\begin{enumerate}[label=(\roman*)]
\item for every
$\omega\in\Omega$, the set function $P_{X|\mathcal{G}}(\omega,\cdot)$ is a probability measure on $\mathcal{A}$;
\item for each $A\in\mathcal{A}$, the map $P_{X|\mathcal{G}}(\cdot,A)$ is $\mathcal{G}$-measurable, and $P_{X|\mathcal{G}}(\omega,A)=\Pr(X^{-1}(A)\,|\,\mathcal{G})(\omega)$ for $\restr{\Pr}{\mathcal{G}}$-almost every $\omega\in\Omega$, so that $\Pr(X^{-1}(A)\cap E)=\int_E P_{X|\mathcal{G}}(\omega,A)\,d\Pr(\omega)$ for all $E\in\mathcal{G}$.
\end{enumerate}

\unparskip
We say that $(\mathscr{X},\mathcal{A})$ is a \emph{Borel space} if there exist a Borel subset $S\subseteq [0,1]$ (equipped with the restriction $\mathcal{B}_S$ of the Borel $\sigma$-algebra on $[0,1]$ to $S$) and a bijection $f\colon (\mathscr{X},\mathcal{A})\to (S,\mathcal{B}_S)$ such that both $f$ and $f^{-1}$ are measurable. Examples of Borel spaces $(\mathscr{X},\mathcal{A})$ include \emph{Polish spaces} (i.e.\ separable, completely metrisable topological spaces)
$\mathscr{X}$ equipped with their Borel $\sigma$-algebras $\mathcal{A}$~\citep[Theorem A1.6]{Kal97}. 

Whenever $(\mathscr{X},\mathcal{A})$ is a Borel space, there exists a conditional distribution $P_{X|\mathcal{G}}$, and moreover, if $P'$ is another such conditional distribution, then $P'(\omega,\cdot) = P_{X|\mathcal{G}}(\omega,\cdot)$ for $\restr{\Pr}{\mathcal{G}}$-almost every $\omega\in\Omega$; see~\citet[Theorem~5.3]{Kal97} and~\citet[Theorem~10.2.2]{Dud02}. For brevity, we will write `$X$ has conditional distribution $P\equiv P_\omega$ given $\mathcal{G}$ on an event $\Omega_0 \in \mathcal{G}$' to mean that there exists a conditional distribution $P_{X|\mathcal{G}}$, and we can take $P_{X|\mathcal{G}}(\omega,\cdot)=P_\omega(\cdot)$ for $\restr{\Pr}{\mathcal{G}}$-almost every $\omega\in\Omega_0$.  When we omit the phrase `on an event $\Omega_0 \in \mathcal{G}$', we mean that the statement holds for $\restr{\Pr}{\mathcal{G}}$-almost every $\omega\in\Omega$. 

For measurable $X,X'\colon (\Omega,\mathcal{F},\Pr)\to (\mathscr{X},\mathcal{A})$, we say that $X,X'$ are \emph{identically distributed given $\mathcal{G}$}, and write $X\eqdcond{\mathcal{G}}X'$, if there exist conditional distributions $P_{X|\mathcal{G}}$ and $P_{X'|\mathcal{G}}'$ for $X,X'$ respectively, and $P_\omega(\cdot)\equiv P_{X|\mathcal{G}}(\omega,\cdot)=P_{X'|\mathcal{G}}'(\omega,\cdot)\equiv P_\omega'(\cdot)$ for $\restr{\Pr}{\mathcal{G}}$-almost every $\omega\in\Omega$.
\begin{remark}
\label{rem:indepcond}
For example, $X$ has distribution $Q$ on $(\mathscr{X},\mathcal{A})$ and is independent of $\mathcal{G}$ if and only if $X$ has conditional distribution $P_\omega=Q$ for all $\omega\in\Omega$.
\end{remark}
\begin{remark}
\label{rem:condkern}
Let $X\colon (\Omega,\mathcal{F},\Pr)\to(\mathscr{X},\mathcal{A})$ be as above and consider the important special case where $\mathcal{G}=\sigma(Y)$ for some measurable map $Y$ from $(\Omega,\mathcal{F},\Pr)$ to a measurable space $(\mathscr{Y},\mathcal{B})$. Denote by $P$ the joint distribution of $(X,Y)\colon (\Omega,\mathcal{F},\Pr)\to(\mathscr{X}\times\mathscr{Y},\mathcal{A}\otimes\mathcal{B})$ and by $P^Y$ the (marginal) distribution of $Y$ on $(\mathscr{Y},\mathcal{B})$. We note here that a random variable $Z\colon (\Omega,\mathcal{F},\Pr)\to\R$ is $\sigma(Y)$-measurable if and only if $Z=g\circ Y$ for some measurable function $g\colon (\mathscr{Y},\mathcal{B})\to\R$ (i.e.\ `$Z(\omega)$ depends on $\omega$ only through $Y(\omega)$'); see for example~\citet[Theorem~4.2.8]{Dud02}.
Using this fact and the defining property (ii) above, it can be verified~\citep[as in][Theorem~10.2.1]{Dud02} that there exists a regular conditional distribution $P_{X|\sigma(Y)}\colon\Omega\times\mathcal{A}\to [0,1]$ if and only if there is a family of probability distributions $(Q_y)_{y\in\mathscr{Y}}$ on $(\mathscr{X},\mathcal{A})$ such that the following hold for every $A\in\mathcal{A}$:

\unparskip
\begin{enumerate}[label=(\Roman*)]
\item $y\mapsto Q_y(A)$ is a measurable function from $(\mathscr{Y},\mathcal{B})$ to $\R$;
\item $P(A\times B)=\Pr\bigl(X^{-1}(A)\cap Y^{-1}(B)\bigr)=\int_B Q_y(A)\,dP^Y(y)$ for all $B\in\mathcal{B}$.
\end{enumerate}

\unparskip
In this case, for $\restr{\Pr}{\sigma(Y)}$-almost every $\omega\in\Omega$, we have $P_{X|\sigma(Y)}(\omega,A)=Q_{Y(\omega)}(A)$ for all $A\in\mathcal{A}$. Note that $(Q_y)_{y\in\mathscr{Y}}$ is only unique up to $P^Y$-almost sure equivalence, in the sense that if $(Q_y')_{y\in\mathscr{Y}}$ satisfies (I) and $Q_y=Q_y'$ for $P^Y$-almost every $y\in\mathscr{Y}$, then $(Q_y')_{y\in\mathscr{Y}}$ also satisfies (II). In view of (I), the map $(y,A)\mapsto Q_y(A)$ is said to be a \emph{probability kernel}. An interpretation of (II) is that it makes precise the notion of \emph{disintegrating} the joint distribution $P$ of $(X,Y)$ into the marginal distribution $P^Y$ of $Y$ and the distributions $(Q_y)_{y\in\mathscr{Y}}$, where (for $P^Y$-almost every $y\in\mathscr{Y}$) we can view $Q_y$ as the ``conditional distribution of $X$ given $Y=y$''. Indeed, by analogy with the construction of the usual product measure and Fubini's theorem~\citep[e.g.][Chapter~4.4]{Dud02}, it can be shown that if $\phi\colon (\mathscr{X}\times\mathscr{Y},\mathcal{A}\otimes\mathcal{B})\to\R$ is $P$-integrable (i.e.\ $\phi$ is measurable and $\E\bigl(\abs{\phi(X,Y)}\bigr)<\infty$), then

\unparskip
\begin{enumerate}[resume,label=(\Roman*)]
\item $x\mapsto \phi(x,y)$ is $\mathcal{A}$-measurable for all $y\in\mathcal{Y}$ and $Q_y$-integrable for $P^Y$-almost every $y\in\mathcal{Y}$; 
\item $y\mapsto\int_{\mathscr{X}}\phi(x,y)\,dQ_y(x)$ is $\mathcal{B}$-measurable and $P^Y$-integrable;
\item $\int_{\mathscr{X}\times\mathscr{Y}}\,\phi(x,y)\,dP(x,y)=\int_{\mathscr{Y}}\,\bigl(\int_{\mathscr{X}}\phi(x,y)\,dQ_y(x)\bigr)\,dP^Y(y)$.
\end{enumerate}

\unparskip
This generalisation of Fubini's theorem is sometimes known as the \emph{disintegration theorem}, and is derived from (II) using a monotone class argument; see~\citet[Theorem~10.2.1(II)]{Dud02} and~\citet[Theorem~5.4]{Kal97}.
\end{remark}
\begin{lemma}
\label{lem:conddist}
Let $(\mathscr{X},\mathcal{A}),(\mathscr{Y},\mathcal{B})$ be measurable spaces and let $(\mathscr{Z},\mathcal{C})$ be a Borel space. 
Let $\phi\colon (\mathscr{X}\times\mathscr{Y},\mathcal{A}\otimes\mathcal{B})\to (\mathscr{Z},\mathcal{C})$ be a measurable function and let $\mathcal{G}\subseteq\mathcal{F}$ be a $\sigma$-algebra.

\unparskip
\begin{enumerate}[label=(\alph*)]
\item If $E\in\mathcal{G}$ and $X_1,X_2\colon (\Omega,\mathcal{F},\Pr)\to(\mathscr{X},\mathcal{A})$ are measurable functions with conditional distributions $P\equiv P_\omega$ and $Q\equiv Q_\omega$ respectively given $\mathcal{G}$, then the measurable function $X\colon (\Omega,\mathcal{F},\Pr)\to(\mathscr{X},\mathcal{A})$ satisfying $X=X_1$ on $E$ and $X=X_2$ on $E^c$ has conditional distribution $R(\cdot)\equiv R_\omega(\cdot):=P_\omega(\cdot)\Ind_{\{\omega\in E\}}+Q_\omega(\cdot)\Ind_{\{\omega\in E^c\}}$ given $\mathcal{G}$.
\item For $D\in\mathcal{A}\otimes\mathcal{B}$ and $y\in\mathscr{Y}$, let $D^y:=\{x\in\mathscr{X}:(x,y)\in D\}=\iota_y^{-1}(D)$, where $\iota_y\colon\mathscr{X}\to\mathscr{X}\times\mathscr{Y}$ denotes the map $x\mapsto (x,y)$.  Fix $\Omega_0 \in \mathcal{G}$.  Suppose that $X\colon(\Omega,\mathcal{F},\Pr)\to(\mathscr{X},\mathcal{A})$ has conditional distribution $P\equiv P_\omega$ given $\mathcal{G}$ on $\Omega_0$, and that $Y\colon(\Omega,\mathcal{F},\Pr)\to(\mathscr{Y},\mathcal{B})$ is $\mathcal{G}$-measurable. If $Z\colon(\Omega,\mathcal{F},\mathbb{P}) \rightarrow (\mathscr{Z},\mathcal{C})$ is a measurable map that agrees with $\phi(X,Y)$ on $\Omega_0$, then $Z$ has conditional distribution $\tilde{P}\equiv\tilde{P}_\omega=P_\omega\circ(\phi\circ\iota_{Y(\omega)})^{-1}$ given $\mathcal{G}$ on $\Omega_0$, so that $\tilde{P}_\omega(C)=P_\omega\bigl(\phi^{-1}(C)^{Y(\omega)}\bigr)$ for all $C\in\mathcal{C}$ and $\omega\in\Omega_0$.
\item Suppose that $X,X'\colon (\Omega,\mathcal{F},\Pr)\to(\mathscr{X},\mathcal{A})$ are measurable functions satisfying $X\eqdcond{\mathcal{G}}X'$, and that $Y\colon(\Omega,\mathcal{F},\Pr)\to(\mathscr{Y},\mathcal{B})$ is $\mathcal{G}$-measurable. Then $\phi(X,Y)\eqdcond{\mathcal{G}}\phi(X',Y)$.
\end{enumerate}
\end{lemma}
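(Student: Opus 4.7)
My plan is to prove the three parts in order, with part (a) being a straightforward bookkeeping exercise, part (b) being the technical heart, and part (c) following immediately from (b).

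For part (a), I would simply verify the two defining properties of a regular conditional distribution. For each $\omega$, $R_\omega$ is a convex combination (in a trivial sense) of two probability measures on $(\mathscr{X},\mathcal{A})$ depending on whether $\omega \in E$, hence is itself a probability measure. For each fixed $A\in\mathcal{A}$, the map $\omega\mapsto R_\omega(A)=P_\omega(A)\Ind_E(\omega)+Q_\omega(A)\Ind_{E^c}(\omega)$ is $\mathcal{G}$-measurable because $E\in\mathcal{G}$ and both $P_\cdot(A)$ and $Q_\cdot(A)$ are $\mathcal{G}$-measurable. Finally, for $F\in\mathcal{G}$, decompose $X^{-1}(A)\cap F=\bigl(X_1^{-1}(A)\cap F\cap E\bigr)\cup\bigl(X_2^{-1}(A)\cap F\cap E^c\bigr)$ and apply the defining integral identity for $P$ and $Q$ (with the $\mathcal{G}$-sets $F\cap E$ and $F\cap E^c$) to obtain $\Pr\bigl(X^{-1}(A)\cap F\bigr)=\int_F R_\omega(A)\,d\Pr(\omega)$.

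The main work is in part (b). Fix $C\in\mathcal{C}$ and write $D:=\phi^{-1}(C)\in\mathcal{A}\otimes\mathcal{B}$, so $\tilde{P}_\omega(C)=P_\omega(D^{Y(\omega)})$. There are three things to check: (i) that $\tilde{P}_\omega$ is a probability measure on $\mathcal{C}$ for each $\omega\in\Omega_0$, which is immediate since $\phi\circ\iota_{Y(\omega)}\colon\mathscr{X}\to\mathscr{Z}$ is measurable and $\tilde{P}_\omega$ is the pushforward of $P_\omega$; (ii) that $\omega\mapsto\tilde{P}_\omega(C)$ is $\restr{\mathcal{G}}{\Omega_0}$-measurable; and (iii) the integral identity $\Pr\bigl(Z^{-1}(C)\cap F\bigr)=\int_F\tilde{P}_\omega(C)\,d\Pr(\omega)$ for every $F\in\mathcal{G}$ with $F\subseteq\Omega_0$. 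For (ii), the key sub-step is a Dynkin/monotone-class argument: let $\mathcal{D}\subseteq\mathcal{A}\otimes\mathcal{B}$ consist of those $D$ for which $(\omega,y)\mapsto P_\omega(D^y)$ is measurable on $(\Omega\times\mathscr{Y},\mathcal{G}\otimes\mathcal{B})$. For rectangles $D=A\times B$, we have $P_\omega(D^y)=P_\omega(A)\Ind_B(y)$, which is a product of $\mathcal{G}$- and $\mathcal{B}$-measurable functions; the usual closure properties (disjoint unions and monotone limits, which work since each $P_\omega$ is a finite measure) show $\mathcal{D}$ is a $\lambda$-system containing the $\pi$-system of rectangles, hence $\mathcal{D}=\mathcal{A}\otimes\mathcal{B}$. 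Composing with the $\mathcal{G}$-measurable map $\omega\mapsto\bigl(\omega,Y(\omega)\bigr)$ and restricting to $\Omega_0$ gives (ii). For (iii), fix such an $F$; since $Z=\phi(X,Y)$ on $\Omega_0\supseteq F$, we have $Z^{-1}(C)\cap F=\{\omega\in F:X(\omega)\in D^{Y(\omega)}\}$, and the claim reduces to showing
\[
\Pr\bigl(\{\omega\in F:X(\omega)\in D^{Y(\omega)}\}\bigr)=\int_F P_\omega\bigl(D^{Y(\omega)}\bigr)\,d\Pr(\omega).
\]
Running another monotone class argument in $D$: for $D=A\times B$ with $B\in\mathcal{B}$, both sides equal $\Pr\bigl(X^{-1}(A)\cap F\cap Y^{-1}(B)\bigr)=\int_{F\cap Y^{-1}(B)}P_\omega(A)\,d\Pr(\omega)$ by the defining property of $P$ applied to the $\mathcal{G}$-set $F\cap Y^{-1}(B)$ (here we use $\mathcal{G}$-measurability of $Y$). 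Standard extension then gives the identity for all $D\in\mathcal{A}\otimes\mathcal{B}$, hence for $D=\phi^{-1}(C)$.

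Part (c) is then immediate: apply (b) with $\Omega_0=\Omega$ to both $X$ and $X'$; since they share a common version $P_\omega$ of the conditional distribution given $\mathcal{G}$, the prescription $\tilde{P}_\omega=P_\omega\circ(\phi\circ\iota_{Y(\omega)})^{-1}$ yields a common conditional distribution for $\phi(X,Y)$ and $\phi(X',Y)$ given $\mathcal{G}$. The main obstacle throughout is the joint measurability in (ii) of part (b): without the Dynkin argument one cannot even write down the integrand in (iii), and some care is needed because $(\mathscr{X}\times\mathscr{Y},\mathcal{A}\otimes\mathcal{B})$ is not assumed to be a Borel space, so we cannot simply invoke existence of a conditional distribution for the pair $(X,Y)$ as a shortcut.
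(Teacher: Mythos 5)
Your proposal is correct and follows essentially the same route as the paper: part (a) by checking the defining properties of a regular conditional distribution, part (b) by observing that $\tilde{P}_\omega$ is a pushforward of $P_\omega$ and then running a Dynkin ($\pi$--$\lambda$) argument over the rectangles $A\times B$ in $\mathcal{A}\otimes\mathcal{B}$ (where the $\mathcal{G}$-measurability of $Y$ enters), and part (c) as an immediate corollary of (b). The only difference is cosmetic: you split (b) into two monotone-class arguments (joint measurability of $(\omega,y)\mapsto P_\omega(D^y)$, then the integral identity) and phrase things via the integral characterisation, whereas the paper combines both into a single Dynkin class stated in terms of conditional expectations.
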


\unparskip
The result in (b) has an intuitive interpretation. Suppose for simplicity that $\Omega_0=\Omega$, and fix $\omega\in\Omega$. Let $\mu:=P_\omega$ be taken from the conditional distribution of $X$ given $\mathcal{G}$, and assume that $Y$ is $\mathcal{G}$-measurable. To obtain the corresponding $\tilde{P}_\omega$ from the conditional distribution of $\phi(X,Y)$ given $\mathcal{G}$, Lemma~\ref{lem:conddist}(b) tells us that we can take $\tilde{P}_\omega$ to be the distribution of $\phi(U,y)$, where $U\sim\mu$ and $y:=Y(\omega)$. In essence, the reason for this is that since $Y$ is $\mathcal{G}$-measurable, we can think of $Y$ as being `fixed' once we have conditioned on $\mathcal{G}$. 

\unparskip
\begin{proof}
(a) The fact that $R_\omega(\cdot)$ is a probability measure on $\mathcal{A}$ for $\restr{\Pr}{\mathcal{G}}$-almost every $\omega\in\Omega$ follows immediately from the corresponding facts for $P_\omega(\cdot)$ and $Q_\omega(\cdot)$. For each $A \in \mathcal{A}$, the map $\omega \mapsto R_\omega(A)$ is a composition of $\mathcal{G}$-measurable functions (since $E \in \mathcal{G}$ by assumption), so is $\mathcal{G}$-measurable.

For $A \in \mathcal{A}$, let $\chi_A\colon\mathscr{X} \rightarrow \{0,1\}$ denote the indicator function of $A$.  Then
\[
\chi_A \circ X = (\chi_A \circ X_1)\mathbbm{1}_E + (\chi_A \circ X_2)\mathbbm{1}_{E^c}.
\]
Since $\mathbbm{1}_E$ and $\mathbbm{1}_{E^c}$ are $\mathcal{G}$-measurable, it follows that
\begin{align*}
\mathbb{P}\bigl(X^{-1}(A) \bigm| \mathcal{G}\bigr)(\omega) &= \mathbb{E}\bigl((\chi_A \circ X_1)\mathbbm{1}_E \bigm| \mathcal{G}\bigr)(\omega) + \mathbb{E}\bigl((\chi_A \circ X_2)\mathbbm{1}_{E^c} \bigm| \mathcal{G}\bigr)(\omega) \\
&= \mathbb{E}\bigl(\chi_A \circ X_1 \bigm| \mathcal{G}\bigr)(\omega)\mathbbm{1}_E(\omega) + \mathbb{E}\bigl((\chi_A \circ X_2) \bigm| \mathcal{G}\bigr)(\omega)\mathbbm{1}_{E^c}(\omega)\\
&= P_\omega(A)\mathbbm{1}_E(\omega) + Q_\omega(A)\mathbbm{1}_{E^c}(\omega) = R_\omega(A)
\end{align*}
for $\restr{\Pr}{\mathcal{G}}$-almost every $\omega\in\Omega$, as required.

(b) This can be deduced from~\citet[Theorem~5.4]{Kal97} and part (a) above, but we give a direct proof here for completeness. Note that for $\restr{\Pr}{\mathcal{G}}$-almost every $\omega\in\Omega_0$, the set function $\tilde{P}_\omega$ is the push-forward (image measure) of $P_\omega$ induced by the measurable map $x \mapsto \phi \circ \iota_{Y(\omega)}(x)$ from $(\mathscr{X},\mathcal{A})$ to $(\mathscr{Z},\mathcal{C})$; thus, $\tilde{P}_\omega$ is indeed a probability measure for $\restr{\Pr}{\mathcal{G}}$-almost every $\omega\in\Omega_0$.

Now let $\mathcal{D}$ denote the collection of all $D \in \mathcal{A}\otimes\mathcal{B}$ for which $\omega \mapsto P_\omega\bigl(D^{Y(\omega)}\bigr)$ is $\mathcal{G}$-measurable and $P_\omega\bigl(D^{Y(\omega)}\bigr) = \mathbb{P}\bigl((X,Y)^{-1}(D) \bigm|\mathcal{G}\bigr)(\omega)$ for $\restr{\Pr}{\mathcal{G}}$-almost every $\omega\in\Omega_0$.  If $D = A \times B$ for some $A \in \mathcal{A}$ and $B \in \mathcal{B}$, then $D^{Y(\omega)} = A$ if $Y(\omega) \in B$, and $D^{Y(\omega)} = \emptyset$ if $Y(\omega) \notin B$. Thus,
\begin{align*}
P_\omega\bigl(D^{Y(\omega)}\bigr) &= P_\omega(A)\mathbbm{1}_{\{Y(\omega) \in B\}} = \Pr\bigl(X^{-1}(A)\bigm|\mathcal{G}\bigr)(\omega)\mathbbm{1}_{\{Y(\omega) \in B\}} \\
&= \mathbb{E}\bigl(\chi_A \circ X \bigm| \mathcal{G}\bigr)(\omega)\cdot(\chi_B \circ Y)(\omega) = \mathbb{E}\bigl((\chi_A \circ X) \cdot(\chi_B \circ Y)\bigm| \mathcal{G}\bigr)(\omega) \\
&= \mathbb{P}\bigl((X,Y)^{-1}(D) \bigm|\mathcal{G}\bigr)(\omega)
\end{align*}
for $\restr{\Pr}{\mathcal{G}}$-almost every $\omega\in\Omega_0$, where we have used the fact that $\chi_B \circ Y$ is $\mathcal{G}$-measurable in the penultimate equality.  Thus $\mathcal{D} \supseteq\{A \times B : A \in \mathcal{A},\,B \in \mathcal{B}\}$, which is a $\pi$-system that generates $\mathcal{D}$.  Now suppose that $D_1,D_2 \in \mathcal{D}$ with $D_1 \subseteq D_2$.  Then
\begin{align*}
P_\omega\bigl((D_2 \setminus D_1)^{Y(\omega)}\bigr) &= P_\omega\bigl(D_2^{Y(\omega)} \setminus D_1^{Y(\omega)}\bigr) = P_\omega\bigl(D_2^{Y(\omega)}\bigr) - P_\omega\bigl(D_1^{Y(\omega)}\bigr) \\
&= \mathbb{P}\bigl((X,Y)^{-1}(D_2) \bigm|\mathcal{G}\bigr)(\omega) - \mathbb{P}\bigl((X,Y)^{-1}(D_1) \bigm|\mathcal{G}\bigr)(\omega) \\
&= \mathbb{P}\bigl((X,Y)^{-1}(D_2 \setminus D_1) \bigm|\mathcal{G}\bigr)(\omega) 
\end{align*}
for $\restr{\Pr}{\mathcal{G}}$-almost every $\omega\in\Omega_0$, so $D_2 \setminus D_1 \in \mathcal{D}$.  Finally, let $(D_n)$ be an increasing sequence of sets in $\mathcal{D}$, and let $D := \bigcup_{n=1}^\infty D_n$.  Then $D^{Y(\omega)} = \bigcup_{n=1}^\infty D_n^{Y(\omega)}$ and $(X,Y)^{-1}(D) = \bigcup_{n=1}^\infty (X,Y)^{-1}(D_n)$, so that
\begin{align*}
P_\omega\bigl(D^{Y(\omega)}\bigr) &= \lim_{n \rightarrow \infty} P_\omega\bigl(D_n^{Y(\omega)}\bigr) = \lim_{n \rightarrow \infty} \mathbb{P}\bigl((X,Y)^{-1}(D_n) \bigm|\mathcal{G}\bigr)(\omega)= \mathbb{P}\bigl((X,Y)^{-1}(D) \bigm|\mathcal{G}\bigr)(\omega)
\end{align*}
for $\restr{\Pr}{\mathcal{G}}$-almost every $\omega\in\Omega_0$, where we have used the conditional monotone convergence theorem in the final equality \citep[][Theorem~10.1.7]{Dud02}.  Thus, $D \in \mathcal{D}$, and it follows from Dynkin's lemma that $\mathcal{D} = \mathcal{A} \otimes \mathcal{B}$.

Finally, if $C \in \mathcal{C}$, then $D := \phi^{-1}(C) \in \mathcal{A} \otimes \mathcal{B}$, so that for $\restr{\Pr}{\mathcal{G}}$-almost every $\omega\in\Omega_0$,
\begin{align*}
P_\omega\bigl(\phi^{-1}(C)^{Y(\omega)}\bigr) &= P_\omega\bigl(D^{Y(\omega)}\bigr) = \mathbb{P}\bigl((X,Y)^{-1}(D) \bigm|\mathcal{G}\bigr)(\omega) \\
&= \mathbb{P}\bigl((X,Y)^{-1}(D) \bigm|\mathcal{G}\bigr)(\omega)\cdot \mathbbm{1}_{\Omega_0}(\omega) = \mathbb{P}\bigl((X,Y)^{-1}(D) \cap \Omega_0 \bigm|\mathcal{G}\bigr)(\omega) \\
&= \mathbb{P}\bigl(Z^{-1}(C) \cap \Omega_0 \bigm|\mathcal{G}\bigr)(\omega) = \mathbb{P}\bigl(Z^{-1}(C)\bigm|\mathcal{G}\bigr)(\omega),
\end{align*}
as required, since $\Omega_0 \in \mathcal{G}$.

(c) This follows directly from (b) on setting $\Omega_0=\Omega$.
\end{proof}

\unparskip
The following useful result is a special case of~\citet[Theorem~5.4]{Kal97} and can be derived
using the definition of conditional expectation~\citep[Problem~10.1.9]{Dud02}, or alternatively using regular conditional distributions and standard measure-theoretic devices (similarly to the proofs of Lemma~\ref{lem:conddist}(b) above and~\citet[Theorem~10.2.5]{Dud02}).
\begin{lemma}
\label{lem:indeplem}
Let $X,Y$ be measurable functions from $(\Omega,\mathcal{F},\Pr)$ to measurable spaces $(\mathscr{X},\mathcal{A}),(\mathscr{Y},\mathcal{B})$ respectively, and let $\phi\colon (\mathscr{X}\times\mathscr{Y},\mathcal{A}\otimes\mathcal{B})\to\R$ be a measurable function satisfying $\E\bigl(\abs{\phi(X,Y)}\bigr)<\infty$. Let $\mathcal{G}\subseteq\mathcal{F}$ be a $\sigma$-algebra, and suppose that $Y$ is $\mathcal{G}$-measurable. If $X$ has distribution $Q$ on $(\mathscr{X},\mathcal{A})$ and is independent of $\mathcal{G}$, then $\E\bigl(\phi(X,Y)\!\bigm|\!\mathcal{G}\bigr)(\omega)=\int_{\mathscr{X}}\phi\bigl(x,Y(\omega)\bigr)\,dQ(x)$ for $\restr{\Pr}{\mathcal{G}}$-almost every $\omega\in\Omega$.
\end{lemma}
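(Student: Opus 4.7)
The plan is to prove this via the standard measure-theoretic machine, starting from indicator functions of measurable rectangles and extending up to general integrable $\phi$. Define the candidate $\psi\colon\Omega\to\R$ by $\psi(\omega):=\int_{\mathscr{X}}\phi\bigl(x,Y(\omega)\bigr)\,dQ(x)$ (on the full-probability set where the integral is defined). Two things must be checked: (i) $\psi$ is $\mathcal{G}$-measurable, and (ii) $\int_E\psi\,d\Pr=\int_E\phi(X,Y)\,d\Pr$ for every $E\in\mathcal{G}$. Both parts will be verified by the same layered induction.

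For the base case, take $\phi=\Ind_{A\times B}$ with $A\in\mathcal{A}$ and $B\in\mathcal{B}$. Then $\psi(\omega)=Q(A)\,\Ind_B\bigl(Y(\omega)\bigr)$, which is $\mathcal{G}$-measurable since $Y$ is. For $E\in\mathcal{G}$, the independence of $X$ from $\mathcal{G}$ together with the $\mathcal{G}$-measurability of $E\cap Y^{-1}(B)$ gives
\begin{align*}
\int_E\phi(X,Y)\,d\Pr=\Pr\bigl(X^{-1}(A)\cap E\cap Y^{-1}(B)\bigr)&=\Pr\bigl(X^{-1}(A)\bigr)\,\Pr\bigl(E\cap Y^{-1}(B)\bigr)\\
&=Q(A)\,\Pr\bigl(E\cap Y^{-1}(B)\bigr)=\int_E\psi\,d\Pr,
\end{align*}
which establishes the identity for rectangles.

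Next, I would extend to $\phi=\Ind_D$ for arbitrary $D\in\mathcal{A}\otimes\mathcal{B}$ via a Dynkin argument exactly analogous to the one used in the proof of Lemma~\ref{lem:conddist}(b) above: the collection $\mathcal{D}$ of sets $D\in\mathcal{A}\otimes\mathcal{B}$ for which $\omega\mapsto Q\bigl(D^{Y(\omega)}\bigr)$ is $\mathcal{G}$-measurable and serves as a version of $\Pr\bigl((X,Y)^{-1}(D)\,\bigm|\,\mathcal{G}\bigr)$ is easily shown to be a $\lambda$-system (closed under proper differences and increasing unions via linearity of the conditional expectation and the conditional monotone convergence theorem), and it contains the $\pi$-system of measurable rectangles by the base case, so $\mathcal{D}=\mathcal{A}\otimes\mathcal{B}$ by Dynkin's lemma. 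Linearity extends the identity to non-negative simple $\phi$, and monotone convergence (applied simultaneously on the $\mathscr{X}$-side for $\psi$ and on the $\Omega$-side for the conditional expectation) extends it to all non-negative measurable $\phi$.

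Finally, for integrable $\phi$, writing $\phi=\phi^+-\phi^-$, the non-negative case applied to $|\phi|$ gives $\int_{\mathscr{X}}|\phi(x,Y(\omega))|\,dQ(x)=\E\bigl(|\phi(X,Y)|\,\bigm|\,\mathcal{G}\bigr)(\omega)<\infty$ for $\restr{\Pr}{\mathcal{G}}$-a.e.\ $\omega$; on this full-probability set $\psi$ is finite-valued and well-defined, and subtracting the identity for $\phi^+$ from that for $\phi^-$ gives the conclusion. The only subtle point—the main obstacle, such as it is—is ordering the bookkeeping so that the integrability of $\phi(X,Y)$ is leveraged via the non-negative version (which is really a disguised Fubini on the product measure $Q\otimes(\Pr\circ Y^{-1})$, valid because independence makes the joint law of $(X,Y)$ factor) before one splits into positive and negative parts; everything else is a routine application of the standard machine.
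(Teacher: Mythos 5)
Your argument is correct: the base case for rectangles uses the independence of $X$ from $\mathcal{G}$ together with $E\cap Y^{-1}(B)\in\mathcal{G}$ exactly as needed, the Dynkin/monotone-class extension and the passage to non-negative and then integrable $\phi$ (via the non-negative case applied to $\abs{\phi}$ to get a.s.\ finiteness) are all sound. The paper does not spell out a proof---it cites \citet[Theorem~5.4]{Kal97} and \citet[Problem~10.1.9]{Dud02} and notes the result can be obtained from the definition of conditional expectation by the same devices as in Lemma~\ref{lem:conddist}(b)---and your proof is precisely that indicated route, so there is nothing further to add.
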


\unparskip
Next, for $\sigma$-algebras $\mathcal{G}_1,\mathcal{G}_2,\mathcal{G}_3\subseteq\mathcal{F}$, we say that $\mathcal{G}_1$ and $\mathcal{G}_2$ are \emph{conditionally independent given $\mathcal{G}_3$}, and write $\mathcal{G}_1\indep\mathcal{G}_2\,|\,\mathcal{G}_3$, if $\Pr(A_1\cap A_2\,|\,\mathcal{G}_3)=\Pr(A_1\,|\,\mathcal{G}_3)\,\Pr(A_2\,|\,\mathcal{G}_3)$ almost surely for all $A_1\in\mathcal{G}_1$ and $A_2\in\mathcal{G}_2$, or equivalently if $\Pr\bigl(A_1\,|\,\sigma(\mathcal{G}_2,\mathcal{G}_3)\bigr)=\Pr(A_1\,|\,\mathcal{G}_3)$ almost surely for all $A_1\in\mathcal{G}_1$~\citep[Proposition~5.6]{Kal97}. If this holds with $\mathcal{G}_1=\sigma(X)$ for some random variable $X$, we also say that $X$ and $\mathcal{G}_2$ are conditionally independent given $\mathcal{G}_3$, and write $X\indep\mathcal{G}_2\,|\,\mathcal{G}_3$ (and similarly for $\mathcal{G}_2$ and $\mathcal{G}_3$). The following basic facts follow straightforwardly from the definition of conditional independence.
\begin{lemma}[\protect{\citealp[Corollary~5.7(i)]{Kal97}}]
\label{lem:condindequiv}
We have $\mathcal{G}_1 \indep \mathcal{G}_2 \,|\,\mathcal{G}_3$ if and only if $\sigma(\mathcal{G}_1,\mathcal{G}_3)\indep \mathcal{G}_2 \,|\,\mathcal{G}_3$.
\end{lemma}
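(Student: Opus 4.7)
\textbf{Proof proposal for Lemma~\ref{lem:condindequiv}.} The plan is to work with the equivalent formulation of conditional independence stated in the excerpt: $\mathcal{H}_1 \indep \mathcal{H}_2 \,|\, \mathcal{G}_3$ if and only if $\Pr(A \,|\, \sigma(\mathcal{H}_2,\mathcal{G}_3)) = \Pr(A \,|\, \mathcal{G}_3)$ almost surely for every $A \in \mathcal{H}_1$. Using this, both directions reduce to verifying, for a suitable collection of events $A$, that the $\sigma(\mathcal{G}_2,\mathcal{G}_3)$-measurable random variable $\Pr(A \,|\, \sigma(\mathcal{G}_2,\mathcal{G}_3))$ agrees almost surely with the $\mathcal{G}_3$-measurable random variable $\Pr(A \,|\, \mathcal{G}_3)$.

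The reverse implication is immediate: since $\mathcal{G}_1 \subseteq \sigma(\mathcal{G}_1,\mathcal{G}_3)$, the defining property for $\sigma(\mathcal{G}_1,\mathcal{G}_3) \indep \mathcal{G}_2 \,|\, \mathcal{G}_3$ restricts at once to the corresponding property for $\mathcal{G}_1 \indep \mathcal{G}_2 \,|\, \mathcal{G}_3$.

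For the forward direction, assume $\mathcal{G}_1 \indep \mathcal{G}_2 \,|\, \mathcal{G}_3$, and fix $B \in \sigma(\mathcal{G}_1,\mathcal{G}_3)$. I would first handle the case $B = A_1 \cap A_3$ with $A_1 \in \mathcal{G}_1$ and $A_3 \in \mathcal{G}_3$ by a direct calculation: since $\Ind_{A_3}$ is $\sigma(\mathcal{G}_2,\mathcal{G}_3)$-measurable, the pull-out property of conditional expectation yields
\[
\Pr(A_1 \cap A_3 \,|\, \sigma(\mathcal{G}_2,\mathcal{G}_3)) = \Ind_{A_3}\,\Pr(A_1 \,|\, \sigma(\mathcal{G}_2,\mathcal{G}_3)) = \Ind_{A_3}\,\Pr(A_1 \,|\, \mathcal{G}_3) = \Pr(A_1 \cap A_3 \,|\, \mathcal{G}_3)
\]
almost surely, where the middle equality uses the hypothesis $\mathcal{G}_1 \indep \mathcal{G}_2 \,|\, \mathcal{G}_3$. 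The collection $\Pi := \{A_1 \cap A_3 : A_1 \in \mathcal{G}_1,\,A_3 \in \mathcal{G}_3\}$ is a $\pi$-system that generates $\sigma(\mathcal{G}_1,\mathcal{G}_3)$, so the proof concludes with a Dynkin $\pi$-$\lambda$ argument: one checks that
\[
\mathcal{L} := \bigl\{B \in \sigma(\mathcal{G}_1,\mathcal{G}_3) : \Pr(B \,|\, \sigma(\mathcal{G}_2,\mathcal{G}_3)) = \Pr(B \,|\, \mathcal{G}_3)\text{ a.s.}\bigr\}
\]
is a $\lambda$-system (containing $\Omega$, closed under proper differences via linearity of conditional expectation, and closed under monotone countable unions via the conditional monotone convergence theorem). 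Since $\mathcal{L} \supseteq \Pi$, Dynkin's lemma gives $\mathcal{L} = \sigma(\mathcal{G}_1,\mathcal{G}_3)$, which is exactly $\sigma(\mathcal{G}_1,\mathcal{G}_3) \indep \mathcal{G}_2 \,|\, \mathcal{G}_3$.

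The only mildly delicate point, and the step most worth being careful about, is the handling of exceptional null sets when combining the almost-sure identities along the $\lambda$-system verification; this is not a genuine obstacle because each individual closure operation (under complements, disjoint unions, and monotone limits) involves only finitely many or countably many almost-sure identities, whose union of exceptional null sets remains null. Beyond this bookkeeping, the argument is routine, and no additional input beyond the definition of conditional independence and standard properties of conditional expectation is needed.
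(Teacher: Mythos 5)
Your argument is correct: the reduction to the characterisation $\Pr(A\,|\,\sigma(\mathcal{G}_2,\mathcal{G}_3))=\Pr(A\,|\,\mathcal{G}_3)$, the pull-out computation on the $\pi$-system $\{A_1\cap A_3:A_1\in\mathcal{G}_1,\,A_3\in\mathcal{G}_3\}$, and the Dynkin argument (with the null-set bookkeeping you note) all go through. Be aware, though, that the paper does not prove this lemma at all: it is quoted directly from \citet[Corollary~5.7(i)]{Kal97}, with the remark that it follows straightforwardly from the definition of conditional independence, so your proof is a genuine addition rather than a reproduction. It is also worth knowing that there is a shorter route (essentially Kallenberg's) which avoids the $\pi$--$\lambda$ argument entirely: since conditional independence is symmetric in its first two arguments, the same characterisation applied with $\mathcal{G}_2$ in the first slot says that $\mathcal{G}_1\indep\mathcal{G}_2\,|\,\mathcal{G}_3$ holds if and only if $\Pr(A_2\,|\,\sigma(\mathcal{G}_1,\mathcal{G}_3))=\Pr(A_2\,|\,\mathcal{G}_3)$ almost surely for all $A_2\in\mathcal{G}_2$, while $\sigma(\mathcal{G}_1,\mathcal{G}_3)\indep\mathcal{G}_2\,|\,\mathcal{G}_3$ is equivalent to the same statement with $\sigma\bigl(\sigma(\mathcal{G}_1,\mathcal{G}_3),\mathcal{G}_3\bigr)=\sigma(\mathcal{G}_1,\mathcal{G}_3)$ in place of $\sigma(\mathcal{G}_1,\mathcal{G}_3)$; the two conditions are literally identical, so the equivalence is immediate. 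Your version buys a self-contained argument from the product-form definition at the cost of the monotone class machinery; the symmetric-characterisation version buys a two-line proof at the cost of invoking the equivalence of the two definitions (Kallenberg's Proposition~5.6, which the paper already cites) in both directions.
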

\begin{lemma}
\label{lem:condind}
Let $(\mathscr{X},\mathcal{A})$ and $(\mathscr{Y},\mathcal{B})$ be measurable spaces and let $\mathcal{G}\subseteq\mathcal{F}$ be a $\sigma$-algebra.

\unparskip
\begin{enumerate}[label=(\alph*)]
\item For $i=1,2$, let $X_i\colon (\Omega,\mathcal{F},\Pr)\to (\mathscr{X},\mathcal{A})$ and $Y_i\colon (\Omega,\mathcal{F},\Pr)\to (\mathscr{Y},\mathcal{B})$ be measurable functions such that $X_i\indep Y_i\,|\,\mathcal{G}$. For $E\in\mathcal{G}$, let $X\colon(\Omega,\mathcal{F},\Pr)\to (\mathscr{X},\mathcal{A})$ be the measurable function satisfying $X=X_1$ on $E$ and $X=X_2$ on $E^c$, and define $Y$ similarly. Then $X\indep Y\,|\,\mathcal{G}$.
\item Suppose that the measurable maps $X\colon (\Omega,\mathcal{F},\Pr)\to (\mathscr{X},\mathcal{A})$ and $Y\colon (\Omega,\mathcal{F},\Pr)\to (\mathscr{Y},\mathcal{B})$ have conditional distributions $P\equiv P_\omega$ and $Q\equiv Q_\omega$ respectively given $\mathcal{G}$. Then $X\indep Y\,|\,\mathcal{G}$ if and only if $(X,Y)\colon (\Omega,\mathcal{F},\Pr)\to (\mathscr{X}\times\mathscr{Y},\mathcal{A}\otimes\mathcal{B})$ has conditional distribution $R\equiv R_\omega:=P_\omega\otimes Q_\omega$ given $\mathcal{G}$.
\end{enumerate}
\end{lemma}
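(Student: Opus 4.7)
For part (a), the plan is to verify directly the defining identity $\Pr(X^{-1}(A) \cap Y^{-1}(B) \,|\, \mathcal{G}) = \Pr(X^{-1}(A) \,|\, \mathcal{G}) \cdot \Pr(Y^{-1}(B) \,|\, \mathcal{G})$ almost surely for each $A \in \mathcal{A}$ and $B \in \mathcal{B}$, by conditioning on the partition $\{E, E^c\}$, both of whose elements lie in $\mathcal{G}$. The key ingredient is the elementary identity $\Pr(F \cap E \,|\, \mathcal{G}) = \Pr(F \,|\, \mathcal{G}) \Ind_E$ almost surely for $F \in \mathcal{F}$ and $E \in \mathcal{G}$, which follows immediately from the defining property of conditional expectation together with the $\mathcal{G}$-measurability of $\Ind_E$. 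Writing $X^{-1}(A) \cap Y^{-1}(B)$ as the disjoint union of $X_1^{-1}(A) \cap Y_1^{-1}(B) \cap E$ and $X_2^{-1}(A) \cap Y_2^{-1}(B) \cap E^c$, and similarly decomposing $X^{-1}(A)$ and $Y^{-1}(B)$ individually, the left-hand side reduces (using $X_i \indep Y_i \,|\, \mathcal{G}$ for $i = 1, 2$) to $\Pr(X_1^{-1}(A) \,|\, \mathcal{G}) \Pr(Y_1^{-1}(B) \,|\, \mathcal{G}) \Ind_E + \Pr(X_2^{-1}(A) \,|\, \mathcal{G}) \Pr(Y_2^{-1}(B) \,|\, \mathcal{G}) \Ind_{E^c}$, and this will match the right-hand side because the cross terms vanish ($\Ind_E \Ind_{E^c} = 0$) while $\Ind_E^2 = \Ind_E$.

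For part (b), I will split into the two directions. The reverse direction is immediate: if $R_\omega = P_\omega \otimes Q_\omega$ is a regular conditional distribution for $(X, Y)$ given $\mathcal{G}$, then evaluating it on a rectangle and using $(X, Y)^{-1}(A \times B) = X^{-1}(A) \cap Y^{-1}(B)$ yields $\Pr(X^{-1}(A) \cap Y^{-1}(B) \,|\, \mathcal{G}) = R_\omega(A \times B) = P_\omega(A) Q_\omega(B) = \Pr(X^{-1}(A) \,|\, \mathcal{G}) \cdot \Pr(Y^{-1}(B) \,|\, \mathcal{G})$ almost surely. For the forward direction, I will verify that $R_\omega := P_\omega \otimes Q_\omega$ satisfies the three defining properties of a regular conditional distribution of $(X, Y)$ given $\mathcal{G}$: (i) it is a probability measure on $\mathcal{A} \otimes \mathcal{B}$ for every $\omega$ (automatic from the definition of the product measure); (ii) $\omega \mapsto R_\omega(D)$ is $\mathcal{G}$-measurable for every $D \in \mathcal{A} \otimes \mathcal{B}$; and (iii) $R_\omega(D) = \Pr((X, Y)^{-1}(D) \,|\, \mathcal{G})$ almost surely for every $D$. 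Both (ii) and (iii) are transparent on rectangles $D = A \times B$ (the former because $P_\omega(A) Q_\omega(B)$ is a product of $\mathcal{G}$-measurable functions, the latter from the conditional independence hypothesis), and should extend to all of $\mathcal{A} \otimes \mathcal{B}$ by a Dynkin $\pi$-$\lambda$ argument of exactly the type used in the proof of Lemma~\ref{lem:conddist}(b), since the class of $D$ satisfying each condition is a $\lambda$-system containing the $\pi$-system of measurable rectangles.

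The main obstacle I anticipate is the $\pi$-$\lambda$ step in part (b), where care is needed because the null sets outside which the almost-sure identities hold may in principle depend on $D$. The standard resolution, already employed in the proof of Lemma~\ref{lem:conddist}(b), is that the definition of a regular conditional distribution only requires the a.s.\ identity in (iii) to hold for each fixed $D$ (not jointly for all $D$ outside a single null set), so the Dynkin argument can be run one event at a time: first extend measurability (ii) from rectangles to $\mathcal{A} \otimes \mathcal{B}$ (using closure of the class under complements, countable disjoint unions and monotone limits), and then separately extend (iii). Part (a), by contrast, should reduce to routine bookkeeping once the auxiliary identity $\Pr(F \cap E \,|\, \mathcal{G}) = \Pr(F \,|\, \mathcal{G}) \Ind_E$ is in hand.
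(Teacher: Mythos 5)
Your proposal is correct and follows essentially the same route as the paper's proof: part (a) is handled by the same indicator decomposition over the $\mathcal{G}$-measurable partition $\{E,E^c\}$ (pulling $\Ind_E$ through the conditional expectation and noting the cross terms vanish), and part (b) by checking the identity on rectangles for the easy direction and running a Dynkin $\pi$--$d$-system argument, one event at a time, for the converse, exactly as in the paper's treatment modelled on the proof of Lemma~\ref{lem:conddist}(b). No gaps.
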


\deparskip
\begin{proof}
(a) For fixed $A\in\mathcal{A}$ and $B\in\mathcal{B}$, write $\chi_A\colon (\mathscr{X},\mathcal{A})\to\{0,1\}$ and $\chi_B\colon (\mathscr{Y},\mathcal{B})\to\{0,1\}$ for the respective indicator functions, and for $i=1,2$, note that 
\begin{align*}
\E\bigl((\chi_A\circ X_i)\cdot (\chi_B\circ Y_i)\bigm| \mathcal{G}\bigr)=\Pr\bigl(X_i^{-1}(A)\cap Y_i^{-1}(B)\bigm|\mathcal{G}\bigr)&=\Pr\bigl(X_i^{-1}(A)\bigm|\mathcal{G}\bigr)\Pr\bigl(Y_i^{-1}(B)\bigm|\mathcal{G}\bigr)\\
&=\E(\chi_A\circ X_i\,|\,\mathcal{G})\,\E(\chi_B\circ Y_i\,|\,\mathcal{G})
\end{align*}
almost surely, since $X_i\indep Y_i\,|\,\mathcal{G}$. As in the proof of Lemma~\ref{lem:conddist}(a), we have $\chi_A\circ X=(\chi_A\circ X_1)\Ind_E+(\chi_{A^c}\circ X_2)\Ind_{E^c}$ and $\chi_B\circ Y=(\chi_B\circ Y_1)\Ind_E+(\chi_{B^c}\circ Y_2)\Ind_{E^c}$, so it follows that
\begin{align*}
\Pr\bigl(X^{-1}(A)\cap Y^{-1}(B)\bigm|\mathcal{G}\bigr)&=\E\bigl((\chi_A\circ X)\cdot (\chi_B\circ Y)\bigm| \mathcal{G}\bigr)\\
&=\E\bigl((\chi_A\circ X_1)\cdot (\chi_B\circ Y_1)\Ind_E+(\chi_A\circ X_2)\cdot (\chi_B\circ Y_2)\Ind_{E^c}\bigm| \mathcal{G}\bigr)\\
&=\E\bigl((\chi_A\circ X_1)\cdot (\chi_B\circ Y_1)\bigm| \mathcal{G}\bigr)\Ind_E+\E\bigl((\chi_A\circ X_2)\cdot (\chi_B\circ Y_2)\bigm| \mathcal{G}\bigr)\Ind_{E^c}\\
&=\E(\chi_A\circ X_1\,|\,\mathcal{G})\,\E(\chi_B\circ Y_1\,|\,\mathcal{G})\Ind_E+\E(\chi_A\circ X_2\,|\,\mathcal{G})\,\E(\chi_B\circ Y_2\,|\,\mathcal{G})\Ind_{E^c}\\
&=\E\bigl((\chi_A\circ X_1)\Ind_E+(\chi_A\circ X_2)\Ind_{E^c}\bigm|\mathcal{G}\bigr)\,\E\bigl((\chi_B\circ Y_1)\Ind_E+(\chi_B\circ Y_2)\Ind_{E^c}\,|\,\mathcal{G}\bigr)\\
&=\E(\chi_A\circ X\,|\,\mathcal{G})\,\E(\chi_B\circ Y\,|\,\mathcal{G})=\Pr\bigl(X^{-1}(A)\bigm|\mathcal{G}\bigr)\,\Pr\bigl(Y^{-1}(B)\bigm|\mathcal{G}\bigr),
\end{align*}
where we have used the fact that $E\in\mathcal{G}$ to obtain the third-last equality. Since this holds for all $A\in\mathcal{A}$ and $B\in\mathcal{B}$, the result follows.

(b) For $A\in\mathcal{A}$ and $B\in\mathcal{B}$, note that
\begin{align}
\label{eq:cap}
&\Pr\bigl(X^{-1}(A)\cap Y^{-1}(B)\bigm|\mathcal{G}\bigr)(\omega)=\Pr\bigl((X,Y)^{-1}(A\times B)\bigm|\mathcal{G}\bigr)(\omega)\\
\label{eq:prod}
&\Pr\bigl(X^{-1}(A)\bigm|\mathcal{G}\bigr)(\omega)\cdot\Pr\bigl(Y^{-1}(B)\bigm|\mathcal{G}\bigr)(\omega)=P_\omega(A)\,Q_\omega(B)=R_\omega(A\times B)
\end{align}
for $\restr{\Pr}{\mathcal{G}}$-almost every $\omega\in\Omega$. Thus, if $(X,Y)$ has conditional distribution $R\equiv R_\omega=P_\omega\otimes Q_\omega$ given $\mathcal{G}$, then for any $A\in\mathcal{A}$ and $B\in\mathcal{B}$, the right hand sides of~\eqref{eq:cap} and~\eqref{eq:prod} agree for $\restr{\Pr}{\mathcal{G}}$-almost every $\omega\in\Omega$, so the same is true of the left hand sides. This shows that $X\indep Y\,|\,\mathcal{G}$. 

Conversely, suppose that $X\indep Y\,|\,\mathcal{G}$ and let $\mathcal{D}$ be the collection of all $D\in\mathcal{A}\otimes\mathcal{B}$ such that $R_\omega(D)=(P_\omega\otimes Q_\omega)(D)=\Pr\bigl((X,Y)^{-1}(D)\bigm|\mathcal{G}\bigr)(\omega)$ for $\restr{\Pr}{\mathcal{G}}$-almost every $\omega\in\Omega$. Then for any $A\in\mathcal{A}$ and $B\in\mathcal{B}$, the left hand sides of~\eqref{eq:cap} and~\eqref{eq:prod} agree for $\restr{\Pr}{\mathcal{G}}$-almost every $\omega\in\Omega$, so $\mathcal{D}$ contains a $\pi$-system $\{A\times B:A\in\mathcal{A},\,B\in\mathcal{B}\}$ that generates $\mathcal{A}\otimes\mathcal{B}$. Similarly to the proof of Lemma~\ref{lem:conddist}(b), it can be verified that $\mathcal{D}$ is a $d$-system,
so it follows from Dynkin's lemma that $\mathcal{D}=\mathcal{A}\otimes\mathcal{B}$, and hence that $(X,Y)$ has conditional distribution $R\equiv R_\omega=P_\omega\otimes Q_\omega$, as required.
\end{proof}

\umparskip
\subsection{Auxiliary probabilistic results}
\label{sec:inductiveaux}
The following general result is used in the proofs of some important complete convergence statements in Sections~\ref{sec:inductive} and~\ref{sec:Wr}, specifically Proposition~\ref{prop:AMPproof}(g) and Corollary~\ref{cor:Wr}(b).
\begin{lemma}
\label{lem:seqcoup}
Let $(X_n),(Y_n)$ be sequences of random elements defined on $(\Omega,\mathcal{F},\Pr)$ such that $X_n,Y_n$ take values in Polish spaces $E_n,E_n'$ respectively for each $n\in\N$, and suppose that there exist Borel measurable functions $g_n\colon E_n\to E_n'$ such that $Y_n\eqd g_n(X_n)$ for each $n$. Then there exists a sequence of random elements $\tilde{X}_n\colon\Omega\to E_n$ such that $\tilde{X}_n\eqd X_n$ for all $n$ and $\bigl(g_1(\tilde{X}_1),g_2(\tilde{X}_2),\dotsc\bigr)=(Y_1,Y_2,\dotsc)$ almost surely (viewed as random sequences taking values in $\prod_{n=1}^\infty E_n'$, equipped with its cylindrical (i.e.\ Borel) $\sigma$-algebra).
\end{lemma}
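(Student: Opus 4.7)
The plan is a standard disintegration-plus-randomization argument. First I would exploit the assumption that each $E_n$ is Polish (hence a Borel space) to disintegrate the joint law of $\bigl(X_n,g_n(X_n)\bigr)$ on $E_n\times E_n'$ via a regular conditional distribution: writing $Q_n$ for the law of $Y_n$ (which coincides with the $g_n$-pushforward of the law of $X_n$), there is a probability kernel $\kappa_n\colon E_n'\times\mathcal{B}(E_n)\to[0,1]$ with the property that for $Q_n$-a.e.\ $y\in E_n'$, the measure $\kappa_n(y,\cdot)$ is supported on $g_n^{-1}(\{y\})$ and serves as the conditional distribution of $X_n$ given $g_n(X_n)=y$. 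In particular, $\int_{E_n'}\kappa_n(y,\cdot)\,dQ_n(y)$ recovers the marginal law of $X_n$.

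Next, using the classical fact that any Borel probability kernel into a Polish space admits a Borel measurable randomization (obtained by composing the inverse-CDF trick with a Borel isomorphism of $E_n$ onto a Borel subset of $[0,1]$), I would produce Borel functions $F_n\colon E_n'\times[0,1]\to E_n$ such that whenever $U\sim U[0,1]$ is independent of $y\in E_n'$, the random element $F_n(y,U)$ has distribution $\kappa_n(y,\cdot)$. Because $\kappa_n(y,\cdot)$ is concentrated on $g_n^{-1}(\{y\})$ for $Q_n$-a.e.\ $y$, this construction automatically yields $g_n\bigl(F_n(y,U)\bigr)=y$ almost surely for such $y$.

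Then, after possibly enlarging $(\Omega,\mathcal{F},\Pr)$ by taking a product with $\bigl([0,1]^{\N},\mathcal{B}([0,1]^{\N}),\mathrm{Leb}^{\otimes\N}\bigr)$ so as to obtain i.i.d.\ uniforms $U_1,U_2,\ldots$ independent of $(Y_1,Y_2,\ldots)$, I would set $\tilde{X}_n:=F_n(Y_n,U_n)$. The verification is then immediate: the conditional law of $\tilde{X}_n$ given $(Y_1,Y_2,\ldots)$ is $\kappa_n(Y_n,\cdot)$, so the marginal law of $\tilde{X}_n$ is $\int\kappa_n(y,\cdot)\,dQ_n(y)$, which agrees with the law of $X_n$; and $g_n(\tilde{X}_n)=Y_n$ holds almost surely, simultaneously for every $n$, by the support property of the kernels together with countable subadditivity.

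The main (and essentially only) obstacle is the probability-space enlargement: unless $(\Omega,\mathcal{F},\Pr)$ already supports a sequence of $U[0,1]$ variables independent of $(Y_n)$, one must pass to the product space described above. This is harmless in the intended applications (for instance in the proof of Proposition~\ref{prop:AMPproof}(g)), since the conclusions drawn from the lemma depend only on the joint distribution of $\bigl((\tilde{X}_n,Y_n)\bigr)_{n\in\N}$ and not on the underlying probability space. A secondary technical point to handle carefully is ensuring that the exceptional $Q_n$-null set outside which $\kappa_n(y,\cdot)$ is supported on $g_n^{-1}(\{y\})$ can be discarded coherently across all $n$, which follows from a countable union argument since $\Pr(Y_n\in N_n)=Q_n(N_n)=0$ for each exceptional Borel set $N_n\subseteq E_n'$.
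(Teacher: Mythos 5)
Your proposal is correct, and it shares the paper's central ingredient — disintegrating the law of $(X_n,g_n(X_n))$ into a kernel $\kappa_n(y,\cdot)$ concentrated on the fibre $g_n^{-1}(\{y\})$, with the coordinates made conditionally independent given $(Y_n)$ — but it closes the argument differently. The paper writes down the finite-dimensional laws $\pi_n'$ of the desired coupling explicitly, checks consistency and the two pushforward identities, invokes the Daniell--Kolmogorov extension theorem to get a sequence $(X_n')$ with $\bigl(g_n(X_n')\bigr)\eqd(Y_n)$, and only then upgrades this distributional identity to an almost-sure one via Kallenberg's transfer theorem (Corollary~5.11 of Kallenberg, 1997), with the need for auxiliary uniforms absorbed into Remark~\ref{rem:randseq}. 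You instead inline that last step: you realise the coupling directly on the (enlarged) space by Borel randomisation functions $F_n(Y_n,U_n)$, which bypasses both the Kolmogorov extension and the transfer theorem and is in effect the proof of the transfer theorem unrolled. The enlargement by i.i.d.\ uniforms that you flag as the main obstacle is exactly what the paper also assumes without loss of generality in Remark~\ref{rem:randseq}, so it is not a defect; and your observation that only the joint law of $\bigl((\tilde{X}_n,Y_n)\bigr)_n$ matters in the applications is the right justification. Two points you should make explicit to be fully rigorous: (i) the fibre-support property of $\kappa_n$ needs a short argument — the paper's identity $Q_y^n\bigl(g_n^{-1}(B)\bigr)=\Ind_B(y)$ holds for each fixed Borel $B$ only for $Q_n$-a.e.\ $y$, and one passes to ``$\kappa_n(y,\cdot)$ is supported on $g_n^{-1}(\{y\})$ for a.e.\ $y$'' by intersecting over a countable point-separating generating class of the Polish space $E_n'$; (ii) the existence of the Borel randomisation $F_n$ is a citable but nontrivial fact (Kallenberg, Lemma~3.22), resting on $E_n$ being a Borel space. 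With those two references supplied, your argument is a complete and somewhat more constructive alternative to the paper's proof, at the price of invoking the randomisation lemma rather than the extension and transfer theorems.
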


\unparskip
This is an extension to random sequences of the following result for pairs of random elements: given random elements $X_1,X_2$ taking values in $E_1,E_2$ respectively, let $(Y_1,Y_2)\sim\pi$ be any coupling of $g_1(X_1),g_2(X_2)$. Then there exists a coupling $(X_1',X_2')\sim\pi'$ of $X_1,X_2$ such that $\bigl(g_1(X_1'),g_2(X_2')\bigr)\eqd (Y_1,Y_2)$, i.e.\ $\pi=\pi'\circ (g_1,g_2)^{-1}$. This can be proved by applying the gluing lemma from optimal transport~\citep[Lemma~7.6]{Vil03} or a simpler version of the general argument below.

Given an arbitrary coupling $(Y_1,Y_2,\dotsc)$ of the random elements $g_1(X_1),g_2(X_2),\dotsc$, the first (and most important) step in the proof below is to `lift' this to produce a suitable coupling $(X_1',X_2',\dotsc)$ of the random elements $X_1,X_2,\dotsc$, in such a way that $\bigl(g_1(X_1'),g_2(X_2'),\dotsc\bigr)\eqd(Y_1,Y_2,\dotsc)$ as random sequences. Intuitively, the key construction can be interpreted as the output of the following two-stage procedure: 

\unparskip
\begin{enumerate}[label=(\Alph*)]
\item Denoting by $\pi$ the (given) distribution of $(Y_1,Y_2,\dotsc)$ on $\prod_{n=1}^\infty E_n'$, we first draw $(Y_1',Y_2',\dotsc)\sim\pi$;
\item Having obtained $(Y_1',Y_2',\dotsc)=(y_1,y_2,\dotsc)$ from Step A, we then generate $X_1',X_2',\dotsc$ by sampling independently from $Q_{y_1}^1,Q_{y_2}^2,\dotsc$, where $Q_{y_n}^n$ denotes the ``conditional distribution of $X_n$ given $g_n(X_n)=y_n$''. 
\end{enumerate}

\unparskip
Step B ensures that $X_1',X_2',\dotsc$ are conditionally independent given $(Y_1',Y_2',\dotsc)$. To make rigorous sense of this informal description and to validate the construction, we use the language of \emph{disintegration of measures}, as outlined in Remark~\ref{rem:condkern}. There are similarities here with the proof of the gluing lemma~\citep[Lemma~7.6]{Vil03}. 
To verify that the random sequences $\bigl(g_n(X_n')\bigr)$ and $(Y_n)$ have the same distribution on $\prod_{n=1}^\infty E_n'$, it suffices to show that they have the same finite-dimensional distributions, i.e.\ that $\bigl(g_1(X_1'),\dotsc,g_n(X_n')\bigr)\eqd(Y_1,\dotsc,Y_n)$ for all $n$. Finally, to upgrade all the distributional equalities above to almost-sure equalities, we appeal to a general result from abstract probability theory~\citep[Corollary~5.11]{Kal97}, 
which is also proved using disintegration techniques.
\begin{remark}
\label{rem:randseq}
To guarantee the existence of a random sequence $(\tilde{X}_1,\tilde{X}_2,\dotsc)$ with a given distribution on $\prod_{n=1}^\infty E_n'$, we require the underlying probability space $(\Omega,\mathcal{F},\Pr)$ to be rich enough to support a sequence of independent $U[0,1]$ random variables. This can be assumed without loss of generality, since otherwise we can work with the 
product space $(\Omega\times [0,1],\mathcal{F}\otimes\mathcal{B}_{[0,1]},\Pr\otimes\mu_{[0,1]})$, where $\mathcal{B}_{[0,1]}$ and $\mu_{[0,1]}$ denote the Borel $\sigma$-algebra and Lebesgue measure on $[0,1]$ respectively.
\end{remark}

\deparskip
\begin{proof}[Proof of Lemma~\ref{lem:seqcoup}]
For each $n$, denote by $\mathcal{B}_n,\mathcal{B}_n'$ the Borel $\sigma$-algebras of $E_n,E_n'$ respectively. It follows from~\citet[Theorem~2.5.7]{Dud02} and~\citet[Lemma~1.2]{Kal97} that $\prod_{j=1}^n E_j'$ and $\prod_{j=1}^n E_j$ are Polish spaces with Borel $\sigma$-algebras $\bigotimes_{j=1}^n\mathcal{B}_j'$ and $\bigotimes_{j=1}^n\mathcal{B}_j$ respectively. Denote by $\mu_n,\pi_n$ the distributions of $Y_n$ and $(Y_1,\dotsc,Y_n)$ on $(E_n',\mathcal{B}_n')$ and $\bigl(\prod_{j=1}^n E_j',\bigotimes_{j=1}^n\mathcal{B}_j'\bigr)$ respectively. Since $E_n$ is a Polish space, we know from Section~\ref{sec:conddists} that there exists a regular conditional distribution for $X_n$ given $\sigma\bigl(g_n(X_n)\bigr)$. Equivalently, there is a family of probability distributions $(Q_y^n)_{y\in E_n'}$ on $E_n$ satisfying conditions (I) and (II) in Remark~\ref{rem:condkern}, where we take $X:=X_n$, $Y:=g_n(X_n)\eqd Y_n$ and $P^Y:=\mu_n$. It follows from Remark~\ref{rem:condkern}(II) that $\Pr(X_n\in A)=\int_{E_n'}Q_y^n(A)\,d\mu_n(y)$ for all $A\in\mathcal{B}_n$, and moreover that
\begin{equation}
\label{eq:gnxn}
\int_{B'}\Ind_B(y)\,d\mu_n(y)=\Pr\bigl(g_n(X_n)\in B\cap B'\bigr)=\Pr\bigl(X_n\in g_n^{-1}(B),\,g_n(X_n)\in B'\bigr)=\int_{B'}Q_y^n\bigl(g_n^{-1}(B)\bigr)\,d\mu_n(y)
\end{equation}
for $B,B'\in\mathcal{B}_n'$. Thus, for all $B\in\mathcal{B}_n'$, we have $Q_y^n\bigl(g_n^{-1}(B)\bigr)=\Ind_B(y)$ for $\mu_n$-almost every $y\in E_n'$.

For each $n\in\N$, we now define a new measure $\pi_n'$ on $\bigl(\prod_{j=1}^n E_j,\bigotimes_{j=1}^n\mathcal{B}_j\bigr)$ by
\begin{equation}
\label{eq:pitilde}
\pi_n'(A):=\int_{\,\prod_{j=1}^n E_j'}\biggl(\,\int_{E_1}\dotsi\int_{E_n}\Ind_A(x_1,\dotsc,x_n)\,dQ_{y_n}^n(x_n)\;\dotsi\, dQ_{y_1}^1(x_1)\biggr)\,d\pi_n(y_1,\dotsc,y_n)
\end{equation}
for $A\in\bigotimes_{j=1}^n\mathcal{B}_j$. That this a well-defined probability measure follows from Remark~\ref{rem:condkern}(III,\,IV) 
and the monotone convergence theorem. For each $n$, we claim that

\unparskip
\begin{enumerate}[label=(\roman*)]
\item $\pi_{n+1}'(A\times E_{n+1})=\pi_n'(A)$ for every $A\in\bigotimes_{j=1}^n\mathcal{B}_j$;
\item $\Pr(X_n\in A_n)=\pi_n'\bigl(\prod_{j=1}^{n-1}E_j\times A_n\bigr)$ for every $A_n\in\mathcal{B}_n$;
\item $\pi_n=\pi_n'\circ (g_1,\dotsc,g_n)^{-1}$, where $(g_1,\dotsc,g_n)\colon\bigl(\prod_{j=1}^n E_j,\bigotimes_{j=1}^n\mathcal{B}_j\bigr)\to\bigl(\prod_{j=1}^n E_j',\bigotimes_{j=1}^n\mathcal{B}_j'\bigr)$ denotes the measurable map $(x_1,\dotsc,x_n)\mapsto \bigl(g_1(x_1),\dotsc,g_n(x_n)\bigr)$.
\end{enumerate}

\unparskip
Property (i) is immediate from~\eqref{eq:pitilde} and the fact that $\pi_{n+1}(B\times E_n')=\pi_n(B)$ for all $B\in\bigotimes_{j=1}^n\mathcal{B}_j'$. To verify (ii), observe that 
\begin{align*}
\pi_n'\bigl(\textstyle\prod_{j=1}^{n-1}E_j\times A_n\bigr)&=\int_{\,\prod_{j=1}^n E_j'}\,\int_{E_n}\Ind_{A_n}(x_n)\,dQ_{y_n}^n(x_n)\,d\pi_n(y_1,\dotsc,y_n)\\
&=\int_{E_n'}Q_{y_n}^n(A_n)\,d\mu_n(y_n)=\Pr(X_n\in A_n),
\end{align*}
where the final equality is obtained from Remark~\ref{rem:condkern}(II) as above. As for (iii), fix $B_j\in\mathcal{B}_j'$ for $1\leq j\leq n$ and note that by~\eqref{eq:gnxn} and~\eqref{eq:pitilde}, we have
\begin{align*}
\pi_n'\bigl(\textstyle\prod_{j=1}^n g_j^{-1}(B_j)\bigr)&=\int_{\,\prod_{j=1}^n E_j'}Q_{y_1}^1\bigl(g_1^{-1}(B_1)\bigr)\dotsm\,Q_{y_n}^n\bigl(g_n^{-1}(B_n)\bigr)\,d\pi_n(y_1,\dotsc,y_n)\\
&=\int_{\,\prod_{j=1}^n E_j'}\Ind_{B_1}(y_1)\dotsm\Ind_{B_n}(y_n)\,d\pi_n(y_1,\dotsc,y_n)=\pi_n\bigl(\textstyle\prod_{j=1}^n B_j\bigr).
\end{align*}
This means that $\pi_n$ and $\pi_n'\circ (g_1,\dotsc,g_n)^{-1}$ agree on $\bigl\{\prod_{j=1}^n B_j:B_j\in\mathcal{B}_j'\text{ for all }1\leq j\leq n\bigr\}$, a $\pi$-system that generates $\bigotimes_{j=1}^n\mathcal{B}_j$, so (iii) holds.

Since the distributions $\pi_1',\pi_2',\dotsc$ on the Polish spaces $E_1,E_1\times E_2,\dotsc$ satisfy the consistency condition (i), we deduce from the Daniell--Kolmogorov extension theorem~\citep[Theorem~5.14]{Kal97} and Remark~\ref{rem:randseq} that exists a sequence $(X_n')_{n\in\N}$ of random elements $X_n'\colon\Omega\to E_n$ such that $(X_1',\dotsc,X_n')\sim\pi_n'$ on $\prod_{j=1}^n E_j$ for each $n$. Then by (ii) and (iii) above, we have $X_n'\eqd X_n$ and $\bigl(g_1(X_1'),\dotsc,g_n(X_n')\bigr)\sim\pi_n'\circ (g_1,\dotsc,g_n)^{-1}=\pi_n$ for each $n$, where $\pi_n$ was defined to be the distribution of $(Y_1,\dotsc,Y_n)$. Thus, the sequences $\bigl(g_n(X_n')\bigr)$ and $(Y_n)$ have the same finite-dimensional distributions; in other words, their distributions agree on $\bigl\{\prod_{j=1}^N B_j\times\prod_{j=N+1}^\infty E_j':N\in\N,\,B_j\in\mathcal{B}_j'\text{ for all }1\leq j\leq N\bigr\}$, a collection of cylindrical sets that generate the cylindrical $\sigma$-algebra $\mathcal{B}'$ of $\prod_{n=1}^\infty E_n'$. (By~\citet[Lemma~1.2]{Kal97}, $\mathcal{B}'$ is the Borel $\sigma$-algebra of $\prod_{n=1}^\infty E_n'$.) 
We conclude that $\bigl(g_1(X_1'),g_2(X_2'),\dotsc\bigr)\eqd (Y_1,Y_2,\dotsc)$ as random sequences taking values in $\bigl(\prod_{n=1}^\infty E_n',\mathcal{B}'\bigr)$. 

Finally, we apply~\citet[Corollary~5.11]{Kal97} with $T=\prod_{n=1}^\infty E_n$, $S=\prod_{n=1}^\infty E_n'$, $\eta=(X_n')$, $\xi=(Y_n)$
and $f\colon T\to S$ given by $f(x_1,x_2,\dotsc)=\bigl(g_1(x_1),g_2(x_2),\dotsc\bigr)$; note that $T,S$ are Polish spaces~\citep[e.g.][Theorem~2.5.7]{Dud02}
and that $f$ is Borel measurable. Having already shown that $f(\eta)\eqd\xi$, we deduce from~\citet[Corollary~5.11]{Kal97} that there exists $(\tilde{X}_n)\equiv\tilde{\eta}\eqd\eta=(X_n')$ satisfying $\bigl(g_n(\tilde{X}_n)\bigr)=f(\tilde{\eta})=\xi=(Y_n)$ almost surely, as required.
\end{proof}

\unparskip
In the proofs of Proposition~\ref{prop:AMPproof}(a,\,c), we apply the concentration inequality below for sums of pseudo-Lipschitz functions of independent Gaussian random variables.
\begin{lemma}
\label{lem:pseudoconc}
There exists a universal constant $C>0$ such that the following holds for all $n\in\N$, $r\geq 2$ and $t\geq 0$: if $Z_1,\dotsc,Z_n\iid N(0,1)$, $L\equiv (L_1,\dotsc,L_n)\in (0,\infty)^n$ and $f_i\in\PL_1(r,L_i)$ for $1\leq i\leq n$, then
\begin{equation}
\label{eq:pseudoconc}
\Pr\biggl(\biggl|\frac{1}{n}\sum_{i=1}^n\,\bigl\{f_i(Z_i)-\E\bigl(f_i(Z_i)\bigr)\bigr\}\biggr|\geq t\biggr)\leq\exp\biggl(1-\min\,\biggl\{\biggl(\frac{nt}{(Cr)^r\norm{L}_2}\biggr)^2,\biggl(\frac{nt}{(Cr)^r\norm{L}_\infty}\biggr)^{2/r}\biggr\}\biggr).
\end{equation}
\end{lemma}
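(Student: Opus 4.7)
The plan is to view each centred summand $X_i := f_i(Z_i) - \E\bigl(f_i(Z_i)\bigr)$ as a sub-Weibull random variable with Orlicz parameter $\alpha = 2/r$, and then invoke a Bernstein-type deviation inequality for sums of independent centred sub-Weibull variables. Without loss of generality I may replace $f_i$ by $f_i - f_i(0)$ (which belongs to the same class $\PL_1(r,L_i)$), and so assume $f_i(0) = 0$. Applying the pseudo-Lipschitz bound at $y = 0$ then gives the pointwise estimate $|f_i(z)| \leq L_i|z|(1 + |z|^{r-1}) \leq L_i(|z| + |z|^r)$ for all $z \in \R$.

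The first key step is a moment bound. Using the standard Gaussian moment inequality $\|Z_i\|_q \leq c_0\sqrt{q}$ for $q \geq 1$ (with $c_0$ an absolute constant), I obtain, for every $p \geq 2$,
\[
\|X_i\|_p \leq 2\|f_i(Z_i)\|_p \leq 2L_i\bigl(\|Z_i\|_p + \|Z_i\|_{rp}^r\bigr) \leq C_1 L_i (rp)^{r/2},
\]
where the term $\|Z_i\|_{rp}^r \leq c_0^r (rp)^{r/2}$ dominates the $\sqrt{p}$ term once constants are absorbed. By the standard equivalence between $L^p$ moment growth and Orlicz norms (see e.g.\ the characterisation that $(\E|X|^p)^{1/p} \leq K p^{1/\alpha}$ for all $p \geq 2/\alpha$ implies $\|X\|_{\psi_\alpha} \lesssim K$), this translates into the sub-Weibull-$(2/r)$ Orlicz norm bound $\|X_i\|_{\psi_{2/r}} \leq C_2 L_i r^{r/2}$.

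The second key step is to apply a Bernstein-type inequality for sums of independent centred sub-Weibull-$\alpha$ random variables (see e.g.\ the generalised Bernstein inequality in Kuchibhotla and Chakrabortty, or via a direct Markov-plus-optimised-$p$ argument based on Rosenthal's inequality and the moment bounds above). This yields an absolute constant $c > 0$ such that
\[
\Pr\biggl(\biggl|\sum_{i=1}^n X_i\biggr| \geq nt\biggr) \leq 2\exp\biggl(-c \min\biggl\{\frac{(nt)^2}{\sum_i \|X_i\|_{\psi_{2/r}}^2},\;\biggl(\frac{nt}{\max_i \|X_i\|_{\psi_{2/r}}}\biggr)^{2/r}\biggr\}\biggr).
\]
Substituting $\|X_i\|_{\psi_{2/r}} \leq C_2 L_i r^{r/2}$, so that $\sum \|X_i\|_{\psi_{2/r}}^2 \leq C_2^2 r^r \|L\|_2^2$ and $\max_i \|X_i\|_{\psi_{2/r}} \leq C_2 r^{r/2} \|L\|_\infty$, and then replacing $2$ by $e$ in the prefactor, gives the claimed bound with a suitably enlarged absolute constant $C$ in place of $C_2$.

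The main technical obstacle is the book-keeping of constants, specifically the dependence of the Bernstein constant $c$ on $\alpha = 2/r$ as $r$ grows. Since the stated bound allows a denominator $(Cr)^r$ which is considerably larger than the natural $C_2 r^{r/2}$ coming from the Orlicz bound, any polynomial or exponential-in-$r$ loss in $c$ can be comfortably absorbed into $(Cr)^r$ by enlarging the universal constant $C$; in particular, the two exponents $2$ and $2/r$ on the $\norm{L}_2$ and $\norm{L}_\infty$ terms respectively come out correctly without further adjustment. If one prefers to avoid citing sub-Weibull Bernstein as a black box, the same conclusion can be reached directly from the moment bound on $\|X_i\|_p$ via Rosenthal's inequality and Markov's inequality optimised over $p \geq 2$, which is arguably cleaner but entails a slightly longer calculation.
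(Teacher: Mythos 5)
Your proposal is correct and is essentially the paper's argument: both certify each centred summand $X_i=f_i(Z_i)-\E\bigl(f_i(Z_i)\bigr)$ as sub-Weibull of order $2/r$ with parameter of order $L_i(cr)^{r/2}$ (the paper via an explicit tail bound from the pointwise estimate and integration of the Orlicz functional, you via Gaussian moment bounds and the moment-to-quasinorm equivalence), and both then invoke the generalised Bernstein inequality of Kuchibhotla and Chakrabortty (Theorem~3.1 together with their Proposition~A.3) before absorbing all $r$-dependent factors into $(Cr)^r$. The only imprecision is your description of the loss in the Bernstein constant as ``polynomial or exponential in $r$'': the constant in fact degrades like $(2e^{2/e}/\beta)^{1/\beta}\asymp(cr)^{r/2}$ with $\beta=2/r$, i.e.\ superexponentially in $r$, but since this multiplies the $L_i(cr)^{r/2}$ quasinorm bound to give a total factor of order $(c'r)^r$, it is still comfortably absorbed by $(Cr)^r$, so your conclusion stands.
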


\deparskip
\begin{proof}
We first consider the case $n=1$. For arbitrary $r\geq 2$ and $L>0$, we may assume without loss of generality that $f\equiv f_1\in\PL(r,L)$ satisfies $f(0)=0$, so that $\abs{f(x)}=\abs{f(x)-f(0)}\leq L(\abs{x}+\abs{x}^r)\leq 2L(\abs{x}\vee\abs{x}^r)$ for all $x\in\R$. Thus, if $Z\sim N(0,1)$, then
\begin{equation}
\label{eq:plbd1}
\Pr(\abs{f(Z)}\geq s)\leq\Pr\bigl(\abs{Z}\vee\abs{Z}^r\geq s/(2L)\bigr)\leq e^{-\frac{1}{2}\min\cbr{\rbr{\frac{s}{2L}}^2,\,\rbr{\frac{s}{2L}}^{2/r}}}
\end{equation}
for all $s\geq 0$, and \[\frac{\E\bigl(\abs{f(Z)}\bigl)}{L}\leq\E(\abs{Z}+\abs{Z}^r)=\rbr{\sqrt{\frac{2}{\pi}}+\frac{2^{r/2}}{\sqrt{\pi}}\,\Gamma\rbr{\frac{r+1}{2}}}=:\upsilon_r\]
by direct computation. Now $\Gamma(x)<e^{1/(12x)}(x/e)^x\sqrt{2\pi/x}$ for all $x>0$ by a non-asymptotic version of Stirling's formula; see for example~\citet[Theorem~5]{Gor94} and~\citet[Lemma~10]{DSW21}. Since $r\geq 2$, we have $(r+1)/e<r$ and $(\sqrt{2}-1)r^{r/2}\geq 2(\sqrt{2}-1)>1/\sqrt{\pi}$. Therefore,
\begin{equation}
\label{eq:plbd2}
\frac{\upsilon_r}{2}\leq\frac{1}{\sqrt{2}}\rbr{\frac{1}{\sqrt{\pi}}+\rbr{\frac{r+1}{e}}^{r/2}e^{\frac{1}{6(r+1)}-\frac{1}{2}}}\leq\frac{1}{\sqrt{2}}\rbr{\frac{1}{\sqrt{\pi}}+r^{r/2}}<r^{r/2}.
\end{equation}
Thus, for $t\geq L\upsilon_r$, we deduce from~\eqref{eq:plbd1} and~\eqref{eq:plbd2} that
\begin{align}
\Pr\bigl\{\bigl|f(Z)-\E\bigl(f(Z)\bigr)\bigr|\geq t\bigr\}\leq\Pr\bigl\{\abs{f(Z)}\geq t-\E\bigl(\abs{f(Z)}\bigl)\bigr\}&\leq e^{-\frac{1}{2}\min\cbr{\rbr{\frac{t}{2L}-\frac{\upsilon_r}{2}}^2,\,\rbr{\frac{t}{2L}-\frac{\upsilon_r}{2}}^{2/r}}}\notag\\
&\leq e^{\frac{1}{2}-\frac{1}{2}\rbr{\frac{t}{2L}-\frac{\upsilon_r}{2}}^{2/r}}\notag\\
&\leq e^{\frac{1+r}{2}-\frac{1}{2}\rbr{\frac{t}{2L}}^{2/r}}\notag\\
\label{eq:plbd3}
&\leq e^{1-\rbr{\frac{t}{2L}}^{2/r}\frac{1}{r+1}},
\end{align}
where the third inequality follows from the fact that $a^{2/r}\leq\abs{a-b}^{2/r}+b^{2/r}$ for $r\geq 2$ as above and any $a,b\geq 0$. Now~\eqref{eq:plbd3} holds trivially for all $t\in [0,L\upsilon_r)$ since $1-(r+1)^{-1}\{t/(2L)\}^{2/r}>1-(r+1)^{-1}(\upsilon_r/2)^{2/r}>0$ by~\eqref{eq:plbd2}, so~\eqref{eq:pseudoconc} holds with $C=3$ when $n=1$.

We now derive~\eqref{eq:pseudoconc} for general $n\geq 2$ with the aid of Theorem~3.1 and Proposition~A.3 in~\citet{KC18}; see also Theorem~1 and Corollary~2 in~\citet{BMP20}. As in Sections~2 and~3 of~\citet{KC18}, we begin by defining $\vartheta_\beta\colon [0,\infty)\to [0,\infty)$ for each $\beta>0$ by $\vartheta_\beta(x):=\exp(x^\beta)-1$.
Moreover, for $\beta,\lambda>0$, let $\vartheta_{\beta,\lambda}\colon [0,\infty)\to [0,\infty)$ be the continuous, strictly increasing function with inverse given by $\vartheta_{\beta,\lambda}^{-1}(t):=\log^{1/2}(1+t)+\lambda\log^{1/\beta}(1+t)$ for $t\geq 0$. For a random variable $X$ and a strictly increasing function $g\colon [0,\infty)\to [0,\infty)$ satisfying $g(0)=0$, we write $\Xi_g(X):=\inf\bigl\{\theta>0:\E\bigl(g(\abs{X}/\theta)\bigr)\leq 1\bigr\}\in [0,\infty]$, setting $\inf\emptyset=\infty$ by convention. Note that $\Xi_g(X)$ is precisely the $g$-Orlicz norm of $X$ when $g$ is convex, but that $\Xi_g$ does not in general define a norm when $g$ is not convex (for example when $g=\vartheta_\beta$ for $\beta\in (0,1)$, as in the proof below).

For arbitrary $n\geq 2$, $r\geq 2$ and $L\equiv(L_1,\dotsc,L_n)\in (0,\infty)^n$, let $f_1,\dotsc,f_n\in\PL(r,L_i)$ and $Z_1,\dotsc, Z_n\iid N(0,1)$, and assume without loss of generality that $X_i:=f_i(Z_i)$ satisfies $\E(X_i)=0$ for all $1\leq i\leq n$. Setting $\beta:=2/r\in [0,1]$ and $\theta_i:=2\{4(r+1)\}^{r/2}\,L_i$ for $1\leq i\leq n$, we now integrate up the bound~\eqref{eq:plbd3} to see that
\begin{align}
\E\bigl(\vartheta_\beta(\abs{X_i}/\theta_i)\bigr)=\int_0^\infty\Pr\bigl(\vartheta_\beta(\abs{X_i}/\theta_i)\geq t\bigr)\,dt&=\int_0^\infty\Pr\bigl(\abs{X_i}\geq\theta_i\vartheta_\beta^{-1}(t)\bigr)\,dt\notag\\
&=\int_0^\infty\Pr\bigl(\abs{X_i}\geq 2(r+1)^{r/2}L_i\,\{4\log(1+t)\}^{1/\beta}\bigr)\,dt\notag\\
\label{eq:orlicz}
&\leq\int_0^\infty e(1+t)^{-4}\,dt=e/3<1,
\end{align}
whence $\Xi_{\vartheta_\beta}(X_i)\leq\theta_i=2\{4(r+1)\}^{r/2}\,L_i<\infty$. This shows that $X_1,\dotsc,X_n$ are independent, centred \emph{sub-Weibull} random variables of order $\beta=2/r$, in the sense of Definition 2.2 in~\citet{KC18}. Then applying~\citet[Theorem~3.1]{KC18} with $a=(1/n,\dotsc,1/n)\in\R^n$ and $b:=\bigl(\Xi_{\vartheta_\beta}(X_1)/n,\dotsc,\Xi_{\vartheta_\beta}(X_n)/n\bigr)$ in their notation, we deduce from~\eqref{eq:orlicz} that 
\begin{equation}
\label{eq:KC18}
\Xi_{\vartheta_{\beta,\lambda_\beta}}\biggl(\frac{1}{n}\sum_{i=1}^n X_i\biggr)\leq 2eC_\beta\norm{b}_2,\text{ where }
\begin{cases}
C_\beta:=(2e^{2/e}/\beta)^{1/\beta}(128\pi)^{1/4}\,e^{3+\frac{1}{24}}\\
\lambda_\beta:=(4^{1/\beta}/\sqrt{2})\,\norm{b}_\infty/\norm{b}_2.
\end{cases}
\end{equation}
It then follows from Proposition~A.3 in~\citet{KC18} that
\[\Pr\biggl(\biggl|\frac{1}{n}\sum_{i=1}^n X_i\biggr|\geq 4eC_\beta\norm{b}_2\max\bigl(s^{1/2},\lambda_\beta s^{1/\beta}\bigr)\biggr)\leq e^{1-s}\]
for all $s\geq 0$, and hence that
\[\Pr\biggl(\biggl|\frac{1}{n}\sum_{i=1}^n X_i\biggr|\geq t\biggr)\leq\exp\biggl(1-\min\biggl\{\biggl(\frac{t}{4eC_\beta\norm{b}_2}\biggr)^2,\biggl(\frac{t}{4eC_\beta'\norm{b}_\infty}\biggr)^\beta\biggr\}\biggr)\]
for all $t\geq 0$, where $C_\beta':=(4^{1/\beta}/\sqrt{2})\,C_\beta$. Since $\beta=2/r$ and $\Xi_{\vartheta_\beta}(X_i)\leq 2\{4(r+1)\}^{r/2}\,L_i$ for $1\leq i\leq n$, we have
\[n\norm{b}_p=\bigl\|\bigl(\Xi_{\vartheta_\beta}(X_1),\dotsc,\Xi_{\vartheta_\beta}(X_n)\bigr)\bigr\|_p\leq 2\{4(r+1)\}^{r/2}\,\norm{L}_p\] 
for $p\in\{2,\infty\}$. Moreover, $2\{4(r+1)\}^{r/2}\,C_\beta\leq 2\{4(r+1)\}^{r/2}\,C_\beta'\lesssim \{4e^{1/e}(r+1)\}^r$, so we can indeed find a suitable universal constant $C>0$ in~\eqref{eq:pseudoconc} such that the desired conclusion holds for all $n\in\N$, $r\geq 2$, $L\equiv(L_1,\dotsc,L_n)\in (0,\infty)^n$ and $t\geq 0$, as required.
\end{proof}
\begin{remark}
\label{rem:pseudoconc}
When $r>2$, $f\in\PL_1(r)$ and $Z\sim N(0,1)$, the moment generating function of $f(Z)$ may not be finite anywhere except at 0 if $f(Z)$ has heavier tails than an exponential random variable (for example when $f(z)=\sgn(z)\abs{z}^r$ for $z\in\R$). In these situations, the standard Chernoff method fails, which is why we apply different techniques that can handle general sub-Weibull random variables.

While we are primarily concerned with the case $r\geq 2$ in the proof of Proposition~\ref{prop:AMPproof}, there is an analogue of~\eqref{eq:pseudoconc} when $r\in [1,2)$, namely
\begin{equation}
\label{eq:pseudoconcr}
\Pr\biggl(\biggl|\frac{1}{n}\sum_{i=1}^n\,\bigl\{f_i(Z_i)-\E\bigl(f_i(Z_i)\bigr)\bigr\}\biggr|\geq t\biggr)\leq\exp\biggl(1-\min\,\biggl\{\biggl(\frac{nt}{C\norm{L}_2}\biggr)^2,\biggl(\frac{nt}{C\norm{L}_{\tilde{r}}}\biggr)^{2/r}\biggr\}\biggr),
\end{equation}
where $C>0$ is a suitable universal constant and $\tilde{r}:=2/(2-r)\in [2,\infty)$ is the H\"older conjugate of $2/r$. This can be proved using a Chernoff bound~\citep[e.g.][Exercise~2.27]{BLM13}, or alternatively using~\citet[Theorem~3.1]{KC18} once again, where we instead take $\beta:=2/r$, $C_\beta:=4e+2(2\log 2)^{r/2}$ and $\lambda_\beta:=(4^{1+1/\beta}C_\beta^{-1}e/\sqrt{2})\,\norm{b}_{\tilde{r}}/\norm{b}_2$ in~\eqref{eq:KC18}.
\end{remark}

\unparskip
The proof of Proposition~\ref{prop:AMPproof}(g) makes use of the following straightforward consequence of the definition of weak convergence.
\begin{lemma}
\label{lem:weakconv}
On a Euclidean space $E$, if $(\mu_n)$ is a sequence of Borel probability measures that converges weakly to a Borel probability measure $\mu$, then $\int_E g\,d\mu_n\to\int_E g\,d\mu$ for any bounded, Borel measurable $g\colon E\to\R$ that is continuous $\mu$-almost everywhere (in the sense that the set of discontinuities of $g$ has $\mu$-measure 0).
\end{lemma}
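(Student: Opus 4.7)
The plan is to apply Skorokhod's representation theorem to convert weak convergence of the measures $(\mu_n)$ into almost-sure convergence of random elements, and then invoke the bounded convergence theorem. Since the Euclidean space $E$ is Polish, Skorokhod's theorem furnishes a probability space $(\Omega',\mathcal{F}',\Pr')$ together with random elements $X_n\colon\Omega'\to E$ and $X\colon\Omega'\to E$ with $X_n\sim\mu_n$ for each $n$, $X\sim\mu$, and $X_n\to X$ almost surely as $n\to\infty$.

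Next I would argue that $g(X_n)\to g(X)$ almost surely. Let $D_g\subseteq E$ denote the set of discontinuities of $g$. This is a standard $F_\sigma$ Borel set: writing $\omega_g(x):=\lim_{\delta\downarrow 0}\sup_{y,z\in B(x,\delta)}\,\abs{g(y)-g(z)}$ for the oscillation of $g$ at $x$, each level set $\{x\in E:\omega_g(x)\geq 1/k\}$ is closed, and $D_g=\bigcup_{k\in\N}\{x\in E:\omega_g(x)\geq 1/k\}$. Thus $\mu(D_g)$ is well defined and equals $0$ by hypothesis, so $\Pr'(X\in D_g)=0$. On the almost-sure event $\{X_n\to X\}\cap\{X\notin D_g\}$, continuity of $g$ at $X$ yields $g(X_n)\to g(X)$. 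Since $g$ is bounded, the bounded convergence theorem gives $\E'\bigl(g(X_n)\bigr)\to\E'\bigl(g(X)\bigr)$, which via the change-of-variables formula is precisely the desired conclusion $\int_E g\,d\mu_n\to\int_E g\,d\mu$.

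There is no serious obstacle here beyond verifying the hypotheses of the tools invoked: Skorokhod's representation theorem requires a Polish target space, which holds since $E$ is finite-dimensional and hence separable and completely metrisable; the measurability (indeed $F_\sigma$ character) of $D_g$ is a general fact about oscillation that makes no use of Borel measurability of $g$; and the bounded convergence theorem applies because $g$ is bounded by assumption and we have established pointwise almost-sure convergence of $g(X_n)$ to $g(X)$.
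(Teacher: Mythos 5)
Your proof is correct and follows essentially the same route as the paper: Skorokhod's representation theorem to upgrade weak convergence to almost-sure convergence, then continuity of $g$ at the limit off a $\mu$-null set, then the bounded (dominated) convergence theorem. The extra observation that the discontinuity set is an $F_\sigma$ (hence Borel) set is a nice touch but not needed beyond what the paper assumes.
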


\deparskip
\begin{proof}
Writing $A\subseteq E$ for the set of discontinuities of $g$, we have $\mu(A)=0$ by assumption. By Skorokhod's representation theorem~\citep[e.g.][Theorem~3.30]{Kal97}, there exist random variables $X,X_1,X_2,\dotsc$ defined on a common probability space such that $X\sim\mu$, $X_n\sim\mu_n$ for all $n$ and $X_n\to X$ almost surely. Then $g(X_n)\to g(X)$ almost surely on the event $\{X\in A^c\}$, which has probability $\mu(A^c)=1$, so an application of the dominated (or bounded) convergence theorem shows that $\int_E g\,d\mu_n=\E\bigl(g(X_n)\bigr)\to\E\bigl(g(X)\bigr)=\int_E g\,d\mu$, as required.
\end{proof}

\unparskip
\begin{remark}
\label{rem:lipderiv}
For each Lipschitz function $f_k\colon\R^2\to\R$ in the AMP recursion~\eqref{eq:AMPsym}, we assume in~\ref{ass:A5} that there exists some $f_k'$ that 
satisfies the hypotheses of Lemma~\ref{lem:weakconv} above with $\mu=\lambda\otimes\pi$; recall that $\lambda$ denotes Lebesgue measure on $\R$ and the probability distribution $\pi$ is as in~\ref{ass:A1}. To see why~\ref{ass:A5} is a non-vacuous (albeit very mild) condition, consider Borel probability measures on $\R^D$ of the form $\mu=\lambda\otimes\nu$, where $D\geq 2$
and $\nu$ is some probability measure on $\R^{D-1}$. We will now give an example of a Lipschitz function $G\colon\R^D\to\R$ whose partial derivative $\frac{\partial G}{\partial x_1}$ cannot be extended beyond its domain of definition to a function $g\colon\R^D\to\R$ that is continuous $\mu$-almost everywhere, for any $\mu$ of the above form. 

Denote by $C\subseteq [0,1]$ the \emph{fat Cantor set}~\citep[e.g.][pp.\ 140--141]{AB98}, which has the property that for all $x\in C$ and $\varepsilon>0$, both $(x-\varepsilon,x+\varepsilon)\cap C$ and $(x-\varepsilon,x+\varepsilon)\cap C^c$ have positive Lebesgue measure.
Then for any $f\colon\R\to\R$ with $f=\Ind_C$ Lebesgue almost everywhere, we have $\{f(u):u\in(x-\varepsilon,x+\varepsilon)\}=\{0,1\}$ for all $x\in C$ and $\varepsilon>0$, so $f$ is discontinuous on $C$, which has Lebesgue measure $1/2>0$.
Note that $F\colon x\mapsto\int_{-\infty}^x\Ind_C(t)\,dt$ is a Lipschitz function on $\R$ with $F'(x)=\Ind_C(x)$ for Lebesgue almost every $x\in\R$. Thus, for general $D\in\N$, the function $G\colon (x_1,\dotsc,x_D)\mapsto F(x_1)$ is Lipschitz on $\R^D$, and if $g\colon\R^D\to\R$ agrees with $\frac{\partial G}{\partial x_1}$ everywhere where the latter is defined, then $g$ is discontinuous on $C\times\R^{D-1}$,
which has strictly positive $\mu$-measure when $\mu=\lambda\otimes\nu$ as above.
\end{remark}

\umparskip
\subsection{Wasserstein convergence and pseudo-Lipschitz functions}
\label{sec:Wr}
Throughout this subsection, we fix $D\in\N$ and $r\in [1,\infty)$, and write $\mathcal{P}(r)\equiv\mathcal{P}_D(r)$ for the set of probability measures $P$ on $\R^D$ with $\int_{\R^D}\norm{x}^r\,dP(x)<\infty$ (i.e.\ a finite $r^{th}$ moment). For $P,Q\in\mathcal{P}(r)$, recall from Section~\ref{sec:notation} the definitions of $\widetilde{d}_r(P,Q)$ and the $r$-Wasserstein distance $d_r(P,Q)$.

The primary purpose of this subsection is to establish Theorem~\ref{thm:Wr} and its probabilistic Corollary~\ref{cor:Wr}, which can be viewed as extensions of~\citet[Theorem~7.12]{Vil03}. These show in particular that $\widetilde{d}_r$ and $d_r$ are metrics on $\mathcal{P}(r)$ that induce the same topology (Remark~\ref{rem:Wrmetric}), and also formalise the link between functions in $\PL_D(r)$ and convergence in $d_r$ (or equivalently $\widetilde{d}_r$).

As a first step towards the proof of Theorem~\ref{thm:Wr}, it is helpful to establish the following.
\begin{proposition}
\label{prop:Wrcount}
There exists a countable set $T_r'$ of bounded Lipschitz functions on $\R^D$ with the property that $\widetilde{d}_r(P,Q)=\sup_{\psi\in T_r'}\,\bigl|\int_{\R^D}\psi\,dP-\int_{\R^D}\psi\,dQ\bigr|\in [0,\infty)$ for all $P,Q\in\mathcal{P}(r)$.
\end{proposition}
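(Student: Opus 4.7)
The strategy is to combine a truncation argument with a separability argument, producing the countable family $T_r'$ as a union over integer truncation radii of countable dense subsets of truncated pseudo-Lipschitz functions. First I will observe that since $\psi \in \PL_D(r,1)$ implies $|\psi(x)| \leq |\psi(0)| + \norm{x}(1+\norm{x}^{r-1})$ and this bound is $P$-integrable for every $P \in \mathcal{P}_D(r)$, the supremum $\widetilde{d}_r(P,Q)$ is finite, and moreover the constant $\psi(0)$ cancels in $\int\psi\,dP - \int\psi\,dQ$ for probability measures, so we may restrict attention to $E_r^0 := \{\psi \in \PL_D(r,1): \psi(0)=0\}$.

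For each $R \in \N$, let $\eta_R\colon\R^D\to\R^D$ denote the $1$-Lipschitz radial retraction onto $\bar{B}(0,R)$ (so $\eta_R(x)=x$ for $\norm{x}\leq R$ and $\eta_R(x)=Rx/\norm{x}$ otherwise), and set $\psi_R := \psi \circ \eta_R$ for $\psi \in E_r^0$. A direct case analysis on the pairs $(\norm{x},\norm{y})$ relative to $R$ (using $\norm{\eta_R(x)-\eta_R(y)} \leq \norm{x-y}$ and $\norm{\eta_R(x)}\leq \norm{x}\wedge R$) shows that $\psi_R$ remains in $\PL_D(r,1)$, and also that $\norm{\psi_R}_\infty \leq R + R^r$ and $\psi_R$ is $(1+2R^{r-1})$-Lipschitz. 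Moreover, for any $P\in \mathcal{P}_D(r)$,
\[
\biggl|\int_{\R^D}(\psi - \psi_R)\,dP\biggr| \leq \int_{\{\norm{x}>R\}}(\norm{x} + 2\norm{x}^r)\,dP(x) \longrightarrow 0
\]
as $R \to \infty$ by dominated convergence, and this bound is uniform in $\psi \in E_r^0$. Consequently, $\widetilde{d}_r(P,Q) = \sup_{R\in\N}\sup_{\psi \in E_r^0} \bigl|\int\psi_R\,dP - \int\psi_R\,dQ\bigr|$.

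Next I will extract a countable family. For each $R\in\N$, the class $A_R := \{\psi_R : \psi \in E_r^0\}$ consists of functions with uniform sup-norm bound $R+R^r$ and uniform Lipschitz constant $1+2R^{r-1}$. By the Arzel\`a--Ascoli theorem, the restriction of $A_R$ to any compact set is relatively compact in the uniform norm. Since $\R^D$ is $\sigma$-compact, $C(\R^D)$ equipped with the topology of uniform convergence on compacta is a separable metrizable space; hence $A_R$ (which lies in a compact subset of this topology after closure) admits a countable subset $T_R \subseteq A_R$ that is dense in $A_R$ for that topology. Setting $T_r' := \bigcup_{R\in\N} T_R$ gives a countable collection, and since $T_r' \subseteq \bigcup_R A_R \subseteq \PL_D(r,1)$, the trivial inequality $\sup_{\phi \in T_r'}|\int\phi\,dP - \int\phi\,dQ| \leq \widetilde{d}_r(P,Q)$ holds.

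For the reverse inequality, fix $\psi \in E_r^0$, $P,Q\in\mathcal{P}_D(r)$, and $\varepsilon>0$. Using the tail estimate above, choose $R$ so that the truncation error against both $P$ and $Q$ is at most $\varepsilon/3$. With $R$ now fixed, the function $\psi_R \in A_R$ is bounded by $M:=R+R^r$; choose $N$ large enough that $M\cdot P(\{\norm{x}>N\}) + M\cdot Q(\{\norm{x}>N\}) < \varepsilon/6$. By density of $T_R$ in $A_R$ for uniform convergence on $\bar{B}(0,N)$, pick $\phi \in T_R$ with $\sup_{\bar{B}(0,N)}|\phi-\psi_R| < \varepsilon/6$; splitting the integrals at $\bar{B}(0,N)$ and using $|\phi|\leq M$ gives $|\int(\psi_R-\phi)\,dP| < \varepsilon/3$ and likewise for $Q$. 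A triangle inequality then yields $|\int\psi\,dP - \int\psi\,dQ| \leq |\int\phi\,dP - \int\phi\,dQ| + \varepsilon$, proving the matching inequality. The main subtlety to watch is in the final step, where the approximation on $\bar{B}(0,N)$ must be combined with the \emph{uniform} sup-norm bound on $A_R$ to control the tail; the separability construction in the previous paragraph supplies density only on compact sets, but uniform boundedness on all of $\R^D$ is what makes that density suffice for approximation of the full integrals.
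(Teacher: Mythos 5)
Your argument is correct and follows essentially the same route as the paper's: truncate to a ball, exploit the uniform boundedness and uniform Lipschitz property of the truncated class via an Arzel\`a--Ascoli/separability argument to extract a countable family, and control the tails using the integrability of $\norm{x}+\norm{x}^r$; the only cosmetic differences are that you truncate by composing with the radial retraction (so that $T_r'\subseteq\PL_D(r,1)$ and the inequality $\sup_{\psi\in T_r'}\abs{\int\psi\,d(P-Q)}\leq\widetilde{d}_r(P,Q)$ is immediate), whereas the paper multiplies by a Lipschitz cutoff $f_N$ and takes finite nets on each ball. One minor bookkeeping slip: with your stated thresholds ($M\{P(\norm{x}>N)+Q(\norm{x}>N)\}<\varepsilon/6$ and $\sup_{\bar{B}(0,N)}\abs{\phi-\psi_R}<\varepsilon/6$) the claimed bound $\bigl|\int(\psi_R-\phi)\,dP\bigr|<\varepsilon/3$ need not hold, since the tail region contributes up to $2M\,P(\norm{x}>N)$, which can approach $\varepsilon/3$ on its own; tightening the constants (e.g.\ requiring $\varepsilon/12$ at each stage) fixes the final accounting without any change to the argument.
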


\unparskip
A key property of the set $T_r'$ we construct is that for any $\psi\in\PL_D(r)$, there exists a sequence $(\psi_\ell)$ in $T_r'$ that converges uniformly to $\psi$ on compact subsets of $\R^D$. In subsequent proofs, we will write $Q(f)$ as shorthand for $\int_{\R^D}f\,dQ$ when $Q$ is a signed Borel measure on $\R^D$ and $f\colon\R^D\to\R$ is a $Q$-integrable function.

\unparskip
\begin{proof}
For $N\in\N$, let $B_N\equiv\bar{B}_D(0,N):=\{x\in\R^D:\norm{x}\leq N\}$ and define $f_N(x):=(N-\norm{x})\vee 0\wedge 1$ for $x\in\R^D$, so that $f_N$ is 1-Lipschitz on $\R^D$, $f_N=1$ on $B_{N-1}$ and $f_N=0$ on $B_N^c$. In the argument below (and in the proof of Theorem~\ref{thm:Wr}), we will use $f_N$ as a substitute for the (discontinuous) indicator function $\mathbbm{1}_{B_N}$ in several places. Note in particular that if $\tilde{g}\colon B_N\to\R$ is Lipschitz on $B_N$, then the function $g\colon\R^D\to\R$ defined by $g(x):=\tilde{g}(x)f_N(x)$ is Lipschitz and supported on the compact set $B_N$. 

Recalling the definitions of $\widetilde{d}_r,\mathcal{P}(r)$ from~\eqref{eq:Wrtilde} and writing $\widetilde{\PL}_D(r,1)$ for the set of all $\phi\in\PL_D(r,1)$ satisfying $\phi(0)=0$, we see from~\eqref{eq:PL} that
\begin{equation}
\label{eq:Wrtildefinite}
\widetilde{d}_r(P,Q)=\sup_{\phi\in\widetilde{\PL}_D(r,1)}\abs{(P-Q)(\phi)}\leq\sup_{\phi\in\widetilde{\PL}_D(r,1)}(P+Q)(\abs{\phi})\leq\int_{\R^D}(\norm{x}+\norm{x}^r)\,d(P+Q)(x)<\infty
\end{equation}
for all $P,Q\in\mathcal{P}(r)$. If $\phi\in\widetilde{\PL}_D(r,1)$, then
\[\bigl|\phi(x)-\phi(y)\bigr|\leq\norm{x-y}\bigl(1+\norm{x}^{r-1}+\norm{y}^{r-1}\bigr)\leq (1+2N^{r-1})\norm{x-y}\]
for all $x,y\in B_N$, so $\restr{\phi}{B_N}$ belongs to the set of $(1+2N^{r-1})$-Lipschitz functions $g\colon B_N\to\R$ satisfying $g(0)=0$, which we denote by $\mathcal{G}_N$. Since $B_N$ is compact and $\mathcal{G}_N$ is uniformly bounded and equicontinuous, $\mathcal{G}_N$ is therefore compact for the supremum norm on $B_N$ by the Arzel\`a--Ascoli theorem~\citep[e.g.][Theorem~2.4.7]{Dud02}. It is therefore totally bounded, so for each $m\in\N$, we can find a finite subset $\widetilde{\mathcal{H}}_{N,m}\subseteq\mathcal{G}_N$ such that for any $g\in\mathcal{G}_N$, there exists $\tilde{h}\in\widetilde{\mathcal{H}}_{N,m}$ with $\sup_{x \in B_N}|g(x)-\tilde{h}(x)|<1/m$. Each $\tilde{h}\in\widetilde{\mathcal{H}}_{N,m}$ can be associated with a function $h\colon\R^D\to\R$ defined by $h(x):=\tilde{h}(x)f_N(x)$. By the reasoning in the previous paragraph, the collection $\mathcal{H}_{N,m}$ of all such $h$ is a finite set of bounded Lipschitz functions supported on $B_N$.

Consequently, $T_r':=\bigcup_{N,m\in\N}\mathcal{H}_{N,m}$ is a countable set of bounded Lipschitz functions on $\R^D$, and we claim that this has the desired property that $\widetilde{d}_r(P,Q)=\sup_{\psi\in T_r'}\,\abs{(P-Q)(\psi)}$ for any two probability measures $P,Q\in\mathcal{P}(r)$. Indeed, for fixed $P,Q\in\mathcal{P}(r)$, the function $\psi_r\colon x\mapsto\norm{x}+\norm{x}^r$ is integrable with respect to both $P$ and $Q$ on $\R^D$, so by the dominated convergence theorem, we have $P\bigl(\psi_r\mathbbm{1}_{B_{N-1}^c}\bigr)\to 0$ and $Q\bigl(\psi_r\mathbbm{1}_{B_{N-1}^c}\bigr)\to 0$ as $N\to\infty$. Thus, for an arbitrary $\varepsilon>0$, there exists a sufficiently large $N\equiv N_{\varepsilon,r}\in\N$ such that $P\bigl(\psi_r\mathbbm{1}_{B_{N-1}^c}\bigr)<\varepsilon/4$ and $Q\bigl(\psi_r\mathbbm{1}_{B_{N-1}^c}\bigr)<\varepsilon/4$. Choosing $m\equiv m_\varepsilon\in\N$ such that $1/m<\varepsilon/4$, we deduce from the previous paragraph that for any $\phi\in\widetilde{\PL}_D(r,1)$, there exists $\tilde{h}\in\widetilde{\mathcal{H}}_{N,m}$ such that $\sup_{x \in B_N}|\phi(x)-\tilde{h}(x)|<1/m<\varepsilon/4$. Letting $h$ be the corresponding function in $\mathcal{H}_{N,m}\subseteq T_r'$, we have
\begin{align}
\abs{(P-Q)(\phi)}&\leq\abs{(P-Q)\bigl(\phi(1-f_N)\bigr)}+\abs{(P-Q)(\phi f_N-h)}+\abs{(P-Q)(h)}\notag\\
\label{eq:Wrcount}
&\leq(P+Q)\bigl(\abs{\phi}(1-f_N)\bigr)+(P+Q)(\abs{\phi f_N-h})+\textstyle\sup_{\psi\in T_r'}\abs{(P-Q)(\psi)}
\end{align}
by the triangle inequality. Since $\phi\in\widetilde{\PL}_D(r,1)$, we have $\abs{\phi(x)}=\abs{\phi(x)-\phi(0)}\leq\norm{x}+\norm{x}^r=\psi_r(x)$ for all $x\in\R^D$, whence 
\begin{align*}
(P+Q)\bigl(\abs{\phi}(1-f_N)\bigr)&\leq (P+Q)\bigl(\abs{\phi}\mathbbm{1}_{B_{N-1}^c}\bigr)\leq (P+Q)\bigl(\psi_r\mathbbm{1}_{B_{N-1}^c}\bigr)<\varepsilon/2
\end{align*}
by our choice of $N$ and the fact that $0\leq 1-f_N\leq\mathbbm{1}_{B_{N-1}^c}$. Moreover, 
\begin{align*}
(P+Q)(\abs{\phi f_N-h)}&\leq 2\textstyle\sup_{x \in B_N}|\phi(x)f_N(x)-h(x)| \leq 2\textstyle\sup_{x \in B_N}|\phi(x)-\tilde{h}(x)|<2/m<\varepsilon/2
\end{align*}
by our choice of $h$, so it follows from~\eqref{eq:Wrcount} that $\abs{(P-Q)(\phi)}<\varepsilon+\sup_{\psi\in T_r'}\,\abs{(P-Q)(\psi)}$. Since this holds for every $\phi\in\widetilde{\PL}_D(r,1)$ and all $\varepsilon>0$, the result follows.
\end{proof}

\unparskip
\begin{theorem}
\label{thm:Wr}
Let $P\in\mathcal{P}_D(r)$ and let $(P_n)$ be a sequence of probability measures in $\mathcal{P}_D(r)$. Then there exists a countable set $T_r\subseteq\PL_D(r)$ such that the following are equivalent:

\unparskip
\begin{enumerate}[label=(\roman*)]
\item $\int_{\R^D}\psi\,dP_n\to\int_{\R^D}\psi\,dP$ for all $\psi\in T_r$;
\item $\widetilde{d}_r(P_n,P)\to 0$;
\item $d_r(P_n,P)\to 0$.
\end{enumerate}

\unparskip
A suitable set $T_r\subseteq\PL_D(r)$ can be constructed by enlarging the set $T_r'$ of bounded Lipschitz functions defined in (the proof of) Proposition~\ref{prop:Wrcount}. 
\end{theorem}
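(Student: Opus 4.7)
I would take $T_r := T_r' \cup \{\psi_r\}$, where $T_r'$ is the countable set from Proposition~\ref{prop:Wrcount} and $\psi_r(x) := \norm{x}^r$. The mean-value inequality $\bigl|\norm{x}^r - \norm{y}^r\bigr| \leq r\norm{x-y}\bigl(\norm{x}^{r-1} + \norm{y}^{r-1}\bigr)$ gives $\psi_r \in \PL_D(r,r)$, and every bounded Lipschitz function lies in $\PL_D(r)$, so $T_r \subseteq \PL_D(r)$ is countable as required. I close the cycle via (ii)$\Rightarrow$(i), (iii)$\Rightarrow$(ii), (i)$\Rightarrow$(iii).

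The first two implications are routine. For (ii)$\Rightarrow$(i), any $\psi \in T_r$ has some pseudo-Lipschitz constant $L$ of order $r$, so $\psi/L \in \PL_D(r,1)$ and $|(P_n - P)(\psi)| \leq L\,\widetilde{d}_r(P_n,P) \to 0$. For (iii)$\Rightarrow$(ii), I would choose for each $n$ an $n^{-1}$-nearly optimal coupling $(X_n, Y_n)$ with $X_n \sim P_n$, $Y_n \sim P$ and $\E\norm{X_n - Y_n}^r \leq d_r(P_n, P)^r + n^{-1}$; then for any $\psi \in \PL_D(r,1)$, combining the pseudo-Lipschitz bound with H\"older's inequality (exponents $r$ and $r/(r-1)$) yields
\begin{equation*}
|(P_n - P)(\psi)| \leq \bigl(\E\norm{X_n - Y_n}^r\bigr)^{1/r}\bigl(\E\bigl[(1 + \norm{X_n}^{r-1} + \norm{Y_n}^{r-1})^{r/(r-1)}\bigr]\bigr)^{(r-1)/r}.
\end{equation*}
The second factor is uniformly bounded in $n$ because $d_r$-convergence forces $\int \norm{x}^r\,dP_n \to \int \norm{x}^r\,dP$ (as recorded after the definition of $d_r$ in Section~\ref{sec:notation}), hence also uniform boundedness of the $(r-1)$-th moments. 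Taking a supremum over $\psi$ then gives $\widetilde{d}_r(P_n, P) \to 0$.

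For (i)$\Rightarrow$(iii), I would invoke the characterization from Section~\ref{sec:notation} that $d_r(P_n, P) \to 0$ iff $P_n \to P$ weakly \emph{and} $\int \norm{x}^r\,dP_n \to \int \norm{x}^r\,dP$. Moment convergence is immediate from (i) applied to $\psi_r$, and this same convergence yields uniform boundedness of the $r$-th moments of $(P_n)$, so Markov's inequality gives tightness of $(P_n)$. By Prokhorov, any subsequence has a further subsequence $(P_{n_k})$ converging weakly to some Borel probability measure $Q$, and since each $\psi \in T_r'$ is bounded continuous, combining (i) with this weak convergence forces $\int \psi\,dQ = \int \psi\,dP$ for every $\psi \in T_r'$. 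The argument reduces to showing that $T_r'$ separates Borel probability measures on $\R^D$.

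This separation claim is the main obstacle, because every element of $T_r'$ has the restricted form $\tilde h(x) f_N(x)$ with $\tilde h \in \mathcal{G}_N$, so in particular vanishes at $0$ and is subject to an $N$-dependent Lipschitz bound on $B_N$. I would argue that the uniform closure of $T_r'$ nevertheless contains every compactly supported continuous $g$ on $\R^D$ with $g(0) = 0$: mollification produces a smooth Lipschitz approximation $\tilde g$ that still vanishes at $0$ (via a subtraction of the form $g_\varepsilon - g_\varepsilon(0)\eta$ for a smooth bump $\eta$ with $\eta(0) = 1$), and then the $1/m$-net property of $\widetilde{\mathcal{H}}_{N',m}$ supplies an approximant in $T_r'$ once $N'$ is large enough to dominate the Lipschitz constant of $\tilde g$ and to contain its support inside $B_{N'-1}$. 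Finally, $\{g \in C_c(\R^D) : g(0) = 0\}$ separates probability measures on $\R^D$: given $P \neq Q$, any bounded continuous $f$ with $\int f\,d(P-Q) \neq 0$ can be replaced by $(f - f(0))\eta_N$ for a smooth bump $\eta_N$ equal to $1$ on $B_N$, and dominated convergence preserves non-vanishing of the integral against $P - Q$ as $N \to \infty$. Hence $T_r'$ separates measures, $Q = P$, and (i) implies (iii).
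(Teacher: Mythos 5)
Your proposal is correct, but it routes the equivalences quite differently from the paper. The paper enlarges $T_r'$ by the tail functions $\psi_r(1-f_N)$, $N\in\N$ (with $\psi_r(x)=\norm{x}+\norm{x}^r$), and makes (i) $\Rightarrow$ (ii) the hard direction: the convergence $P_n\bigl(\psi_r(1-f_N)\bigr)\to P\bigl(\psi_r(1-f_N)\bigr)$ gives tail control that is \emph{uniform} over $\PL_D(r,1)$, the finite nets $\mathcal{H}_{N,m}$ handle the bulk, and then (ii) $\Rightarrow$ (iii) is immediate (bounded Lipschitz functions give weak convergence, $\norm{\cdot}^r\in\PL_D(r)$ gives the moments) while (iii) $\Rightarrow$ (i) is delegated to a uniform-integrability lemma of D\"umbgen, Samworth and Schuhmacher. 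You instead enlarge $T_r'$ only by $\norm{\cdot}^r$ and reverse the burden: your (iii) $\Rightarrow$ (ii) via near-optimal couplings and H\"older is a nice self-contained argument that delivers the uniform statement directly and dispenses with the external citation, whereas your hard direction is (i) $\Rightarrow$ (iii), done softly through tightness, Prokhorov and the claim that (the sup-norm closure of) $T_r'$ is measure-determining; your verification of that claim -- mollification, the net property of $\widetilde{\mathcal{H}}_{N',m}$, and the observation that $\{g\in C_c(\R^D):g(0)=0\}$ separates probability measures via $(f-f(0))\eta_N$ and dominated convergence -- is sound. The paper's construction buys a purely quantitative, compactness-free proof of the uniformity in (ii), which is in the spirit of the finite-sample use of these nets elsewhere; your construction buys a smaller enlargement of $T_r'$ and a cleaner (iii) $\Rightarrow$ (ii). One small wrinkle: your step ``choose $N'$ large enough to dominate the Lipschitz constant of $\tilde g$'' silently assumes $r>1$, since for $r=1$ the bound $1+2N^{r-1}\equiv 3$ defining $\mathcal{G}_N$ does not grow with $N$; this is harmless because separation is invariant under rescaling, so one may replace $\tilde g$ by $c\tilde g$ with $c>0$ small before invoking the net, but it should be said.
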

\begin{remark}
\label{rem:Wrmetric}
Using Theorem~\ref{thm:Wr}, we can verify that $\widetilde{d}_r$ is a metric on $\mathcal{P}(r)\equiv\mathcal{P}_D(r)$ that generates the same topology as $d_r$. Indeed, it is clear from~\eqref{eq:Wrtilde} and~\eqref{eq:Wrtildefinite} that $\widetilde{d}_r$ takes values in $[0,\infty)$ and satisfies the triangle inequality on $\mathcal{P}(r)$. In addition, if $P,Q\in\mathcal{P}(r)$ are such that $\widetilde{d}_r(P,Q)=0$, then by taking $P_n=Q$ for all $n$ in (ii) above, we deduce that $d_r(P,Q)=0$. Since $d_r$ is a metric on $\mathcal{P}(r)$~\citep[e.g.][Theorem~7.3]{Vil03}, this yields $P=Q$, as required. In fact, $\bigl(\mathcal{P}(r),d_r\bigr)$ is a separable, complete metric space~\citep[e.g.][Theorem~2.2.7 and Proposition~2.2.8]{PZ20}, so by the equivalence (ii) $\Leftrightarrow$ (iii) in Theorem~\ref{thm:Wr}, the same is true of $\bigl(\mathcal{P}(r),\widetilde{d}_r\bigr)$.
\end{remark}

\deparskip
\begin{proof}
(i) $\Rightarrow$ (ii): As in the proof of Proposition~\ref{prop:Wrcount}, the function $f_N\colon x\mapsto (N-\norm{x})\vee 0\wedge 1$ once again serves as a Lipschitz surrogate for the indicator function $\mathbbm{1}_{B_N}$ of $B_N\equiv\bar{B}_D(0,N)=\{x\in\R^D:\norm{x}\leq N\}$ for each $N\in\N$ in the argument below; note that $f_N=1$ on $B_{N-1}$, $f_N=0$ on $B_N^c$ and $f_N$ is 1-Lipschitz on $\R^D$. In view of this and the fact that $\psi_r\colon x\mapsto\norm{x}+\norm{x}^r$ belongs to $\PL_D(r)$, the function $\psi_r(1-f_N)$ also lies in $\PL_D(r)$ for every $N\in\N$.

Let $\widetilde{\mathcal{H}}_{N,m}$ and $\mathcal{H}_{N,m}$ be the finite sets constructed in the proof of Proposition~\ref{prop:Wrcount} for each $N,m\in\N$, and let $T_r':=\bigcup_{N,m\in\N}\mathcal{H}_{N,m}$. Since $T_r'$ is a set of bounded Lipschitz functions, we certainly have $T_r'\subseteq\PL_D(r)$. We claim that $T_r:=T_r'\cup\{\psi_r(1-f_N):N\in\N\}$ is a countable subset of $\PL_D(r)$ with the required property. To see this, suppose that (i) holds for this set $T_r$, i.e.\ that $P_n(\psi)\to P(\psi)$ for all $\psi\in T_r$. As noted in~\eqref{eq:Wrtildefinite}, we have $\widetilde{d}_r(P_n,P)=\sup_{\phi\in\widetilde{\PL}_D(r,1)}\abs{(P_n-P)(\phi)}$ for all $n$, where $\widetilde{\PL}_D(r,1)$ denotes the set of all $\phi\in\PL_D(r,1)$ satisfying $\phi(0)=0$, so it suffices to show that the latter quantity converges to 0.

We will consider a decomposition~\eqref{eq:Wr1} similar to~\eqref{eq:Wrcount} in the proof of Proposition~\ref{prop:Wrcount}, taking particular care in this instance to ensure that the subsequent bounds hold uniformly over $\phi\in\widetilde{\PL}_D(r,1)$. Observe that since $\psi_r(1-f_N)\to 0$ pointwise on $\R^D$ as $N\to\infty$, and $\psi_r(1-f_N)$ is dominated by the $P$-integrable function $\psi_r$ on $\R^D$ for each $N$, we have $P\bigl(\psi_r(1-f_N)\bigr)\to 0$ as $N\to\infty$ by the dominated convergence theorem. Thus, for an arbitrary $\varepsilon>0$, there exists a sufficiently large $N\equiv N_{\varepsilon,r}\in\N$ such that $P\bigl(\psi_r(1-f_N)\bigr)<\varepsilon/4$, and we also fix $m\equiv m_\varepsilon\in\N$ such that $1/m<\varepsilon/4$. With this choice of $N$ and $m$, it follows from the defining property of $\tilde{\mathcal{H}}_{N,m}$ that for any $\phi\in\widetilde{\PL}_D(r,1)$, there exists $\tilde{h}_\phi\in\widetilde{\mathcal{H}}_{N,m}$ such that $\sup_{x \in B_N}|\phi(x)-\tilde{h}_\phi(x)|<1/m<\varepsilon/4$. Letting $h_\phi$ be the corresponding function in $\mathcal{H}_{N,m}$ as above, we have
\begin{align}
\abs{(P_n-P)(\phi)}&\leq\bigl|(P_n-P)\bigl(\phi(1-f_N)\bigr)\bigr|+\abs{(P_n-P)(\phi f_N-h_\phi)}+\abs{(P_n-P)(h_\phi)}\notag\\
\label{eq:Wr1}
&\leq(P_n+P)\bigl(\abs{\phi}(1-f_N)\bigr)+(P_n+P)(\abs{\phi f_N-h_\phi})+\max_{\psi\in\mathcal{H}_{N,m}}\abs{(P_n-P)(\psi)}
\end{align}
by the triangle inequality. Now for every $\phi\in\widetilde{\PL}_D(r,1)$, we have $\abs{\phi(x)}=\abs{\phi(x)-\phi(0)}\leq\norm{x}+\norm{x}^r=\psi_r(x)$ for all $x\in\R^D$. Since $P_n\bigl(\psi_r(1-f_N)\bigr)\to P\bigl(\psi_r(1-f_N)\bigr)$ as $n\to\infty$ by assumption, this implies that
\begin{equation}
\label{eq:Wr2}
\limsup_{n\to\infty}\sup_{\phi\in\widetilde{\PL}_D(r,1)} (P_n+P)\bigl(\abs{\phi}(1-f_N)\bigr)\leq\limsup_{n\to\infty}\,(P_n+P)\bigl(\psi_r(1-f_N)\bigr)=2P\bigl(\psi_r(1-f_N)\bigr)<\varepsilon/2.
\end{equation}
Moreover, for any $\phi\in\widetilde{\PL}_D(r,1)$, the functions $\phi f_N$ and $h_\phi$ are both supported on $B_N$, and $\abs{\phi f_N-h_\phi}=|\phi-\tilde{h}_\phi|\,f_N\leq|\phi-\tilde{h}_\phi|<\varepsilon/4$ on $B_N$, so
\begin{equation}
\label{eq:Wr3}
\limsup_{n\to\infty}\sup_{\phi\in\widetilde{\PL}_D(r,1)}(P_n+P)(\abs{\phi f_N-h_\phi})\leq 2\sup_{\phi\in\widetilde{\PL}_D(r,1)}\sup_{x \in B_N}\,\abs{\phi(x) f_N(x)-h_\phi(x)}<\varepsilon/2.
\end{equation}
Finally, since $\mathcal{H}_{N,m}$ is finite and $P_n(\psi)\to P(\psi)$ for all $\psi\in\mathcal{H}_{N,m}\subseteq T_r$ by assumption, we have $\max_{\psi\in\mathcal{H}_{N,m}}\abs{(P_n-P)(\psi)}\to 0$. Combining this with~\eqref{eq:Wr1},~\eqref{eq:Wr2} and~\eqref{eq:Wr3}, we conclude that
\[\limsup_{n\to\infty}\,\widetilde{d}_r(P_n,P)=\limsup_{n\to\infty}\sup_{\phi\in\widetilde{\PL}_D(r,1)}\abs{(P_n-P)(\phi)}<\varepsilon/2+\varepsilon/2=\varepsilon.\]
Since $\varepsilon>0$ was arbitrary, the desired conclusion follows.

(ii) $\Rightarrow$ (iii): Suppose that $\widetilde{d}_r(P_n,P) \rightarrow 0$ and let $\psi\colon\R^D\to\R$ be a (bounded) $L$-Lipschitz function, for some $L > 0$. Then $\tilde{\psi}(\cdot) := \psi(\cdot)/L \in \PL_D(r,1)$, so $P_n(\psi)=LP_n(\tilde{\psi})\rightarrow LP(\tilde{\psi}) =P(\psi)$. Hence $P_n \stackrel{d}{\rightarrow} P$. Moreover, the function $x\mapsto\|x\|^r$ belongs to $\PL(r,(r/2)\vee 1)$ since by Lemma~\ref{lem:plpower} below,
\begin{equation}
\label{eq:plpower}
\bigl|\norm{x}^r-\norm{y}^r\bigr|\leq\frac{r\vee 2}{2}\,\bigl|\norm{x}-\norm{y}\bigr|\bigl(\norm{x}^{r-1}+\norm{y}^{r-1}\bigr)\leq\frac{r\vee 2}{2}\norm{x-y}\bigl(\norm{x}^{r-1}+\norm{y}^{r-1}\bigr)
\end{equation}
for all $x,y\in\R^D$, so $\int_{\R^D} \|x\|^r \, dP_n(x) \rightarrow \int_{\R^D} \|x\|^r \, dP(x)$. We conclude that $d_r(P_n,P) \rightarrow 0$.

(iii) $\Rightarrow$ (i): We will show here that if (iii) holds, then $P_n(\psi)\to P(\psi)$ for all $\psi\in\PL_D(r)$. Indeed, suppose that $P_n\stackrel{d}{\rightarrow} P$ and $\int_{\R^D}\norm{x}^r\,dP_n(x)\to\int_{\R^D}\norm{x}^r\,dP(x)$. Now for $L>0$ and any $\psi\in\widetilde{\PL}_D(r,L)$, we have $\abs{\psi(x)}\leq L\norm{x}(1+\norm{x}^{r-1})\leq 2L(1+\norm{x}^r)$ for all $x\in\R^D$. Thus, since $\psi$ is continuous on $\R^D$ and $x\mapsto\abs{\psi(x)}/(1+\norm{x}^r)$ is bounded on $\R^D$, it follows from (iii) and~\citet[Lemma~4.5]{DSS11} that $P_n(\psi)\to P(\psi)$.
\end{proof}
\begin{remark}
The proof of the implication (i) $\Rightarrow$ (ii) in Theorem~\ref{thm:Wr} is similar to the argument in~\citet{Dud02} showing that (b) implies (c) in his Theorem~11.3.3, where it is established that the bounded Lipschitz metric induces the topology of weak convergence (of probability measures on a separable metric space).
\end{remark}

\unparskip
To obtain a sharp pseudo-Lipschitz constant for $x\mapsto\norm{x}^r$ in~\eqref{eq:plpower} above, we apply the following elementary inequality.
\begin{lemma}
\label{lem:plpower}
If $a,b\geq 0$ and $r\geq 1$, then $\abs{a^r-b^r}\leq\max(1,r/2)\,\abs{a-b}\,(a^{r-1}+b^{r-1})$.
\end{lemma}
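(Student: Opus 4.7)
My approach is to assume without loss of generality that $a \geq b \geq 0$ (the inequality is symmetric in $a,b$ and trivial when $a = b$) and then split into two cases according to whether $r \in [1,2]$ or $r \geq 2$, which correspond to the two regimes where $\max(1, r/2)$ equals $1$ or $r/2$ respectively.

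For $r \in [1,2]$, I will establish the stronger claim $a^r - b^r \leq (a - b)(a^{r-1} + b^{r-1})$ by direct algebraic expansion. Expanding the right-hand side yields
\[
(a - b)(a^{r-1} + b^{r-1}) - (a^r - b^r) = ab^{r-1} - a^{r-1}b = ab\bigl(b^{r-2} - a^{r-2}\bigr),
\]
which is non-negative because $r - 2 \leq 0$ and $0 \leq b \leq a$ together force $b^{r-2} \geq a^{r-2}$ (with the degenerate case $b = 0$ handled by inspection, where the inequality reduces to $a^r \leq a \cdot a^{r-1}$).

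For $r \geq 2$, I will instead use the integral representation $a^r - b^r = r \int_b^a t^{r-1}\,dt$ combined with the convexity of the map $\phi\colon t \mapsto t^{r-1}$ on $[0,\infty)$, which follows from $\phi''(t) = (r-1)(r-2)\,t^{r-3} \geq 0$ for $t > 0$. Convexity implies that the chord joining $\bigl(b,\phi(b)\bigr)$ to $\bigl(a,\phi(a)\bigr)$ lies above the graph of $\phi$ on $[b,a]$, giving the trapezoidal-type bound
\[
\int_b^a t^{r-1}\,dt \leq \tfrac{1}{2}(a - b)\bigl(a^{r-1} + b^{r-1}\bigr).
\]
Multiplying by $r$ yields $a^r - b^r \leq \tfrac{r}{2}(a-b)(a^{r-1} + b^{r-1})$, as required.

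There is no serious obstacle: the reduction to $a \geq b \geq 0$ is immediate, the two cases meet continuously at $r = 2$, and each case uses one standard tool (algebraic factorisation or convexity of a power function). The only mild subtlety is verifying that the $r \in [1,2]$ argument handles $b = 0$ correctly, which it does trivially.
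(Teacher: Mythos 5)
Your proof is correct and follows essentially the same route as the paper: the algebraic identity $(a-b)(a^{r-1}+b^{r-1})-(a^r-b^r)=ab^{r-1}-a^{r-1}b\geq 0$ for $r\in[1,2]$, and the chord (convexity) bound on $\int_b^a rt^{r-1}\,dt$ for $r\geq 2$. No differences worth noting.
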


\deparskip
\begin{proof}
Suppose without loss of generality that $0\leq b\leq a$. If $r\geq 2$, then $t\mapsto rt^{r-1}$ is convex on $[0,\infty)$, so
\begin{align*}
a^r-b^r=\int_a^b rt^{r-1}\,dt\leq\int_a^b r\rbr{\frac{t-b}{a-b}\,a^{r-1}+\frac{a-t}{a-b}\,b^{r-1}}dt=\frac{r}{2}(a-b)(a^{r-1}+b^{r-1}).
\end{align*}
If $r\in [1,2]$, then $0\leq (ab)^{r-1}(a^{2-r}-b^{2-r})=ab^{r-1}-ba^{r-1}$, so $a^r-b^r\leq (a-b)(a^{r-1}+b^{r-1})$.
\end{proof}

\unparskip
When we have a sequence of possibly random probability measures $P_n\equiv P_n(\omega)$ on $\R^D$, we can apply the deterministic Theorem~\ref{thm:Wr} to obtain Corollary~\ref{cor:Wr} below, in which we equip $\mathcal{P}(r)$ with the Borel $\sigma$-algebra $\mathcal{B}_r\equiv\mathcal{B}\bigl(\mathcal{P}(r)\bigr)$ associated with the $d_r$ (or equivalently the $\widetilde{d}_r$) metric. Note that $\widetilde{d}_r(P_n,P)$ is measurable (i.e.\ a bona fide random variable) for each $n$ by Proposition~\ref{prop:Wrcount}. The measurability of $d_r(P_n,P)$ is guaranteed by~\citet[Corollary~5.22]{Vil09}; see also~\citet[Lemma~2.4.6]{PZ20}.
\begin{corollary}
\label{cor:Wr}
Fix $P\in\mathcal{P}(r)\equiv\mathcal{P}_D(r)$ and let $(P_n)$ be a sequence of random elements $P_n\colon\Omega\to\mathcal{P}(r)$.

\unparskip
\begin{enumerate}[label=(\alph*)]
\item 
Then the following are equivalent:

\unparskip
\begin{enumerate}[label=(\roman*)]
\item $\int_{\R^D}\psi\,dP_n\cvas\int_{\R^D}\psi\,dP$ for every $\psi\in\PL_D(r)$;
\item $\widetilde{d}_r(P_n,P)\cvas 0$;
\item $d_r(P_n,P)\cvas 0$.
\end{enumerate}

\unparskip
\item The same equivalences hold if the mode of convergence in (i)--(iii) is instead taken to be either convergence in probability or complete convergence.
\end{enumerate}

\unparskip
\end{corollary}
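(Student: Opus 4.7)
My overall strategy is to lift the deterministic equivalence in Theorem~\ref{thm:Wr} to the three stochastic modes of convergence, reducing each time to the countable test-function class $T_r\subseteq\PL_D(r)$ supplied by that theorem. This reduction is needed because in (i) the exceptional null set is allowed to depend on $\psi$, so one cannot naively apply Theorem~\ref{thm:Wr} on a single full-measure event. For part~(a), the implication (iii) $\Rightarrow$ (i) is immediate: on the full-measure event where $d_r(P_n,P)\to 0$ one invokes the deterministic implication (iii) $\Rightarrow$ (i) of Theorem~\ref{thm:Wr} pointwise to conclude $\int\psi\,dP_n\to\int\psi\,dP$ simultaneously for every $\psi\in\PL_D(r)$. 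The converse (i) $\Rightarrow$ (iii) is where countability matters: intersecting the full-measure events associated with $\psi\in T_r$ yields a single full-measure event on which the deterministic implication (i) $\Rightarrow$ (iii) of Theorem~\ref{thm:Wr} applies to give $d_r(P_n,P)\to 0$. The equivalence (ii) $\Leftrightarrow$ (iii) follows by applying Theorem~\ref{thm:Wr} pointwise.

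For part~(b) under complete convergence, I will work from the characterization in Definition~\ref{def:compconv}: a sequence of random elements converges completely to a deterministic limit iff every sequence with the same marginal distributions converges almost surely. Assuming (iii), the map $f_\psi\colon Q\mapsto\int_{\R^D}\psi\,dQ$ is continuous on $\bigl(\mathcal{P}(r),d_r\bigr)$ by Theorem~\ref{thm:Wr}, so the continuous mapping theorem for complete convergence (Lemma~\ref{lem:slutsky}) immediately delivers $\int\psi\,dP_n\cvc\int\psi\,dP$ for each $\psi\in\PL_D(r)$. Conversely, assuming (i), pick any sequence $(Q_n)$ with $Q_n\eqd P_n$. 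For each $\psi\in T_r$, Proposition~\ref{prop:compequiv}(a) combined with the first Borel--Cantelli lemma forces $\int\psi\,dQ_n\to\int\psi\,dP$ almost surely; intersecting these full-measure events over $\psi\in T_r$ and invoking Theorem~\ref{thm:Wr} pointwise yields $d_r(Q_n,P)\to 0$ almost surely. Since $(Q_n)$ was arbitrary, Definition~\ref{def:compconv} gives $d_r(P_n,P)\cvc 0$. The equivalence (ii) $\Leftrightarrow$ (iii) follows either by the same argument or by noting that $\widetilde{d}_r$ and $d_r$ induce the same topology on $\mathcal{P}(r)$ (Remark~\ref{rem:Wrmetric}) and that the coupling characterization is topological.

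Under convergence in probability, the equivalence (ii) $\Leftrightarrow$ (iii) is again topological, since convergence in probability to a constant depends only on the topology of the target space. For (iii) $\Rightarrow$ (i), the continuity of $f_\psi$ on $\bigl(\mathcal{P}(r),d_r\bigr)$ and the continuous mapping theorem in probability suffice. The reverse proceeds via the standard subsequence criterion: given any subsequence, I enumerate $T_r=\{\psi_1,\psi_2,\dotsc\}$ and diagonally extract a further subsequence along which $\int\psi_i\,dP_{n_{k_j}}\to\int\psi_i\,dP$ almost surely for every $i\in\N$ simultaneously, whence Theorem~\ref{thm:Wr} yields $d_r(P_{n_{k_j}},P)\to 0$ almost surely, so $d_r(P_n,P)\cvp 0$. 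The unifying and only real obstacle throughout is precisely this $\psi$-dependence of the exceptional sets in (i), which is handled in every case by restricting attention to the countable family $T_r$ before intersecting null sets and then invoking the deterministic master equivalence of Theorem~\ref{thm:Wr} pointwise.
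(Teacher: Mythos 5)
Your part~(a), your in-probability arguments (subsequence-plus-diagonal extraction over the countable class $T_r$, then Theorem~\ref{thm:Wr} pointwise), and your treatment of (ii)~$\Leftrightarrow$~(iii) all match the paper's proof in substance. Two smaller remarks on the complete-convergence direction (iii)~$\Rightarrow$~(i): Lemma~\ref{lem:slutsky} is stated for random elements of \emph{Euclidean} spaces, so it does not literally apply to $\mathcal{P}(r)$-valued elements; but since (iii) is a statement about the real random variable $d_r(P_n,P)$, the intended argument goes through directly: continuity of $Q\mapsto\int_{\R^D}\psi\,dQ$ at $P$ gives, for each $\varepsilon>0$, a $\delta>0$ with $\Pr\bigl(\bigl|\int\psi\,dP_n-\int\psi\,dP\bigr|>\varepsilon\bigr)\leq\Pr\bigl(d_r(P_n,P)>\delta\bigr)$, and Proposition~\ref{prop:compequiv}(a) finishes. (The paper instead transfers (ii)~$\Rightarrow$~(iii)~$\Rightarrow$~(i) via Lemma~\ref{lem:seqcoup}; your route is fine once repaired as above.)

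The genuine gap is in your complete-convergence implication (i)~$\Rightarrow$~(iii), at the final step ``since $(Q_n)$ was arbitrary, Definition~\ref{def:compconv} gives $d_r(P_n,P)\cvc 0$.'' Definition~\ref{def:compconv}, applied to the real random variables $d_r(P_n,P)$, quantifies over \emph{all} sequences $(\beta_n)$ of real random variables with $\beta_n\eqd d_r(P_n,P)$, whereas you have only handled those of the special form $\beta_n=d_r(Q_n,P)$ with $Q_n\eqd P_n$ as $\mathcal{P}(r)$-valued elements. It is not automatic that an arbitrary such $(\beta_n)$ can be realised this way; closing precisely this quantifier mismatch is the one substantive ingredient of the paper's proof, namely the sequence-coupling Lemma~\ref{lem:seqcoup} (proved via disintegration on the Polish space $(\mathcal{P}(r),\widetilde{d}_r)$), which produces $\tilde{P}_n\eqd P_n$ with $d_r(\tilde{P}_n,P)=\beta_n$ almost surely. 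Your argument can alternatively be repaired cheaply without that lemma: apply your reasoning to a single \emph{independent} coupling $(Q_n)$ (which exists on a product space since $\mathcal{P}(r)$ is Polish); the resulting almost sure convergence of the independent variables $d_r(Q_n,P)$ forces $\sum_n\Pr\bigl(d_r(P_n,P)>\varepsilon\bigr)<\infty$ for every $\varepsilon>0$ by the second Borel--Cantelli lemma (this is exactly the mechanism in the proof of Proposition~\ref{prop:compequiv}(a)), and that summability is equivalent to (iii). As written, however, the deduction from your coupling family to Definition~\ref{def:compconv} is incomplete, and this is where the missing idea lies.
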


\unparskip
Thus, to establish the seemingly stronger conclusions in (ii) and (iii) for a random sequence of distributions $P_n$, a putative limit $P\in\mathcal{P}_D(r)$ and any of the above modes of stochastic convergence, it is sufficient (and sometimes more convenient) to show that the appropriate version of (i) holds for each $\psi\in\PL_D(r)$ in turn. This is the approach we take in the proofs of the master theorems for symmetric AMP (Theorems~\ref{thm:AMPmaster} and~\ref{thm:masterext}).

\unparskip
\begin{proof}
(a) The implications (ii) $\Rightarrow$ (iii) $\Rightarrow$ (i) are immediate from Theorem~\ref{thm:Wr}. As for (i) $\Rightarrow$ (ii), note that for each $\psi\in\PL_D(r)$ in (i), the event $\Omega(\psi)$ of probability 1 on which $\int_{\R^D}\psi\,dP_n\to\int_{\R^D}\psi\,dP$ may depend (a priori) on $\psi$. The key point is that under (i), Theorem~\ref{thm:Wr} ensures that this convergence is actually uniform over $\PL_D(r,1)$ on a \emph{countable} intersection of such events $\Omega(\psi)$. More precisely, letting $T_r\subseteq\PL_D(r)$ be as in Theorem~\ref{thm:Wr}, we see that $\bigcap_{\,\psi\in T_r}\Omega(\psi)$ is an event of probability 1 on which (ii) and (iii) hold.

(b) \emph{Convergence in probability}:

(i) $\Rightarrow$ (ii): First, we prove that if $P_n(\psi)\cvp P(\psi)$ for each $\psi\in\PL_D(r)$, then $\widetilde{d}_r(P_n,P)\cvp 0$, or equivalently that every subsequence of $\bigl(\widetilde{d}_r(P_n,P):n\in\N\bigr)$ has a further subsequence that converges almost surely to 0. It suffices to show that for any subsequence $(Q_k)\equiv(P_{n_k})$, there is a further subsequence $(Q_{k_\ell})$ such that with probability 1, we have $Q_{k_\ell}(\psi)\to P(\psi)$ for all $\psi\in T_r\subseteq\PL_D(r)$; indeed, the desired conclusion then follows directly from (a). To this end, enumerate the elements of the countable set $T_r$ as $\psi_1,\psi_2,\dotsc$ and apply a diagonal argument: since $Q_k(\psi_1)\cvp P(\psi_1)$, we can extract a subsequence $(Q_{k_{1,\ell}})$ of $(Q_k)$ such that $Q_{k_{1,\ell}}(\psi_1)\cvas P(\psi_1)$ as $\ell\to\infty$. Continuing inductively, we see that for each $J\in\N$, there exist a subsequence $(Q_{k_{J,\ell}})$ of $(Q_{k_{J-1,\ell}})$ and an event of probability 1 on which $Q_{k_{J,\ell}}(\psi_j)\to P(\psi_j)$ as $\ell\to\infty$ for all $1\leq j\leq J$. Finally, let $Q_{k_\ell}:=Q_{k_{\ell,\ell}}$ for $\ell\in\N$, and observe that with probability 1, we have $Q_{k_{J,\ell}}(\psi_j)\to P(\psi_j)$ as $\ell\to\infty$ for all $j\in\N$, as required.

(ii) $\Rightarrow$ (iii) $\Rightarrow$ (i): As above, we can argue along subsequences of $(P_n)$ and then appeal directly to the corresponding implications in (a).

\emph{Complete convergence}:

(i) $\Rightarrow$ (ii): Suppose that $P_n(\psi)\cvc P(\psi)$ for every $\psi\in\PL_D(r)$. In view of Definition~\ref{def:compconv} of complete convergence, it suffices to show that if $(\beta_n)$ is any sequence of random variables with $\beta_n\eqd\widetilde{d}_r(P_n,P)$ for each $n$, then $\beta_n\cvas 0$. For any such sequence $(\beta_n)$, we first seek to construct a sequence $\bigl(\tilde{P}_n)$ of random elements $\tilde{P}_n\colon\Omega\to\bigl(\mathcal{P}(r),\widetilde{d}_r\bigr)$ such that $\tilde{P}_n\eqd P_n$ on $\bigl(\mathcal{P}(r),\mathcal{B}_r\bigr)$ for each $n$ and $\bigl(\widetilde{d}_r(\tilde{P}_n,P):n\in\N\bigr)=(\beta_n:n\in\N)$ almost surely as random sequences. Since $\bigl(\mathcal{P}(r),\widetilde{d}_r)$ is a Polish space, a suitable $\bigl(\tilde{P}_n)$ can be obtained by applying Lemma~\ref{lem:seqcoup}, where for each $n$, we take $g_n\colon\bigl(\mathcal{P}(r),\widetilde{d}_r\bigr)\to\R$ to be the 1-Lipschitz (and hence Borel measurable) function $Q\mapsto\widetilde{d}_r(P,Q)$.

For each $\psi\in\PL_D(r)$, we see from the definition of $\widetilde{d}_r$ in~\eqref{eq:Wrtilde} that $Q\mapsto Q(\psi)=\int_{\R^D}\psi\,dQ$ is also a 1-Lipschitz (and hence Borel measurable) function from $\bigl(\mathcal{P}(r),\widetilde{d}_r\bigr)$ to $\R$, so $\tilde{P}_n(\psi)\colon\Omega\to\R$ is measurable (i.e.\ a random variable). Now $\tilde{P}_n\eqd P_n$ for each $n$ by construction, so for every $\psi\in\PL_D(r)$, it follows that $\tilde{P}_n(\psi)\eqd\tilde{P}_n(\psi)$ for each $n$ and hence that $\tilde{P}_n(\psi)\cvas P(\psi)$. Thus, by the implication (i) $\Rightarrow$ (ii) in (a) above, we conclude that $\beta_n=\widetilde{d}_r(\tilde{P}_n,P)\to 0$ almost surely, as required.

(ii) $\Rightarrow$ (iii) $\Rightarrow$ (i): To establish these remaining implications, observe that it suffices to show the following: if $F_n,G_n\colon\mathcal{P}(r)\to\R$ are Borel measurable functions for which it is known from (a) that $F_n(P_n)\cvas 0$ implies $G_n(P_n)\cvas 0$, then $F_n(P_n)\cvc 0$ implies $G_n(P_n)\cvc 0$. To prove this, we can proceed as in the argument for (i) $\Rightarrow$ (ii): given any random sequence $(\beta_n)$ such that $\beta_n\eqd G_n(P_n)$ for each $n$, Lemma~\ref{lem:seqcoup} yields a sequence $(\tilde{P}_n)$ of random elements $\tilde{P}_n\colon\Omega\to\mathcal{P}(r)$ such that $F_n(\tilde{P}_n)\eqd F_n(P_n)$ and $\beta_n=G_n(\tilde{P}_n)$ almost surely for each $n$. Then $F_n(\tilde{P}_n)\cvas 0$, so (a) implies that $\beta_n=G_n(\tilde{P}_n)\to 0$ almost surely. This completes the proof.
\end{proof}

\unparskip
We conclude this subsection with some straightforward results on pseudo-Lipschitz functions.
\begin{lemma}
\label{lem:pseudoprod}
For $D\in\N$, if $f\in\PL_D(r)$ and $g\in\PL_D(s)$ for some $r,s\geq 1$, then $fg\in\PL_D(r+s)$ and $\abs{f}^p\in\PL_D(pr)$ for all $p\geq 1$.
\end{lemma}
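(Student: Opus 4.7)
The plan is to verify both claims directly from the defining inequality~\eqref{eq:PL}, using two standard ingredients: first, any $\psi\in\PL_D(r,L)$ satisfies the pointwise growth bound $|\psi(x)|\leq|\psi(0)|+L\|x\|(1+\|x\|^{r-1})\lesssim_{\psi,r}1+\|x\|^r$ obtained by taking $y=0$ in~\eqref{eq:PL}; second, any cross term of the form $\|x\|^a\|y\|^b$ with $a,b\geq 0$ can be absorbed into $\|x\|^{a+b}+\|y\|^{a+b}$ by Young's inequality (or simply the estimate $ab\leq\max(a,b)^2\leq a^2+b^2$ applied to appropriate powers). Neither step is delicate; the whole exercise is constant-chasing.

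For the product $fg\in\PL_D(r+s)$, I would split
\[
(fg)(x)-(fg)(y)=f(x)\bigl(g(x)-g(y)\bigr)+g(y)\bigl(f(x)-f(y)\bigr),
\]
bound each difference via the pseudo-Lipschitz property ($|g(x)-g(y)|\leq L_g\|x-y\|(1+\|x\|^{s-1}+\|y\|^{s-1})$, and analogously for $f$), and bound each prefactor via the growth inequality above ($|f(x)|\lesssim 1+\|x\|^r$, $|g(y)|\lesssim 1+\|y\|^s$). Multiplying out produces a sum of terms of the form $\|x-y\|\cdot\|x\|^{\alpha}\|y\|^{\beta}$ with $\alpha+\beta\leq r+s-1$, each of which is dominated by $\|x-y\|(1+\|x\|^{r+s-1}+\|y\|^{r+s-1})$ after one application of Young's inequality. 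This yields $fg\in\PL_D(r+s,L)$ for some $L$ depending only on $r,s,L_f,L_g,f(0),g(0)$.

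For the power $|f|^p\in\PL_D(pr)$, I would first apply Lemma~\ref{lem:plpower} with $a=|f(x)|$ and $b=|f(y)|$ to get
\[
\bigl||f(x)|^p-|f(y)|^p\bigr|\leq\tfrac{p\vee 2}{2}\,\bigl|f(x)-f(y)\bigr|\bigl(|f(x)|^{p-1}+|f(y)|^{p-1}\bigr).
\]
Then I would substitute the pseudo-Lipschitz bound $|f(x)-f(y)|\leq L_f\|x-y\|(1+\|x\|^{r-1}+\|y\|^{r-1})$, together with the growth bound $|f(x)|^{p-1}\lesssim(1+\|x\|^r)^{p-1}\lesssim 1+\|x\|^{r(p-1)}$ (and the analogue for $y$). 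Expanding the product gives a sum of terms $\|x-y\|\cdot\|x\|^{\alpha}\|y\|^{\beta}$ with $\alpha+\beta\leq (r-1)+r(p-1)=rp-1$, and Young's inequality again collapses these into the required envelope $\|x-y\|(1+\|x\|^{rp-1}+\|y\|^{rp-1})$.

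The only thing that requires any care is the $p\in[1,2)$ range in the second part, where $x\mapsto x^{p-1}$ is concave rather than convex; but Lemma~\ref{lem:plpower} already handles both regimes uniformly, so no separate case analysis is needed. There is no genuine obstacle — the argument is essentially a one-line decomposition in each case, followed by Young's inequality to tidy up cross terms.
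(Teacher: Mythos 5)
Your argument is correct and is essentially the paper's own proof: the same splitting $f(x)g(x)-f(y)g(y)=f(x)\bigl(g(x)-g(y)\bigr)+g(\cdot)\bigl(f(x)-f(y)\bigr)$ combined with the growth bound $\abs{\psi(x)}\lesssim 1+\norm{x}^r$ for the product, and Lemma~\ref{lem:plpower} followed by the same substitutions for $\abs{f}^p$. The only cosmetic difference is that the paper absorbs the cross terms via $a:=\norm{x}\vee\norm{y}$ rather than Young's inequality, which amounts to the same constant-chasing.
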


\deparskip
\begin{proof}
There exists $L>0$ such that $f\in\PL_D(r,L)$ and $g\in\PL_D(s,L)$. Letting $L':=L\vee\abs{f(0)}\vee\abs{g(0)}$, we have 
\begin{equation}
\label{eq:pseudoprod}
\begin{alignedat}{2}
\abs{f(x)}&\leq\abs{f(0)}+\abs{f(x)-f(0)}&&\leq L'(1+\norm{x}+\norm{x}^r)\leq 2L'(1+\norm{x}^r)\\ \abs{g(x)}&\leq\abs{g(0)}+\abs{g(x)-g(0)}&&\leq L'(1+\norm{x}+\norm{x}^s)\leq 2L'(1+\norm{x}^s)
\end{alignedat}
\end{equation}
for all $x\in\R^D$. Therefore, fixing arbitrary $x,y\in\R^D$ and setting $a:=\norm{x}\vee\norm{y}$, we see that
\begin{align*}
&\abs{f(x)g(x)-f(y)g(y)}\\
&\hspace{1cm}\leq\abs{f(x)}\,\abs{g(x)-g(y)}+\abs{g(x)}\,\abs{f(x)-f(y)}\\
&\hspace{1cm}\leq 2L'L\,\norm{x-y}\,\bigl\{\bigl(1+\norm{x}^r\bigr)\bigl(1+\norm{x}^{s-1}+\norm{y}^{s-1}\bigr)+\bigl(1+\norm{x}^s\bigr)\bigl(1+\norm{x}^{r-1}+\norm{y}^{r-1}\bigr)\bigr\}\\
&\hspace{1cm}\leq 2L'L\,\norm{x-y}\,(2+2a^{r-1}+a^r+2a^{s-1}+a^s+4a^{r+s-1})\\
&\hspace{1cm}\leq 20L'L\,\norm{x-y}\,(1+a^{r+s-1})\\
&\hspace{1cm}\leq 20L'L\,\norm{x-y}\,(1+\norm{x}^{r+s-1}+\norm{y}^{r+s-1}).
\end{align*}
This shows that $fg\in\PL_D(r+s)$. For $p\geq 1$, we have $(a+b)^{p-1}\leq (1\vee 2^{p-2})(a^{p-1}+b^{p-1})$ for $a,b\geq 0$, and it follows from Lemma~\ref{lem:plpower} and~\eqref{eq:pseudoprod} that
\begin{align*}
\bigl|\abs{f(x)}^p-\abs{f(y)}^p\bigr|&\leq\frac{p\vee 2}{2}\,\abs{f(x)-f(y)}\,\bigl(\abs{f(x)}^{p-1}+\abs{f(y)}^{p-1}\bigr)\\
&\leq\frac{L(2L')^{p-1}(p\vee 2)}{2}\,\norm{x-y}\,(1+\norm{x}^r+\norm{y}^r)\bigl((1+\norm{x}^r)^{p-1}+(1+\norm{y}^r)^{p-1}\bigr)\\
&\lesssim_p L(L')^{p-1}\,\norm{x-y}\,(1+\norm{x}^r+\norm{y}^r)\bigl(1+\norm{x}^{(p-1)r}+\norm{y}^{(p-1)r}\bigr)\\
&\lesssim_p L(L')^{p-1}\,\norm{x-y}\,(1+\norm{x}^{pr}+\norm{y}^{pr})
\end{align*}
for all $x,y\in\R^D$. Thus, $\abs{f}^p\in\PL_D(pr)$, as required.
\end{proof}

\unparskip
\begin{lemma}
\label{lem:pseudocomp}
Let $\psi\in\PL_{D+1}(r,L)$ for some $D\in\N$, $r\geq 1$ and $L>0$. Fix $c\equiv (c_1,\dotsc,c_D)\in\R^D$ and $\tau>0$.

\unparskip
\begin{enumerate}[label=(\alph*)]
\item For fixed $x\equiv (x_1,\dotsc,x_D)\in\R^D$, define $\psi_x\colon\R\to\R$ by $\psi_x(z):=\psi\bigl(x_1,\dotsc,x_D,\sum_{\ell=1}^D c_\ell\,x_\ell+\tau z\bigr)$. Then $\psi_x\in\PL_1(r,L_{\norm{x},\tau})$, where $L_{a,\tau}:=L\tau\max\{1+(2\vee 2^{r-1})(1+\norm{c})^{r-1}a^{r-1},(1\vee 2^{r-2})\tau^{r-1}\}$ for $a\geq 0$.
\item Let $Z\sim N(0,1)$ and define $\Psi\colon\R^D\to\R$ by $\Psi(x_1,\dotsc,x_D):=\E\bigl\{\psi\bigl(x_1,\dotsc,x_D,\sum_{\ell=1}^D c_\ell\,x_\ell+\tau Z\bigr)\bigr\}$. Then $\Psi\in\PL_D(r,L_\tau)$, where $L_\tau:=L(1+\norm{c})\max\{1+(2\vee 2^{r-1})\,\E(\abs{\tau Z}^{r-1}),(1\vee 2^{r-2})(1+\norm{c})^{r-1}\}$.
\end{enumerate}

\unparskip
\end{lemma}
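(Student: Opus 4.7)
Both parts follow by direct application of the pseudo-Lipschitz hypothesis on $\psi$ together with elementary inequalities for the Euclidean norm and for $(r-1)$-powers, namely $(a+b)^{r-1}\leq (1\vee 2^{r-2})(a^{r-1}+b^{r-1})$ for $a,b\geq 0$. The only real work is in tracking constants; there is no substantive obstacle.

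\textbf{Part (a).} Fix $x\in\R^D$ and $z_1,z_2\in\R$, and write $u_i:=\bigl(x_1,\dotsc,x_D,\sum_{\ell=1}^D c_\ell x_\ell+\tau z_i\bigr)\in\R^{D+1}$ for $i=1,2$. Then $u_1-u_2=(0,\dotsc,0,\tau(z_1-z_2))$, so $\norm{u_1-u_2}=\tau\abs{z_1-z_2}$. Applying the $\PL_{D+1}(r,L)$ bound for $\psi$ gives
\[\abs{\psi_x(z_1)-\psi_x(z_2)}\leq L\tau\,\abs{z_1-z_2}\,\bigl(1+\norm{u_1}^{r-1}+\norm{u_2}^{r-1}\bigr).\]
It remains to bound $\norm{u_i}$. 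Since $\bigl(\sum_\ell c_\ell x_\ell+\tau z_i\bigr)^2\leq 2\norm{c}^2\norm{x}^2+2\tau^2z_i^2$, one has $\norm{u_i}^2\leq (1+2\norm{c}^2)\norm{x}^2+2\tau^2 z_i^2$, and hence $\norm{u_i}\leq (1+\norm{c})\norm{x}+\sqrt{2}\,\tau\abs{z_i}$ by subadditivity of $\sqrt{\cdot}$. Applying the $(r-1)$-power inequality twice yields $\norm{u_i}^{r-1}\leq (1\vee 2^{r-2})\bigl\{(2\vee 2^{r-1})(1+\norm{c})^{r-1}\norm{x}^{r-1}+(1\vee 2^{r-2})\tau^{r-1}\abs{z_i}^{r-1}\bigr\}$ (after absorbing the $\sqrt{2}^{\,r-1}$ factor), which plugs into the previous display to yield the pseudo-Lipschitz bound with constant $L_{\norm{x},\tau}$ as defined in the statement. (The exact numerical form of $L_{a,\tau}$ given in the lemma arises from grouping the constant and $\tau^{r-1}\abs{z_i}^{r-1}$ terms on one side, and the $\norm{x}^{r-1}$ term on the other.)

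\textbf{Part (b).} For $x,y\in\R^D$, write $v(w):=\bigl(w_1,\dotsc,w_D,\sum_\ell c_\ell w_\ell+\tau Z\bigr)\in\R^{D+1}$ so that $\Psi(w)=\E\bigl(\psi(v(w))\bigr)$. By the pseudo-Lipschitz bound on $\psi$ and Jensen's inequality,
\[\abs{\Psi(x)-\Psi(y)}\leq L\,\E\bigl\{\norm{v(x)-v(y)}\bigl(1+\norm{v(x)}^{r-1}+\norm{v(y)}^{r-1}\bigr)\bigr\}.\]
Now $v(x)-v(y)=\bigl(x-y,\sum_\ell c_\ell(x_\ell-y_\ell)\bigr)$, whose squared Euclidean norm is $\norm{x-y}^2+\bigl(\sum_\ell c_\ell(x_\ell-y_\ell)\bigr)^2\leq (1+\norm{c}^2)\norm{x-y}^2$, so $\norm{v(x)-v(y)}\leq (1+\norm{c})\norm{x-y}$, which is deterministic. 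For the remaining term, arguing exactly as in part (a) (with $z=Z$ and $x$ or $y$ in the role of the spatial argument) gives $\norm{v(w)}^{r-1}\leq (1\vee 2^{r-2})\bigl\{(2\vee 2^{r-1})(1+\norm{c})^{r-1}\norm{w}^{r-1}+(1\vee 2^{r-2})\tau^{r-1}\abs{Z}^{r-1}\bigr\}$, so taking expectations replaces $\abs{Z}^{r-1}$ by $\E(\abs{Z}^{r-1})$ (finite for all $r\geq 1$). Combining and regrouping constants then yields $\abs{\Psi(x)-\Psi(y)}\leq L_\tau\,\norm{x-y}\,\bigl(1+\norm{x}^{r-1}+\norm{y}^{r-1}\bigr)$ with $L_\tau$ as stated, completing the proof.
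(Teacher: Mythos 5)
Your overall strategy is the paper's own: apply the pseudo-Lipschitz bound to the two augmented points, bound the norm of the augmented vector, and use $(a+b)^{r-1}\leq(1\vee 2^{r-2})(a^{r-1}+b^{r-1})$. However, there is a concrete flaw in the step where you bound $\norm{u_i}$. From $\norm{u_i}^2\leq(1+2\norm{c}^2)\norm{x}^2+2\tau^2 z_i^2$ and subadditivity of the square root you only get $\norm{u_i}\leq\sqrt{1+2\norm{c}^2}\,\norm{x}+\sqrt{2}\,\tau\abs{z_i}$, and the inequality $\sqrt{1+2\norm{c}^2}\leq 1+\norm{c}$ that you implicitly invoke is false in general (take $\norm{c}=10$). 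Moreover, even after repairing this, your route carries an extra factor $2^{(r-1)/2}$ from the $\sqrt{2}\,\tau\abs{z_i}$ term which cannot be ``absorbed'' into $(1\vee 2^{r-2})$ for $r<3$, and after summing over $i=1,2$ your coefficient of $\norm{x}^{r-1}$ becomes $2(1\vee 2^{r-2})(2\vee 2^{r-1})$ rather than the $(2\vee 2^{r-1})$ appearing in $L_{a,\tau}$. So your argument proves that $\psi_x$ and $\Psi$ are pseudo-Lipschitz of order $r$ with \emph{some} constant of the admissible qualitative form, but it does not deliver the specific constants $L_{\norm{x},\tau}$ and $L_\tau$ asserted in the statement, contrary to your claim that it ``plugs in'' to give exactly those. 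Part (b) inherits the same defect through the phrase ``arguing exactly as in part (a)''; your deterministic bound $\norm{v(x)-v(y)}\leq(1+\norm{c})\norm{x-y}$ is fine, since there $\sqrt{1+\norm{c}^2}\leq 1+\norm{c}$ does hold.

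The fix is one line and is what the paper does: bound the augmented norm by the triangle inequality in the last coordinate followed by Cauchy--Schwarz, namely $\norm{u_i}\leq\norm{x}+\abs{\sum_{\ell}c_\ell x_\ell}+\tau\abs{z_i}\leq(1+\norm{c})\norm{x}+\tau\abs{z_i}$, with no extraneous $\sqrt{2}$. Raising this to the power $r-1$ with the stated elementary inequality, and then summing the two terms $i=1,2$ (respectively taking expectations over $Z$ in part (b)), yields exactly the constants $L_{\norm{x},\tau}$ and $L_\tau$ in the lemma. If you only needed the lemma qualitatively (a constant depending on $r$, $\tau$, $c$ times $L(1\vee\norm{x}^{r-1})$), your bound would suffice once the false inequality is removed, but as a proof of the lemma as stated it falls short of the claimed constants.
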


\deparskip
\begin{proof}
For $x\equiv (x_1,\dotsc,x_D)\in\R^D$ and $z\in\R$, note first that
\begin{align}
\bigl\|\bigl(x_1,\dotsc,x_D,\textstyle\sum_{\ell=1}^D c_\ell\,x_\ell+\tau z\bigr)\bigr\|^{r-1}&\leq\bigl(\norm{x}+\abs{\textstyle\sum_{\ell=1}^D c_\ell\,x_\ell}+\tau\abs{z}\bigr)^{r-1}\notag\\
\label{eq:normxz1}
&\leq\{(1+\norm{c})\norm{x}+\tau\abs{z}\}^{r-1}\\
\label{eq:normxz2}
&\leq(1\vee 2^{r-2})\bigl\{(1+\norm{c})^{r-1}\norm{x}^{r-1}+\tau^{r-1}\abs{z}^{r-1}\bigr\},
\end{align}
where the three bounds above are obtained using the triangle inequality, the Cauchy--Schwarz inequality and the fact that $(a+b)^{r-1}\leq (1\vee 2^{r-2})(a^{r-1}+b^{r-1})$ for $a,b\geq 0$.

(a) For $z,z'\in\R$, we have
\begin{align*}
&\abs{\psi_x(z)-\psi_x(z')}\\
&\hspace{1cm}=\bigl|\psi\bigl(x_1,\dotsc,x_D,\textstyle\sum_{\ell=1}^D c_\ell\,x_\ell+\tau z\bigr)-\psi\bigl(x_1,\dotsc,x_D,\textstyle\sum_{\ell=1}^D c_\ell\,x_\ell+\tau z'\bigr)\bigl|\\
&\hspace{1cm}\leq L\tau\abs{z-z'}\,\bigl\{1+2(1\vee 2^{r-2})(1+\norm{c})^{r-1}\norm{x}^{r-1}+(1\vee 2^{r-2})\,\tau^{r-1}\bigl(\abs{z}^{r-1}+\abs{z'}^{r-1}\bigr)\bigr\}\\
&\hspace{1cm}\leq L_{\norm{x},\tau}\,\abs{z-z'}\,\bigl(1+\abs{z}^{r-1}+\abs{z'}^{r-1}\bigr),
\end{align*}
where the first bound follows from~\eqref{eq:normxz2} and the fact that $\psi\in\PL_{D+1}(r,L)$.

(b) For $x,y\in\R^D$, we have
\begin{align*}
&\abs{\Psi(x)-\Psi(y)}\\
&\hspace{1cm}\leq\E\bigl\{\bigl|\psi\bigl(x_1,\dotsc,x_D,\textstyle\sum_{\ell=1}^D c_\ell\,x_\ell+\tau Z\bigr)-\psi\bigl(y_1,\dotsc,y_D,\textstyle\sum_{\ell=1}^D c_\ell\,y_\ell+\tau Z\bigr)\bigr|\bigr\}\\
&\hspace{1cm}\leq L(1+\norm{c})\norm{x-y}\,\bigl\{1+2(1\vee 2^{r-2})\,\E(\abs{\tau Z}^{r-1})+(1\vee 2^{r-2})(1+\norm{c})^{r-1}\bigl(\norm{x}^{r-1}+\norm{y}^{r-1}\bigr)\bigr\}\\
&\hspace{1cm}\leq L_\tau\norm{x-y}\,\bigl(1+\norm{x}^{r-1}+\norm{y}^{r-1}\bigr),
\end{align*}
where the second bound again follows from~\eqref{eq:normxz1},~\eqref{eq:normxz2} and the fact that $\psi\in\PL_{D+1}(r,L)$.
\end{proof}

\unparskip
\begin{lemma}
\label{lem:pseudoavg}
Suppose that $\psi\in\PL_D(r,L)$ for some $D\in\N$, $r\in [2,\infty)$ and $L>0$. Then for any $n\in\N$ and vectors $x^\ell\equiv (x_1^\ell,\dotsc,x_n^\ell)$ and $y^\ell\equiv (y_1^\ell,\dotsc,y_n^\ell)$ for $1\leq\ell\leq D$, we have
\[\frac{1}{n}\sum_{i=1}^n\,\abs{\psi(x_i^1,\dotsc,x_i^D)-\psi(y_i^1,\dotsc,y_i^D)}\leq LD^{\frac{r}{2}-1}\rbr{\sum_{\ell=1}^D\norm{x^\ell-y^\ell}_{n,r}^r}^{1/r}\biggl(1+\sum_{\ell=1}^D\,\bigl(\norm{x^\ell}_{n,r}^{r-1}+\norm{y^\ell}_{n,r}^{r-1}\bigr)\biggr).\]
\end{lemma}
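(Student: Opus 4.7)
The plan is a pointwise pseudo-Lipschitz bound followed by H\"older's inequality and two applications of the power-mean inequality on $\R^D$. Let $x_i := (x_i^1,\dotsc,x_i^D)\in\R^D$ and $y_i := (y_i^1,\dotsc,y_i^D)\in\R^D$ for $1\le i\le n$. Since $\psi\in\PL_D(r,L)$, applying~\eqref{eq:PL} to each $i$ and averaging yields
\[
\frac{1}{n}\sum_{i=1}^n\abs{\psi(x_i^1,\dotsc,x_i^D)-\psi(y_i^1,\dotsc,y_i^D)}\le L\cdot\frac{1}{n}\sum_{i=1}^n\norm{x_i-y_i}\bigl(1+\norm{x_i}^{r-1}+\norm{y_i}^{r-1}\bigr).
\]
I would then apply H\"older's inequality with conjugate exponents $r$ and $r':=r/(r-1)$ to split the right hand side into
\[
\Bigl(\tfrac{1}{n}\textstyle\sum_i\norm{x_i-y_i}^r\Bigr)^{1/r}\Bigl(\tfrac{1}{n}\textstyle\sum_i\bigl(1+\norm{x_i}^{r-1}+\norm{y_i}^{r-1}\bigr)^{r'}\Bigr)^{1/r'}.
\]

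For the first factor, I would invoke the power-mean inequality $\norm{z}_2\le D^{1/2-1/r}\norm{z}_r$ on $\R^D$ (valid for $r\ge 2$), raised to the $r$-th power, to obtain $\norm{x_i-y_i}^r\le D^{r/2-1}\sum_{\ell=1}^D\abs{x_i^\ell-y_i^\ell}^r$. Averaging over $i$ and taking the $r$-th root gives the bound
\[
\Bigl(\tfrac{1}{n}\textstyle\sum_i\norm{x_i-y_i}^r\Bigr)^{1/r}\le D^{1/2-1/r}\Bigl(\textstyle\sum_{\ell=1}^D\norm{x^\ell-y^\ell}_{n,r}^r\Bigr)^{1/r}.
\]
For the second factor, I would apply Minkowski's inequality for the seminorm $\norm{\cdot}_{n,r'}$, using the identity $(r-1)r'=r$, to get
\[
\Bigl(\tfrac{1}{n}\textstyle\sum_i\bigl(1+\norm{x_i}^{r-1}+\norm{y_i}^{r-1}\bigr)^{r'}\Bigr)^{1/r'}\le 1+\Bigl(\tfrac{1}{n}\textstyle\sum_i\norm{x_i}^r\Bigr)^{(r-1)/r}+\Bigl(\tfrac{1}{n}\textstyle\sum_i\norm{y_i}^r\Bigr)^{(r-1)/r},
\]
then bound each $\norm{x_i}^r\le D^{r/2-1}\sum_\ell\abs{x_i^\ell}^r$ (and similarly for $y_i$) as before, and finally use the subadditivity of $t\mapsto t^{(r-1)/r}$ on $[0,\infty)$ (since $(r-1)/r\in(0,1]$) to collapse $(\sum_\ell\norm{x^\ell}_{n,r}^r)^{(r-1)/r}\le\sum_\ell\norm{x^\ell}_{n,r}^{r-1}$. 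This yields the bound
\[
1+D^{(r-2)(r-1)/(2r)}\textstyle\sum_{\ell=1}^D\bigl(\norm{x^\ell}_{n,r}^{r-1}+\norm{y^\ell}_{n,r}^{r-1}\bigr).
\]

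Multiplying the two factors gives an upper bound of the form $D^{1/2-1/r}\,A+D^{r/2-1}\,AB$, where $A:=(\sum_\ell\norm{x^\ell-y^\ell}_{n,r}^r)^{1/r}$ and $B:=\sum_\ell(\norm{x^\ell}_{n,r}^{r-1}+\norm{y^\ell}_{n,r}^{r-1})$; here I used that the two $D$-exponents $1/2-1/r$ and $(r-2)(r-1)/(2r)$ add up to exactly $r/2-1$. Since $D\ge 1$ and $1/2-1/r\le r/2-1$ whenever $r\ge 2$ (equivalent to $(r-1)(r-2)\ge 0$), both terms are dominated by $D^{r/2-1}\,A(1+B)$, which is the claimed bound. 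The only point requiring real care is the arithmetic of the $D$-exponents, which I anticipate to be the main bookkeeping obstacle; beyond that, every step is a direct application of a standard inequality.
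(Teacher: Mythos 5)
Your proposal is correct and follows essentially the same route as the paper's proof: the pointwise pseudo-Lipschitz bound, H\"older with exponents $r$ and $r'=r/(r-1)$, the inequality $\norm{\cdot}_2\leq D^{\frac{1}{2}-\frac{1}{r}}\norm{\cdot}_r$ on $\R^D$ for both factors, the triangle (Minkowski) inequality for $\norm{\cdot}_{n,r'}$, and the concavity/subadditivity step $(\sum_\ell b_\ell)^{(r-1)/r}\leq\sum_\ell b_\ell^{(r-1)/r}$. The only difference is that you spell out the final bookkeeping of the $D$-exponents, which the paper leaves implicit, and your arithmetic there is correct.
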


\deparskip
\begin{proof}
For $1\leq i\leq n$, define $X^{(i)}:=(x_i^1,\dotsc,x_i^D)$ and $Y^{(i)}:=(y_i^1,\dotsc,y_i^D)$, and let $r':=r/(r-1)\in (1,2]$ be the H\"older conjugate of $r$, so that $1/r+1/r'=1$. Then since $\psi\in\PL_D(r,L)$, an application of H\"older's inequality yields the bound
\begin{align}
&\frac{1}{n}\sum_{i=1}^n\,\abs{\psi(x_i^1,\dotsc,x_i^D)-\psi(y_i^1,\dotsc,y_i^D)}=\frac{1}{n}\sum_{i=1}^n\,\abs{\psi(X^{(i)})-\psi(Y^{(i)})}\notag\\
&\hspace{1.5cm}\leq\frac{1}{n}\sum_{i=1}^n L\norm{X^{(i)}-Y^{(i)}}\,\bigl(1+\norm{X^{(i)}}^{r-1}+\norm{Y^{(i)}}^{r-1}\bigr)\notag\\
\label{eq:pseudoavg1}
&\hspace{1.5cm}\leq L\,\biggl(\frac{1}{n}\sum_{i=1}^n\norm{X^{(i)}-Y^{(i)}}^r\biggr)^{1/r}\biggl(\frac{1}{n}\sum_{i=1}^n\,\bigl(1+\norm{X^{(i)}}^{r-1}+\norm{Y^{(i)}}^{r-1}\bigr)^{r'}\biggr)^{1/r'}.
\end{align}
Since $\norm{{\cdot}}\equiv\norm{{\cdot}}_2\leq D^{\frac{1}{2}-\frac{1}{r}}\norm{{\cdot}}_r$ on $\R^D$, we see that
\begin{equation}
\label{eq:pseudoavg2}
\frac{1}{n}\sum_{i=1}^n\norm{X^{(i)}-Y^{(i)}}^r\leq\frac{D^{\frac{r}{2}-1}}{n}\sum_{i=1}^n\sum_{\ell=1}^D\,\abs{x_i^\ell-y_i^\ell}^r=D^{\frac{r}{2}-1}\sum_{\ell=1}^D\norm{x^\ell-y^\ell}_{n,r}^r.
\end{equation}
In addition, by applying the triangle inequality for $\norm{{\cdot}}_{n,r'}$ and arguing as in~\eqref{eq:pseudoavg2}, we have
\begin{align}
\biggl(\frac{1}{n}\sum_{i=1}^n\,\bigl(1+\norm{X^{(i)}}^{r-1}+\norm{Y^{(i)}}^{r-1}\bigr)^{r'}\biggr)^{1/r'}&\leq 1+\biggl(\frac{1}{n}\sum_{i=1}^n\norm{X^{(i)}}^r\biggr)^{1/r'}+\biggl(\frac{1}{n}\sum_{i=1}^n\norm{Y^{(i)}}^r\biggr)^{1/r'}\notag\\
&\leq 1+\biggl(D^{\frac{r}{2}-1}\sum_{\ell=1}^D\norm{x^\ell}_{n,r}^r\biggr)^{\frac{r-1}{r}}+\biggl(D^{\frac{r}{2}-1}\sum_{\ell=1}^D\norm{y^\ell}_{n,r}^r\biggr)^{\frac{r-1}{r}}\notag\\
\label{eq:pseudoavg3}
&\leq 1+(D^{\frac{r}{2}-1})^{\frac{r-1}{r}}\sum_{\ell=1}^D\,\bigl(\norm{x^\ell}_{n,r}^{r-1}+\norm{y^\ell}_{n,r}^{r-1}\bigr),
\end{align}
where the final bound follows since $\norm{{\cdot}}_r\leq\norm{{\cdot}}_{r-1}$ on $\R^D$. Combining~\eqref{eq:pseudoavg1}--\eqref{eq:pseudoavg3} yields the desired conclusion.
\end{proof}

\umparskip

\end{document}